\documentclass[aos]{imsart}

\RequirePackage{amsthm,amsmath,amsfonts,amssymb}
\RequirePackage[authoryear]{natbib}
\RequirePackage[colorlinks,citecolor=blue,urlcolor=blue]{hyperref}
\RequirePackage{graphicx}
\RequirePackage[T1]{fontenc}

\makeatletter 
\def\journal@name{}
\makeatother

\startlocaldefs

\usepackage[utf8]{inputenc}
\usepackage{mathtools}
\usepackage{multirow}
\usepackage{enumerate}
\usepackage{comment}
\usepackage{bbm}
\usepackage{arydshln}
\usepackage{float}
\usepackage{adjustbox}
\usepackage{dsfont}
\usepackage{bm}
\usepackage{paralist}
\usepackage[dvipsnames]{xcolor}
\usepackage{booktabs}
\usepackage{makecell}
\usepackage{etoc}
\usepackage{bibunits} 

\numberwithin{equation}{section}
\theoremstyle{plain}
\newtheorem{theorem}{Theorem}[section]

\newtheorem{lemma}[theorem]{Lemma}
\newtheorem{corollary}[theorem]{Corollary}

\theoremstyle{definition}
\newtheorem{definition}[theorem]{Definition}

\newtheorem{remark}[theorem]{Remark}

\allowdisplaybreaks[4]

\newcommand{\E}{\mathbb{E}}
\newcommand{\R}{\mathbb{R}}

\newcommand{\Z}{\mathbb{Z}}
\newcommand{\N}{\mathbb{N}}
\newcommand{\eps}{\varepsilon}

\newcommand{\Hb}{\mathbb{H}}

\newcommand{\Hc}{\mathcal{H}}

\newcommand{\Prob}{\mathbb{P}}    
\newcommand{\Exp}{\operatorname{E}}
\newcommand{\Var}{\operatorname{Var}}
\newcommand{\Cov}{\operatorname{Cov}}

\newcommand{\FWER}{\operatorname{FWER}}
\newcommand{\argmin}{\operatornamewithlimits{\arg\min}}

\newcommand{\diff}{\mathrm{d}}

\DeclareMathOperator{\lin}{span}
\newcommand{\FAC}{\operatorname{FAC}}
\newcommand{\SNR}{\operatorname{SNR}}
\newcommand{\op}{{\operatorname{op}}}
\newcommand{\HS}{{\operatorname{HS}}}
\newcommand{\Tr}{{\operatorname{Tr}}}
\DeclareMathOperator{\id}{id}
\newcommand{\chgset}{\kappa}
\newcommand{\kexp}{\varpi}

\endlocaldefs

\begin{document}
\etocdepthtag.toc{main}
\begin{bibunit}[imsart-nameyear]
\begin{frontmatter}
\title{Simultaneous Change-Point Inference for High-Dimensional Functional Time Series}
\runtitle{Simultaneous Change-Point Inference}

\begin{aug}
\author[A]{\fnms{Axel}~\snm{Bücher}\ead[label=e1]{axel.buecher@rub.de}\orcid{0000-0002-1947-1617}}
\author[A]{\fnms{Colin}~\snm{Decker}\ead[label=e2]{david.decker@rub.de}}
\address[A]{Ruhr-Universität Bochum, Fakultät für Mathematik\printead[presep={,\ }]{e1,e2}}
\end{aug}

\begin{abstract}
We develop a framework for simultaneous change-point inference of high-dimensional functional time series. The observations are modeled as temporally dependent vectors whose coordinates take values in possibly different separable Hilbert spaces, thereby covering a broad class of functional data. Heterogeneous mean changes may occur at coordinate-specific locations, and the contemporaneous dependence across coordinates is left unrestricted.
Our procedure is based on coordinatewise cumulative-sum statistics and a residual block multiplier bootstrap that provides a common critical value for the global test and all coordinatewise decisions. We establish a nonasymptotic Gaussian approximation, quantitative nonasymptotic bounds for strong family-wise error control under arbitrary mixtures of changed and unchanged coordinates, covariance-adaptive detection guarantees, and simultaneous high-probability bounds for change-point localization. The bounds accommodate high-dimensional regimes in which the number of functional coordinates grows exponentially in a power of the sample size. We investigate finite-sample performance in simulations and illustrate the method using river discharge curves and high-frequency financial log returns.
\end{abstract}
\begin{keyword}[class=MSC]
\kwdgroup[type=primary]{\kwd{62G10}
\kwd{62R10}}
\kwdgroup[type=secondary]{\kwd{62M10}}
\end{keyword}

\begin{keyword}
\kwd{Change-point detection}
\kwd{CUSUM statistics}
\kwd{Family-wise error control}
\kwd{Functional time series}
\kwd{High-dimensional inference}
\end{keyword}

\end{frontmatter}


\section{Introduction}
\label{sec:intro}

Modern applications increasingly produce large collections of functional observations recorded sequentially over time. Examples include environmental monitoring, where measurements from many sensors or stations are recorded continuously; financial markets, where intraday price trajectories are observed for large collections of assets; biomedical applications involving repeated functional imaging signals; and industrial monitoring systems recording multiple streams of functional measurements. Such data combine temporal dependence, functional observations, and high dimensionality, where the latter refers to the number of functional coordinates observed at each time rather than to the dimension of the domain of a single function. 
A fundamental question in analyzing such systems is whether their underlying structure remains stable over time, with one important form of instability being the occurrence of abrupt structural changes. Beyond detecting whether such a change has occurred somewhere in the system, scientific interest often lies in identifying which functional coordinates have changed, when the respective changes occurred, and how confidently these conclusions can be drawn simultaneously.

Answering these questions is challenging. Temporal dependence must be accounted for when calibrating the inferential procedure, while high dimensionality requires simultaneous control across a potentially large number of functional coordinates. At the same time, the coordinates may exhibit strong and otherwise unrestricted contemporaneous dependence, so that they cannot generally be treated as independent. Different coordinates may also undergo changes of different magnitudes, in different functional directions, and at different time points. Testing each coordinate separately does not account for multiplicity, whereas a global test alone cannot identify the coordinates responsible for a rejection. A simultaneous inferential procedure should therefore identify changed coordinates while controlling the overall probability of false detections and, at the same time, provide guarantees for the estimated change-point locations.

In general, change-point analysis for functional time series has received considerable attention. However, the literature has largely focused on inferential objectives different from the simultaneous coordinate-specific problem described above. Fully functional methods for detecting changes in the mean of a single functional time series have been developed by \citet{Horvath2014,Aue2018,Bastian2025,kumar2024estimation}, among others,  and, more generally in a Hilbert-space framework, by \citet{sharipov2016sequential}. Methods based on dimension reduction, including functional principal component analysis, were developed by \citet{berkes2009detecting,MR3147328,DetteKutta2021,MR2922864,MR4176154}. Multiple change-point detection for functional time series has also received considerable attention; see, for example, \citet{3600270.3602960,bastian2024multiple,kutta2025multiscale,Bai2026}. These contributions primarily concern detecting or localizing changes over time within one or more functional processes.

Two contributions are particularly close to the simultaneous inference problem described above, albeit from different perspectives. From the perspective of the inferential objective, \citet{jirak2015uniform} develops simultaneous coordinatewise change-point methodology for high-dimensional real-valued time series. From the perspective of the data structure, \citet{li2024detection} studies structural breaks in high-dimensional functional time series in \(L^2[0,1]\), including global detection and estimation of heterogeneous break-point locations under a latent group structure. Further related work on simultaneous inference includes \citet{ZD23}, who construct simultaneous confidence bands for the marginal mean functions of high-dimensional functional time series rather than addressing structural changes.

To the best of our knowledge, however, simultaneous coordinate-specific change-point inference for high-dimensional functional time series, with rigorous guarantees for both detection and localization, has not been addressed in the literature. In this paper, we develop a general framework for this problem.
We consider temporally dependent observations consisting of \(K\) (functional) coordinates, where different coordinates may take values in different separable Hilbert spaces, undergo mean changes of different functional forms and magnitudes at different time points, and exhibit unrestricted contemporaneous dependence. For each coordinate, we construct a cumulative-sum (CUSUM) statistic in the corresponding Hilbert-space norm and use the maximum of these marginal statistics for simultaneous calibration. To account for temporal and cross-coordinate dependence, we employ a residual block multiplier bootstrap in which estimated coordinate-specific mean changes are removed before resampling and common block multipliers are used across coordinates. The bootstrap yields a single data-dependent critical value that is used both for the global test and for all coordinatewise decisions, while change-point locations are estimated from the marginal CUSUM processes. A step-down refinement further improves the coordinatewise decisions by iteratively recalibrating the bootstrap critical value over the set of hypotheses not yet rejected, thereby allowing for coordinate-specific effective critical values.

The proposed methodology is accompanied by three principal nonasymptotic inferential guarantees, all of which accommodate high-dimensional settings in which the number of functional coordinates may be substantially larger than the sample size, including asymptotic regimes in which $K$ grows exponentially in a power of $n$.

\begin{enumerate}
\item \emph{Strong family-wise error control}
(Theorem~\ref{thm:fwer_control}).
We derive a quantitative nonasymptotic bound on the probability of making at least one false coordinatewise change detection, uniformly over arbitrary mixtures of changed and unchanged coordinates. In particular, the simultaneous error guarantee is not restricted to the global null hypothesis. 

\item \emph{Covariance-adaptive simultaneous detection}
(Theorem~\ref{thm:profiled_simultaneous_power}).
We establish nonasymptotic lower bounds on the probability of simultaneously detecting collections of changed coordinates in high-dimensional settings. The sufficient detection threshold is expressed through quantiles of a Gaussian reference process and therefore adapts to both temporal dependence and contemporaneous dependence across coordinates, rather than replacing the effect of simultaneous calibration by a generic worst-case function of $K$.

\item \emph{Simultaneous localization guarantees}
(Theorem~\ref{thm:linear_process_localization}).
We derive nonasymptotic high-probability bounds for the errors of the estimated change-point locations, simultaneously over collections of changed coordinates. The resulting bounds are signal-adaptive and incur only a logarithmic dependence on the number of coordinates being localized.

\end{enumerate}

A key ingredient underlying the first two results is a nonasymptotic Gaussian approximation for the global maximum CUSUM statistic under the global null hypothesis (Theorem~\ref{thm:gaussian_approximation}). By retaining the full cross-coordinate covariance structure, the Gaussian reference distribution provides the theoretical basis for the covariance-adaptive bootstrap calibration and power analysis.

We investigate the finite-sample behavior of the proposed procedure in a simulation study designed to demonstrate empirically its strong control of the family-wise error rate and covariance-adaptive coordinatewise detection. As a secondary objective, we benchmark its performance as a global max-type procedure against the aggregation-based PE-CUSUM method of \citet{li2024detection}.
We further illustrate the methodology in two data applications involving annual river-discharge curves from European measurement stations and cumulative intraday log-return curves for constituents of the Dow Jones Industrial Average.

The remainder of the paper is organized as follows. Section~\ref{sec:methodology} introduces the proposed methodology, and Section~\ref{sec:theoretical-guarantees} establishes its theoretical guarantees. Section~\ref{subsec:tuning-parameter-choice} provides practical guidance on the choice of tuning parameters. Section~\ref{sec:simulation-study} presents the simulation study, and Section~\ref{sec:case-study} contains the two empirical applications. All proofs are deferred to the supplementary material.

\section{Statistical Methodology}
\label{sec:methodology}

\subsection{Observation Scheme and Objectives}

For $k\in[K]=\{1,\dots,K\}$, let $(\Hb_k,\langle\cdot,\cdot\rangle_{\Hb_k})$ be a separable Hilbert space, for example $\R^d$ or $L^2[0,1]$. 
We consider a sequence of observations $f^{(1)},\ldots,f^{(n)}$ taking values in the product Hilbert space
\begin{equation}
\label{eq:mathcalH-decomposition}
    \mathcal H
    =
    \bigoplus_{k\in[K]}\Hb_k.
\end{equation}
Specifically, for each $i\in[n]$,
\[
    f^{(i)}
    =
    \big(f_1^{(i)},\ldots,f_K^{(i)}\big),
    \qquad
    f_k^{(i)}\in\Hb_k.
\]
Our framework accommodates high-dimensional settings in which the number $K$ of Hilbert-space-valued coordinates may be substantially larger than the sample size $n$. 
We assume that the observations follow a change-point model driven by a centered, strictly stationary noise process
\begin{equation}
\label{def:global_noise}
    \varepsilon
    =
    \big(\varepsilon^{(i)}\big)_{i\in\mathbb Z},
    \qquad
    \varepsilon^{(i)}
    =
    \big(\varepsilon_1^{(i)},\ldots,\varepsilon_K^{(i)}\big)
    \in\mathcal H,
\end{equation}
such that, for $i\in[n]$ and $k\in[K]$,
\begin{equation}
\label{eq:change_point_model}
    f_k^{(i)}
    =
    \mu_k
    +
    \delta_k\mathbf 1\{i>\omega_k\}
    +
    \varepsilon_k^{(i)}.
\end{equation}
Here, $\mu_k,\delta_k\in\Hb_k$ denote the baseline mean and mean change
function, respectively, while $\omega_k \in [n-1]$ is the change-point location
for the $k$th coordinate. If $\delta_k=0$, then $\omega_k$ is not
identifiable, and the $k$th coordinate is said to have no mean change.  The coordinates of each $\varepsilon^{(i)}$ may be contemporaneously dependent, and no structural model is imposed on this cross-coordinate dependence by our assuptions below.

The model allows both the change functions and the change-point locations
to vary across coordinates. It also permits the marginal spaces $\Hb_k$
to differ. For example, some coordinates may take values in
infinite-dimensional function spaces such as $L^2[0,1]$, while others
may belong to finite-dimensional Euclidean spaces or to Hilbert spaces
arising from transformed compositional data
\citep{aitchison1982statistical}. This flexibility is useful in
applications where different components of a multivariate observation
possess different structure. For brevity, we refer to all coordinates as functional coordinates, including the finite-dimensional special cases.

Define
\begin{equation}
\label{eq:change-point-coordinates}
\mathcal S
=
\{k\in[K]:\delta_k\neq0\},
\qquad
\mathcal S^c
=
\{k\in[K]:\delta_k=0\},
\end{equation}
to be the sets of coordinates with and without a mean change, respectively. The set $\mathcal S$ is unknown, and our primary objective is to recover it through simultaneous inference on the marginal hypotheses
\begin{equation*}
H_0^k:\delta_k=0
\qquad\text{versus}\qquad
H_A^k:\delta_k\neq0,
\qquad
k\in[K],
\end{equation*}
while strongly controlling the family-wise error rate at a prescribed level $\gamma\in(0,1)$. At the same time, the procedure should retain high power for detecting coordinates with genuine mean changes. For coordinates that undergo a change, we further seek to estimate the corresponding change-point locations $\omega_k$ and to establish simultaneous guarantees for the localization errors of the resulting estimators.
The methodology developed below is designed to address these objectives simultaneously. As a by-product, it provides a test for the global null hypothesis \citep{li2024detection}
\begin{align}
\label{eq:global-hypothesis}
H_0^{\infty}
:
\delta_k=0
\quad\text{for all }k\in[K],
\end{align}
or, equivalently, $\mathcal S=\varnothing$.

\subsection{Simultaneous testing and change-point estimation}
\label{subsec:testing-localization}
For each coordinate $k\in[K]$, evidence against $H_0^k$ is quantified by a marginal CUSUM process. For $s\in[n]$, define
\begin{equation}
\label{def:cusum}
C_{n,k}(s)
=
\frac{1}{\sqrt n}
\sum_{i=1}^{s}
\bigl(
f_k^{(i)}-\bar f_k
\bigr)
\in\Hb_k,
\qquad
\bar f_k
=
\frac1n
\sum_{i=1}^n
f_k^{(i)}.
\end{equation}
The corresponding marginal test statistic is
\begin{equation}
\label{def:marginal_test}
T_{n,k}^2
=
\max_{s\in[n]}
\|C_{n,k}(s)\|_{\Hb_k}^2,
\end{equation}
and the global max statistic is
\begin{equation}
\label{def:global_test}
T_n^2
=
\max_{k\in[K]}
T_{n,k}^2.
\end{equation}
Large values of $T_{n,k}^2$ provide evidence of a mean change in coordinate $k$, while $T_n^2$ aggregates this evidence through the maximum over all coordinates.

Suppose, for the moment, that the $(1-\gamma)$-quantile of the null distribution of $T_n^2$ was known, and denote it by $Q_{1-\gamma}$. 
A natural simultaneous procedure would then be
\[
\text{Reject } H_0^k
\qquad\text{whenever}\qquad
T_{n,k}^2>Q_{1-\gamma}.
\]
Conceptually, the multiplicity adjustment is thus achieved by a global threshold determined by the null distribution of the maximum of the marginal statistics, rather than by calibrating the coordinates separately.

In practice, the required quantile $Q_{1-\gamma}$ is unknown. We estimate it using the residual block multiplier bootstrap introduced in Section~\ref{subsec:bootstrap} below. Let $\smash{\widehat Q_{1-\gamma}}$ denote the resulting conditional $(1-\gamma)$ bootstrap quantile. The rejection set is then
\begin{equation}
\label{eq:def-hat-S_gamma}
\widehat{\mathcal S}_\gamma
:=
\left\{
k\in[K]:
T_{n,k}^2>\widehat Q_{1-\gamma}
\right\},
\end{equation}
which can also be regarded as an estimator of the unknown set $\mathcal S$ of coordinates containing a mean change.

Finally, for each coordinate $k\in[K]$, define the CUSUM change-point estimator as the first maximizer of the marginal CUSUM process,
\begin{equation}
\label{eq:definition-hat-omega-k}
\widehat\omega_k
=
\min\Bigl\{
s\in[n]:
\|C_{n,k}(s)\|_{\Hb_k}^2
=
\max_{t\in[n]}
\|C_{n,k}(t)\|_{\Hb_k}^2
\Bigr\}.
\end{equation}
When $k\in\widehat{\mathcal S}_\gamma$, we report $\widehat\omega_k$ as the estimated location of the corresponding mean change.

Section~\ref{sec:theoretical-guarantees} establishes the theoretical properties of the proposed methodology, including family-wise error control, covariance-adaptive detection guarantees, and simultaneous localization guarantees for the estimated change-point locations.

\subsection{Bootstrap calibration}
\label{subsec:bootstrap}
The bootstrap procedure combines a big-block--small-block construction with residualization designed to remain valid under both the global null hypothesis and change-point alternatives. The block lengths are
denoted by $q,r\in\mathbb N$, where $1\le r\le q\le n/4$, with $q$ and $r$ corresponding to the lengths of the big and small blocks,
respectively. Let
\begin{equation}
\label{def:m}
m:=\left\lfloor\frac{n}{q+r}\right\rfloor.
\end{equation}
For $\ell\in[m]$, define
\begin{align}
\label{def:big_little_blocks}
I_\ell&=\{(\ell-1)(q+r)+1,\ldots,(\ell-1)(q+r)+q\},\\
J_\ell&=\{(\ell-1)(q+r)+q+1,\ldots,\ell(q+r)\},\nonumber
\end{align}
to be the corresponding big and small blocks. The small blocks separate successive big blocks and are omitted from the bootstrap sums introduced below. This construction reduces the effect of temporal dependence between the retained blocks, while allowing the big blocks to preserve the serial dependence within each functional coordinate.

To prevent an estimated mean change from `contaminating' the bootstrap, we estimate the mean separately before and after the estimated
change-point, excluding a neighborhood of its estimated location. This approach to removing contamination was also used by \cite{jirak2015uniform}. Let
$g\ge0$ denote the trimming radius and let $\tau\in[0,1/2)$ be a
boundary clipping parameter. Define the clipped change-point estimator
\begin{equation}
\label{def:trimmed_omega_k}
\widetilde\omega_k:=\min\{(1-\tau)n,\max\{\tau n,\widehat\omega_k\}\},
\qquad k\in[K].
\end{equation}
Next define the block-aligned trimming indices
\begin{align}
\label{def:clipped-LR-general}
N_{L,k} &:=(q+r)\left\lfloor\frac{\widetilde\omega_k-g}{q+r}\right\rfloor,
\qquad
N_{R,k}:=(q+r) \left\lceil\frac{\widetilde\omega_k+g}{q+r}\right\rceil,
\end{align}
and left and right trimmed means 
\begin{align}
\widehat\mu_{L,k}&:=\frac{1}{N_{L,k}}\sum_{i=1}^{N_{L,k}}f_k^{(i)},
\qquad
\widehat\mu_{R,k}:=\frac{1}{n-N_{R,k}}\sum_{i=N_{R,k}+1}^{n}f_k^{(i)}.
\end{align}
If $\tau>0$ and $g+q+r\le\tau n/2$, then
\[
N_{L,k}\ge\frac{\tau n}{2},\qquad
n-N_{R,k}\ge\frac{\tau n}{2},\qquad k\in[K],
\]
so that both trimmed means are well defined and are based on sample sizes of the same order as $n$. Using these trimmed means, define the residuals
\begin{equation}
\label{def:trimmed-residuals-clipped}
\widehat\varepsilon_{i,k}=
\begin{cases}
f_k^{(i)}-\widehat\mu_{L,k},&1\le i\le N_{L,k},\\[0.4em]
0,&N_{L,k}<i\le N_{R,k},\\[0.4em]
f_k^{(i)}-\widehat\mu_{R,k},&N_{R,k}<i\le n.
\end{cases}
\end{equation}
Residuals in the trimmed region are thus set to zero so that observations near the estimated change-point do not contribute to the bootstrap sample. The purpose of this residualization is to reconstruct, as closely as possible, the fluctuations that would be observed after removal of the coordinate-specific mean changes. 
The trimming step limits the effect of change-point estimation error on the residuals, while clipping ensures that the left and right mean estimators are computed from sufficiently many observations.

Let $e_1,\ldots,e_m$ be independent standard normal random variables,
independent of the data. For $s\in[n]$ and $k\in[K]$, define the
bootstrap partial sum process
\begin{equation}
\label{def:bootstrap-partial-sum}
S_k^*(s):=\sum_{\ell=1}^{m}e_\ell
\sum_{i\in I_\ell}\widehat\varepsilon_{i,k}\mathbf 1\{i\le s\},
\end{equation}
and the corresponding bootstrap CUSUM process
\begin{equation}
\label{def:bootstrap-cusum}
C_k^*(s):=\frac{1}{\sqrt{mq}}
\left\{S_k^*(s)-\frac{s}{n}S_k^*(n)\right\}.
\end{equation}
Importantly, the same multiplier $e_\ell$ is applied to the $\ell$th
block for every coordinate $k$. Consequently, the bootstrap does not
resample the coordinates independently; it preserves the empirical
cross-coordinate dependence carried by the block sums. This feature
allows the resulting critical value to adapt to the contemporaneous
dependence structure without requiring an explicit model for the
cross-coordinate covariance.

The bootstrap analogue of the global statistic is
\begin{equation}
\label{eq:definition-Tstar}
\left(T_n^*\right)^2
=\left(T_n^*(g,q,r,\tau)\right)^2
:=\max_{k\in[K]}(T_{n,k}^*)^2,
\qquad
(T_{n,k}^*)^2:=\max_{s\in[n]}\|C_k^*(s)\|_{\Hb_k}^2.
\end{equation}
Thus, the bootstrap reproduces both sources of dependence relevant for
simultaneous inference: temporal dependence is retained through the
block sums, while cross-coordinate dependence is retained by using
common multipliers across coordinates. At the same time, the
coordinate-specific residualization removes the estimated mean changes,
so that the bootstrap distribution can be used for calibration under
arbitrary mixtures of changed and unchanged coordinates.

Finally, let $\widehat Q_{1-\gamma}$ denote the conditional $(1-\gamma)$-quantile of $(T_n^*)^2$ given the observed data. 
In practice, this quantile is approximated from $B$ independent bootstrap replicates.

\subsection{Step-down refinement}
\label{subsec:stepdown}
The single-step procedure of Section~\ref{subsec:testing-localization} can be refined using the resampling-based step-down principle of \citet{RomanoWolf2005}. For every nonempty set of coordinates $\mathcal J\subseteq[K]$, define, in analogy with~\eqref{eq:definition-Tstar},
\[
(T_{n,\mathcal J}^*)^2
:=
\max_{k\in\mathcal J}(T_{n,k}^*)^2,
\]
and let $\widehat Q_{1-\gamma}(\mathcal J)$ denote its conditional $(1-\gamma)$-quantile given the observed data.

Starting with $\mathcal R_1=[K]$, let $\mathcal R_j$ denote the set of coordinates not rejected before step $j$. At step $j$, reject all marginal null hypotheses $H_0^k$, $k\in\mathcal R_j$, for which
\[
T_{n,k}^2>
\widehat Q_{1-\gamma}(\mathcal R_j),
\]
and set
\[
\mathcal R_{j+1}
=
\mathcal R_j
\setminus
\left\{
k\in\mathcal R_j:
T_{n,k}^2>
\widehat Q_{1-\gamma}(\mathcal R_j)
\right\}.
\]
The procedure is repeated until no further rejection occurs, and the
resulting rejection set is denoted by
$\smash{\widehat{\mathcal S}^{\mathrm{SD}}_\gamma}$. The same bootstrap replications
are used throughout, with only the set of coordinates entering the bootstrap
maximum changing across steps. Corollary~\ref{cor:stepdown} shows that the
step-down refinement retains the strong family-wise error control of the
single-step procedure while providing detection guarantees that are at least
as strong.

\section{Theoretical Guarantees}
\label{sec:theoretical-guarantees}

This section establishes the theoretical guarantees underlying the
proposed methodology. 

\subsection{Assumptions on the data-generating process}
\label{subsec:assumptions}

Our theoretical results are established under two sets of conditions on the noise process. The first consists of Assumptions~\ref{cond:stationary} and~\ref{cond:scores}, which impose polynomial $\beta$-mixing and polynomial eigenvalue decay of the marginal long-run covariance operators. We control the familywise error of the proposed procedure under these assumptions and also show that it detects local alternative changes under them.  The second is Assumption~\ref{cond:linear}, which assumes a linear-process representation and is used only for the localization theory. The reason for using two different assumptions is discussed in greater detail below.

Throughout, $\|X\|_{\psi_1}$ denotes the Orlicz--1 norm of a real-valued random variable; see Section~\ref{sec:orlicz} for its definition and basic properties.

\begin{enumerate}
\renewcommand{\theenumi}{[A1]}
\renewcommand{\labelenumi}{\theenumi}

\item
\label{cond:stationary}
The $\Hc$-valued noise variables $\eps^{(1)}, \dots, \eps^{(n)}$ are an excerpt from a centered, strictly stationary series $(\eps^{(i)})_{i \in \Z}$ with $\beta$-mixing coefficients $(\beta_h)_{h\in\mathbb N}$ (Definition~\ref{def:beta_mixing_for_sequence}). There exist constants $D_\psi>0$, $C_\beta>0$ and $c_\beta>2$ such that
\[
\max_{k\in[K]}
\bigl\|
\|\varepsilon_k^{(1)}\|_{\Hb_k}
\bigr\|_{\psi_1}
\le
D_\psi,
\qquad
\beta_h
\le
C_\beta h^{-c_\beta},
\qquad
h\in\mathbb N.
\]
\end{enumerate}

Under Assumption~\ref{cond:stationary}, each marginal process
$(\varepsilon_k^{(i)})_{i\in\mathbb Z}$ possesses a self-adjoint,
positive semidefinite, trace-class long-run covariance operator
$\mathcal K_k$ satisfying
\[
\lim_{N\to\infty} \Bigl\|
\Cov
\Bigl( N^{-1/2} \sum_{i=1}^N \varepsilon_k^{(i)} \Bigr)
-
\mathcal K_k \Bigr\|_{\Tr(\Hb_k)} = 0,
\]
where $\|\cdot\|_{\Tr}$ denotes the trace norm (Lemma~\ref{lem:convergence_to_long_term_trace}). Let $\lambda_{k1}\ge\lambda_{k2}\ge\cdots$ denote the nonzero eigenvalues of $\mathcal K_k$, and let $Z_{k1},Z_{k2},\ldots$ be corresponding orthonormal eigenvectors. Their number is $r_k=\operatorname{rank}(\mathcal K_k)$, which may be finite or infinite. We assume throughout that $r_k\ge1$.

\begin{enumerate}
\renewcommand{\theenumi}{[A2]}
\renewcommand{\labelenumi}{\theenumi}

\item
\label{cond:scores}
There exist constants
$0<C_L\le C_U$
and
$1/2<\gamma_U\le\gamma_L$
such that
\[
C_L^2j^{-2\gamma_L}
\le
\lambda_{kj}
\le
C_U^2j^{-2\gamma_U},
\qquad
k\in[K],
\quad
j\in[r_k]\cap\mathbb N.
\]
\end{enumerate}

To prove our principal localization result, we instead work under the following linear-process assumption, which then replaces \ref{cond:stationary} and \ref{cond:scores}.

\begin{enumerate}
\renewcommand{\theenumi}{[B]}
\renewcommand{\labelenumi}{\theenumi}

\item
\label{cond:linear}
There exist constants $C_A,D_R>0$ such that, for every $k\in[K]$,
\[
\varepsilon_k^{(i)}
=
\sum_{j=0}^{\infty}
A_{k,j}\eta_k^{(i-j)},
\qquad
i\in\mathbb Z,
\]
where each $A_{k,j}:\Hb_k\to\Hb_k$ is a bounded linear operator satisfying
\[
\max_{k\in[K]}
\sum_{j=0}^{\infty}
(j+1)
\|A_{k,j}\|_{\op}
\le
C_A,
\]
and $(\eta^{(i)})_{i \in \Z}$ is a centered i.i.d. sequence with $\eta^{(i)} = (\eta_1^{(i)},\ldots,\eta_K^{(i)}) \in\mathcal H$ satisfying
\[
\max_{k\in[K]}
\Bigl\|
\|\eta_k^{(0)}\|_{\Hb_k}
\Bigr\|_{\psi_1}
\le
D_R.
\]
\end{enumerate}

Assumptions~\ref{cond:stationary} and~\ref{cond:scores} are standard in high-dimensional statistics and functional time series analysis. Assumption~\ref{cond:stationary} requires polynomially decaying temporal
dependence together with exponential tails of the marginal
Hilbert-space norms, while
Assumption~\ref{cond:scores} imposes polynomial eigenvalue decay of the
marginal long-run covariance operators.
Similar conditions appear in
\citet{jirak2016optimal,li2024detection,cai2006prediction,HK07,lu2022almost,lopes2025improved}.
Together they permit nonasymptotic Gaussian approximation and bootstrap results in high-dimensional settings, including regimes in which $K$ may grow  exponentially in a power of the sample size.

Assumption~\ref{cond:linear} replaces Assumptions~\ref{cond:stationary} and~\ref{cond:scores} when proving our principal localization result in Theorem~\ref{thm:linear_process_localization}. Its short-range linear dependence structure allows us to derive substantially sharper nonasymptotic localization bounds than under generic polynomial $\beta$-mixing. While the latter framework is sufficiently general for establishing consistency of the residual bootstrap procedure, the resulting localization rates are too conservative to provide a meaningful benchmark result.
Assumptions of a similar type are employed by \citet{li2024detection,jirak2018rate}.

\subsection{Gaussian approximation}
\label{subsec:gaussian-approximation}

We start by presenting a Gaussian approximation result for the  global test statistic $T^2_n$ from \eqref{def:global_test}. This approximation identifies the covariance-adaptive reference distribution underlying the simultaneous testing procedure and provides the theoretical benchmark for the bootstrap calibration developed below.
Let
\begin{align}
\label{eq:model-class-P}
\mathcal P
=
\mathcal P(D_\psi,C_U,C_L,\gamma_U,\gamma_L,C_\beta,c_\beta)
\end{align}
denote the class of data-generating distributions satisfying the change-point model \eqref{eq:change_point_model}, where the noise process fulfills Assumptions~\ref{cond:stationary} and~\ref{cond:scores} with the stated constants. Let
\begin{equation}\label{eq:model-class-P_0}
\mathcal P_0
:=
\left\{
P\in\mathcal P:
H_0^\infty\text{ holds under }P
\right\}.
\end{equation}
denote the subclass of distributions for which the global null is met, where $H_0^\infty:\delta_k=0\text{ for all }k\in[K]$.

Throughout the sequel, we occasionally attach the index $P$ to
quantities that depend on the data-generating distribution. For example,
we write
\[
\mathcal S_P=\{k\in[K]:\delta_k(P)\neq0\},
\qquad
\mathcal S_P^c=\{k\in[K]:\delta_k(P)=0\},
\]
for the sets of coordinates with and without a mean change, respectively, introduced in \eqref{eq:change-point-coordinates}.
For each $P\in\mathcal P$, let $\mathcal K=\mathcal K_P$ denote the
global long-run covariance operator of the global noise process
$\varepsilon=(\varepsilon^{(i)})_{i\in\mathbb Z}$. By
Assumption~\ref{cond:stationary}, $\mathcal K$ is a self-adjoint,
positive semidefinite trace-class operator on $\mathcal H$. Let
\begin{equation}
\label{def:Hilbert_Brownian_Bridge}
\mathbb B_{\mathcal K}
=
\{\mathbb B_{\mathcal K}(t):t\in[0,1]\}
=
\{
(\mathbb B_{\mathcal K,1}(t),\ldots,\mathbb B_{\mathcal K,K}(t))
:t\in[0,1]
\}
\end{equation}
denote a centered $\mathcal H$-valued Brownian bridge with covariance
operator $\mathcal K$, that is,
\[
\mathbb E\,
\mathbb B_{\mathcal K}(t)
=
0,
\qquad
\operatorname{Cov}
\bigl(
\mathbb B_{\mathcal K}(s),
\mathbb B_{\mathcal K}(t)
\bigr)
=
\{\min(s,t)-st\}\mathcal K,
\qquad
s,t\in[0,1].
\]
Recall the global test statistic $T_n$ from \eqref{def:global_test}.

\begin{theorem}[Uniform Gaussian approximation under the global null]
\label{thm:gaussian_approximation}
Fix \(\rho>0\) and recall $\mathcal P_0$ from \eqref{eq:model-class-P_0}; note that this means that \ref{cond:stationary} and \ref{cond:scores} holds. There exist constants $c,d,C>0$ such that, for every \(n\ge4\), \(K\ge1\) and $P \in \mathcal P_0$, we have 
\begin{align}
&\sup_{t \geq \rho}
\Big|
\Prob_P\big(T_n^2 \leq t\big)
-
\Prob\Big(
\max_{k \in [K]}
\sup_{u \in [0,1]}
\big\|
\mathbb{B}_{\mathcal{K},k}(u)
\big\|_{\Hb_k}^{2}
\leq t
\Big)
\Big| \phantom{|}
\le
C n^{-c}\log^d(n\vee K).
\label{eq:main_gaussian_approx}
\end{align}
Here, the constants $c$ and $d$ only depend on $\gamma_L$ and $\gamma_U$, while $C$ depends on $\rho$ as well as on all constants from \ref{cond:stationary} and \ref{cond:scores} as specified in \eqref{eq:model-class-P}.
\end{theorem}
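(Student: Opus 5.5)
Under $H_0^\infty$ we have $C_{n,k}(s)=n^{-1/2}\{\sum_{i\le s}\varepsilon_k^{(i)}-\tfrac sn\sum_{i\le n}\varepsilon_k^{(i)}\}$, so $T_n^2$ is a functional of the noise only. The plan is to reduce the statement to a high-dimensional central limit theorem for maxima of finite-dimensional sums over a hyperrectangle-type class. We then lift this back to the Hilbert-space norms using the eigenvalue decay~\ref{cond:scores}, and finally use anti-concentration of the Gaussian maximum to absorb all approximation errors.

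\textbf{Step 1: finite-dimensional truncation.} For each $k$ we project onto the leading $J=n^{a}$ eigenvectors $Z_{k1},\dots,Z_{kJ}$ of $\mathcal K_k$, for a small $a>0$ to be tuned. By~\ref{cond:scores}, the tail $\sum_{j>J}\lambda_{kj}\lesssim J^{1-2\gamma_U}$. Maximal inequalities for $\beta$-mixing sums (Bernstein-type, using the $\psi_1$ bound in~\ref{cond:stationary}), combined with a union bound over $k$, should show that $\max_k\sup_s$ of the squared tail CUSUM norm is of order $J^{1-2\gamma_U}\log^{d}(n\vee K)$ with high probability. The same holds for the Gaussian bridge. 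A short technical point: the empirical covariance eigenbasis differs from the long-run one, but we only need the fixed basis of $\mathcal K_k$, so this causes no difficulty.

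\textbf{Step 2: discretization of the squared norm.} The finite-dimensional squared norm $\sum_{j\le J}\langle C_{n,k}(s),Z_{kj}\rangle^2$ is not a maximum of linear functionals. We write it as $\sup_{v\in S^{J-1}}\langle\cdot,v\rangle^2$ and replace the sphere by an $\epsilon$-net of cardinality $(3/\epsilon)^J$. This turns $T_n$, up to a factor $(1-\epsilon)^{-1}$, into a maximum of at most $K\,n\,(3/\epsilon)^{n^a}$ linear statistics $\pm\langle\cdot\rangle$. Their logarithm is of order $n^a\log n+\log K$, which remains polynomial in $n$ and polylogarithmic in $K$, so the high-dimensional CLT still yields polynomial rates. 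The time supremum over $s\in[n]$ is already a finite maximum.

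\textbf{Step 3: Gaussian approximation and coupling.} For dependent data we use the big-block--small-block device with the blocks $I_\ell,J_\ell$. Berbee's coupling under $\beta$-mixing replaces the big blocks by independent copies at cost $m\beta_r$. The small blocks are negligible by the Step~1 maximal inequality. To the resulting independent block sums we apply a Chernozhukov--Chetverikov--Kato type Gaussian approximation for maxima of sums with sub-exponential summands, uniformly over the partial-sum index (via the standard cumulative-sum embedding). The block covariance differs from $\mathcal K$ by $O(q^{-1}+$ mixing tails$)$, by Lemma~\ref{lem:convergence_to_long_term_trace}. Gaussian comparison inequalities convert this covariance error into a Kolmogorov-distance error, and the discrete-time Gaussian bridge approximates the continuous one with a modulus-of-continuity error of order $n^{-1/2}$ up to logarithms.

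\textbf{Step 4 (main obstacle): anti-concentration.} All the errors above are perturbations of size $n^{-c'}$ of the statistic, and converting them into probability bounds needs anti-concentration of $M=\max_k\sup_u\|\mathbb B_{\mathcal K,k}(u)\|^2$ near levels $t\ge\rho$. The Nazarov-type bound is unavailable, since it depends on the dimension and on the minimal variances in a way that is bad here. I would instead write $M=\sup$ of a Gaussian process indexed by $(k,u,v)$ with $v$ in the unit ball, and use the anti-concentration of Chernozhukov--Chetverikov--Kato for suprema of Gaussian processes: $\Prob(|M^{1/2}-t^{1/2}|\le\eta)\lesssim\eta\{\E M^{1/2}+1\}/\underline\sigma$. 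The difficulty is that the index variances can be tiny. This is where $t\ge\rho$ and the lower bound $\lambda_{k1}\ge C_L^2$ enter: restricting to indices with variance at least a constant only changes the event $\{M\le t\}$ on a set controlled by Borell--TIS. Finally, we choose $J$, $\epsilon$, $q$ and $r$ as powers of $n$ to balance the errors, which gives $Cn^{-c}\log^d(n\vee K)$ with $c$ and $d$ depending only on $\gamma_L$ and $\gamma_U$.
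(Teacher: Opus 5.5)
Your route differs from the paper's. You reduce the Hilbert norms to maxima of linear functionals via $\epsilon$-nets and then invoke hyperrectangle CLTs, Gaussian comparison and Gaussian-process anti-concentration. The paper never uses nets. After Berbee coupling of the block-aligned CUSUM (Lemma~\ref{lem:berbee_hp_reduction}), it applies a CLT for maxima of Euclidean norms of $M$-dimensional blocks (Lemma~\ref{new:high_dimensional_CLT_mixed_norms}) and a mixed-norm anti-concentration bound (Lemma~\ref{lem:acnew}). Both have explicit dependence on the smallest block eigenvalue. That choice of route is legitimate on its own. The genuine gap is non-degeneracy, which you treat only in Step~4, and there incorrectly.

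First, Step~3 applies the high-dimensional CLT and the Gaussian comparison to all indices $(k,s,v)$. These results need a lower bound on the coordinate variances, and their constants deteriorate polynomially in its inverse. Here the variance is $V(s/n)\sum_j\lambda_{kj}v_j^2$. It equals $0$ at $s=n$, because the CUSUM vanishes there. It is of order $n^{-1}$ at $s=1$, and of order $V(s/n)J^{-2\gamma_L}$ for net directions near $Z_{kJ}$. So Step~3 as written yields nothing.

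The restriction to well-conditioned indices must therefore be made before Step~3 and on the non-Gaussian statistic, where Borell--TIS is not available. You need a maximal inequality showing that the discarded part of the coupled block statistic is below $\rho/4$ with high probability. This is what the paper's trimming set $S_x=\{s:V(s/n)\ge x\}$ and Lemma~\ref{lem:coupled_big_block_boundary} do. For the retained directions, the paper uses $\lambda_{kj}\ge C_L^2j^{-2\gamma_L}$ together with condition (M), $M^{2\gamma_L}\le x(q\wedge m)/C_M$. Condition (M) ensures that the $O(1/(q\wedge m))$ gap between the block covariance and $K_n\otimes\mathcal K$ (Lemma~\ref{lem:bound-on-operator-norm-covariances}) does not destroy that lower bound (Corollary~\ref{lem:good_eigen_values_2}). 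Alternatively, you could discard low-variance net directions as well, again with a matching non-Gaussian bound.

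Once this trimming is done, Nazarov's inequality is perfectly usable, with $\sqrt{\log p}\asymp\sqrt{J\log(1/\epsilon)}$. You rely on it implicitly anyway, since the CCK CLT is proved through it.

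Second, the variance threshold cannot be a constant $\sigma_0^2$. The supremum over indices with variance below $\sigma_0^2$ has mean of order $\sigma_0\sqrt{\log(nK)}$. To see this, take independent coordinates and times $u$ with $V(u)\lambda_{k1}\approx\sigma_0^2/2$. Borell--TIS only controls deviations above this mean, which is not below $\sqrt\rho$ once $K$ is large. The threshold must shrink at least like $\{\rho/\log(nK)\}^{1/2}$; the paper takes $x=1/(C_X[\log(nK)\log n]^4)$. Its inverse then multiplies every CLT, comparison and anti-concentration bound, and this is where the factor $\log^d(n\vee K)$ (through $x^{-3/2}$) comes from. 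Also, for processes with unequal variances, the CCK anti-concentration bound involves the standardized quantity $\E\sup_t X_t/\sigma_t$, not $\E M^{1/2}$.
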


Importantly, in an asymptotic framework where $K=K_n$ may depend on $n$, the upper bound in \eqref{eq:main_gaussian_approx} converges to zero if $\log K_n=o(n^{c/d})$. In particular, the Gaussian approximation remains valid even when $K_n$ grows nearly exponentially fast in a  power of $n$. 

Parseval's equality implies the almost sure representation
\[\max_{k\in[K]}\sup_{u\in[0,1]}
\left\|
\mathbb B_{\mathcal K,k}(u)
\right\|_{\Hb_k}^2=\max_{k\in[K]}
\sup_{u\in[0,1]}
\sum_{j=1}^{r_k} \lambda_{kj}\eta_{kj}^2(u),
\]
where $r_k$ is the rank of the marginal long-run covariance operator $\mathcal K_k$ as defined below Assumption~\ref{cond:stationary} and where \(\{\eta_{kj}(u):u\in[0,1]\}\) are  standard Brownian bridges that are generally dependent across the $K$ coordinates.
Therefore, Theorem~\ref{thm:gaussian_approximation} is equivalent to (when $r_k=\infty$ for all $k \in [K]$)
\begin{equation*}
\sup_{t\ge \rho}
\bigg|
\mathbb P(T^2_{n}\le t)
-
\mathbb P\Big(
\max_{k\in[K]}
\sup_{u\in[0,1]}
\sum_{j=1}^\infty \lambda_{kj}\eta_{kj}^2(u)
\le t
\Big)
\bigg|
\le
C_\rho \log^d(n\vee K)n^{-c}. 
\end{equation*}
For $K=1$, asymptotic versions of such `Hilbertian Brownian Bridge' approximations are
well known and form a standard tool in functional change-point
analysis. Corresponding results under various notions of temporal dependence have been established, for example, by \citet{sharipov2016sequential,Aue2018,li2024detection}. In this case, the Gaussian reference distribution is determined solely by the eigenvalues of the marginal long-run covariance operator, so its quantiles can be approximated by Monte Carlo simulation after estimating sufficiently many eigenvalues; see Section~2.3 of \citet{Aue2018}.

The high-dimensional setting of Theorem~\ref{thm:gaussian_approximation} is fundamentally different. For $K>1$, the Gaussian reference distribution depends not only on the eigenvalues ${\lambda_{kj}}$, but also on the cross-coordinate covariance structure of the Brownian-bridge array. Consequently, its quantiles adapt to the contemporaneous dependence across coordinates, which is the sense in which the approximation is covariance-adaptive.

Theorem~\ref{thm:gaussian_approximation} therefore does not by itself yield a practical calibration method in high dimensions. Since we impose no structural assumptions on the contemporaneous dependence, direct simulation from the Gaussian reference distribution would require estimating a potentially very high-dimensional cross-covariance structure, which is particularly challenging when $K\gg n$. Instead, the theorem provides a theoretical reference distribution for the global statistic and serves as an intermediate approximation in our bootstrap and power analyses. We next show that the bootstrap procedure of Section~\ref{subsec:bootstrap} consistently approximates the relevant quantiles without explicitly estimating this cross-covariance structure.

\subsection{Bootstrap validity and family-wise error control}
\label{subsec:FWER-control}

Fix $\gamma \in (0,1)$ and $P \in \mathcal P$, and recall the true and estimated change point locations $\mathcal S = \mathcal S_P$ and $\widehat{\mathcal S}_\gamma$ from \eqref{eq:change-point-coordinates} and \eqref{eq:def-hat-S_gamma}, respectively. Define the familywise error rate as
\begin{align*}
\FWER(P,\gamma)
=
\mathbb P_P\left(
\widehat{\mathcal S}_\gamma\cap \mathcal S_P^c\neq\varnothing
\right).
\end{align*}

\begin{theorem}[Strong family-wise error control]
\label{thm:fwer_control}
Fix \(\gamma\in(0,1)\) and recall $\mathcal P$ and $\mathcal P_0$ from \eqref{eq:model-class-P} and \eqref{eq:model-class-P_0}, respectively. Suppose that the
residual bootstrap is implemented with clipping parameter \(\tau\), gap
length \(g=q\), and parameters \(q,r\in\mathbb N\) satisfying
\begin{equation}
\tau\in(0,1/2),
\qquad 1\le r\le q,
\qquad
2q+r\le\frac{\tau n}{2}.
\label{eq:fwer-profiled-tuning}
\end{equation}
Then there exist constants $d,C>0$ 
such that, for every \(n\ge4\), \(K\ge1\), and
\(q,r, \tau\) satisfying~\eqref{eq:fwer-profiled-tuning}, we have 
\begin{alignat}{3}
&\forall P\in\mathcal P: \qquad 
&\operatorname{FWER}(P,\gamma) - \gamma \phantom{|}
&\le
\frac {C} {\sqrt {\tau}}\log^d(nK)\mathcal R_{n,q,r},
\label{eq:fwer-profiled-strong-control}
\\
&\forall P\in\mathcal P_0: 
&\big|\operatorname{FWER}(P,\gamma)-\gamma \big|
&\le
\frac {C} {\sqrt {\tau}}\log^d(nK)\mathcal R_{n,q,r},
\label{eq:fwer-profiled-null-calibration}
\end{alignat}
where 
\begin{align}
\mathcal R_{n,q,r}
:={}&
\Big(\frac{q^2}{n}\Big)^{
\frac{2\gamma_U-1}{12\gamma_U-2}}
+q^{-\frac{2\gamma_U-1}{2\gamma_L}}
+\Big(\frac rq\Big)^{1/2}
+\frac nq r^{-c_\beta}.
\label{eq:fwer-profiled-rate}
\end{align}
Here, the constant $d$ only depends on $\gamma_L$ and $\gamma_U$, while $C$ depends on all constants from \ref{cond:stationary} and \ref{cond:scores} as specified in \eqref{eq:model-class-P} as well as on $\gamma$.
\end{theorem}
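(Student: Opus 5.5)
The strategy is to reduce strong FWER control to a statement about the noise only, and then compare two quantiles: that of the maximum over the unchanged coordinates, and the bootstrap quantile.

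For any \(k\in\mathcal S_P^c\) we have \(\delta_k=0\), so \(C_{n,k}\) is the CUSUM of pure noise. Hence
\[
\FWER(P,\gamma)\le \Prob_P\Bigl(\max_{k\in\mathcal S^c}T_{n,k}^2>\widehat Q_{1-\gamma}\Bigr).
\]
Since \((T_n^*)^2\ge (T^*_{n,\mathcal S^c})^2\), the full bootstrap quantile dominates the \(\mathcal S^c\)-restricted one, \(\widehat Q_{1-\gamma}\ge \widehat Q_{1-\gamma}(\mathcal S^c)\). It therefore suffices to show
\[
\Prob_P\Bigl(\max_{k\in\mathcal S^c}T_{n,k}^2>\widehat Q_{1-\gamma}(\mathcal S^c)\Bigr)\le\gamma+\text{rate}.
\]
This is where one-sidedness enters. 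Under \(\mathcal P_0\) we have \(\mathcal S^c=[K]\), and the same argument run in both directions gives \eqref{eq:fwer-profiled-null-calibration}.

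The remaining comparison has three ingredients.

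\begin{enumerate}
\item \emph{Gaussian approximation for the restricted maximum.} Apply Theorem~\ref{thm:gaussian_approximation} to the subvector \((\varepsilon_k)_{k\in\mathcal S^c}\), which still satisfies \ref{cond:stationary}--\ref{cond:scores}. This gives \(\max_{k\in\mathcal S^c}T_{n,k}^2\approx G:=\max_{k\in\mathcal S^c}\sup_u\|\mathbb B_{\mathcal K,k}(u)\|^2\) in Kolmogorov distance. Since \(r_k\ge1\) and \(\lambda_{k1}\ge C_L^2\), the \(\gamma\)-dependent quantile of \(G\) is bounded below by a constant \(\rho\), so the restriction \(t\ge\rho\) causes no harm.

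\item \emph{Gaussian comparison for the bootstrap.} Conditionally on the data, \(C^*_k\) is Gaussian with covariance built from the block sums \(\sum_{i\in I_\ell}\widehat\varepsilon_{i,k}\). I will first treat the oracle bootstrap, in which \(\widehat\varepsilon_{i,k}\) is replaced by the true \(\varepsilon_k^{(i)}\) on the same trimmed index set. Its conditional covariance is compared with \(\{\min(s,t)-st\}\mathcal K\), restricted to \(\mathcal S^c\), using a Gaussian-to-Gaussian comparison for maxima of Hilbert norms. The plan is to truncate each coordinate to its first \(J\) eigen-directions, costing \(J^{-(2\gamma_U-1)}\) by \ref{cond:scores}, and to use anti-concentration of the finite-dimensional maxima.

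The error sources are as follows.
\begin{itemize}
\item Replacing \(\beta\)-mixing blocks by independent ones via Berbee coupling costs \(m\beta_r\lesssim (n/q)r^{-c_\beta}\).
\item Dropping the small blocks, and the zeroed trimmed region of size \(O(g+q+r)=O(q)\), costs \((r/q)^{1/2}\) and \((q/n)^{1/2}\).
\item The bias of the within-block covariance relative to \(\mathcal K\) costs a term in \(q^{-1}\), which after truncation yields \(q^{-(2\gamma_U-1)/(2\gamma_L)}\); the \(\gamma_L\) enters through the lower eigenvalue bound in anti-concentration.
\item Concentration of the empirical block covariance over \(mJ\)-dimensional blocks, with \(\psi_1\)-tails and union bounds over \(K,n\), costs \((q^2/n)^{\cdot}\log^d\). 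Optimizing \(J\) produces the exponent \((2\gamma_U-1)/(12\gamma_U-2)\).
\end{itemize}
The clipping \(\tau\) enters through \(1/N_{L,k},1/(n-N_{R,k})\le 2/(\tau n)\), and hence through the factor \(\tau^{-1/2}\).

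\item \emph{Residual effect.} For \(k\in\mathcal S^c\), the residuals \(\widehat\varepsilon_{i,k}\) equal the noise minus the trimmed means \(\bar\varepsilon_{L,k}\) or \(\bar\varepsilon_{R,k}\), irrespective of where \(\widehat\omega_k\) lands. This is the key point: for unchanged coordinates the change-point estimate only changes which observations are zeroed, and by block alignment the trimmed set is a union of \(O(1)\) whole blocks. Thus no localization result is needed. The mean corrections are of size \(O_P(\log^{1/2}(nK)/\sqrt{\tau n})\) uniformly in \(k\), and after multiplication by the Gaussian multipliers they contribute \(\tau^{-1/2}\cdot\)polylog\(\cdot n^{-1/2}\)-type errors, which are absorbed into the rate.

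The subtlety is that \(N_{L,k},N_{R,k}\) are random and data-dependent. I will handle this by a uniform bound over all \(O(n/(q+r))\) possible block-aligned positions, which adds only a log factor.
\end{enumerate}

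To assemble the pieces, convert the conditional Kolmogorov closeness into a quantile statement: on an event of probability \(1-\text{rate}\), \(\widehat Q_{1-\gamma}(\mathcal S^c)\ge q^G_{1-\gamma-\text{rate}}\). Then apply the first ingredient together with anti-concentration of \(G\) (a Gaussian anti-concentration bound for suprema of \(\chi^2\)-type processes, valid above \(\rho\)).

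I expect the main obstacle to be the second ingredient: obtaining a conditional Gaussian comparison uniform over \(K\), with infinite-dimensional coordinates, from only polynomial mixing and polynomial eigendecay. In particular, the truncation–anti-concentration trade-off has to be balanced to get the stated exponents. I would try first to reuse the truncation and anti-concentration machinery from the proof of Theorem~\ref{thm:gaussian_approximation}, applied conditionally, and to bound the max-norm distance of the covariance operators in finite-dimensional projections.
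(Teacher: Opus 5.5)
Your skeleton matches the paper's in most respects. You reduce to $X_0=\max_{k\in\mathcal S_P^c}T_{n,k}^2$ and use $\widehat Q_{1-\gamma}\ge\widehat Q_{1-\gamma}(\mathcal S_P^c)$. You observe that unchanged coordinates need no localization; in the paper this appears as $\Delta_{\mathcal S_P^c,\mathrm{wk}}(q,\ell)=0$ with a vacuous interiority condition. You sandwich the bootstrap quantile via Lemma~\ref{lem:koltoquant} and argue two-sidedly under $\mathcal P_0$.

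\textbf{The gap: the Gaussian reference.} You compare both sides with the Hilbert Brownian bridge, and this does not give the stated rate.
\begin{itemize}
\item \emph{Oracle side.} You invoke Theorem~\ref{thm:gaussian_approximation}, whose error $Cn^{-c}\log^d(n\vee K)$ does not depend on $(q,r)$ and has a tiny exponent (its proof gives $c\le 1/800$). This is not bounded by $C\tau^{-1/2}\log^d(nK)\mathcal R_{n,q,r}$. For example, with $\gamma_U=\gamma_L=1$, $c_\beta\ge 52$, $q\asymp n^{1/7}$ and $r\asymp n^{1/56}$, one has $\mathcal R_{n,q,r}\lesssim n^{-1/16}$.
\item \emph{Bootstrap side.} Comparing the conditional covariance with $\{\min(s,t)-st\}\mathcal K$ puts the bias $\|\Sigma_q-\mathcal K\|_{\Tr}+\|A_n-K_n\|_\infty\lesssim (q\wedge m)^{-1}$ directly into the Gaussian-to-Gaussian comparison (Lemma~\ref{lem:gaussbootsphere-new}). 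The cost is a term of order $M^{2/3}(q\wedge m)^{-1/3}$, as in Corollary~\ref{cor:projected_mixed_norm_kolmogorov} and Theorem~\ref{Thm_2}.
\item \emph{Why that term matters.} It is absent from \eqref{eq:fwer-profiled-rate}. Take the truncation level $M\asymp q^{1/(2\gamma_L)}$ (up to logarithms) that produces $q^{-(2\gamma_U-1)/(2\gamma_L)}$. There the term is of order $q^{(1/\gamma_L-1)/3}$, which does not decay when $\gamma_L\le 1$. You would have to lower $M$ and accept a worse $q$-exponent.
\end{itemize}
Your error list attributes $q^{-(2\gamma_U-1)/(2\gamma_L)}$ to this bias but omits its direct cost. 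Compare Theorem~\ref{thm:profiled_simultaneous_power}: there Brownian-bridge quantiles are genuinely needed, and its rate accordingly carries the extra $n^{-c}$.

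\textbf{The missing idea.} FWER control only needs the law of $X_0$ and the conditional law of $(T^*_{n,0})^2$ to be close to a \emph{common} reference. The paper chooses one matched to the block structure:
\[
Y_0=\max_{k\in\mathcal S_P^c}\max_{s\in S_x}\|G_{sk}^M\|_2^2,
\]
where $G^{(n,q)}$ is Gaussian with covariance $A_n\otimes\Sigma_q$ and $\Sigma_q=\Cov(q^{-1/2}S_1)$.
\begin{itemize}
\item This is exactly the covariance of the Berbee-coupled big-block CUSUM, and the mean of the bootstrap's conditional covariance.
\item Hence neither Theorem~\ref{Thm_1} ($X_0$ versus $Y_0$) nor Theorem~\ref{thm:conditional_gaussian_subset_master} with $\mathcal J=\mathcal S_P^c$, $g=q$ (bootstrap versus $Y_0$) ever treats $\Sigma_q-\mathcal K$ as a covariance error.
\item That bias enters only additively, as $+1/q$ in the tail-projection error. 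It also enters through Lemma~\ref{lem:bound-on-operator-norm-covariances}, to keep the eigenvalues of $\Cov(G^M_{sk})$ bounded below on $S_x$.
\item The eigenvalue requirement forces $M^{2\gamma_L}\le x(q\wedge m)/C_M$. This is the actual source of $q^{-(2\gamma_U-1)/(2\gamma_L)}$, through $M_{\max}$.
\item The term $(q^2/n)^{(2\gamma_U-1)/(12\gamma_U-2)}$ comes from balancing the block-CLT term $(M^4q^2/n)^{1/6}$ against $M^{1-2\gamma_U}$, through $M_\star$.
\item The same perturbation lemma gives a variance lower bound at an interior $s_\star$. This places $Q_{Y_0}(1-\gamma-\varepsilon_{n,q,r})$ above $\rho_\gamma$.
\end{itemize}

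\textbf{Two smaller points.} Both are absorbable, but need to be checked.
\begin{itemize}
\item Anti-concentration requires restricting to interior times $S_x$. Boundary times must be shown separately to stay below $\rho/4$.
\item The trimmed-mean correction contributes $\sqrt{q/(\tau n)}$, not $n^{-1/2}$, because it is constant over blocks of length $q$. Its uniform control under polynomial mixing needs a second coupling at block length $\ell=\lceil\sqrt q\vee n^{3/[2(c_\beta+2)]}\rceil$. The failure probability $\lceil n/\ell\rceil\beta_\ell$ must then be shown to be $\lesssim\mathcal R_{n,q,r}$; the paper reduces to $q\le\sqrt n/\log n$ for this.
\end{itemize}
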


Note that \eqref{eq:fwer-profiled-strong-control} yields strong FWER control uniformly over arbitrary mixtures of changed and unchanged coordinates, while \eqref{eq:fwer-profiled-null-calibration} shows that, under the complete null hypothesis, the procedure is not merely valid, but calibrated around the nominal level, with an explicit finite-sample bound on the calibration error.

To the best of our knowledge, Theorem~\ref{thm:fwer_control} appears to be the first result providing quantitative strong family-wise error control for simultaneous change-point inference in high-dimensional functional time series while allowing for temporal dependence and unrestricted contemporaneous (cross-sectional) dependence.
In an asymptotic framework in which $K, q, r$ and $\tau$ are allowed to depend on $n$, the right-hand side of \eqref{eq:fwer-profiled-null-calibration} and \eqref{eq:fwer-profiled-rate} converges to zero for a broad range of choices. Importantly, it permits sequences for which $\tau\to0$ while $K$ grows exponentially in a power of $n$:  For example, one admissible choice is
\[
q=\lfloor n^{2/5}\rfloor,\qquad
r=\lfloor n^{1/3}\rfloor,\qquad
\tau=n^{-\rho} / 4 ,\qquad
K=\lfloor \exp(n^{\rho/(2d)})\rfloor,
\]
for any 
\[
0<\rho<
\min\left\{
\frac{2\gamma_U-1}{60\gamma_U-10},
\frac{2\gamma_U-1}{5\gamma_L},
\frac1{30},
\frac{c_\beta}{3}-\frac35
\right\}.
\]

In Section ~\ref{subsec:tuning-parameter-choice} we recommend setting $q=\lfloor Cn^{1/3} \rfloor$ for a constant $C$ that is determined by a stability criterion. This choice is always admissible. Our recommended choice of $r=\lfloor q^{1/2} \rfloor$, also discussed in Section~\ref{subsec:tuning-parameter-choice}, is simultaneously admissible if $c_\beta>4$; otherwise, it must be chosen larger in order than recommended for the final additive term in \eqref{eq:fwer-profiled-rate} to become small along the sequence.  Finally, the conclusion of Theorem~\ref{thm:fwer_control} also holds, up to constants, for the recommended choice $g=(q+r)/2$. 

\subsection{Detection guarantees}
\label{subsec:detectionguarantees}

To state our main power result, we introduce a subclass of \(\mathcal P\)
consisting of data-generating distributions whose changed coordinates
have interior change points and whose nonzero change sizes are bounded
below. 
For $k \in \mathcal S_P$, let
\begin{align}
\label{eq:def-nu_k}
    \theta_k:=\frac{\omega_k}{n},
    \qquad
    \gamma_k:=\theta_k\wedge(1-\theta_k),
    \qquad
    \Delta_k:=\|\delta_k\|_{\Hb_k},
    \qquad
    \nu_k := \gamma_k\Delta_k.
\end{align}
Occasionally, we also write $\nu_k(P), \Delta_k(P), \ldots$ to highlight the dependence on the data generating model $P$.
For $\widetilde c >0$, \(\widetilde g\in\mathbb N\) and
\(\widetilde\tau\in(0,1/2)\), define
\begin{align}
    \mathcal P_{\mathrm{loc}}(\widetilde c, \widetilde g,\widetilde\tau)
    &:=
    \bigg\{
    P\in\mathcal P \,\bigg|\, \mathcal S_P \ne \varnothing \text{ and } \forall k\in\mathcal S_P:\,
    \omega_k
    \in
    [\widetilde\tau n,(1-\widetilde\tau)n],
    \nonumber\\
    & \hspace{3cm}
    \theta_k(1-\theta_k)\Delta_k
    \ge
    \widetilde c \log(nK)
    \bigg[
        \frac1{\sqrt{\widetilde g}}
        +
        \frac{
            n^{3/[2(c_\beta+2)]}\log n
        }{\widetilde g}
    \bigg]
    \bigg\};
    \label{eq:model-class-P-loc}
\end{align}
here, we suppress the dependence on $D_\psi, C_U,C_L,\gamma_U,\gamma_L,C_\beta,c_\beta$ from the notation. Note that every $P \in \mathcal P_{\mathrm{loc}}(\widetilde c, \widetilde g,\widetilde\tau)$ satisfies \ref{cond:stationary} and~\ref{cond:scores}.
For every \(P\in\mathcal P\) and every nonempty  set
\(\mathcal J\subseteq[K]\), define the Brownian-bridge maximum
\begin{equation}
    B_{\mathcal J,P}
    :=
    \max_{k\in\mathcal J}
    \sup_{u\in[0,1]}
    \left\|
        \mathbb B_{\mathcal K,k}(u)
    \right\|_{\Hb_k}.
    \label{eq:def-B-J-P}
\end{equation}
For \(a\in(0,1)\), let
\begin{equation}
    c_{\mathcal J,P}^{\mathrm{BB}}(a)
    :=
    \inf\left\{
        z\in\mathbb R:
        \mathbb P(B_{\mathcal J,P}\le z)\ge a
    \right\}
    \label{eq:def-BB-quantile-J-P}
\end{equation}
denote its \(a\)-quantile.

\begin{theorem}[Simultaneous Power]
\label{thm:profiled_simultaneous_power}
Fix \(\gamma\in(0,1)\). Suppose that the residual bootstrap is
implemented with clipping parameter \(\tau\), gap length \(g=q\), and
block lengths \(q,r\in\mathbb N\) satisfying
\begin{equation}
    \tau\in(0,1/2),
    \qquad
    1\le r\le q,
    \qquad
    2q+r\le\frac{\tau n}{2}.
    \label{eq:power-profiled-tuning}
\end{equation} 
Then there exist constants  $c_0, d, c, C>0$
such that for every \(n\ge4\), \(K\ge1\), and \(q,r,\tau\) satisfying
\eqref{eq:power-profiled-tuning}, every
\(\eta\in(0,\gamma]\), every
\(P\in\mathcal P_{\mathrm{loc}}(c_0,q,\tau)\), and
every nonempty deterministic coordinate subset
\(\chgset\subseteq\mathcal S_P\), if
\begin{equation}
\sqrt n\,\theta_k(1-\theta_k)\Delta_k
>
c_{[K],P}^{\mathrm{BB}}
\left(
    1-\gamma+
    \{\varepsilon_{n,q,r}\wedge c_\gamma\}
\right)
+
c_{\chgset,P}^{\mathrm{BB}}(1-\eta),
\qquad k\in\chgset,
\label{eq:profiled-power-signal}
\end{equation}
then
\begin{equation}
\mathbb P_P\left(
    \chgset\subseteq\widehat{\mathcal S}_\gamma
\right)
\ge
1-\eta-2\varepsilon_{n,q,r}.
\label{eq:profiled-power-conclusion}
\end{equation}
Here
\begin{equation*}
c_\gamma
:=
\frac14\{\gamma\wedge(1-\gamma)\},
\qquad
\varepsilon_{n,q,r}
:=
\frac{C}{\sqrt\tau}
\log^{d}(nK) \big[ \mathcal R_{n,q,r}
+
n^{-c} \big],
\end{equation*}
and $\mathcal R_{n,q,r}$ is defined in~\eqref{eq:fwer-profiled-rate}. The constant $c_0$ may depend on $D_\psi, C_\beta, c_\beta$. The constants $d$ and $c$ depend only on $\gamma_L$ and $\gamma_U$, 
while $C$ depends on $\gamma$ as well as on all constants from \ref{cond:stationary} and \ref{cond:scores} as specified in \eqref{eq:model-class-P}. More specifically, $c$ may be set equal to the constant of the same name from Theorem~\ref{thm:gaussian_approximation}.
\end{theorem}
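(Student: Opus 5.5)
The plan is to decompose the event $\chgset\subseteq\widehat{\mathcal S}_\gamma$ into two simpler events. One is an upper bound on the bootstrap quantile $\widehat Q_{1-\gamma}$. The other is a lower bound on every marginal statistic $T_{n,k}$, $k\in\chgset$. Both are controlled through the Gaussian reference process $\mathbb B_{\mathcal K}$ from Theorem~\ref{thm:gaussian_approximation}.

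For the first event I would reuse the bootstrap-validity argument behind Theorem~\ref{thm:fwer_control}. Under $P\in\mathcal P_{\mathrm{loc}}(c_0,q,\tau)$, that argument should yield an event of probability at least $1-\varepsilon_{n,q,r}$ on which
\[
\widehat Q_{1-\gamma}^{1/2}\le c_{[K],P}^{\mathrm{BB}}\bigl(1-\gamma+\{\varepsilon_{n,q,r}\wedge c_\gamma\}\bigr).
\]
Concretely, I would show that the conditional law of $(T_n^*)^2$ is within Kolmogorov distance $\varepsilon_{n,q,r}$ (for levels $t\ge\rho$) of the law of $B_{[K],P}^2$.

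The residualization is what needs care here, because changed coordinates are present. The localization condition in $\mathcal P_{\mathrm{loc}}$ has exactly the form $\log(nK)[\widetilde g^{-1/2}+n^{3/[2(c_\beta+2)]}\log n/\widetilde g]$ with $\widetilde g=q$. This suggests that, with high probability, $|\widehat\omega_k-\omega_k|\le g=q$ simultaneously over $k\in\mathcal S_P$. It would follow from a uniform (over $k$) maximal inequality for the centered CUSUM noise under \ref{cond:stationary}, combined with the standard drift argument: the drift of $\|C_{n,k}(s)\|$ decreases roughly linearly in $|s-\omega_k|\Delta_k\gamma_k$.

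On that event the trimmed region contains $\omega_k$, so the residuals equal the noise minus estimated left and right means. The bootstrap then coincides, up to these mean-estimation errors, with a null-type bootstrap built from the pure noise. The errors are $O_P(\sqrt{\log(nK)/(\tau n)})$, which is where the factor $\tau^{-1/2}$ enters. The null bootstrap is then handled by a block Gaussian comparison: the block covariance is compared with $\mathcal K$ (producing the terms $(q^2/n)^{\cdots}$, $q^{-\cdots}$ and $(r/q)^{1/2}$), and the small blocks give decoupling via $\beta$-mixing (producing $(n/q)r^{-c_\beta}$). Gaussian anti-concentration converts the distributional closeness into the stated quantile shift.

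For the second event, fix $k\in\chgset$ and write
\[
C_{n,k}(\omega_k)=C_{n,k}^{\varepsilon}(\omega_k)-\sqrt n\,\theta_k(1-\theta_k)\delta_k,
\]
where $C^\varepsilon_{n,k}$ is the CUSUM of the noise. By the reverse triangle inequality,
\[
T_{n,k}\ge\sqrt n\,\theta_k(1-\theta_k)\Delta_k-\max_{k'\in\chgset}\sup_s\|C^\varepsilon_{n,k'}(s)\|.
\]
The noise CUSUM does not depend on the means, so Theorem~\ref{thm:gaussian_approximation}, applied to the subvector indexed by $\chgset$, gives
\[
\mathbb P\Bigl(\max_{k'\in\chgset}\sup_s\|C^\varepsilon_{n,k'}(s)\|>c_{\chgset,P}^{\mathrm{BB}}(1-\eta)\Bigr)\le\eta+Cn^{-c}\log^d(nK).
\]
The subvector still satisfies \ref{cond:stationary} and \ref{cond:scores}, and $n^{-c}$ is absorbed into $\varepsilon_{n,q,r}$. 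A small-$t$ issue arises when $t<\rho$, but for fixed $\rho$ the quantile is bounded below, so it can be handled by monotonicity.

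Combining the two events under \eqref{eq:profiled-power-signal} gives $T_{n,k}>\widehat Q_{1-\gamma}^{1/2}$ for all $k\in\chgset$, with failure probability at most $\eta+2\varepsilon_{n,q,r}$. I expect the main obstacle to be the first event: the quantile upper bound under alternatives. It requires simultaneous localization within radius $q$ under only polynomial $\beta$-mixing, and it requires showing that block-aligned trimming leaves the bootstrap law close to the Gaussian one. The $n^{3/[2(c_\beta+2)]}$ term suggests that the uniform maximal inequality should come from a Fuk–Nagaev-type bound for mixing sums, which I would try first.
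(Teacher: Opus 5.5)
Your plan follows the paper's proof. The paper also lower-bounds $T_{n,k}\ge\sqrt n\,\theta_k(1-\theta_k)\Delta_k-T^{O}_{n,\chgset}$ at $s=\omega_k$, controls $T^{O}_{n,\chgset}$ by Theorem~\ref{thm:gaussian_approximation} applied to the mean-free noise, and upper-bounds $\widehat Q_{1-\gamma}$ through a Kolmogorov comparison of the conditional bootstrap law with $B_{[K],P}^2$ on an $\mathcal F_n$-event (Theorems~\ref{Thm_1}, \ref{thm:conditional_gaussian_subset_master} and~\ref{thm:gaussian_approximation}). Membership in $\mathcal P_{\mathrm{loc}}(c_0,q,\tau)$ serves exactly to force $r^{(n)}_{\beta,k}(\ell)\le q$ for all $k\in\mathcal S_P$, so the contamination term $\Delta_{[K],\mathrm{wk}}(q,\ell)$ vanishes. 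The pieces are assembled in Lemma~\ref{lem:simultaneous_power_generic}.

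Two small corrections at this level. First, the passage from a Kolmogorov bound to a quantile bound is the elementary Lemma~\ref{lem:koltoquant}, not anti-concentration. It needs $\widehat Q_{1-\gamma}>\rho_\gamma$, which the paper obtains from $\lambda_{k1}\ge C_L^2$. Second, the $\wedge c_\gamma$ comes from choosing $C$ large enough that the actual approximation error, a fixed fraction $\delta_0$ of $\varepsilon_{n,q,r}$, satisfies $\delta_0\le\varepsilon_{n,q,r}\wedge c_\gamma$ whenever $\varepsilon_{n,q,r}<1/2$.

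The step you flag as the main obstacle is where your suggested tool would fail. A Fuk--Nagaev inequality for mixing sums, applied coordinatewise and then union-bounded over $k\in[K]$, leaves a factor $K$ in front of its polynomial mixing term. The deviation threshold, and with it the localization radius, would then grow like a power of $K$. That is incompatible with the $\log(nK)$ in $\mathcal P_{\mathrm{loc}}$ and with $K$ exponential in a power of $n$.

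The idea you need is to pay the mixing cost once for all coordinates. Lemma~\ref{lem:uniform_interval_sums_new} applies Berbee's coupling to the whole $\mathcal H$-valued sequence, cut into blocks of length $\ell$ (odd and even blocks separately). A single coupling-failure event of probability at most $2\lceil n/\ell\rceil\beta_\ell$ then serves every $k$. On the coupled sequence, independent blocks with $\psi_1$ tails give $\|\sum_{i\in I}\varepsilon_k^{(i)}\|_{\Hb_k}\lesssim\log(nK)\{\sqrt{|I|}+\ell\log(en/\ell)\}$ simultaneously over $k\in[K]$ and all integer intervals $I$.

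This one event drives both the localization (Corollary~\ref{cor:coordinatewise_beta_localization_new}) and the residual-versus-oracle comparison (Lemma~\ref{lem:residual_oracle_adaptive_subset}). The latter involves not only the left and right mean errors you mention but also the zeroed gap residuals, which contribute $\log(nK)\sqrt{(g+q)/n}\,\{1+\ell\log(en/\ell)/\sqrt q\}$.

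The exponent in $\ell=\lceil n^{3/[2(c_\beta+2)]}\rceil$ is not a Fuk--Nagaev artifact. It balances the coupling cost $n\ell^{-(c_\beta+1)}$ against the term $\ell/\sqrt n$ that the same $\ell$ contributes to the bootstrap error. Both are of order $n^{-(c_\beta-1)/[2(c_\beta+2)]}$, which the paper bounds by $\mathcal R_{n,q,r}$.
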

Theorem~\ref{thm:profiled_simultaneous_power} involves two distinct
signal-strength requirements, which play fundamentally different roles.
We first discuss the condition
$P\in\mathcal P_{\mathrm{loc}}(c_0,q,\tau)$ and then turn to the
detection condition~\eqref{eq:profiled-power-signal}. Membership in $\mathcal P_{\mathrm{loc}}(c_0,q,\tau)$ requires, first,
that every change point lies in the region searched by the trimmed
estimator:
\begin{align}
\label{eq:interiority-condition}
\omega_k\in[\tau n,(1-\tau)n],
\qquad k\in\mathcal S_P.
\end{align}
Second, every non-null signal must satisfy
\begin{align}
\label{eq:signal-strength-condition}
\theta_k(1-\theta_k)\Delta_k
\ge
c_0\log(nK)
\left[
\frac1{\sqrt q}
+
\frac{
n^{3/[2(c_\beta+2)]}\log n
}{q}
\right],
\qquad k\in\mathcal S_P.
\end{align}
The proof shows that this condition guarantees  with high probability 
that the trimmed estimator $\widetilde\omega_k$ from
\eqref{def:trimmed_omega_k} lies within distance $g=q$ of the true
change point. Removing the corresponding gap around
$\widetilde\omega_k$ therefore prevents the coordinate-specific mean
change from contaminating the residuals retained by the bootstrap.
The residual bootstrap can consequently approximate its oracle
counterpart based on the centered noise process, and
$\widehat Q_{1-\gamma}$ can approximate the corresponding oracle null
quantile. Thus, \eqref{eq:signal-strength-condition} is not a detection
boundary; its role is to guarantee sufficiently accurate preliminary
localization for bootstrap validity under change-point alternatives.

The sufficient condition for simultaneous detection is instead \eqref{eq:profiled-power-signal}. The first term on the right-hand side of~\eqref{eq:profiled-power-signal} reflects calibration against the
global maximum over all $K$ coordinates, whereas the second determines
the probability with which all coordinates in the target set
$\chgset$ are detected simultaneously. Both are quantiles of maxima of
the Gaussian reference process and hence depend on the cross-covariance
structure of the full high-dimensional Hilbert-space Brownian Bridge . The detection threshold is
therefore covariance-adaptive: rather than replacing the effect of
simultaneous calibration by a generic worst-case function of $K$, the
theorem retains the quantiles induced by the actual cross-coordinate
dependence structure. Although these quantiles admit generic
$\sqrt{\log K}$-type upper bounds under our model assumptions, they can
be smaller when dependence reduces the effective size of the
coordinatewise maximum. This difference can be particularly meaningful when $K$ is large compared to $n$.

The distinction between preliminary localization for the purpose of establishing bootstrap validity and signal detection is also present in \cite{jirak2015uniform}. Jirak likewise
imposes the interiority condition~\eqref{eq:interiority-condition}; see
his Equation~(3.5). His Assumption~4.3\textnormal{(B2)} contains the
asymptotic localization requirement
\[
\frac{\log n}{q\Delta_{\min,P}^2}\to0,
\qquad
\Delta_{\min,P}
:=
\min_{k\in\mathcal S_P}\Delta_k.
\]
Under the interiority condition, this is equivalent up to constants to
\[
\nu_{\min,P}
\gg
\sqrt{\frac{\log n}{q}},
\qquad
\nu_{\min,P}
:=
\min_{k\in\mathcal S_P}
\theta_k(1-\theta_k)\Delta_k.
\]
This in turn is closely related to our localization requirement
\eqref{eq:signal-strength-condition}, although our condition is stronger:
its first term requires
$\nu_{\min,P}\gtrsim\log(nK)/\sqrt q$ and it contains an additional
mixing-dependent term. These differences arise from controlling the
preliminary localization error uniformly while retaining only
logarithmic dependence on $K$ under polynomially decaying
$\beta$-mixing. In contrast, \citet{jirak2015uniform} imposes the
additional cardinality restriction
\[
|\mathcal S_P|
(q\log n)^{-p/2+2}
\lesssim
n^{-C},
\]
which permits the use of sharper $L^p$ bounds for serially dependent observations. The detection condition in Equation~(3.6) of \citet{jirak2015uniform} is
likewise comparable to~\eqref{eq:profiled-power-signal}. In particular, we
replace the generic $\sqrt{\log K}$ term in his bound, viewed on the square-root scale, with the data-dependent threshold $c_{\mathcal J,P}^{\mathrm{BB}}(a)$ defined in
\eqref{eq:def-BB-quantile-J-P}. Since
$c_{\mathcal J,P}^{\mathrm{BB}}(a)$ is always of order at most $\sqrt{\log K}$, our threshold is consistent with the scaling in \citet{jirak2015uniform}. However,
Theorem~\ref{thm:profiled_simultaneous_power} retains the relevant Gaussian quantiles themselves, allowing the sufficient detection threshold to adapt to the cross-coordinate covariance structure. 

\begin{remark}
The signal-strength condition defining \(\mathcal P_{\mathrm{loc}}(c_0,q,\tau)\) can be relaxed by choosing the bootstrap gap length \(g\) larger than \(q\). We state Theorem~\ref{thm:profiled_simultaneous_power} with \(g=q\) to obtain a simple result in terms of the block length \(q\). A corresponding result for general \(g\) follows from the \(g\)-dependent bound in Theorem~\ref{thm:conditional_gaussian_subset_master}, which is a key ingredient in the proof of Theorem~\ref{thm:profiled_simultaneous_power}. Such a result would also allow for our recommended choice of $g=(q+r)/2$ given in Section~\ref{subsec:tuning-parameter-choice}.
\end{remark}

\subsection{Simultaneous localization intervals}
\label{subsec:localization-guarantees}

We show that our method can simultaneously localize any collection of
changed coordinates. 
Let
\begin{align*}
\mathcal P_{\mathrm{lin}}
=
\mathcal P_{\mathrm{lin}}(C_A, D_R)
\end{align*}
denote the class of data-generating distributions satisfying the change-point model \eqref{eq:change_point_model}, where the noise process fulfills Assumption~\ref{cond:linear} with the stated constants.
Recall the change point estimator $\widehat \omega_k$ from \eqref{eq:definition-hat-omega-k} and $\omega_k$ and $\nu_k$ from \eqref{eq:def-nu_k}.

\begin{theorem}
\label{thm:linear_process_localization}
There exists a constant $C$ that may depend only on $D_R$ and $C_A$ of Assumption~\ref{cond:linear} such that, 
for every $P \in \mathcal P_{\mathrm{lin}}$, every $\varnothing \ne \chgset \subset \mathcal S_P$ and every $\eta \in (0, 1)$,
\[
   \mathbb P_P\Big(
        \forall k\in \mathcal \chgset:
        |\widehat\omega_k-\omega_k|
        <
        r_{k}(\eta)
    \Big)
    \ge
    1-\eta,
\]
where, recalling $\nu_k$ from \eqref{eq:def-nu_k},
\begin{align} \label{eq:definition-rketa}
    r_{k}(\eta)
    :=
    \left\lceil
        C\log\left(\frac{e|\chgset|}{\eta}\right)
        \left(
            \nu_k^{-2}
            +
            \nu_k^{-1}
        \right)
    \right\rceil.
\end{align}
\end{theorem}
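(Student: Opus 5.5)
By a union bound over $k\in\chgset$, it suffices to prove a single-coordinate statement. For fixed $k\in\mathcal S_P$ and every $\eta'\in(0,1)$ we will show
\[
\mathbb P_P\bigl(|\widehat\omega_k-\omega_k|\ge r_k^{(1)}(\eta')\bigr)\le\eta',
\qquad
r_k^{(1)}(\eta')=\bigl\lceil C\log(e/\eta')(\nu_k^{-2}+\nu_k^{-1})\bigr\rceil.
\]
Applying this with $\eta'=\eta/|\chgset|$ then gives the claim, because $\log(e|\chgset|/\eta)$ is exactly the resulting logarithm. So the theorem reduces to an exponential-tail bound for the one-dimensional CUSUM argmax in the Hilbert space $\Hb_k$ under the linear-process noise of Assumption~\ref{cond:linear}.

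\textbf{Deterministic decomposition.} Write $C_{n,k}(s)=\mu(s)+E(s)$. The signal part is $\mu(s)=n^{-1/2}\delta_k\,\phi(s)$, where $\phi(s)=s(n-\omega_k)/n$ for $s\le\omega_k$ and $\phi(s)=\omega_k(n-s)/n$ for $s>\omega_k$. The noise part $E(s)$ is the centered CUSUM of the $\varepsilon_k^{(i)}$. On the event $\widehat\omega_k=s$ we have
\[
\|\mu(\omega_k)\|^2-\|\mu(s)\|^2\le 2\langle \mu(s)-\mu(\omega_k),E(s)\rangle+\|E(s)\|^2-\|E(\omega_k)\|^2 .
\]
Since $\mu$ is collinear with $\delta_k$, the left side is $n^{-1}\Delta_k^2(\phi(\omega_k)^2-\phi(s)^2)$. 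This is at least $c\,\Delta_k^2\gamma_k|s-\omega_k|$ up to the appropriate $\theta_k(1-\theta_k)$ factors, which produce $\nu_k$ after normalisation. The right side splits into two pieces. The cross term is $n^{-1/2}\Delta_k$ times $\langle\delta_k/\Delta_k,\,E(s)\rangle\,\phi$-increments, i.e.\ a real-valued partial-sum process over the window between $s$ and $\omega_k$. The quadratic term is $\|E(s)\|^2-\|E(\omega_k)\|^2=\langle E(s)-E(\omega_k),E(s)+E(\omega_k)\rangle$, and $E(s)-E(\omega_k)$ is essentially $n^{-1/2}$ times the Hilbert-valued sum over that window plus a mean-correction term.

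\textbf{Peeling.} We decompose $\{|s-\omega_k|\ge r\}$ into dyadic shells $2^jr\le|s-\omega_k|<2^{j+1}r$. On shell $j$ the signal is of order $\nu_k^2\,2^jr\,n^{-1}$ after rescaling (equivalently $\Delta_k^2 2^jr$ on the unnormalised scale). We need maximal inequalities, uniform over $h\le 2^{j+1}r$, of the form
\[
\max_{h\le H}\Bigl\|\sum_{i=\omega_k+1}^{\omega_k+h}\varepsilon_k^{(i)}\Bigr\|\lesssim \sqrt{H}\,x + x^{2}\quad\text{w.p.}\ \ge 1-e^{-x}.
\]
Under Assumption~\ref{cond:linear} this follows from the Beveridge--Nelson decomposition
\[
\sum\varepsilon_k^{(i)}=\Bigl(\sum_j A_{k,j}\Bigr)\sum\eta_k^{(i)}+\text{remainder},
\]
where the remainder is a difference of two terms whose $\psi_1$-norms are bounded via $\sum(j+1)\|A_{k,j}\|_{\op}\le C_A$. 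The i.i.d.\ Hilbert part is handled by a Bernstein/Pinelis inequality for $\psi_1$ Hilbert-valued martingales, combined with Ottaviani or Doob to take the maximum. The cross term needs only the scalar projection and yields the $\nu_k^{-2}$ contribution; the linear $\nu_k^{-1}$ contribution arises from the $x^2$ (sub-exponential) part. The quadratic term is bounded by $\|E(s)-E(\omega_k)\|\cdot 2\max_t\|E(t)\|$, but crudely bounding $\max_t\|E(t)\|=O_P(1)$ would lose. We therefore instead use $\|E(s)\|^2-\|E(\omega_k)\|^2=\|E(s)-E(\omega_k)\|^2+2\langle E(s)-E(\omega_k),E(\omega_k)\rangle$ and condition-split $E(\omega_k)$ into its signal direction and remainder.

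\textbf{Main obstacle.} The hardest step is this quadratic Hilbert term. In infinite dimensions $\langle E(s)-E(\omega_k),E(\omega_k)\rangle$ is not small relative to the signal in all directions. The key is that $E(\omega_k)$ has norm $O(1)$ with $\psi_1$ tails, while $E(s)-E(\omega_k)$ has norm of order $\sqrt{h/n}$, giving a contribution $\sqrt{h/n}$ against the signal $\Delta_k^2h/n\cdot n$. I expect this to balance exactly into the $\nu_k^{-2}$ scale after the $\sqrt n$ normalisation of $\Delta_k$ is tracked. Summing the shell probabilities $\sum_j\exp(-c\,2^{j}x)$ gives a geometric series bounded by $e\cdot e^{-x}$. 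Choosing $x=\log(e/\eta')$ and the constant $C$ accordingly then finishes the proof, with $C$ depending only on $C_A$ and $D_R$.
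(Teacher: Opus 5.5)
Your reduction to a single coordinate (a union bound with $\eta'=\eta/|\chgset|$) and your probabilistic tools (Beveridge--Nelson, Pinelis, dyadic peeling) are the ones the paper uses. The argument as written, however, has concrete gaps.

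\textbf{The displayed basic inequality is false.} Expanding $\|\mu(s)+E(s)\|^2\ge\|\mu(\omega_k)+E(\omega_k)\|^2$ gives the cross term
\[
2\langle\mu(s),E(s)\rangle-2\langle\mu(\omega_k),E(\omega_k)\rangle
=2\langle\mu(s)-\mu(\omega_k),E(s)\rangle+2\langle\mu(\omega_k),E(s)-E(\omega_k)\rangle,
\]
and you dropped the second summand. Already the scalar configuration $\mu(\omega_k)=1$, $\mu(s)=0.9$, $E(\omega_k)=0$, $E(s)=0.2$ satisfies $\|C(s)\|\ge\|C(\omega_k)\|$, yet your inequality would read $0.19\le0$.

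The dropped summand is exactly the window partial sum you describe in words. It has size $\theta_k(1-\theta_k)\Delta_k\|\sum_{\mathrm{window}}\varepsilon_k^{(i)}\|$ plus a mean correction, and it is what produces the rate. The term you kept involves the full CUSUM $E(s)$. Relative to the signal $n^{-1}\Delta_k^2\{\phi(\omega_k)^2-\phi(s)^2\}\ge\nu_k^2|s-\omega_k|/4$, it is of order $\|E(s)\|/(\sqrt n\,\nu_k)$, independently of $|s-\omega_k|$, so peeling does nothing for it. It is harmless only because the nontrivial case $r_k(\eta)<n$ forces $n\nu_k^2\ge Cx$ and $n\nu_k\ge Cx$. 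You never make this reduction, and the same lower bounds are needed for your quadratic term.

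\textbf{The tail bound you postulate is too weak for the stated radius.} A deviation of $\sqrt H\,x+x^2$ at probability $e^{-x}$, matched against the signal on the first shell, forces $r\gtrsim x^2(\nu_k^{-2}+\nu_k^{-1})$. That is $\log^2(e|\chgset|/\eta)$, not $\log$. You need the Bernstein form $\sqrt{Hx}+x$. Pinelis' maximal inequality gives this directly for the i.i.d.\ part $A_k\eta_k^{(i)}$, with $A_k=\sum_jA_{k,j}$. The Beveridge--Nelson remainders contribute $x+\log H$ via a union bound, and $\log H\lesssim\sqrt H$ is absorbed.

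\textbf{The ``main obstacle'' is left open, but it needs no splitting.} With these Bernstein bounds and $r_k(\eta)<n$, plain Cauchy--Schwarz,
\[
\bigl|\|E(s)\|^2-\|E(\omega_k)\|^2\bigr|\le\|E(s)-E(\omega_k)\|^2+2\|E(s)-E(\omega_k)\|\,\|E(\omega_k)\|,
\]
checked term by term against $\nu_k^2|s-\omega_k|$, already suffices. No decomposition of $E(\omega_k)$ into a signal direction and a remainder is required.

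\textbf{How the paper avoids this.} The paper never expands squared norms. Lemma~\ref{lem:general_localization_lemma} writes
\[
C_{n,k}(s)=a(s)C_{n,k}(\omega_k)+\{C^O_{n,k}(s)-C^O_{n,k}(\omega_k)\}+\{1-a(s)\}C^O_{n,k}(\omega_k)
\]
and uses only the triangle inequality. It bounds $\|C^O_{n,k}(\omega_k)\|$ by a long-window increment, so there is no quadratic term at all. Everything then reduces to the single envelope condition $\mathcal H_n(r)<1/5$ on normalized window increments, which Beveridge--Nelson, Pinelis and peeling bound by $C\nu_k^{-1}(\sqrt{x/r}+x/r)$.
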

Theorem~2 of \cite{li2024detection} gives an asymptotic uniform
localization result that should be compared to Theorem~\ref{thm:linear_process_localization}.  Under a strengthened version of Assumption~\ref{cond:linear} with  $D_R$ providing a bound on the sub-Gaussian rather than the sub-Exponential norm of the Hilbert space norm of the latent linear process innovations, and also the uniform signal condition
\(\min_{k\in\mathcal S_P}\nu_k\ge c>0\), their coordinatewise CUSUM
estimators $\widehat\omega_k$ satisfy
\[
\max_{k\in\mathcal S_P}
|\widehat\omega_k-\omega_k|
=
o_{\mathbb P}\!\left(
[\log(n\vee K)]^{2}
\right);
\]
see Equation~(4.5) therein. In
contrast, Theorem~\ref{thm:linear_process_localization} provides a fully nonasymptotic and signal-adaptive guarantee,
which holds at each sample size and confidence level,
rather than only along an asymptotic sequence. When the signals are
uniformly bounded below, its localization radius is proportional to
\(\log(e|\chgset|/\eta)\), improving the logarithmic exponent in the
asymptotic bound of \cite{li2024detection}. 

Note that under an interiority condition like~\eqref{eq:interiority-condition} it holds for a $\tau$-dependent constant $C_\tau$ that 
\[
\nu_k^{-2}+\nu_k^{-1} \leq C_\tau\{ \Delta_k^{-2}+\Delta_k^{-1}\}
\]
The additional \(\Delta_k^{-1}\)term, which is less familiar than the
commonly provided \(\Delta_k^{-2}\) localization rate (see \citealp{verzelen2023optimal}, who work in the Gaussian scalar case), reflects the linear-tail term in a Bernstein-type maximal inequality of~\cite{pinelis1994optimum} that we use under the tail assumption of Assumption~\ref{cond:linear}. Under a sub-Gaussian
assumption like the one used by \cite{li2024detection}, a similar argument  gives, under the interiority condition~\eqref{eq:interiority-condition}, the sharper radius
\[
C\Delta_k^{-2}
\log\left(\frac{e|\chgset|}{\eta}\right),
\]
where now $C$ may depend on the interiority parameter $\tau$. This rate is known to be optimal in the scalar case with $K=1$, as discussed in \cite{verzelen2023optimal}.

Theorem~\ref{thm:linear_process_localization} is established under
Assumption~\ref{cond:linear}, rather than under
Assumptions~\ref{cond:stationary} and~\ref{cond:scores}. Under the
polynomial \(\beta\)-mixing conditions in
Assumption~\ref{cond:stationary}, we also derive a nonasymptotic
simultaneous localization bound in Corollary~\ref{cor:coordinatewise_beta_localization_new}. That bound, however, depends on a
coupling length chosen to control the mixing remainder and therefore
on the constants \(C_\beta\) and \(c_\beta\). Under polynomial mixing,
the resulting radius generally grows faster than logarithmically in
\(n\vee K\). Although this bound is sufficient for controlling
contamination in the residual-bootstrap construction, it is too
conservative to serve as our principal localization result.
We therefore impose the alternative linear-process structure in
Assumption~\ref{cond:linear}, which permits us to directly use the maximal inequality of \cite{pinelis1994optimum}
for partial sums and removes the coupling-length penalty. This yields
the sharper radius 
in Theorem~\ref{thm:linear_process_localization}. Assumption
\ref{cond:scores}, which governs the spectral structure needed for the
Gaussian and bootstrap approximations, plays no role in this
localization argument. Thus, the general mixing assumptions suffice
for bootstrap validity, whereas the linear-process assumption delivers
a sharper and more readily interpretable localization guarantee.

\subsection{Step-down refinement of simultaneous testing}
\label{subsec:stepdown-theory}

We conclude this section by recording the theoretical properties of the step-down refinement introduced in Section~\ref{subsec:stepdown}. The refinement concerns the simultaneous testing procedure and can only increase the set of detected coordinates without weakening its family-wise error guarantee. Its theoretical properties are essentially a corollary of Theorem~\ref{thm:fwer_control}.

\begin{corollary}[Step-down refinement]
\label{cor:stepdown}
For fixed $\gamma \in (0,1)$, let $\widehat{\mathcal S}^{\mathrm{SD}}_\gamma$denote the rejection set of the step-down procedure. Then 
\[
\widehat{\mathcal S}_\gamma
\subseteq
\widehat{\mathcal S}^{\mathrm{SD}}_\gamma.
\]
Moreover,  under the conditions of Theorem~\ref{thm:fwer_control}, 
\[
\forall P\in\mathcal P:\qquad
\mathbb P_P\!\left(
\widehat{\mathcal S}^{\mathrm{SD}}_\gamma
\cap\mathcal S_P^c\neq\varnothing
\right)-\gamma
\le
\frac{C}{\sqrt{\tau}}
\log^d(nK)\mathcal R_{n,q,r}.
\]
\end{corollary}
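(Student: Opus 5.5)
The inclusion $\widehat{\mathcal S}_\gamma\subseteq\widehat{\mathcal S}^{\mathrm{SD}}_\gamma$ is immediate from the construction. At the first step $\mathcal R_1=[K]$ and $(T^*_{n,[K]})^2=(T_n^*)^2$, so $\widehat Q_{1-\gamma}([K])=\widehat Q_{1-\gamma}$. The first step therefore rejects exactly $\widehat{\mathcal S}_\gamma$. Later steps only add rejections, since $\mathcal R_{j+1}\subseteq\mathcal R_j$ and rejected hypotheses are never reinstated.

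For the error bound I will use the standard Romano--Wolf argument, resting on two facts. The first is monotonicity: for $\mathcal J\subseteq\mathcal J'$ we have $(T^*_{n,\mathcal J})^2\le(T^*_{n,\mathcal J'})^2$ pointwise for every bootstrap draw, hence $\widehat Q_{1-\gamma}(\mathcal J)\le\widehat Q_{1-\gamma}(\mathcal J')$. The second is the key event. Let $\mathcal S^c=\mathcal S^c_P$ be nonempty; otherwise there is nothing to prove. Define
\[
E:=\Bigl\{\max_{k\in\mathcal S^c}T_{n,k}^2\le\widehat Q_{1-\gamma}(\mathcal S^c)\Bigr\}.
\]
On $E$, the step-down procedure makes no false rejection, by induction over steps. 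Suppose no true null has been rejected before step $j$. Then $\mathcal S^c\subseteq\mathcal R_j$, so $\widehat Q_{1-\gamma}(\mathcal R_j)\ge\widehat Q_{1-\gamma}(\mathcal S^c)\ge T^2_{n,k}$ for all $k\in\mathcal S^c$, and no $k\in\mathcal S^c$ is rejected at step $j$. Consequently $\mathbb P_P(\widehat{\mathcal S}^{\mathrm{SD}}_\gamma\cap\mathcal S^c\neq\varnothing)\le\mathbb P_P(E^c)$.

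It remains to show $\mathbb P_P(E^c)\le\gamma+C\tau^{-1/2}\log^d(nK)\mathcal R_{n,q,r}$. This is exactly the statement that the subset-calibrated bootstrap quantile over the null coordinates controls the maximum of the null marginal statistics. I expect this to be precisely what the proof of Theorem~\ref{thm:fwer_control} establishes. The FWER of the single-step procedure is bounded by $\mathbb P_P(\max_{k\in\mathcal S^c}T^2_{n,k}>\widehat Q_{1-\gamma})$, and because $\widehat Q_{1-\gamma}\ge\widehat Q_{1-\gamma}(\mathcal S^c)$, a proof of Theorem~\ref{thm:fwer_control} via comparison of the oracle and residual bootstrap on the subset $\mathcal S^c$ would give the bound for $\widehat Q_{1-\gamma}(\mathcal S^c)$ directly.

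Concretely, I would invoke the subset version underlying Theorem~\ref{thm:fwer_control}, namely the $g$-dependent master bound Theorem~\ref{thm:conditional_gaussian_subset_master}, with $\mathcal J=\mathcal S^c$. That result gives three ingredients. First, on the null coordinates $T^2_{n,k}$ equals the CUSUM of pure noise, so the Gaussian approximation of Theorem~\ref{thm:gaussian_approximation}, applied to the subvector $(\varepsilon_k)_{k\in\mathcal S^c}$, still holds and satisfies the assumptions with the same constants. Second, the conditional distribution of $(T^*_{n,\mathcal S^c})^2$ is close to the same Brownian-bridge maximum, because for $k\in\mathcal S^c$ the residuals contain no mean change and trimming only removes a few blocks. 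Third, anti-concentration of the Gaussian maximum converts these closeness statements into $\mathbb P_P(E^c)\le\gamma+\text{error}$.

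The main obstacle is to check that the single-step proof really controls $\widehat Q_{1-\gamma}(\mathcal S^c)$ rather than only the full-set quantile, with constants uniform in the set. The relevant error terms depend only on $\log(n|\mathcal J|)\le\log(nK)$ and on the marginal assumptions, so the bound should transfer with the same $C,d$.
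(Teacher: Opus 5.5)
Your proposal is correct and follows the paper's proof essentially verbatim. The first-step identity $\widehat Q_{1-\gamma}([K])=\widehat Q_{1-\gamma}$ gives the inclusion, and quantile monotonicity (your induction is the paper's ``first false rejection at step $j_\star$'' argument) reduces the step-down FWER to $\mathbb P_P\bigl(\max_{k\in\mathcal S_P^c}T_{n,k}^2>\widehat Q_{1-\gamma}(\mathcal S_P^c)\bigr)$; the proof of Theorem~\ref{thm:fwer_control} controls exactly this event, since its $\widehat Q^0_{1-\gamma}$ equals $\widehat Q_{1-\gamma}(\mathcal S_P^c)$ and the sandwich $q_-\le\widehat Q^0_{1-\gamma}$ on $\Omega_0$ yields $\gamma+3\varepsilon_{n,q,r}$, with one caveat on your sketch of the internals: that proof compares both the null oracle maximum and the subset bootstrap to the discretized, projected Gaussian maximum $Y_0$ (via Theorems~\ref{Thm_1} and~\ref{thm:conditional_gaussian_subset_master}) rather than to the Brownian-bridge maximum, which is what keeps an extra $n^{-c}$ term from Theorem~\ref{thm:gaussian_approximation} out of the bound $C\tau^{-1/2}\log^d(nK)\mathcal R_{n,q,r}$.
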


The first conclusion shows that the step-down refinement has simultaneous detection probability at least as large as that of the single-step procedure. We do not attempt to quantify the potential improvement, which depends on how much the bootstrap critical values decrease as coordinates are removed during the step-down procedure. In particular, whenever the conditions of Theorem~\ref{thm:profiled_simultaneous_power} and the signal strength condition \eqref{eq:profiled-power-signal} hold for a nonempty deterministic set $\chgset\subseteq\mathcal S_P$, the step-down procedure inherits the same simultaneous detection guarantee. Thus, the step-down refinement retains the strong family-wise error guarantee of the single-step procedure while having no smaller simultaneous detection probability.

\section{Choice of tuning parameters}
\label{subsec:tuning-parameter-choice}

This section discusses the practical selection of the tuning parameters
$q$, $r$, $g$, and $\tau$ needed for the bootstrap statistic  $\left(T_n^*\right)^2 = \left(T_n^*(g,q,r,\tau)\right)^2 $  introduced in~\eqref{eq:definition-Tstar}.

We begin with the clipping parameter
$\tau\in(0,1/2)$. The theoretical results in
Section~\ref{sec:theoretical-guarantees} require $\tau>0$, but this
condition primarily serves as a technical device to ensure that the left
and right residual samples remain sufficiently large in the proofs.
Empirically, we found no benefit from clipping, and we therefore
recommend setting $\tau=0$ in practice.

We select the big-block length $q$ using a rolling stability criterion
based on the estimated bootstrap quantile, similar in spirit to the
procedure of \citet{ZD23}. Let $\mathcal Q=\mathcal Q_n$ denote a finite
set of candidate block lengths. We recommend
\[
\mathcal Q_n
=
\left\{
q\in\mathbb N:
\left\lfloor\frac12 n^{1/3}\right\rfloor
\le q\le
\left\lceil\frac32 n^{1/3}\right\rceil,
\quad
2\le q\le \frac n4
\right\}.
\]
The candidate values in $\mathcal Q_n$ typically satisfy the conditions
required by our theoretical results. For each $q\in\mathcal Q_n$, define
\[
r_q
:=
\lfloor\sqrt q\rfloor,
\qquad
g_q
:=
\frac{q+r_q}{2},
\]
and generate repeated realizations of
$
(T_n^*(g_q,q,r_q,0))^2
$
using independent multiplier draws. Let
$\widehat Q_{1-\gamma}(q)$ denote the resulting empirical
$(1-\gamma)$-quantile. Writing the candidate values as
$
q_1<q_2<\cdots<q_G,
$
we quantify the local stability of the estimated quantiles by
\[
\widehat\sigma_j
:=
\operatorname{sd}
\left\{
\widehat Q_{1-\gamma}(q_j),
\widehat Q_{1-\gamma}(q_{j+1}),
\widehat Q_{1-\gamma}(q_{j+2})
\right\},
\qquad
j=1,\ldots,G-2.
\]
We then select the center of the most stable triple,
\[
\widehat j
\in
\argmin_{1\le j\le G-2}
\widehat\sigma_j,
\qquad
\widehat q
:=
q_{\widehat j+1}.
\]

Finally, we set $\widehat r=\lfloor\sqrt{\widehat q}\rfloor$ and $\widehat g=(\widehat q+\widehat r)/2$, and use the corresponding empirical critical value $\widehat Q_{1-\gamma}(\widehat q)$ obtained from repeated realizations of $(T_n^*(\widehat g,\widehat q,\widehat r,0))^2$.

\section{Simulation Study}
\label{sec:simulation-study}
We evaluate the finite-sample performance of our method with a simulation study. Our main focus is on coordinatewise inference, and as such we study finite-sample family-wise error control and coordinatewise power, including the gain from the stepdown refinement from Section~\ref{subsec:stepdown}. As a secondary benchmark we also compare the global test induced by our procedure with the power-enhanced CUSUM (PE-CUSUM) test of \cite{li2024detection}. 
The study has two parts: Experiment~1 examines strong FWER control and coordinatwise power.
Experiment~2 compares global rejection probabilities with PE-CUSUM.
Implementations are available at \url{https://github.com/colindecker/hd_change_point}.

\subsection{Simulation design}
\label{subsec:simulation-design}
We introduce a simulation framework that depends on several design parameters, which are introduced now and fully defined below. Each simulation is characterized by the choice of 
\[
\boldsymbol\psi
=
(K, \lambda, \FAC, \SNR, \mathcal C,\mathcal O).
\]
The first parameter, $K \in \N$, is the number of coordinates. The second parameter, $\lambda \in [0,1]$, is a factor-mixing weight used to control the strength of global contemporaneous
dependence in the noise process across coordinates. Finally, each alternative is
characterized by the fraction of affected coordinates 
$\FAC \in [0,1]$, the signal strength $\SNR>0$, a change-point configuration design
$\mathcal C\in\{\mathrm{common},\mathrm{heterogeneous}\}$, and a
functional-direction design
$\mathcal O\in\{\mathrm{fixed},\mathrm{random}\}$. 
Throughout $n=200$, and $\Hb_k = L^2([0,1])$ for all $k \in [K]$.

\subsubsection{Noise process}
Let $\phi_1, \dots, \phi_{21}$ be the first 21 Fourier basis functions of $L^2([0,1])$, defined for $u \in [0,1]$ by
\[
\phi_1(u)=1,\quad
\phi_{2j}(u)=\sqrt2\sin(2\pi ju),\quad
\phi_{2j+1}(u)=\sqrt2\cos(2\pi ju),\quad j=1,\ldots,10.
\]
Following \cite{li2024detection}, the baseline noise model is
\[
\varepsilon_{k,0}^{(i)}(u)
=
\sum_{j=1}^{21}
\{Z_{kj}^{(i)}+\eta_{kj}^{(i)}\}\phi_j(u),
\qquad
Z_j^{(i)}=AZ_j^{(i-1)}+\xi_j^{(i)}.
\]
Here, $\smash{Z_j^{(i)}=(Z_{1j}^{(i)},\ldots,Z_{Kj}^{(i)})^\top}$ induces serial and cross-sectional dependence through the vector autoregressive recursion, where $\smash{\xi_j^{(i)}\in\R^K}$ are standard Gaussian vectors that are independent across $i$ and $j$. 
Moreover, the variables $\smash{\eta_{kj}^{(i)}}$ are independent Gaussian random variables with standard deviation $1/j$, and the matrix $A$ is banded, with $a_{k\ell}=0$ whenever $|k-\ell|>3$, while the remaining entries $a_{k\ell}$ are drawn independently from $\operatorname{Unif}[-0.3,0.3]$ in each Monte Carlo replication.

To introduce global cross-sectional dependence, independently generate
\[
G_\ell^{(i)}(u)
=
\sum_{j=1}^{21}
\{X_{\ell j}^{(i)}+\zeta_{\ell j}^{(i)}\}\phi_j(u),
\qquad
X_{\ell j}^{(i)}
=
\rho_\ell X_{\ell j}^{(i-1)}+\nu_{\ell j}^{(i)},
\]
for $\ell=1,\ldots,10$,  where
$\rho_\ell$ are iid $\operatorname{Unif}[-0.3,0.3]$,
$\smash{\nu_{\ell j}^{(i)}}$ are iid standard Gaussian, and
$\smash{\zeta_{\ell j}^{(i)}}$ are independent Gaussians with standard deviation $1/j$. 
Let $b_k = (b_{k1}, \dots, b_{k10})^\top\in\mathbb R^{10}$ have independent standard Gaussian entries
and define
\[
F_k^{(i)}(u)=\sum_{\ell=1}^{10} \frac{b_{k\ell}}{\lVert b_k\rVert_2} G_\ell^{(i)}(u).
\]
The general noise process is a mixture of the baseline noise process and the factors
\begin{equation}
\label{eq:mixed-noise}
\varepsilon_{k,\lambda}^{(i)}(u)
=
\sqrt{1-\lambda}\,\varepsilon_{k,0}^{(i)}(u)
+
\sqrt{\lambda}\,F_k^{(i)}(u).
\end{equation}
The setting $\lambda=0$ recovers the baseline model, while larger $\lambda$ gives
more weight to the common factors.

\subsubsection{Alternatives}
For a specified $\FAC \in [0,1]$, the affected set
$\mathcal S\subseteq[K]$ is drawn uniformly among the subsets of $[K]$ satisfying
$|\mathcal S|= \lfloor \FAC  \cdot K \rfloor$. We set $\mu_k\equiv0$ for every $k \in [K]$,
$\delta_k\equiv0$ for $k\notin\mathcal S$, and
\[
\delta_k(u)=\sqrt{c_{k,\lambda}}\,d_k(u),
\qquad k\in\mathcal S.
\]
In the heterogeneous change-point design, $\mathcal C = \mathrm{heterogeneous}$, the $\omega_k$ are drawn
independently and uniformly from
$\{\lfloor0.25n\rfloor,\ldots,\lfloor0.75n\rfloor\}$. In the common design, $\mathcal C = \mathrm{common}$,
one location is drawn from the same set and assigned to all affected
coordinates.

Under fixed orientation, $\mathcal O = \mathrm{fixed}$, all affected coordinates share $d_k = d_{\mathrm{fix}}$ with
\[
d_{\mathrm{fix}}(u)
=
\frac{1}{\sqrt{21}}\sum_{j=1}^{21}\phi_j(u).
\]
Under random orientation, $\mathcal O = \mathrm{random}$, 
$\smash{d_k(u)=\sum_{j=1}^{21}\gamma_{kj}\phi_j(u)}$, where each
$\gamma_k=(\gamma_{k1},\allowbreak \ldots,\gamma_{k,21})^\top$ is independently uniform
on the unit sphere in $\mathbb R^{21}$. Note that both constructions have unit
coefficient norm.

All experiments use the same signal calibration based on the long-run variance. Specifically, from the signal-free noise realization at the relevant $\lambda$, 
we estimate the marginal long-run covariance matrix $\widehat\Omega_{k,\lambda}$ (using an equidistant grid of 101 values in $[0,1]$) and set, for specified $\SNR > 0$,
\begin{equation}
\label{eq:snr-calibration}
c_{k,\lambda}
=
\SNR \cdot \frac{
\operatorname{tr}(\widehat\Omega_{k,\lambda})10^{-4}  
}{
\theta_k(1-\theta_k)
}.
\end{equation}

\subsubsection{Computation}
All random design quantities are independently regenerated in each Monte Carlo replication.
Final inference uses $B=500$ bootstrap replications, and the tuning parameter selection method described in Section~\ref{subsec:tuning-parameter-choice} uses $B_{\mathrm{select}}=200$ separate replications. The experiment-specific design choices are summarized below. All presented results are based on $N_{MC}=1000$ Monte-Carlo replications.

\subsection{Experiment 1: Strong FWER control and coordinatewise power}
\label{subsec:experiment-fwer}
Throughout, we fix $n=200$, $K\in\{200,400\}$ and $\lambda \in \{0, .3, .6\}$, and consider both the complete null, $\FAC=0$, as well as partial alternatives of the form
\[
\FAC\in\{0.02,0.10,0.70\},
\qquad
\SNR\in \{0.75, 1.50, 2.50, 3.50\},
\]
always using heterogeneous change point locations and random orientations. 
We report Monte-Carlo estimates of both 
\[
\mathrm{FWER} = \Prob(\widehat{\mathcal {S}}_\gamma \cap \mathcal S^c \ne \varnothing), \qquad 
\mathrm{TPR} = \mathbb E\bigg[
\frac{|\widehat{\mathcal {S}}_\gamma\cap\mathcal S|}{|\mathcal S|}
\bigg].
\]
Table~\ref{tab:complete-null-fwer} examines finite-sample calibration under the complete null. In this setting, the FWER coincides with the rejection probability of the induced global test and is identical for the single-step procedure and the stepwise refinement from Section~\ref{subsec:stepdown}. At the nominal \(5\%\) level, the empirical FWER is close to nominal when \(\lambda=0\), at \(5.0\%\) for \(K=200\) and \(5.9\%\) for \(K=400\), consistent with the complete-null calibration guarantee of Theorem~\ref{thm:fwer_control}. As \(\lambda\) increases, the procedure becomes more conservative, with rejection rates falling to approximately \(1.5\%\) at \(\lambda=0.6\). 
This conservatism is in line with findings from other simulation studies of hypothesis tests based on the high-dimensional Gaussian multiplier bootstrap \citep{Chakraborty2023}.
At the same time, the mean bootstrap critical value decreases substantially with $\lambda$, indicating that the calibration adapts to the stronger cross-coordinate dependence and the resulting reduction in effective multiplicity. Critical values remain higher for $K=400$ than for $K=200$, as expected for a maximum over a larger number of coordinates. Overall, the results illustrate the covariance-adaptive nature of the bootstrap calibration, while also revealing some finite-sample conservatism under strong factor dependence.

\begin{table}[t]
\centering
\caption{Empirical family-wise error rates (in percent) under the complete null ($\FAC=0$) at the 5\% nominal level. The reported global critical values are bootstrap critical values for the maximum statistic.}
\label{tab:complete-null-fwer}
\small
\setlength{\tabcolsep}{8pt}
\renewcommand{\arraystretch}{1.12}
\begin{tabular}{c cc cc}
\toprule
& \multicolumn{2}{c}{$K=200$}
& \multicolumn{2}{c}{$K=400$} \\
\cmidrule(lr){2-3}\cmidrule(lr){4-5}
$\lambda$
& FWER
& Mean critical value
& FWER
& Mean critical value \\
\midrule
0.0 & 5.0 & 47.48 & 5.9 & 51.55 \\
0.3 & 3.4 & 39.08 & 4.4 & 42.22 \\
0.6 & 1.5 & 31.58 & 1.4 & 33.71 \\
\bottomrule
\end{tabular}
\end{table}

Table~\ref{tab:power-partial-fwer-small} evaluates the article's strong FWER objective under partial alternatives, where genuine changes and unchanged coordinates occur simultaneously. Across all combinations of \(K\), \(\FAC\), $\SNR$, and \(\lambda\), the empirical FWER remains at or below \(5.9\%\), with the few values above the nominal \(5\%\) level falling within the Monte Carlo variability associated with \(1{,}000\) replications. For the single-step procedure, the error rate is largely insensitive to signal strength, indicating that the residualization step prevents genuine changes from substantially contaminating the bootstrap calibration. The procedure becomes more conservative as \(\lambda\) or the affected fraction increases, the latter because fewer unchanged coordinates remain while the single-step threshold is still calibrated over all \(K\) coordinates. The stepdown procedure partially reduces this conservatism, especially under dense and strong alternatives, by recalibrating over progressively smaller sets of hypotheses. Nevertheless, its FWER remains controlled, providing finite-sample support for Theorem~\ref{thm:fwer_control} and Corollary~\ref{cor:stepdown}.

\begin{table}[t]
\centering
\caption{Empirical family-wise error rates (in percent) under partial alternatives ($\FAC>0$) at the $5\%$ nominal level. Results are reported for $\SNR=2.5$ only; those for $\SNR\in\{0.75,1.5,3.5\}$ are very similar.}
\label{tab:power-partial-fwer-small}
\small
\setlength{\tabcolsep}{6pt}
\renewcommand{\arraystretch}{1.12}
\begin{tabular}{cc cc cc cc}
\toprule
& &
\multicolumn{2}{c}{$\lambda=0$}
& \multicolumn{2}{c}{$\lambda=0.3$}
& \multicolumn{2}{c}{$\lambda=0.6$} \\
\cmidrule(lr){3-4}
\cmidrule(lr){5-6}
\cmidrule(lr){7-8}
$K$ & FAC
& Single-step & Stepdown
& Single-step & Stepdown
& Single-step & Stepdown \\
\midrule
\multirow{3}{*}{200}
& 0.02 & 4.9 & 4.9 & 3.5 & 3.6 & 1.4 & 1.4 \\
& 0.10 & 4.8 & 5.5 & 2.7 & 3.2 & 1.2 & 1.3 \\
& 0.70 & 1.3 & 2.7 & 0.7 & 2.0 & 0.2 & 0.6 \\
\addlinespace[4pt]
\multirow{3}{*}{400}
& 0.02 & 5.7 & 5.9 & 4.2 & 4.2 & 1.5 & 1.5 \\
& 0.10 & 5.2 & 5.7 & 4.4 & 4.5 & 1.5 & 1.5 \\
& 0.70 & 1.9 & 4.2 & 1.4 & 3.3 & 0.5 & 1.1 \\
\bottomrule
\end{tabular}
\end{table}

Finally, Figure~\ref{fig:tpr} reports coordinatewise true-positive rates and shows that power increases with both SNR and the factor weight \(\lambda\), while generally decreasing when the dimension rises from \(K=200\) to \(K=400\). The improvement with \(\lambda\), together with the declining critical values in Table~\ref{tab:complete-null-fwer}, illustrates the covariance-adaptive nature of the procedure: stronger cross-coordinate dependence reduces effective multiplicity and permits a lower simultaneous threshold. The single-step true-positive rate changes little as the affected fraction increases, which is consistent with comparing each marginal statistic with a common threshold calibrated over all \(K\) coordinates. By contrast, the stepdown refinement yields a substantial power gain when \(\FAC=0.70\), particularly at the intermediate and high SNR levels, because detected coordinates are removed before the remaining hypotheses are recalibrated. Its advantage is negligible under sparse alternatives, where relatively few coordinates can be removed. Figure~\ref{fig:tpr} therefore empirically supports Corollary~\ref{cor:stepdown}: stepdown can improve detection under dense alternatives without sacrificing the strong FWER control documented in Table~\ref{tab:power-partial-fwer-small}.

\begin{figure}[thbp]
\centering
\includegraphics[
  width=0.9\textwidth,
  keepaspectratio
]{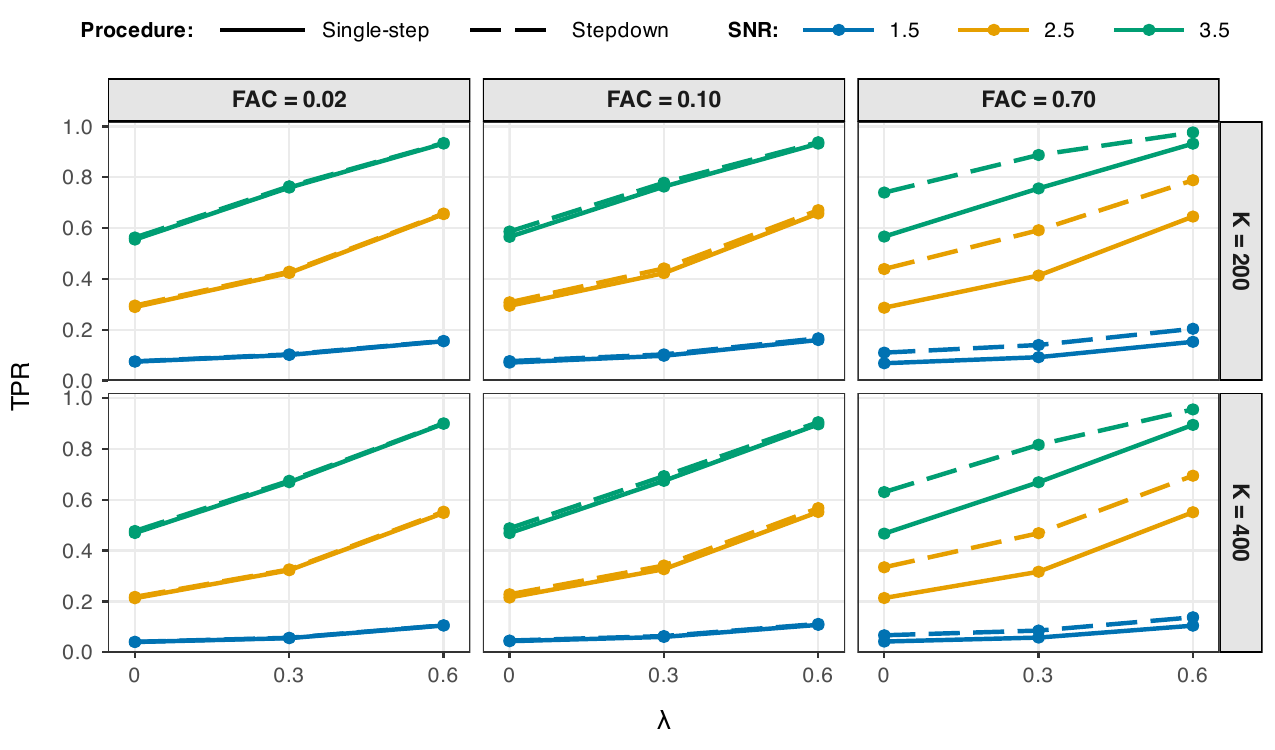}
\vspace{-.4cm}
\caption{Coordinatewise true-positive rate of the single-step and stepdown procedures across dependence strengths $\lambda$, fraction of affected coordinates $\FAC$, and dimensions $K$. Curves correspond to SNR levels $1.5$, $2.5$, and $3.5$.}
\label{fig:tpr}
\end{figure}

\subsection{Experiment 2: Global comparison with PE-CUSUM}
\label{subsec:experiment-global-comparison}

We recall the power-enhanced (PE) CUSUM procedure of \citet{li2024detection} using our notation. The procedure requires $\Hb_1=\dots=\Hb_K=L^2(\mathbb C)$ for some bounded set $\mathbb C$. Under this setting, define the aggregated functional observations and the corresponding CUSUM process by
\[
\widetilde f^{(i)}
=
\frac{1}{\sqrt K}\sum_{k=1}^K f_k^{(i)}, \qquad \widetilde C_n(s)
=
\frac{1}{\sqrt n}
\sum_{i=1}^s
\Big(
\widetilde f^{(i)}-\overline{\widetilde f}
\, \Big),
\quad s\in[n].
\]
The aggregated CUSUM statistic is
\begin{equation}\label{eq:aggregated_CUSUM_PE}
Z_n
=
\max_{s\in[n]}
\big\|
\widetilde C_n(s)
\big\|_\Hb^2.
\end{equation}
Using our marginal statistics
$
T_{n,k}^2
=
\max_{s\in[n]}
\|C_{n,k}(s)\|_\Hb^2
$
from \eqref{def:marginal_test},
the power-enhancement component can be written as
\[
Z_n^\circ
=
\sqrt{K\vee n}
\sum_{k\in[K]}
\mathbf 1
\left\{
T_{n,k}^2>\zeta_{n,K}
\right\},
\qquad
\zeta_{n,K}
=
c_\zeta
\log(K\vee n)
\log\log(K\vee n),
\]
where \(c_\zeta>0\) is a user defined tuning constant. The PE-CUSUM test statistic is then
\[
Z_n^{\mathrm{PE}}
=
Z_n+Z_n^\circ,
\]
which is compared with the Brownian-bridge-based null distribution of the aggregated CUSUM statistic $Z_n$. The constant \(c_\zeta\) controls the screening threshold used in the
power-enhancement component. Smaller values of \(c_\zeta\) make it easier
for marginal statistics to exceed the threshold and can therefore increase
power against sparse alternatives or alternatives that cancel under aggregation, at the cost of potentially inflating the rate of false positives. Following the simulation study of \citet{li2024detection}, we set 
$c_\zeta=\smash{\hat \lambda_1^{1/2}}$,
where $\hat \lambda_1$ is an estimate of the largest eigenvalue of long-run covariance operator of the process $\smash{\tilde f^{(1)}, \dots \tilde{f}^{(n)}}$ defined above \eqref{eq:aggregated_CUSUM_PE}.

We fix $\lambda=0$, use $K\in\{200,400\}$, and include all four combinations of common or heterogeneous change points and fixed or random orientations. We consider both the global null, $\FAC=0$, as well as (partial) alternatives, for which the FAC grid is
$
\{0.020,0.250\}.$
The first value corresponds to a sparse regime and we use
\[
\SNR\in \{1.0,1.5,2.0,2.5,3.0,3.5\};
\]
the second value corresponds to a moderately dense regime and we use 
\[
\SNR\in \{0.25,0.5,0.75,1.0,1.25,1.5\}.
\]
Since PE-CUSUM is global, the comparison is based on global rejection probabilities only.

We first discuss the empirical levels under the global null. For the proposed method, these are reported in Table~\ref{tab:complete-null-fwer} and equal $5.0\%$ for $K=200$ and $5.9\%$ for $K=400$. The corresponding values for the PE-CUSUM test are $4.9\%$ and $6.1\%$, respectively, consistent with the findings of \citet{li2024detection}. Overall, both tests provide a reasonable approximation to the nominal level.

Figure~\ref{fig:power-combined} compares the empirical power of the two methods over the partial-alternative grid. Viewed as a test of the global null-hypothesis, the proposed maximum-based procedure performs particularly well under sparse alternatives and heterogeneous change-point locations, where its coordinatewise CUSUM statistics can detect a small number of pronounced changes. PE-CUSUM is more powerful when many coordinates share a common functional direction, because their signals reinforce one another under aggregation.
Thus, the methods exploit complementary signal structures: PE-CUSUM favors dense, aligned global alternatives, whereas the proposed procedure applied as a test of the global null favors sparse or heterogeneous alternatives while additionally providing coordinate-specific decisions with strong FWER control. More fundamentally, the methods address different inferential objectives: PE-CUSUM tests only the global null, while the proposed procedure provides coordinate-specific decisions with strong FWER control.

\begin{figure}[t]
\centering
\includegraphics[
  width=0.99\textwidth,
  keepaspectratio
]{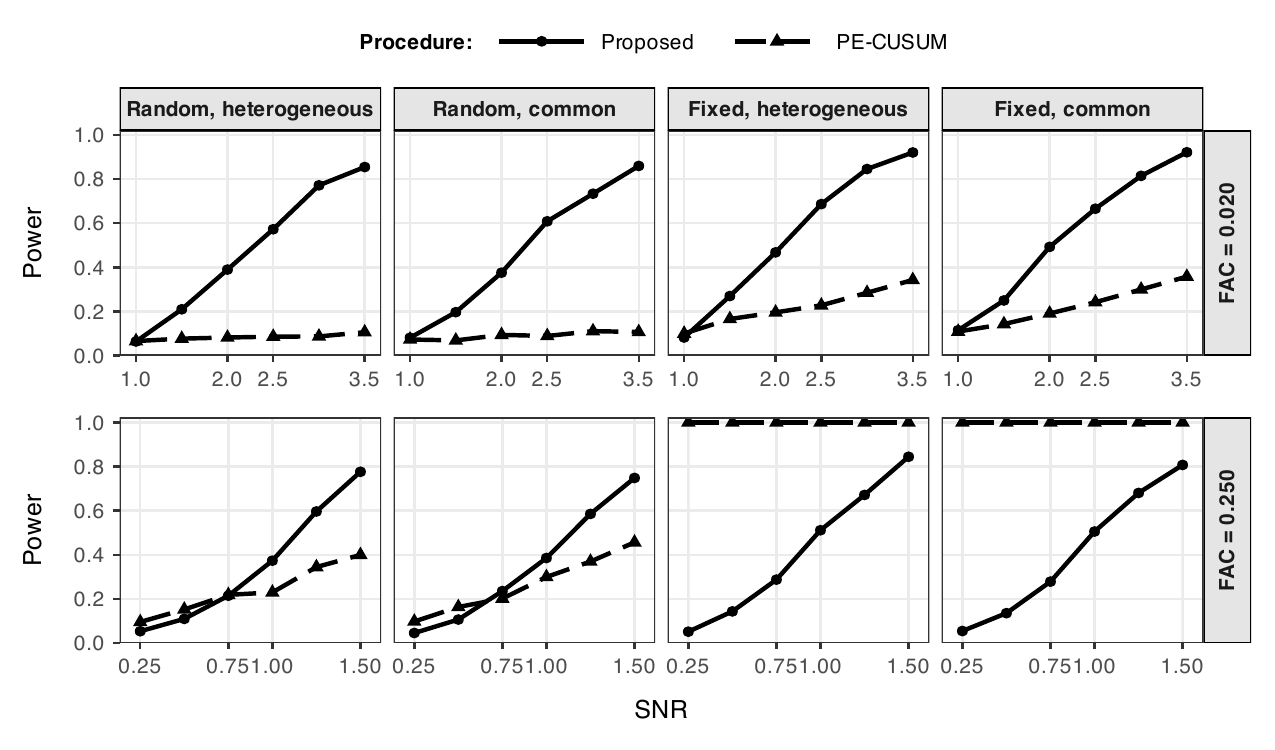}
\vspace{-.4cm}
\caption{Power of the proposed test and the PE-CUSUM test for the global null hypothesis $H_0^\infty$ from \eqref{eq:global-hypothesis}.}
\label{fig:power-combined}
\end{figure}

\section{Case studies}
\label{sec:case-study}

\subsection{European Rivers}
\label{subsec:rivers}

\subsubsection{Data and preprocessing}
Following \citet{sharipov2016sequential}, we analyze daily river-discharge measurements from the Global Runoff Data Centre (GRDC) over the period January 1, 1921 to December 31, 2020, and selected the $K=27$ stations that had complete observations over this \(n=100\) year period. February 29 was removed so that each year is represented on a common 365-day grid. 

Let \(D_k^{(i)}(t)\) denote the discharge at station \(k\), in year \(i\), and on calendar day \(t\). Each year is treated as one functional observation, and the data entering the analysis are the log-transformed
discharge curves
\begin{equation*}
f_k^{(i)}(t)
=
\log D_k^{(i)}(t).
\end{equation*}
The log transformation reduces the influence of very large discharge values and allows changes to be interpreted on a relative rather than absolute
scale. To place the stations on a comparable overall scale, we standardize each
station using a single station-specific scale factor. Specifically, for each
\(k\in[K]\), we replace 
$\smash{f_k^{(i)}(t)}$ by $\smash{f_k^{(i)}(t) / \widehat \sigma_k}$, where
\(\widehat\sigma_k\) is the pooled empirical standard deviation computed across all years and calendar days at station $k$.
For comparison, we also apply the univariate functional change-point procedure of \cite{Aue2018} separately to the standardized log-discharge curves of each station using the implementation from the R package \cite{fChange2025}. We note that \citet{sharipov2016sequential} implements a similar test based on the same test-statistic but with a different null-quantile calibration procedure. We then applied Holm–Bonferroni correction \citep{holm1979simple} to these $p$-values to obtain adjusted $p$-values that control for familywise error.

\subsubsection{Results}
Table~\ref{tab:river_station_p_values} reports the resulting $p$-values from the two procedures. For the proposed procedure, we report the simultaneous single-step $p$-values derived from the method outlined in Section~\ref{sec:methodology}.  Specifically, for each coordinate \(k\in[K]\), define
\[
\widehat p_k^{\,\mathrm{SS}}
=
\mathbb P\!\left(
(T_n^*)^2\ge T_{n,k}^2
\,\middle|\,
f^{(1)},\ldots,f^{(n)}
\right),
\]
where \((T_n^*)^2=\max_{j\in[K]}(T_{n,j}^*)^2\) is the bootstrap
maximum over all \(K\) coordinates, and the probability is conditional on the observed data and so only with respect to the multiplier randomness.  Using $B$ bootstrap
replications, we estimate this probability by
\begin{align} \label{eq:p-values}
\widehat p_{k,B}^{\,\mathrm{SS}}
=
\frac{
1+\sum_{b=1}^{B}
\mathbf 1\!\left\{
(T_n^{*(b)})^2\ge T_{n,k}^2
\right\}
}{B+1}.
\end{align}
Table~\ref{tab:river_station_p_values}
reports these simultaneous single-step \(p\)-values in the column
``Prop. Method.''

At the $5\%$ significance level, the proposed simultaneous procedure identifies a mean change at three of the 27 measurement stations: Aare at Brienzwiler and the RHINE at Diepoldsau--Rietbruecke and Domat/Ems. The corresponding estimated change-point years are 1950, 1963, and 1962, respectively. In contrast, the univariate procedure of \cite{Aue2018}, applied separately to each coordinate and followed by a post-hoc Bonferroni--Holm correction to ensure strong family-wise error control, does not detect a mean change at any station. 

The proposed method controls the family-wise error rate across all 27 tests while calibrating the rejection threshold to the dependence structure among stations. This is particularly relevant in this example, as stations on the same river may have noise fluctuations that are positively dependent. The bootstrap calibration can exploit this dependence, whereas the Bonferroni--Holm adjustment does not.

\begin{table}[htbp]
\centering
\caption{
Change-point results for the 27 river measurement stations.
The column ``Adjusted ARS'' reports the Bonferroni-Holm-adjusted $p$-values obtained from the univariate functional change-point procedure of \cite{Aue2018} applied separately to each station. The column ``Prop. Method'' reports the simultaneous marginal $p$-values
obtained from the proposed procedure. Values below $0.05$ are shown in bold. The final column reports estimated change-point when the corresponding $p$-value is significant. 
}
\label{tab:river_station_p_values}

\begin{tabular}{llccc}
\toprule
\textbf{River}
&
\textbf{Station}
&
\textbf{Adjusted ARS}
&
\textbf{Prop. Method}
&
\textbf{Change point}
\\
\midrule

{\small AARE}
& BERN-SCHOENAU
& 1.000
& 0.985
& --
\\

{\small AARE}
& BRIENZWILER
& 0.621
& \textbf{0.025}
& 1950
\\

{\small AARE}
& BRUEGG-AEGERTEN
& 1.000
& 0.992
& --
\\

{\small AARE}
& BRUGG
& 1.000
& 0.985
& --
\\

{\small AARE}
& MURGENTHAL
& 1.000
& 0.984
& --
\\

{\small AARE}
& UNTERSIGGENTHAL
& 1.000
& 0.985
& --
\\

{\small BIRSE}
& MONTIER (LA CHARRUE)
& 1.000
& 0.660
& --
\\

{\small EMME}
& EMMENMATT
& 1.000
& 0.278
& --
\\

{\small LANDQUART}
& FELSENBACH
& 1.000
& 0.957
& --
\\

{\small LUTSCHINE}
& GSTEIG
& 1.000
& 0.953
& --
\\

{\small MUOTA}
& INGENBOHL
& 0.882
& 0.320
& --
\\

{\small REUSS}
& MELLINGEN
& 0.621
& 0.901
& --
\\

{\small RHINE}
& BASEL
& 0.882
& 0.970
& --
\\

{\small RHINE}
& DIEPOLDSAU
& 0.081
& \textbf{0.033}
& 1963
\\

{\small RHINE}
& DOMAT/EMS
& 0.081
& \textbf{0.002}
& 1962
\\

{\small RHINE}
& NEUHAUSEN
& 0.225
& 0.905
& --
\\

{\small RHINE}
& REKINGEN
& 0.480
& 0.894
& --
\\

{\small SIMME}
& OBERWIL
& 0.882
& 0.836
& --
\\

{\small SITTER}
& APPENZELL
& 1.000
& 0.141
& --
\\

{\small THUR}
& ANDELFINGEN
& 1.000
& 0.411
& --
\\

{\small TOSS}
& NEFTENBACH
& 1.000
& 0.738
& --
\\

{\small MAIN}
& SCHWEINF. NEUER HAFEN
& 1.000
& 0.278
& --
\\

{\small RHINE}
& DUESSELDORF
& 1.000
& 0.936
& --
\\

{\small RHINE}
& KOELN
& 1.000
& 0.941
& --
\\

{\small RHINE}
& REES
& 1.000
& 0.965
& --
\\

{\small WUTACH}
& OBERLAUCHRINGEN
& 1.000
& 0.495
& --
\\

{\small RHINE}
& LOBITH
& 1.000
& 0.978
& --
\\

\bottomrule
\end{tabular}

\end{table}

The most notable difference between the two procedures concerns Aare--Brienzwiler. The adjusted univariate procedure yields $p=0.621$, whereas the proposed simultaneous procedure yields $\widehat p<0.025$, providing substantially stronger evidence for a change at this station. The estimated change-point year is 1950, which coincides with major hydropower development in the upper Aare region; in particular, the Räterichsboden dam was completed in 1950 (see~\citealp{thevenon2013human}, p.~416).

\subsection{Dow Jones Industrial Average}
\label{subsec:djia}

\subsubsection{Data and preprocessing}
Following \citet{Horvath2014} and \citet{li2024detection}, we analyze cumulative intraday log-return curves for constituents of the Dow Jones Industrial Average (DJIA). The data cover the period from January 2, 2018 through December 31, 2021, comprising \(n=1008\) trading days.

To match the notation of Section~\ref{sec:methodology}, the stocks are indexed
by \(k\in[K]\), with \(K=28\), and trading days by \(i\in[n]\). At intraday
time \(t\), let 
\[
P_k^{(i)}(t)
=
\frac{
\operatorname{Bid}_k^{(i)}(t)
+
\operatorname{Ask}_k^{(i)}(t)
}{2}
\]
denote the bid--ask midpoint price.
The observations are aligned to a common five-minute grid from 09:30 to 15:55,
giving 78 intraday price observations per trading day. Missing observations are completed following the
preprocessing used in \citet{li2024detection}, and the remaining missing
values are filled by interpolation.

For each stock and trading day, the midpoint prices are transformed into
cumulative intraday log returns (CIDRs). Writing \(t_0=09{:}30\) and
\(t_1,\ldots,t_{77}\) for the five-minute grid from 09:35 to 15:55, we set
\begin{equation*}
f_k^{(i)}(t_\ell)
=
100
\log\left(
\frac{P_k^{(i)}(t_\ell)}
     {P_k^{(i)}(t_0)}
\right),
\qquad
\ell=1,\ldots,77.
\end{equation*}
Thus, \(f_k^{(i)} \in \Hb_k = L^2([0,1])\) is the functional observation corresponding to stock \(k\)
on trading day \(i\), in the notation of
\eqref{eq:mathcalH-decomposition}--\eqref{eq:change_point_model}. 
In the implementation, the squared \(L^2\)-norm is approximated on this equally
spaced grid by
$
\|x\|_{\Hb_k}^2
\approx
77^{-1}
\sum_{\ell=1}^{77} x(t_\ell)^2.
$

\subsubsection{Results}
For each stock \(k\), we compute the marginal CUSUM statistic \(T_{n,k}^2\) defined in~\eqref{def:marginal_test}. 
We implement the proposed method using the tuning parameter choices of Section~\ref{subsec:tuning-parameter-choice}. Table~\ref{tab:stock_pvalues} reports the simultaneous marginal $p$-values from \eqref{eq:p-values} for the 28 stocks. None of the marginal hypotheses $H_0^k:\delta_k=0$ is rejected at level $\gamma=0.05$. The smallest $p$-value is obtained for Chevron (CVX), with $\widehat p_{\mathrm{CVX}}=0.470$. Since no coordinate is selected at the $5\%$ level, we do not report the corresponding CUSUM maximizers as detected change-point locations.

\begin{table}[htbp]
\centering
\caption{Simultaneous marginal $p$-values for the 28 DJIA stocks.}
\label{tab:stock_pvalues}
\begin{tabular}{lc|lc|lc|lc}
\toprule
\textbf{Stock} & \textbf{$p$-value} &
\textbf{Stock} & \textbf{$p$-value} &
\textbf{Stock} & \textbf{$p$-value} &
\textbf{Stock} & \textbf{$p$-value} \\
\midrule
AAPL.O & 1.000 & AXP    & 0.956 & CSCO.O & 0.980 & JNJ    & 1.000 \\
AMGN.O & 1.000 & BA     & 0.538 & CVX    & 0.470 & JPM    & 1.000 \\
CAT    & 0.988 & CRM    & 0.988 & DIS    & 0.996 & KO     & 1.000 \\
GS     & 0.689 & HD     & 0.996 & IBM    & 0.964 & INTC.O & 1.000 \\
MCD    & 0.984 & MMM    & 1.000 & MRK    & 1.000 & MSFT.O & 0.996 \\
NKE    & 0.912 & PG     & 1.000 & TRV    & 0.920 & UNH    & 0.689 \\
V      & 1.000 & VZ     & 1.000 & WBA.O  & 0.793 & WMT    & 1.000 \\
\bottomrule
\end{tabular}
\end{table}

The absence of significant coordinatewise mean changes is consistent with
earlier empirical evidence that CIDR curves can be quite stable. In particular,
\citet{Horvath2014} apply functional stationarity tests to intraday financial
data. They find systematic evidence against stationarity for sufficiently
long samples of raw intraday price curves, but not the corresponding CIDR curves.  
We note that the hypotheses are not identical to those considered here: \citet{Horvath2014} test the broader null that a functional time series is stationary and weakly dependent, whereas our marginal hypotheses \(H_0^k:\delta_k=0\) concern the absence of a change in the mean function. Nevertheless, the two findings are qualitatively consistent. 

Further empirical work supports this interpretation. Using five-minute
S\&P~500 data, \citet{RiceWirjantoZhao2020} examine cumulative intraday
log-return curves and report that functional stationarity tests suggest that
the return series are reasonably stationary, even though the same curves
exhibit pronounced conditional heteroscedasticity. Likewise, \citet{ShangJi2023} transform
five-minute S\&P/ASX All Ordinaries prices into CIDRs and report a functional
KPSS \(p\)-value of \(0.737\), providing no evidence against stationarity in
their sample.

A notable contrast is provided by \citet{li2024detection}, who analyze the
same DJIA CIDR dataset analyzed here and report evidence of a structural break,
with 22 stocks contributing to the power-enhancement component. We can think of two main reasons for this contrast. 

First, the PE-CUSUM method is designed to detect a global change in systems for which the marginal changes are similarly oriented; recall that its high power in such settings has been illustrated in Section~\ref{sec:simulation-study}. 
It may be the case that many of the CIDR trajectories experienced a small mean change in a nearly uniform direction, and that the global aggregate statistic $Z_n$ of~\eqref{eq:aggregated_CUSUM_PE} amplifies these small changes into a large global signal. 

Second, our replication of \citet{li2024detection} shows that the finite-sample conclusions can be sensitive to the choice of $c_\zeta$. Theorem~1(i) of \citet{li2024detection} establishes asymptotic Type~I error control for any fixed $c_\zeta>0$ under their stated assumptions. In finite samples, however, smaller values of $c_\zeta$ lower the screening
threshold and can substantially increase the power-enhancement term $Z_n^\circ$ by increasing the number of coordinates that contribute to it.
Since each such coordinate contributes $\sqrt{K\vee n}$ to $Z_n^\circ$, this effect can be meaningful in high dimensions, especially because the limiting Brownian-bridge reference distribution has no explicit dependence on $K$.  In their simulation study (and in our implementation of their method in our own simulation study), \citet{li2024detection} set
$c_\zeta=\smash{(\widehat\lambda_1)^{1/2}}$, where $\widehat\lambda_1$ is the estimated largest eigenvalue of the long-run covariance operator of the aggregated
process. In the code used for their empirical DJIA analysis the choice $c_\zeta=\smash{(\widehat\lambda_1)^{1/2} /12.5}$ is instead used, thereby lowering the screening threshold by a factor of $12.5$. 
While this choice is asymptotically valid, in our reimplementation of the empirical analysis using $c_\zeta=\smash{(\widehat\lambda_1)^{1/2}}$, no marginal CUSUM statistic exceeds the screening threshold and the resulting global PE-CUSUM test does not reject, with $p=0.225$.

%
%

\begin{acks}[Acknowledgments]
During the preparation of this manuscript, the authors used ChatGPT, developed by OpenAI, to improve the language and clarity of the text and to assist with the presentation of figures. The authors reviewed and edited all AI-assisted content and take full responsibility for the final manuscript.
\end{acks}
\begin{funding}
Both authors were supported by the Deutsche Forschungsgemeinschaft (DFG, German Research Foundation; Project-ID 520388526; TRR 391: Spatio-temporal Statistics for the Transition of Energy and Transport) which is gratefully acknowledged. Calculations for this publication were performed on the HPC cluster Elysium of the Ruhr University Bochum, subsidised by the DFG (INST 213/1055-1).
\end{funding}

\begin{supplement}
\stitle{Supplement to the Paper: ``Simultaneous Change-Point Inference for High-Dimensional Functional Time Series''}
\sdescription{The supplement contains all proofs for the results in this paper.}
\end{supplement}


\bibliographystyle{imsart-nameyear} 
\putbib[biblio]
\end{bibunit}
\begin{bibunit}[imsart-nameyear]

\newpage

\appendix
\thispagestyle{empty}
\numberwithin{equation}{section}

\counterwithin{figure}{section}
\counterwithin{table}{section}

\begin{center}
	
	{\bfseries SUPPLEMENT TO THE PAPER:  \\  ``\MakeUppercase{Simultaneous Change-Point Inference for High-Dimensional Functional Time Series'' }}
	\vspace{.5cm}
	
	{\textsc{By Axel Bücher and David Colin Decker}}
	
	\vspace{.28cm}
	
	{\textit{Ruhr-Universität Bochum}}
	
	\vspace{.28cm}

	\begin{center}
		\begin{minipage}{.6\textwidth}
			{\small \hspace{.5cm}
					This supplement contains all proofs for the results stated in the main paper. Section~\ref{sec:notation-and-definitions} introduces notation and definitions used throughout. The proofs of the four main results are presented in Section~\ref{sec:proofs-of-main-theorems}, with supporting arguments collected in Sections~\ref{sec:supporting-results} and~\ref{sec:miscellaneous}. Finally, Sections~\ref{sec:orlicz} and~\ref{sec:Hilbert_space_defs} summarize standard background on Orlicz norms and Hilbert spaces, respectively.
				 }
		\end{minipage}
	\end{center}

\end{center}

\vspace{.5cm}

\begingroup

\etocsettagdepth{main}{none}

\etocsettagdepth{Supplementary}{subsection}


\setcounter{tocdepth}{2}
\tableofcontents

\endgroup

\etocdepthtag.toc{Supplementary}

\appendix

\section{Notation and Definitions}
\label{sec:notation-and-definitions}

Standard notation related to separable Hilbert spaces is defined in Section~\ref{sec:Hilbert_space_defs}. In particular, $\|\cdot\|_{\Tr}$, $\|\cdot\|_{\HS}$ , $\|\cdot\|_{\op}$ denote the trace, Hilbert-Schmidt, and operator norms of a linear operator on a reference Hilbert space. If $T:\Hb_1\to \Hb_2$ is a linear map, and then $T^*:\Hb_2\to \Hb_1$ is its adjoint. If $A=\varnothing$ and $(\cdot)$ is a generic argument we define $\max_{s \in A}(\cdot):=0$ and  $\sum_{s \in A}(\cdot):=0$; in the second case, $0$ is interpreted as the $0$ element of a relevant vector space.  For example if  $a <b$ we define $\sum_{b}^{a}(\cdot):=0$. If $ z \in \N$, $[z] = \{1, \dots, z\}$, and if $z=\infty$, $[z]:=\N$. If $a \in \R$,   $\lfloor a \rfloor  = \max\{z \in \Z : z \leq a\}$, and $\lceil a \rceil = \min\{z \in \Z: z \geq a\}$, the ceiling and floor functions. It is straightforward to see that $\lceil a \rceil-\lfloor a \rfloor =1$ if $a\notin \Z$. If $X$ and $Y$ are random variables and $\rho \in \R$, we define the $\rho$-restricted Kolmogorov distance
\begin{align}
\label{eq:restricted-k-distance}
d_{\mathrm{K}}^{(\rho)}(X,Y) := \sup_{t \ge \rho} d_t\big(X,Y\big), \qquad d_t\big(X,Y\big):=\Big|\Prob\big(X\leq t\big)-\Prob\big(Y\leq t\big)\Big|.
\end{align}

If $A=(A_{ij})_{ij}$ is a matrix, $\|A\|_{\infty}=\max_{i,j}|A_{ij}|$. Let $I=[0,1]$ be the unit interval. Let $\N_{\geq 0}:=\N \cup \{0\}$ and for a natural number $x$ let $\N_{\geq x}:=\{n \in \N: n \geq x\}$. We use $\Z$ to refer to the positive and negative integers and $0$. We write $A\lesssim B$ if there is an absolute constant $C$ such that $A\leq C \cdot B$. 

For the $\beta$-mixing coefficients $\beta_h$ of Assumption~\ref{cond:stationary} define the following serial-dependence coefficient for all $j\in \N_{\geq 0}$:
\begin{equation}\label{eq:def-S_beta}
S_{\beta,j}:= \sum_{h=1}^{\infty}
    h^j\beta_h\log^2(4/\beta_h),
\end{equation}
with the convention that \(0\log^2(4/0)=0\). We will only use $S_{\beta,0}$ and $S_{\beta,1}$. Note that $S_{\beta,0}\leq S_{\beta,1}$. Since Assumption~\ref{cond:stationary} gives $\beta_h\leq C_\beta h^{-c_\beta}$ for some $c_\beta>2$, it holds that $S_{\beta,1}$ is bounded by a constant that depends only on $C_\beta$ and $c_\beta$.

\subsection{Hilbert Space Conventions and Projection Operators}
\label{subsec:direct_sum}
Recall that $\Hb_1, \dots, \Hb_K$ are separable Hilbert spaces, and that $\mathcal{H}:=\oplus_{k=1}^K\Hb_k$ is the collection of all tuples of the form $(v_1, \dots, v_K)$, where for each $k \in [K]$, $v_k\in \Hb_k$. Equipped with the direct sum vector  space structure (component wise addition and scalar multiplication distributed over the components) and the direct sum inner-product
\begin{equation*}
\big\langle (w_1, \dots, w_K),  (v_1, \dots, v_K)\big\rangle_{\mathcal{H}}=\sum_{k \in [K]}\langle w_k, v_k\rangle_{\Hb_k}
\end{equation*}
is itself a separable Hilbert space. Given an ordered subset $\chgset = \{k_{1}, \dots, k_{d}\} \subset [K]$ consisting of distinct elements (so that $|\chgset|=d$) we define $\pi_{\chgset}:\mathcal{H}\to \oplus^{d}_{j=1}\Hb_{k_j}=:\Hb_{\chgset}$ as $\pi_{\chgset}w=[w_{k_1}, \dots, w_{k_d}]$. In case $\chgset =\{k\}$ for some $k\in [K]$ we define $\pi_{k}:=\pi_{\chgset}=\pi_{\{k\}}$ and generally drop the parentheses in the subscript of a factor projection map to lighten notation.

Recall for each $k \in [K]$ the trace class linear operator $\mathcal{K}_k: \Hb_k \to \Hb_k$ defined above Assumption~\ref{cond:scores}. Let  $\mathcal{K}:\mathcal{H} \to \mathcal{H} $ be the long run covariance operator of the strictly stationary centered series strictly stationary and centered series $(\eps^{(i)})_{i \in \Z}$ of~\eqref{def:global_noise}, which exists by Lemma~\ref{lem:convergence_to_long_term_trace}.  For each $k \in [K]$ we have the identity $\mathcal{K}_k=\pi_k\circ \mathcal{K}\circ \pi^*_k$. We define 
\begin{equation}\label{def:C_Kappa}
C_{\mathcal K}:=\max_{k \in [K]}\big\|\mathcal{K}_k\|_{\Tr},
\end{equation}
which is the $L^\infty$-trace norm of $\mathcal{K}$, with respect to the decomposition $\mathcal{H}:=\oplus_{k=1}^K\Hb_k$. Under Assumption~\ref{cond:scores}, we have 
\begin{align}
\label{eq:C_t}
C_{\mathcal K}  \le \mathcal C_{\mathcal T} :=  C_U^2 \sum_{j=1}^\infty j^{-2\gamma_U} < \infty,
\end{align}
which is finite since $\gamma_U>1/2$. Further note that
\begin{align} \label{eq:trace-class-covariance-products}
\| \mathcal K \|_{\Tr} = \sum_{k \in [K]} \| \mathcal K_k\|_{\Tr} \le K \mathcal C_{\mathcal T},
\end{align}
which is valid since $\mathcal K$ is positive-semidefinite. 
For each $k\in[K]$, define
\[
    \tilde{\Hb}_k := \overline{\textrm{im}(\mathcal K_k)} \subseteq \Hb_k,
\]
where the closure is taken in the norm topology of $\Hb_k$. Since $\tilde{\Hb}_k$ is a closed linear subspace of the separable Hilbert space $\Hb_k$, it is itself a separable Hilbert space, with vector space structure and inner product inherited from $\Hb_k$. Let
\[
    \tilde{\mathcal H} := \bigoplus_{k \in [K]} \tilde{\Hb}_k .
\]
By the spectral theorem for compact, self-adjoint, positive operators, the eigenvectors of $\mathcal K_k$ corresponding to strictly positive eigenvalues form an orthonormal basis of $\tilde{\Hb}_k$. Moreover, if $X\in\Hb_k$ is a centered random element with covariance operator $\mathcal K_k$, then
\[
    \Prob(X\in \tilde{\Hb}_k)=1;
\]
see Lemma~\ref{lem:properties-covariance}(3).
Thus, although $X$ is formally an $\Hb_k$-valued random element, it almost surely lies in the principal subspace $\tilde{\Hb}_k$ generated by the eigen-directions of $\mathcal K_k$. As in the formulation of Assumption~\ref{cond:scores}, let $r_k$ denote the number of strictly positive eigenvalues of $\mathcal K_k$, counted with multiplicity; $r_k$ may be finite or infinite. We write the corresponding orthonormal eigenvectors as $\{Z_{kj}\}_{j=1}^{r_k}$. These eigenvectors form an orthonormal basis of $\tilde{\Hb}_k$, meaning that
\[
    \tilde{\Hb}_k
    =
    \overline{\operatorname{span}}\{Z_{kj}\}_{j=1}^{r_k}.
\]
Equivalently, every $x\in\tilde{\Hb}_k$ admits the expansion
\[
    x
    =
    \sum_{j=1}^{r_k}
    \langle x,Z_{kj}\rangle_{\Hb_k} Z_{kj},
\]
where the sum is finite if $r_k<\infty$ and converges in $\Hb_k$ if $r_k=\infty$. For each $M\in\N$, define
\begin{align}
\label{eq:definition-check-mk}
    \check M_k := r_k\wedge M,
\end{align}
with the convention that $\infty\wedge M=M$. We then define $ P_{kM}:{\Hb}_k\to \Hb_k$
by 
\begin{equation}\label{eq:def-PkM}
    P_{kM}x
    :=
    \sum_{j=1}^{\check M_k}
    \langle x,Z_{kj}\rangle_{\Hb_k} Z_{kj}.
\end{equation}
Thus $P_{kM}x$ is the orthogonal projection (in particular, it is self-adjoint) of $x$ onto the span of the first $\check M_k$ principal directions of $\mathcal{K}_k$.  In particular, for every $x\in{\Hb}_k$,
\begin{align}
\label{eq:orthogonal}
    \|P_{kM}x\|_{\Hb_k}^2
    +
    \|(\id_{\Hb_k}-P_{kM})x\|_{\Hb_k}^2
    =
    \|x\|_{\Hb_k}^2.
\end{align}
We then define the operator $P_M=\oplus_{k=1}^K P_{kM}: \mathcal H \to \mathcal H$, which takes values in $\tilde{\mathcal H}$.
Finally, define
\begin{align}
\label{eq:def-OkM}
    O_{kM}:
    \operatorname{span}\{Z_{kj}:1\le j\le \check M_k\}
    \to
    \R^{\check M_k}
\end{align}
to be the linear isometry that sends each basis vector $Z_{kj}$ to the $j$th standard basis vector $e_j\in\R^{\check M_k}$. Equivalently
\[
    x=\sum_{j=1}^{\check M_k} v_j Z_{kj} \mapsto O_{kM}x = (v_1,\ldots,v_{\check M_k})^\top.
\]
We have $O_{kM}^*=O_{kM}^{-1}$.

\subsection{Blocking and Coupling}
\label{subsubsec:blocking_coupling}
We first introduce the sigma-field generated by the noise variables over the observed excerpt:
\[
\label{def:mathcal_F_n}
\mathcal F_n
:=
\sigma\left(
    \varepsilon_k^{(i)}:
    1\le i\le n,\ k\in[K]
\right)
=
\sigma\left(
    \varepsilon^{(i)}:
    1\le i\le n
\right).
\]
Recalling the sets \(I_\ell\) and \(J_\ell\) from~\eqref{def:big_little_blocks}, define the big- and small-block noise sums, for \(\ell\in[m]\), by
\begin{equation}
\label{def:block_sums}
S_\ell
:=
\sum_{i\in I_\ell}\varepsilon^{(i)},
\qquad
S'_\ell
:=
\sum_{i\in J_\ell}\varepsilon^{(i)}.
\end{equation}
By Berbee's coupling lemma, Lemma~\ref{lem:berbee}, possibly after enlarging the probability space, we may construct random elements
\begin{equation*}
\label{def:tilde eps}
\tilde \eps^{(1)}, \dots, \tilde \eps^{(n)}
\end{equation*}
such that for all $i \in [n]$ it holds that $\tilde \eps^{(i)}=_d \eps^{(i)}$, and that further, it holds that \(\widetilde S_1,\ldots,\widetilde S_m\) are independent where for each $\ell \in [m]$  we define
\begin{equation}\label{def:big_block_sums}
 \widetilde S_\ell = \sum_{i\in I_\ell}\tilde \eps^{(i)}.
\end{equation}
Further, the Berbee-coupled block sums satisfy 
\[
    \widetilde S_\ell\stackrel{d}{=}S_\ell,
    \qquad
    \ell\in[m].
\]
Moreover, defining
\begin{equation}
\label{def:Omega_big}
\Omega_{\mathrm{big}}
:=
\left\{
S_\ell=\widetilde S_\ell
\text{ for all }\ell\in[m]
\right\},
\end{equation}
we have
\[
\mathbb P(\Omega_{\mathrm{big}}^c)
\le
\sum_{\ell=2}^m
\beta\!\left(
\sigma(S_\ell),
\sigma(S_1,\ldots,S_{\ell-1})
\right)
\le
(m-1)\beta_r.
\]
Here the final inequality uses the fact that consecutive big blocks are separated by a small block of length \(r\). Similarly, again possibly after enlarging the probability space, we may construct random elements
\[
\widetilde S'_1,\ldots,\widetilde S'_m
\]
such that \(\widetilde S'_1,\ldots,\widetilde S'_m\) are independent and
\[
    \widetilde S'_\ell\stackrel{d}{=}S'_\ell,
    \qquad
    \ell\in[m].
\]
Defining
\begin{equation*}
\Omega_{\mathrm{small}}
:=
\left\{
S'_\ell=\widetilde S'_\ell
\text{ for all }\ell\in[m]
\right\},
\end{equation*}
we have
\[
\mathbb P(\Omega_{\mathrm{small}}^c)
\le
\sum_{\ell=2}^m
\beta\!\left(
\sigma(S'_\ell),
\sigma(S'_1,\ldots,S'_{\ell-1})
\right)
\le
(m-1)\beta_q,
\]
because consecutive small blocks are separated by a big block of length \(q\). Consequently, on the event
\begin{equation}
\label{eq:Omega-block}
\Omega_{\mathrm{block}}
:=
\Omega_{\mathrm{big}}\cap\Omega_{\mathrm{small}},
\end{equation}
both the big-block sums and the small-block sums agree with their respective independent coupled versions. Moreover,
\begin{equation}
\label{eq:Omega-block-probability}
\mathbb P(\Omega_{\mathrm{block}}^c)
\le
(m-1)\beta_r+(m-1)\beta_q.
\end{equation}
The two coupled families are used separately below; the construction does not require the coupled big-block family and the coupled small-block family to be mutually independent. Finally, we define the sigma-field generated by the coupled big-block sums:
\begin{equation*}
\label{def:tilde_mathcal_F_n}
\widetilde{\mathcal F}_n
:=
\sigma\left(
\widetilde S_\ell:
1\le \ell\le m
\right).
\end{equation*}
We define the join of $\widetilde{\mathcal F}_n$ and $\mathcal F_n$, the smallest $\sigma$-field containing both $\widetilde{\mathcal F}_n$ and $\mathcal F_n$, as 
\begin{equation}\label{def:join_data_coupled}
\widetilde{\mathcal F}_n \vee \mathcal F_n =\sigma(\widetilde{\mathcal F}_n, \mathcal F_n).
\end{equation}

\subsection{Tensor product notation and projections}
\label{subsec:tensor-products}
For a symmetric matrix $A=(a_{st})_{s,t=1}^n \in \R^{n \times n}$ and an operator $T:\mathcal H \to \mathcal H$, we define the Kronecker product operator $(A \otimes T):\mathcal H^n \to \mathcal H^n$ by
\begin{align} \label{eq:Kronecker-product-operator}
\begin{pmatrix}
    x_1 \\ \vdots \\ x_n
\end{pmatrix}  \mapsto 
(A \otimes T) 
\begin{pmatrix}
    x_1 \\ \vdots \\ x_n
\end{pmatrix} 
:=
\begin{pmatrix}
    a_{11} T & \dots & a_{1n} T \\
    \vdots & & \vdots \\
    a_{n1} T & \dots & a_{nn} T
\end{pmatrix} 
\begin{pmatrix}
    x_1 \\ \vdots \\ x_n
\end{pmatrix}
=
\begin{pmatrix}
    \sum_{s=1}^n a_{1s} T x_s \\
    \vdots  \\
    \sum_{s=1}^n a_{ns} T x_s
\end{pmatrix} 
\end{align}
Clearly, $A \otimes T$ is bi-linear. A standard argument based on the spectral theorem shows that
\begin{align} \label{eq:operator-norm-Kronecker}
\| A \otimes T \|_{\op} = \| A \|_{\op} \| T \|_{\op}. 
\end{align}
For $\sigma \subset [n]$ and $\chgset \subset [K|$, write 
\begin{align}
\label{eq:product-spaces-with-indices}
\mathcal H_\chgset = \bigoplus_{k \in \chgset} \Hb_k, \qquad \mathcal H_\chgset^\sigma = \bigoplus_{s \in \sigma}  \mathcal H_\chgset.
\end{align}
Note that $\mathcal H = \mathcal H_{[K]}$ and $\mathcal H^n = \mathcal H^{[n]}$. In later sections, we will be interested in the projection
\begin{align}
\label{eq:projection-pisigmak}
\pi_{\sigma,\chgset}: \mathcal H^n \to \mathcal H_\chgset^\sigma, 
\qquad 
(x_1, \dots, x_n) \mapsto \big( (x_{s_1k})_{k \in \chgset}, \dots, (x_{s_{|\sigma|}k})_{k \in \chgset}\big),
\end{align}
and in particular, in the operator
\[
\pi_{\sigma ,\chgset }(A \otimes T) \pi_{\sigma ,\chgset }^*:  \mathcal H_\chgset^\sigma \to  \mathcal H_\chgset^\sigma.
\]
This operator can be rewritten in terms of further projections as follows: write
\begin{align*}
    p_\sigma: &\mathcal H^n \to \mathcal H^\sigma,
    \qquad
    (x_1, \dots, x_n) \mapsto (x_s)_{s \in \sigma}, 
    \\
    \pi^\sigma_\chgset :  & \mathcal H^\sigma \to \mathcal H_\chgset^\sigma,
    \qquad
    (x_s)_{s \in \sigma } \mapsto ( (x_{sk})_{k \in \chgset})_{s \in \sigma}, 
\end{align*}
and note that
\[
\pi_{\sigma,\chgset} 
=
\pi_\chgset^\sigma \circ p_{\sigma}. 
\]
A straightforward calculation shows that
\[
p_\sigma (A \otimes T) p_\sigma^* = A_\sigma \otimes T : \mathcal H^\sigma \to \mathcal H^\sigma,
\]
where $A_\sigma =(a_{st})_{s,t \in \sigma}$ is the submatrix of $A$ with rows and columns in $\sigma$. Next,
\[
\pi_\chgset^\sigma (A_\sigma \otimes T) (\pi_\chgset^\sigma)^*
=
A_\sigma \otimes (\pi_\chgset T \pi_\chgset^* )
\]
where we write $\pi_ \chgset = \pi^{[n]}_\chgset$. As a consequence, 
\begin{align}
\label{eq:representation-projection-kronecker}
\pi_{\sigma ,\chgset }(A \otimes T) \pi_{\sigma, \chgset }^* = A_\sigma \otimes  (\pi_\chgset T \pi_\chgset^* ).
\end{align}

\subsection{Discretization of the Brownian Bridge}
\label{subsec:discrete_brownian-bridge}
For each $u, v  \in I=[0,1]$ let 
\begin{equation}\label{def:brownian_covariance_kernel_variance_2}
K(u,v) := u\wedge v -uv, \hspace  {0.5cm}V(u):= K(u,u)=u(1-u).
\end{equation}
Recalling $\mathbb B_{\mathcal K}$ from \eqref{def:Hilbert_Brownian_Bridge} define the discretized Brownian Bridge $\mathcal{B}^{(n)} \in \Hc^n$ by
\begin{align}
\label{eq:def-bn}
\mathcal{B}^{(n)}
:= 
\bigl(
\mathbb B_{\mathcal K}(1/n),
\ldots,
\mathbb B_{\mathcal K}(1)
\bigr)^\top.
\end{align}
Note that the associated covariance operator $\Sigma^{(n)}:=\Cov(\mathcal B^{(n)}):\mathcal H^n \to \mathcal H^n$ can be written as 
\[
\begin{pmatrix}
    x_1 \\ \vdots \\ x_n
\end{pmatrix}  \mapsto 
\Sigma^{(n)} \begin{pmatrix}
    x_1 \\ \vdots \\ x_n
\end{pmatrix} 
:=
\begin{pmatrix}
    \Sigma^{(n)}{(1,1)} & \dots & \Sigma^{(n)}{(1,n)} \\
    \vdots & & \vdots \\
    \Sigma^{(n)}{(n,1)} & \dots & \Sigma^{(n)}{(n,n)}
\end{pmatrix} 
\begin{pmatrix}
    x_1 \\ \vdots \\ x_n
\end{pmatrix}
\]
with $\Sigma^{(n)}(s,t): \mathcal H \to \mathcal H$ given by
\[
\Sigma^{(n)}(s,t) = K\Big(\frac sn, \frac tn\Big) \cdot \mathcal K.
\]
Using the Kronecker product operator from \eqref{eq:Kronecker-product-operator}, we can write this as
\begin{align} 
\label{eq:Kronecker-Kn}
\Sigma^{(n)} = K_n \otimes \mathcal K, \qquad K_n = [K(s/n,t/n)]_{s,t=1}^n \in \R^{n \times n}.
\end{align}
Finally, for $ (k,s)\in[K]\times[n]$, we also write
\[
\mathcal{B}_{sk}^{(n)}
= \pi_{sk} \mathcal B^{(n)}
=
\mathbb B_{\mathcal K,k}(s/n) 
\in \Hb_k,
\]
with $\pi_{sk}:=\pi_{\{s\},\{k\}}$ from \eqref{eq:projection-pisigmak}.

\subsection{Key Intermediate Gaussian Variables}
\label{subsec:key-intermediate-variables}
Recall from Section~\ref{subsec:bootstrap} that $
    m:=\left\lfloor n/(q+r)\right\rfloor$. For each \(s\in[n]\) define
\begin{equation}\label{eq:def-Bs}
    B_s:=\left\lceil \frac{s}{q+r}\right\rceil ,
\end{equation}
so that \(1\le B_s\le m+1\). For \(s\in[n]\) and \(\ell\in[m]\), define
\begin{equation}
\label{eq:def-a_sl}
    a_{s\ell}
    :=
    \mathbf 1\{\ell\le B_s\}-\frac{s}{n},
    \qquad
    a_s:=(a_{s1},\ldots,a_{sm})^\top\in\mathbb R^m .
\end{equation}
Let $A_n \in \mathbb R^{n \times n}$ be the matrix with entries
\begin{equation*}
    [A_n]_{st}
    :=
    \frac{1}{m}
    \langle a_s,a_t\rangle \in [-1,1],
    \qquad s,t\in[n].
\end{equation*}
Since \(A_n\) is a rescaled Gram matrix, it is positive semidefinite. 
Let
\[
G^{(n,q)} = (G^{(q)}_1, \dots, G_n^{(q)}) \in \mathcal H^n
\]
be centered Gaussian with covariance operator $\Sigma^{(n,q)}: \mathcal H^n \to \mathcal H^n$ defined by 
\[
\begin{pmatrix}
    x_1 \\ \vdots \\ x_n
\end{pmatrix}  \mapsto 
\Sigma^{(n,q)} \begin{pmatrix}
    x_1 \\ \vdots \\ x_n
\end{pmatrix} 
:=
\begin{pmatrix}
    \Sigma^{(n,q)}{(1,1)} & \dots & \Sigma^{(n,q)}{(1,n)} \\
    \vdots & & \vdots \\
    \Sigma^{(n,q)}{(n,1)} & \dots & \Sigma^{(n,q)}{(n,n)}
\end{pmatrix} 
\begin{pmatrix}
    x_1 \\ \vdots \\ x_n
\end{pmatrix}
\]
where, for $s,t \in [n]$, $\Sigma^{(n,q)}(s,t): \mathcal H \to \mathcal H$ is defined by 
\[
\Sigma^{(n,q)}{(s,t)}:= \frac1m \langle a_s, a_t\rangle \cdot \Sigma_q
\]
and where
\[
    \Sigma_q
    :=
    \Cov\bigl(q^{-1/2}S_1\bigr),
\]
with \(S_1 \in \mathcal H\) the first big-block noise sum defined in~\eqref{def:block_sums}. Using the Kronecker product operator from \eqref{eq:Kronecker-product-operator}, we can write
\[
\Sigma^{(n,q)} = A_n \otimes \Sigma_q
\]
Note that 
\[
\Sigma^{(n,q)}{(s,t)}
=
\Cov(G^{(q)}_s,G_t^{(q)} ), \qquad  \Sigma^{(n,q)}{(s,t)} = (\Sigma^{(n,q)}{(t,s)})^*.
\]

Fix $s \in [n]$, $k \in [K]$ and $M \in \N$. Recall $P_{kM}$ from \eqref{eq:def-PkM}, $O_{kM}$ from \eqref{eq:def-OkM}, and $\pi_{sk} : \mathcal H^n \to \Hb_k, (x_1, \dots, x_n) \to x_{sk}$, where $x_s=(x_{s1}, \dots, x_{sK}) \in \mathcal H$. Define
\begin{align}
\label{eq:definition-GMsk}
G_{sk}^{M} = G_{sk}^{(M,n,q)} :=  (O_{kM} \circ P_{kM} \circ \pi_{sk}) G^{(n,q)},
\end{align}
which is a random variable in $\R^{\check M_k}$ with $\check M_k = r_k \wedge M$. A straightforward calculation shows that the cross-covariance between $G_{sk}^M$ and $G_{t\ell}^M$ is
\begin{align}
\label{eq:covariance-GMsk}
\Cov( G_{sk}^M, G_{t\ell}^M ) 
&=  \nonumber
O_{kM} \circ P_{kM} \circ \pi_{sk} \circ \Sigma^{(n,q)} \circ \pi_{t\ell}^* \circ P_{\ell M}^* \circ O_{\ell M}^*
\\&=
\frac1m \langle a_s , a_t \rangle \cdot \Big( \langle \Sigma_q \pi_\ell^* Z_{\ell j}, \pi_k^* Z_{ki} \rangle_{\mathcal H} \Big)_{i,j=1}^M \in \R^{M \times M}
\end{align}
with row index $i$ and column index $j$. Here, we made use of the fact that $P_{\ell M}^* = P_{\ell M}$. 
By Lemma~\ref{lem:properties-covariance} and since $\pi_\ell^*(y_\ell) = (0, \dots, 0, y_\ell, 0 \dots 0)$, we have the alternative representation
\begin{align}
\label{eq:covariance-GMsk-alternative-expresssion}
\langle \Sigma_q \pi_\ell^* Z_{\ell j}, \pi_k^* Z_{ki} \rangle_{\mathcal H}
=
\frac1q \E\Big[ \langle S_1, Z_{\ell j} \rangle_{\Hb_k}\langle S_1, Z_{k i} \rangle_{\Hb_k}  \Big].
\end{align}

\section{Proofs of Main Theorems}
\label{sec:proofs-of-main-theorems}

\begin{proof}[Proof of Theorem~\ref{thm:gaussian_approximation}]
Fix \(P\in\mathcal P_0\), and define
\[
B_P
:=
\max_{k\in[K]}\sup_{u\in [0,1]}
\|\mathbb B_{\mathcal K_P,k}(u)\|_{\Hb_k}^2,
\]
where \(\mathbb B_{\mathcal K_P}\) is from \eqref{def:Hilbert_Brownian_Bridge}. Under \(H_0^\infty\), the baseline mean \(\mu_k\) from \eqref{eq:change_point_model} cancels from every marginal CUSUM statistic in $T_n$ from \eqref{def:global_test}, such that $T_n = T_n^O$ with \(T_n^O\) from \eqref{eq:def_oracle_test_statistic}. It is therefore sufficient to show that 
\begin{align}
\label{eq:gap-proof1}
d_{\mathrm K}^{(\rho)}\big( (T_n^O)^2, B_P \big)
\le
C n^{-c}\log^d(n\vee K).
\end{align}

For \(x\in(0,3/16]\) and \(M\in\mathbb N_{\ge1}\), define
\[
Z
:=
\max_{k\in[K]}\max_{s\in S_x}
\|G_{sk}^M\|_{\check M_k}^2,
\]
where \(S_x\) and \(G_{sk}^M\) are from
\eqref{eq:definition-S_x} and \eqref{eq:definition-GMsk}, respectively.
The distribution of \(Z\) depends on
\(P,n,q,r,x\), and \(M\);
this dependence is suppressed.
Let \(C_1,c_1>0\) and \(C_2,c_2>0\) be the constants in
Theorems~\ref{Thm_1} and~\ref{Thm_2}, respectively, evaluated at the
fixed threshold \(\rho\). Whenever \(x,q,r,M\) satisfy the hypotheses
of both theorems, their conclusions and the triangle inequality for
\(d_{\mathrm K}^{(\rho)}\) give
\begin{align}
d_{\mathrm K}^{(\rho)}\bigl((T_n^O)^2,B_P\bigr)
&\le
C_0x^{-3/2}\log^{A_0}(nK)
\left\{
\left(\frac{M^4q^2}{n}\right)^{1/6}
+\left(\frac{M^2}{q\wedge m}\right)^{1/3}
\right.
\nonumber\\[-0.2em]
&\hspace{4.5cm}\left.
+M^{1-2\gamma_U}
+\left(\frac rq\right)^{1/2}
\right\}
+2C_\beta\frac nq r^{-c_\beta},
\label{eq:short-combined-bound}
\end{align}
where we have used \(\beta_r\le C_\beta r^{-c_\beta}\) and where one may take
\begin{equation*}
C_0:=C_1+C_2,
\qquad
A_0:=c_1\vee c_2.
\end{equation*}
Note that the constants in \eqref{eq:short-combined-bound} are uniform over
\(P\in\mathcal P_0\). In particular,
\[
C_0
=
C_0\bigl(
\rho,D_\psi,C_\beta,c_\beta,
C_L,C_U,\gamma_L,\gamma_U
\bigr),
\qquad
A_0=A_0(\gamma_L,\gamma_U).
\]

Let \(C_X^{(1)},C_X^{(2)}\) and \(C_M^{(1)},C_M^{(2)}\) be the constants constructed in Theorems~\ref{Thm_1} and~\ref{Thm_2}, respectively, 
and define
\begin{align*}
C_X
&:=
C_X^{(1)}\vee C_X^{(2)}
\vee
\frac{16}{3(\log4)^8},
\qquad 
C_M
:=
C_M^{(1)}\vee C_M^{(2)}.
\end{align*}
If we then take
\[
x:=\frac1{C_XL_{nK}^4},
\qquad
q:=\lfloor n^{9/20}\rfloor,
\qquad
r:=\lfloor n^{3/10}\rfloor
\]
we have \(x\in(0,3/16]\) and condition \text{\normalfont {(X)}} from~\eqref{eq:side-conditions} holds.
Define
\begin{equation}
\eta_0
:=
1\wedge C_M^{-1/(2\gamma_L)},
\qquad
a_n
:=
\eta_0
x^{1/(2\gamma_L)}n^{1/(100\gamma_L)},
\qquad
M:=\lfloor a_n\rfloor.
\label{eq:proof-choice-M}
\end{equation}
First suppose that
\begin{equation}
L_{nK} := \log(nK)\log n \le n^{1/400}.
\label{eq:short-moderate-regime}
\end{equation}
There exists an absolute integer \(n_{0,1}\) such that, for every
\(n\ge n_{0,1}\),
\begin{equation}
1\le r<q
\le
\min\left\{\frac n4,\frac{\sqrt n}{\log n}\right\},
\qquad
q\wedge m=q,
\qquad
q\ge n^{1/50}.
\label{eq:proof-block-admissibility}
\end{equation}
Indeed, these assertions follow directly from the definitions of
\(q,r\) and \(m=\lfloor n/(q+r)\rfloor\). Under
\eqref{eq:short-moderate-regime},
\begin{align*}
a_n
=
\eta_0
C_X^{-1/(2\gamma_L)}
L_{nK}^{-2/\gamma_L}n^{1/(100\gamma_L)}
\ge
\eta_0
C_X^{-1/(2\gamma_L)}
n^{1/(200\gamma_L)}.
\end{align*}
Consequently, if
\begin{equation*}
n
\ge
n_{0,2}
:=
\left\lceil
\left(
2
\eta_0^{-1}
C_X^{1/(2\gamma_L)}
\right)^{200\gamma_L}
\right\rceil,
\end{equation*}
then \(a_n\ge2\), and hence
\begin{equation}
\frac{a_n}{2}\le M\le a_n.
\label{eq:proof-control-floor-M}
\end{equation}
We used \(\lfloor u\rfloor\ge u/2\) whenever \(u\ge2\). In
particular, \(M\ge1\). Moreover, by
\eqref{eq:proof-choice-M} and
\eqref{eq:proof-block-admissibility},
\begin{align*}
M^{2\gamma_L}
&\le
a_n^{2\gamma_L}
=
\eta_0^{2\gamma_L}
xn^{1/50}
\le
\frac{xq}{C_M}
=
\frac{x(q\wedge m)}{C_M}.
\end{align*}
Thus condition \text{\normalfont {(M)}} from~\eqref{eq:side-conditions} also holds. 

We next evaluate the terms in~\eqref{eq:short-combined-bound}. After
increasing \(n_{0,1}\) if necessary, the floor bounds for \(q,r\),
the inequality \(x\le1\), and \(\gamma_L>1/2\) give
for absolute constants
\(C_{\mathrm{rate},1},C_{\mathrm{rate},2},
C_{\mathrm{rate},4}>0\)
\begin{align*}
\left(\frac{M^4q^2}{n}\right)^{1/6}
&\le
C_{\mathrm{rate},1}
n^{-1/60+1/(150\gamma_L)}
\le
C_{\mathrm{rate},1}
n^{-1/300},
\\
\left(\frac{M^2}{q\wedge m}\right)^{1/3}
&\le C_{\mathrm{rate},2}
n^{-3/20+1/(150\gamma_L)}
\le C_{\mathrm{rate},2}
n^{-41/300},
\\
\left(\frac rq\right)^{1/2}
&\le
C_{\mathrm{rate},4}
n^{-3/40}.
\end{align*}
Because \(1-2\gamma_U<0\), the lower bound for \(M\) in
\eqref{eq:proof-control-floor-M} is needed to control the 
remainder \(M^{1-2\gamma_U}\) in \eqref{eq:short-combined-bound}.
It allows to choose \(C_{\mathrm{rate},3}>0\) sufficiently large and depending only on
\(\eta_0,C_X,\gamma_L\), and \(\gamma_U\), such that
\begin{align*}
M^{1-2\gamma_U}
&\le
(a_n/2)^{1-2\gamma_U}
\le
C_{\mathrm{rate},3}
L_{nK}^{\,2(2\gamma_U-1)/\gamma_L}
n^{-(2\gamma_U-1)/(100\gamma_L)},
\end{align*}
After increasing \(n_{0,1}\) if necessary, the floor bounds give
$q \ge n^{9/20}/2$ and $r\ge n^{3/10}/2$.
Thus, with $C_{\mathrm{rate},5}
:=
2^{c_\beta+1}$, we have
\begin{align*}
\frac nq r^{-c_\beta}
&\le
C_{\mathrm{rate},5}
n^{11/20-3c_\beta/10}
\le
C_{\mathrm{rate},5}n^{-1/20},
\end{align*}
where the final inequality uses \(c_\beta>2\). 
Note that all constants $C_{\mathrm{rate},j}$ are uniform over \(P\in\mathcal P_0\).

Since $x^{-3/2}=C_X^{3/2}L_{nK}^{6}$ and $\log(nK)\le L_{nK}$, 
the derived upper bounds on the terms in~\eqref{eq:short-combined-bound} from the previous five displayed lines imply that, in the case $L_{nk} \le n^{1/400}$ from \eqref{eq:short-moderate-regime} and if 
\(n\ge n_{0,1}\vee n_{0,2}\), 
\begin{equation}
d_{\mathrm K}^{(\rho)}\bigl((T_n^O)^2,B_P\bigr)
\le
C_{\mathrm{mod}}n^{-z}L_{nK}^{A},
\label{eq:short-moderate-bound}
\end{equation}
where 
\begin{equation}
z
:=
\min\left\{
\frac1{300},
\frac{2\gamma_U-1}{100\gamma_L}
\right\}>0,
\qquad
A
:=
A_0+6+\frac{2(2\gamma_U-1)}{\gamma_L}
\label{eq:proof-A}
\end{equation}
and where
\[
C_{\mathrm{mod}}
:=
C_0C_X^{3/2}
\sum_{j=1}^{4}C_{\mathrm{rate},j}
+2C_\beta C_{\mathrm{rate},5}.
\]
Finally, because $
L_{nK}\le2\log^2(n\vee K)$, 
the  bound in \eqref{eq:short-moderate-bound} yields
\begin{equation}
d_{\mathrm K}^{(\rho)}\bigl((T_n^O)^2,B_P\bigr)\le 
2^A C_{\mathrm{mod}}
n^{-z}\log^{2A}(n\vee K).
\label{eq:proof-moderate-final-form}
\end{equation}

We now treat the complementary case of \eqref{eq:short-moderate-regime}, so that \(L_{nK}>n^{1/400}\). Then
\[
\log(n\vee K)
\ge
\frac12\log(nK)
>
\frac{n^{1/400}}{2\log n}
\ge 
n^{1/800},
\]
where the final inequality holds true for all sufficiently large $n$, say, all
\(n \ge n_{0,3}\). Hence, using the elementary bound \(d_{\mathrm K}^{(\rho)}\le1\), we obtain, for all \(n\ge n_{0,3}\),
\begin{align*}
d_{\mathrm K}^{(\rho)}\bigl((T_n^O)^2,B_P\bigr)
\le 1
\le 
n^{-1/800} \log(n\vee K).
\end{align*}

Combining with the first-case bound from \eqref{eq:proof-moderate-final-form}, and defining 
\begin{equation*}
c
:=
z\wedge\frac1{800},
\qquad
d:=2A\vee1
\end{equation*}
with $z$ and $A$ from \eqref{eq:proof-A},
we have shown that, for all $n \ge n_0:=n_{0,1}\vee n_{0,2}\vee n_{0,3}$, 
\[
d_{\mathrm K}^{(\rho)}\bigl((T_n^O)^2,B_P\bigr)
\le 
2^A C_{\mathrm{mod}}
n^{-c}\log^{d}(n\vee K).
\]

To cover the remaining
values \(4\le n<n_0\), define
\[
C_{\mathrm{fin}}
:=
\max_{4\le j<n_0}
\frac{j^c}{\log^d j},
\]
with the maximum interpreted as zero if the index set is empty. Since
\(\log(n\vee K)\ge\log n\),
\[
C_{\mathrm{fin}}n^{-c}\log^d(n\vee K)\ge1,
\qquad 4\le n<n_0.
\]
Thus the elementary bound \(d_{\mathrm K}^{(\rho)}\le1\) covers all
remaining \(n\). The claimed bound in \eqref{eq:gap-proof1} hence holds for all $n \ge 4$ if we choose $C
\ge
1\vee 2^AC_{\mathrm{mod}}\vee C_{\mathrm{fin}}$. The proof is finished.
\end{proof}

\begin{proof}[Proof of Theorem~\ref{thm:fwer_control}]
Fix \(P\in\mathcal P\). If \(\mathcal S_P^c=\varnothing\), then \(\operatorname{FWER}(P,\gamma)=0\), so \eqref{eq:fwer-profiled-strong-control} is immediate, and there is nothing to be shown for \eqref{eq:fwer-profiled-null-calibration}. We may therefore assume henceforth that \(\mathcal S_P^c\ne\varnothing\).

For every \(k\in\mathcal S_P^c\), we have $f_k^{(i)}=\mu_k + \eps_k^{(i)}$, such that $C_{n,k}(s)=C_{n,k}^O(s)$  for all $s \in [n]$,
with $C_{n,k}$ and $C_{n,k}^O$ from \eqref{def:cusum} and \eqref{eq:definition-CnkO}, respectively. 
Consequently, with $\widehat{\mathcal S}_\gamma$ from \eqref{eq:def-hat-S_gamma},
\begin{align} \label{eq:fwer-starting-point}
\operatorname{FWER}(P,\gamma)
&=
\Prob\big(\widehat{\mathcal S}_\gamma\cap\mathcal S_P^c\ne\varnothing\big)
=
\Prob\big( \exists k \in S_P^c: T_{nk}^2 > \hat Q_{1-\gamma} \big) 
=
\mathbb P_P\big(X_0>\widehat Q_{1-\gamma}\big)
\end{align}
where
\begin{align*}
X_0
&:= 
\max_{k\in\mathcal S_P^c}\max_{s\in[n]}
\|C_{n,k}^O(s)\|_{\Hb_k}^2.
\end{align*}
Define intermediate variables
\begin{align*}
(T_{n,0}^*)^2
&:= 
\max_{k\in\mathcal S_P^c}\max_{s\in[n]}
\|C_k^*(s)\|_{\Hb_k}^2,
\qquad 
Y_0
:= 
\max_{k\in\mathcal S_P^c}\max_{s\in S_x}
\|G_{sk}^M\|_{2}^2.
\end{align*}
We will show below that there exist constants $D$ and $d$ with dependencies as required in the theorem and an event $\Omega_0 \in \mathcal F_n$ such that
\begin{align}
\mathbb P_P(\Omega_0^c)
&\le
\varepsilon_{n,q,r},
\label{eq:fwer-proof-three-profiled-event}
\\
\sup_{t\ge\rho_\gamma}
\left|
\mathbb P_P(X_0\le t)
-
\mathbb P_P(Y_0\le t)
\right|
&\le
\varepsilon_{n,q,r},
\label{eq:fwer-proof-three-profiled-oracle}
\\
\text{On }\Omega_0:\qquad
\sup_{t\ge\rho_\gamma}
\left|
\mathbb P_P\left(
(T_{n,0}^*)^2\le t
\,\middle|\,
\mathcal F_n
\right)
-
\mathbb P_P(Y_0\le t)
\right|
&\le
\varepsilon_{n,q,r},
\label{eq:fwer-proof-three-profiled-bootstrap}
\end{align}
where 
\begin{align}
\label{eq:rho-gamma}
\rho_\gamma:= \frac3{50}C_L^2
\left[
\Phi^{-1}\left(\frac{3-\gamma}{4}\right)
\right]^2,
\qquad
\varepsilon_{n,q,r} := \frac {D} {\sqrt {\tau}}\log^d(nK)\mathcal R_{n,q,r}   
\end{align}
and where \(\Phi\) is the cdf of a standard normal random variable.
If $\varepsilon_{n,q,r} > \{\gamma\wedge(1-\gamma)\} /2$,
then both conclusions in \eqref{eq:fwer-profiled-strong-control} and \eqref{eq:fwer-profiled-null-calibration} follow from
\(0\le\operatorname{FWER}(P,\gamma)\le1\), by choosing $C$ such that
$
C\ge 2D/ \{ \gamma\wedge(1-\gamma) \}.
$
Hence assume that $\varepsilon_{n,q,r}  \le  \{\gamma\wedge(1-\gamma)\} /2$.
Define, for $t \in \R$,
\[
F_{*,0}(t)
:=
\mathbb P_P\left(
(T_{n,0}^*)^2\le t
\,\middle|\,
\mathcal F_n
\right),
\qquad
G_0(t)
:=
\mathbb P_P(Y_0\le t), 
\]
and, for $u \in (0,1)$,
\[
\widehat Q^0_{1-\gamma}
:=
\inf\left\{
t\in\mathbb R:
F_{*,0}(t)\ge1-\gamma
\right\},
\qquad 
Q_{Y_0}(u)
:=
\inf\left\{
t\in\mathbb R:
\mathbb P_P(Y_0\le t)\ge u
\right\}.
\]
We will also show  below that
\begin{equation}
\label{eq:fwer-proof-three-quantile-sandwich}
\text{On } \Omega_0: \qquad \rho_\gamma \le q_-:= Q_{Y_0}(1-\gamma-\varepsilon_{n,q,r}) 
\le
\widehat Q^0_{1-\gamma}
\le
Q_{Y_0}(1-\gamma+\varepsilon_{n,q,r}) =: q_+.
\end{equation}
With \eqref{eq:fwer-proof-three-profiled-event}, \eqref{eq:fwer-proof-three-profiled-oracle}, \eqref{eq:fwer-proof-three-profiled-bootstrap} and \eqref{eq:fwer-proof-three-quantile-sandwich} at hand, we will now prove the two claims from the theorem. Starting from \eqref{eq:fwer-starting-point}, we have
\begin{align*}
\operatorname{FWER}(P,\gamma)
&=
\mathbb P_P\left(
X_0>\widehat Q_{1-\gamma},\,
\Omega_0
\right)
+
\mathbb P_P\left(
X_0>\widehat Q_{1-\gamma},\,
\Omega_0^c
\right)
\\
&\le
\mathbb P_P\left(
X_0>\widehat Q_{1-\gamma},\,
\Omega_0
\right)
+
\eps_{n,q,r},
\end{align*}
where we have used \eqref{eq:fwer-proof-three-profiled-event}.
Since 
\(T_n^*\ge T_{n,0}^*\) by definition, we have $\widehat Q_{1-\gamma}
\ge
\widehat Q^0_{1-\gamma}$. Hence, on \(\Omega_0\), the lower bound
in~\eqref{eq:fwer-proof-three-quantile-sandwich} gives
$\widehat Q_{1-\gamma} \ge \widehat Q^0_{1-\gamma} \ge q_-$,
such that
\begin{equation*}
\operatorname{FWER}(P,\gamma) \le \mathbb P_P(X_0>q_-) + \eps_{n,q,r}.
\end{equation*}
Because \(q_-\ge\rho_\gamma\) by \eqref{eq:fwer-proof-three-quantile-sandwich}, the oracle approximation \eqref{eq:fwer-proof-three-oracle-bound} is applicable at \(t=q_-\). It gives
\[
\left|
\mathbb P_P(X_0\le q_-)-G_0(q_-)
\right|
\le
\varepsilon_{n,q,r}.
\]
Consequently,
\begin{align*}
\mathbb P_P(X_0>q_-)
&=
1-\mathbb P_P(X_0\le q_-)
\le
1-G_0(q_-)+\varepsilon_{n,q,r}
=
\gamma + 2 \varepsilon_{n,q,r},
\end{align*}
where the final equality follows from continuity of $G_0$ (note that $Y_0$ is a maximum of squared norms of Gaussian variables) and the definition of $q_-$.
Consequently, 
\begin{align} \label{eq:fwer-lower}
\operatorname{FWER}(P,\gamma)
\le
\gamma+3\varepsilon_{n,q,r}
\end{align}
and~\eqref{eq:fwer-profiled-strong-control} follows provided the final
constant \(C\) is chosen so that $C\ge3D$.
For the proof of \eqref{eq:fwer-profiled-null-calibration}, suppose \(P\in\mathcal P_0\). Then
\[
\mathcal S_P^c=[K],
\qquad
X_0=T_n^2,
\qquad
T_{n,0}^*=T_n^*,
\qquad
\widehat Q^0_{1-\gamma}
=
\widehat Q_{1-\gamma}
\]
such that,  on \(\Omega_0\), we have 
$q_+ \ge \widehat Q^0_{1-\gamma} = \widehat Q_{1-\gamma}$ by \eqref{eq:fwer-proof-three-quantile-sandwich}. As a consequence, 
\begin{align*}
\operatorname{FWER}(P,\gamma)
=
\mathbb P_P(X_0>\widehat Q_{1-\gamma})
&\ge
\mathbb P_P(X_0>\widehat Q_{1-\gamma}, \Omega_0)
\\&\ge
\mathbb P_P(X_0>  q_+,\Omega_0)
\\&\ge
\mathbb P_P(X_0>q_+) -\Prob(\Omega_0^c)
\\&\ge
1-G_0(q_+)
-
2\varepsilon_{n,q,r}
\\&=
\gamma-3\varepsilon_{n,q,r}.
\end{align*}
Together with the lower bound from \eqref{eq:fwer-lower}, this yields \eqref{eq:fwer-profiled-null-calibration}.

It remains to show \eqref{eq:fwer-proof-three-profiled-event}, \eqref{eq:fwer-proof-three-profiled-oracle}, \eqref{eq:fwer-proof-three-profiled-bootstrap} and \eqref{eq:fwer-proof-three-quantile-sandwich}. We start by introducing some helpful notation. Let
\[
\Lambda:=\log(nK),
\qquad
b:=2\gamma_U-1,
\qquad
\alpha:=\frac{b}{12\gamma_U-2} \in (0,1/6),
\qquad
\beta:=\frac{b}{2\gamma_L},
\]
such that 
\[
\mathcal R_{n,q,r}
=
\Big(\frac{q^2}{n}\Big)^{\alpha}
+q^{-\beta}
+\Big(\frac rq\Big)^{1/2}
+\frac nq r^{-c_\beta}.
\]
Let \(c_1=c_1(\gamma_L,\gamma_U)\) and
\(c_3=c_3(\gamma_L,\gamma_U)\) be the logarithmic exponents in
Theorem~\ref{Thm_1} and
Theorem~\ref{thm:conditional_gaussian_subset_master}, respectively, and
set
\begin{equation}
d:=14+(c_1\vee c_3)+8\beta.
\label{eq:fwer-proof-three-d}
\end{equation}
Let \(C_X^{(1)}\) and \(C_X^{(3)}\) be sufficiently large constants required by Theorems~\ref{Thm_1} and \ref{thm:conditional_gaussian_subset_master}, respectively, evaluated at the threshold \(\rho_\gamma\) defined in \eqref{eq:rho-gamma}. Choose \(C_X\) to satisfy
\begin{equation}
\label{eq:choice-CX-fwer}
C_X
\ge
C_X^{(1)}
\vee C_X^{(3)}
\vee
\frac{16}{3(\log4)^8}.
\end{equation}
Further set 
\begin{align}
\label{eq:hnk-x}
H_{nK}
:=
\{\log(nK)\log n\}^4,
\qquad
x
:=
\frac1{C_XH_{nK}}.
\end{align}
This choice ensures that condition \text{\normalfont (X)} from \eqref{eq:side-conditions}  holds for
both Theorem~\ref{Thm_1} and
Theorem~\ref{thm:conditional_gaussian_subset_master}. Moreover, because
\(n\ge4\) and \(K\ge1\), $
H_{nK}\ge(\log4)^8$, 
and hence
\[
0<x
\le
\frac1{C_X(\log4)^8}
\le
\frac3{16}.
\]
Choose any $k_\star\in\mathcal S_P^c$ and choose $s_\star
\in
\operatorname*{arg\,max}_{s\in[n]}V(s/n)$. As observed below~\eqref{eq:definition-S_x}, $V(s_\star/n) = \max_{s\in[n]}V(s/n) \ge 6/25 > 3/16$.
It follows that \(s_\star\in S_x\) for the particular choice of $x$ made above. Furthermore, Assumption~\ref{cond:scores} gives $\lambda_{k_\star1}\ge C_L^2$. 
Let
\[
A_{\mathcal P}
:=
D_\psi^2S_{\beta,1}\vee C_{\mathcal T}
\]
with $C_{\mathcal T}$ from \eqref{eq:C_t}
and note that $C_{\mathcal K} \le C_{\mathcal T}$ by Assumption~\ref{cond:scores}. 
Let \(C_M^{(1)}\) and \(C_M^{(3)}\) be sufficiently large constants
required by Theorems~\ref{Thm_1} and
\ref{thm:conditional_gaussian_subset_master} for $\rho=\rho_\gamma$, respectively, and choose
\begin{equation}
\label{eq:choice-CM-fwer}
C_M
\ge
C_M^{(1)}
\vee C_M^{(3)}
\vee
\frac{2C_4A_{\mathcal P}}{C_L^2},
\end{equation}
where \(C_4\) is the universal constant from
Lemma~\ref{lem:bound-on-operator-norm-covariances}. Put
\begin{align} \label{eq:2-ms}
M_{\max}
:=
\left(
\frac{q}{4C_XC_MH_{nK}}
\right)^{1/(2\gamma_L)},
\qquad
M_\star
:=
\left(\frac n{q^2}\right)^{1/(12\gamma_U-2)}
=
\left(\frac n{q^2}\right)^{\alpha/b}.
\end{align}
Choose the constant \(C\) in the theorem so that
\begin{equation*}
C
\ge
4^\beta
\vee 2^b
\vee
\left\{
2^b(4C_XC_M)^\beta
\right\}.
\end{equation*}
This choice demonstrates that we may assume that 
\begin{equation}
\label{eq:fwer-proof-three-nontrivial}
4\le q\le\frac{\sqrt n}{\log n},
\qquad
M_{\max}\ge2,
\qquad
M_\star\ge2,
\end{equation}
because if some of the inequalities does not hold, then  $C\Lambda^d\mathcal R_{n,q,r} \ge 1$ and the claimed inequalities in \eqref{eq:fwer-profiled-strong-control} and \eqref{eq:fwer-profiled-null-calibration} are trivial. Indeed, 
if \(q<4\), then
\(\mathcal R_{n,q,r}\ge q^{-\beta}\ge4^{-\beta}\), and since
\(\Lambda=\log(nK)\ge\log4>1\), we have $C\Lambda^d\mathcal R_{n,q,r} > 4^{\beta-\beta}=1$.
If \(q>\sqrt n/\log n\), then $(q^2/n)^\alpha > (\log n)^{-2\alpha}$, and since \(\log n\le\Lambda\) and \(d\ge2\alpha\), the definition of $R_{n,q,r}$ from \eqref{eq:fwer-profiled-rate} yields $C\Lambda^d\mathcal R_{n,q,r} \ge \Lambda^d(q^2/n)^\alpha > (\log n)^{d-2\alpha} \ge 1$.
If \(M_{\max}<2\), then $M_{\max}^{-b} = (4C_XC_M)^\beta
H_{nK}^{\beta}q^{-\beta}$ yields $q^{-\beta} > 2^{-b}(4C_XC_M)^{-\beta}H_{nK}^{-\beta}$. Hence, since $H_{nK} \le \Lambda^8$ and $d\ge8\beta$, we obtain
\begin{align*}
C\Lambda^d\mathcal R_{n,q,r}
&\ge
2^b(4C_XC_M)^\beta
\Lambda^d q^{-\beta}>
\Lambda^dH_{nK}^{-\beta}
\ge
\Lambda^{d-8\beta}
\ge1.
\end{align*}
Finally, if \(M_\star<2\), then $(q^2/{n})^\alpha = M_\star^{-b} > 2^{-b}$, so $C\Lambda^d\mathcal R_{n,q,r} \ge 2^b (q^2/{n})^\alpha > 1$.

It therefore suffices to continue under the assumptions of
\eqref{eq:fwer-proof-three-nontrivial}.
Recall from~\eqref{def:m} that $m=\lfloor n/(q+r)\rfloor$. Since \(r\le q\) and \(q\le n/4\), this yields $n/(q+r) \ge 2$ and hence $m \ge n/\{2(q+r)\} \ge n/(4q)$.
Thus, under~\eqref{eq:fwer-proof-three-nontrivial}, $m \ge \sqrt n \log n/4 \ge q/4$.
Define
\begin{equation}
\label{eq:fwer-proof-three-M-ell}
M
:=
\left\lfloor M_\star\wedge M_{\max}\right\rfloor,
\qquad
\ell
:=
\left\lceil
\sqrt q\vee n^{3/[2(c_\beta+2)]}
\right\rceil.
\end{equation}
Then \(M\ge2\) and \(\ell\in[n]\), with the inequality $\ell \leq n$ following from $c_\beta >2$ in Assumption ~\ref{cond:stationary}. Furthermore,
\begin{align}
\label{eq:mbound1}
M^{2\gamma_L}
\le
M_{\max}^{2\gamma_L}
=
\frac{q}{4C_XC_MH_{nK}}
=
\frac{xq}{4C_M}
\le
\frac{x(q\wedge m)}{C_M}.
\end{align}
Thus conditions \text{\normalfont (X)} and
\text{\normalfont (M)}  from \eqref{eq:side-conditions}  hold and Theorem~\ref{Thm_1} and
Theorem~\ref{thm:conditional_gaussian_subset_master} can be freely applied for the chosen values of $x$ and $M$. 

Theorem~\ref{Thm_1} remains valid after replacing the index set $[K]$ by \(\mathcal S_P^c\). Since \(|\mathcal S_P^c|\le K\), we may apply the bound of Theorem~\ref{Thm_1} as it is stated, and obtain
\begin{align}
&\sup_{t\ge\rho_\gamma}
\big|
\mathbb P_P(X_0\le t)
-
\mathbb P_P(Y_0\le t)
\big|
\nonumber\\
&\hspace{2cm}\le
D_1x^{-3/2}
\left[
\left(\frac{M^4q^2}{n}\right)^{1/6}
+
M^{-b}
+
\left(\frac rq\right)^{1/2}
\right]\Lambda^{c_1}
+
2C_\beta\frac nq r^{-c_\beta}.
\label{eq:fwer-proof-three-oracle-bound}
\end{align}
Here and below, \(D_1,D_3,D_{\mathrm{boot}}>0\) depend only on the
parameters permitted in the statement of the theorem. Next apply
Theorem~\ref{thm:conditional_gaussian_subset_master} with $\mathcal J=\mathcal S_P^c$ and $g=q$.
Because $
\mathcal S_P^c\cap\mathcal S_P=\varnothing$ the interior condition of ~\eqref{eq:intertior-condition} is vacuous and $
\Delta_{\mathcal S_P^c,\mathrm{wk}}(q,\ell)=0$ by definition in~\eqref{eq:def-delta-J-wk}.
Consequently, by Theorem~\ref{thm:conditional_gaussian_subset_master}, there exists an event
\(\Omega_0\in\mathcal F_n\) satisfying
\begin{align}
\mathbb P_P(\Omega_0^c)
\le
D_{\mathrm{boot}}
\bigg\{
\frac1n
+
C_\beta
\left\lceil\frac n\ell\right\rceil
\ell^{-c_\beta}
+
C_\beta\frac nq r^{-c_\beta}
\bigg\},
\label{eq:fwer-proof-three-event}
\end{align}
such that, on \(\Omega_0\),
\begin{align}
&\sup_{t\ge\rho_\gamma}
\left|
\mathbb P_P\left(
(T_{n,0}^*)^2\le t
\,\middle|\,
\mathcal F_n
\right)
-
\mathbb P_P(Y_0\le t)
\right|\le
D_3x^{-3/2}
\Bigg[
\frac1{\sqrt \tau}\left(\frac{2q}{n}\right)^{1/2}
\left(1\vee\frac{\ell}{\sqrt q}\right)
\nonumber\\
&\hspace{4.5cm}
+
M^{-b}
+
M^{2/3}
\left\{
\left(\frac qn\right)^{1/6}
\vee
\left(\frac{q^2}{n}\right)^{1/3}
\right\}
\Bigg]
\log^{c_3}(nKM).
\label{eq:fwer-proof-three-bootstrap-bound}
\end{align}
The claims in \eqref{eq:fwer-proof-three-profiled-event}, \eqref{eq:fwer-proof-three-profiled-oracle}, \eqref{eq:fwer-proof-three-profiled-bootstrap} will now follow by showing that we can choose $D$ sufficiently large such that each upper bound in \eqref{eq:fwer-proof-three-oracle-bound}, \eqref{eq:fwer-proof-three-event} and \eqref{eq:fwer-proof-three-bootstrap-bound} can be bounded by $\eps_{n,q,r}$ from \eqref{eq:rho-gamma}. For that purpose, we start by showing that there exists a constant $D_M$ having only the permitted parameter dependence such that
\begin{align}
\label{eq:m4q}
\left(\frac{M^4q^2}{n}\right)^{1/6} + M^{-b}
\le 
D_M\Lambda^{8\beta}
\left[
\left(\frac{q^2}{n}\right)^\alpha
+
q^{-\beta}
\right]
\end{align}
Indeed, recall $M=\left\lfloor M_\star\wedge M_{\max}\right\rfloor \ge 2$ from~\eqref{eq:fwer-proof-three-M-ell}, with $M_\star$ and $M_{\max}$ from \eqref{eq:2-ms}. On the one hand, 
\[
\left(\frac{M_\star^4q^2}{n}\right)^{1/6} = M_\star^{-b} = \left(\frac{q^2}{n}\right)^{\alpha}. 
\]
On the other hand,
\[
M_{\max}^{-b}
=
(4C_XC_M)^\beta H_{nK}^{\beta}q^{-\beta}
\le
(4C_XC_M)^\beta 
\Lambda^{8\beta}q^{-\beta}.
\]
Together with the bounds $(M_\star\wedge M_{\max})/2 \le M  \le M_\star\wedge M_{\max}$ this proves \eqref{eq:m4q}.

Similarly, the choice of \(\ell\) from \eqref{eq:fwer-proof-three-M-ell} gives
\begin{equation*}
\left(\frac qn\right)^{1/2}
\vee
\frac{\ell}{\sqrt n}
+
n\ell^{-(c_\beta+1)}
\le
D_\ell
\left[
\left(\frac qn\right)^{1/2}
+
n^{-\frac{c_\beta-1}{2(c_\beta+2)}}
\right]
\end{equation*}
where \(D_\ell>0\) is absolute. Here, since $\alpha \in (0,1/6)$, 
\[
\Big(\frac qn\Big)^{1/2}
\le
\Big(\frac{q^2}{n}\Big)^\alpha.
\]
Moreover, 
\begin{align}
\label{eq:bound-on-hn}
h_n := n^{-\frac{c_\beta-1}{2(c_\beta+2)}} \le \Big(\frac{q^2}{n}\Big)^\alpha \vee \Big(\frac rq\Big)^{1/2}
\vee
\Big(\frac nq r^{-c_\beta}\Big).
\end{align}
Indeed, since $q < \sqrt n$ by \eqref{eq:fwer-proof-three-nontrivial}, we have
\begin{align*}
\Big(\frac rq\Big)^{1/2}
\vee
\Big(\frac nq r^{-c_\beta}\Big)
&\ge
\Big(\frac rq\Big)^{\frac{c_\beta}{2c_\beta+1}}
\Big(\frac nq r^{-c_\beta}\Big)^{\frac1{2c_\beta+1}}
=
n^{\frac1{2c_\beta+1}}
q^{-\frac{c_\beta+1}{2c_\beta+1}}
\ge
n^{-\frac{c_\beta-1}{4c_\beta+2}}
\ge
h_n.
\end{align*}
Next, because  $q <\sqrt n$ also yields $q^2/n<1$ from~\eqref{eq:fwer-proof-three-nontrivial}, such that
\begin{align}
\label{eq:m4q-2}
M^{2/3}
\bigg\{
\left(\frac qn\right)^{1/6}
\vee
\left(\frac{q^2}{n}\right)^{1/3}
\bigg\}
\le
M^{2/3}\left(\frac{q^2}{n}\right)^{1/6}
\end{align}
Finally, $1/n \le (q^2/n)^{\alpha}$ and $\log(nKM) \le \log(n^2K) \le2\Lambda$ and, recalling \eqref{eq:hnk-x},
\[
x^{-3/2}
=
C_X^{3/2}H_{nK}^{3/2}
\le
C_X^{3/2}\Lambda^{12}.
\]
Assembling bounds and recalling the definition of $d = 14 + (c_1 \vee c_3) + 8 \beta$ from \eqref{eq:fwer-proof-three-d}
the proof of \eqref{eq:fwer-proof-three-profiled-event}, \eqref{eq:fwer-proof-three-profiled-oracle}, \eqref{eq:fwer-proof-three-profiled-bootstrap} is finished by choosing $D$ sufficiently large.

It remains to show \eqref{eq:fwer-proof-three-quantile-sandwich}.  We will show below that 
\begin{align}
\Var_P\!\left([G_{s_\star k_\star}^M]_1\right)
&\ge
\frac{3C_L^2}{25} =: v_0.
\label{eq:fixed-coordinate-variance-lower-bound}
\end{align}
Consequently, since \(Y_0\ge [G_{s_\star k_\star}^M]_1^2\), 
\begin{align*}
\mathbb P_P(Y_0\le2\rho_\gamma)
\le
\mathbb P_P\left(
[G_{s_\star k_\star}^M]_1^2\le2\rho_\gamma
\right)
\le
\mathbb P\left(Z^2\le\frac{2\rho_\gamma}{v_0}\right)
=
2\Phi\left(\sqrt{\frac{2\rho_\gamma}{v_0}}\right)-1
=
\frac{1-\gamma}{2}
\end{align*}
where \(Z\) is standard normal. Hence
\begin{equation*}
Q_{Y_0}\big((1-\gamma)/2\big)\ge2\rho_\gamma.
\end{equation*}
Recall our assumption $\varepsilon_{n,q,r} \le \{\gamma\wedge(1-\gamma)\}/2$ from above. It implies $1-\gamma - \varepsilon_{n,q,r} \ge (1-\gamma)/2$, and thus
\[
q_-
=
Q_{Y_0}(1-\gamma-\varepsilon_{n,q,r})
\ge 
Q_{Y_0}\big( (1-\gamma)/2\big) \ge 2\rho_\gamma,
\]
which is the first inequality in \eqref{eq:fwer-proof-three-quantile-sandwich}. Moreover, this inequality yields
\[
G_0(\rho_\gamma)+\varepsilon_{n,q,r}<1-\gamma.
\]
The remaining two inequalities in \eqref{eq:fwer-proof-three-quantile-sandwich}, valid on $\Omega_0$, therefore follow from an application of Lemma~\ref{lem:koltoquant} with
\[
F=F_{*,0},
\qquad
G=G_0,
\qquad
t_*=\rho_\gamma,
\qquad
\delta=\varepsilon_{n,q,r},
\qquad
\alpha=1-\gamma.
\]

It remains to show \eqref{eq:fixed-coordinate-variance-lower-bound}.
For that purpose, note that
\begin{align}
\label{eq-v0-bound}
\frac{C_4A_{\mathcal P}}{q\wedge m}
\le
\frac{C_4A_{\mathcal P}x}{C_MM^{2\gamma_L}}
\le
\frac{xC_L^2}{2M^{2\gamma_L}}
\le
\frac{3C_L^2}{32} \le \frac{3C_L^2}{25} = v_0,
\end{align}
where we have used 
\eqref{eq:mbound1}, \eqref{eq:choice-CM-fwer}  and \(x\le3/16\) and \(M\ge1\). 

Next, write $L_\star := O_{k_\star M}\circ P_{k_\star M}\circ\pi_{s_\star k_\star}$ and note that $G_{s_\star k_\star}^M = L_\star G^{(n,q)}$ with $\Cov(G^{(n,q)}) = \Sigma^{(n,q)}$ by definition in \eqref{eq:definition-GMsk}. 
Further recall $\Sigma^{(n)} = \Cov(\mathcal B^{(n)}) = K_n \otimes \mathcal K$ from \eqref{eq:Kronecker-Kn}.
It then follows from Lemma~\ref{lem:bound-on-operator-norm-covariances}, applied with
\(\sigma=\{s_\star\}\) and \(\chgset=\{k_\star\}\), that
\begin{align*}
\left\|
\Cov(G_{s_\star k_\star}^M)
-
L_\star\Sigma^{(n)}L_\star^*
\right\|_{\mathrm{op}}
&= \nonumber
\left\|
 L_\star\Sigma^{(n,q)}L_\star^* -  L_\star\Sigma^{(n)}L_\star^*
\right\|_{\mathrm{op}}
\\&\le \nonumber
\left\|
(P_{k_\star M}\circ\pi_{s_\star k_\star} )(\Sigma^{(n,q)} - \Sigma^{(n)}) (P_{k_\star M}\circ\pi_{s_\star k_\star} )
\right\|_{\mathrm{op}}
\\&= \nonumber
\left\|
\Cov\big( (P_{k_\star M}\circ\pi_{s_\star k_\star})G^{(n,q)} \big) 
-
\Cov\big( (P_{k_\star M}\circ\pi_{s_\star k_\star})\mathcal B^{(n)} \big)
\right\|_{\mathrm{op}}
\\&\le
\frac{C_4A_{\mathcal P}}{q\wedge m},
\end{align*}
where we have used sub-multiplicativity of the operator norm and $\|O_{kM}\|_\op= 1$. Next, from \eqref{eq:representation-projection-kronecker},
\[
\pi_{s_\star k_\star} \Sigma^{(n)} \pi_{s_\star k_\star}^*
=
V(s_\star/n)\mathcal (\pi_k \mathcal K \pi_k^*) = V(s_\star/n) \mathcal K_{k_\star},
\]
where  \(V(u)=u(1-u)\), such that 
\begin{align*}
L_\star\Sigma^{(n)}L_\star^*
&=
V(s_\star/n)
O_{k_\star M}P_{k_\star M}
\mathcal K_{k_\star}
P_{k_\star M}O_{k_\star M}^*
=
V(s_\star/n)
\operatorname{diag}
(
\lambda_{k_\star1},
\ldots,
\lambda_{k_\star\check M_{k_\star}}
).
\end{align*}
As a consequence, with \(\mathbf e_1\in\mathbb R^{\check M_{k_\star}}\) the first unit vector, we have
\begin{align*}
\Var\!\left([G_{s_\star k_\star}^M]_1\right)
&=
\big\langle
\mathbf e_1, \Cov_P(G_{s_\star k_\star}^M)\mathbf e_1
\big\rangle
\\
&=
\big\langle
\mathbf e_1, L_\star\Sigma^{(n)}L_\star^*\mathbf e_1
\big\rangle
+
\left\langle
\mathbf e_1,
\left\{
\Cov(G_{s_\star k_\star}^M)
-
L_\star\Sigma^{(n)}L_\star^*
\right\}
\mathbf e_1
\right\rangle
\\
&\ge
V(s_\star/n)\lambda_{k_\star1}
-
\left\|
\Cov_P(G_{s_\star k_\star}^M)
-
L_\star\Sigma^{(n)}L_\star^*
\right\|_{\mathrm{op}}
\\
&\ge
V(s_\star/n)\lambda_{k_\star1}
-
\frac{C_4A_{\mathcal P}}{q\wedge m}
\\& \ge  2v_0 - v_0 = v_0,
\end{align*}
where we have used \eqref{eq-v0-bound} and $V(s_\star/n) \ge 6/25$ and $\lambda_{k_\star1}\ge C_L^2$. We have thus shown  \eqref{eq:fixed-coordinate-variance-lower-bound} and the proof is complete.
\end{proof}

\begin{proof}[Proof of Theorem~\ref{thm:profiled_simultaneous_power}]
Define $c_0 = 5(C_{\mathrm{loc}} \vee 1)$ with $C_{\mathrm{loc}}=C_{\mathrm{loc}}(D_\psi, C_\beta, c_\beta)$ from Corollary \ref{cor:coordinatewise_beta_localization_new}.
Fix \(\eta\in(0,\gamma]\), \(P\in\mathcal P_{\mathrm{loc}}(c_0,q,\tau)\) and a nonempty deterministic set
\(\chgset\subseteq\mathcal S_P\). As in~\eqref{eq:rho-gamma}, let
\[
\rho_\gamma
:=
\frac{3C_L^2}{50}
\left[
\Phi^{-1}\left(\frac{3-\gamma}{4}\right)
\right]^2.
\]
For the value of \(x\) and \(M\) chosen below in~\eqref{eq:power-proof-M-ell} set $
Y_{[K]}
:=
\max_{k\in[K]}\max_{s\in S_x}\|G_{sk}^M\|_2^2$. 
We will show below that there exist constants
\[
D_{\mathrm F}
=
D_{\mathrm F}\!\left(
\gamma,D_\psi,C_\beta,c_\beta,C_L,C_U,\gamma_L,\gamma_U
\right)>0,
\qquad
d_{\mathrm F}
=
d_{\mathrm F}(\gamma_L,\gamma_U)>0,
\]
such that, with
\begin{equation}
e_{\mathrm F}
:=
\frac{D_{\mathrm F}}{\sqrt\tau}
\log^{d_{\mathrm F}}(nK)\mathcal R_{n,q,r},
\label{eq:power-proof-eF}
\end{equation}
there exists an event \(\Omega_{\mathrm F}\in\mathcal F_n\)
satisfying
\begin{align}
\mathbb P_P(\Omega_{\mathrm F}^c)
&\le e_{\mathrm F},
\label{eq:power-proof-profiled-event}
\\
\sup_{t\ge\rho_\gamma}
\left|
\mathbb P_P\left((T_n^O)^2\le t\right)
-\mathbb P_P(Y_{[K]}\le t)
\right|
&\le e_{\mathrm F},
\label{eq:power-proof-profiled-oracle-finite}
\\
\text{on }\Omega_{\mathrm F}: \qquad
\sup_{t\ge\rho_\gamma}
\left|
\mathbb P_P\left(
(T_n^*)^2\le t
\,\middle|\,
\mathcal F_n
\right)
-\mathbb P_P(Y_{[K]}\le t)
\right|
&\le e_{\mathrm F}.
\label{eq:power-proof-profiled-bootstrap-finite}
\end{align}
Recall that $T^O_n$ is defined in~\eqref{eq:def_oracle_test_statistic} as the noise only global CUSUM statistic; therefore its distribution is independent of the mean parameter structure $(\mu_k, \delta_k)_{k\in [K]}$ induced by $P$. 

We first complete the proof assuming
\eqref{eq:power-proof-profiled-event}--\eqref{eq:power-proof-profiled-bootstrap-finite}, and to do so will apply Lemma~\ref{lem:simultaneous_power_generic}. To apply the lemma,  first note that although Theorem~\ref{thm:gaussian_approximation} is stated under the
global null, it applies to \(T_n^O\) under any \(P\in\mathcal P\):
indeed, apply that theorem to the global-null model obtained by
suppressing the deterministic mean changes under \(P\) while retaining
the centered noise process induced by $P$. Hence
Theorem~\ref{thm:gaussian_approximation}, evaluated at
\(\rho_\gamma\), gives constants
\[
D_{\mathrm G}
=
D_{\mathrm G}\!\left(
\gamma,D_\psi,C_\beta,c_\beta,C_L,C_U,\gamma_L,\gamma_U
\right)>0,
\]
\[
c_{\mathrm{G}}
=
c_{\mathrm{G}}(\gamma_L,\gamma_U)>0,
\qquad
d_{\mathrm G}
=
d_{\mathrm G}(\gamma_L,\gamma_U)>0,
\]
such that, with
\[
e_{\mathrm G}
:=
D_{\mathrm G}n^{-c_{\mathrm{G}}}
\log^{d_{\mathrm G}}(n\vee K),
\]
we have
\begin{equation}
\sup_{t\ge\rho_\gamma}
\left|
\mathbb P_P\left((T_n^O)^2\le t\right)
-\mathbb P\left(B_{[K],P}^2\le t\right)
\right|
\le e_{\mathrm G}.
\label{eq:power-proof-global-oracle-BB}
\end{equation}
The same Theorem~\ref{thm:gaussian_approximation}, applied to the random variable $T_{n,\chgset}^O$ defined in~\eqref{noise:kappa} gives 
\begin{equation}
\sup_{t\ge\rho_\gamma}
\left|
\mathbb P_P\left((T_{n,\chgset}^O)^2\le t\right)
-\mathbb P\big(B_{\chgset,P}^2\le t\big)
\right|
\le e_{\mathrm G},
\label{eq:power-proof-subset-oracle-BB}
\end{equation}
because \(|\chgset|\le K\). Combining \eqref{eq:power-proof-profiled-oracle-finite},
\eqref{eq:power-proof-profiled-bootstrap-finite}, and
\eqref{eq:power-proof-global-oracle-BB} by the triangle inequality
shows that, on \(\Omega_{\mathrm F}\),
\begin{align}
&\sup_{t\ge\rho_\gamma}
\left|
\mathbb P_P\left(
(T_n^*)^2\le t
\,\middle|\,
\mathcal F_n
\right)
-\mathbb P\big(B_{[K],P}^2\le t\big)
\right|
\le2e_{\mathrm F}+e_{\mathrm G}.
\label{eq:power-proof-bootstrap-BB}
\end{align}
Choose $d
\ge
d_{\mathrm F}\vee d_{\mathrm G}$ and $c=c_{\mathrm G}$ in the formulation of the theorem we are proving. Since $
\log(n\vee K)\le\log(nK)$ and also $\tau^{-1/2}\ge1$ 
we have
\[
\max\left\{
e_{\mathrm F},
e_{\mathrm G},
2e_{\mathrm F}+e_{\mathrm G}
\right\}
\le
\frac{D_0}{\sqrt\tau}
\log^{d}(nK)
\left\{
\mathcal R_{n,q,r}+n^{-c}
\right\},
\]
where $D_0:=2D_{\mathrm F}+D_{\mathrm G}$. Choose \(C\) in the statement of the theorem we are proving sufficiently
large that
\begin{equation}
C
\ge
D_0
\vee
\frac{D_0}{2c_\gamma}
= D_0 \max\{1, 1/(2c_\gamma)\}.
\label{eq:power-proof-choice-Cpow}
\end{equation}
Note that if \(\varepsilon_{n,q,r}\ge1/2\), then \eqref{eq:profiled-power-conclusion} is immediate since
\[
1-\eta-2\varepsilon_{n,q,r}
\le
-\eta
<0
\le
\mathbb P_P\big(
\chgset\subseteq\widehat{\mathcal S}_\gamma
\big)
\]
holds trivially. Hence from now on assume that
\(\varepsilon_{n,q,r}<1/2\), and set
\[
\delta_0
:=
\frac{D_0}{C}\varepsilon_{n,q,r}.
\]
We then have $
\delta_0
<
{D_0}/( 2C)
\le c_\gamma
$ and also $\delta_0\le\varepsilon_{n,q,r}$ by~\eqref{eq:power-proof-choice-Cpow}.
Consequently,
\begin{equation}
0<\delta_0
\le
\varepsilon_{n,q,r}\wedge c_\gamma.
\label{eq:power-proof-delta0}
\end{equation}
We apply Lemma~\ref{lem:simultaneous_power_generic} with
\[
\rho=\rho_\gamma,
\qquad
\Omega=\Omega_{\mathrm F},
\qquad
\varpi=\delta_0,
\qquad
\delta=\delta_0.
\]
Since
$
\delta_0\le c_\gamma
=
\{\gamma\wedge(1-\gamma)\}/4
$
the requirement \(0<\delta<\gamma \wedge (1-\gamma)\) is satisfied. Moreover,
\eqref{eq:power-proof-profiled-event},
\eqref{eq:power-proof-subset-oracle-BB}, and
\eqref{eq:power-proof-bootstrap-BB}
show that
\eqref{eq:generic-power-calibration-event},
\eqref{eq:generic-power-oracle-comparison}, and 
\eqref{eq:generic-power-bootstrap-comparison}, respectively, hold with
these choices. Next, by
\eqref{eq:power-proof-delta0} and the fact that quantile functions are nondecreasing it holds that 
\[
c_{[K],P}^{\mathrm{BB}}(1-\gamma+\delta_0)
\le
c_{[K],P}^{\mathrm{BB}}
\left(
1-\gamma+
\{\varepsilon_{n,q,r}\wedge c_\gamma\}
\right),
\]
such that \eqref{eq:profiled-power-signal} implies
\[
\sqrt n\,\theta_k(1-\theta_k)\Delta_k
>
c_{[K],P}^{\mathrm{BB}}(1-\gamma+\delta_0)
+
c_{\chgset,P}^{\mathrm{BB}}(1-\eta),
\qquad k\in\chgset.
\]
This is
\eqref{eq:generic-power-two-quantile-condition} in
Lemma~\ref{lem:simultaneous_power_generic}, with
\(\delta=\delta_0\).
It remains to verify the two lower-threshold conditions
\eqref{eq:generic-power-bootstrap-threshold} and
\eqref{eq:generic-power-oracle-threshold} of
Lemma~\ref{lem:simultaneous_power_generic}.
For $\mathcal J \subset [K]$, fix \(\tilde{k}\in\mathcal J\), and
let \(Z_{\tilde{k}1}\) be a unit eigenfunction of
\(\mathcal K_{P,\tilde{k}}\) corresponding to its largest
eigenvalue \(\lambda_{\tilde{k}1}\). Note that
\[
B_{\mathcal J,P}
\ge
\Big|
\left\langle
\pi_{\tilde{k}1}
\mathbb B_{\mathcal K_P}(1/2),
Z_{\tilde{k}1}
\right\rangle_{\Hb_{\tilde{k}}}
\Big|
\]
and
\[
\left\langle
\pi_{\tilde{k}}
\mathbb B_{\mathcal K_P}(1/2),
Z_{\tilde{k}1}
\right\rangle_{\Hb_{\tilde{k}}}
\sim
N\big(0,\lambda_{\tilde{k}1}/4\big),
\]
the second statement following from the Brownian-bridge covariance at \(1/2\),
\[
\operatorname{Cov}\!\left(
\pi_{\widetilde k}\mathbb B_{\mathcal K_P}(1/2)
\right)
= \mathcal K_{P,\widetilde k} / 4,
\]
and the eigenfunction identity
\(\mathcal K_{P,\widetilde k}Z_{\widetilde k1}
=\lambda_{\widetilde k1}Z_{\widetilde k1}\).
Then Assumption~\ref{cond:scores}, which gives
\(\lambda_{k_{\mathcal J}1}\ge C_L^2\), implies
\begin{align*}
\mathbb P\left(
B_{\mathcal J,P}^2\le\rho_\gamma
\right)
&\le
\mathbb P\left(
Z^2\le\frac{4\rho_\gamma}
{\lambda_{k_{\mathcal J}1}}
\right)\le
2\Phi\left(
\frac{2\sqrt{\rho_\gamma}}{C_L}
\right)-1,
\end{align*}
where \(Z\sim N(0,1)\) and $\Phi$ is the cumulative distribution function of $Z$. Put
\[
z_\gamma
:=
\Phi^{-1}\left(\frac{3-\gamma}{4}\right)>0.
\]
By the definition of \(\rho_\gamma\), we have $2\sqrt{\rho_\gamma}/{C_L}
=
\sqrt{6/25}\,z_\gamma
<
z_\gamma$.
Consequently,
\begin{equation}
\mathbb P\left(
B_{\mathcal J,P}^2\le\rho_\gamma
\right)
<
2\Phi(z_\gamma)-1
=
\frac{1-\gamma}{2}.
\label{eq:power-proof-BB-lower-tail}
\end{equation}
Applying \eqref{eq:power-proof-BB-lower-tail} with
\(\mathcal J=[K]\), and using
\(\delta_0\le c_\gamma\le(1-\gamma)/4\), recall ~\eqref{eq:power-proof-delta0}, gives
\begin{align*}
\mathbb P\left(
B_{[K],P}^2\le\rho_\gamma
\right)
+\delta_0
&<
\frac{1-\gamma}{2}
+\frac{1-\gamma}{4}<
1-\gamma.
\end{align*}
This verifies
\eqref{eq:generic-power-bootstrap-threshold}. Applying \eqref{eq:power-proof-BB-lower-tail} 
with \(\mathcal J=\chgset\), 
and recalling that \(\eta\le\gamma\), gives
\[
\mathbb P\left(
B_{\chgset,P}\le\sqrt{\rho_\gamma}
\right)
<
\frac{1-\gamma}{2}
<
1-\eta.
\]
By the definition of the quantile function, it follows that
\[
c_{\chgset,P}^{\mathrm{BB}}(1-\eta)
\ge
\sqrt{\rho_\gamma},
\]
which verifies
\eqref{eq:generic-power-oracle-threshold}. The conclusion of Lemma~\ref{lem:simultaneous_power_generic} therefore gives
\begin{align*}
\Prob\big(
\chgset\subseteq\widehat{\mathcal S}_\gamma
\big)
\ge
1-\eta-\varpi-\delta
=
1-\eta-2\delta_0
\ge
1-\eta-2\varepsilon_{n,q,r},
\end{align*}
where the final inequality follows from
\(\delta_0\le\varepsilon_{n,q,r}\). This proves
\eqref{eq:profiled-power-conclusion}, subject only to the verification
of
\eqref{eq:power-proof-profiled-event}--
\eqref{eq:power-proof-profiled-bootstrap-finite}.

For that purpose, put
\[
\Lambda:=\log(nK),
\qquad
b:=2\gamma_U-1,
\qquad
\alpha:=\frac{b}{12\gamma_U-2},
\qquad
\beta_M:=\frac{b}{2\gamma_L}.
\]
Choose \(C_X,C_M,H_{nK},x,M_{\max}\), and \(M_\star\) exactly as in
\eqref{eq:choice-CX-fwer}, \eqref{eq:hnk-x},
\eqref{eq:choice-CM-fwer}, and~\eqref{eq:2-ms}, with
\(\rho=\rho_\gamma\), and set
\begin{equation}
M
:=
\left\lfloor M_\star\wedge M_{\max}\right\rfloor,
\qquad
\ell
:=
\left\lceil n^{3/[2(c_\beta+2)]}\right\rceil;
\label{eq:power-proof-M-ell}
\end{equation}
note that only the definition of $\ell$ is different from the one used in  the proof of  Theorem~\ref{thm:fwer_control}; see \eqref{eq:fwer-proof-three-M-ell}.
The elementary-case argument leading to
\eqref{eq:fwer-proof-three-nontrivial} applies without change, so that it suffices to assume
\begin{equation*}
4\le q\le\frac{\sqrt n}{\log n},
\qquad
M_{\max}\ge2,
\qquad
M_\star\ge2.
\end{equation*}
The calculation preceding \eqref{eq:mbound1} then gives $m \ge q/4$
and, since the definition of \(M\) is unchanged,
\eqref{eq:mbound1} gives
\[
M^{2\gamma_L}
\le
\frac{x(q\wedge m)}{C_M}.
\]
Thus conditions \textnormal{(X)} and \textnormal{(M)} from
\eqref{eq:side-conditions} hold.

Recall 
\(\nu_k=\gamma_k\Delta_k\) and $\gamma_k = \theta_k \wedge (1-\theta_k) \ge \theta_k (1-\theta_k)$ from \eqref{eq:def-nu_k}. Membership in
\(\mathcal P_{\mathrm{loc}}(c_0,q,\tau)\) gives
\begin{equation*}
\nu_k
\ge
c_0\Lambda
\left\{
\frac1{\sqrt q}
+
\frac{n^{3/[2(c_\beta+2)]}\log n}{q}
\right\},
\qquad k\in\mathcal S_P.
\end{equation*}
Note that 
\[
\ell\le2n^{3/[2(c_\beta+2)]},
\qquad
\log\left(\frac{en}{\ell}\right)\le2\log n,
\]
which follow from $n^{3/[2(c_\beta+2)}\ge 1$ and $\lceil x \rceil \leq x+1$ and $\ell \ge 1$ and $\log(en)=1+\log(n) \leq 2\log(n)$, since $n\ge 4$. Recall the definition of \(r_{\beta,k}^{(n)}(\ell)\) in~\eqref{eq:def-coordinatewise-rbeta-new}. By our choice of $c_0 =  5(C_{\mathrm{loc}} \vee 1)$ from the beginning of this proof, 
we have 
\[
C_{\mathrm{loc}}
\left(
\frac1{c_0^2}+\frac4{c_0}
\right)
\le1.
\]
Then, by~\eqref{eq:def-coordinatewise-rbeta-new} and the preceding
bound it holds for all $k\in\mathcal S_P$ that 
\begin{align*}
r_{\beta,k}^{(n)}(\ell)
&=
\left\lceil
C_{\mathrm{loc}}
\frac{\Lambda}{\nu_k}
\left\{
\frac{\Lambda}{\nu_k}
+
\ell\log\left(\frac{en}{\ell}\right)
\right\}
\right\rceil
\\
&\le
\left\lceil
C_{\mathrm{loc}}
\left(
\frac1{c_0^2}+\frac4{c_0}
\right)q
\right\rceil
\\
&\le
\lceil q\rceil
=
q.
\end{align*}
Recalling the definitions of~\eqref{def:W_j(g)} and~\eqref{eq:def-delta-J-wk} it follows that 
\begin{equation*}
\mathcal W_{[K]}(q,\ell)=\varnothing,
\qquad
\Delta_{[K],\mathrm{wk}}(q,\ell)=0,
\end{equation*}
with the second equality follow from the first, recalling the convention used throughout this manuscript that an a maximum over an empty set is defined to equal $0$.

Theorem~\ref{Thm_1} now gives the bound in 
\eqref{eq:fwer-proof-three-oracle-bound}, but with \(X_0\) replaced by
\((T_n^O)^2\) and \(Y_0\) replaced by \(Y_{[K]}\). Likewise,
Theorem~\ref{thm:conditional_gaussian_subset_master}, applied with
\(\mathcal J=[K]\) and \(g=q\), gives the bounds
\eqref{eq:fwer-proof-three-event} and
\eqref{eq:fwer-proof-three-bootstrap-bound}, but with $(T_{n,0}^*)^2$ replaced by $(T_n^*)^2$ and \(Y_0\) replaced by \(Y_{[K]}\). Here, 
the interior condition from \eqref{eq:intertior-condition}
follows from \(P\in\mathcal P_{\mathrm{loc}}(c_0,q,\tau)\).
 The claims in \eqref{eq:power-proof-profiled-event},
\eqref{eq:power-proof-profiled-oracle-finite}, and 
\eqref{eq:power-proof-profiled-bootstrap-finite} hence follow once we show that each of three the upper bounds can be upper bounded by $e_{\mathrm F}$ from \eqref{eq:power-proof-eF}. This in turn follows analogously to the proof of Theorem~\ref{thm:fwer_control}.

All \(M\)-dependent terms in these bounds are controlled by
\eqref{eq:m4q} and \eqref{eq:m4q-2}. 
Only the calculation involving \(\ell\) differs from the FWER proof, since $\ell$ has a different definition in this one. Recall
\[
h_n
:=
n^{-(c_\beta-1)/[2(c_\beta+2)]}
\le 
\Big(\frac{q^2}{n}\Big)^\alpha \vee \Big(\frac rq\Big)^{1/2}
\vee
\Big(\frac nq r^{-c_\beta}\Big) \le \mathcal R_{n,q,r}
\]
from \eqref{eq:bound-on-hn}.
The choice of $\ell$ in \eqref{eq:power-proof-M-ell} gives
\begin{align*}
&\left(\frac qn\right)^{1/2}
\left(1\vee\frac\ell{\sqrt q}\right)
\le
\left(\frac qn\right)^{1/2}+\frac\ell{\sqrt n}
\le
\left(\frac qn\right)^{1/2}+2h_n,
\\
&\left\lceil\frac n\ell\right\rceil\ell^{-c_\beta}
\le
2n\ell^{-(c_\beta+1)}
\le
2h_n.
\end{align*}
Moreover,
\[
\left(\frac qn\right)^{1/2}
\le
\left(\frac{q^2}{n}\right)^\alpha,
\qquad
\frac1n
\le
\left(\frac{q^2}{n}\right)^\alpha,
\]
Finally, the bounds following \eqref{eq:m4q-2} give
\[
x^{-3/2}\le C_X^{3/2}\Lambda^{12},
\qquad
\log(nKM)\le2\Lambda.
\]
Combining these bounds 
establishes
\eqref{eq:power-proof-profiled-event}--\eqref{eq:power-proof-profiled-bootstrap-finite}
after choosing \(D_{\mathrm F}\) sufficiently large and
\[
d_{\mathrm F}
\ge
14+(c_1\vee c_3)+8\beta_M,
\]
where \(c_1\) and \(c_3\) are the logarithmic exponents in
Theorem~\ref{Thm_1} and
Theorem~\ref{thm:conditional_gaussian_subset_master}, respectively.
This completes the proof.

\end{proof}

\begin{proof}[Proof of Theorem~\ref{thm:linear_process_localization}]
Fix \(\eta\in(0,1)\), and set
\begin{align}
\label{eq:definition-x_eta}
    x_\eta:=C_x\log\left(\frac{e|\chgset|}{\eta}\right),
\end{align}
where \(C_x\) is a sufficiently large absolute constant chosen later.  At the end of the
proof, \(C_x\) is absorbed into the constant \(C\) appearing in the definition
of \(r_k(\eta)\) from \eqref{eq:definition-rketa}.

Without loss of generality, assume that \(r_k(\eta) < n\) for all $k \in \chgset$; otherwise, the desired localization inequality is trivial for all coordinates with $r_k(\eta) \ge  n$ and we may remove them from $\chgset$.

Recall $\mathcal H_n(r)=\mathcal H_n(r, \chgset)$ from \eqref{eq:definition-Hnr}, and let 
\[
\mathcal H_{n,k}(r)
:= \mathcal H_{n}(r, \{k\})
=
\sup_{\substack{
1\le s\le n\\
|s-\omega_k|\ge r
}}
\frac{
\sqrt n\,
\|C_{n,k}^{O}(s)-C_{n,k}^{O}(\omega_k)\|_{\Hb_k}
}{
|s-\omega_k|\,\gamma_k\Delta_k
}, \quad r <n,
\]
with $C_{n,k}^O$ from \eqref{eq:definition-CnkO} and $\Delta_k$ from \eqref{eq:def-nu_k}. By Lemma~\ref{lem:general_localization_lemma}, applied to each coordinate $k \in \chgset$, it is enough to show that
\[
    \mathbb P\Big(
        \forall k\in\chgset:
        \mathcal H_{n,k}(r_k(\eta))<\frac15
    \Big)
    \ge 1-\eta .
\]
We will first work with each coordinate separately. Let
\[
    L_{k,+}:=n-1-\omega_k,
    \qquad
    L_{k,-}:=\omega_k-1.
\]
For \(j\ge0\), set
\begin{align}
\label{eq:def-xj}
    m_j:=2^jr,
    \qquad
    x_j:=x_\eta+j .
\end{align}
These sequences will be used below to obtain a sharper localization rate via a standard dyadic segment argument. 

By the Beveridge--Nelson decomposition associated with
Assumption~\ref{cond:linear} (see \cite{li2024detection}, Equation (B.2) on page 34 of \url{https://arxiv.org/pdf/2304.07003}), we can write
\begin{align}
\label{eq:beveridge-nelson}
    \varepsilon_k^{(i)}
    =
    X_{i,k}+R_{i-1,k}-R_{i,k},
    \qquad i\in\mathbb Z,
\end{align}
with
\[
    X_{i,k}:=A_k\eta_{i,k},
    \qquad
    R_{i,k}:=\sum_{\ell=0}^{\infty}\widetilde A_{k,\ell}\eta_{i-\ell,k}
\]
and 
\[
    A_k:=\sum_{\ell=0}^{\infty}A_{k,\ell},
    \qquad
    \widetilde A_{k,\ell}:=\sum_{j=\ell+1}^{\infty}A_{k,j}.
\]
Note that the $X_{i,k}$ are centered and independent over $i$.
The summability and tail conditions in Assumption~\ref{cond:linear} imply 
\begin{align}
\label{eq:bound-on-rik}
    \sup_{i\in\mathbb Z}\max_{k\in\chgset}
    \big\|
        \|R_{i,k}\|_{\Hb_k}
    \big\|_{\psi_1}
    \le
    D_R C_A. 
\end{align}
Indeed, for any fixed $(i,k)$, it holds for  any fixed $N \in \N$ that by~\ref{orlicz:triangle} and the triangle inequality on $\Hb_k$ that for
\[
R_{i,k}^{(N)}
:=
\sum_{j=0}^{N}
\widetilde A_{k,j}\eta_{i-j,k}.
\]
we have 
\begin{align*}
\left\|
    \|R_{i,k}^{(N)}\|_{\Hb_k}
\right\|_{\psi_1}
&\le
\sum_{\ell=0}^{N}
\left\|
    \|\widetilde A_{k,\ell}\eta_{i-\ell,k}\|_{\Hb_k}
\right\|_{\psi_1}
\\
&\le
\sum_{\ell=0}^{N}
\|\widetilde A_{k,\ell}\|_{\mathrm{op}}
\big\|
    \|\eta_{i-\ell,k}\|_{\Hb_k}
\big\|_{\psi_1}
\le
D_R
\sum_{\ell=0}^{N}
\|\widetilde A_{k,\ell}\|_{\mathrm{op}}
\le
D_R C_A .
\end{align*}
It is straightforward to verify that the sequence $\|R_{i,k}^{(N)}\|_{\Hb_k}$ is Cauchy in the $\psi_1$ norm. Then, following \cite[Equation H.2]{Dec24}, let $L^{\psi_1}=\{X: \|X\|_{\psi_1}<\infty\}$, where each $X$ in the collection is a random variable defined on a fixed probability space.  By Lemma H.0.1 of \cite{Dec24},  $L^{\psi_1}$ is a Banach space for the $\psi_1$ norm, and therefore complete in the $\psi_1$ norm. Hence the Cauchy sequence $\|R_{i,k}^{(N)}\|_{\Hb_k}$converges to a limit in the $\psi_1$ norm. It then follows by~\ref{orlicz:limit} that
\[
\Big\| \lim_{N\to \infty}\Big\|\sum_{j=0}^{N}
\widetilde A_{k,j}\eta_{i-j,k}\Big\|_{\Hb_k}\Big\|_{\psi_1} 
\leq 
D_R C_A,
\]
which justifies \eqref{eq:bound-on-rik}.
Similarly, Assumption~\ref{cond:linear} gives
\[
\|A_k\|_{\mathrm{op}}
\le
\sum_{\ell=0}^{\infty}
\|A_{k,\ell}\|_{\mathrm{op}}
\le
\sum_{\ell=0}^{\infty}
(\ell+1)\|A_{k,\ell}\|_{\mathrm{op}}
\le
C_A,
\]
which in turn implies
\[
\big\|
    \|X_{i,k}\|_{\Hb_k}
\big\|_{\psi_1}
=
\big\|
    \|A_k\eta_{i,k}\|_{\Hb_k}
\big\|_{\psi_1}
\le
\|A_k\|_{\mathrm{op}}
\big\|
    \|\eta_{i,k}\|_{\Hb_k}
\big\|_{\psi_1}
\le
D_R C_A .
\]
Therefore, by \ref{orlicz:expectation},
\begin{align}
\label{eq:Bernstein-bound-assumption}
\mathbb E\|X_{i,k}\|_{\Hb_k}^{p} 
\le (2p!) \cdot 
\big\|
    \|X_{i,k}\|_{\Hb_k}
\big\|_{\psi_1}^p
\le
(2p!) \cdot (D_R C_A)^p
\le 
\frac{p!}2 \cdot (2D_R C_A)^p
\end{align}
as required for later use of the Hilbert-space Bernstein inequality of~\cite{pinelis1994optimum}, recorded in Lemma~\ref{lem:pinelis-maximal-bernstein}, with $B=2D_RC_A$.

For arbitrary integers \(u<v\), the Beveridge--Nelson decomposition from \eqref{eq:beveridge-nelson} gives
\[
    \sum_{i=u+1}^{v}\varepsilon_k^{(i)}
    =
    \sum_{i=u+1}^{v}X_{i,k}
    +
    R_{u,k}-R_{v,k}.
\]
Moreover, from~\eqref{eq:COnk-difference-decomposition1}, writing \(s=\omega_k+d\) with \(1\le d\le L_{k,+}\),
\[
\sqrt n
\{C_{n,k}^{O}(\omega_k+d)-C_{n,k}^{O}(\omega_k)\}
=
\sum_{i=\omega_k+1}^{\omega_k+d}\varepsilon_k^{(i)}
-
\frac dn\sum_{i=1}^{n}\varepsilon_k^{(i)},
\]
whereas for \(s=\omega_k-d\) with \(1\le d\le L_-\), from \eqref{eq:COnk-difference-decomposition2},
\[
\sqrt n
\{C_{n,k}^{O}(\omega_k-d)-C_{n,k}^{O}(\omega_k)\}
=
-
\sum_{i=\omega_k-d+1}^{\omega_k}\varepsilon_k^{(i)}
+
\frac dn\sum_{i=1}^{n}\varepsilon_k^{(i)}.
\]
Therefore, by the triangle inequality, the right-side increment satisfies
\begin{align*}
&\sqrt n
\left\|C_{n,k}^{O}(\omega_k+d)-C_{n,k}^{O}(\omega_k)
\right\|_{\Hb_k}
\\&\hspace{4cm} \le
\Big\|
\sum_{i=\omega_k+1}^{\omega_k+d}X_{i,k}
\Big\|_{\Hb_k}
+
\|R_{\omega_k,k}\|_{\Hb_k}
      +\|R_{\omega_k+d,k}\|_{\Hb_k}
+
dT_{n,k},
\end{align*}
where
\[
    T_{n,k}
    :=
    \frac1n
    \Big\|
        \sum_{i=1}^{n}X_{i,k}
    \Big\|_{\Hb_k}
    +
    \frac{\|R_{0,k}\|_{\Hb_k}
          +\|R_{n,k}\|_{\Hb_k}}{n}.
\]
Similarly, the left-side increment satisfies
\begin{align*}
&\sqrt n
\left\|
\{C_{n,k}^{O}(\omega_k-d)-C_{n,k}^{O}(\omega_k)
\right\|_{\Hb_k}
\\ &\hspace{4cm}\le
\Big\|
\sum_{i=\omega_k-d+1}^{\omega_k}X_{i,k}
\Big\|_{\Hb_k}
+
\|R_{\omega_k-d,k}\|_{\Hb_k}
      +\|R_{\omega_k,k}\|_{\Hb_k}
+
dT_{n,k}.
\end{align*}
To remove an extra additive factor of size $\log(n)$ that results from using a naive union bound, we use the standard ``peeling'' argument from empirical process theory
(see Section 5.3 of \citealp{vandeGeer2000}), applied to the range 
\(d\ge r\). Let 
\[
    \mathcal D_j(r)
    :=
    \{d\in\mathbb N: 2^jr\le d<2^{j+1}r\}
    =
    \{d\in\mathbb N: m_j\le d<2m_j\},
\]
where $m_j = 2^j r$.
For \(j\ge0\), define the contribution from the $j$th  segment $\mathcal D_j(r)$  as
\begin{align}
    \label{eq:def-ejkr}
E_{j,k}(r)
:= \nonumber
&\sup_{\substack{
d\in\mathcal D_j(r)\\
d\le L_{k,+}
}}
\frac1d
\Big\|
    \sum_{i=\omega_k+1}^{\omega_k+d}X_{i,k}
\Big\|_{\Hb_k}
+
\sup_{\substack{
d\in\mathcal D_j(r)\\
d\le L_{k,-}
}}
\frac1d
\Big\|
    \sum_{i=\omega_k-d+1}^{\omega_k}X_{i,k}
\Big\|_{\Hb_k}
\\
&+
\sup_{\substack{
d\in\mathcal D_j(r)\\
d\le L_{k,+}
}}
\frac{
    \|R_{\omega_k,k}\|_{\Hb_k}
    +
    \|R_{\omega_k+d,k}\|_{\Hb_k}
}{d}
+
\sup_{\substack{
d\in\mathcal D_j(r)\\
d\le L_{k,-}
}}
\frac{
    \|R_{\omega_k-d,k}\|_{\Hb_k}
    +
    \|R_{\omega_k,k}\|_{\Hb_k}
}{d}.
\end{align}
Since the union (over all $j\geq 0$) of the segments $\mathcal D_j(r)$  covers the region $d\geq r$, the previous bounds imply that
\begin{align}
\label{eq:bound-on-Hnkr}
    \mathcal H_{n,k}(r)
    \le
    \frac1{\gamma_k\Delta_k}
    \left[
        \sup_{j\ge0}E_{j,k}(r)
        +
        T_{n,k}
    \right].
\end{align}
We now bound each $E_{j,k}(r)$. Since \(d\ge m_j\) it holds for each $j\geq 0$ that 
\[
\sup_{\substack{
d\in\mathcal D_j(r)\\
d\le L_{k,+}
}}
\frac1d
\Big\|
    \sum_{i=\omega_k+1}^{\omega_k+d}X_{i,k}
\Big\|_{\Hb_k}\leq \frac1{m_j}
\max_{1\le d\le (2m_j-1)\wedge L_+}
\Big\|
    \sum_{i=\omega_k+1}^{\omega_k+d}X_{i,k}
\Big\|_{\Hb_k}.
\]
Recalling \eqref{eq:Bernstein-bound-assumption}, we may apply the Hilbert-space Bernstein inequality of~\cite{pinelis1994optimum}, recorded in Lemma~\ref{lem:pinelis-maximal-bernstein}, to obtain that, for every \(j\ge0\) and $x_j \ge 1$,
\[
\begin{aligned}
&\mathbb P\Bigg(
\max_{1\le d\le (2m_j-1)\wedge L_+}
\Big\|
    \sum_{i=\omega_k+1}^{\omega_k+d}X_{i,k}
\Big\|_{\Hb_k}
>
D_R C_A\{\sqrt{m_jx_j}+x_j\}
\Bigg)
\le
2e^{-x_j/4}.
\end{aligned}
\]
Hence, there exists an event $\Omega_{S,+,j}$ of probability at most  \(2e^{-x_j/4}\), such that on $\Omega^c_{S,+,j}$ it holds that 
\[
   \sup_{\substack{
d\in\mathcal D_j(r)\\
d\le L_{k,+}
}}
\frac1d
\Big\|
    \sum_{i=\omega_k+1}^{\omega_k+d}X_{i,k}
\Big\|_{\Hb_k} \leq  D_R C_A
    \left\{
        \sqrt{\frac{x_j}{m_j}}
        +
        \frac{x_j}{m_j}.
    \right\}
\]
Using the same reasoning, there is an event $\Omega_{S,-,j}$ of probability at most  \(2e^{-x_j/4}\), such that on $\Omega^c_{S,-,j}$ it holds that 
\[
   \sup_{\substack{
d\in\mathcal D_j(r)\\
d\le L_{k,-}
}}
\frac1d
\Big\|
    \sum_{i=\omega_k-d+1}^{\omega_k}X_{i,k}
\Big\|_{\Hb_k} \leq D_R C_A
    \left\{
        \sqrt{\frac{x_j}{m_j}}
        +
        \frac{x_j}{m_j}
    \right\}.
\]
In view of \eqref{eq:bound-on-rik}, 
a union bound over at most \(2m_j\) remainder variables together with \ref{orlicz:tail_bound} gives
\[
\begin{aligned}
&\mathbb P\Bigg(
\max_{0\le d\le (2m_j-1)\wedge L_{k,+}}
\|R_{\omega_k+d,k}\|_{\Hb_k}
>
D_R C_A\{x_j+\log(em_j)\}
\Bigg)
\le
2e^{-x_j} \le 2 e^{-x_j/4}.
\end{aligned}
\]

The same estimate holds for the second remainder term, corresponding to region to the left of the change point. Therefore, considering the left and right partial sum terms, and left and right remainder terms, for each fixed $j\geq 0$, there is an event $\Omega_j$ of probability at most \(8e^{-x_j/4}\)
such that, on $\Omega^c_j$, it holds
\[
 E_{j,k}(r)\leq  D_R C_A
    \left\{
        \sqrt{\frac{x_j}{2^jr}}
        +
        \frac{x_j}{2^jr}
        +
        \frac{\log(e2^jr)}{2^jr}
    \right\},
\]
where $E_{j,k}(r)$ is from \eqref{eq:def-ejkr}, and we substituted $m_j:=2^jr$. Since \(x_j=x_\eta+j\) by definition in \eqref{eq:def-xj}, we have
\[
   \sum_{j=0}^{\infty} 8 e^{-x_j/4}
    =
    8 e^{-x_\eta/4}\sum_{j=0}^{\infty}e^{-j/4}
    =
    \frac{8}{1-e^{-1/4}} e^{-x_\eta/4}
    =
    C_1 e^{-x_\eta/4},
\]
where $C_1=8/(1-e^{-1/4})\approx 36.1664933$.  Therefore, outside an event of probability at most \(C_1 e^{-x_\eta/4}\),
\[
\sup_{j\ge0}E_{j,k}(r)
\le
D_R C_A
\sup_{j\ge0}
    \left\{
        \sqrt{\frac{x_\eta+j}{2^jr}}
        +
        \frac{x_\eta+j}{2^jr}
        +
        \frac{\log(e2^jr)}{2^jr}
    \right\}.
\]
Since \(x_\eta\ge1\),
\[
    \sup_{j\ge0}
    \sqrt{\frac{x_\eta+j}{2^jr}}\leq  \sup_{j\ge0}
    \sqrt{\frac{x_\eta}{2^jr}}+ 
    \sqrt{\frac{1}{2r}}
    \le
    3\sqrt{\frac{x_\eta}{r}},
\]
and
\[
    \sup_{j\ge0}
    \frac{x_\eta+j}{2^jr}
    \le
    \frac{x_\eta}{r},
\]
and, for an absolute constant $C_2$,
\[
    \sup_{j\ge0}
    \frac{\log(e2^jr)}{2^jr}
    = \frac{\log(er)}r
    \le
    \frac{C_2}{\sqrt r}
    \le
    C_2\sqrt{\frac{x_\eta}{r}}.
\]
Thus, with probability at least \(1-C_1e^{-x_\eta/4}\), it holds for the absolute constant $C_3 = 4 + C_2$ that 
\[
   \sup_{j\ge0}E_{j,k}(r)\leq  C_3 D_R C_A
    \left\{
        \sqrt{\frac{x_\eta}{r}}
        +
        \frac{x_\eta}{r}
    \right\}.
\]
It remains to control  \(T_{n,k}\). Applying the
same Hilbert-space Bernstein inequality to \(\sum_{i=1}^{n}X_{i,k}\), and
using the sub-exponential tail bound from \eqref{eq:bound-on-rik} for \(R_{0,k}\) and \(R_{n,k}\), gives for an absolute constant $C_4$
\[
    T_{n,k}
    \le
    C_4D_R C_A
    \left\{
        \sqrt{\frac{x_\eta}{n}}
        +
        \frac{x_\eta}{n}
    \right\}
\]
outside an event of probability at most \(6e^{-x_\eta/4}\). Combining the previous two displays with \eqref{eq:bound-on-Hnkr}, recalling that $\nu_k = \gamma_k\Delta_k$ by definition in \eqref{eq:def-nu_k}, and using that $x_\eta/n \le x_\eta/r$ we obtain that there is an absolute constant $C_5$ such that 
\[
\mathbb P\left(
\mathcal H_{n,k}(r)
>
\frac{C_5D_R C_A}{\nu_k}
\left\{
    \sqrt{\frac{x_\eta}{r}}
    +
    \frac{x_\eta}{r}
\right\}
\right)
\le
C_5e^{-x_\eta/4},
\]
with $x_\eta=C_x\log(e|\chgset|/\eta)$ from \eqref{eq:definition-x_eta}. 
Now take \(r=r_k(\eta)\) from \eqref{eq:definition-rketa}. 
Since
\[
    r_k(\eta) = \Big\lceil \frac{C}{C_x}x_\eta (\nu_k^{-2}+ \nu_k^{-1}) \Big\rceil
    \ge \frac{C}{C_x} x_\eta\nu_k^{-2}
    \quad \text{ and } \quad
    r_k(\eta)\ge \frac{C}{C_x} x_\eta\nu_k^{-1}.
\]
we have
\[
    \frac{D_R C_A}{\nu_k}
    \sqrt{\frac{x_\eta}{r_k(\eta)}}
    \le
    \frac{D_R C_A}{\sqrt{C/C_x}}
    \quad \text{ and } \quad
    \frac{D_R C_A}{\nu_k}
    \frac{x_\eta}{r_k(\eta)}
    \le
    \frac{D_R C_A}{C/C_x}.
\]
Therefore, choosing $C$ as a sufficiently large multiple of $C_x$, we obtain that $\mathcal H_{n,k}(r_k(\eta))<1/5$ 
outside an event of probability at most \(C_5e^{-x_\eta/4}\). A union bound over \(k\in\chgset\) gives
\[
\mathbb P\left(
    \exists k\in\chgset:
    \mathcal H_{n,k}(r_k(\eta))\ge\frac15
\right)
\le
C_5|\chgset|e^{-x_\eta/4}.
\]
Since \(x_\eta=C_x\log(e|\chgset|/\eta)\), choosing \(C_x\) sufficiently large
makes the right-hand side at most \(\eta\),
which completes the proof.
\end{proof}

\begin{proof}[Proof of Corollary~\ref{cor:stepdown}]
For the first assertion, the first step of the step-down procedure uses \(\mathcal R_1=[K]\), and hence
\[
\widehat Q_{1-\gamma}(\mathcal R_1)
=
\widehat Q_{1-\gamma}.
\]
Thus the coordinates rejected at the first step are exactly
\[
\left\{
k\in[K]:
T_{n,k}^2>\widehat Q_{1-\gamma}
\right\}
=
\widehat{\mathcal S}_\gamma.
\]
Since subsequent steps can only add rejections,
$
\widehat{\mathcal S}_\gamma
\subseteq
\widehat{\mathcal S}^{\mathrm{SD}}_\gamma
$ as asserted.

For the second assertion, fix $P\in\mathcal P$ and write
$
\mathcal N_P:=\mathcal S_P^c
$
for the set of true null coordinates. If $\mathcal N_P=\varnothing$, then there are no true null hypotheses, so
$
\widehat{\mathcal S}^{\mathrm{SD}}_\gamma
\cap \mathcal S_P^c
=
\varnothing.
$
Hence
\[
\mathbb P_P\!\left(
\widehat{\mathcal S}^{\mathrm{SD}}_\gamma
\cap \mathcal S_P^c\neq\varnothing
\right)
=0,
\]
and the first claim is immediate. From now on we suppose that \(\mathcal N_P\neq\varnothing\). For every nonempty \(\mathcal J\subseteq[K]\), recall from Section~\ref{subsec:stepdown} that 
\[
(T_{n,\mathcal J}^*)^2
=
\max_{k\in\mathcal J}(T_{n,k}^*)^2.
\]
If \(\mathcal J\subseteq\mathcal J'\), then, for every bootstrap realization,
\[
(T_{n,\mathcal J}^*)^2
\le
(T_{n,\mathcal J'}^*)^2,
\]
since the second variable is a maximum over more coordinates. Therefore
\begin{equation}
\label{eq:stepdown-quantile-monotonicity}
\widehat Q_{1-\gamma}(\mathcal J)
\le
\widehat Q_{1-\gamma}(\mathcal J').
\end{equation}
Consider the event that the step-down procedure rejects at least one true null, and let \(j_\star\) be the first step at which this occurs. Since no true null has been rejected before step \(j_\star\),
\[
\mathcal N_P\subseteq\mathcal R_{j_\star}.
\]
At step \(j_\star\), some \(k\in\mathcal N_P\) satisfies
\[
T_{n,k}^2>
\widehat Q_{1-\gamma}(\mathcal R_{j_\star}).
\]
By~\eqref{eq:stepdown-quantile-monotonicity},
\[
\widehat Q_{1-\gamma}(\mathcal N_P)
\le
\widehat Q_{1-\gamma}(\mathcal R_{j_\star}),
\]
and hence
\begin{equation*}
\left\{
\widehat{\mathcal S}^{\mathrm{SD}}_\gamma
\cap\mathcal S_P^c\neq\varnothing
\right\}
\subseteq
\left\{
\max_{k\in\mathcal S_P^c}T_{n,k}^2
>
\widehat Q_{1-\gamma}(\mathcal S_P^c)
\right\}.
\end{equation*}

It remains to control the probability on the right-hand side. This is precisely the true-null subset bootstrap problem considered in the proof of Theorem~\ref{thm:fwer_control}. In the notation of that proof,
\[
X_0=\max_{k\in\mathcal S_P^c}T_{n,k}^2,
\qquad
\widehat Q_{1-\gamma}(\mathcal S_P^c)
=
\widehat Q^0_{1-\gamma}.
\]
By~\eqref{eq:fwer-proof-three-profiled-event} and the lower bound in
\eqref{eq:fwer-proof-three-quantile-sandwich},
\[
\mathbb P_P\!\left(X_0>\widehat Q^0_{1-\gamma}\right)
\le
\mathbb P_P(X_0>q_-)+\varepsilon_{n,q,r}.
\]
Since $q_-\ge\rho_\gamma$; see~\eqref{eq:rho-gamma} and~\eqref{eq:fwer-proof-three-quantile-sandwich} for definitions;  
\eqref{eq:fwer-proof-three-profiled-oracle} gives
\[
\mathbb P_P(X_0>q_-)
\le
\gamma+2\varepsilon_{n,q,r}.
\]
Consequently,
\[
\mathbb P_P\!\left(
\max_{k\in\mathcal S_P^c}T_{n,k}^2
>
\widehat Q_{1-\gamma}(\mathcal S_P^c)
\right)
\le
\gamma+3\varepsilon_{n,q,r}.
\]
The same choice of constants as in the argument leading to
\eqref{eq:fwer-profiled-strong-control} therefore yields
\[
\mathbb P_P\!\left(
\widehat{\mathcal S}^{\mathrm{SD}}_\gamma
\cap\mathcal S_P^c\neq\varnothing
\right)-\gamma
\le
\frac{C}{\sqrt{\tau}}
\log^d(nK)\mathcal R_{n,q,r},
\]
completing the proof.
\end{proof}

\section{Supporting Results}
\label{sec:supporting-results}

\subsection{Main Supporting Results}

Throughout this section, we fix $\rho>0$, which serves as a non-uniform Kolmogorov threshold. We assume that Assumptions~\ref{cond:stationary} and~\ref{cond:scores} hold with fixed values of the parameters appearing in those assumptions.

Unless stated otherwise, a “constant” means a positive quantity that may depend on these fixed parameters and on $\rho$, but is otherwise independent of $n,q,r,K,M,x$. When useful, we will specify that a constant depends only on a particular subset of these quantities. Unless a result imposes further restrictions, all results in this section hold for all
\begin{equation*}
(n,q,r,K,M,x)
\in
\mathbb{N}_{\geq 4}
\times \mathbb{N}
\times \mathbb{N}
\times \mathbb{N}
\times \mathbb{N}
\times (0,3/16],
\end{equation*}
subject to the additional restrictions $1 \leq r\leq q\leq n/4$. We use the following side conditions:
\begin{equation}
\label{eq:side-conditions}
\begin{aligned}
\text{(X)}\quad
&x
\leq
\frac{1}
{C_X{[\log(nK)\log(n)]}^{4}}, \qquad
\text{(M)}\quad
M^{2\gamma_L}
\leq
\frac{x(q\wedge m)}{C_M},
\end{aligned}
\end{equation}
where  $m:=\left\lfloor n/(q+r)\right\rfloor$ as in~\eqref{def:m}. Here, $C_X>0$ is a constant that is permitted to depend on $D_\psi,\ C_\beta,\ c_\beta,\ C_L,\ C_U,\ \gamma_U,\ \gamma_L, \rho$, and \(C_M>0\) is a constant that is independent of \(\rho\) but may depend on $D_\psi,\ C_\beta,\ c_\beta,\ C_L,\ C_U,\ \gamma_U,\ \gamma_L$. These side conditions are imposed only when explicitly invoked in the statement of a result.
Recall the definition for each $u \in I=[0,1]$ of $V(u)=u(1-u)$ from~\eqref{def:brownian_covariance_kernel_variance_2} and for each $x >0$ let
\begin{equation}\label{eq:definition-S_x}
S_x
:=
\left\{
s\in[n]:
V(s/n)\ge x
\right\}.
\end{equation}
Since $\max_{s\in[n]}V(s/n) \ge 6/25 > 3/16$ (the first bound is sharp for $n=5$),
both \(S_x\) and \([n]\setminus S_x\) are nonempty. Hence, by construction,
\begin{equation}\label{eq:infsup-V-S_x}
\min_{s\in S_x}V(s/n)\ge x,
\qquad
\max_{s\in[n]\setminus S_x}V(s/n)\le x.
\end{equation}
In each of the following theorems, the sufficiently large constants \(C_X\) and \(C_M\) appearing in the conditions from \eqref{eq:side-conditions} may be chosen differently.

To state Theorem~\ref{Thm_1} we introduce the oracle analogue of each marginal CUSUM process $C_{n,k}$ from~\eqref{def:cusum} by first defining the oracle partial sum process
\begin{equation*}
S_{n,k}^{O}(s)
=
\sum_{i=1}^{n}\varepsilon^{(i)}_{k}\mathbf 1(i\le s),
\qquad 0\le s\le n,
\end{equation*}
and then setting
\begin{equation}
\label{eq:definition-CnkO}
C_{n,k}^{O}(s)
=
\frac{1}{\sqrt n}
\left[
S_{n,k}^{O}(s)
-
\frac{s}{n}S_{n,k}^{O}(n)
\right],
\qquad 0\le s\le n.
\end{equation}
The oracle analogue of \(T_n\) from~\eqref{def:global_test} is then
\begin{equation}
\label{eq:def_oracle_test_statistic}
T^O_n
:=
\max_{k\in[K]}
\max_{s\in[n]}
\left\|C_{n,k}^O(s)\right\|_{\Hb_k}.
\end{equation}
Note that $C_{n,k}^{O}(0)=C_{n,k}^{O}(n)=0$. Further recall the definition of $G_{sk}^M$ from \eqref{eq:definition-GMsk}, $\check M_k$ from \eqref{eq:definition-check-mk}, and $\mathbb B_{\mathcal K}$ from \eqref{def:Hilbert_Brownian_Bridge}.

\begin{theorem}\label{Thm_1}
    Fix $\rho>0$, and suppose that Assumptions~\ref{cond:stationary} and~\ref{cond:scores} hold. There exist sufficiently large constants \(C_X,C_M>0\) and constants
\(C_1,c_1>0\) such that, whenever conditions \text{\normalfont {(X)}} and \text{\normalfont {(M)}} from~\eqref{eq:side-conditions} hold,
\begin{align*}
d_{\mathrm{K}}^{(\rho)}\left( (T_n^O)^2,
\max_{k\in[K]}
\max_{s\in S_x}
\|G_{sk}^M\|_{2}^2
\right)
\leq
C_1\Delta_1\log^{c_1}(nK)
+
2\frac{n}{q}\beta_r,
\end{align*}
where $d_{\mathrm{K}}^{(\rho)}$ is the restricted K-distance from \eqref{eq:restricted-k-distance} and where
\begin{align}
\label{eq:def-delta_1}
\Delta_1
:=
x^{-3/2}
\left[
\left(\frac{M^4q^2}{n}\right)^{1/6}
+
M^{1-2\gamma_U}
+
\left(\frac rq\right)^{1/2}
\right].
\end{align}
The constant \(c_1\) depends only on \(\gamma_U\) and \(\gamma_L\).
\end{theorem}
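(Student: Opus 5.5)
We compare $(T_n^O)^2$ with the Gaussian maximum through a chain of intermediate statistics, controlling each link in restricted Kolmogorov distance $d_{\mathrm K}^{(\rho)}$ and using the triangle inequality.

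\textbf{Step 1 (reduce to the interior grid $S_x$).} Outside $S_x$ we have $V(s/n)<x$, so the CUSUM values there have small variance. By a maximal inequality for $\|C^O_{n,k}(s)\|$ over $s\notin S_x$, uniformly in $k$, we will show these values stay below $\rho$ with probability $1-O(\text{small})$. The tools are Assumption~\ref{cond:stationary}, the $\psi_1$ tails, and $\log(nK)$ union bounds, and the smallness is ensured by condition (X). Hence, for $t\ge\rho$, restricting the maximum to $s\in S_x$ costs only a negligible error.

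\textbf{Step 2 (blocking and coupling).} Write $C^O_{n,k}(s)$ as $(q+r)^{-1/2}$-scaled sums of big-block sums $S_\ell$, plus small-block sums $S'_\ell$, plus the partial block containing $s$. On $\Omega_{\mathrm{block}}$, whose complement has probability at most $2(n/q)\beta_r$ by \eqref{eq:Omega-block-probability}, we replace the $S_\ell$ by the independent Berbee copies $\widetilde S_\ell$. The small-block and boundary contributions have norm of order $(r/q)^{1/2}$ and $(q/n)^{1/2}$ up to logarithms. Since $V(s/n)\ge x$ on $S_x$, an anti-concentration bound for the Gaussian limit turns these perturbations into a Kolmogorov error of order $x^{-3/2}(r/q)^{1/2}\log^{c}$.

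\textbf{Step 3 (finite-dimensional projection).} Apply $P_{kM}$ coordinatewise. By \eqref{eq:orthogonal}, the squared norm equals the projected squared norm plus a tail whose expectation is bounded by $\sum_{j>M}\lambda_{kj}\lesssim M^{1-2\gamma_U}$ (Assumption~\ref{cond:scores}). The tail is controlled uniformly in $k$ and $s$ by a Gaussian/Bernstein maximal bound, and it enters through anti-concentration. This yields the $M^{1-2\gamma_U}$ term in $\Delta_1$.

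\textbf{Step 4 (high-dimensional CLT).} After Steps 2 and 3, the statistic is a maximum over $(s,k)\in S_x\times[K]$ of squared Euclidean norms of sums $m^{-1/2}\sum_\ell a_{s\ell}O_{kM}P_{kM}q^{-1/2}\widetilde S_\ell$ of independent vectors in $\R^{\check M_k}$. This is exactly the structure of $G^M_{sk}$, with covariance $A_n\otimes\Sigma_q$. The event $\{\max_{s,k}\|\cdot\|_2^2\le t\}$ is an intersection of balls, which we approximate by polytopes via an $\epsilon$-net of the unit sphere in $\R^M$ with $e^{O(M\log(1/\epsilon))}$ facets. A Chernozhukov--Chetverikov--Kato-type CLT for hyperrectangles with sub-exponential summands (Orlicz bounds for $\widetilde S_\ell/\sqrt q$) then gives an error of order $(\log^{c}(nKM)\,\cdot\,\text{facets}/m)^{1/6}$. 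Combined with $m\asymp n/q$ and the $M$-dimensional net, this produces the $(M^4q^2/n)^{1/6}$ term. Condition (M) keeps the smallest eigenvalues of $\Sigma_q$, namely $\gtrsim C_LM^{-2\gamma_L}$ after the $O(1/(q\wedge m))$ covariance perturbation of Lemma~\ref{lem:bound-on-operator-norm-covariances}, bounded below relative to $x$. This nondegeneracy is needed for Nazarov-type anti-concentration, and the factor $x^{-3/2}$ comes from normalizing by $V(s/n)\ge x$.

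\textbf{Main obstacle.} The delicate step is Step 4 together with the anti-concentration used in Steps 1–3. We need an anti-concentration bound for the maximum of squared Gaussian norms at levels $t\ge\rho$ that is uniform over $K$ and depends only polylogarithmically on $nK$. I would first try reducing to linear functionals over the net and applying Nazarov's inequality, with minimal variance $\gtrsim x M^{-2\gamma_L}$. Any polynomial loss in $M$ incurred there must be balanced against the rates above.
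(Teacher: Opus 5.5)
Your chain (Berbee coupling on $\Omega_{\mathrm{block}}$, restriction to $S_x$, projection onto the leading eigendirections of $\mathcal K_k$, Gaussian comparison, anti-concentration) has the same architecture as the paper's proof. However, the two tools you propose for the last steps cannot produce the stated $\Delta_1$, as your \emph{main obstacle} paragraph essentially concedes.

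The first problem is anti-concentration. Every perturbation from Steps 1--3 must be converted into Kolmogorov distance through anti-concentration of $\max_{k}\max_{s\in S_x}\|G^M_{sk}\|_2$. Nazarov's inequality over an $\epsilon$-net of the unit sphere of $\mathbb R^M$ bounds a shell of width $\delta$ by $\delta\,\underline\sigma^{-1}(\sqrt{2\log p}+2)$. Here $p$ is the number of net functionals, so $\log p\gtrsim M$. The quantity $\underline\sigma^2$ is essentially the smallest eigenvalue of $\Cov(G^M_{sk})$, which is at most $2C_U^2M^{-2\gamma_U}$ and only guaranteed to be $\gtrsim xC_L^2M^{-2\gamma_L}$. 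The loss is therefore at least of order $M^{\gamma_U+1/2}$, and up to $x^{-1/2}M^{\gamma_L+1/2}$. Applied to the projection error, of order $M^{1-2\gamma_U}\log^2(nK)$ on the squared scale, this gives a contribution of order at least $M^{3/2-\gamma_U}$. That grows with $M$ whenever $\gamma_U<3/2$, and the $(r/q)^{1/2}$ term is inflated by the same factor.

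The CLT step has the same defect. A CCK-type hyperrectangle bound $(B^2\log^7(pn)/m)^{1/6}$ with $B^2\asymp q$ and $\log p\gtrsim M$ gives at best $(M^7q^2/n)^{1/6}$. This is before accounting for how its constant depends on the minimal variance $\asymp xM^{-2\gamma_L}$. So the assertion that it produces $(M^4q^2/n)^{1/6}$ does not follow, and re-balancing $M$ afterwards cannot rescue a statement whose $\Delta_1$ is fixed.

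The missing idea is to exploit the whole polynomially decaying spectrum rather than its smallest eigenvalue. The paper first shows (Corollary~\ref{lem:good_eigen_values_2}, which is where (M) enters) that each $\Cov(G^M_{sk})$, $s\in S_x$, lies in $\operatorname{Decay}(xC_L^2/2,2C_U^2,2\gamma_L,2\gamma_U)$. It then uses two mixed-norm results adapted from Decker, Kong and Volgushev:
\begin{itemize}
\item an anti-concentration inequality for $\max_k\|\pi_k g\|_2$, $g$ Gaussian, with no polynomial dependence on $M$ (Lemma~\ref{lem:acnew} and Corollary~\ref{cor:projected_mixed_norm_ac}), giving $x^{-3/2}(\delta+1/n)$ times polylogarithms;
\item a high-dimensional CLT for maxima of block norms of sums of independent vectors (Lemma~\ref{new:high_dimensional_CLT_mixed_norms}), with error $J_m(M^4H_m^2/m)^{1/6}$, where $H_m\asymp\sqrt q$ and $m\ge n/(4q)$.
\end{itemize}
These are exactly the sources of $(M^4q^2/n)^{1/6}$ and of $x^{-3/2}$, which equals $(C_L\sqrt{x/2})^{-3}$ up to constants. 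All perturbations are collected into a single sandwich $\{U_3^2\le t-\eta\}\subseteq\{(T_n^O)^2\le t\}\subseteq\{U_3^2\le t+\eta\}$ on a good event, with $U_3$ the projected coupled statistic. Anti-concentration is then applied once, at the Gaussian endpoint.

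Two smaller points. First, Step 1 cannot be done on the dependent CUSUM with only $\psi_1$ tails and union bounds, because the $\psi_1$ triangle inequality gives order $s$ rather than $\sqrt s$ for partial sums. Instead, do the boundary restriction after coupling, on the independent-block process as in Lemma~\ref{lem:coupled_big_block_boundary}; this also keeps the mixing remainder at $2(n/q)\beta_r$. Second, in Step 3, $P_{kM}$ projects onto eigenvectors of $\mathcal K_k$ while the blocks have covariance $\Sigma_q$. The tail expectation therefore carries an extra $O(1/q)$ from Lemma~\ref{lem:convergence_to_long_term_trace}, which is harmless because $1/q\le (r/q)^{1/2}$.
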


\begin{theorem}\label{Thm_2}
Fix $\rho>0$, and suppose Assumptions~\ref{cond:stationary} and~\ref{cond:scores} hold.
There exist sufficiently large constants \(C_X,C_M>0\) and constants
\(C_2,c_2>0\) such that, whenever conditions \text{\normalfont {(X)}} and \text{\normalfont {(M)}} from~\eqref{eq:side-conditions} hold,
\begin{align*}
d_{\mathrm{K}}^{(\rho)}\left(
\max_{k\in[K]}
\max_{s\in S_x}
\|G_{sk}^M\|_{2}^2,
\max_{k\in[K]}
\sup_{u\in I}
\|\pi_k\mathbb B_{\mathcal K}(u)\|_{\Hb_k}^2
\right)
\leq
C_2\Delta_2\log^{c_2}(nK),
\end{align*}
where $d_{\mathrm{K}}^{(\rho)}$ is the restricted K-distance from \eqref{eq:restricted-k-distance} and where
\[
\Delta_2
:=
x^{-3/2}
\left[
\left(\frac{M^2}{q\wedge m}\right)^{1/3}
+
M^{1-2\gamma_U}
\right].
\]
The constant $c_2$ depends only on $\gamma_U$ and $\gamma_L$.
\end{theorem}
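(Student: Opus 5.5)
The plan is to prove the bound by a chain of three approximations, each measured in the restricted Kolmogorov distance $d_{\mathrm K}^{(\rho)}$ and combined by the triangle inequality. Write $\mathcal B^{(n)}_{sk}=\mathbb B_{\mathcal K,k}(s/n)$ as in Section~\ref{subsec:discrete_brownian-bridge}, and set $\widetilde G_{sk}^M:=O_{kM}P_{kM}\mathcal B^{(n)}_{sk}\in\R^{\check M_k}$, the Brownian-bridge analogue of $G_{sk}^M$. The chain is
\[
\max_{k}\sup_{u\in I}\|\pi_k\mathbb B_{\mathcal K}(u)\|^2
\;\rightsquigarrow\;
\max_k\max_{s\in[n]}\|\mathcal B^{(n)}_{sk}\|^2
\;\rightsquigarrow\;
\max_k\max_{s\in S_x}\|\mathcal B^{(n)}_{sk}\|^2
\;\rightsquigarrow\;
\max_k\max_{s\in S_x}\|\widetilde G^M_{sk}\|_2^2
\;\rightsquigarrow\;
\max_k\max_{s\in S_x}\|G^M_{sk}\|_2^2 .
\]

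\emph{Steps 1 and 2 (discretization and removal of the boundary region).} For the discretization step I would bound the modulus of continuity of the Hilbert-valued bridge. Increments over intervals of length $1/n$ have covariance of trace at most $C_{\mathcal T}/n$, so Gaussian concentration together with a union bound over $n$ grid cells and $K$ coordinates bounds $\max_k\sup_u\|\pi_k\mathbb B_{\mathcal K}(u)\|$ minus its grid maximum by $n^{-1/2}\log(nK)$ with high probability. Since $q\wedge m\lesssim\sqrt n$, this error is dominated by the main terms.

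For removing the indices $s\notin S_x$, note that $V(s/n)\le x$ on $[n]\setminus S_x$. Hence each $\|\mathcal B^{(n)}_{sk}\|$ there has variance proxy of order $xC_{\mathcal T}$. Gaussian concentration for the norm, followed by a union bound over $nK$ indices, gives $\max_{s\notin S_x,k}\|\mathcal B^{(n)}_{sk}\|^2\le C x\log(nK)<\rho$ with overwhelming probability; condition (X) with $C_X$ large makes the exceptional probability polynomially small. Because only thresholds $t\ge\rho$ are considered, discarding these indices changes $d_{\mathrm K}^{(\rho)}$ by at most this probability.

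\emph{Step 3 (spectral truncation).} Here $\|\mathcal B^{(n)}_{sk}\|^2=\|\widetilde G^M_{sk}\|_2^2+\|(\id-P_{kM})\mathcal B^{(n)}_{sk}\|^2$ by \eqref{eq:orthogonal}. The remainder has trace at most $\sum_{j>M}\lambda_{kj}\lesssim M^{1-2\gamma_U}$ by \ref{cond:scores}, and it is uniformly small over $(s,k)$ after a $\log(nK)$ union bound. Turning this additive perturbation into a Kolmogorov bound requires an anti-concentration inequality for $\max_{s\in S_x,k}\|\widetilde G_{sk}^M\|_2^2$ on $[\rho,\infty)$. I would derive it from the fact that on $S_x$ every coordinate has first-eigendirection variance at least $xC_L^2$, while the threshold $\rho$ bounds the relevant radii from below. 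A Nazarov-type anti-concentration for maxima of norms of Gaussians, with the variance floor $x$ entering polynomially, gives a bound of the form $x^{-3/2}\,\epsilon\log^{c}(nK)$. This is the source of the $x^{-3/2}$ factor.

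\emph{Step 4 (Gaussian-to-Gaussian comparison, the main obstacle).} By \eqref{eq:Kronecker-Kn} and the definition of $G^{(n,q)}$, the covariances of $\widetilde G^M$ and $G^M$ are $K_n\otimes(P_M\mathcal K P_M)$ and $A_n\otimes(P_M\Sigma_q P_M)$, projected onto the $S_x\times[K]$ block. Their blockwise difference is $\lesssim A_{\mathcal P}/(q\wedge m)$ in operator norm: $\|A_n-K_n\|_\infty\lesssim q/n$ from block discretization, and $\|\Sigma_q-\mathcal K\|\lesssim D_\psi^2S_{\beta,1}/q$ from mixing. This is exactly the content of Lemma~\ref{lem:bound-on-operator-norm-covariances}. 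I would then run a Slepian/Stein interpolation between the two Gaussian families, applied to a smoothed maximum of the quadratic forms $\|\cdot\|_2^2$ (softmax with temperature $\beta$, composed with a smooth indicator at scale $\epsilon$). The derivative bounds involve the $M\times M$ block structure, giving an error of order $M^2\Delta_\Sigma\beta\epsilon^{-1}\cdots$. Optimizing against the smoothing bias $\beta^{-1}\log(nK)$ and the anti-concentration from Step 3 should yield $(M^2/(q\wedge m))^{1/3}$ up to $x^{-3/2}\log^{c}(nK)$.

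Condition (M) ensures that the perturbation is small compared with the smallest relevant eigenvalue $C_L^2M^{-2\gamma_L}x$, so that the interpolated covariances stay well-conditioned along the first $M$ directions. I expect this quadratic-form comparison, together with the accompanying anti-concentration bound, to be the delicate step. If a direct quadratic-form interpolation proves awkward, the fallback is to represent $\|v\|_2$ as a supremum over an $\epsilon$-net of the sphere in $\R^M$ and apply the Chernozhukov--Chetverikov--Kato comparison for maxima of $nK5^M$ linear forms. That route costs only a factor $M$ inside the logarithm, which is absorbed because $M$ grows only polynomially in $n$.
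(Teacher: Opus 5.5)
Your four-step chain is the skeleton of the paper's proof. It discretizes the bridge, removes $[n]\setminus S_x$ using (X), truncates the spectrum at error $\log(nK)M^{1-2\gamma_U}$, and compares the two Gaussian families via the blockwise bound of Lemma~\ref{lem:bound-on-operator-norm-covariances}, closing with anti-concentration.

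The paper organizes the argument more economically. It does not chain Kolmogorov distances step by step. Instead it uses the almost-sure ordering $U_3\le U_2\le U_1\le U_0$ (projected grid maximum on $S_x$, grid maximum on $S_x$, grid maximum on $[n]$, continuous supremum). It works on an event where the boundary maximum is at most $\rho/4$, so any grid maximum above $3\rho/4$ is attained on $S_x$. This gives the one-sided sandwich $U_0-\eta_1-\eta_3\le U_3\le U_0$, so a single anti-concentration call suffices. In your stepwise version, the discretization step needs anti-concentration of the grid maximum over all of $[n]$, which you would have to reduce to the projected $S_x$ maximum in exactly this way.

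More substantively, the paper does not prove the comparison. It imports Lemma~\ref{lem:gaussbootsphere-new} and Lemma~\ref{lem:acnew} (through Corollaries~\ref{cor:projected_mixed_norm_kolmogorov} and~\ref{cor:projected_mixed_norm_ac}) and places the $\operatorname{Decay}$ hypothesis on $\Cov(G^M_{sk})$. That is exactly where (M) enters, via Corollary~\ref{lem:good_eigen_values_2}. Your primary plan puts anti-concentration on the Brownian-bridge side and runs a smoothed-max interpolation, which is a legitimate self-contained alternative. On that route (M) plays no essential role: the smoothing argument needs anti-concentration at only one endpoint, and the interpolated covariances need not be well conditioned.

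Two of your justifications need correcting.

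First, the anti-concentration in Step 3 does not follow from a floor on the leading eigenvalue. A Nazarov-type bound needs a variance floor for every linear functional in the supremum. Writing $\|\cdot\|_2$ as a supremum over the sphere brings in tail directions with variance only of order $xC_L^2M^{-2\gamma_L}$. The tool that is available, Lemma~\ref{lem:acnew}, requires the full two-sided class $\operatorname{Decay}$. For the bridge blocks $V(s/n)\operatorname{diag}(\lambda_{k1},\dots,\lambda_{k\check M_k})$ with $s\in S_x$, this holds directly from \ref{cond:scores} with lower constant $C_L\sqrt{x}$. The factor $x^{-3/2}$ is then $(C_L\sqrt{x})^{-3}$.

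Second, in your fallback, $\log(nK\,5^M)=\log(nK)+M\log 5$. The net costs an additive $M$ inside the logarithm, not a factor $M$, so it is not absorbed into $\log^{c}(nK)$. In the Chernozhukov--Chetverikov--Kato bound $\Delta^{1/3}\log^{2/3}p$ this produces $M^{2/3}\Delta^{1/3}$, which coincidentally matches the target. However, that bound's constant depends on the minimal variance over the net's linear forms, again only of order $xC_L^2M^{-2\gamma_L}$. As stated, the fallback therefore loses further powers of $M$ and does not deliver $(M^2/(q\wedge m))^{1/3}$.
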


Recall the set of changed coordinates
\(\mathcal S\subseteq[K]\) from~\eqref{eq:change-point-coordinates}.
For each \(k\in\mathcal S\) and \(\ell\in[n]\), let
\begin{equation*}
\label{eq:def-coordinatewise-rbeta-new_2}
    r_{\beta,k}^{(n)}(\ell)
    :=
    \left\lceil
        C_{\mathrm{loc}}
        \frac{\log(nK)}{\nu_k}
        \left\{
            \frac{\log(nK)}{\nu_k}
            +
            \ell\log\left(\frac{en}{\ell}\right)
        \right\}
    \right\rceil 
\end{equation*}
denote the coordinatewise
localization radius defined in~\eqref{eq:def-coordinatewise-rbeta-new}.
The subscript \(\beta\) indicates that its probability guarantee in Corollary~\ref{cor:coordinatewise_beta_localization_new} is
obtained under the \(\beta\)-mixing conditions of
Assumption~\ref{cond:stationary}. For \(g\in\mathbb N\), \(\ell\in[n]\), and
\(\mathcal J\subseteq[K]\), define
\begin{equation}
\label{def:W_j(g)}
\mathcal W_{\mathcal J}(g,\ell)
:=
\left\{
k\in\mathcal J\cap\mathcal S:
r_{\beta,k}^{(n)}(\ell)>g
\right\}.
\end{equation}
Thus \(\mathcal W_{\mathcal J}(g,\ell)\) contains the changed
coordinates in \(\mathcal J\) for which
Corollary~\ref{cor:coordinatewise_beta_localization_new} does not
guarantee localization within distance \(g\). Finally, recalling $\Delta_k=\| \delta_k\|_{\Hb_k}$ from \eqref{eq:def-nu_k}, define
\begin{equation}
\label{eq:def-delta-J-wk}
\Delta_{\mathcal J,\mathrm{wk}}(g,\ell)
:=
\max_{k\in\mathcal W_{\mathcal J}(g,\ell)}\Delta_k,
\end{equation}
where the maximum over the empty set is defined to equal zero following the convention throughout this manuscript.

\begin{theorem}[Conditional Gaussian approximation on a coordinate subset]
\label{thm:conditional_gaussian_subset_master}
Fix \(\rho>0\) and suppose
Assumptions~\ref{cond:stationary} and~\ref{cond:scores} hold.
Fix \(\varnothing\ne\mathcal J\subseteq[K]\) and suppose that the tuning parameters $q,r,g,\tau$ for the residual bootstrap from Section~\ref{subsec:bootstrap} satisfy
\begin{equation}
\label{eq:adaptive-subset-block-conditions}
   \tau \in (0,1/2) \qquad 1\le r\le q,
    \qquad
    g+q+r\le\frac{\tau n}{2}
\end{equation}
and also that
\begin{equation}
\label{eq:intertior-condition}
    \omega_k\in[\tau n,(1-\tau)n]
    \quad \forall\,
    k\in\mathcal J\cap\mathcal S,
\end{equation}
where this condition is understood to be vacuous if
\(\mathcal J\cap\mathcal S=\varnothing\). 
Then there exist sufficiently large constants \(C_X,C_M>0\), constants \(C_3,c_3>0\), and an absolute constant \(C_{\mathrm{boot}}>0\) such that, for every $\ell \in [n]$ and every $x \in (0,3/16]$ and every $M \in \N_{\ge 1}$ such that \text{\normalfont (X)} and \text{\normalfont (M)} from~\eqref{eq:side-conditions} hold, there exists an event \(\Omega_{\mathrm{boot},\mathcal J}\in\mathcal F_n\) satisfying
\begin{equation*}
\mathbb P\left(
\Omega_{\mathrm{boot},\mathcal J}^c
\right)
\le
6
\left\{
\frac1n
+
\left\lceil\frac n{\ell}\right\rceil\beta_{\ell}
+
\frac nq\beta_r
\right\},
\end{equation*}
such that, on \(\Omega_{\mathrm{boot},\mathcal J}\),
\begin{align}
&\sup_{t\ge\rho}
\Bigg|
\mathbb P\left(
\max_{k\in\mathcal J}
\max_{s\in[n]}
\|C_k^*(s)\|_{\Hb_k}^2
\le t
\,\middle|\,
\mathcal F_n
\right)
-
\mathbb P\left(
\max_{k\in\mathcal J}
\max_{s\in S_x}
\|G_{sk}^M\|_{2}^2
\le t
\right)
\Bigg|
\nonumber
\\& \hspace{9cm} \le
C_3
\Delta_{3,\mathcal J}^{\mathrm{ad}}
\log^{c_3}(nKM),
\label{eq:subset-conditional-gaussian-approximation}
\end{align}
where, recalling \(\Delta_{\mathcal J,\mathrm{wk}}(g,\ell)\) from~\eqref{eq:def-delta-J-wk}, 
\begin{align} \nonumber
\Delta_{3,\mathcal J}^{\mathrm{ad}}
:={}&
x^{-3/2}
\Bigg[
\frac1{\sqrt\tau}
\Bigg\{
\left(\frac{q+g}{n}\right)^{1/2}
\left(
1\vee\frac{\ell}{\sqrt q}
\right)
+
\sqrt q\,
\Delta_{\mathcal J,\mathrm{wk}}(g,\ell)
\Bigg\}
\\ & \hspace{4cm} +
M^{1-2\gamma_U}
+
M^{2/3}
\left\{
\left(\frac qn\right)^{1/6}
\vee
\left(\frac{q^2}{n}\right)^{1/3}
\right\}
\Bigg].
\label{eq:def-delta-3-adaptive-subset}
\end{align}
Here, the exponent \(c_3\) depends only on
\(\gamma_L\) and \(\gamma_U\), the constants \(C_3\) and $C_X$ depend only
on \(\rho\) and the fixed constants in
Assumptions~\ref{cond:stationary} and~\ref{cond:scores}, while \(C_M\)  depend only on those fixed constants and may be chosen
independently of \(\rho\). All constants are otherwise
independent of
$
n,K,\mathcal J,q,r,g,\ell,x,\tau$ and $M$.
\end{theorem}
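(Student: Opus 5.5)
Conditionally on $\mathcal F_n$, the bootstrap field $\{C_k^*(s)\}_{s\in[n],k\in\mathcal J}$ is an exactly centered Gaussian element of $\mathcal H_{\mathcal J}^n$. Its covariance is
\[
\widehat\Sigma^*(s,t)=\frac{1}{mq}\sum_{\ell'=1}^m \widehat U_{\ell'}(s)\otimes \widehat U_{\ell'}(t),
\qquad
\widehat U_{\ell'}(s)=\sum_{i\in I_{\ell'}}\widehat\varepsilon_i\bigl(\mathbf 1\{i\le s\}-s/n\bigr).
\]
The plan is therefore a Gaussian-to-Gaussian comparison: compare the bootstrap maximum with the finite-dimensional target $\max_{k\in\mathcal J}\max_{s\in S_x}\|G_{sk}^M\|_2^2$, whose covariance is $A_n\otimes\Sigma_q$ projected via $O_{kM}P_{kM}$. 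I would argue in four reductions, each costing one term in $\Delta^{\mathrm{ad}}_{3,\mathcal J}$, and collect the failure probabilities into $\Omega_{\mathrm{boot},\mathcal J}$.

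\emph{Step 1 (time discretization and projection).} First replace $\max_{s\in[n]}$ by $\max_{s\in S_x}$. This uses that the bootstrap CUSUM is small near the boundary, where $V(s/n)\le x$; since it is conditionally Gaussian, Borell--TIS with the empirical variance bounds below gives this. Next replace $\|C_k^*(s)\|^2$ by $\|O_{kM}P_{kM}C_k^*(s)\|_2^2$. The tail $\|(\id-P_{kM})C_k^*(s)\|^2$ is controlled by the empirical trace beyond $M$, which should be close to $\sum_{j>M}\lambda_{kj}\lesssim M^{1-2\gamma_U}$ by \ref{cond:scores}. Both perturbations are turned into Kolmogorov errors via the anti-concentration for maxima of squared Gaussian norms above $\rho$ that was already used for Theorems~\ref{Thm_1}--\ref{Thm_2}. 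This anti-concentration is where the factor $x^{-3/2}$ and the $\log^{c_3}$ enter.

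\emph{Step 2 (oracle residuals).} Decompose $\widehat\varepsilon_{i,k}$ into $\varepsilon_k^{(i)}$, a mean-estimation error from $\widehat\mu_{L,k},\widehat\mu_{R,k}$, the zeroed gap, and a contamination term. The contamination equals $\pm\delta_k$ on indices lying between $\omega_k$ and the trimmed boundary, and it is nonzero only if $|\widehat\omega_k-\omega_k|>g$. I would apply Corollary~\ref{cor:coordinatewise_beta_localization_new} simultaneously over $k\in\mathcal J\cap\mathcal S$, with coupling length $\ell$ and failure probability $\lesssim 1/n+\lceil n/\ell\rceil\beta_\ell$. On that event, every $k\notin\mathcal W_{\mathcal J}(g,\ell)$ is uncontaminated. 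The coordinates in $\mathcal W_{\mathcal J}(g,\ell)$ contribute at most about $\sqrt q\,\Delta_{\mathcal J,\mathrm{wk}}$ per block, relative to the $\sqrt q$ normalization, after dividing by the $\sqrt\tau$-order sample sizes. The trimmed means have error of order $\tau^{-1/2}n^{-1/2}$ uniformly in $k$, by a Bernstein/coupling bound with $\log(nK)$. The gap of length about $g+q$ removes a fraction $(q+g)/n$ of the variance. Together these give the term $\tau^{-1/2}\{((q+g)/n)^{1/2}(1\vee \ell/\sqrt q)+\sqrt q\Delta_{\mathrm{wk}}\}$. Since all of these enter only as covariance perturbations, I would convert them with the Gaussian comparison inequality as a square root of the covariance error.

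\emph{Step 3 (covariance concentration; main obstacle).} On $\Omega_{\mathrm{block}}$, with failure probability $\lesssim (n/q)\beta_r$, the oracle block sums equal the Berbee-coupled independent sums $\widetilde S_\ell$. Then the projected empirical covariance $\frac1{mq}\sum_\ell \langle \widetilde S_\ell,Z_{ki}\rangle\langle\widetilde S_\ell,Z_{k'j}\rangle$ must be shown to be uniformly close to $\Sigma_q$ in max-entrywise norm over the $(KM)^2$ entries. Partial-block effects also have to be handled: the bootstrap uses $a_{s\ell}$, which includes the block straddling $s$. The products are sub-Weibull(1/2), so a Bernstein-type inequality plus a union bound gives entrywise error about $\sqrt{\log(nKM)/m}+\log^2(nKM)/m$. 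The straddling block contributes about $q/n$. Feeding this into the Chernozhukov--Chetverikov--Kato-type Gaussian comparison for maxima of squared norms over $|S_x||\mathcal J|$ groups of dimension $M$ yields the $M^{2/3}\{(q/n)^{1/6}\vee(q^2/n)^{1/3}\}$ term. This is the delicate step: I expect the difficulty to be in getting the $M$-dependence through the comparison of squared-$\ell_2$ norms and not a cruder bound. I would treat each $\|\cdot\|_2^2$ as a maximum over a net of the $M$-sphere, or smooth it directly with a Gaussian-norm smoothing. Finally, the intersection of the localization, block-coupling and concentration events defines $\Omega_{\mathrm{boot},\mathcal J}\in\mathcal F_n$ with the stated probability bound.
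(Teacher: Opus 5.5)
\textbf{The gap is in your Step 2.} There you convert the residual-versus-oracle discrepancy into a covariance perturbation and then apply a Gaussian comparison. That route cannot deliver the stated bound.

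In \eqref{eq:def-delta-3-adaptive-subset} the residual terms $\tau^{-1/2}\{((q+g)/n)^{1/2}(1\vee \ell/\sqrt q)+\sqrt q\,\Delta_{\mathcal J,\mathrm{wk}}(g,\ell)\}$ enter linearly and carry no $M^{2/3}$ factor. Your Step 3 uses a comparison that costs $M^{2/3}\|\Sigma_1-\Sigma_2\|_\infty^{1/3}$; this is also the one available here for maxima of Euclidean norms of $M$-dimensional blocks, Lemma~\ref{lem:gaussbootsphere-new}. The ``square root'' you invoke in Step 2 is not backed by any tool you name.

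Even a square-root comparison would not suffice on its own. Write $\widehat U_{\ell'}=U^O_{\ell'}+D_{\ell'}$. The conditional covariance difference then contains the cross terms $(mq)^{-1}\sum_{\ell'}U^O_{\ell'}\otimes D_{\ell'}$. Cauchy--Schwarz bounds these only by the size of the correction times polylog factors, i.e.\ linearly, not quadratically, in the correction. For the mean-estimation error, which affects every block, and for the contamination of weakly localized coordinates, you would need a cancellation argument for these cross terms. It would have to be uniform over all $(s,t,k,k',j,j')$, with data-dependent $\widehat\mu_{L,k}$ and $\widehat\omega_k$, and you do not supply one. As written, your route gives at best $M^{2/3}\bigl[\tau^{-1/2}\{((q+g)/n)^{1/2}(1\vee\ell/\sqrt q)+\sqrt q\,\Delta_{\mathcal J,\mathrm{wk}}(g,\ell)\}\bigr]^{1/3}$ up to logarithms, which is strictly weaker than the theorem.

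\textbf{The missing idea: common multipliers.} The residual bootstrap $C_k^*$ and the oracle process $C_k^{O*}$ are driven by the \emph{same} multipliers $e_1,\dots,e_m$. Hence their difference
\[
(mq)^{-1/2}\sum_{\ell'}e_{\ell'}\sum_{i\in I_{\ell'}}d_{i,k}\{\mathbf 1(i\le s)-s/n\},
\qquad d_{i,k}=\widehat\varepsilon_{i,k}-\varepsilon_k^{(i)},
\]
is itself conditionally Gaussian, with variance proxy
\[
D_{n,\mathcal J}^2:=\max_{k\in\mathcal J,\,s\in[n]}(mq)^{-1}\sum_{\ell'}\bigl\|\sum_{i\in I_{\ell'}}d_{i,k}\{\mathbf 1(i\le s)-s/n\}\bigr\|_{\Hb_k}^2 .
\]
A conditional Gaussian tail bound and a union bound give $\max_{k\in\mathcal J,\,s\in[n]}\|C_k^*(s)-C_k^{O*}(s)\|_{\Hb_k}\lesssim\sqrt{\log(nK)}\,D_{n,\mathcal J}$ with conditional probability at least $1-2/n$.

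You then bound $D_{n,\mathcal J}$ by splitting into deleted and retained indices. This uses the uniform interval-sum bound of Lemma~\ref{lem:uniform_interval_sums_new}, which is where $\ell\log(en/\ell)$ and hence the factor $1\vee\ell/\sqrt q$ come from, together with Corollary~\ref{cor:coordinatewise_beta_localization_new}.

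This pathwise shift bound costs only linearly, times $x^{-3/2}$ and logarithms, once you combine it with two further ingredients: a size bound of order $\log^{3/2}(nK)$ on the oracle maximum, needed to pass to squared norms, and anti-concentration of the Gaussian target. That is how the paper proceeds (Lemma~\ref{lem:residual_oracle_adaptive_subset}). The same pathwise device handles block alignment, boundary restriction and projection. Only the final step, from the projected coupled oracle process to $G^M_{sk}$, is a genuine covariance comparison.

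\textbf{A smaller point on Step 3.} The block scores $q^{-1/2}\langle\widetilde S_{\ell' k},Z_{kj}\rangle$ have $\psi_1$-norm of order $\sqrt q$. The entrywise concentration error is therefore $m^{-1/2}+q\log^2 n/m$, not $\log^2(nKM)/m$. This factor $q$ is what actually produces the $(q^2/n)^{1/3}$ term.
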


\begin{proof}[Proof of Theorem~\ref{Thm_1}]
Throughout the proof, constants may change from line to line but depend only on the fixed constants appearing in Assumptions~\ref{cond:stationary} and~\ref{cond:scores}, and on \(\rho\). We write \(C,c\) for such constants.

Without loss of generality, we may assume that
\begin{align*}
r\le q,
\qquad
q\le \frac n4,
\qquad
q\le \frac{\sqrt n}{\log n}
\end{align*}
(which in turn implies $m = \lfloor n/(q+r) \rfloor \ge \lfloor n/(2q) \rfloor \ge 2$).
Indeed, if \(r>q\), then, since \(x\le1\), we have
$
\Delta_1
\ge
x^{-3/2}(r/q)^{1/2}
>1$, and hence the claimed bound holds trivially (note that $d_{\mathrm K}^{(\rho)}\le1$). Next, if \(q>n/4\), then, since \(M\ge1\) and \(x\le1\), we have
$
\Delta_1
\ge
\left(M^4q^2/{n}\right)^{1/6}
\ge
\left(q^2/n\right)^{1/6}
>
16^{-1/6},
$
and hence the claimed bound holds choosing $C_1$ at least $16^{1/6}$. Finally, if \(q>\sqrt n/\log n\), then $\Delta_1 \log^{1/3}(nK) \ge \left({q^2}/{n}\right)^{1/6} \log^{1/3}(n) \ge 1$, so the claimed bound holds by choosing $c_1$ at least $1/3$.

The proof proceeds by successively relating the quantities $U_i$ and $U_{i+1}$, where
\begin{align*}
U_0
&:=
\max_{k\in[K]}
\max_{s\in[n]}
\big\|C_{n,k}^O(s)\big\|_{\Hb_k} \\
U_1
&:=
\max_{k\in[K]}\max_{s \in [n]}\widetilde T_{sk},\\
U_2
&:=
\max_{k\in[K]}\max_{s\in S_x}\widetilde T_{sk},\\
U_3
&:=
\max_{k\in[K]}\max_{s\in S_x}\widetilde T^M_{sk}, \\
U_4
&:=
\max_{k\in[K]}\max_{s\in S_x}
\|G^M_{sk}\|_{\check M_k},
\end{align*}
with $\widetilde T_{sk}$ and $\widetilde T_{sk}^M$ defined in \eqref{eq:tilde-tsk} and \eqref{eq:tilde-tskm}, respectively. 
Since $U_0=T_n^O$, the quantity appearing in the assertion of the theorem can be written as \(d_{\mathrm K}^{(\rho)}(U_0^2,U_4^2)\). 

Recall the definition of 
$\Omega_{\mathrm{block}}
:=
\Omega_{\mathrm{big}}\cap\Omega_{\mathrm{small}}
$ from \eqref{eq:Omega-block}. By \eqref{eq:Omega-block-probability}, we have
\[
\mathbb P(\Omega_{\mathrm{block}}^c)
\le
(m-1)\beta_r+(m-1)\beta_q
\le
2\frac nq\beta_r,
\]
where we have used that $\beta_q\le \beta_r$ by monotonicity and $m\le {n}/({q+r}) \le n/q$.

We now construct a `good' event on which a comparison chain,  involving the $U_i$ variables defined above, holds.  By Lemma~\ref{lem:berbee_hp_reduction}, there exists an event
\(\Omega_1\) with
$
\mathbb P(\Omega_1)\ge 1-1/n
$
such that, on \(\Omega_1\cap\Omega_{\mathrm{block}}\),
\[
|U_0-U_1|
\le
\delta_1
:=
C D_\psi(S_{\beta,0}\vee1)^{1/2}
\log(nK)
\left(
\frac rq\vee \frac{q^2}{n}
\right)^{1/2}.
\]

As shown in~\eqref{bound_on_tilde_T}, there is a constant $A_1$ that depends only on $D_\psi,C_\beta,c_\beta$, and an event $\Omega^{(1)}$ of probability greater than $1-1/(3n)$ such that on $\Omega^{(1)}$ it holds that
\begin{equation*}
U_1=\max_{k\in[K]}\max_{s \in [n]}\widetilde T_{sk}\leq A_1\log(nK).
\end{equation*}
Let
$
\Omega_{01}
:=
\Omega^{(1)}\cap\Omega_1\cap\Omega_{\mathrm{block}}.
$
Then
\[
\mathbb P(\Omega_{01})
\ge
1-\frac3n-2\frac nq\beta_r.
\]
On \(\Omega_{01}\), we have
\[
U_0
\le
U_1+\delta_1
\le
A_1\log(nK)+\delta_1,
\]
so
\[
U_0+U_1
\le
2A_1\log(nK)+\delta_1.
\]
Hence, 
\begin{align}
\label{eq:u0-u1-bound}
|U_0^2-U_1^2|
=
|U_0+U_1|\,|U_0-U_1|
\le
\delta_1\{2A_1\log(nK)+\delta_1\}
=: \eta_{01}.
\end{align}
Since \(r\le q\) and \(q\le \sqrt n/\log n\),
\[
\left(
\frac rq\vee \frac{q^2}{n}
\right)^{1/2}
\le1,
\]
and hence \(\delta_1\le D_1\log(nK)\) for a constant $D_1$ that depends only on $D_\psi,C_\beta,c_\beta$.
It follows that
\[
\eta_{01}
\le
C\log^2(nK)
\left(
\frac rq\vee \frac{q^2}{n}
\right)^{1/2}.
\]
Furthermore,
\[
\left(
\frac rq\vee \frac{q^2}{n}
\right)^{1/2}
\le
\left(\frac rq\right)^{1/2}
+
\frac q{\sqrt n},
\]
and since \(q^2/n\le1\) and \(M\ge1\),
\[
\frac q{\sqrt n}
=
\left(\frac{q^2}{n}\right)^{1/2}
\le
\left(\frac{q^2}{n}\right)^{1/6}
\le
\left(\frac{M^4q^2}{n}\right)^{1/6}.
\]
Thus
\begin{equation}\label{eq:eta01-bound}
\eta_{01}
\le
C\log^2(nK)
\left[
\left(\frac rq\right)^{1/2}
+
\left(\frac{M^4q^2}{n}\right)^{1/6}
\right].
\end{equation}

Define

\[
U^2_{\partial,x}
:=
\max_{k\in[K]}
\max_{s\in[n]\setminus S_x}
(\widetilde T_{sk})^2.
\]
Let
\[
B_{\partial}
:=
C_2D_\psi^2(S_{\beta,0}\vee1),
\]
where \(C_2\) is the universal constant from
Lemma~\ref{lem:coupled_big_block_boundary}. We may assume without loss
of generality that
\begin{equation}\label{eq:q-boundary-reduction-proof-optimized}
\frac{q^2\log^2 n}{n}
\le
\frac{\rho}
{4B_{\partial}\log^2(nK)}.
\end{equation}
Indeed, if \eqref{eq:q-boundary-reduction-proof-optimized} fails, then
\[
\frac{q^2}{n}
>
\frac{\rho}
{4B_{\partial}\log^2(nK)\log^2 n}
\ge
\frac{\rho}
{4B_{\partial}\log^4(nK)}.
\]
Using \(M\ge1\) and \(x\le1\), this yields
\[
\Delta_1
\ge
\left(\frac{M^4q^2}{n}\right)^{1/6}
>
\left(\frac{\rho}{4B_{\partial}}\right)^{1/6}
\log^{-2/3}(nK),
\]
so the desired theorem is trivial after enlarging the constants
$c_1$ and $C_1$.

Choose \(C_X\) sufficiently large so that $C_X\ge 4B_{\partial}/{\rho}$.
Then the first inequality in the condition from \eqref{eq:side-conditions} implies
\[
x
\le
\frac{1}{C_X[\log(nK)\log n]^4}
\le
\frac{1}{C_X\log^2(nK)}
\le
\frac{\rho}{4B_{\partial}\log^2(nK)}.
\]
Together with \eqref{eq:q-boundary-reduction-proof-optimized}, this gives
\[
x\vee \frac{q^2\log^2 n}{n}
\le
\frac{\rho}{4B_{\partial}\log^2(nK)}.
\]
Therefore Lemma~\ref{lem:coupled_big_block_boundary}, applied with
\(t=\rho/4\), yields 
$
\mathbb P(\Omega_{\partial})\ge 1-2/n,
$
where
\[
\Omega_{\partial}
:=
\left\{
U_{\partial,x}^2\le\frac{\rho}{4}
\right\}.
\]
Now note that 
$
U_1^2= U_2^2\vee U^2_{\partial,x}.
$
Consequently, on \(\Omega_{\partial}\), for every \(u\ge \rho/4\),
\begin{equation}\label{eq:u1-u2-boundary-equivalence}
\{U_1^2\le u\}
=
\{U_2^2\le u\}.
\end{equation}

We now compare \(U_2^2\) and \(U_3^2\). Choose \(C_M\) sufficiently large so
that the second inequality in the condition from \eqref{eq:side-conditions}  implies the condition in \eqref{eq:bound-on-M} of
Corollary~\ref{lem:good_eigen_values_2}. It is enough to take
\[
C_M
\ge
\frac{2C_4\bigl(D_\psi^2S_{\beta,1}\vee C_{\mathcal K}\bigr)}
{C_L^2}.
\]
Indeed, then
\[
M^{2\gamma_L}
\le
\frac{x(q\wedge m)}{C_M}
\le
\frac{
C_L^2x(q\wedge m)
}{
2C_4\bigl(D_\psi^2S_{\beta,1}\vee C_{\mathcal K}\bigr)
}.
\]
By Lemma~\ref{lem:coupled_big_block_projection_error}, there exists an event
\(\Omega_{\mathrm{proj}}\) satisfying $\mathbb P(\Omega_{\mathrm{proj}})\ge 1-2/n$ and a constant $C>0$
such that, on \(\Omega_{\mathrm{proj}}\),
\begin{align}
\label{eq:u2-u3-bound}
|U_2^2-U_3^2|
&\le
\eta_{23}
:=
C\log^2(nK)
\left[
M^{1-2\gamma_U}
+\frac1q
+\frac{q^2\log^2 n}{n}
\right].
\end{align}
Moreover, since \(r\ge1\), \(M\ge1\), and \(q^2/n\le1\),
\[
\frac1q\le\left(\frac rq\right)^{1/2},
\qquad
\frac{q^2\log^2n}{n}
\le
\left(\frac{M^4q^2}{n}\right)^{1/6}\log^2(nK).
\]
Consequently,
\begin{align*}
\eta_{23}
\le
C\log^4(nK)
\left[
M^{1-2\gamma_U}
+\left(\frac rq\right)^{1/2}
+\left(\frac{M^4q^2}{n}\right)^{1/6}
\right].
\end{align*}
Combining the preceding display with \eqref{eq:eta01-bound}, we obtain
\begin{align}
\label{eq:eta-total-bound}
\eta:=\eta_{01}+\eta_{23}
&\le
C\log^4(nK)
\left[
\left(\frac rq\right)^{1/2}
+\left(\frac{M^4q^2}{n}\right)^{1/6}
+M^{1-2\gamma_U}
\right]
\nonumber\\
&=
C\log^4(nK)x^{3/2}\Delta_1,
\end{align}
where the final equality follows from the definition of $\Delta_1$ from \eqref{eq:def-delta_1}.
In particular,
\begin{equation}\label{eq:x-eta-total-bound}
x^{-3/2}\eta
\le
C\Delta_1\log^4(nK).
\end{equation}
In the following, we may assume without loss of generality that $\eta\le {\rho}/{2}$.
Indeed, if \(\eta>\rho/2\), then by \eqref{eq:eta-total-bound} and
\(x^{-3/2}\ge1\),
\[
\Delta_1\log^4(nK) \ge
\frac1C x^{-3/2} \eta > \frac1{2C} \rho,
\]
so the desired bound is trivial after enlarging
$C_1$ and $c_1$.
Now define the good event
$
\Omega_\star
:=
\Omega_{01}\cap\Omega_{\partial}\cap\Omega_{\mathrm{proj}},
$
which satisfies
\[
\mathbb P(\Omega_\star^c)
\le
\frac7n+2\frac nq\beta_r.
\]
On \(\Omega_\star\), for every \(t\ge\rho\),
\begin{equation}\label{eq:main-sandwich}
\{U_3^2\le t-\eta\}
\subseteq
\{U_0^2\le t\}
\subseteq
\{U_3^2\le t+\eta\}.
\end{equation}
Indeed, for the upper inclusion, if \(U_0^2\le t\), then
\[
U_1^2\le t+\eta_{01}
\]
by \eqref{eq:u0-u1-bound}.
Since \(t+\eta_{01}\ge\rho>\rho/4\), \eqref{eq:u1-u2-boundary-equivalence}
implies
\[
U_2^2\le t+\eta_{01}.
\]
Therefore, by \eqref{eq:u2-u3-bound},
\[
U_3^2\le U_2^2+\eta_{23}\le t+\eta.
\]
For the lower inclusion, if \(U_3^2\le t-\eta\), then, by \eqref{eq:u2-u3-bound},
\[
U_2^2\le U_3^2+\eta_{23}\le t-\eta_{01}.
\]
Since \(t-\eta_{01}\ge t-\eta\ge \rho/2>\rho/4\), again by
\eqref{eq:u1-u2-boundary-equivalence},
\[
U_1^2\le t-\eta_{01}.
\]
Thus, by \eqref{eq:u0-u1-bound},
\[
U_0^2\le U_1^2+\eta_{01}\le t.
\]
It remains to compare \(U_3^2\) with \(U_4^2\) and then remove the shift
\(\eta\). Let
$
\kexp:=\gamma_L / (2\gamma_U-1).
$
Applying Lemma~\ref{lem:gaussian_approx_projected_coupled} with
threshold \(\rho/2\), and using \(\log n\le\log(nK)\), gives
\begin{align}
\label{eq:u3-u4-bound}
\sup_{u\ge\rho/2}
\left|
\mathbb P(U_3^2\le u)-\mathbb P(U_4^2\le u)
\right|
\le
\varepsilon_G
:=
C x^{-3/2}
\left(\frac{M^4q^2}{n}\right)^{1/6}
\log^{6\kexp+7/2}(nK).
\end{align}
for a sufficiently large constant $C$ that may depend on the parameters listed in Lemma~\ref{lem:gaussian_approx_projected_coupled}.
Since \(\eta\le\rho/2\), for every \(t\ge\rho\) we have
\(t-\eta\ge\rho/2\). From \eqref{eq:main-sandwich},
\[
\mathbb P(U_0^2\le t)
\le
\mathbb P(U_3^2\le t+\eta)+\mathbb P(\Omega_\star^c)
\]
and
\[
\mathbb P(U_0^2\le t)
\ge
\mathbb P(U_3^2\le t-\eta)-\mathbb P(\Omega_\star^c).
\]
Thus, by \eqref{eq:u3-u4-bound},
\[
\mathbb P(U_0^2\le t)
\le
\mathbb P(U_4^2\le t+\eta)+\varepsilon_G+\mathbb P(\Omega_\star^c),
\]
and
\[
\mathbb P(U_0^2\le t)
\ge
\mathbb P(U_4^2\le t-\eta)-\varepsilon_G-\mathbb P(\Omega_\star^c).
\]
Therefore
\[
\sup_{t\ge\rho} \left|
\mathbb P(U_0^2\le t)-\mathbb P(U_4^2\le t)
\right|
\le
\varepsilon_G+\mathbb P(\Omega_\star^c)+
\mathcal A_4(\eta),
\]
where
\[
\mathcal A_4(\eta)
:=
\sup_{u\ge\rho/2}
\mathbb P\left(u<U_4^2\le u+\eta\right).
\]
By Corollary~\ref{cor:projected_mixed_norm_ac}, applied with restricted
threshold \(\rho/2\),
\[
\mathcal A_4(\eta)
\le
C x^{-3/2}
\left(\eta+\frac1n\right)
\log^{1/2+3\kexp}(nK)\log^{3\kexp}n.
\]
From \eqref{eq:x-eta-total-bound}, we have the bound
$x^{-3/2}\eta\le C\Delta_1\log^4(nK)$.
Moreover, since \(M,q\ge1\), we have
\[
x^{-3/2} \frac1n
\le 
x^{-3/2}n^{-1/6} 
\le
x^{-3/2}\left(\frac{M^4q^2}{n}\right)^{1/6}
\le 
\Delta_1
\le \Delta_1 \log^2 (nK).
\]
Thus, since $\log(n)\le \log(nK)$, we obtain that 
\[
\mathcal A_4(\eta)
\le
C\Delta_1\log^{9/2+6\kexp}(nK).
\]
Finally, \(7n^{-1}\le C\Delta_1\), so
\[
\mathbb P(\Omega_\star^c)
\le
C\Delta_1+2\frac nq\beta_r.
\]
Combining the preceding bounds and taking the supremum over \(t\ge\rho\),
we obtain
\[
d_{\mathrm K}^{(\rho)}(U_0^2,U_4^2)
\le
C\Delta_1\log^{6\kexp+9/2}(nK)
+
2\frac nq\beta_r,
\]
and the desired conclusion follows with
$c_1=6\kexp+9/2$.
\end{proof}

\begin{proof}[Proof of Theorem~\ref{Thm_2}]
Throughout the proof, constants may change from line to line but depend only on the fixed constants appearing in Assumptions~\ref{cond:stationary} and~\ref{cond:scores}, and on \(\rho\). Let
\begin{align*}
Z_{M,q,x}
&:=
\max_{k\in[K]}\max_{s\in S_x}
\|G_{sk}^M\|^2_{\check M_k} \\
U_0
&:=
\max_{k\in[K]}
\sup_{u\in[0,1]}
\|\pi_k\mathbb B_{\mathcal K}(u)\|^2_{\Hb_k},\\
U_1
&:=
\max_{k\in[K]}
\max_{s\in[n]}
\|\pi_k\mathbb B_{\mathcal K}(s/n)\|^2_{\Hb_k},\\
U_2
&:=
\max_{k\in[K]}
\max_{s\in S_x}
\|\pi_k\mathbb B_{\mathcal K}(s/n)\|^2_{\Hb_k},\\
U_3
&:=
\max_{k\in[K]}
\max_{s\in S_x}
\|(P_{kM}\circ\pi_k)\mathbb B_{\mathcal K}(s/n)\|^2_{\Hb_k},
\end{align*}
and note that we need to bound the restricted Kolmogorov distance between $U_0$ and $Z_{M,q,x}$.
We have the almost sure inequalities
\[
U_3\le U_2\le U_1\le U_0,
\]
where the first one holds since $P_{kM}$ is an orthogonal projection.  

We start by showing that there is a constant $A_1$ depending only on $C_U$ and $\gamma_U$ and an event $\Omega_1$ satisfying $\Prob(\Omega_1)\geq 1-4/n$ such that on $\Omega_1$ it holds that
\begin{align} \label{eq:inequality-u0-u1}
U_0\leq U_1 + \eta_1, \qquad \eta_1:=A_1\frac{\log(nK)}{\sqrt{n}}.
\end{align}
Indeed, Lemma~\ref{lem:cts_to_discrete_hilbert_bridge_high_prob} gives on an event $\Omega_{11}$ satisfying $\Prob(\Omega_{11})\geq 1-2/n$ the inequality
\[
U_0 \leq U_1+ 2\delta_1\sqrt{U_1}+\delta^2_1,
\]
for 
\[
\delta_1
:=
D_1
\sqrt{
\frac{
C_{\mathcal K}\log(nK)
}{n}
},
\]
where $D_1$ is an absolute constant. Then by Lemma~\ref{lem:psi_2_controlled_by_L_2} and the union bound it is seen that on an event $\Omega_{12}$ satisfying $\Prob(\Omega_{12})\geq 1-2/n$ it holds that for an absolute constant $D_2$ that 
\[
\sqrt{U_1}\leq D_2 \sqrt{C_{\mathcal K}\log(nK)}
\]
Therefore on $\Omega_1=\Omega_{11}\cap \Omega_{12}$ it holds for an absolute constant $D_3$   that 
\begin{align*}
U_0& \leq U_1+ 2D_1
\sqrt{
\frac{
C_{\mathcal K}\log(nK)
}{n}
}D_2\sqrt{\log(nK)C_{\mathcal K}}+\Big(D_1
\sqrt{
\frac{
C_{\mathcal K}\log(nK)
}{n}
}\Big)^2\\
& \leq  U_1 + D_3(C_{\mathcal K}\vee 1) \Big[ 
\frac{\log(nK)}{\sqrt{n}}+ \frac{\log(nK)}{n}
\Big]
\leq 
U_1 + 2D_3(C_{\mathcal K}\vee 1)\frac{\log(nK)}{\sqrt{n}}. 
\end{align*}
Then set $A_1=2D_3(C_{\mathcal K}\vee 1)$ and note that$(C_{\mathcal K}\vee 1)$ may be bounded in terms of $C_U$ and $\gamma_U$ only, as the proof of Lemma~\ref{lem:interior_supremum_to_projected_interior_supremum_high_prob}. This yields \eqref{eq:inequality-u0-u1}.

We now consider the case that $\eta_1\geq \rho/4$. A straightforward calculation, using $q \wedge m \leq n$, $x\leq 1$ and $M\geq 1$, shows that $\eta_1 \leq A_1\Delta_2\log(nK)$. Therefore,
\begin{align*}
d_{\mathrm{K}}^{(\rho)}\left(Z_{M,q,x},
U_0
\right)
\leq
1\leq (4/\rho)A_1\Delta_2\log(nK).
\end{align*}
Thus in this case the bound we seek to prove holds by setting $C_2=(4/\rho)A_1$ and $c_2=1$. In the remainder of the proof we consider the case $\eta_1< \rho/4$.

Define the boundary maximum
\[
U_{\partial,x}
:=
\max_{k\in[K]}
\max_{s\in[n]\setminus S_x}
\|\pi_k\mathbb B_{\mathcal K}(s/n)\|^2_{\Hb_k}.
\]
Choose $C_X$ in the formulation of the Theorem to satisfy 
$C_X\geq (4/\rho)C_\partial C_{\mathcal{K}}$,
where $C_\partial$ is the absolute constant of Lemma~\ref{lem:supremum_to_interior_supremum_high_prob}. Then by \eqref{eq:side-conditions}, it holds that 
\[
x
\leq
\frac{1}
{C_X{[\log(nK)\log(n)]}^{4}} <
\frac{\rho/4}
{C_\partial C_{\mathcal K}\log(nK)}.
\]
Therefore, by Lemma~\ref{lem:supremum_to_interior_supremum_high_prob}, 
there exists an event $\Omega_2$ satisfying $\Prob(\Omega_2)\geq 1-2/n$ such that
\[
U_{\partial,x} \leq \rho/4.
\]
on $\Omega_2$.
By Lemma~\ref{lem:interior_supremum_to_projected_interior_supremum_high_prob} there is a constant $A_3$ and an event $\Omega_3$ satisfying $\Prob(\Omega_3)\geq 1-2/n$ such that on $\Omega_3$
\[
U_2\leq U_3 + \eta_3, \qquad \eta_3:=A_3\log(nK)M^{1-2\gamma_U}.
\]
Combining the previous bounds, we find that
\[
\Omega
:=
\{U_0\le U_1+\eta_1\}
\cap
\{U_{\partial,x}\le\rho/4\}
\cap
\{U_2\le U_3+\eta_3\}
\]
satisfies
\[
\Prob(\Omega^c)\le \frac8n.
\]
Fix \(t\ge\rho\). Since \(U_3\le U_0\) almost surely,
\[
0
\le
\Prob(U_3\le t)-\Prob(U_0\le t)
=
\Prob(U_3\le t<U_0).
\]
On the event
\[
\{U_3\le t<U_0\}\cap\Omega,
\]
we have \(U_0>t\), and therefore
\[
U_1
\ge
U_0-\eta_1
>
t-\eta_1
\ge
\rho-\eta_1>3\rho/4.
\]
Since \(U_{\partial,x}\le \rho/4\) on \(\Omega\), the maximum defining \(U_1\) must be attained over \(S_x\). Hence \(U_1=U_2\) on this event and so it holds on $\{U_3\le t<U_0\}\cap\Omega$ that
\[
U_0-\eta_1 -\eta_3 \leq U_3 \leq U_0.
\]
Consequently,
\[
\{U_3\le t<U_0\}\cap\Omega
\subseteq
\left\{
t-\eta_1-\eta_3
<
U_3
\le t
\right\},
\]
and so
\[
\Prob(U_3\le t)-\Prob(U_0\le t) \leq \frac 8n + \Prob(t-\eta_1-\eta_3
<
U_3
\le t).
\]
As we did with $\eta_1$ above, it suffices to consider the case of $\eta_3 <\rho/4$; in the complementary case the constants $C_2$ and $c_2$ may be chosen large enough so that the bound we seek to prove holds.  

Then $\eta_1+\eta_3 <\rho/2$ and so $t-\eta_1-\eta_3>\rho/2$. Write
\[
\Delta_{GA}(s)=d_K^{(s)}\big( U_3, Z_{M,q,x} \big) = \sup_{y\geq s}\left|
\Prob(U_3\le y)
-
\Prob(Z_{M,q,x}\le y)
\right|,
\]
and let 
\[
\Delta_{AC}(s)=\sup_{y\geq s}\Prob\left(y-\eta_1-\eta_3 <Z_{M,q,x} \leq y
\right).
\]
It then holds that 
\[
\Prob(U_3\le t)-\Prob(U_0\le t) \leq \frac 8n + 2\Delta_{GA}(\rho/2)+\Delta_{AC}(\rho),
\]
which in turn yields
\[
\big|\Prob(Z_{M,q,x}\le t)-\Prob(U_0\le t)\big| \leq \frac 8n + 3\Delta_{GA}(\rho/2)+\Delta_{AC}(\rho).
\]
By Corollary~\ref{cor:projected_mixed_norm_kolmogorov}, applied with lower threshold \(\rho/2\), there exists a constant $c_{GA} > 7/6$ and a constant $C$ such that 
\[
3\Delta_{GA}(\rho/2) \leq 
C
x^{-1}
M^{2/3}
(m\wedge q)^{-1/3}
\log^{c_{GA}}(nK).
\]
Corollary~\ref{cor:projected_mixed_norm_ac} gives for a constant $c_{\mathrm{AC}}>1/2$
\[
\Delta_{AC}(s)
\le
C
x^{-3/2}
\left(\eta_1+\eta_3+\frac1n\right)
\log^{c_{\mathrm{AC}}}(nK).
\]
Recall that
\[
    \eta_1=A_1\frac{\log(nK)}{\sqrt n},
    \qquad
    \eta_3=A_3\log(nK)M^{1-2\gamma_U}.
\]
Hence, for every \(t\ge \rho\),
\[
\begin{aligned}
\left|
\Prob(Z_{M,q,x}\le t)-\Prob(U_0\le t)
\right|
\le
\frac8n
&+
C x^{-1}
M^{2/3}(m\wedge q)^{-1/3}
\log^{c_{GA}}(nK)
\\
&+
C x^{-3/2}
\left(
    \frac{\log(nK)}{\sqrt n}
    +
    \log(nK)M^{1-2\gamma_U}
    +
    \frac1n
\right)
\log^{c_{\mathrm{AC}}}(nK),
\end{aligned}
\]
where we have absorbed \(A_1\) and \(A_3\) into the generic constant \(C\).
Choose \(c_\star\ge \max\{c_{GA},c_{\mathrm{AC}}+1\}\). Increasing \(C\) if
necessary, and using \(\log(nK)\ge 1\), we obtain
\[
\begin{aligned}
\left|
\Prob(Z_{M,q,x}\le t)-\Prob(U_0\le t)
\right|
\le
\frac8n
&+
C x^{-1}
M^{2/3}(m\wedge q)^{-1/3}
\log^{c_\star}(nK)
\\
&+
C x^{-3/2}
\left(
    \frac1{\sqrt n}
    +
    M^{1-2\gamma_U}
    +
    \frac1n
\right)
\log^{c_\star}(nK).
\end{aligned}
\]
Continuing, since \(m\wedge q\le n\),
\(M\ge1\), and \(x\le1\),
\[
\frac1n
\le
\frac1{\sqrt n}
\le
(m\wedge q)^{-1/3}
\le
M^{2/3}(m\wedge q)^{-1/3}
\le
x^{-3/2}M^{2/3}(m\wedge q)^{-1/3}.
\]
Moreover, \(x\le1\) implies \(x^{-1}\le x^{-3/2}\). Therefore, after
increasing \(C\) if necessary,
the upper bound in the penultimate display
is bounded by
\[
Cx^{-3/2}
\left[
\left(\frac{M^2}{q\wedge m}\right)^{1/3}
+
M^{1-2\gamma_U}
\right] \log^{c_\star}(nK)
= 
C\Delta_2\log^{c_\star}(nK)
\]
The assertion follows since $t \ge \rho$ was arbitrary.
\end{proof}

\begin{proof}[Proof of Theorem~\ref{thm:conditional_gaussian_subset_master}]
Set $c_3$ as
$
c_3:=6+6\kexp,
$
where $\kexp = \gamma_L / (2\gamma_U-1)$.
We may assume throughout that
\begin{equation}
\Delta_{3,\mathcal J}^{\mathrm{ad}}
\log^{c_3}(nKM)
\le1.
\label{thm_3_constraint_1}
\end{equation}
Indeed, otherwise \eqref{eq:subset-conditional-gaussian-approximation} is immediate, after taking \(C_3\ge1\).
The proof is based on comparing the following six random variables:
\begin{align*}
U_0
&:=
\max_{k\in\mathcal J}\max_{s\in[n]}
\|C_k^*(s)\|_{\Hb_k},
\\
U_1
&:=
\max_{k\in\mathcal J}\max_{s\in[n]}
\|C_k^{O*}(s)\|_{\Hb_k},
\\
U_2
&:=
\max_{k\in\mathcal J}\max_{s\in[n]}
\|\widetilde C_k^{O*}(s)\|_{\Hb_k},
\\
U_3
&:=
\max_{k\in\mathcal J}\max_{s\in S_x}
\|\widetilde C_k^{O*}(s)\|_{\Hb_k},
\\
U_4
&:=
\max_{k\in\mathcal J}\max_{s\in S_x}
\|P_{kM}\widetilde C_k^{O*}(s)\|_{\Hb_k},
\\
U_5
&:=
\max_{k\in\mathcal J}\max_{s\in S_x}
\|G_{sk}^M\|_{2}.
\end{align*}
with $C_k^*$, $C_k^{O*}$ and $\widetilde C_k^{O*}$ defined in~\eqref{eq:Ckstar}, \eqref{def:oracle-ck} and \eqref{def:berbee-coupled-block-oracle}, respectively. 
Note that the two distribution functions in
\eqref{eq:subset-conditional-gaussian-approximation} are those of
\(U_0^2\), conditionally on \(\mathcal F_n\), and \(U_5^2\),
respectively. 
For $u \in \R$, let
\[
F_5(u) = \Prob(U_5^2 \le u), \qquad F_j(u) = \Prob(U_j^2 \le u \mid \mathcal G_n), \qquad j \in \{0,1,2,3,4\},
\]
where $\mathcal G_n := \mathcal F_n\vee\widetilde{\mathcal F}_n$. 
We will show below that there exists an event $\Omega_{04}$
with 
\begin{align}
\mathbb P(\Omega_{04}^c)
\le
\frac5n
+2\left\lceil\frac n\ell\right\rceil\beta_\ell
+\frac nq\beta_r.
\label{eq:subset-proof-bootstrap-event-new}
\end{align}
and a constant $C_{04}>0$ such that with $\delta := C_{04} x^{3/2}\Delta_{3,\mathcal J}^{\mathrm{ad}}\log^5(nMK)$, we have the following: if $\delta \le \rho/2$, then,
on $\Omega_{04}$
\begin{align}
\forall t \ge \rho: \qquad 
F_4(t-\delta)
-\frac{10}{n}
\le
F_0(t)
\le
F_4(t+\delta)
+\frac{10}n
\label{eq:subset-proof-cdf-sandwich-new}.
\end{align}
Furthermore, we will show that there is an event $\Omega_{45}$ satisfying $\Omega_{45}\subset \Omega_{04}$ and 
\begin{align}
\label{eq:omega45}
\mathbb P(\Omega_{45}^c)\leq \frac6n
+2\left\lceil\frac n\ell\right\rceil\beta_\ell
+\frac nq\beta_r
\end{align}
and a constant $C_{45}>0$ such that on $\Omega_{45}$ we have the following:
\begin{align}\label{eq:Thm_3_gaussian_approx}
&\sup_{u \ge \rho/2} \big| F_4(u) - F_5(u)| \le C_{45}\Delta_{3,\mathcal J}^{\mathrm{ad}}\log^{c_3}(KMn).
\end{align}
Finally, we will show that there is a constant $C_5>0$ such that, with $\varepsilon
:=
C_5
\Delta_{3,\mathcal J}^{\mathrm{ad}}
\log^{c_3}(nKM)$,
\begin{equation}\label{Thm_3:main_ac}
\forall t \ge \rho: \qquad
F_5(t)-\varepsilon
\le
F_5(t-\delta)
\le
F_5(t)
\le
F_5(t+\delta)
\le
F_5(t)+\varepsilon.
\end{equation}

With this three assertions at hand, we can now proof the theorem. 
We distinguish two cases. First, if $\delta >\rho/2$, then, taking $C_3\geq 2C_{04}/\rho$, the claimed upper bound in \eqref{eq:subset-conditional-gaussian-approximation} is trivial. Indeed, 
we then have 
\begin{align*}
C_3
\Delta_{3,\mathcal J}^{\mathrm{ad}}
\log^{c_3}(nKM)=(C_3/C_{04})x^{-3/2}\delta\log^{c_3 - 5}(KMn)\geq (2/\rho)\times (\rho/2)=1
\end{align*}
where we used that $x\leq 1$, that $c_3-5\geq 1$ and that $\log(KMn)\geq \log(8)\geq 1$, since $n\geq 8$ as verified as a first step in the paragraph justifying~\eqref{eq:subset-proof-cdf-sandwich-new}, below~\eqref{eq:subset-proof-main-cdf-bound}. 
Otherwise, if \(\delta\le\rho/2\), fix \(t\ge\rho\) and note that
\[
t-\delta\ge\frac\rho2,
\qquad
t+\delta\ge\frac\rho2.
\]
Consequently, 
\eqref{eq:Thm_3_gaussian_approx} evaluated at \(u=t\pm\delta\), gives, on \(\Omega_{45}\),
\begin{align}
F_4(t-\delta)
&\ge
F_5(t-\delta)
-
C_2\Delta_{3,\mathcal J}^{\mathrm{ad}}
\log^{c_3}(nKM),
\label{eq:subset-proof-lower-Gaussian-at-shift}
\\
F_4(t+\delta)
&\le
F_5(t+\delta)
+
C_2\Delta_{3,\mathcal J}^{\mathrm{ad}}
\log^{c_3}(nKM).
\label{eq:subset-proof-upper-Gaussian-at-shift}
\end{align}
Consequently, successively applying
\eqref{eq:subset-proof-cdf-sandwich-new},
\eqref{eq:subset-proof-upper-Gaussian-at-shift}, and
\eqref{Thm_3:main_ac} gives
\begin{align*}
F_0(t)
&\le
F_4(t+\delta)+\frac{10}{n}
\\
&\le
F_5(t+\delta)
+
C_{45}\Delta_{3,\mathcal J}^{\mathrm{ad}}
\log^{c_3}(nKM)
+\frac{10}{n}
\\
&\le
F_5(t)
+
\varepsilon
+
C_{45}\Delta_{3,\mathcal J}^{\mathrm{ad}}
\log^{c_3}(nKM)
+\frac{10}{n}
\\
&=
F_5(t)
+
\left(C_5+C_{45}\right)
\Delta_{3,\mathcal J}^{\mathrm{ad}}
\log^{c_3}(nKM)
+\frac{10}{n}.
\end{align*}
Likewise, 
for the lower bound,
\eqref{eq:subset-proof-cdf-sandwich-new},
\eqref{eq:subset-proof-lower-Gaussian-at-shift}, and
\eqref{Thm_3:main_ac} give
\begin{align*}
F_0(t)
&\ge
F_4(t-\delta)-\frac{10}{n}
\\
&\ge
F_5(t-\delta)
-
C_{45}\Delta_{3,\mathcal J}^{\mathrm{ad}}
\log^{c_3}(nKM)
-\frac{10}{n}
\\
&\ge
F_5(t)
-
\varepsilon
-
C_{45}\Delta_{3,\mathcal J}^{\mathrm{ad}}
\log^{c_3}(nKM)
-\frac{10}{n}
\\
&=
F_5(t)
-
\left(C_5+C_{45}\right)
\Delta_{3,\mathcal J}^{\mathrm{ad}}
\log^{c_3}(nKM)
-\frac{10}{n}.
\end{align*}
It is straightforward to see that $1/n\le \Delta_{3,\mathcal J}^{\mathrm{ad}} \log^{c_3}(nKM)$.
Therefore, on \(\Omega_{45}\),
\begin{equation}
\sup_{t\ge\rho}
\left|
F_0(t)-F_5(t)
\right|
\le
C_{3}
\Delta_{3,\mathcal J}^{\mathrm{ad}}
\log^{c_3}(nKM),
\label{eq:subset-proof-main-cdf-bound}
\end{equation}
for any
\[
C_{3}
\ge 
C_5+C_{45}+10.
\]
This implies the assertion of the theorem after noting that, in the definition of $F_0$, we may replace the conditioning event $\mathcal G_n$ by $\mathcal F_n$ without changing the function (almost surely).
The proof is therefore complete once the three assertions above are justified.

\smallskip
\noindent
\emph{Justification of~\eqref{eq:subset-proof-cdf-sandwich-new}.}
We justify~\eqref{eq:subset-proof-cdf-sandwich-new} by
comparing \(U_0\) with \(U_4\) using a chain of supporting lemmas.
We choose \(C_X\) and \(C_M\) explicitly below. We begin by comparing
\(U_0^2\) and \(U_2^2\) on an event of high probability.
The tuning condition implies \(n>8\). Indeed, since
\(q,r\ge1\), \(g\ge0\), and \(\tau<1/2\),
$
2
\le q+r
\le g+q+r
\le\tau n/{2}
<  n/4,
$
and therefore \(n>8\).
The tuning condition implies \(n>8\). Hence
\[
\log\left(\frac{en}{\ell}\right)
\le 1+\log n
\le \frac32\log n
\le \frac32\log(nKM).
\]
Consequently, the rate in
Lemma~\ref{lem:residual_oracle_adaptive_subset} satisfies
\begin{align*}
R_{n,\mathcal J}^{\mathrm{ad}}(q,g,\ell,K)
&\le
\sqrt{\log(nKM)}
\bigg[
\log(nKM)
\left(\frac{q+g}{n}\right)^{1/2}
\left\{
1+\frac{3\ell\log(nKM)}{2\sqrt q}
\right\}
+B_{\mathcal J}
\bigg]
\\
&\le
2\mathcal A\log^{5/2}(nKM)
+B_{\mathcal J}\log^{1/2}(nKM)
\\
&\le
2R_{\mathcal J}\log^{5/2}(nKM),
\end{align*}
where
\begin{align}
\label{eq:rj}
\mathcal A
:=
\left(\frac{q+g}{n}\right)^{1/2}
\left(1\vee\frac{\ell}{\sqrt q}\right),
\qquad
B_{\mathcal J}
:=
\sqrt q\,\Delta_{\mathcal J,\mathrm{wk}}(g,\ell),
\qquad
R_{\mathcal J}:=\mathcal A+B_{\mathcal J},
\end{align}
and where we used
\(1+3\ell\log(nKM)/(2\sqrt q)\le2\log(nKM)\{ 1\vee (\ell/\sqrt q) \}\),
which follows from \(\log(nKM)\ge\log8>2\). Applying Lemma~\ref{lem:residual_oracle_adaptive_subset} and noting that
\[
|U_0 - U_1| \le \max_{k \in \mathcal J} \max_{s \in [n]}
\|C_k^*(s) - C_k^{O*}(s)\|_{\Hb_k} 
\]
gives an
event \(\Omega_{\mathrm{res}}\in\mathcal F_n\) satisfying
\begin{equation*}
\mathbb P(\Omega_{\mathrm{res}}^c)
\le
\frac1n
+2\left\lceil\frac n\ell\right\rceil\beta_\ell
\end{equation*}
such that, on \(\Omega_{\mathrm{res}}\),
\begin{equation}
\mathbb P\left(\mathcal A_{\mathrm{res}}
\,\middle|\,
\mathcal F_n
\right)
\ge 1-\frac2n,
\qquad
\mathcal A_{\mathrm{res}}
:=
\left\{
|U_0-U_1|
\le
2C_{\mathrm{res}}
\frac{R_{\mathcal J}}{\sqrt\tau}
\log^{5/2}(nKM)
\right\},
\label{eq:subset-proof-residual-comparison-new}
\end{equation}
Here \(C_{\mathrm{res}}\) is the constant in
Lemma~\ref{lem:residual_oracle_adaptive_subset}; it depends only on
\(D_\psi,C_\beta,c_\beta\). Because the multiplier sequence is independent of
\(\widetilde{\mathcal F}_n\) conditionally on \(\mathcal F_n\), and
the coefficients defining \(U_0\) and \(U_1\) are
\(\mathcal F_n\)-measurable, the same conditional bound holds with
\(\mathcal F_n\) replaced by
\(\mathcal G_n:=\mathcal F_n\vee\widetilde{\mathcal F}_n\). Lemma~\ref{lem:oracle_block_alignment}, applied to the coordinate
subset \(\mathcal J\), gives an event
\(\Omega_{\mathrm{align}}\in\mathcal F_n\) satisfying
\[
\mathbb P(\Omega_{\mathrm{align}}^c)\le\frac1n
\]
such that, on
\(\Omega_{\mathrm{align}}\cap\Omega_{\mathrm{big}}\) with $\Omega_{\mathrm{big}}$ from \eqref{def:Omega_big},
\begin{equation}
\mathbb P\left(\mathcal A_{\mathrm{align}}
\,\middle|\,
\mathcal G_n
\right)
\ge 1-\frac2n,
\qquad
\mathcal A_{\mathrm{align}}
:=
\left\{
|U_1-U_2|
\le
C_{\mathrm{align}}
\frac q{\sqrt n}
\log^{3/2}(nK)
\right\},
\label{eq:subset-proof-alignment-comparison}
\end{equation}
The constant \(C_{\mathrm{align}}\) depends only on \(D_\psi\). 
Finally, let \(C_{\mathrm{size}}\) and
\(\Omega_{\mathrm{size}}^{O*}\in\widetilde{\mathcal F}_n\) be the
constant and event supplied by
Lemma~\ref{lem:coupled_block_oracle_size}, with $\mathbb P((\Omega_{\mathrm{size}}^{O*})^c)
\le1/n$. Then, on \(\Omega_{\mathrm{size}}^{O*}\), replacing the conditioning event $\widetilde{\mathcal F}_n$ by $\mathcal G_n$ again,
\begin{align}
\mathbb P(
\mathcal A_{\mathrm{size}} \mid 
\mathcal G_n)
\ge1-\frac2n,
\qquad
\mathcal A_{\mathrm{size}}
:=
\left\{
U_2
\le
C_{\mathrm{size}}
\left(1+\frac{q\log n}{\sqrt n}\right)
\log^{3/2}(nK)
\right\}.
\label{eq:subset-proof-size-control-raw}
\end{align}

Now define
\[
\Omega_{02}
:=
\Omega_{\mathrm{res}}
\cap\Omega_{\mathrm{align}}
\cap\Omega_{\mathrm{big}}
\cap\Omega_{\mathrm{size}}^{O*} \in \mathcal G_n.
\]
Using
\(\mathbb P(\Omega_{\mathrm{big}}^c)\le(n/q)\beta_r\), the union
bound gives
\begin{equation}
\mathbb P(\Omega_{02}^c)
\le
\frac3n
+2\left\lceil\frac n\ell\right\rceil\beta_\ell
+\frac nq\beta_r.
\label{eq:subset-proof-Omega12-probability}
\end{equation}
We will next show that there exists a constant $C_{\mathrm{sq}}$ such that, on \(\Omega_{02}\),
\begin{equation}
\mathbb P\left(
\mathcal A_{02}
\,\middle|\,
\mathcal G_n
\right)
\ge 1-\frac6n,
\qquad
\mathcal A_{02}
:=
\left\{
|U_0^2-U_2^2|
\le
C_{\mathrm{sq}}x^{3/2}
\Delta_{3,\mathcal J}^{\mathrm{ad}}\log^5(nKM)
\right\}.
\label{eq:subset-proof-squared-comparison-current}
\end{equation}
In view of \eqref{eq:subset-proof-residual-comparison-new}, \eqref{eq:subset-proof-alignment-comparison} and \eqref{eq:subset-proof-size-control-raw}, it is sufficient to show that 
\[
\widetilde{\mathcal A}_{02} := \mathcal A_{\mathrm{res}} \cap \mathcal A_{\mathrm{align}} \cap \mathcal A_{\mathrm{size}} 
\subset 
\mathcal A_{02}.
\]
For that purpose note that the constraint in \eqref{thm_3_constraint_1} from the beginning of this proof yields
\(\Delta_{3,\mathcal J}^{\mathrm{ad}}\le1\). In particular, by the definition
of \(\Delta_{3,\mathcal J}^{\mathrm{ad}}\) in \eqref{eq:def-delta-3-adaptive-subset} and using
\(M\ge1\) and \(x\le1\), 
\begin{equation}
\left(\frac{q^2}{n}\right)^{1/3}
\le \label{eq:q2-1}
x^{3/2}\Delta_{3,\mathcal J}^{\mathrm{ad}}
\le1.
\end{equation}
Consequently,
\begin{align}
\frac q{\sqrt n}
=
\left(\frac{q^2}{n}\right)^{1/2}
\le
\left(\frac{q^2}{n}\right)^{1/3}
\le
x^{3/2}\Delta_{3,\mathcal J}^{\mathrm{ad}}.
\label{eq:subset-proof-q2n-current}
\end{align}
Further, we have 
\[
\frac{R_{\mathcal J}}{\sqrt\tau}
\le
x^{3/2}\Delta_{3,\mathcal J}^{\mathrm{ad}},
\]
by the definition of $\mathcal R_{\mathcal J}$ and 
\(\Delta_{3,\mathcal J}^{\mathrm{ad}}\) in \eqref{eq:rj} and \eqref{eq:def-delta-3-adaptive-subset}, respectively. Finally, 
by \eqref{eq:subset-proof-q2n-current}, \(c_3\ge1\), and
\(\log n\le\log(nKM)\),
\[
\frac{q\log n}{\sqrt n}
\le
x^{3/2}\Delta_{3,\mathcal J}^{\mathrm{ad}}\log(nKM)
\le
\Delta_{3,\mathcal J}^{\mathrm{ad}}
\log^{c_3}(nKM)
\le1,
\]
with the last equality following from \eqref{thm_3_constraint_1}.
Overall, on $\widetilde{\mathcal A}_{02}$, 
\begin{align*}
|U_0-U_1| 
&\le 
2 C_{\mathrm{res}} x^{3/2} \Delta_{3,\mathcal J}^{\mathrm{ad}}\log^{5/2}(nKM),
\\
|U_1-U_2| 
&\le  
C_{\mathrm{align}} x^{3/2}\Delta_{3,\mathcal J}^{\mathrm{ad}}\log^{3/2}(nKM),
\\
|U_2 | 
&\le 2 C_{\mathrm{size}} \log^{3/2}(nKM).
\end{align*}
Combining the first two inequalities with the triangular inequality yields yields
\[
|U_0 - U_2| \le (2 C_{\mathrm{res}} + C_{\mathrm{align}}) x^{3/2} \Delta_{3,\mathcal J}^{\mathrm{ad}}\log^{5/2}(nKM)
= 
\tilde C x^{3/2} \Delta_{3,\mathcal J}^{\mathrm{ad}}\log^{5/2}(nKM),
\]
where $\tilde C =2 C_{\mathrm{res}} + C_{\mathrm{align}}$.
This in turn can be combined with the last inequality to obtain, on $\widetilde{\mathcal A}_{02}$, 
\begin{align*}
|U_0^2-U_2^2|
&= \nonumber
|U_0-U_2|(U_0+U_2)
\\
&\le
|U_0-U_2| \big( |U_0-U_2|+2U_2 \big)
\\&=|U_0-U_2|^2 + 2U_2|U_0-U_2| 
\\
&\le 
\tilde C^2  x^3 (\Delta_{3,\mathcal J}^{\mathrm{ad}})^2 \log^{5}(nKM) + 4 C_{\mathrm{size}}\tilde C x^{3/2} \Delta_{3,\mathcal J}^{\mathrm{ad}}\log^{4}(nKM)
\\
&\le 
C_{\mathrm{sq}}
x^{3/2}\Delta_{3,\mathcal J}^{\mathrm{ad}}
\log^5(nKM),
\label{eq:subset-proof-squared-difference-bound}
\end{align*}
where
\[
C_{\mathrm{sq}}
:=
\tilde C^2 + 
4\tilde CC_{\mathrm{size}}.
\]
and where have used that $x \le 1$ and $\Delta_{3,\mathcal J}^{\mathrm{ad}} \le 1$. This proves \eqref{eq:subset-proof-squared-comparison-current}.

We proceed by comparing $U_2^2$ and $U_3^2$, for which we use the decomposition
\begin{equation}
U_2^2=U_3^2\vee U_{\partial}^2,
\qquad U_{\partial}^2
:=
\max_{k\in\mathcal J}
\max_{s\in[n]\setminus S_x}
\|\widetilde C_k^{O*}(s)\|_{\Hb_k}^2.
\label{eq:subset-proof-boundary-decomposition-current}
\end{equation}
Let \(C_{\partial}\) be the absolute constant from
Lemma~\ref{lem:oracle_coupled_boundary_restriction_squared}, and define $B_{\partial}
:=
C_{\partial}D_\psi^2(S_{\beta,0}\vee1)$. The goal is to apply that lemma with $t=\rho/4$, for which we need to check the lemma's condition on $t$.
Choose $C_X$ from the formulation of the theorem we are proving as
\begin{equation*}
C_X
:=
1\vee\frac{4B_{\partial}}{\rho}.
\end{equation*}
Condition \textnormal{(X)} from \eqref{eq:side-conditions} then gives
\begin{align*}
B_{\partial}x\log^3(nK)
&\le
\frac{B_{\partial}\log^3(nK)}
{C_X\{\log(nK)\log n\}^4}
=
\frac{B_{\partial}}
{C_X\log(nK)\log^4 n}
\le
\frac{B_{\partial}}{C_X}
\le\frac\rho4,
\end{align*}
where we used \(n>8\). If $B_{\partial}
\{
\Delta_{3,\mathcal J}^{\mathrm{ad}}
\log^{c_3}(nKM)
\}^{3}
>
\rho/4,$
then the asserted inequality in \eqref{eq:subset-conditional-gaussian-approximation} is elementary after increasing the
final constant so that
\(C_3\ge(4B_{\partial}/\rho)^{1/3}\). We may therefore assume the
reverse inequality. Since \(c_3\ge2\) and by the definition of \eqref{eq:def-delta-3-adaptive-subset}, we get
\begin{align*}
B_{\partial}
\frac{q^2\log^2 n}{n}\log^3(nK)
&\le
B_{\partial}\frac{q^2}{n}\log^5(nKM)
\le
B_{\partial}
\left\{
\Delta_{3,\mathcal J}^{\mathrm{ad}}
\log^{c_3}(nKM)
\right\}^{3}
\le\frac\rho4.
\end{align*}
Thus, combining the previous two displays
\[
\frac\rho4 \ge B_{\partial}\log^3(nK)  
\left(
x\vee\frac{q^2\log^2 n}{n}
\right)
=
C_{\partial}D_\psi^2(S_{\beta,0}\vee1)\log^3(nK)  
\left(
x\vee\frac{q^2\log^2 n}{n}
\right).
\]
Hence, the condition on $t=\rho/4$ from Lemma~\ref{lem:oracle_coupled_boundary_restriction_squared} is satisfied, and there exists an
even \(\Omega_{\partial}^{O*}\in\widetilde{\mathcal F}_n\) with $\mathbb P((\Omega_{\partial}^{O*})^c) \le 1/n$
such that, on \(\Omega_{\partial}^{O*}\),
\begin{equation}
\mathbb P\left(
\mathcal A_{\partial}
\,\middle|\,
\mathcal G_n
\right)
\ge1-\frac2n,
\qquad
\mathcal A_{\partial}
:=
\left\{U_{\partial}^2\le\frac\rho4\right\}.
\label{eq:subset-proof-boundary-control-current}
\end{equation}
Here we used \(\mathcal J\subseteq[K]\) and the independence of the
Gaussian multipliers to replace conditioning on
\(\widetilde{\mathcal F}_n\) by conditioning on \(\mathcal G_n\). 

We next compare $U_3^2$ and $U_4^2$, for which we use Lemma~\ref{lem:oracle_coupled_projection_error_2}. We obtain that 
there exists an event
$
\Omega_{\mathrm{proj}}^{O*}\in\widetilde{\mathcal F}_n
$, potentially depending on $M$ and $S$,
satisfying
$
\mathbb P((\Omega_{\mathrm{proj}}^{O*})^c)\le 1/n 
$
such that, on \(\Omega_{\mathrm{proj}}^{O*}\),
\begin{align}
\mathbb P\Big(&
U_3^2
\le
U_4^2 
+\Delta_{\mathrm{proj}}^O
\,\Bigm|\,
\widetilde{\mathcal F}_n
\Big)
\ge1-\frac2n,
\label{eq:subset-proof-projection-control-raw}
\end{align}
where
\[
\Delta_{\mathrm{proj}}^O
:=
C_{\mathrm{proj}}\log^3(nK)
\left[
M^{1-2\gamma_U}
+\frac1q
+\frac{q^2\log^2n}{n}
\right].
\]
We proceed by upper bounding $\Delta_{\mathrm{proj}}^O$, for which we will make use of \textnormal{(M)} from \eqref{eq:side-conditions}, with a choice of $C_M$ that will be helpful for proving \eqref{eq:Thm_3_gaussian_approx} below.  Specifically, we choose
\begin{equation}
C_M
:=
1\vee\frac{2C_{\mathrm{cov}}(D_\psi^2S_{\beta,1}\vee C_{\mathcal T})}{C_L^2}.
\label{eq:subset-proof-CM-choice-current}
\end{equation}
where $\mathcal C_{\mathcal T}$ is from \eqref{eq:C_t} and where \(C_{\mathrm{cov}}>0\) is the absolute constant from
Lemma~\ref{lem:bound-on-operator-norm-covariances}. 

Condition \textnormal{(M)} from \eqref{eq:side-conditions} also gives 
\[
\frac1q
\le
\frac1{q\wedge m}
\le
\frac{x}{C_MM^{2\gamma_L}}
\le
\frac1{C_M}M^{1-2\gamma_U},
\]
where we used \(x\le1\), \(M\ge1\), and
\(2\gamma_L\ge2\gamma_U-1\). Moreover,
\eqref{eq:q2-1}, \(M\ge1\), and
\(\log n\le\log(nKM)\) imply
\[
\frac{q^2\log^2 n}{n}
\le
M^{2/3}
\left(\frac{q^2}{n}\right)^{1/3}
\log^2(nKM).
\]
Consequently, by the definition of $\Delta_{3,\mathcal J}^{\mathrm{ad}}$ in \eqref{eq:def-delta-3-adaptive-subset}, 
\[
\Delta_{\mathrm{proj}}^O
\le
C_{\mathrm P}x^{3/2}
\Delta_{3,\mathcal J}^{\mathrm{ad}}\log^5(nKM),
\qquad
C_{\mathrm P}
:=
C_{\mathrm{proj}}\left(2+\frac1{C_M}\right).
\]
Therefore, from \eqref{eq:subset-proof-projection-control-raw},  on \(\Omega_{\mathrm{proj},\mathcal J}^{O*}\),
\begin{equation}
\mathbb P\left(
\mathcal A_{34}
\,\middle|\,
\mathcal G_n
\right)
\ge1-\frac2n,
\qquad
\mathcal A_{34}:=
\left\{
U_3^2
\le
U_4^2+C_{\mathrm P}x^{3/2}
\Delta_{3,\mathcal J}^{\mathrm{ad}}\log^5(nKM)
\right\}
\label{eq:subset-proof-projection-control-current}
\end{equation}

We finally justify~\eqref{eq:subset-proof-cdf-sandwich-new} using \eqref{eq:subset-proof-squared-comparison-current}, \eqref{eq:subset-proof-boundary-control-current} and \eqref{eq:subset-proof-projection-control-current}. Define
\[
C_{04}:=C_{\mathrm{sq}}+C_{\mathrm P},
\qquad 
\Omega_{04}
:=
\Omega_{02}
\cap\Omega_{\partial}^{O*}
\cap\Omega_{\mathrm{proj},\mathcal J}^{O*}
\]
and recall $\delta = C_{04} x^{3/2}\Delta_{3,\mathcal J}^{\mathrm{ad}} \log^5(nKM)$.
By \eqref{eq:subset-proof-Omega12-probability} and the two preceding
event bounds,
\begin{equation}
\mathbb P(\Omega_{04}^c)
\le
\frac5n
+2\left\lceil\frac n\ell\right\rceil\beta_\ell
+\frac nq\beta_r
\label{eq:subset-proof-Omega14-probability}
\end{equation}
as required in \eqref{eq:subset-proof-bootstrap-event-new}.
Fix \(t\ge\rho\). 
On
\(\mathcal A_{02}\cap\mathcal A_{\partial}\cap\mathcal A_{34}\),
the inequality \(U_4^2\le t-\delta\) implies
\[
U_3^2
\le
t-
C_{\mathrm{sq}}x^{3/2}
\Delta_{3,\mathcal J}^{\mathrm{ad}}\log^5(nKM).
\]
Also, since \(t\ge\rho\) and \(\delta\le\rho/2\) and $C_{\mathrm{sq}} \le C_{04}$,
\[
U_{\partial}^2
\le\frac\rho4
\le t - \frac\rho2
\le
t-
C_{\mathrm{sq}}x^{3/2}
\Delta_{3,\mathcal J}^{\mathrm{ad}}\log^5(nKM).
\]
The decomposition
\eqref{eq:subset-proof-boundary-decomposition-current} therefore gives
\[
U_2^2
\le
t-
C_{\mathrm{sq}}x^{3/2}
\Delta_{3,\mathcal J}^{\mathrm{ad}}\log^5(nKM),
\]
and the definition of \(\mathcal A_{02}\) in \eqref{eq:subset-proof-squared-comparison-current} then yields \(U_0^2\le t\). Hence
\[
\{U_4^2\le t-\delta\}
\cap\mathcal A_{02}
\cap\mathcal A_{\partial}
\cap\mathcal A_{34}
\subseteq
\{U_0^2\le t\}.
\]
The conditional union bound gives
\begin{equation*}
F_0(t)
\ge
F_4(t-\delta)-\frac{10}{n},
\end{equation*}
which is the first inequality in \eqref{eq:subset-proof-cdf-sandwich-new}.
For the reverse comparison, \(U_4\le U_3\le U_2\) identically.
Thus, on \(\mathcal A_{02}\), the event \(U_0^2\le t\) implies
\[
U_4^2
\le U_2^2
\le
t+C_{\mathrm{sq}}x^{3/2}
\Delta_{3,\mathcal J}^{\mathrm{ad}}\log^5(nKM)
\le t+\delta.
\]
Using only the \(6/n\) failure probability of \(\mathcal A_{02}\),
we obtain
\begin{equation*}
F_0(t)
\le
F_4(t+\delta)+\frac6n \le F_4(t+\delta)+\frac{10}n,
\end{equation*}
which is the second inequality in \eqref{eq:subset-proof-cdf-sandwich-new}.

\smallskip
\noindent
\emph{Justification of~\eqref{eq:Thm_3_gaussian_approx}.}
Define
\[
\Omega_{45}
:=
\Omega_{04}
\cap
\Omega_{\mathrm{GB},\mathcal J}^{O*},
\]
where \(\Omega_{\mathrm{GB},\mathcal J}^{O*}\) is the event supplied by
Lemma~\ref{lem:projected_coupled_oracle_gaussian_comparison} for the
coordinate subset \(\mathcal J\). That lemma gives that 
$
\mathbb P((\Omega_{\mathrm{GB},\mathcal J}^{O*})^c)\leq 1/n
$
which combined with the bound on $\Omega_{04}$ from ~\eqref{eq:subset-proof-Omega14-probability}

\[
\mathbb P(\Omega_{45}^c)\leq\frac6n
+2\left\lceil\frac n\ell\right\rceil\beta_\ell
+\frac nq\beta_r
\]
as required in \eqref{eq:omega45}.

We now prove~\eqref{eq:Thm_3_gaussian_approx}. Condition
\textnormal{(M)} from \eqref{eq:side-conditions} and the choice of \(C_M\) from \eqref{eq:subset-proof-CM-choice-current} ensure that the hypothesis in \eqref{eq:M-condition-conditional-Gaussian-comparison} 
of Lemma~\ref{lem:projected_coupled_oracle_gaussian_comparison} is
satisfied. Moreover, by the definition of \(O_{kM}\),
\[
\|P_{kM}\widetilde C_k^{O*}(s)\|_{\Hb_k}
=
\|(O_{kM}\circ P_{kM})
\widetilde C_k^{O*}(s)\|_2.
\]
Thus the random variables compared in that lemma are precisely
\(U_4^2\) and \(U_5^2\). 
Since \(m\ge n/(4q)\),
\[
\frac1{\sqrt m}
\le
2\left(\frac qn\right)^{1/2},
\qquad
\frac{q\log^2n}{m}
\le
4\frac{q^2}{n}\log^2n.
\]
Using \((a+b)^{1/3}\le a^{1/3}+b^{1/3}\), we obtain
\begin{align*}
\left[
\frac1{\sqrt m}
+
\frac{q\log^2n}{m}
\right]^{1/3}
&\le
2^{1/3}\left(\frac qn\right)^{1/6}
+
4^{1/3}
\left(\frac{q^2}{n}\right)^{1/3}
\log^{2/3}n
\\
&\le
\left(2^{1/3}+4^{1/3}\right)
\left\{
\left(\frac qn\right)^{1/6}
\vee
\left(\frac{q^2}{n}\right)^{1/3}
\right\}
\log^{2/3}(nKM).
\end{align*}
Further, because \(x\le1\) we have \(x^{-1}\le x^{-3/2}\) and hence, by the
definition of \(\Delta_{3,\mathcal J}^{\mathrm{ad}}\) from in \eqref{eq:def-delta-3-adaptive-subset},
\[
x^{-1}M^{2/3}
\bigg\{
\left(\frac qn\right)^{1/6}
\vee
\left(\frac{q^2}{n}\right)^{1/3}
\bigg\}
\le
\Delta_{3,\mathcal J}^{\mathrm{ad}}.
\]
Substituting these estimates into the error bound in
Lemma~\ref{lem:projected_coupled_oracle_gaussian_comparison} gives,
on \(\Omega_{\mathrm{GB},\mathcal J}^{O*} \subset\Omega_{45} \),
\begin{align*}
\sup_{u\ge\rho/2}
\big|
F_4(u)
-
F_5(u)
\big|
&\le
C_{45}
\Delta_{3,\mathcal J}^{\mathrm{ad}}
\log^{5/2+6\kexp}(nKM)
\le
C_{45}
\Delta_{3,\mathcal J}^{\mathrm{ad}}
\log^{c_3}(nKM)
\end{align*}
where
\[
C_{45}
:=
\left(2^{1/3}+4^{1/3}\right)C_{\mathrm{GB}}.
\]
and where the final inequality follows from $c_3 = 6+6\kexp \ge 5/2 + \kexp$.
This proves \eqref{eq:Thm_3_gaussian_approx}.

\smallskip
\noindent
\emph{Justification of~\eqref{Thm_3:main_ac}.}
Corollary~\ref{cor:projected_mixed_norm_ac}, applied to the coordinate
subset indexed by \(\mathcal J\), gives
\begin{equation*}
\sup_{u\ge\rho/2}
\mathbb P\left(
\left|U_5^2-u\right|\le\delta
\right)
\le
C_{\mathrm{AC}}x^{-3/2}
\left(
\delta+\frac1n
\right)
\log^{1/2+6\kexp}(nKM).
\end{equation*}
Recall that $\delta
=
C_{04} x^{3/2}
\Delta_{3,\mathcal J}^{\mathrm{ad}}
\log^5(nKM)$ and that
\begin{align*}
\frac{x^{-3/2}}n
\le 
x^{-3/2}n^{-1/6}
\le
\left(\frac qn\right)^{1/6}
\le 
x^{-3/2}M^{2/3}
\le 
\Delta_{3,\mathcal J}^{\mathrm{ad}}
\end{align*}
by the definition of \(\Delta_{3,\mathcal J}^{\mathrm{ad}}\) in \eqref{eq:subset-conditional-gaussian-approximation}.
Consequently, 
\begin{align}
\sup_{u\ge\rho/2}
\mathbb P\left(
\left|U_5^2-u\right|\le\delta
\right)
&\le \nonumber
C_{\mathrm{AC}}
\left\{
C_{04}\Delta_{3,\mathcal J}^{\mathrm{ad}}
\log^5(nKM)
+
\Delta_{3,\mathcal J}^{\mathrm{ad}}
\right\}
\log^{1/2+6\kexp}(nKM)
\\ \nonumber
&\le
C_{\mathrm{AC}}(C_{04}+1)
\Delta_{3,\mathcal J}^{\mathrm{ad}}
\log^{11/2+6\kexp}(nKM)
\\
&\le \label{eq:subset-proof-U5-anticoncentration-rate}
C_5 \Delta_{3,\mathcal J}^{\mathrm{ad}}
\log^{c_3}(nKM),
\end{align}
where $C_5 := C_{\mathrm{AC}}(C_{04}+1)$ and where we used that $c_3=6+6\kexp \ge 11/2 + \kexp$.
Recall
\[
\varepsilon
:=
C_5
\Delta_{3,\mathcal J}^{\mathrm{ad}}
\log^{c_3}(nKM)
\]
and fix \(t\ge\rho\). Since \(t\ge\rho/2\),
\eqref{eq:subset-proof-U5-anticoncentration-rate}, evaluated at
\(u=t\), gives
$
\mathbb P\left(
\left|U_5^2-t\right|\le\delta
\right)
\le\varepsilon,
$
which implies \eqref{Thm_3:main_ac}. Indeed, the first inequality  follows from
\begin{align*}
F_5(t)-F_5(t-\delta)
&=
\mathbb P\left(
t-\delta<U_5^2\le t
\right)\le
\mathbb P\left(
\left|U_5^2-t\right|\le\delta
\right)
\le\varepsilon,
\end{align*}
and the last one follows similarly. 
\end{proof}

\subsection{Gaussian Comparison Bounds}
\label{sec:Gaussian_Comparison}

Given a covariance operator $\mathcal{K}:\mathcal H \to \mathcal H$ recall from~\eqref{def:Hilbert_Brownian_Bridge} that $\mathbb{B}_{\mathcal K}$ is a $\mathcal H$-valued Brownian Bridge with covariance $\mathcal{K}$.  Further, recall that for each $k\in [K]$ it holds that 
\[
\mathcal{K}_k=\pi_k\circ \mathcal{K}\circ \pi^{*}_k;
\]
this identity is discussed in the paragraph above~\eqref{def:C_Kappa}.
The notation $\mathcal{K}_k$ is introduced in the statement of Assumption~\ref{cond:scores} and each marginal covariance operator $\mathcal{K}_k$ is realized from $\mathcal {K}$ by conjugation with $\pi_k$. Recall from~\eqref{def:C_Kappa} and Definition~\ref{def:trace_norm} that
\begin{equation*}
C_{\mathcal K}:=\max_{k \in [K]}\big\|\mathcal{K}_k\|_{\Tr}=\max_{k \in [K]}\sum_{j=1}^{r_k}\lambda_{kj},
\end{equation*}
where as in the statement of Assumption~\ref{cond:scores} the collection of positive real numbers $\{\lambda_{kj}\}_{j=1}^{r_k}$ are the eigenvalues of $\mathcal{K}_k$ where $r_k:=\textrm{rank}(\mathcal{K}_k)\geq 1$. We assume throughout this section that $C_{\mathcal K}>0$ which excludes the case case that $\mathbb B_{\mathcal K}$ is almost surely equal to $0 \in \mathcal{H}$.  Eventually the results in this section will applied the global long run covariance operator $\mathcal{K}$ whose spectral structure is controlled in Assumption~\ref{cond:scores}, which also excludes this total degeneracy. Recall also the set $S_x=
\left\{
s\in[n]:
V(s/n)\ge x
\right\}$ defined in~\eqref{eq:definition-S_x}.
We define $S^c_x:=[n]\setminus S_x$. Recall also the constant $D_\psi$ of Assumption~\ref{cond:stationary}, and the serial-dependence coefficients $S_{\beta,j}$ from~\eqref{eq:def-S_beta}.

\begin{lemma}[High-probability discretization of the Hilbert-space Brownian bridge]
\label{lem:cts_to_discrete_hilbert_bridge_high_prob}
There exists an absolute constant \(C_1>0\) such that, for every
\(K\in\mathbb N_{\ge1}\) and \(n\in\mathbb N_{\ge3}\), defining
\[
\delta_1
:=
C_1
\sqrt{
\frac{
C_{\mathcal K}\log(nK)
}{n}
},
\]
we have
\[
\mathbb P\bigg(
\max_{k\in[K]}
\sup_{u\in[0,1]}
\|\pi_k\mathbb B_{\mathcal K}(u)\|_{\Hb_k}
>
\max_{k\in[K]}
\max_{s\in[n]}
\|\pi_k\mathbb B_{\mathcal K}(s/n)\|_{\Hb_k}
+
\delta_1
\bigg)
\le
\frac{2}{n}.
\]
Equivalently, with probability at least \(1-2/n\),
\[
0
\le
\max_{k\in[K]}
\sup_{u\in[0,1]}
\|\pi_k\mathbb B_{\mathcal K}(u)\|_{\Hb_k}
-
\max_{k\in[K]}
\max_{s\in[n]}
\|\pi_k\mathbb B_{\mathcal K}(s/n)\|_{\Hb_k}
\le
\delta_1.
\]
\end{lemma}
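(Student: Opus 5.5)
We reduce the statement to a uniform bound on local oscillations of each marginal bridge. For $u\in[0,1]$, let $s(u)\in\{0,\dots,n\}$ be a grid point with $|u-s(u)/n|\le 1/n$. Since $\mathbb B_{\mathcal K}(0)=\mathbb B_{\mathcal K}(1)=0$, the grid point $s=0$ can be replaced by $s=n$. The triangle inequality then gives
\[
0\le \max_{k}\sup_u\|\pi_k\mathbb B_{\mathcal K}(u)\|_{\Hb_k}-\max_k\max_{s\in[n]}\|\pi_k\mathbb B_{\mathcal K}(s/n)\|_{\Hb_k}\le \max_{k\in[K]}\max_{j=0}^{n-1}\ \sup_{u\in[j/n,(j+1)/n]}\|\pi_k\{\mathbb B_{\mathcal K}(u)-\mathbb B_{\mathcal K}(j/n)\}\|_{\Hb_k}.
\]
The lower bound is trivial. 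It therefore suffices to show that each of the $nK$ random variables
\[
\Xi_{jk}:=\sup_{u\in[j/n,(j+1)/n]}\|\pi_k\{\mathbb B_{\mathcal K}(u)-\mathbb B_{\mathcal K}(j/n)\}\|_{\Hb_k}
\]
exceeds $\delta_1$ with probability at most $2/(n^2K)$. A union bound over the $nK$ pairs then yields the claim.

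\textbf{Step 1 (representation).} Write $\mathbb B_{\mathcal K}(u)=W(u)-uW(1)$, where $W$ is an $\mathcal H$-valued Brownian motion with covariance $\mathcal K$. Then
\[
\pi_k\{\mathbb B(u)-\mathbb B(j/n)\}=\pi_k\{W(u)-W(j/n)\}-(u-j/n)\pi_kW(1).
\]
Consequently $\Xi_{jk}\le \sup_{0\le h\le 1/n}\|\pi_k\{W(j/n+h)-W(j/n)\}\|+n^{-1}\|\pi_kW(1)\|$. The process $h\mapsto\pi_k\{W(j/n+h)-W(j/n)\}$ is a $\Hb_k$-valued Brownian motion with covariance $\mathcal K_k$, and $\E\|\pi_kW(1/n)\|^2=\Tr(\mathcal K_k)/n\le C_{\mathcal K}/n$.

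\textbf{Step 2 (Gaussian concentration for the supremum).} The quantity $X:=\sup_{h\le 1/n}\|\pi_k\{W(j/n+h)-W(j/n)\}\|_{\Hb_k}$ is the supremum of a centered Gaussian process, indexed by $(h,v)$ with $\|v\|\le1$. Its maximal variance is $\sigma^2=\|\mathcal K_k\|_{\op}/n\le C_{\mathcal K}/n$. For the mean, Doob's $L^2$ maximal inequality applied to the martingale $\|\cdot\|$ gives $\E X\le 2\sqrt{C_{\mathcal K}/n}$. The Borell--TIS inequality then yields
\[
\Prob\big(X>2\sqrt{C_{\mathcal K}/n}+\sqrt{2\sigma^2 t}\big)\le e^{-t}.
\]
We take $t=\log(n^2K)\le 2\log(nK)$. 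The second term $n^{-1}\|\pi_kW(1)\|$ is handled in the same way: it has mean at most $\sqrt{C_{\mathcal K}}/n$ and is concentrated with $\sigma\le\sqrt{C_{\mathcal K}}/n$, so it is dominated by the first term. Each of the two terms is allotted failure probability $1/(n^2K)$.

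\textbf{Step 3 (assembly).} Combining Steps 1 and 2, we obtain $\Xi_{jk}\le C\sqrt{C_{\mathcal K}\log(nK)/n}$ outside an event of probability $2/(n^2K)$. Here we use $\log(nK)\ge\log 3\ge1$ to absorb the constant $2\sqrt{C_{\mathcal K}/n}$ and the $1/n$ terms. A union bound over the $nK$ pairs $(j,k)$ completes the argument with an absolute constant $C_1$.

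The only delicate point is Step 2, namely obtaining a dimension-free bound: the trace norm controls the mean and the operator norm controls the fluctuations. Borell--TIS combined with Doob handles this cleanly. Measurability of the supremum over the interval is ensured by continuity of the sample paths.
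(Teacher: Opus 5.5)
Your argument is correct, and it takes a genuinely different route from the paper's proof.

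\textbf{The paper's route.} The paper never passes to a Brownian motion. It works directly with the real-valued process $u\mapsto\|\pi_k\mathbb B_{\mathcal K}(u)\|_{\Hb_k}$. From $\Cov\bigl(\pi_k\mathbb B_{\mathcal K}(u)-\pi_k\mathbb B_{\mathcal K}(v)\bigr)=|u-v|(1-|u-v|)\mathcal K_k$, the reverse triangle inequality, and the Gaussian bound $\bigl\|\|g\|\bigr\|_{\psi_2}^2\le\frac{8}{3}\|\Cov(g)\|_{\Tr}$, it gets increments with $\psi_2$-norm at most a constant times $C_{\mathcal K}^{1/2}|u-v|^{1/2}$. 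It then applies a Dudley-type chaining bound (Corollary 2.2.5 of van der Vaart and Wellner) in the metric $|u-v|^{1/2}$. This controls the oscillation over each cell $[(s-1)/n,s/n]$ in $\psi_2$ at order $(C_{\mathcal K}/n)^{1/2}$. A $\psi_2$ tail bound and a union bound over the $nK$ cells finish the proof.

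\textbf{Your route.} You replace the chaining step with three ingredients: the representation $\mathbb B_{\mathcal K}(u)=W(u)-uW(1)$, Doob's $L^2$ maximal inequality to bound the mean of the local supremum, and Borell--TIS for the deviation.

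\textbf{What your route buys.} It avoids covering numbers and gives explicit constants. It is also slightly sharper before you absorb terms. It separates the roles of the two norms: the trace controls the mean and the operator norm controls the fluctuations. This gives a local radius of order $\sqrt{C_{\mathcal K}/n}+\sqrt{\|\mathcal K_k\|_{\op}\log(nK)/n}$. The paper's $\psi_2$ argument, by contrast, puts $\log(nK)$ in front of the trace.

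\textbf{What the paper's route buys.} Your argument relies on Gaussianity (Borell--TIS) and on the martingale structure of the Brownian-motion representation. It also requires fixing a continuous version, so that the event is determined by the law of the process; you flag this yourself. The paper's chaining argument uses only $\psi_2$ control of increments. It would therefore carry over verbatim to non-Gaussian processes satisfying the same increment bound.
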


\begin{lemma}[High-probability control of the discrete boundary supremum]
\label{lem:supremum_to_interior_supremum_high_prob}
There exists an absolute constant \(C_2>0\) such that, for every
\(K\in\mathbb N_{\ge1}\), \(n\in\mathbb N_{\ge3}\), \(t>0\), and
\(x\in(0,3/16]\) satisfying
\[
x
\le
\frac{t}
{C_2C_{\mathcal K}\log(nK)},
\]
we have
\[
\mathbb P\left(
\max_{k\in[K]}
\max_{s\in[n]\setminus S_x}
\left\|
\pi_k\mathbb B_{\mathcal K}(s/n)
\right\|_{\Hb_k}^2
>
t
\right)
\le
\frac{2}{n}.
\]
\end{lemma}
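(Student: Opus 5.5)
The statement is Lemma~\ref{lem:supremum_to_interior_supremum_high_prob}. The plan is a union bound over the at most $nK$ pairs $(k,s)$ with $s\in[n]\setminus S_x$, combined with a Gaussian tail bound for each squared Hilbert norm. The only structural input is that a boundary index has small variance: for $s\notin S_x$ we have $V(s/n)\le x$ by \eqref{eq:infsup-V-S_x}. Since $\pi_k\mathbb B_{\mathcal K}(s/n)$ is a centered Gaussian element of $\Hb_k$ with covariance $V(s/n)\mathcal K_k$, this gives
\[
\E\|\pi_k\mathbb B_{\mathcal K}(s/n)\|_{\Hb_k}^2 = V(s/n)\|\mathcal K_k\|_{\Tr}\le xC_{\mathcal K}.
\]

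Fix such a pair $(k,s)$ and write $X:=\pi_k\mathbb B_{\mathcal K}(s/n)$ and $\sigma^2:=\E\|X\|^2\le xC_{\mathcal K}$. By the Karhunen--Loève expansion, $\|X\|^2=\sum_j V(s/n)\lambda_{kj}\xi_j^2$ with i.i.d.\ standard normal $\xi_j$. We then bound the tail of $\|X\|^2$ in one of two equivalent ways:
\begin{itemize}
\item[(a)] use Gaussian concentration of the Lipschitz map $X\mapsto\|X\|$, which gives $\|X\|\le\sigma+\sqrt{2\sigma_{\max}^2u}$ with probability at least $1-e^{-u}$, where $\sigma_{\max}^2\le\sigma^2$;
\item[(b)] use the Laurent--Massart bound for weighted chi-square sums, which gives $\Prob\bigl(\|X\|^2>\sigma^2+2\sigma^2\sqrt u+2\sigma^2u\bigr)\le e^{-u}$, using $\sum_j a_j^2\le(\sum_j a_j)^2$ and $\max_j a_j\le\sum_j a_j$.
\end{itemize}
Either route yields $\Prob\bigl(\|X\|^2>C\sigma^2(1+u)\bigr)\le e^{-u}$ for an absolute constant $C$. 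Alternatively, one could invoke Lemma~\ref{lem:psi_2_controlled_by_L_2} (the $\psi_2$ norm of $\|X\|$ is controlled by its $L^2$ norm), which is the tool the paper uses elsewhere for exactly this purpose.

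Take $u=\log(n^2K)\le 2\log(nK)$. A union bound over at most $nK$ pairs gives a total failure probability of at most $nK\,e^{-u}=1/n\le 2/n$. On the complementary event every boundary term satisfies
\[
\|X\|^2\le C xC_{\mathcal K}\bigl(1+2\log(nK)\bigr)\le 3C\,xC_{\mathcal K}\log(nK),
\]
where the last step uses $\log(nK)\ge\log 3\ge 1$. Choosing $C_2:=3C$, the hypothesis $x\le t/(C_2C_{\mathcal K}\log(nK))$ turns this into $\|X\|^2\le t$, which is the claim.

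The only point needing care is making the tail constant absolute, i.e.\ independent of the eigenvalue profile of $\mathcal K_k$. This follows from trace domination in (b), or from $\sigma_{\max}\le\sigma$ in (a). No step is a serious obstacle.
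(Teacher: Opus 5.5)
Your proposal is correct and is essentially the paper's argument: a union bound over the at most $nK$ boundary pairs $(k,s)$, combined with a sub-Gaussian tail for $\|\pi_k\mathbb B_{\mathcal K}(s/n)\|_{\Hb_k}$ whose variance proxy is $V(s/n)\|\mathcal K_k\|_{\Tr}\le xC_{\mathcal K}$. The paper implements the tail step through the alternative you mention, Lemma~\ref{lem:psi_2_controlled_by_L_2} plus the $\psi_2$ tail bound, which yields $C_2=16/3$; your Laurent--Massart and Gaussian-concentration variants reach the same conclusion with a different absolute constant.
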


\begin{lemma}[High-probability control of the projection error]
\label{lem:interior_supremum_to_projected_interior_supremum_high_prob}
Suppose Assumption~\ref{cond:scores} holds. There exists a constant
$
C_3=C_3(\gamma_U,C_U)>0
$
depending only on \(\gamma_U\) and \(C_U\) such that, for every
\(K\in\mathbb N_{\ge1}\), \(n\in\mathbb N_{\ge3}\),
\(M\in\mathbb N_{\ge1}\), and all  $S \subset [n]$,
defining
\[
\delta_2
:=
C_3\log(nK)M^{1-2\gamma_U},
\]
we have
\[
\mathbb P\left(
\max_{k\in[K]}
\max_{s\in S}
\left\|
(\operatorname{id}_{\Hb_k}-P_{kM})
\pi_k\mathbb B_{\mathcal K}(s/n)
\right\|_{\Hb_k}^2
>
\delta_2
\right)
\le
\frac{2}{n}.
\]
Moreover,
\[
\mathbb P\left(
\max_{k\in[K]}
\max_{s\in S}
\left\|
\pi_k\mathbb B_{\mathcal K}(s/n)
\right\|_{\Hb_k}^2
>
\max_{k\in[K]}
\max_{s\in S}
\left\|
(P_{kM}\circ\pi_k)
\mathbb B_{\mathcal K}(s/n)
\right\|_{\Hb_k}^2
+
\delta_2
\right)
\le
\frac{2}{n}.
\]
\end{lemma}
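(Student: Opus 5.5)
The plan is to treat each pair $(k,s)$ separately as a centered Gaussian element of $\Hb_k$, bound its squared norm using Gaussian concentration in Hilbert space, and then take a union bound over the at most $K|S|\le nK$ pairs. The second assertion will then follow from the Pythagorean identity \eqref{eq:orthogonal}.

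\emph{Step 1 (covariance of the residual).} Fix $k\in[K]$ and $s\in S$, and set $X_{sk}:=(\operatorname{id}_{\Hb_k}-P_{kM})\pi_k\mathbb B_{\mathcal K}(s/n)$.
\begin{itemize}
\item Since $\operatorname{Cov}(\pi_k\mathbb B_{\mathcal K}(u))=V(u)\mathcal K_k$ with $V(u)=u(1-u)\le 1/4$, the element $X_{sk}$ is centered Gaussian with covariance $V(s/n)(\operatorname{id}-P_{kM})\mathcal K_k(\operatorname{id}-P_{kM})$.
\item This covariance is diagonal in the eigenbasis $\{Z_{kj}\}$, with eigenvalues $V(s/n)\lambda_{kj}$ for $j>\check M_k$.
\item If $r_k\le M$, then $X_{sk}=0$ almost surely by Lemma~\ref{lem:properties-covariance}(3), so there is nothing to prove for that $k$.
\item Otherwise, Assumption~\ref{cond:scores} gives the trace bound
\[
\sigma_{\Tr}^2:=\operatorname{tr}\operatorname{Cov}(X_{sk})\le \frac{C_U^2}{4}\sum_{j>M}j^{-2\gamma_U}\le \frac{C_U^2}{4}\int_M^\infty u^{-2\gamma_U}\,du=\frac{C_U^2}{4(2\gamma_U-1)}M^{1-2\gamma_U}.
\]
\item The operator norm satisfies $\sigma_{\op}^2\le \sigma_{\Tr}^2$, since the operator norm of a positive operator is at most its trace.
\end{itemize}
Both bounds are uniform in $k$ and $s$.

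\emph{Step 2 (concentration).} The map $x\mapsto\|x\|$ is $1$-Lipschitz. Hence Gaussian concentration (Borell--TIS), together with $\mathbb E\|X_{sk}\|\le\sigma_{\Tr}$, gives for every $t>0$
\[
\mathbb P\bigl(\|X_{sk}\|_{\Hb_k}>\sigma_{\Tr}+\sigma_{\op}\sqrt{2t}\bigr)\le e^{-t}.
\]
On the complement of this event, $\|X_{sk}\|^2\le 2\sigma_{\Tr}^2+4\sigma_{\op}^2t\le(2+4t)\,\sigma_{\Tr}^2$.

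\emph{Step 3 (union bound).} Take $t=2\log(nK)$ and apply the union bound over at most $nK$ pairs. The failure probability is at most $nK\cdot(nK)^{-2}\le 1/n\le 2/n$. On the complementary event,
\[
\max_{k\in[K]}\max_{s\in S}\|X_{sk}\|^2\le (2+8\log(nK))\,\frac{C_U^2}{4(2\gamma_U-1)}\,M^{1-2\gamma_U}\le C_3\log(nK)M^{1-2\gamma_U}.
\]
The last inequality uses $\log(nK)\ge\log 3>1$, and $C_3$ depends only on $C_U$ and $\gamma_U$. This proves the first assertion.

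\emph{Step 4 (second assertion).} By \eqref{eq:orthogonal}, for each $(k,s)$ we have
\[
\|\pi_k\mathbb B_{\mathcal K}(s/n)\|^2=\|P_{kM}\pi_k\mathbb B_{\mathcal K}(s/n)\|^2+\|X_{sk}\|^2.
\]
Taking maxima over $k$ and $s$, the maximum of a sum is at most the sum of the maxima. On the same event as in Step 3, this gives the stated inequality with the additive term $\delta_2$.

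The only delicate step is Step 2: one needs a Hilbert-space Gaussian tail bound whose logarithmic factor multiplies only the operator norm, so that it is not inflated by the infinite dimension. Borell--TIS, or the standard bound $\mathbb P(\|X\|>\sqrt{\operatorname{tr}}+\sqrt{2\|\Sigma\|_{\op}t})\le e^{-t}$, handles this directly.
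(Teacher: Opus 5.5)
Your proof is correct and is essentially the paper's argument: bound $\mathbb E\|X_{sk}\|^2=V(s/n)\sum_{j>\check M_k}\lambda_{kj}\le \frac{C_U^2}{4(2\gamma_U-1)}M^{1-2\gamma_U}$, apply a per-pair Gaussian tail bound, take a union bound over at most $nK$ pairs, and obtain the second claim from \eqref{eq:orthogonal}. The only difference is the tail tool: the paper uses the Orlicz bound $\bigl\|\|g\|_{\mathcal H}\bigr\|_{\psi_2}^2\le\frac83\,\mathbb E\|g\|_{\mathcal H}^2$ of Lemma~\ref{lem:psi_2_controlled_by_L_2} instead of Borell--TIS, and this also shows that the step you flag as delicate is not --- you bound $\sigma_{\op}$ by $\sigma_{\Tr}$ anyway, so a purely trace-based sub-Gaussian bound already suffices.
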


Recall from \eqref{eq:projection-pisigmak} that, for $\sigma \subset [n]$ and $\chgset \subset [K]$, we write
\[
\pi_{\sigma,\chgset}: \mathcal H^n \to \mathcal H_\chgset^\sigma, 
\qquad 
(x_1, \dots, x_n) \mapsto \big( (x_{s_1k})_{k \in \chgset}, \dots, (x_{s_{|\sigma|}k})_{k \in \chgset}\big), 
\]
where $\mathcal H_\chgset = \bigoplus_{k \in \chgset} \Hb_k$ and $ \mathcal H_\chgset^\sigma = \bigoplus_{s \in \sigma}  \mathcal H_\chgset$ are from \eqref{eq:product-spaces-with-indices}. For $M \in \N$, define 
\[
P_{M\sigma \chgset} := 
\bigoplus_{s \in \sigma} \Big(\bigoplus_{k \in \chgset} P_{kM}\Big)
:
\mathcal H_\chgset^\sigma
\to
\mathcal H_\chgset^\sigma
\]
with $P_{kM}:\Hb_k \to \Hb_k$ from \eqref{eq:def-PkM}.

\begin{lemma}\label{lem:bound-on-operator-norm-covariances}
There exists a universal constant \(C_4>0\) such that, for any $\sigma \subset [n]$, $\chgset \subset [K]$, $M\in \N$ and $q \in \N$ with $q \le n$, we have
\begin{align*}
&\Big\|
\Cov\Big(
(P_{M\sigma \chgset} \circ  \pi_{\sigma,\chgset }) G^{(n,q)}
\Big)
-
\Cov\Big(
( P_{M\sigma \chgset} \circ \pi_{\sigma,\chgset })\mathcal B^{(n)}
\Big)
\Big\|_{\op}
\\
&\hspace{7cm}
\le
C_4
\big(D_{\psi}^2S_{\beta,1}\vee C_{\mathcal K}\big)
|\chgset|\,|\sigma|\,
(m\wedge q)^{-1}.
\end{align*}
\end{lemma}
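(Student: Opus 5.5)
The two covariance operators are both Kronecker products: $\Cov(G^{(n,q)})=A_n\otimes\Sigma_q$ and $\Cov(\mathcal B^{(n)})=K_n\otimes\mathcal K$. By \eqref{eq:representation-projection-kronecker}, conjugating with $P_{M\sigma\chgset}\circ\pi_{\sigma,\chgset}$ gives $(A_n)_\sigma\otimes T_q$ and $(K_n)_\sigma\otimes T$, where $T_q:=P_{M\chgset}\pi_\chgset\Sigma_q\pi_\chgset^*P_{M\chgset}$, $T:=P_{M\chgset}\pi_\chgset\mathcal K\pi_\chgset^*P_{M\chgset}$ and $P_{M\chgset}=\oplus_{k\in\chgset}P_{kM}$. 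I will split the difference as
\[
(A_n)_\sigma\otimes(T_q-T)+\big((A_n)_\sigma-(K_n)_\sigma\big)\otimes T .
\]
Each term is then bounded with \eqref{eq:operator-norm-Kronecker} and sub-multiplicativity, using $\|P_{M\chgset}\|_\op\le1$.

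For the first term, the entries of $A_n$ lie in $[-1,1]$, so $\|(A_n)_\sigma\|_\op\le|\sigma|$. It then suffices to show $\|\pi_\chgset(\Sigma_q-\mathcal K)\pi_\chgset^*\|_\op\lesssim|\chgset|D_\psi^2S_{\beta,1}/q$. I plan to bound this operator norm by $|\chgset|\max_{k,k'\in\chgset}\|\pi_k(\Sigma_q-\mathcal K)\pi_{k'}^*\|_\op$, using the block-matrix inequality $\|(B_{kk'})\|_\op\le|\chgset|\max\|B_{kk'}\|$. Each block is handled by stationarity. With $\Gamma_h:=\Cov(\varepsilon^{(h)},\varepsilon^{(0)})$ one has
\[
\Sigma_q=\sum_{|h|<q}\Big(1-\frac{|h|}q\Big)\Gamma_h,\qquad \mathcal K=\sum_h\Gamma_h ,
\]
so the difference is $\sum_{|h|<q}(|h|/q)\Gamma_h+\sum_{|h|\ge q}\Gamma_h$, whose norm is at most $q^{-1}\sum_h|h|\,\|\pi_k\Gamma_h\pi_{k'}^*\|$. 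The needed decay comes from a Hilbert-space $\beta$-mixing covariance inequality (Rio/Dedecker type, with $\psi_1$ tails): $\|\pi_k\Gamma_h\pi_{k'}^*\|_\op\lesssim D_\psi^2\beta_h\log^2(4/\beta_h)$. Summed with weight $|h|$, this gives exactly $S_{\beta,1}$. I expect this covariance inequality for the cross-coordinate blocks to be the main technical point; presumably it is the same lemma already used to establish Lemma~\ref{lem:convergence_to_long_term_trace}.

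For the second term, $\|T\|_\op\le|\chgset|\max_k\|\mathcal K_k\|_\op\le|\chgset|C_{\mathcal K}$ by the same block bound. It remains to show $\max_{s,t}|[A_n]_{st}-K(s/n,t/n)|\lesssim 1/m$, which gives $\|(A_n)_\sigma-(K_n)_\sigma\|_\op\le|\sigma|\cdot C/m$. This is an elementary computation. Writing $\ell_s=B_s$, we get $\frac1m\langle a_s,a_t\rangle=\frac1m\big[(\ell_s\wedge\ell_t)\wedge m-\frac tn(\ell_s\wedge m)-\frac sn(\ell_t\wedge m)\big]+\frac{st}{n^2}$. Since $\ell_s\wedge m=ms/n+O(1)$, because $m(q+r)\in(n-(q+r),n]$ and $(q+r)/n\asymp1/m$, each term differs from its counterpart in $s\wedge t/n-st/n^2$ by $O(1/m)$.

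Combining the two terms gives the bound $C_4(D_\psi^2S_{\beta,1}\vee C_{\mathcal K})|\chgset||\sigma|\big(q^{-1}+m^{-1}\big)$, which is at most the claimed $2C_4(\cdot)|\chgset||\sigma|(m\wedge q)^{-1}$.
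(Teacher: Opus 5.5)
Your argument is correct and follows the paper's proof. It uses the same split into $A_{n,\sigma}\otimes\pi_\chgset(\Sigma_q-\mathcal K)\pi_\chgset^*$ and $(A_{n,\sigma}-K_{n,\sigma})\otimes\pi_\chgset\mathcal K\pi_\chgset^*$, the bound $\|A_{n,\sigma}\|_\op\le|\sigma|$, the entrywise $O(1/m)$ bound on $A_n-K_n$ (the paper's Lemma~\ref{lem:prop a_s_ell}), and an $O(|\chgset|D_\psi^2S_{\beta,1}/q)$ bound on $\Sigma_q-\mathcal K$ from the $\beta$-mixing covariance inequality.

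The differences are minor. The paper obtains the last bound by applying Lemma~\ref{lem:convergence_to_long_term_trace} in trace norm to the stacked process $\pi_\chgset\varepsilon^{(i)}$, using $\|\|\pi_\chgset\varepsilon^{(i)}\|\|_{\psi_1}^2\lesssim|\chgset|D_\psi^2$, rather than working blockwise as you do. One step needs an extra word: for $\|\pi_\chgset\mathcal K\pi_\chgset^*\|_\op\le|\chgset|C_{\mathcal K}$, your block bound also involves the off-diagonal blocks, so "the same block bound" alone does not give it. You need positive semidefiniteness there, e.g.\ $\|\pi_k\mathcal K\pi_{k'}^*\|_\op\le(\|\mathcal K_k\|_\op\|\mathcal K_{k'}\|_\op)^{1/2}$, or the paper's identity $\|\pi_\chgset\mathcal K\pi_\chgset^*\|_{\Tr}=\sum_{k\in\chgset}\|\mathcal K_k\|_{\Tr}$.
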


\begin{corollary}\label{lem:good_eigen_values_2}
Recall the variables
$
G^M_{sk}
:=
G^{(M,n,q)}_{sk}
=
(O_{kM}\circ P_{kM}\circ\pi_{sk}) G^{(n,q)} \in \R^{\check M_k}$
defined in~\eqref{eq:definition-GMsk}. Suppose that Assumption~\ref{cond:stationary} and Assumption~\ref{cond:scores}
hold, and let \(x\in(0,3/16]\). Suppose further that
\begin{align} 
\label{eq:bound-on-M}
M^{2\gamma_L}
\leq
\frac{
C_L^2 x (m\wedge q)
}{
2C_4\bigl(D_\psi^2S_{\beta,1}\vee C_{\mathcal K}\bigr)
},
\end{align}
where \(C_4\) is the universal constant from Lemma~\ref{lem:bound-on-operator-norm-covariances}.
Then, for each \((s,k)\in S_x\times[K]\) with $S_x$ from \eqref{eq:definition-S_x}, in the finite-dimensional sense of
Definition~\ref{def:covariance-decay}, it holds that 
\begin{align} \label{eq:cov-gmsk-decay}
\operatorname{Cov}(G^M_{sk})
\in
\operatorname{Decay}
\left(
\frac{xC_L^2}{2},
2C_U^2,
2\gamma_L,
2\gamma_U
\right).
\end{align}
\end{corollary}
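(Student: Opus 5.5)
The plan is a perturbation argument: compare $\operatorname{Cov}(G^M_{sk})$ with the covariance of the projected discretized Brownian bridge, which is exactly diagonal with the eigenvalues of $\mathcal K_k$. Then use Lemma~\ref{lem:bound-on-operator-norm-covariances} to show that the discrepancy is small relative to the smallest retained eigenvalue. Throughout, fix $(s,k)\in S_x\times[K]$ and write $L:=O_{kM}\circ P_{kM}\circ\pi_{sk}$, so that $G^M_{sk}=LG^{(n,q)}$.

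\textbf{Step 1 (reference covariance).} By \eqref{eq:Kronecker-Kn} and \eqref{eq:representation-projection-kronecker} with $\sigma=\{s\}$ and $\chgset=\{k\}$, we have $\pi_{sk}\Sigma^{(n)}\pi_{sk}^*=V(s/n)\mathcal K_k$. Hence
\[
L\Sigma^{(n)}L^*=V(s/n)\operatorname{diag}(\lambda_{k1},\ldots,\lambda_{k\check M_k}).
\]
Since $s\in S_x$, we have $x\le V(s/n)\le 1/4$. Assumption~\ref{cond:scores} then gives, for $j\le\check M_k$,
\[
xC_L^2j^{-2\gamma_L}\le V(s/n)\lambda_{kj}\le \tfrac14 C_U^2j^{-2\gamma_U}.
\]

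\textbf{Step 2 (perturbation).} Apply Lemma~\ref{lem:bound-on-operator-norm-covariances} with $|\sigma|=|\chgset|=1$. Using $\|O_{kM}\|_{\op}=1$ and submultiplicativity, as in the proof of Theorem~\ref{thm:fwer_control}, this gives
\[
\|\operatorname{Cov}(G^M_{sk})-L\Sigma^{(n)}L^*\|_{\op}\le E:=C_4(D_\psi^2S_{\beta,1}\vee C_{\mathcal K})/(m\wedge q).
\]
Condition \eqref{eq:bound-on-M} is precisely $E\le \tfrac12 xC_L^2M^{-2\gamma_L}$, and this is at most $\tfrac12 xC_L^2 j^{-2\gamma_L}$ for every $j\le\check M_k\le M$.

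\textbf{Step 3 (membership in Decay).} This step depends on the precise form of Definition~\ref{def:covariance-decay}, which I take to constrain the eigenvalues of the covariance matrix, ordered decreasingly, by lower and upper power envelopes.
\begin{itemize}
\item For the lower bound, apply Weyl's inequality: the $j$th eigenvalue $\mu_j$ of $\operatorname{Cov}(G^M_{sk})$ satisfies $\mu_j\ge V(s/n)\lambda_{kj}-E\ge xC_L^2j^{-2\gamma_L}-\tfrac12xC_L^2j^{-2\gamma_L}=\tfrac12xC_L^2j^{-2\gamma_L}$.
\item For the upper bound, $\mu_j\le \tfrac14C_U^2j^{-2\gamma_U}+E$. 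Here $E\le \tfrac12 xC_L^2 j^{-2\gamma_L}\le \tfrac{3}{32}C_U^2j^{-2\gamma_U}$, using $x\le 3/16$, $C_L\le C_U$ and $\gamma_L\ge\gamma_U$. Hence $\mu_j\le 2C_U^2j^{-2\gamma_U}$.
\end{itemize}

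The main obstacle is matching the exact requirements of Definition~\ref{def:covariance-decay}. If it constrains diagonal entries or quadratic forms along basis directions rather than eigenvalues, the same bound $E$ still works entrywise via $|\langle e_j,(\cdot)e_j\rangle|\le\|\cdot\|_{\op}$. If it additionally requires control of off-diagonal entries, I would bound each off-diagonal entry by $E$ and try to fit it into the given constants.
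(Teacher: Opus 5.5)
Your proposal is correct and follows essentially the same route as the paper: you apply Weyl's inequality against the projected Brownian-bridge covariance $V(s/n)\operatorname{diag}(\lambda_{k1},\ldots,\lambda_{k\check M_k})$, bound the perturbation via Lemma~\ref{lem:bound-on-operator-norm-covariances} with $\sigma=\{s\}$, $\chgset=\{k\}$, and convert \eqref{eq:bound-on-M} into $E\le \tfrac12 xC_L^2 j^{-2\gamma_L}$ for both envelopes. Definition~\ref{def:covariance-decay} does constrain the decreasingly ordered positive eigenvalues, so your Step~3 is exactly what is needed; your use of $V(s/n)\le 1/4$ is slightly sharper than the paper's $V(s/n)\le 1$, but this is not required.
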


The following follows directly from Corollary~\ref{lem:good_eigen_values_2} and Lemma~\ref{lem:acnew}.

\begin{corollary}[Anti-concentration for projected Gaussian mixed norms]
\label{cor:projected_mixed_norm_ac}
Suppose that Assumptions~\ref{cond:stationary} and~\ref{cond:scores} hold. Let \(x\in(0,3/16]\), and suppose that \eqref{eq:bound-on-M} is satisfied for this \(x\).  Let $\kexp:=\gamma_L /(2\gamma_U-1)$.
Then, for every \(\rho>0\), there exists a constant
\[
C=C(C_U,C_L,\gamma_U,\gamma_L,\rho)>0
\]
such that, for every \(\delta>0\),
\[
\sup_{t\ge \rho}
\Prob\bigg(
\Big|\max_{k\in[K]}\max_{s\in S_x}
\|G^M_{sk}\|_{2}-t\Big|\le \delta
\bigg)
\le
C
x^{-3/2}
\left(\delta+\frac1n\right)
\log^{1/2+3\kexp}(nK)
\log^{3\kexp}(n).
\]
The same upper bound holds with $
\|G^M_{sk}\|_{2}$ replaced by $
\|G^M_{sk}\|_{2}^2$ on the right-hand side.
\end{corollary}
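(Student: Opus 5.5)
The plan is to condition on the maximizing index and reduce to a one-dimensional Gaussian small-ball estimate, which Corollary~\ref{lem:good_eigen_values_2} makes uniform over $(s,k)\in S_x\times[K]$. By that corollary, each covariance $\operatorname{Cov}(G^M_{sk})$ lies in $\operatorname{Decay}(xC_L^2/2,2C_U^2,2\gamma_L,2\gamma_U)$. Hence the smallest eigenvalue is at least of order $x M^{-2\gamma_L}$, the largest is at most $2C_U^2$, and the trace is bounded uniformly. The key input is then Lemma~\ref{lem:acnew}, which I take to be an anti-concentration inequality for maxima of Euclidean norms of finite-dimensional Gaussian vectors whose covariances lie in a common Decay class, in the spirit of Nazarov-type bounds for $\ell_2$-mixed norms. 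I would apply it to the family $\{G^M_{sk}:(s,k)\in S_x\times[K]\}$, which has at most $nK$ members. This should give
\[
\sup_{t\ge\rho}\Prob\Big(\Big|\max_{k,s}\|G^M_{sk}\|_2-t\Big|\le\delta\Big)\lesssim \frac{\delta+\text{(discretization term)}}{\sqrt{\lambda_{\min}\text{-type scale}}}\times\text{polylog}(nK).
\]

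The factor $x^{-3/2}$ arises from the lower eigenvalue parameter $xC_L^2/2$. I expect the dimension-free form of the Decay class to let the effective dimension be truncated at a level polylogarithmic in $n$. At that level the tail eigenvalues $j^{-2\gamma_U}$ contribute at most $1/n$, which is where the additive $1/n$ and the exponents $\log^{3\varpi}$ with $\varpi=\gamma_L/(2\gamma_U-1)$ come from. The truncation level $j^\ast\asymp (n\log(nK))^{1/(2\gamma_U-1)}$ makes the ratio of the top eigenvalue to the $j^\ast$-th one a power $\gamma_L/(2\gamma_U-1)$ of $\log$-type quantities. I would track that ratio to confirm the stated exponents. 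The restriction $t\ge\rho$ is needed because near $0$ the norm density blows up; Lemma~\ref{lem:acnew} presumably handles this by a constant depending on $\rho$.

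For the squared version, note that $|Y^2-t|\le\delta$ with $t\ge\rho$ implies $|Y-\sqrt t|\le\delta/\sqrt t\le\delta/\sqrt\rho$. Applying the first bound at threshold $\sqrt\rho$ with radius $\delta/\sqrt\rho$ gives the same rate, after absorbing $\rho^{-1/2}$ into $C$.

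The main obstacle is checking that Lemma~\ref{lem:acnew}'s hypotheses match exactly the Decay parameters and that its constants depend only on $(C_U,C_L,\gamma_U,\gamma_L,\rho)$. In particular the constants must not depend on $M$, since $\check M_k$ varies with $k$; I would pad the vectors with zeros to a common dimension if the lemma requires equal dimensions.
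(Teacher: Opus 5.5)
Your proposal is essentially the paper's argument: Corollary~\ref{lem:good_eigen_values_2} places every block covariance $\operatorname{Cov}(G^M_{sk})$, $(s,k)\in S_x\times[K]$, in $\operatorname{Decay}(xC_L^2/2,2C_U^2,2\gamma_L,2\gamma_U)$. Lemma~\ref{lem:acnew}, applied to the $|S_x|K\le nK$ blocks with lower constant $C_L\sqrt{x/2}$ (so that $(C_L\sqrt{x/2})^{-3}\propto x^{-3/2}$) and auxiliary integer $m=n$, then gives both displayed bounds; your own reduction of the squared case via $|Y-\sqrt t|\le\delta/\sqrt\rho$ is also fine.

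Discard the heuristic remarks rather than ``tracking'' them. The argument never conditions on a maximizing index. The $1/n$ and $\log^{3\kexp}(n)$ factors are simply the lemma's free parameter $m$ set to $n$. A truncation level $j^\ast\asymp(n\log(nK))^{1/(2\gamma_U-1)}$ would make $\lambda_1/\lambda_{j^\ast}$ polynomial rather than logarithmic in $n$. No zero-padding is needed, since Lemma~\ref{lem:acnew} already allows block dimensions $\check M_k$ that vary with $k$.
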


\begin{corollary}[Non-uniform Kolmogorov comparison for projected mixed norms]
\label{cor:projected_mixed_norm_kolmogorov}
Suppose that Assumptions~\ref{cond:stationary} and~\ref{cond:scores} hold. Let \(x\in(0,3/16]\), and suppose that \eqref{eq:bound-on-M} is satisfied for this \(x\). 
Then, for every \(\rho>0\), there exists a constant
\[
C=C(D_\psi,C_U,C_L,\gamma_U,\gamma_L,\rho)>0
\]
such that, with $\kexp:=\gamma_L /(2\gamma_U-1)$,
\begin{align*}
&d_{\mathrm{K}}^{(\rho)}\left(
\max_{k\in[K]}\max_{s\in S_x}
\|G^M_{sk}\|_{2}^2
,
\max_{k\in[K]}\max_{s\in S_x}
\left\|
(P_{kM}\circ\pi_k)\mathbb B_{\mathcal K}(s/n)
\right\|_{\Hb_k}^2
\right)
\nonumber\\
&\hspace{6cm} \le
C
x^{-1}
M^{2/3}
\left(
\frac{
1
}{
m\wedge q
}
\right)^{1/3}
\log^{7/6+6 \kexp}(nK).
\end{align*}
\end{corollary}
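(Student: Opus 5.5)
The plan is to compare the two Gaussian maxima through a finite-dimensional Gaussian-to-Gaussian comparison, using the covariance bound of Lemma~\ref{lem:bound-on-operator-norm-covariances} together with the eigenvalue control of Corollary~\ref{lem:good_eigen_values_2}. Write $X_{sk}:=G^M_{sk}\in\R^{\check M_k}$ and $Y_{sk}:=(O_{kM}\circ P_{kM}\circ\pi_{sk})\mathcal B^{(n)}\in\R^{\check M_k}$. By the isometry property of $O_{kM}$, the second variable in the statement equals $\max_{k}\max_{s\in S_x}\|Y_{sk}\|_2^2$. Both $(X_{sk})$ and $(Y_{sk})$ are centered Gaussian vectors in $\R^{D}$ with $D=|S_x|\sum_k\check M_k\le nKM$. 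By \eqref{eq:representation-projection-kronecker} and Lemma~\ref{lem:bound-on-operator-norm-covariances} applied to pairs $\sigma=\{s,t\}$, $\chgset=\{k,k'\}$, their entrywise covariances differ by at most
\[
\Delta_\infty\lesssim(D_\psi^2S_{\beta,1}\vee C_{\mathcal K})(m\wedge q)^{-1}.
\]
Corollary~\ref{lem:good_eigen_values_2} (valid under \eqref{eq:bound-on-M}) shows that each block $\Cov(X_{sk})$ has eigenvalues in $[xC_L^2j^{-2\gamma_L}/2,\,2C_U^2j^{-2\gamma_U}]$. The same holds for $\Cov(Y_{sk})=V(s/n)\operatorname{diag}(\lambda_{k1},\dots)$, since $V(s/n)\ge x$ on $S_x$.

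The core step is a smooth-max comparison. I would approximate the indicator $\mathbf 1\{\max_{s,k}\|z_{sk}\|_2^2\le t\}$ by $g_\epsilon\circ F_\beta(z)$, where
\[
F_\beta(z)=\beta^{-1}\log\sum_{s,k}\exp\bigl(\beta\|z_{sk}\|_2^2\bigr)
\]
and $g_\epsilon$ is a smooth cutoff at scale $\epsilon$. The Slepian–Stein interpolation $Z(v)=\sqrt vX+\sqrt{1-v}Y$ then bounds
\[
|\E f(X)-\E f(Y)|\lesssim\Delta_\infty\sum|\partial_{ij}f|.
\]
The second derivatives are controlled in the usual Chernozhukov–Chetverikov–Kato way, with the extra factor $\|z\|$ coming from the squared norm, which is $O(\sqrt{\log(nK)})$ with high probability by Gaussian concentration and $\psi_2$ bounds (as in Lemma~\ref{lem:psi_2_controlled_by_L_2}). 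This yields a bound of the form
\[
\Delta_\infty\bigl(\epsilon^{-2}+\beta\epsilon^{-1}\bigr)\cdot\mathrm{polylog}+\beta^{-1}\log(nKM).
\]
The smoothing error is converted into a Kolmogorov bound on $t\ge\rho$ via the anti-concentration of Corollary~\ref{cor:projected_mixed_norm_ac}, applied to both maxima. That corollary applies to $Y$ as well, since its covariance lies in the same Decay class. Its cost is $x^{-3/2}(\epsilon+\beta^{-1}\log(nKM))\,\mathrm{polylog}$.

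The main obstacle is the dimension factor hidden in $\sum_{i,j}|\partial_{ij}f|$ over coordinates within a block. A naive count gives a factor $M^2$, and the Gaussian comparison rate must be $M^{2/3}(m\wedge q)^{-1/3}$, i.e.\ $(M^2\Delta_\infty)^{1/3}$. I would therefore exploit that within each block the derivative of $\|z_{sk}\|_2^2$ is $2z_{sk}$, so that
\[
\sum_{i,j\in\text{block}}|\partial_{ij}f|\lesssim\beta\|z_{sk}\|_1^2+\|\cdot\|
\]
summed with softmax weights $\pi_{sk}$, which sum to one. Since $\|z_{sk}\|_1\le\sqrt M\|z_{sk}\|_2$, this gives a factor $M\log(nK)$ per derivative pair, i.e.\ $M^2$ overall in the third-order terms. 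Optimizing $\epsilon^3\asymp M^2\Delta_\infty$ with $\beta\asymp\epsilon^{-1}\log(nKM)$ then produces $(M^2/(m\wedge q))^{1/3}$ times logarithms. The prefactor $x^{-1}$ (rather than $x^{-3/2}$) should come from the anti-concentration step only, using the $\sqrt{\cdot}$-scale density bound $x^{-1/2}$ per block variance, combined with the squared-scale conversion at $t\ge\rho$. Finally, the $\log^{7/6+6\kexp}$ exponent is obtained by tracking the logarithms from Corollary~\ref{cor:projected_mixed_norm_ac} after taking the cube root.
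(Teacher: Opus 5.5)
Your argument is a valid alternative route: the paper imports the Gaussian comparison step ready-made, and you reprove it. The paper uses the same $X$ and $Y$ (padded to $\R^M$) and bounds $\|\Sigma_X-\Sigma_Y\|_\infty\le 4C_4(D_\psi^2S_{\beta,1}\vee C_{\mathcal K})/(m\wedge q)$ via Lemma~\ref{lem:bound-on-operator-norm-covariances}, exactly as you do. It then applies the packaged Gaussian-to-Gaussian comparison Lemma~\ref{lem:gaussbootsphere-new}, with auxiliary integer $m\wedge q$ and threshold $\sqrt\rho$. That lemma needs the Decay property only for $\Cov(X)$, supplied by Corollary~\ref{lem:good_eigen_values_2}. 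The $(m\wedge q)^{-2/3}$ term is absorbed using $C_{\mathcal K}\ge C_L^2$, and the substitution $u=\sqrt t$ passes to squared norms.

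Your Slepian interpolation with a log-sum-exp of squared block norms, plus Lemma~\ref{lem:acnew}, amounts to proving that lemma in the case needed, so it buys self-containedness. Your derivative count is in fact conservative. Only second derivatives enter a Gaussian-to-Gaussian interpolation; there are no third-order terms. Moreover $\sum_{i,j}|\partial_{ij}F_\beta|\le 2M+8\beta M\max_b\|z_b\|_2^2$ and $(\sum_i|\partial_iF_\beta|)^2\le 4M\max_b\|z_b\|_2^2$. Hence the comparison error is $\lesssim\Delta_\infty M(\varepsilon^{-2}+\beta\varepsilon^{-1})\log(nK)$, a factor $M$ rather than $M^2$. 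Either count gives the stated $M^{2/3}$.

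The step that fails as written is the $x$-bookkeeping. The anti-concentration you have (Corollary~\ref{cor:projected_mixed_norm_ac}, or Lemma~\ref{lem:acnew} applied to $Y$) costs $x^{-3/2}$ per unit window width, coming from $(C_L\sqrt{x/2})^{-3}$. There is no per-block $x^{-1/2}$ density bound that reduces this to $x^{-1}$. So with $\varepsilon^3\asymp M^2\Delta_\infty$ chosen independently of $x$, you only get $x^{-3/2}(M^2\Delta_\infty)^{1/3}$, which is weaker than the claim.

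You must let $\varepsilon$ depend on $x$. With $\beta=\varepsilon^{-1}\log(nK)$, the total error is of order $x^{-3/2}\varepsilon L_1+M^2\Delta_\infty\varepsilon^{-2}L_2$, where $L_1=\log^{1/2+6\kexp}(nK)$ and $L_2=\log^2(nK)$. Choosing $\varepsilon^3\asymp x^{3/2}M^2\Delta_\infty L_2/L_1$ yields $x^{-1}(M^2\Delta_\infty)^{1/3}\log^{1+4\kexp}(nK)$, and $1+4\kexp\le 7/6+6\kexp$. This is also how $x^{-1}=(x^{-3/2})^{2/3}$ enters the paper's constant.

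You also need to absorb the $x^{-3/2}n^{-1}$ remainder of the anti-concentration bound. Condition \eqref{eq:bound-on-M} with $M\ge1$ and $C_{\mathcal K}\ge C_L^2$ forces $x\ge 2C_4/(m\wedge q)$. Hence $x^{-3/2}n^{-1}\lesssim x^{-1}n^{-1/2}\le x^{-1}(m\wedge q)^{-1/3}$.
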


\begin{proof}[Proof of Lemma~\ref{lem:cts_to_discrete_hilbert_bridge_high_prob}]
For $(x,y) \in I^2$ let $d(x,y):=|x-y|^{1/2}$. If $J \subset I$ is an interval write $\textrm{diam}(J):=d(\inf J, \sup J)$ for its length. For each $k \in [K]$ and each $(u,v) \in I^2$ let 
\begin{align}
\label{eq:Ink}
I_n^{(k)}(u,v)
:=
\sqrt{\frac 38}C_\mathcal{K}^{-1/2}\Big|\|\pi_k\mathbb{B}_{\mathcal K}(u)\|_{\Hb_k} - \|\pi_k\mathbb{B}_{\mathcal K}(v)\|_{\Hb_k}\Big|.
\end{align}
Furthermore, for $\delta \in (0,1)$,  let
\[
B(\delta)
:=
\max_{k \in [K]}\sup_{J \subset I, \textrm{diam}(J)\leq \delta}
\Big\|\sup_{(x,y) \in J^2}I_n^{(k)}(x,y)\Big\|_{\psi_2},
\]
where the supremum is over all intervals $J \subset I$ with $\textrm{diam}(J)\leq \delta$.
By the reverse triangle inequality for the Euclidean $L^\infty$ norm, it holds that
\begin{align*}
A
:\! &=
\max_{k \in [K]}\sup_{u \in I}\|\pi_k\mathbb{B}_{\mathcal K}(u)\|_{\Hb_k} - \max_{k \in [K]}\sup_{s \in [n]}\|\pi_k\mathbb{B}_{\mathcal K}(s/n)\|_{\Hb_k}
\\&=
\Big|\max_{k \in [K]}\sup_{u \in I}\|\pi_k\mathbb{B}_{\mathcal K}(u)\|_{\Hb_k} - \max_{k \in [K]}\sup_{s \in [n]}\|\pi_k\mathbb{B}_{\mathcal K}(s/n)\|_{\Hb_k}\Big| \\
&\le 
\max_{k \in [K]}\Big|\sup_{u \in I}\|\pi_k\mathbb{B}_{\mathcal K}(u)\|_{\Hb_k} - \sup_{s \in [n]}\|\pi_k\mathbb{B}_{\mathcal K}(s/n)\|_{\Hb_k}\Big|\\
&\leq 
\max_{k \in [K]}\max_{s \in [n]}\Big|\sup_{u\in [\frac {s-1}{n},\frac sn]}\|\pi_k\mathbb{B}_{\mathcal K}(u)\|_{\Hb_k} -\|\pi_k\mathbb{B}_{\mathcal K}(s/n)\|_{\Hb_k} \Big|\\
&\leq 
\sqrt{\frac 83}C_\mathcal{K}^{1/2} \max_{k \in [K]}\sup_{s \in [n]}\sup_{u \in [\frac {s-1}{n},\frac sn]}I_n^{(k)}(u,s/n). 
\end{align*}
We show below that show that \(B(\delta)\lesssim\delta\). Taking this bound for granted for now, we complete the proof. Since for every $s \in [n]$ it holds that  $\textrm{diam}([\frac {s-1}{n},\frac sn]) \leq n^{-1/2}$, we have, for any $k \in [K]$, 
\[
\Big\|
\sup_{u\in[\frac{s-1}{n},\frac sn]}
I_n^{(k)}\left(u,s/n\right)
\Big\|_{\psi_2}
\le
B(n^{-1/2})
\lesssim n^{-1/2}.
\]
Therefore by~\ref{orlicz:tail_bound} it holds for all $y >0$ and for a universal constant $c$ that can be extracted from the implied constant in the bound $B(\delta) \lesssim \delta$,
\[
\Prob\Big(
\sup_{u\in[\frac{s-1}{n},\frac sn]}
I_n^{(k)}\left(u,s/n\right)
> y
\Big)
\le
2\exp(-cny^2).
\]
Taking a union bound over $k\in[K]$ and $s\in[n]$, we obtain for every $y>0$ that 
\[
\mathbb P\Big(
A>
\sqrt{\frac 83}C_{\mathcal K}^{1/2}y
\Big)
\le
2nK\exp(-cny^2).
\]
Choosing
\[
y=\delta_1 = C_1\sqrt{\frac{\log(nK)}{n}}
\]
with $C_1>0$ sufficiently large gives the claimed bound,
\[
\mathbb P\Big(
A>
C_1
\sqrt{\frac{C_{\mathcal K}\log(nK)}{n}}
\Big)
\le
\frac{2}{n}.
\]

It remains to show that $B(\delta)\lesssim \delta$.  For each $k \in [K]$ it holds by definition of $\mathbb{B}_{\mathcal K}$ and bi-linearity of the covariance operator that
\begin{align*}
&\phantom{{}={}} 
\Cov\big(\pi_k\mathbb{B}_{\mathcal K}(x)-\pi_k \mathbb{B}_{\mathcal K}(y)\big)
\\&= 
\Cov\big(\pi_k\mathbb{B}_{\mathcal K}(x),\pi_k\mathbb{B}_{\mathcal K}(x)\big)+\Cov\big(\pi_k\mathbb{B}_{\mathcal K}(y),\pi_k\mathbb{B}_{\mathcal K}(y)\big)
\\&\hspace{5cm}
-\Cov\big(\pi_k\mathbb{B}_{\mathcal K}(x),\pi_k\mathbb{B}_{\mathcal K}(y)\big)-\Cov\big(\pi_k\mathbb{B}_{\mathcal K}(y),\pi_k\mathbb{B}_{\mathcal K}(x)\big)
\\&=
\big[ (x-x^2)+(y-y^2) -(x\wedge y -xy)-(x\wedge y - xy)\big]\mathcal{K}_k
\\&=
\big[ x\vee y - x\wedge y -(x-y)^2\big]\mathcal{K}_k
\\&=
\big(|x-y|(1- |x-y|)\big)\mathcal{K}_k. 
\end{align*}
Therefore, by definition of $C_{\mathcal K}$ in \eqref{def:C_Kappa}, for each $k \in [K]$ it holds that
\begin{align*}
\max_{k\in [K]}\big\|\Cov\big(\pi_k\mathbb{B}_{\mathcal K}(x)-\pi_k\mathbb{B}_{\mathcal K}(y)\big)\big\|_{\Tr} \leq C_{\mathcal K} |x-y|,
\end{align*}
and so by definition of $I_{n}^{(k)}$ in \eqref{eq:Ink}, the reverse triangle inequality and Lemma~\ref{lem:psi_2_controlled_by_L_2} it holds for all $k \in [K]$ and all $(x,y)\in I^2$ that
\begin{align*}
\big\|I_n^{(k)}(x,y)\big\|_{\psi_2}
&\leq 
\sqrt{\frac 38}C_\mathcal{K}^{-1/2}\big\|\|\pi_k\mathbb{B}_{\mathcal K}(x)-\pi_k\mathbb{B}_{\mathcal K}(y)\|_{\Hb_k}\big\|_{\psi_2}
\\&\leq  
C_\mathcal{K}^{-1/2}\big\|\Cov\big(\pi_k\mathbb{B}_{\mathcal K}(x)-\pi_k\mathbb{B}_{\mathcal K}(y)\big)\big\|^{1/2}_{\Tr}
\leq 
|x-y|^{1/2}.
\end{align*}
Fix $k\in[K]$ and let $J\subset I$ be an interval with $\operatorname{diam}(J)\le\delta$. Choose any point $u_J\in J$, and define the real-valued process $Z=\{Z(u):u\in J\}$
by
\[
Z(u)
:=
\sqrt{\frac38}\,C_{\mathcal K}^{-1/2}
\Big(
\left\|\pi_k\mathbb B_{\mathcal K}(u)\right\|_{\Hb_k}
-
\left\|\pi_k\mathbb B_{\mathcal K}(u_J)\right\|_{\Hb_k}
\Big).
\]
Then, for all $u,v\in J$,
\[
\|Z(u)-Z(v)\|_{\psi_2}
=
\left\|I_n^{(k)}(u,v)\right\|_{\psi_2}
\le |u-v|^{1/2}.
\]

Let $N(\tau)$ be the minimal number of closed balls of radius $\tau$ in the metric $d(x,y)=|x-y|^{1/2}$ required to cover an interval of radius $\delta$ in the metric $d$. By Corollary 2.2.5 of \cite{VW96} if $\textrm{diam}(J)\leq \delta$, 
\begin{align*}
\Big\|\sup_{(x,y) \in J^2}I_n^{(k)}(x,y)\Big\|_{\psi_2}
&\lesssim 
\int^{\delta}_{0}\psi^{-1}_2\big(N(\eps/2,d)\big)\, \diff\eps 
\\&\leq 
\int^{\delta}_{0}\sqrt{\log\big(3\delta^2/\eps^2\big)} \, \diff\eps
=
\delta
\int_0^1
\sqrt{\log\big(3 u^{-2}\big)} \, \diff u
\lesssim \delta 
\end{align*}
We used that $\psi_2(x):=\exp(x^2)-1$, and $N(\eps/2,d) \leq 2(\delta/\eps)^2+1 \le 3 (\delta/\eps)^2$ as shown at the very end of the proof and the change of variables $u=\eps/\delta$ and finiteness of the integral in the second last bound. This justifies the claim that $B(\delta)\lesssim \delta$.  

It remains to justify the covering number statement. Let  $J\subset \mathbb R$ be an interval with $d$-diameter at most $\delta$, such that the ordinary Euclidean diameter of $J$ is at most
$\delta^2$.  A $d$-ball of radius $\varepsilon/2$ is of the form
$
    B_d(x,\varepsilon/2)
    =
    \{y: d(x,y)\le \varepsilon/2\}
    =
    \{y: |x-y|\le \varepsilon^2/4\}$.
Thus it is an ordinary Euclidean interval of radius $\varepsilon^2/4$,
and hence of length $\varepsilon^2/2$.
The claim then follows from the fact that covering an interval of ordinary length at most $\delta^2$
requires no more than
$
    \left\lceil \delta^2/(\varepsilon^2/2)\right\rceil
    =
    \left\lceil2 (\delta/\varepsilon)^2\right\rceil \le 2 (\delta/\eps)^2 + 1
$
such balls. 
\end{proof}

\begin{proof}[Proof of Lemma~\ref{lem:supremum_to_interior_supremum_high_prob}]
By definition of $\mathbb{B}_{\mathcal K}$ in \eqref{def:Hilbert_Brownian_Bridge}, we have $\Cov(\pi_k\mathbb{B}_{\mathcal K}(s/n))=V(s/n)\mathcal{K}_k$.
Since $ \| \mathcal{K}_k \|_{\Tr} \le C_{\mathcal K}$ by definition of $C_{\mathcal K}$ in \eqref{def:C_Kappa}, we obtain from Lemma~\ref{lem:psi_2_controlled_by_L_2} that
\begin{align*}
\big\|\|\pi_k\mathbb{B}_{\mathcal K}(s/n)\|_{\Hb_k}\big\|_{\psi_2}^2
\leq 
\frac 83 C_{\mathcal{K}} V(s/n). 
\end{align*}
Recall that $V(s/n)\le x$ for all $s \in S_x^c$; see \eqref{eq:infsup-V-S_x}.
Write 
\[
U=\max_{k \in [K]}\sup_{s\in S_x^c}\big\|\pi_k\mathbb{B}_{\mathcal K}(s/n)\big\|_{\Hb_k}.
\]
By the union bound and~\ref{orlicz:tail_bound}, we obtain that
\begin{align*}
\Prob( U^2 \geq t) = P(U > \sqrt t) 
\leq 
2Kn\exp\left(
-\frac{3t}{8C_{\mathcal K}x}
\right) .
\end{align*}
By assumption, we have $t \ge x C_2 C_{\mathcal K} \log(nK)$, so the right hand side of the previous display is upper bounded by $2Kn \exp(-3C_2 \log(nK)/8)$. This is bounded by $2/n$ if we choose $C_2$ sufficiently large; for instance, $C_2 = 16/3$.
\end{proof}

\begin{proof}[Proof of Lemma~\ref{lem:interior_supremum_to_projected_interior_supremum_high_prob}]
To lighten notation let 
\[
P^{\perp}_{kM}=(\id_{{\Hb}_k}-P_{kM}),
\]
and recall that for each $x\in \Hb_k$,
\[
\big\|P_{kM}x\big\|^2_{\Hb_k}
+
\big\|P^{\perp}_{kM} x\big\|^2_{\Hb_k}
=
\big\|x\big\|^2_{\Hb_k}.
\]
Using the elementary inequality
$|\max_i a_i-\max_i b_i|
\le
\max_i |a_i-b_i|$,
it holds that
\begin{align*}
\Big|
\max_{k\in[K]}\max_{s\in S}
\|\pi_k\mathbb B_{\mathcal K}(s/n)\|^2_{\Hb_k}
-&
\max_{k\in[K]}\max_{s\in S}
\|(P_{kM}\circ \pi_k)\mathbb B_{\mathcal K}(s/n)\|^2_{\Hb_k}
\Big|\\
&
\le
\max_{k\in[K]}\max_{s\in S}
\Big|
\|\pi_k\mathbb B_{\mathcal K}(s/n)\|^2_{\Hb_k}
-
\|(P_{kM}\circ \pi_k)\mathbb B_{\mathcal K}(s/n)\|^2_{\Hb_k}
\Big| \\
&=
\max_{k\in[K]}\max_{s\in S}
\|(P^{\perp}_{kM}\circ \pi_k)\mathbb B_{\mathcal K}(s/n)\|^2_{\Hb_k}.
\end{align*}
Let \(\Omega_0\) denote the event on which
\[
\pi_k\mathbb B_{\mathcal K}(s/n)\in \tilde{\Hb}_k
\qquad
\text{for all }(s,k)\in[n]\times[K].
\]
By the discussion preceding the definition of \(P_{kM}\) in~\eqref{eq:def-PkM} each of these inclusions holds almost surely, and since the index set \([n]\times[K]\) is finite it holds that \(\mathbb P(\Omega_0)=1\). On $\Omega_0$ it holds that
\[
R_{sk} := \|(P^{\perp}_{kM}\circ \pi_k)\mathbb B_{\mathcal K}(s/n)\|^2_{\Hb_k}=\|(P^{\perp}_{kM}\circ \pi_k)\mathbb B_{\mathcal K}(s/n)\|^2_{\tilde{\Hb}_k}=\sum^{r_k}_{j=\check M_k+1}\langle\pi_k\mathbb B_{\mathcal K}(s/n), Z_{kj}\rangle_{\tilde{\Hb}_k}^2,
\]
where the sum is interpreted as \(0\) if \(\check M_k\ge r_k\) following our convention throughout this manuscript. To obtain the first inequality we used that if  $x \in \tilde{\Hb}_k$ then $P^{\perp}_{kM} x \in \tilde{\Hb}_k$, and that  for all $x \in \tilde{\Hb}_k$ it holds that
$
\|x\|^2_{\tilde{\Hb}_k}:=\|x\|^2_{\Hb_k}
$
as discussed in the paragraphs below~\eqref{def:C_Kappa}. To obtain the second inequality we used  that the eigenvectors of $\mathcal{K}_k$,  $\{Z_{kj}\}_{j=1}^{r_k}$, form an orthonormal basis of $\tilde{\Hb}_k$ and also used Parseval's inequality, recalling the definition of $P_{kM}$ in~\eqref{eq:def-PkM}. Then note that 
\begin{align*}
\E[R_{sk}]
=
\sum^{r_k}_{j=\check M_k+1}\E\Big[\langle\pi_k\mathbb B_{\mathcal K}(s/n), Z_{kj}\rangle_{\tilde{\Hb}_k}^2\Big]
&=
\sum^{r_k}_{j=\check M_k+1}\left\langle\Cov(\pi_k\mathbb B_{\mathcal K}(s/n))Z_{kj}, Z_{kj}\right\rangle_{\tilde{\Hb}_k}
\\&=
V(s/n)\sum^{r_k}_{j=\check M_k+1}\left\langle \mathcal{K}_k Z_{kj}, Z_{kj}\right\rangle_{\tilde{\Hb}_k}
\\&=
V(s/n)\sum^{r_k}_{j=\check M_k+1}\lambda_{kj}.
\end{align*}
Here, the second equality follows by Lemma~\ref{lem:properties-covariance}(1), the third by~\eqref{def:Hilbert_Brownian_Bridge}, and the fourth because each $Z_{kj}$ is a normalized eigenvector with eigenvalue $\lambda_{kj}$. Using $V(s/n)\le 1/4$, Assumption~\ref{cond:scores}, and \(\gamma_U>1/2\), we obtain
\begin{equation*}
\E[R_{sk}]
\le
\frac14
\sum_{j=\check M_k+1}^{r_k}\lambda_{kj} \le
\frac14
C_U^2
\sum_{j=M+1}^{\infty} j^{-2\gamma_U} \le
\frac{C_U^2}{4(2\gamma_U-1)}
M^{1-2\gamma_U}.
\end{equation*}
Define
\[
Y_{k,s}
:=
(P^{\perp}_{kM}\circ \pi_k)\mathbb B_{\mathcal K}(s/n), \qquad k \in  [K], s \in S_x,
\]
which is a centered Gaussian element of \(\tilde{\Hb}_k\).
By  Lemma~\ref{lem:psi_2_controlled_by_L_2} we obtain that
\[
\big\|
\|Y_{k,s}\|_{\Hb_k}
\big\|_{\psi_2}^2 \leq \frac83\E
\|Y_{k,s}\|_{\Hb_k}^2
=
\frac83
\E[R_{sk}].
\]
where the final equality holds by definition. Thus, by~\ref{orlicz:tail_bound}, for every \(u>0\),
\[
\mathbb P\left(
\|Y_{k,s}\|_{\Hb_k}^2>u
\right)
\le
2\exp\left(
-\frac38
\frac{u}{\E[R_{sk}]}
\right).
\]
Taking
$
u=\delta_2
=
C_3\log(nK)M^{1-2\gamma_U}
$
and using the bound on $\E[R_{sk}]$ it holds that 
\[
\mathbb P\left(
\|Y_{k,s}\|_{\Hb_k}^2>\delta_2
\right)
\le
2\exp\left(
-
\frac{3(2\gamma_U-1)}{2C_U^2}
C_3
\log(nK)
\right).
\]
Taking a union bound over \(k\in[K]\) and \(s\in S\), and using \(|S|\le n\), gives
\begin{align*}
&\mathbb P\left(
\max_{k\in[K]}
\max_{s\in S}
\|Y_{k,s}\|^2_{\Hb_k}
>
\delta_2
\right) 
\le
2nK
\exp\left(
-
\frac{3(2\gamma_U-1)}{2C_U^2}
C_3
\log(nK)
\right).
\end{align*}
Choosing
$C_3=4C_U^2 / \{ 3(2\gamma_U-1) \}$, the exponential factor become $(nK)^{-2} \le n^{-2}K^{-1}$, and this completes the proof.
\end{proof}

\begin{proof}[Proof of Lemma~\ref{lem:bound-on-operator-norm-covariances}]
Recall the notation $\Sigma^{(n)} = \Cov(\mathcal B^{(n)})$ from Section~\ref{subsec:discrete_brownian-bridge} and $\Sigma^{(n,q)} = \Cov(G^{(n,q)})$ from \eqref{subsec:key-intermediate-variables}. We have
\begin{align*}
&\phantom{{}={}} 
\Big\|
\Cov\Big(
(P_{M\sigma \chgset} \circ  \pi_{\sigma,\chgset }) G^{(n,q)}
\Big)
-
\Cov\Big(
( P_{M\sigma \chgset} \circ \pi_{\sigma,\chgset })\mathcal B^{(n)}
\Big)
\Big\|_{\op}
\\&=
\Big\|
P_{M\sigma \chgset} \circ \pi_{\sigma,\chgset } ( \Sigma^{(n,q)} - \Sigma^{(n)} ) \pi_{\sigma,\chgset }^* \circ P_{M\sigma \chgset}^*
\Big\|_{\op}
\\&\le 
\| P_{M\sigma \chgset}  \| _{\op} 
( \Sigma^{(n,q)} - \Sigma^{(n)} ) \pi_{\sigma,\chgset }^* 
\Big\|_{\op}
\| P_{M\sigma \chgset}^*  \| _{\op} 
\\&\le
\Big\|
\pi_{\sigma,\chgset } ( \Sigma^{(n,q)} - \Sigma^{(n)} ) \pi_{\sigma,\chgset }^* 
\Big\|_{\op},
\end{align*}
where we have used sub-multiplicativity of the operator norm and the fact that $\| P_{M\sigma\chgset} \|_\op \le 1$. The latter follows from
\[
\| P_{M\sigma\chgset} x \|_{\mathcal H_\chgset^\sigma}^2
=
\sum_{k \in \chgset,s \in  \sigma } \| P_{kM} x_{sk} \|_{\Hb_k}^2
\le 
\sum_{k \in \chgset,s \in  \sigma } \| x_{sk} \|_{\Hb_k}^2
= \| x \|_{\mathcal H_\chgset^\sigma}^2.
\]
Next, note that
\begin{align*}
\pi_{\sigma ,\chgset } ( \Sigma^{(n,q)} - \Sigma^{(n)} ) \pi_{\sigma ,\chgset }^* 
&=
\pi_{\sigma ,\chgset } ( A_n \otimes \Sigma_q - K_n \otimes \mathcal K ) \pi_{\sigma ,\chgset }^* 
\\&= 
A_{n,\sigma} \otimes (\pi_\chgset\Sigma_q\pi_\chgset^*) - K_{n,\sigma} \otimes (\pi_\chgset \mathcal K \pi_\chgset^*)
\\&=
A_{n,\sigma} \otimes (\pi_\chgset[ \Sigma_q - \mathcal K]\pi_\chgset^*)  + (A_{n,\sigma}  - K_{n,\sigma}) \otimes  (\pi_\chgset \mathcal K \pi_\chgset^*),
\end{align*}
where we have used \eqref{eq:representation-projection-kronecker}. Using \eqref{eq:operator-norm-Kronecker}, we have
\begin{align*}
\| A_{n,\sigma} \otimes (\pi_\chgset[ \Sigma_q - \mathcal K]\pi_\chgset^*) \|_{\op}
&=
\| A_{n,\sigma}  \|_{\op}
\| \pi_\chgset[ \Sigma_q - \mathcal K]\pi_\chgset^* \|_{\op},
\\
\| (A_{n,\sigma} -K_{n,\sigma}) \otimes  (\pi_\chgset \mathcal K \pi_\chgset^*) \|_\op 
&=
\| (A_{n,\sigma} -K_{n,\sigma})\|_\op \|  (\pi_\chgset \mathcal K \pi_\chgset^*) \|_\op .
\end{align*}
Since $A_{n,\sigma}$ has entries in $[-1,1]$, we have $\| A_{n,\sigma} \|_{\op} \le |\sigma| \cdot \|A_{n,\sigma}\|_\infty \le  |\sigma|$. 
Next, we have 
\[
\| \pi_\chgset \mathcal K \pi_\chgset^* \|_{\op}
\le 
\| \pi_\chgset \mathcal K \pi_\chgset^* \|_{\Tr}
=
\sum_{k \in \chgset} \| \pi_k \mathcal K \pi_k^*\|_{\Tr}
=
\sum_{k \in \chgset} \| \mathcal K_k\|_{\Tr}
\le |\chgset| C_{\mathcal K}
\]
with $C_{\mathcal K}$ from \eqref{def:C_Kappa}, where the first equality follows from a standard argument on CONS in product spaces; see also \eqref{eq:trace-class-covariance-products}. Overall, we obtain that 
\[
\| \pi_{\sigma ,\chgset } ( \Sigma^{(n,q)} - \Sigma^{(n)} ) \pi_{\sigma ,\chgset }  \|_{\op}
\le 
|\sigma| \cdot \| \pi_\chgset[ \Sigma_q - \mathcal K]\pi_\chgset^* \|_{\op} + C_{\mathcal K} |\chgset| \cdot \| A_{n,\sigma} - K_{n,\sigma} \|_\op.
\]
As a consequence of Lemma~\ref{lem:prop a_s_ell}, we have 
\[
\| A_{n,\sigma} - K_{n,\sigma} \|_\op
\le 
|\sigma| \cdot \| A_{n,\sigma} - K_{n,\sigma} \|_\infty \le \frac{2|\sigma|}m,
\]
and it remains to bound $\| \pi_\chgset[ \Sigma_q - \mathcal K]\pi_\chgset^* \|_{\op}$.

For this purpose, we will apply Lemma~\ref{lem:convergence_to_long_term_trace} with $X_i = \pi_\chgset \eps^{(i)} \in \mathcal H_\chgset$, with $N=q$, and with candidate long run variance $\mathcal K_\infty = \pi_\chgset \mathcal K \pi_\chgset^*$. For that purpose, we need to bound $\|\|\pi_\chgset \eps^{(i)}\|_{\mathcal H_\chgset} \|_{\psi_1}$. 
First, by the definition of the inner product on product Hilbert spaces, we have
\[
\|\pi_\chgset \eps^{(i)}\|_{\mathcal H_\chgset}^2
=
\sum_{k \in \chgset} \| \eps_k^{(i)}\|_{\Hb_k}^2.
\]
As a consequence, by \ref{orlicz:power} and Assumption~\ref{cond:stationary},
\begin{align*}
    \big\|\|\pi_\chgset \eps^{(i)}\|_{\mathcal H_\chgset} \big\|_{\psi_1}^2
    =
    \big\|\|\pi_\chgset \eps^{(i)}\|_{\mathcal H_\chgset}^2 \big\|_{\psi_{1/2}}
    =
    \Big\|  \sum_{k \in \chgset} \| \eps_k^{(i)}\|_{\Hb_k}^2 \Big\|_{\psi_{1/2}}
    &\lesssim 
    \sum_{k \in \chgset} \big\|\|\eps_k^{(i)}\|_{\Hb_k}^2 \big\|_{\psi_{1/2}}
    \\&=
    \sum_{k \in \chgset} \big\|\|\eps_k^{(i)}\|_{\Hb_k} \big\|_{\psi_{1}}^2
    \le 
    |\chgset| D_\psi^2.
\end{align*}
We may hence apply Lemma~\ref{lem:convergence_to_long_term_trace} to deduce that 
\begin{align*}
\| \pi_\chgset[ \Sigma_q - \mathcal K]\pi_\chgset^* \|_{\op}
&\le 
\| \pi_\chgset[ \Sigma_q - \mathcal K]\pi_\chgset^* \|_{\Tr}
\\&=
\Big\| \Cov\Big(q^{-1/2} \sum_{i=1}^q \pi_\chgset \eps^{(i)} \Big) - \pi_\chgset\mathcal K\pi_\chgset^* \Big\|_\Tr
\lesssim
|\chgset| D_\psi^2 S_{\beta,1} \frac1q.
\end{align*}
Overall, 
\[
\| \pi_{\sigma ,\chgset } ( \Sigma^{(n,q)} - \Sigma^{(n)} ) \pi_{\sigma ,\chgset }  \|_{\op}
\lesssim 
(D_\psi^2 S_{\beta, 1} \vee C_{\mathcal K}) \cdot  |\sigma| \cdot  |\chgset| \frac{1}{q \wedge m},
\]
which yields the claim.
\end{proof}

\begin{proof}[Proof of Corollary~\ref{lem:good_eigen_values_2}]
Fix \((s,k)\in S_x\times[K]\), and let 
\[
\lambda^{(ks)}_1\ge \lambda^{(ks)}_2\ge \cdots
\ge \lambda^{(ks)}_{\check M_k}
\]
be the eigenvalues of \(\operatorname{Cov}(G^M_{sk})\). 
Since \(O_{kM}\) from \eqref{eq:def-OkM} is an isometry, the eigenvalues of 
\(\Cov(G^M_{sk})\) agree with those of $\Cov((P_{kM}  \circ \pi_{sk} )G^{(n,q)})$.
Let \(\lambda_j\) denote the \(j\)-th eigenvalue of $\Cov((P_{kM}  \circ \pi_{sk} )\mathcal B^{(n)})$.
By Weyl's inequality and Lemma~\ref{lem:bound-on-operator-norm-covariances}, applied with
\(\chgset=\{k\}\) and \(\sigma=\{s\}\), for every
\(j\in[\check M_k]\),
\begin{align}
\label{eq:bound-from-Weyl}
\left|
\lambda^{(ks)}_j-\lambda_j
\right|
\le
C_4
\bigl(D_\psi^2S_{\beta,1}\vee C_{\mathcal K}\bigr)
(m\wedge q)^{-1} =: E_{n,q}.
\end{align}
By Assumption~\ref{cond:scores}, the eigenvalues \(\lambda_{kj}\) of
\(\mathcal K_k\) satisfy
\[
C_L^2j^{-2\gamma_L}
\le
\lambda_{kj}
\le
C_U^2j^{-2\gamma_U}.
\]
Moreover, since
$
\Cov(\mathcal B^{(n)})=K_n \otimes \mathcal K
$
by \eqref{eq:Kronecker-Kn}
and since \(K_n(s,s)=V(s/n)\), we have, applying \eqref{eq:representation-projection-kronecker},
\[
\Cov(\pi_{sk}\mathcal B^{(n)})
=
V(s/n) \cdot \pi_k \mathcal K \pi_k^* = V(s/n) \cdot \mathcal K_k,
\]
so that the eigenvalues of
$\Cov((P_{kM}  \circ \pi_{sk} )\mathcal B^{(n)})$
are precisely
$
\lambda_j
=
V(s/n)\lambda_{kj}$, for $j \in [\check M_k]$.
Therefore,
\begin{align}
\label{eq:bound-on-lambdaj}
V(s/n)C_L^2j^{-2\gamma_L}
\le
\lambda_j
\le
V(s/n)C_U^2j^{-2\gamma_U}.
\end{align}

Since \(s\in S_x\), we have \(V(s/n)\ge x\) as remarked in~\eqref{eq:infsup-V-S_x}. Hence, in combination with \eqref{eq:bound-from-Weyl}, we get that
\[
\lambda^{(ks)}_j
\ge
xC_L^2j^{-2\gamma_L}
-
E_{n,q}.
\]
The assumed upper bound on \(M\) from \eqref{eq:bound-on-M} is equivalent to
$
E_{n,q}
\le
xC_L^2M^{-2\gamma_L} / 2.
$
Since \(j\le \check M_k\le M\), it follows that
$
M^{-2\gamma_L}
\le
j^{-2\gamma_L}
$
and thus
\[
\lambda^{(ks)}_j
\ge
\frac{xC_L^2}{2}j^{-2\gamma_L}.
\]

For the upper bound, note that \eqref{eq:bound-on-lambdaj} in combination with \eqref{eq:bound-from-Weyl} and the fact that $V(s/n) \le 1$ gives
\[
\lambda^{(ks)}_j
\le
C_U^2j^{-2\gamma_U}
+
E_{n,q}.
\]
Using the same bound on \(E_{n,q}\), together with
\(\gamma_U\le\gamma_L\), \(x\le1\), and \(C_L\le C_U\), we have
$
E_{n,q}
\le
xC_L^2j^{-2\gamma_L}/2
\le
C_U^2j^{-2\gamma_U}.
$
Therefore,
\[
\lambda^{(ks)}_j
\le
2C_U^2j^{-2\gamma_U}.
\]

Combining the lower and upper bounds, for every
\(j\in[\check M_k]\),
\[
\frac{xC_L^2}{2}j^{-2\gamma_L}
\le
\lambda^{(ks)}_j
\le
2C_U^2j^{-2\gamma_U},
\]
which is exactly \eqref{eq:cov-gmsk-decay}
in the sense of Definition~\ref{def:covariance-decay}.
\end{proof}

\begin{proof}[Proof of Lemma~\ref{cor:projected_mixed_norm_kolmogorov}]
Let $\mathcal I:=S_x\times[K]$ and $
K_x:=|\mathcal I|=|S_x|K\le nK$.
Define the centered Gaussian random vectors
\[
X
:=
\big(
G^M_{sk}
\big)_{(s,k)\in\mathcal I}
\in
(\mathbb R^M)^{K_x},
\]
where we pad the coordinates of $G^M_{sk}$ with $M-\check{M}_k$ zeroes. Likewise, let
\[
Y
:=
\left(
\big((O_{kM}\circ P_{kM}\circ\pi_k)\otimes\pi_s\big)
\mathcal B^{(n)}
\right)_{(s,k)\in\mathcal I}
\in
(\mathbb R^M)^{K_x},
\]
where we also add zeros if necessary.
Since \(O_{kM}\) from \eqref{eq:def-OkM} is an isometry, and since
$\mathcal B^{(n)}
=
(
\mathbb B_{\mathcal K}(1/n),
\ldots,
\mathbb B_{\mathcal K}(1)
)^\top
$ by definition in  \eqref{eq:def-bn},
we have
\[
\max_{(s,k)\in\mathcal I}\|Y_{sk}\|_{2}
=
\max_{k\in[K]}\max_{s\in S_x}
\left\|
(P_{kM}\circ\pi_k)\mathbb B_{\mathcal K}(s/n)
\right\|_{\Hb_k}.
\]
By Corollary~\ref{lem:good_eigen_values_2}, for every \((s,k)\in\mathcal I\), the covariance matrix of \(X_{sk}=G^M_{sk}\) satisfies, in the finite-dimensional sense of Definition~\ref{def:covariance-decay},
\begin{align}
\label{eq:cov-decay-x}
\Cov(X_{sk})
\in
\operatorname{Decay}
\left(
\frac{xC_L^2}{2},
2C_U^2,
2\gamma_L,
2\gamma_U
\right).
\end{align}
Thus Lemma~\ref{lem:gaussbootsphere-new} may be applied to \(X\) and \(Y\).
We first bound \(\|\Sigma_X-\Sigma_Y\|_{\infty}\), where $\Sigma_X:=\Cov(X)$ and $\Sigma_Y:=\Cov(Y)$. Fix two scalar coordinates, say
$(s,k,j)$ and $
(t,\ell,r)$,
where \(s,t\in S_x\), \(k,\ell\in[K]\), and \(j,r\in[M]\). Apply Lemma~\ref{lem:bound-on-operator-norm-covariances} to the ordered coordinate set formed by the distinct elements of \(\{k,\ell\}\) and the ordered time set formed by the distinct elements of \(\{s,t\}\). Since these two sets have cardinalities at most \(2\), Lemma~\ref{lem:bound-on-operator-norm-covariances} gives
\[
\|\Sigma_X-\Sigma_Y\|_{\max}
\le 
\|\Sigma_X-\Sigma_Y\|_{\op}
\le
4C_4
\frac{
D_\psi^2S_{\beta,1}\vee C_{\mathcal K}
}{
m\wedge q
}.
\]
We now apply Lemma~\ref{lem:gaussbootsphere-new} with auxiliary integer $m_{\mathrm{aux}}=m\wedge q$ and with threshold lower bound \(\sqrt\rho\). We obtain
\begin{align*}
&\sup_{u\ge \sqrt\rho}
\bigg|
\Prob\Big(
\max_{(s,k)\in\mathcal I}\|X_{sk}\|_{2}\le u
\Big)
-
\Prob\Big(
\max_{(s,k)\in\mathcal I}\|Y_{sk}\|_{2}\le u
\Big)
\bigg|
\\
& \quad \le C_1
x^{-1}
\bigg[
M^{2/3}
\Big(
\frac{
D_\psi^2S_{\beta,1}\vee C_{\mathcal K}
}{
m\wedge q
}
\Big)^{1/3}
+
(m\wedge q)^{-2/3}
\bigg]
\log^{7/6 + 2 \kexp}
\big(
e(m\wedge q)(K_x+1)(U\vee1)
\big).
\end{align*}
where $C_1=C_1(C_U,C_L,\gamma_U,\gamma_L,\rho)$ and where 
\[
U:=
\max_{(s,k)\in\mathcal I}
\left\{
\sqrt{\Tr(\Cov(X_{sk}))}
\vee
\sqrt{\Tr(\Cov(Y_{sk}))}
\right\}.
\]
Since the eigenvalues of $\Cov(X_{sk})$ and $\Cov(Y_{sk})$ are bounded from above by $2C_U^2j^{-2\gamma_U}$, which follows from \eqref{eq:cov-decay-x} and \eqref{eq:bound-on-lambdaj}, respectively, $U$ can be bounded by a constant $C_2=C_2(C_U,c_U)$ that only depends on $C_U$ and $c_U$.

We next simplify the logarithmic factor. Since \(K_x\le nK\), \(m\wedge q\le n\) and $U \le C_2$, there exists a constant $C_3=C_3(C_U,c_U)$ such that
\[
\log^{7/6 + 2 \kexp}
\left(
e(m\wedge q)(K_x+1)(U\vee1)
\right)
\le
C
\log^{7/6 + 2 \kexp}(nK).
\]

Since \(M\ge1\), \(m\wedge q\ge1\), and Assumption~\ref{cond:scores} implies \(C_{\mathcal K}\ge C_L^2\), we have
\[
(m\wedge q)^{-2/3}
\le
C_L^{-2/3}
M^{2/3}
\left(
\frac{
D_\psi^2S_{\beta,1}\vee C_{\mathcal K}
}{
m\wedge q
}
\right)^{1/3}.
\]
Hence, for some constant $C_4=C_4(C_U,C_L,\gamma_U,\gamma_L,\rho)$,
\begin{align*}
&\sup_{u\ge \sqrt\rho}
\bigg|
\Prob\Big(
\max_{(s,k)\in\mathcal I}\|X_{sk}\|_{2}\le u
\Big)
-
\Prob\Big(
\max_{(s,k)\in\mathcal I}\|Y_{sk}\|_{2}\le u
\Big)
\bigg|
\\
&\hspace{4cm}\le
C_4
x^{-1}
M^{2/3}
\left(
\frac{
D_\psi^2S_{\beta,1}\vee C_{\mathcal K}
}{
m\wedge q
}
\right)^{1/3}
\log^{7/6 + 2 \kexp}(nK).
\end{align*}
After the change of variable $u=\sqrt t$, this yields the claim, with $C= C_4 (D_\psi^2 S_{\beta,1} \vee C_{\mathcal K})^{2/3}$.
\end{proof}

\subsection{Gaussian Approximation}

Recall the Berbee coupled big-block sums $
\widetilde S_1,\ldots,\widetilde S_m$
defined in~\eqref{def:big_block_sums}, and write
\(\widetilde S_{\ell k}:=\pi_k \widetilde S_{\ell} \in \Hb_k\) for the \(k\)-th coordinate of
$\widetilde S_\ell$. For each \((s,k)\in[n]\times[K]\), define
\begin{align}
\label{eq:tilde-tsk}
\widetilde T_{sk}
:=
(mq)^{-1/2}
\Big\|
\sum_{\ell\in[m]}a_{s\ell}\widetilde S_{\ell k}
\Big\|_{\Hb_k},
\end{align}
where the constants $\{a_{s\ell}\}_{s \in [n],\ell\in [m]}$ are defined in~\eqref{eq:def-a_sl}. Then define each projected analogue as
\begin{align}
\label{eq:tilde-tskm}
\widetilde T^M_{sk}
:=
(mq)^{-1/2}
\Big\|
P_{kM}\sum_{\ell\in[m]}a_{s\ell}\widetilde S_{\ell k}
\Big\|_{\Hb_k}.
\end{align}
Finally, set
\[
\widetilde T
:=
\max_{k\in[K]}\max_{s \in [n]}\widetilde T_{sk},
\qquad
\widetilde T^M
:=
\max_{k\in[K]}\max_{s \in [n]}\widetilde T^M_{sk},
\]
and recall $T_n^O$ from \eqref{eq:def_oracle_test_statistic}.

\begin{lemma}[Comparison of oracle process to coupled process]
\label{lem:berbee_hp_reduction}
Suppose Assumption~\ref{cond:stationary} holds. There exists a universal
constant \(C_1>0\) such that, for every $K\in\mathbb N_{\ge1}$, $n\in\mathbb N_{\ge4}$, $q,r\in\mathbb N$
satisfying
\[
1\le r\le q \le
\min\left\{
\frac n4,\frac{\sqrt n}{\log n}
\right\},
\]
there exists an event \(\Omega_1\) with probability at least $1-1/n$
such that, on
\(\Omega_1\cap\Omega_{\mathrm{block}}\), with
\(\Omega_{\mathrm{block}}\) defined in~\eqref{eq:Omega-block},
\[
\left|T_n^O-\widetilde T\right|
\le\delta_1 
:=
C_1D_\psi(S_{\beta,0}\vee1)^{1/2}
\log(nK)
\left(
\frac rq\vee\frac{q^2}{n}
\right)^{1/2},
\]
where \(S_{\beta,0}\) is defined in~\eqref{eq:def-S_beta}.
\end{lemma}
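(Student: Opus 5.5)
The plan is to work on $\Omega_{\mathrm{block}}$, where $\widetilde S_\ell=S_\ell$ for all $\ell\in[m]$, so that the coupling disappears and $\widetilde T$ becomes a deterministic functional of the noise. On this event, for each $(s,k)$, I compare
\[
\sqrt n\,C_{n,k}^O(s)=S^O_{n,k}(s)-\tfrac sn S^O_{n,k}(n)
\quad\text{with}\quad
\sum_{\ell\le B_s\wedge m}S_{\ell k}-\tfrac sn\sum_{\ell\in[m]}S_{\ell k}.
\]
Since $|\max_{k,s}a_{sk}-\max_{k,s}b_{sk}|\le\max_{k,s}|a_{sk}-b_{sk}|$ and the norms satisfy the reverse triangle inequality, it suffices to bound
\[
\max_{k,s}\Big\|C^O_{n,k}(s)-(mq)^{-1/2}\sum_\ell a_{s\ell}S_{\ell k}\Big\|_{\Hb_k}.
\]
I split this difference into three terms: (a) the omitted small-block sums; (b) the misalignment remainders; and (c) the normalization error between $n^{-1/2}$ and $(mq)^{-1/2}$.

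\emph{Misalignment (b).} The partial sum $S^O_{n,k}(s)$ differs from the sum of complete big and small blocks up to index $\lceil s/(q+r)\rceil$ by a sum over at most $q+r\le 2q$ consecutive observations. The full sum $S^O_{n,k}(n)$ differs from its block version by the tail after $m(q+r)$, which also has length at most $2q$. I bound each such remainder crudely by $2q\max_{i\le n,k\le K}\|\varepsilon^{(i)}_k\|_{\Hb_k}$. The $\psi_1$ tail in Assumption~\ref{cond:stationary} and a union bound over $nK$ indices give $\max_{i,k}\|\varepsilon_k^{(i)}\|\lesssim D_\psi\log(nK)$ outside probability $1/(3n)$. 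After dividing by $\sqrt n$, this contributes $D_\psi\log(nK)(q^2/n)^{1/2}$.

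\emph{Small blocks (a).} The discrepancy is $n^{-1/2}$ times $\max_{j\le m}\|\sum_{\ell\le j}S'_{\ell k}\|$, plus the analogous term weighted by $s/n$. On $\Omega_{\mathrm{small}}\subset\Omega_{\mathrm{block}}$ the $S'_\ell$ coincide with independent copies. I will need a $\psi_1$ bound of the form $\|\|S'_{\ell k}\|\|_{\psi_1}\lesssim D_\psi\{r(S_{\beta,0}\vee1)\}^{1/2}\log$-factor for a sum of $r$ mixing terms; this should follow from a covariance summation using $S_{\beta,0}$ combined with a Bernstein-type bound for $\beta$-mixing, and it is the step I expect to be most delicate. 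With that bound, the Pinelis maximal Bernstein inequality (Lemma~\ref{lem:pinelis-maximal-bernstein}) and a union bound over $k$ give a deviation of order $D_\psi(S_{\beta,0}\vee1)^{1/2}\{\sqrt{mr\log(nK)}+\sqrt r\log(nK)\}$. Dividing by $\sqrt n\gtrsim\sqrt{mq}$ yields $(r/q)^{1/2}\log(nK)$.

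\emph{Normalization (c).} Because $0\le n-mq\le mr+q+r$, we get $|1-\sqrt{mq/n}|\lesssim r/q+q/n$. This is multiplied by $(mq)^{-1/2}\max_{s,k}\|\sum_\ell a_{s\ell}S_{\ell k}\|$, which I show is $O(D_\psi(S_{\beta,0}\vee1)^{1/2}\log(nK))$ with probability $1-1/(3n)$ by the same Pinelis argument applied to the independent big-block sums; here I use $|a_{s\ell}|\le1$ and the fact that the $a_{s\ell}$ are piecewise monotone in $\ell$, treating the two pieces separately. Since $r/q\le(r/q)^{1/2}$ and $q/n\le(q^2/n)^{1/2}$, this term is also of the required order.

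Finally, I define $\Omega_1$ as the intersection of the three high-probability events above, so that $\mathbb P(\Omega_1)\ge 1-1/n$, and sum the three contributions to obtain $\delta_1$. The main obstacle is the moment/$\psi_1$ control of the block sums under polynomial $\beta$-mixing without losing more than the constant $(S_{\beta,0}\vee1)^{1/2}$ and a single $\log(nK)$. If a direct $\psi_1$ bound is unavailable, I would first try a second-moment bound together with truncation at level $D_\psi\log(nK)$ before invoking Pinelis.
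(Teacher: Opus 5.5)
Your decomposition into misalignment, normalization and omitted small blocks is the same as the paper's (its $n^{-1/2}\Delta_3$, $C_n\widetilde\Delta_1$ and $n^{-1/2}\widetilde\Delta_2$). Step (b), and the deterministic factor $|1-\sqrt{mq/n}|\lesssim r/q+q/n$ in (c), are fine. The gap is the probabilistic input for the block-sum terms in (a) and (c).

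\textbf{The bound you need is false.} You require $\bigl\|\|S'_{\ell k}\|_{\Hb_k}\bigr\|_{\psi_1}\lesssim D_\psi\{r(S_{\beta,0}\vee1)\}^{1/2}$, and the analogue with $q$ for the big blocks, so that Lemma~\ref{lem:pinelis-maximal-bernstein} can run with scale $B\asymp\sqrt r$. Under polynomial $\beta$-mixing no such sub-exponential bound at the $\sqrt r$ scale exists in general. Take a bounded stationary regenerative sequence whose cycle lengths have a sufficiently fast polynomial tail, with an independent random sign on each cycle. It satisfies Assumption~\ref{cond:stationary}. Yet it is constant on $\{1,\dots,r\}$ with probability only polynomially small in $r$, which forces $\|S'_\ell\|_{\psi_1}\gtrsim r/\log r$.

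\textbf{What is actually available is too weak for your route.} You only have the crude bound $rD_\psi$ from \ref{orlicz:triangle} and the second-moment bound $\mathbb E\|S'_{\ell k}\|_{\Hb_k}^2\lesssim rD_\psi^2(S_{\beta,0}\vee1)$. Lemma~\ref{lem:pinelis-maximal-bernstein} has a single parameter $B$ governing both variance and tail. Feeding it the crude bound gives $(r/q)^{1/2}\{r\log(nK)\}^{1/2}$, which is off by $\sqrt r$. Your truncation fallback at level $D_\psi\log(nK)$ puts $\log(nK)$ into the scale and produces a term $r\log^2(nK)/\sqrt n$. Under $q\log n\le\sqrt n$ this is only $\lesssim(r/q)^{1/2}\log(nK)\cdot\log(nK)/\log n$, so it does not give $\delta_1$ with a universal $C_1$ when $K\gg n$. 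Even granting your bound with a ``log-factor'', that factor would survive into the final rate, where you silently drop it.

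\textbf{The missing idea} is to keep the variance part separate from the maximal-summand part, so that the crude scale $r$ costs only $\log(m+1)\le\log n$ rather than $\log(nK)$. The paper does this for each fixed $(s,k)$ via Talagrand's inequality \ref{orlicz:banach} for the independent coupled blocks (Lemma~\ref{lem:uified_bounds}(3)), which gives $\bigl\|\|\sum_\ell a_{s\ell}\widetilde S'_{\ell k}\|_{\Hb_k}\bigr\|_{\psi_1}\lesssim D_\psi(S_{\beta,0}\vee1)^{1/2}\{\|a_s\|_2\sqrt r+r\log(m+1)\}$. It then absorbs $r\log n\le 2\sqrt{mr}$ using $q\le\sqrt n/\log n$; your argument never uses this hypothesis, which is a symptom of the missing step. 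It does the same for $\widetilde\Delta_1$ with $q$ in place of $r$. A union bound over the $nK$ pairs $(s,k)$ then supplies the single $\log(nK)$.

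\textbf{Alternative repair.} If you want to keep a maximal inequality over prefix sums, use Pinelis' Theorem~3.3 in its original form. It allows a separate variance proxy $V^2\asymp mrD_\psi^2(S_{\beta,0}\vee1)$ and scale $B\asymp rD_\psi\log(er)$; the latter follows from the crude $\psi_1$ bound by splitting moments. The linear term $r\log(er)\log(nK)/\sqrt n$ is then $\lesssim(r/q)^{1/2}\log(nK)$, again by $q\log n\le\sqrt n$.

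In either version, define the exceptional events through the coupled independent blocks $\widetilde S_\ell,\widetilde S'_\ell$. Also note $\Omega_{\mathrm{block}}\subseteq\Omega_{\mathrm{small}}$; you wrote the reverse inclusion.
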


\begin{lemma}[Boundary bound for the squared coupled big-block statistic]
\label{lem:coupled_big_block_boundary}
Suppose Assumption~\ref{cond:stationary} holds. There exists a universal
constant \(C_2>0\) such that, for every $K\in\mathbb N_{\ge1},n\in\mathbb N_{\ge4},q,r\in\mathbb N$
satisfying
\[
1\le r\le q\le\frac n4,
\]
and every \(t>0\) and \(x\in(0,3/16]\) satisfying
\[
x\vee\frac{q^2\log^2 n}{n}
\le
\frac{t}
{C_2D_\psi^2(S_{\beta,0}\vee1)\log^2(nK)},
\]
we have
\[
\mathbb P\left(
\max_{k\in[K]}
\max_{s\in[n]\setminus S_x}
(\widetilde T_{sk})^2
>
t
\right)
\le
\frac2n,
\]
where \(S_x\) is defined in~\eqref{eq:definition-S_x} and
\(S_{\beta,0}\) is defined in~\eqref{eq:def-S_beta}.
\end{lemma}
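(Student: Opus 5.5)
The plan is to control the boundary maximum by a union bound over the at most $nK$ pairs $(s,k)$ with $s\in[n]\setminus S_x$. Each pair gets a sub-Gaussian/sub-exponential tail bound whose scale is of order $V(s/n)+q^2\log^2 n/n$, up to the factor $D_\psi^2(S_{\beta,0}\vee1)$.

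\textbf{Step 1 (moments).} Fix $(s,k)$. By Berbee's construction the coupled blocks $\widetilde S_{\ell k}$, $\ell\in[m]$, are independent, centered, and distributed as $S_{\ell k}=\sum_{i\in I_\ell}\varepsilon_k^{(i)}$. Hence
\[
\widetilde T_{sk}=(mq)^{-1/2}\Big\|\sum_{\ell}a_{s\ell}\widetilde S_{\ell k}\Big\|_{\Hb_k}
\]
is the norm of a weighted sum of independent Hilbert-valued summands. A covariance-summation bound for $\beta$-mixing sequences (the same one underlying Lemma~\ref{lem:convergence_to_long_term_trace}) gives $\E\|S_{\ell k}\|^2\lesssim qD_\psi^2 S_{\beta,0}$. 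Therefore
\[
\E\widetilde T_{sk}^2\lesssim D_\psi^2S_{\beta,0}\,\frac1m\sum_\ell a_{s\ell}^2 .
\]
By the comparison of $A_n$ with $K_n$ used in Lemma~\ref{lem:bound-on-operator-norm-covariances} (Lemma~\ref{lem:prop a_s_ell}), $\frac1m\sum_\ell a_{s\ell}^2\le V(s/n)+2/m\lesssim x+q/n$.

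\textbf{Step 2 (tails).} Next I need tail control for each block norm. A $\beta$-mixing Bernstein or Orlicz bound for partial sums under Assumption~\ref{cond:stationary} should give $\|\|S_{\ell k}\|\|_{\psi_1}\lesssim D_\psi(S_{\beta,0}\vee1)^{1/2}\sqrt q\,\log n$, or at least the estimate $q D_\psi$ as a crude bound. I then apply Pinelis' Hilbert-space Bernstein inequality (Lemma~\ref{lem:pinelis-maximal-bernstein}) to the independent summands $a_{s\ell}\widetilde S_{\ell k}$, with $|a_{s\ell}|\le1$. This gives, with probability at least $1-2e^{-y}$,
\[
\widetilde T_{sk}\lesssim D_\psi(S_{\beta,0}\vee1)^{1/2}\Big(\sqrt{(x+q/n)\,y}+\frac{q\log n}{\sqrt{mq}}\,y\Big).
\]
Since $m\gtrsim n/q$, the linear term is of order $(q^2/n)^{1/2}\log n\cdot y$.

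\textbf{Step 3 (union bound).} Take $y=C\log(nK)$ and union bound over $nK$ pairs; the failure probability is then at most $2/n$. The squared bound is $\lesssim D_\psi^2(S_{\beta,0}\vee1)\log^2(nK)\,(x\vee q^2\log^2n/n)$, where $q/n\le q^2/n$ is absorbed. The hypothesis then makes this at most $t$ once $C_2$ is chosen large.

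The main obstacle is Step 2: obtaining a sub-exponential norm for the block sum $S_{\ell k}$ that scales like $\sqrt q$ rather than $q$ under only polynomial $\beta$-mixing. If that fails, I would fall back on the crude bound $\|\|S_{\ell k}\|\|_{\psi_1}\le qD_\psi$. In that case the linear Bernstein term becomes $q/\sqrt{mq}\cdot y\lesssim (q^2/n)^{1/2}y\,\log$-factors, which is still covered by the $q^2\log^2 n/n$ term in the hypothesis. The variance term relies only on second moments and is unaffected. This fallback appears to suffice, so I would use it directly.
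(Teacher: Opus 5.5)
Your architecture matches the paper's. You prove a per-pair tail bound for $\widetilde T_{sk}$ whose variance part is controlled by $\|a_s\|_2^2/m\le V(s/n)+2/m$ (Lemma~\ref{lem:prop a_s_ell}) and whose scale part is of order $q\log n/\sqrt n$, then take a union bound over $nK$ pairs.

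\textbf{The gap is in Step~2.} It sits in the claim that, with the crude bound $\|\|S_{\ell k}\|_{\Hb_k}\|_{\psi_1}\le qD_\psi$, ``the variance term relies only on second moments and is unaffected.'' For Bernstein-type inequalities this is not true.

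First, Lemma~\ref{lem:pinelis-maximal-bernstein} as stated uses a single parameter $B$ both as scale and as per-summand variance proxy, through $\max_i\E\|X_i\|^p\le\frac{p!}{2}B^p$. With $B\asymp qD_\psi$ it only gives $\widetilde T_{sk}\lesssim D_\psi\sqrt{qy}$, and it never sees $\|a_s\|_2$, which is where the $\sqrt x$ comes from. You need Pinelis' Theorem~3.3 in its two-parameter form. Its hypothesis $\sum_\ell\E\|a_{s\ell}\widetilde S_{\ell k}\|_{\Hb_k}^p\le\frac{p!}{2}b^2\Gamma^{p-2}$, for all $p\ge2$, ties every moment to the variance proxy $b^2$.

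Second, that condition does not follow from a second-moment bound $\bar\sigma^2:=CqD_\psi^2(S_{\beta,0}\vee1)$ together with $\psi_1$-norm $qD_\psi$ if you take $\Gamma\asymp qD_\psi$. You must truncate $\|\widetilde S_{\ell k}\|_{\Hb_k}$ at a level $T\asymp qD_\psi\log\{e(qD_\psi)^2/\bar\sigma^2\}\lesssim qD_\psi\log(eq)$. This gives $\E\|\widetilde S_{\ell k}\|_{\Hb_k}^p\lesssim\frac{p!}{2}\bar\sigma^2(CT)^{p-2}$. Since $|a_{s\ell}|^p\le a_{s\ell}^2$, you then get $b^2\asymp\bar\sigma^2\|a_s\|_2^2$ and $\Gamma\asymp qD_\psi\log(eq)$.

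That factor $\log q\le\log n$ is the real source of the $q\log n$ in your displayed bound. Your hoped-for $\psi_1$ bound of order $\sqrt q\log n$ would produce $\sqrt q\log n$ instead, and the crude bound alone produces $q$ without justifying the variance term. The extra logarithm is absorbed by the $q^2\log^2n/n$ term in the hypothesis, so after this repair Steps~2 and~3 go through. A minor point: Step~1 should read $qD_\psi^2(1+S_{\beta,0})$, since $S_{\beta,0}$ may vanish.

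The paper avoids the moment condition altogether. It uses Talagrand's inequality \ref{orlicz:banach}, packaged as Lemma~\ref{lem:uified_bounds}(3). This bounds $\|\|\sum_\ell a_{s\ell}\widetilde S_{\ell k}\|_{\Hb_k}\|_{\psi_1}$ by two pieces:
\begin{itemize}
\item $\E\|\sum_\ell a_{s\ell}\widetilde S_{\ell k}\|_{\Hb_k}$, which needs only second moments and independence and is $\lesssim D_\psi(S_{\beta,0}\vee1)^{1/2}\|a_s\|_2\sqrt q$;
\item $\|\max_\ell\|\widetilde S_{\ell k}\|_{\Hb_k}\|_{\psi_1}\lesssim qD_\psi\log(m+1)$, from the crude bound and \ref{orlicz:max}.
\end{itemize}
This is exactly the split you wanted: second moments for the bulk, a crude $\psi_1$ bound for the largest block. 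The logarithm comes from the maximal inequality rather than from truncation.

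With $\|a_s\|_2^2\le mV(s/n)+2$ this gives $\|\widetilde T_{sk}\|_{\psi_1}\lesssim D_\psi(S_{\beta,0}\vee1)^{1/2}\{\sqrt x\vee q\log n/\sqrt n\}$ for $s\notin S_x$. The tail bound \ref{orlicz:tail_bound} and a union bound then finish the proof. Your repaired Bernstein route reaches the same conclusion, with a slightly sharper $\sqrt{x\log(nK)}$ in place of $\sqrt x\log(nK)$ in the variance part, which the lemma does not need. The cost is the extra truncation step.
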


\begin{lemma}[Projection error for the coupled big-block statistic]
\label{lem:coupled_big_block_projection_error}
Suppose Assumptions~\ref{cond:stationary} and~\ref{cond:scores} hold.
Then there exists a constant
\[
C_{\mathrm{proj}}
=
C_{\mathrm{proj}}
(D_\psi,C_U,\gamma_U,C_\beta,c_\beta)>0
\]
such that, for every \(K\in\mathbb N_{\ge1}\),
\(n\in\mathbb N_{\ge4}\), \(M\in\mathbb N_{\ge1}\), and
\(q,r\in\mathbb N\) satisfying
\[
1\le r\le q\le\frac n4,
\]
and every nonempty \(S\subset[n]\), defining
\[
\delta_{\mathrm{proj}}
:=
C_{\mathrm{proj}}\log^2(nK)
\left[
M^{1-2\gamma_U}
+\frac1q
+\frac{q^2\log^2 n}{n}
\right],
\]
we have
\[
\mathbb P\left(
\max_{k\in[K]}\max_{s\in S}
(\widetilde T_{sk})^2
>
\max_{k\in[K]}\max_{s\in S}
(\widetilde T^M_{sk})^2
+
\delta_{\mathrm{proj}}
\right)
\le
\frac2n.
\]
\end{lemma}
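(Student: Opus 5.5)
The plan is to reduce the comparison to a tail bound for the discarded high-frequency component, and to control that component with a Bernstein-type inequality for sums of independent Hilbert-space vectors. By \eqref{eq:orthogonal}, applied to $x=\sum_{\ell}a_{s\ell}\widetilde S_{\ell k}$, we have for every $(s,k)$
\[
(\widetilde T_{sk})^2-(\widetilde T^M_{sk})^2=\|R_{sk}\|_{\Hb_k}^2,
\qquad
R_{sk}:=(mq)^{-1/2}(\id_{\Hb_k}-P_{kM})\sum_{\ell\in[m]}a_{s\ell}\widetilde S_{\ell k}.
\]
Using $|\max_i a_i-\max_i b_i|\le\max_i|a_i-b_i|$, it therefore suffices to show that $\max_{k\in[K]}\max_{s\in S}\|R_{sk}\|_{\Hb_k}^2\le\delta_{\mathrm{proj}}$ with probability at least $1-2/n$. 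For fixed $(s,k)$, $R_{sk}$ is a weighted sum of the independent (Berbee-coupled) centered vectors $\xi_{\ell k}:=(\id-P_{kM})\widetilde S_{\ell k}$, each distributed as $(\id-P_{kM})\pi_kS_1$. The weights satisfy $|a_{s\ell}|\le1$.

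First I will bound the second moment. By independence,
\[
\E\|R_{sk}\|^2=\frac{1}{mq}\sum_\ell a_{s\ell}^2\,\E\|(\id-P_{kM})\pi_kS_1\|^2\le \Tr\bigl((\id-P_{kM})\Sigma_{q,k}(\id-P_{kM})\bigr),
\]
where $\Sigma_{q,k}=\pi_k\Sigma_q\pi_k^*$. I then split $\Sigma_{q,k}=\mathcal K_k+(\Sigma_{q,k}-\mathcal K_k)$.
\begin{itemize}
\item The $\mathcal K_k$ part contributes $\sum_{j>\check M_k}\lambda_{kj}\le C_U^2M^{1-2\gamma_U}/(2\gamma_U-1)$ by Assumption~\ref{cond:scores}.
\item The remainder contributes at most $\|\Sigma_{q,k}-\mathcal K_k\|_{\Tr}\lesssim D_\psi^2S_{\beta,1}/q$, by Lemma~\ref{lem:convergence_to_long_term_trace}, exactly as in the proof of Lemma~\ref{lem:bound-on-operator-norm-covariances}.
\end{itemize}
Together this gives $\sigma^2:=\max_{s,k}\E\|R_{sk}\|^2\lesssim M^{1-2\gamma_U}+1/q$. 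The same quantity also bounds the weak variance $\sup_{\|h\|\le1}\E\langle R_{sk},h\rangle^2$.

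Next I will establish concentration. Each summand satisfies $\bigl\|\|\xi_{\ell k}\|\bigr\|_{\psi_1}\le\bigl\|\|\pi_kS_1\|\bigr\|_{\psi_1}\le qD_\psi$, by the triangle inequality for $\psi_1$ and Assumption~\ref{cond:stationary}. I will truncate each $\xi_{\ell k}$ at level $L:=CqD_\psi\log(nK)$. A union bound over $\ell\in[m]$ and $k\in[K]$ shows the truncation is inactive with probability at least $1-1/n$. Recentering after truncation costs only a negligible amount, since $\E\|\xi\|\mathbf 1\{\|\xi\|>L\}$ is exponentially small. For the bounded sums I apply the Pinelis/Yurinskii-type Bernstein inequality for Hilbert-space martingales, Lemma~\ref{lem:pinelis-maximal-bernstein}, with deviation level $u\asymp\log(nK)$. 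This gives
\[
\|R_{sk}\|\le \sigma\bigl(1+\sqrt{\log(nK)}\bigr)+C\,\frac{L\log(nK)}{\sqrt{mq}}
\]
outside probability $(nK)^{-2}$. A union bound over the at most $nK$ pairs $(s,k)$ then costs at most $1/n$.

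Squaring, and using $m\ge n/(4q)$, the last term gives $L^2\log^2(nK)/(mq)\lesssim D_\psi^2\,q^2\log^4(nK)/n$. This is where I must be careful to obtain exactly $\log^2(nK)\,q^2\log^2n/n$. I will choose the truncation level via a union bound over $m\le n$ blocks only, giving $\log n$, and the Bernstein deviation via $\log(nK)$, giving $L^2u^2/(mq)\lesssim q^2\log^2 n\log^2(nK)/n$. The $\sigma^2$ term yields $\log(nK)(M^{1-2\gamma_U}+1/q)$. Collecting terms gives $\delta_{\mathrm{proj}}$ with a constant depending on $D_\psi,C_U,\gamma_U,C_\beta,c_\beta$.

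I expect the main obstacle to be this concentration step: getting a max-over-$(s,k)$ bound that involves only polylogarithmic factors despite using the crude $\psi_1$ bound $qD_\psi$ for block sums. The calibration of truncation level against deviation level must land on the stated $q^2\log^2n/n$ form rather than an extra $\log$ power. If the powers do not match, I would instead bound $\|\xi_{\ell k}\|_{\psi_1}$ more sharply through the mixing moment bounds underlying Lemma~\ref{lem:convergence_to_long_term_trace}.
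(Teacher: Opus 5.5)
Your overall architecture matches the paper's: the orthogonal split $(\widetilde T_{sk})^2=(\widetilde T^M_{sk})^2+\|R_{sk}\|_{\Hb_k}^2$, the second-moment bound, and a final union bound over the pairs $(s,k)$. The second-moment bound goes through $\Tr(P_{kM}^\perp\mathcal K_kP_{kM}^\perp)\lesssim M^{1-2\gamma_U}$ and $\|\Sigma_{q,k}-\mathcal K_k\|_{\Tr}\lesssim D_\psi^2S_{\beta,1}/q$, with $P_{kM}^\perp:=\id_{\Hb_k}-P_{kM}$, up to the harmless factor $\|a_s\|_2^2/m\le 9/4$ that you dropped.

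The gap is in the concentration step, exactly where you expected trouble, and your proposed recalibration does not work. The conclusion is a maximum over $k\in[K]$, so the truncation must be inactive simultaneously for all $(\ell,k)\in[m]\times[K]$. With $L=CqD_\psi\log n$ the union bound only gives $\mathbb P(\exists\,\ell,k:\|\xi_{\ell k}\|_{\Hb_k}>L)\le 2mKn^{-C}$. This is useless once $K\gg n^{C}$, which is precisely the regime ($\log K$ a power of $n$) the paper targets. You are therefore forced to take $L\asymp qD_\psi\log(nK)$. Combined with the Bernstein deviation level $u\asymp\log(nK)$, the last term becomes $L^2u^2/(mq)\asymp q^2\log^4(nK)/n$ instead of $\log^2(nK)\,q^2\log^2 n/n$. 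Sharpening $\|\|\xi_{\ell k}\|_{\Hb_k}\|_{\psi_1}$ would not help, since the extra logarithm comes from truncation interacting with the union over $K$. Your argument does prove the lemma with $q^2\log^4(nK)/n$ as the last term. That weaker version would still suffice in Theorem~\ref{Thm_1}, where $\log^2 n$ is bounded by $\log^2(nK)$ anyway, but it is not the stated $\delta_{\mathrm{proj}}$.

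The missing idea is to control, for each fixed $(s,k)$, the Orlicz norm of the whole sum rather than truncating. By Talagrand's inequality \ref{orlicz:banach}, which is Lemma~\ref{lem:uified_bounds}(4),
\[
\Bigl\|\Bigl\|\sum_{\ell=1}^m a_{s\ell}P_{kM}^\perp\widetilde S_{\ell k}\Bigr\|_{\Hb_k}\Bigr\|_{\psi_1}
\lesssim
\E\Bigl\|\sum_{\ell=1}^m a_{s\ell}P_{kM}^\perp\widetilde S_{\ell k}\Bigr\|_{\Hb_k}
+\Bigl\|\max_{\ell\in[m]}\|P_{kM}^\perp\widetilde S_{\ell k}\|_{\Hb_k}\Bigr\|_{\psi_1},
\]
and \ref{orlicz:max} bounds the last term by a constant times $qD_\psi\log(m+1)$. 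The maximum over blocks is thus paid for inside the $\psi_1$-norm at cost $\log m\le\log n$, independently of $K$. This gives $\|\|R_{sk}\|_{\Hb_k}\|_{\psi_1}\lesssim(M^{1-2\gamma_U}+1/q+q^2\log^2 n/n)^{1/2}$. Only afterwards does the $\psi_1$ tail bound, with a union over the $nK$ pairs, contribute a single factor $\log(nK)$, which after squaring yields exactly $\delta_{\mathrm{proj}}$.

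A secondary point concerns Lemma~\ref{lem:pinelis-maximal-bernstein}. As stated, it uses one parameter $B$ both as the almost-sure scale and in the variance proxy ($mB^2$ for $m$ summands). With $B\asymp L$ it therefore produces a term of order $L\sqrt{u/q}\asymp\sqrt q\,D_\psi\log^{3/2}(nK)$, not your $\sigma\sqrt{\log(nK)}$. To obtain your displayed bound you would need Pinelis' Theorem~3.3 itself, with the variance proxy $\sum_\ell a_{s\ell}^2\E\|\xi_{\ell k}\|^2$ kept separate from the bound $2L$.
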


\begin{lemma}[Gaussian approximation for the projected coupled statistic]
\label{lem:gaussian_approx_projected_coupled}
Fix \(\rho>0\), and suppose that
Assumptions~\ref{cond:stationary} and~\ref{cond:scores} hold.
Then there exists a constant
\[
C
=
C(C_U,C_L,\gamma_U,\gamma_L,\rho,
D_\psi,C_\beta,c_\beta)>0
\]
such that, for every \(K\in\mathbb N_{\ge1}\),
\(n\in\mathbb N_{\ge4}\), \(M\in\mathbb N_{\ge1}\), and
\(q,r\in\mathbb N\) satisfying
\[
1\le r\le q\le\frac n4,
\]
and every \(x\in(0,3/16]\) satisfying
\[
M^{2\gamma_L}
\le
\frac{
C_L^2x(m\wedge q)
}{
2C_4\bigl(D_\psi^2S_{\beta,1}\vee C_{\mathcal K}\bigr)
},
\]
where \(C_4\) is the universal constant from
Lemma~\ref{lem:bound-on-operator-norm-covariances},
\(S_{\beta,1}\) is defined in~\eqref{eq:def-S_beta}, and
\(C_{\mathcal K}\) is defined in~\eqref{def:C_Kappa}, we have
\begin{align*}
&d_{\mathrm K}^{(\rho)}
\Big(
\max_{k\in[K]}\max_{s\in S_x}
(\widetilde T_{sk}^{M})^2,
\max_{k\in[K]}\max_{s\in S_x}
\|G_{sk}^{M}\|_{2}^2
\Big)
\le
\frac{C}{x^{3/2}}
\left(
\frac{M^4q^2}{n}
\right)^{1/6}
\log^{2+3\kexp}(nK)
\log^{3/2+3\kexp}(n),
\end{align*}
where $\kexp := \gamma_L /(2\gamma_U-1)$.
\end{lemma}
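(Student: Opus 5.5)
The plan is to follow the comparison-chain template of the proof of Theorem~\ref{Thm_1}, now with the Gaussian target $\max_{k}\max_{s\in S_x}\|G^M_{sk}\|_2^2$. The first step is to write $\widetilde T^M_{sk}=\|O_{kM}P_{kM}\sum_{\ell}(mq)^{-1/2}a_{s\ell}\widetilde S_{\ell k}\|_2$. This exhibits the statistic as the mixed $\ell_\infty/\ell_2$ norm of the vector
\[
X=\sum_{\ell\in[m]}\xi_\ell,\qquad \xi_\ell:=\big((mq)^{-1/2}a_{s\ell}\,O_{kM}P_{kM}\widetilde S_{\ell k}\big)_{(s,k)\in S_x\times[K]}\in(\mathbb R^M)^{|S_x|K}.
\]
The summands $\xi_\ell$ are independent by the Berbee construction. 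Their covariance is
\[
\Cov(X)=\sum_\ell m^{-1}a_{s\ell}a_{t\ell}\,\Cov(q^{-1/2}\widetilde S_1)=\tfrac1m\langle a_s,a_t\rangle\Sigma_q.
\]
This equals exactly the covariance of $(G^M_{sk})$ from \eqref{eq:covariance-GMsk}, because $\widetilde S_\ell\stackrel d=S_\ell\stackrel d=S_1$ by stationarity. There is therefore no covariance mismatch, and the task reduces to a high-dimensional CLT for sums of independent, non-identically distributed vectors, measured in a max-of-Euclidean-norms (``sphere''/hyperrectangle-type) Kolmogorov distance restricted to $t\ge\rho$.

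Next I would invoke a CLT for independent sums over such mixed-norm sets; this is presumably the non-Gaussian counterpart of Lemma~\ref{lem:gaussbootsphere-new}, proved along the same lines and stated in the supplement. The idea is to reduce the events $\{\max_{s,k}\|X_{sk}\|_2\le u\}$ to finitely many half-spaces via a net of the unit sphere in $\mathbb R^M$, i.e.\ $\|v\|_2\le u$ iff $\langle v,\theta\rangle\le u$ for all $\theta$ up to a $(1+\epsilon)$ factor. This uses $\sim(C/\epsilon)^M$ directions per block and gives a max over $p\lesssim nK\epsilon^{-M}$ linear forms. On this max I would apply the Chernozhukov--Chetverikov--Kato-type bound for independent sums, which is of order $(B_n^2\log^7 p/m)^{1/6}$. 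Here $B_n$ controls the $\psi_1$ norms of the summands $\sqrt m\,\xi_\ell$. By Assumption~\ref{cond:stationary}, $\|\|q^{-1/2}\widetilde S_{\ell k}\|\|_{\psi_1}\lesssim \sqrt q\,D_\psi$ in the worst case, and $|a_{s\ell}|\le 1$. With $\log p\lesssim M\log(\cdot)$ and $m\asymp n/q$, the resulting rate is of order $(M^{c}q\cdot q/n)^{1/6}$ times polylogs, which matches the target $(M^4q^2/n)^{1/6}$.

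The net discretization error converts the distance into a Gaussian anti-concentration term over the band $u<\max\|G^M_{sk}\|_2\le u(1+\epsilon)$. I would control this with Corollary~\ref{cor:projected_mixed_norm_ac}, which is available because condition \eqref{eq:bound-on-M} is exactly the hypothesis imposed here. That corollary supplies the $x^{-3/2}$ factor and the $\log^{3\kexp}$ powers. I would choose $\epsilon$ as a small power of $n^{-1}$, so that the anti-concentration contribution $x^{-3/2}\epsilon\,\mathrm{polylog}$ is dominated.

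The final step is to pass from norms to squared norms by the change of variables $t=u^2$, taking the threshold $\sqrt\rho$, and to absorb all constants and logs. The main obstacle is getting the exponent $M^4$ rather than something worse. The log of the net cardinality contributes $M$ to $\log p$, and the moment bound for projected block sums must not carry extra $M$ factors. I would first attempt to bound the third-moment and $\psi_1$ parameters uniformly over unit directions using the trace bound $C_{\mathcal T}$, then balance $\epsilon$ against the anti-concentration bound. If the net approach loses too much, the fallback is a smooth-max/Stein (Lindeberg) argument applied directly to the block norms. In that argument, the $M$-dependence enters only through the smoothing of $\|\cdot\|_2$ and through the lower eigenvalue $xC_L^2M^{-2\gamma_L}/2$ given by Corollary~\ref{lem:good_eigen_values_2}.
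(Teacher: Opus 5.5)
The skeleton of your proposal is right and matches the paper. You use Berbee-coupled independent block vectors and the exact covariance identity, so that a Gaussian with the same block covariances has the law of $(G^M_{sk})$. You then apply a mixed-norm CLT with sample size $m$, with $\psi_1$ parameter of order $D_\psi\sqrt q$ and $m\ge n/(4q)$. However, the step where you produce that CLT does not work as sketched. A net of the unit sphere in $\mathbb R^M$ has log-cardinality of order $M\log(1/\epsilon)$. A hyperrectangle bound of order $(B^2\log^7 p/m)^{1/6}$ therefore contributes at least $M^{7/6}$, not the $M^{2/3}$ in the statement. Moreover, such bounds depend on the minimal variance of the linear forms $\langle G^M_{sk},\theta\rangle$. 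Here that variance is only $\lambda_{\min}(\Cov(G^M_{sk}))\ge xC_L^2M^{-2\gamma_L}/2$, so you pick up extra polynomial factors in $M$ and $x^{-1}$ that are absent from the claim. The paper never passes through half-spaces. It applies Lemma~\ref{new:high_dimensional_CLT_mixed_norms} directly, whose bound $D\,C_L^{-3}J_m(M^4H_m^2/m)^{1/6}$ times logarithms exploits the Decay structure from Corollary~\ref{lem:good_eigen_values_2}. The lower constant $C_L\sqrt{x/2}$ there is what produces exactly $x^{-3/2}$. You should invoke that lemma rather than re-derive it.

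The genuinely missing ingredient is control of the third-moment parameter $J_m$, which enters that bound multiplicatively. You need $\sup_{\|v\|_2=1}\E|\langle v,q^{-1/2}a_{s\ell}O_{kM}P_{kM}\widetilde S_{\ell k}\rangle|^3=O(1)$ uniformly in $q$. The trace bound (equivalently Lemma~\ref{lem:convergence_to_long_term_trace}) only controls second moments. The $\psi_1$ bound alone gives third moments of order $q^{3/2}$, or of order $q$ via H\"older, and either would destroy the $(q^2/n)^{1/6}$ rate. The paper obtains $J_m=O(1)$ from Rio's fourth-moment inequality for $\alpha$-mixing sums, using $\alpha_h\le\beta_h/2$. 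Set $\zeta_i=\langle v,O_{kM}P_{kM}\varepsilon_k^{(i)}\rangle$, which has quantile function at most $D_\psi\log(2/u)$. Let $C_R=\int_0^1(\inf\{h:\beta_h\le 2u\})^2\log^4(2/u)\,du$, which is finite precisely because $c_\beta>2$. The inequality gives $\E|\sum_{i\in I_\ell}\zeta_i|^4\le 256q^2D_\psi^4C_R$, and Lyapunov's inequality then yields $J_m\le 64D_\psi^3C_R^{3/4}$. With $J_m=O(1)$ and $H_m$ of order $\sqrt q$, the stated bound follows at once from the lemma.
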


\begin{proof}[Proof of Lemma~\ref{lem:berbee_hp_reduction}]
For \((s,i)\in [n]\times [n]\), define $
c_{si}:=\mathbf 1(i\leq s)-s/n$. Define also 
$C_n:=n^{-1/2}-(mq)^{-1/2}$ and let
\begin{align*}
\Delta_0
&:=
\max_{k\in[K]}\max_{s\in[n]}
\|\Delta^{(s,k)}_0\|_{\Hb_k},
&
\Delta^{(s,k)}_0
&:=
\sum_{i=1}^n c_{si}\eps_k^{(i)},
\\
\Delta_1
&:=
\max_{k\in[K]}\max_{s\in[n]}
\|\Delta^{(s,k)}_1\|_{\Hb_k},
&
\Delta^{(s,k)}_1
&:=
\sum_{\ell=1}^m a_{s\ell}S_{\ell k},
\\
\tilde{\Delta}_1
&:=
\max_{k\in[K]}\max_{s\in[n]}
\|\tilde{\Delta}^{(s,k)}_1\|_{\Hb_k},
&
\tilde{\Delta}^{(s,k)}_1
&:=
\sum_{\ell=1}^m a_{s\ell}\tilde{S}_{\ell k},
\\
\Delta_2
&:=
\max_{k\in[K]}\max_{s\in[n]}
\|\Delta^{(s,k)}_2\|_{\Hb_k},
&
\Delta^{(s,k)}_2
&:=
\sum_{\ell=1}^m a_{s\ell}S'_{\ell k},
\\
\tilde{\Delta}_2
&:=
\max_{k\in[K]}\max_{s\in[n]}
\|\tilde{\Delta}^{(s,k)}_2\|_{\Hb_k},
&
\tilde{\Delta}^{(s,k)}_2
&:=
\sum_{\ell=1}^m a_{s\ell}\tilde{S}'_{\ell k},
\end{align*}
where the various $S_{\ell k}$ variables are defined near~\eqref{def:big_block_sums}. Further, 
note that $\Delta_0 = \sqrt n T_n^O$ with $T_n^O$ from \eqref{eq:def_oracle_test_statistic} and that $\widetilde \Delta_1=(mq)^{1/2}\max_{k\in[K]}\max_{s \in [n]}\widetilde T_{sk} $ from \eqref{eq:tilde-tsk}.
Finally, define
\begin{align*}
\Delta_3
&:=
\max_{k\in[K]}\max_{s\in[n]}
\|\Delta^{(s,k)}_3\|_{\Hb_k},
&
\Delta^{(s,k)}_3
&:=
\sum_{i=1}^n c_{si}\eps_k^{(i)}
-
\Delta^{(s,k)}_1
-
\Delta^{(s,k)}_2.
\end{align*}
By the reverse triangle inequality for each Hilbert space norm and the \(L^\infty\) norm over \([K]\times[n]\), followed by the triangle inequality for each Hilbert space norm and the \(L^\infty\) norm, it holds that
\begin{align} 
\label{eq:long-bound:)}
&\phantom{{}={}}  \nonumber
\Big|
\max_{k\in [K]}\max_{s \in [n]}
\big\|n^{-1/2}\Delta^{(s,k)}_0\big\|_{\Hb_k}
-
\max_{k\in [K]}\max_{s \in [n]}
\big\|(mq)^{-1/2}\Delta^{(s,k)}_1\big\|_{\Hb_k}
\Big|
\\
&\leq \nonumber
\max_{k\in [K]}\max_{s \in [n]}
\big\|
n^{-1/2}\Delta^{(s,k)}_0
-
(mq)^{-1/2}\Delta^{(s,k)}_1
\big\|_{\Hb_k}
\\
&= \nonumber
\max_{k\in [K]}\max_{s \in [n]}
\big\|
n^{-1/2}
(\Delta^{(s,k)}_0-\Delta^{(s,k)}_1-\Delta^{(s,k)}_2)
+
C_n\Delta^{(s,k)}_1
+
n^{-1/2}\Delta^{(s,k)}_2
\big\|_{\Hb_k}
\\ 
&\leq
\max_{k\in [K]}\max_{s \in [n]}
\big\|
n^{-1/2}\Delta^{(s,k)}_3
\big\|_{\Hb_k}
+
|C_n|
\max_{k\in [K]}\max_{s \in [n]}
\big\|
\Delta^{(s,k)}_1
\big\|_{\Hb_k}
+
\max_{k\in [K]}\max_{s \in [n]}
\big\|
n^{-1/2}\Delta^{(s,k)}_2
\big\|_{\Hb_k}.
\end{align}

We note for later that
\begin{align} \label{eq:bound-on-cn}
|C_n|\sqrt{mq}
\le
3\left(\frac rq\vee \frac{q^2}{n}\right)^{1/2}.
\end{align}
To see this, let \(L:=q+r\), and write \(n=mL+d\) with \(0\le d<L\). Since
\(mq\le n\), we have
\[
|C_n|\sqrt{mq}
=
\left((mq)^{-1/2}-n^{-1/2}\right)\sqrt{mq}
=
1-\sqrt{\frac{mq}{n}}
\le
1-\frac{mq}{n}
=
\frac{n-mq}{n}
=
\frac{mr+d}{n}.
\]
Moreover, since \(d<L=q+r\le 2q\) and \(mq\le n\),
\[
\frac{mr+d}{n}
\le
\frac{mr}{n}+\frac{2q}{n}
\le
\frac rq+\frac{2q}{n}
\le
\sqrt{\frac rq}+\frac{2q}{\sqrt n}
\le
3\left(\sqrt{\frac rq}\vee \frac q{\sqrt n}\right).
\]

We now bound each of the pieces on the right-hand side of the inequality above. To do so, work on the event
$
\Omega_{\mathrm{block}}
:=
\Omega_{\mathrm{big}}\cap\Omega_{\mathrm{small}}
$ from \eqref{eq:Omega-block}, on which we have, for all
$(s,k)\in[n]\times[K]$,
\[
\Delta_1^{(s,k)}=\tilde\Delta_1^{(s,k)},
\qquad
\Delta_2^{(s,k)}=\tilde\Delta_2^{(s,k)}
\]
Therefore, on \(\Omega_{\mathrm{block}}\) from \eqref{eq:long-bound:)},
\begin{align}
\Big|
T_n^O
-
\max_{k\in[K]}\max_{s \in [n]}\widetilde T_{sk}
\Big|
&\leq
\max_{k\in [K]}\max_{s \in [n]}
\big\|n^{-1/2}\Delta^{(s,k)}_3\big\|_{\Hb_k}
\nonumber\\
&\quad +
|C_n|
\max_{k\in [K]}\max_{s \in [n]}
\big\|\tilde \Delta^{(s,k)}_1\big\|_{\Hb_k}
+
\max_{k\in [K]}\max_{s \in [n]}
\big\|n^{-1/2}\tilde \Delta^{(s,k)}_2\big\|_{\Hb_k}.
\label{eq:berbee_reduction_three_terms}
\end{align}

We first bound the small-block term involving $\tilde \Delta_2$. By Lemma~\ref{lem:uified_bounds}(3) and~\eqref{bound_norm_a_s}, recalling that
$
m=\lfloor {n}/(q+r)\rfloor\ge 2$
under the present assumptions, it holds for all \((s,k)\in[n]\times[K]\) that
\begin{align*}
\Big\|
\big\|\tilde{\Delta}^{(s,k)}_2\big\|_{\Hb_k}
\Big\|_{\psi_1}
&=
\Big\|
\Big\|
\sum_{\ell=1}^{m} a_{s\ell}\tilde{S}'_{\ell k}
\Big\|_{\Hb_k}
\Big\|_{\psi_1}
\\
&\leq
C_3D_{\psi}({C}_{\beta, r}\vee 1)^{1/2}
\left[
\|a_s\|_2\sqrt{r}
+
r\log(m+1)
\right]
\\
&\lesssim
D_{\psi}(S_{\beta,0} \vee 1)^{1/2}
\left[
\sqrt{mr}
+
r\log n
\right]
\\
&\lesssim
D_{\psi}(S_{\beta,0} \vee 1)^{1/2}\sqrt{mr}.
\end{align*}
In second inequality we used that the constant $C_{\beta, r}$ of Lemma~\ref{lem:uified_bounds} is bounded, up to an absolute constant, by $S_{\beta,0}\vee 1$, and use this fact without further comment below. 
In the last step, we used \(q\log n\le\sqrt n\),
\(m\ge n/(4q)\), and \(r\le q\), which give
\[
r\log n
\le
\frac{r\sqrt n}{q}
\le
2\frac{r\sqrt{mq}}q
=
2\sqrt{mr}\sqrt{\frac rq}
\le
2\sqrt{mr}.
\]
Consequently, since $mq \le n$,
\[
\left\|
n^{-1/2}\|\tilde{\Delta}^{(s,k)}_2\|_{\Hb_k}
\right\|_{\psi_1}
\lesssim
D_\psi (S_{\beta,0} \vee 1)^{1/2}
\left(\frac rq\right)^{1/2}.
\]
By the Orlicz tail bound~\ref{orlicz:tail_bound} and the union bound, there exists a sufficiently large absolute constant \(D_2>0\) such that
\begin{align} \label{eq:bound-e2}
\Prob\left(
\max_{k\in[K]}\max_{s\in[n]}
\big\|
n^{-1/2}\tilde\Delta^{(s,k)}_2
\big\|_{\Hb_k}
\ge
D_2 D_\psi (S_{\beta,0} \vee 1)^{1/2}
\left(\frac rq\right)^{1/2}
\log(nK)
\right)
\le
\frac{1}{3n}.
\end{align}
Let \(E_2\) denote the complement of the event inside the probability above.

Next, by Lemma~\ref{lem:uified_bounds}(3) and~\eqref{bound_norm_a_s}, for all \((s,k)\in[n]\times[K]\), 
\[
\Big\|
\big\|\tilde{\Delta}^{(s,k)}_1\big\|_{\Hb_k}
\Big\|_{\psi_1}
=
\Big\|
\Big\|
\sum_{\ell=1}^{m} a_{s\ell}\tilde{S}_{\ell k}
\Big\|_{\Hb_k}
\Big\|_{\psi_1}
\lesssim
D_\psi (S_{\beta,0} \vee 1)^{1/2}\sqrt{mq}.
\]
Therefore, using the bound on $|C_n|$ from \eqref{eq:bound-on-cn},
we obtain
\[
\left\|
|C_n|\,
\big\|\tilde{\Delta}^{(s,k)}_1\big\|_{\Hb_k}
\right\|_{\psi_1}
\lesssim
D_\psi (S_{\beta,0}  \vee 1)^{1/2}
\left(
\frac rq\vee \frac{q^2}{n}
\right)^{1/2}.
\]
Thus, again by~\ref{orlicz:tail_bound} and the union bound, there exists a sufficiently large absolute constant \(D_1>0\) such that
\begin{align} \label{eq:bound-e1}
\Prob\left(
|C_n|
\max_{k\in[K]}\max_{s\in[n]}
\|\tilde{\Delta}^{(s,k)}_1\|_{\Hb_k}
\ge
D_1D_\psi (S_{\beta,0} \vee 1)^{1/2}
\left(
\frac rq\vee \frac{q^2}{n}
\right)^{1/2}
\log(nK)
\right)
\le
\frac{1}{3n}.
\end{align}
Let \(E_1\) denote the complement of the event inside the probability above.

We note for a different proof that we have also just shown that there exists a sufficiently large constant $A_1$ that depends only on $C_\beta$ and $c_\beta$ (through $S_{\beta,0}$) and on $D_\psi$ and an event $\tilde E_1$ of probability larger than $1-1/(3n)$ such that, on $\tilde E_1$, it holds that 
\begin{equation}\label{bound_on_tilde_T}
\max_{k\in[K]}\max_{s \in [n]}\widetilde T_{sk}
=
\frac{1}{\sqrt{mq}} \max_{k\in[K]}\max_{s\in[n]}
\|\tilde{\Delta}^{(s,k)}_1\|_{\Hb_k}
\leq A_1\log(nK),
\end{equation}
where the first equality has been stated at the beginning of this proof.

It remains to bound the first term on the right-hand side of \eqref{eq:berbee_reduction_three_terms}. For any \((s,k)\in[n]\times[K]\),
\begin{align*}
\Big\|
n^{-1/2}
\|\Delta^{(s,k)}_3\|_{\Hb_k}
\Big\|_{\psi_1}
&=
n^{-1/2}
\Big\|
\Big\|
\sum_{i\in[n]}c_{si}\eps^{(i)}_k
-
\sum_{\ell=1}^{m}a_{s\ell}
(S_{\ell k}+S'_{\ell k})
\Big\|_{\Hb_k}
\Big\|_{\psi_1}
\\
&\quad\leq
n^{-1/2}
\Big\|
\Big\|
\sum_{i=1}^{m(q+r)}c_{si}\eps^{(i)}_k
-
\sum_{\ell=1}^{m}a_{s\ell}
(S_{\ell k}+S'_{\ell k})
\Big\|_{\Hb_k}
\Big\|_{\psi_1}
\\
&\hspace{4cm}+
n^{-1/2}
\Big\|
\Big\|
\sum_{i=m(q+r)+1}^{n}c_{si}\eps^{(i)}_k
\Big\|_{\Hb_k}
\Big\|_{\psi_1}.
\end{align*}
Since $n-m(q+r)<q+r\le2q$
and \(|c_{si}|\le1\), the second term is bounded by
$
2D_\psi qn^{-1/2}.
$
For the first term, observe that, except for indices lying in the single big block or small block intersecting the cutoff location \(s\), the coefficient \(c_{si}\) agrees with the corresponding block coefficient \(a_{s\ell_i}\), where \(\ell_i\) is the block index containing \(i\). Hence the difference contains at most \(q+r\le2q\) summands, each with coefficient bounded in absolute value by \(1\). Therefore,
\[
n^{-1/2}
\Big\|
\Big\|
\sum_{i=1}^{m(q+r)}c_{si}\eps^{(i)}_k
-
\sum_{\ell=1}^{m}a_{s\ell}
(S_{\ell k}+S'_{\ell k})
\Big\|_{\Hb_k}
\Big\|_{\psi_1}
\le
2D_\psi\frac{q}{\sqrt n}.
\]
Combining the two bounds gives
\[
\Big\|
n^{-1/2}
\|\Delta^{(s,k)}_3\|_{\Hb_k}
\Big\|_{\psi_1}
\le
4D_\psi
\Big(\frac{q^2}{n}\Big)^{1/2}.
\]
Therefore, by~\ref{orlicz:tail_bound} and the union bound, there exists a sufficiently large absolute constant \(D_3>0\) such that
\begin{align} \label{eq:bound-e3}
\Prob\bigg(
\max_{k\in[K]}\max_{s\in[n]}
\big\|
n^{-1/2}\Delta_3^{(s,k)}
\big\|_{\Hb_k}
\ge
D_3D_\psi
\Big(\frac{q^2}{n}\Big)^{1/2}
\log(nK)
\bigg)
\le
\frac{1}{3n}.
\end{align}
Let \(E_3\) denote the complement of the event inside the probability above. Now define
\[
\Omega_1:=E_1\cap E_2\cap E_3.
\]
By the three preceding probability bounds,
$
\Prob(\Omega_1)
\ge
1-1/n$.
On \(\Omega_1\cap\Omega_{\mathrm{block}}\) with $\Omega_{\mathrm{block}}$ from \eqref{eq:long-bound:)},
the reduction~\eqref{eq:berbee_reduction_three_terms} and the bounds from \eqref{eq:bound-e2}, \eqref{eq:bound-e1} and \eqref{eq:bound-e3}  imply that
\begin{align*}
\left|
T_n^O
-
\max_{k\in[K]}\max_{s \in [n]}\widetilde T_{sk}
\right|
&\le
D_2 D_\psi (S_{\beta,0} \vee 1)^{1/2}
\left(\frac rq\right)^{1/2}
\log(nK)
\\
&\quad+
D_1 D_\psi (S_{\beta,0} \vee 1)^{1/2}
\left(
\frac rq\vee \frac{q^2}{n}
\right)^{1/2}
\log(nK)
\\
&\quad+
D_3 D_\psi
\left(\frac{q^2}{n}\right)^{1/2}
\log(nK).
\end{align*}
This yields the claimed bound with $C_1 = D_1 + D_2 + D_3$.
\end{proof}

\begin{proof}[Proof of Lemma~\ref{lem:coupled_big_block_boundary}]
Define
\[
\eta_x
:=
\max_{k\in[K]}
\max_{s\in[n]\setminus S_x}
\widetilde T_{sk}.
\]
We need to prove that \(\Prob(\eta_x>\sqrt t)\le 2/n\).

Fix \((s,k)\in[n]\times[K]\). By the definition of \(\widetilde T_{sk}\) in \eqref{eq:tilde-tsk}, Lemma~\ref{lem:uified_bounds} applied with \(Q=\mathrm{id}_{\Hb_k}\), and the definition of \(S_{\beta,0}\) from \eqref{eq:def-S_beta}, and the inequalities
\(C_{\beta,q}\le S_{\beta,0}\) and
\(\|\|\varepsilon_k^{(1)}\|_{\Hb_k}\|_{\psi_1}\le D_\psi\), there exists a universal constant \(C>0\) such that
\begin{align}
\big\|\widetilde T_{sk}\big\|_{\psi_1}
&=
(mq)^{-1/2}
\Big\|
\Big\|
\sum_{\ell=1}^{m}a_{s\ell}\widetilde S_{\ell k}
\Big\|_{\Hb_k}
\Big\|_{\psi_1}
\nonumber\\
&\le
C D_\psi (S_{\beta,0}\vee 1)^{1/2}
\left[
\frac{\|a_s\|_2}{\sqrt m}
+
\sqrt{\frac qm}\log(m+1)
\right].
\label{eq:boundary_fixed_sk_orlicz}
\end{align}
Here \(\|a_s\|_2\) denotes the Euclidean norm of
\(a_s=(a_{s1},\dots,a_{sm})\in\mathbb R^m\). We next simplify the right hand side of~\eqref{eq:boundary_fixed_sk_orlicz}. By Lemma~\ref{lem:prop a_s_ell}, recalling $V(u) =u-u^2$, we have
\[
\|a_s\|_2^2
\le
mV(s/n)+2.
\]
Therefore, by subadditivity of the square root,
\[
\frac{\|a_s\|_2}{\sqrt m}
\le
\sqrt{V(s/n)}+\sqrt{\frac2m}.
\]
Under our assumptions \(1\le r\le q\) and \(q\le n/4\), we have
\(q+r\le 2q\le n/2\), and hence $m = \lfloor n/(q+r) \rfloor \ge 2$.
Thus \(\sqrt{2/m}\lesssim \sqrt{q/m}\log(m+1)\), since \(q\ge1\) and
\(\log(m+1)\ge1\). Also, since \(m\ge n/\{2(q+r)\}\ge n/(4q)\) and
\(m+1\le n+1\le 2n\), we have
\[
\sqrt{\frac qm}\log(m+1)
\lesssim
\frac{q\log n}{\sqrt n}.
\]
Consequently,
\begin{align}
\frac{\|a_s\|_2}{\sqrt m}
+
\sqrt{\frac qm}\log(m+1)
&\lesssim
\sqrt{V(s/n)}
+
\frac{q\log n}{\sqrt n}.
\label{eq:boundary_weight_bound_all_s}
\end{align}

If \(s\in[n]\setminus S_x\), then \(V(s/n)<x\) as remarked in~\eqref{eq:infsup-V-S_x}. Hence, by
\eqref{eq:boundary_fixed_sk_orlicz} and~\eqref{eq:boundary_weight_bound_all_s},
\[
\big\|\widetilde T_{sk}\big\|_{\psi_1}
\lesssim
D_\psi(S_{\beta,0}\vee1)^{1/2}
\left[
\sqrt{x}
+
\frac{q\log n}{\sqrt n}
\right]
\lesssim
D_\psi(S_{\beta,0}\vee1)^{1/2}
B_x,
\]
where
\[
B_x
:=
\sqrt{x}\vee \frac{q\log n}{\sqrt n}.
\]
By the \(\psi_1\) tail bound~\ref{orlicz:tail_bound}, there exists a universal
constant \(c>0\) such that, for every \(u>0\),
\[
\Prob(|\widetilde T_{sk}|>u)
\le
2\exp\left\{
-\frac{c u}{D_\psi(S_{\beta,0}\vee1)^{1/2}B_x}
\right\}.
\]
Taking \(u=\sqrt t \) and applying the union bound over at most \(nK\) pairs
\((s,k)\), we get
\[
\Prob(\eta_x> \sqrt t )
\le
2nK
\exp\left\{
-\frac{c \sqrt t }{D_\psi(S_{\beta,0}\vee1)^{1/2}B_x}
\right\}.
\]
By the hypothesis of the lemma,
\[
B_x
\le
\frac{\sqrt t }{C_2^{1/2}D_\psi(S_{\beta,0}\vee1)^{1/2}\log(nK)}.
\]
Therefore,
\[
\Prob(\eta_x>\sqrt t)
\le
2nK
\exp\left\{
-cC_2\log(nK)
\right\}.
\]
Choosing the universal constant \(C_2>0\) sufficiently large yields  the claim.
\end{proof}

\begin{proof}[Proof of Lemma~\ref{lem:coupled_big_block_projection_error}]
To lighten notation let
\[
P^{\perp}_{kM}=(\id_{{\Hb}_k}-P_{kM}),
\]
and recall that
$
\|P_{kM}x\|^2_{\Hb_k}
+
\|P^{\perp}_{kM} x\|^2_{\Hb_k}
=
\|x\|^2_{\Hb_k}
$
for every $x\in \Hb_k$
as noted in \eqref{eq:orthogonal}. Therefore, 
defining 
\[
R_{sk}
:=
(mq)^{-1/2}
\Big\|
P_{kM}^{\perp}
\sum_{\ell=1}^m a_{s\ell}\widetilde S_{\ell k}
\Big\|_{\Hb_k}, \qquad (s,k)\in S\times[K]
\]
and recalling the notation from \eqref{eq:tilde-tsk} and \eqref{eq:tilde-tskm},
we obtain the decomposition
$
(\widetilde T_{sk})^2
=
(\widetilde T^M_{sk})^2+R_{sk}^2.
$
Consequently,
\begin{align}
\max_{k\in[K]}\max_{s\in S}
(\widetilde T_{sk})^2
\leq
\max_{k\in[K]}\max_{s\in S}
(\widetilde T^M_{sk})^2+
\max_{k\in[K]}\max_{s\in S}
(R_{sk})^2.
\label{eq:projection-error-deterministic-reduction}
\end{align}
By Lemma~\ref{lem:uified_bounds}(4), applied with
\(Q=P_{kM}^{\perp}\) and $U=D_\psi$, it holds that, for the absolute
constant $C=C_4$ of Lemma ~\ref{lem:uified_bounds} (4),
\begin{align}
&
\bigg\|
\Big\|
P_{kM}^{\perp}
\sum_{\ell=1}^m a_{s\ell}\widetilde S_{\ell k}
\Big\|_{\Hb_k}
\bigg\|_{\psi_1}
\le
C\left(
\E\Big[\Big\|\sum_{\ell=1}^m a_{s\ell}\cdot
P_{kM}^{\perp}\widetilde S_{\ell k}\Big\|_{\Hb_k}\Big]
+\log(m+1)qD_\psi
\right).
\label{eq:projected_residual_block_orlicz}
\end{align}
Since \(1\le r\le q\le n/4\), we have \(m\ge n/(4q)\).
Together with \(m+1\le2n\) and \(\|a_s\|_\infty\le1\), this gives
\begin{equation}
\frac{\|a_s\|_\infty qD_\psi\log(m+1)}{\sqrt{mq}}
\leq
C D_\psi\frac{q\log n}{\sqrt n}.
\label{eq:projection-error-large-block-term}
\end{equation}
Moving onto the first term in~\eqref{eq:projected_residual_block_orlicz},
by the Cauchy--Schwarz inequality and independence and centeredness of the
sequence $\widetilde S_{1k},\dots,\widetilde S_{mk}$, it holds that
\begin{align}
\E\Big[\Big\|\sum_{\ell=1}^m a_{s\ell}\cdot
P_{kM}^{\perp}\widetilde S_{\ell k}\Big\|_{\Hb_k}\Big]
\leq
\left(\sum_{\ell=1}^m a_{s\ell}^2
\E\left[\big\|P_{kM}^{\perp}\widetilde S_{\ell k}\big\|^2_{\Hb_k}\right]
\right)^{1/2}.
\label{eq:projection-error-expectation}
\end{align}
Further, by Lemma~\ref{lem:properties-covariance}(2), it holds that
\begin{align*}
\E\left[\big\|P_{kM}^{\perp}\widetilde S_{\ell k}\big\|^2_{\Hb_k}\right]
&=
\left\|\Cov\left(P_{kM}^{\perp}\widetilde S_{\ell k}\right)\right\|_{\Tr}
\nonumber\\
&=
q\left\|\Cov\left(q^{-1/2}P_{kM}^{\perp}\widetilde S_{\ell k}\right)\right\|_{\Tr}
\nonumber\\
&=
q\left\|
P_{kM}^{\perp}
\left(\Cov\left(q^{-1/2}\widetilde S_{\ell k}\right)-\mathcal K_k\right)
(P_{kM}^{\perp})^*
+P_{kM}^{\perp}\mathcal K_k(P_{kM}^{\perp})^*
\right\|_{\Tr},
\end{align*}
where the final equality is obtained by adding and subtracting
\(P_{kM}^{\perp}\mathcal K_k(P_{kM}^{\perp})^*\).
By the triangle inequality for the trace norm,
\begin{align*}
&\left\|
P_{kM}^{\perp}
\left(\Cov\left(q^{-1/2}\widetilde S_{\ell k}\right)-\mathcal K_k\right)
(P_{kM}^{\perp})^*
+P_{kM}^{\perp}\mathcal K_k(P_{kM}^{\perp})^*
\right\|_{\Tr}
\nonumber\\
&\qquad\leq
\left\|
P_{kM}^{\perp}
\left(\Cov\left(q^{-1/2}\widetilde S_{\ell k}\right)-\mathcal K_k\right)
(P_{kM}^{\perp})^*
\right\|_{\Tr}
+
\left\|P_{kM}^{\perp}\mathcal K_k(P_{kM}^{\perp})^*\right\|_{\Tr}.
\end{align*}

Since $P_{kM}^{\perp}$ is the orthogonal projection onto the
orthogonal complement of the first $M\wedge r_k$ eigenvectors of
$\mathcal K_k$, the positive eigenvalues of
$P_{kM}^{\perp}\mathcal K_kP_{kM}^{\perp}$ are
$\{\lambda_{kj}:M<j\le r_k\}$. Hence, using
Assumption~\ref{cond:scores},
\begin{align*}
\left\|P_{kM}^{\perp}\mathcal K_k(P_{kM}^{\perp})^*\right\|_{\Tr}
&=
\sum_{j=M+1}^{r_k}\lambda_{kj}
\le
C_U^2\sum_{j=M+1}^{\infty}j^{-2\gamma_U}
\nonumber\\
&\le
C_U^2\int_M^\infty u^{-2\gamma_U}\,du
=
\frac{C_U^2}{2\gamma_U-1}M^{1-2\gamma_U},
\end{align*}
where the integral is finite because $\gamma_U>1/2$. Since \(\widetilde S_{\ell k}\) has the same distribution as
\(\sum_{i=1}^q\varepsilon_k^{(i)}\),
Lemma~\ref{lem:convergence_to_long_term_trace} gives
\[
\left\|
\Cov\left(q^{-1/2}\widetilde S_{\ell k}\right)-\mathcal K_k
\right\|_{\Tr}
\leq
64D_\psi^2S_{\beta,1}\frac1q.
\]
Recall the following property of the trace norm: if \(T\) is trace class
and \(A,B\) are bounded operators, then
\[
\|ATB\|_{\Tr}
\le
\|A\|_{\op}\|T\|_{\Tr}\|B\|_{\op};
\]
see, for example, \citet[Chapter~2]{simon2005trace}. 
Since
\(P_{kM}^{\perp}\) is an orthogonal projection, its operator norm is at
most one. Consequently,
\begin{align*}
&\left\|
P_{kM}^{\perp}
\left\{
\Cov\left(q^{-1/2}\widetilde S_{\ell k}\right)-\mathcal K_k
\right\}
(P_{kM}^{\perp})^*
\right\|_{\Tr}
\le
\left\|
\Cov\left(q^{-1/2}\widetilde S_{\ell k}\right)-\mathcal K_k
\right\|_{\Tr}
\le
64D_\psi^2S_{\beta,1}\frac1q.
\end{align*}
Therefore, there is a constant $C_3$ depending only on
$D_\psi,C_U,\gamma_U,C_\beta,c_\beta$ such that, for all $\ell\in[m]$,
\begin{equation*}
\frac1q\E\left[
\big\|P_{kM}^{\perp}\widetilde S_{\ell k}\big\|^2_{\Hb_k}
\right]
\leq
C_3\Big(M^{1-2\gamma_U}+\frac1q\Big).
\end{equation*}
Hence, from \eqref{eq:projection-error-expectation},
\begin{align*}
q^{-1/2}\E\Big[\Big\|\sum_{\ell=1}^m a_{s\ell}\cdot
P_{kM}^{\perp}\widetilde S_{\ell k}\Big\|_{\Hb_k}\Big]
\leq
C_3^{1/2}\Big(M^{1-2\gamma_U}+\frac1q\Big)^{1/2}
\Big(\sum_{\ell=1}^m a_{s\ell}^2\Big)^{1/2}.
\end{align*}
Since, by Lemma~\ref{lem:prop a_s_ell},
\[
\Big(\sum_{\ell=1}^m a_{s\ell}^2\Big)^{1/2}
\leq \frac32\sqrt m,
\]
there is a constant $C_4$ such that
\begin{equation}
(mq)^{-1/2}\E\Big[\Big\|\sum_{\ell=1}^m a_{s\ell}\cdot
P_{kM}^{\perp}\widetilde S_{\ell k}\Big\|_{\Hb_k}\Big]
\leq
C_4\Big(M^{1-2\gamma_U}+\frac1q\Big)^{1/2}.
\label{eq:projection-error-expectation-final}
\end{equation}
Combining \eqref{eq:projected_residual_block_orlicz},
\eqref{eq:projection-error-large-block-term}, and
\eqref{eq:projection-error-expectation-final}, and using
\(\sqrt{x}+\sqrt{y}\le\sqrt{2(x+y)}\), gives a constant
\(C_5=C_5(D_\psi,C_U,\gamma_U,C_\beta,c_\beta)>0\) such that
\begin{equation*}
\left\|R_{sk}\right\|_{\psi_1}
\leq
C_5\Big(
M^{1-2\gamma_U}+\frac1q+\frac{q^2\log^2 n}{n}
\Big)^{1/2}.
\end{equation*}
There are at most \(nK\) pairs in \(S\times[K]\). Hence, by the
\(\psi_1\) tail bound~\ref{orlicz:tail_bound} and the union bound,
there exists a sufficiently large constant
\(C_6=C_6(D_\psi,C_U,\gamma_U,C_\beta,c_\beta)>0\) such that
\begin{align*}
\mathbb P\Bigg(&
\max_{k\in[K]}\max_{s\in S}R_{sk}
>
C_6\log(nK)
\Big(M^{1-2\gamma_U}+\frac1q+\frac{q^2\log^2 n}{n}\Big)^{1/2}
\Bigg)
\leq\frac2n.
\end{align*}
Consequently, outside an event of probability at most \(2/n\),
\[
\max_{k\in[K]}\max_{s\in S}R_{sk}^2
\leq
C_6^2\log^2(nK)
\Big(M^{1-2\gamma_U}+\frac1q+\frac{q^2\log^2 n}{n}\Big).
\]
The conclusion follows from
\eqref{eq:projection-error-deterministic-reduction} upon taking
\(C_{\mathrm{proj}}:=C_6^2\).
\end{proof}

\begin{proof}[Proof of Lemma~\ref{lem:gaussian_approx_projected_coupled}]
To lighten notation, we write $S=S_x$ throughout the proof.
Define
\[
G^M
:=
\bigoplus_{s\in S}
\bigoplus_{k \in [K]}
G^M_{sk}
\in
\bigoplus_{s\in S}
\bigoplus_{k \in [K]}
\mathbb R^{\check M_k}.
\]
For each \(\ell\in[m]\), define
\[
X_\ell
:=
\bigoplus_{s\in S}
\bigoplus_{k \in [K]}
X_{\ell sk}
\in
\bigoplus_{s\in S}
\bigoplus_{k \in [K]}
\mathbb R^{\check M_k},
\]
where, recalling $O_{kM}$ from \eqref{eq:def-OkM},
\[
X_{\ell sk}
:=
q^{-1/2}a_{s\ell}
(O_{kM}\circ P_{kM})\widetilde S_{\ell k}.
\]
Further, let
\[
S_m^X
:=
m^{-1/2}\sum_{\ell=1}^m X_\ell,
\qquad
S^X_{msk}:=m^{-1/2}\sum_{\ell=1}^m X_{\ell sk}.
\]
Since \(O_{kM}\) is an isometry and by definition of $\widetilde T^M_{sk}$ in \eqref{eq:tilde-tskm}, we have
\begin{equation}
\label{eq:projected-coupled-as-block-norm-tracked}
\max_{k\in[K]}\max_{s\in S}(\widetilde T^M_{sk})^2
=
\max_{s\in S}\max_{k\in[K]}
\|S^X_{msk}\|_{2}^2.
\end{equation}
The vectors \(X_1,\ldots,X_m\) are independent and centered because the
Berbee-coupled block sums are independent and centered.  Moreover, for
\((s_a,k_a,j_a)\), \(a=1,2\),
\begin{align*}
\Cov
\left(
[S_m^X]_{s_1k_1j_1},[S_m^X]_{s_2k_2j_2}
\right)
&=
\frac{\langle a_{s_1},a_{s_2}\rangle_{\mathbb R^m}}{mq}
\Cov
\left(
\left\langle Z_{k_1j_1},\pi_{k_1}S_1\right\rangle_{\Hb_{k_1}},
\left\langle Z_{k_2j_2},\pi_{k_2}S_1\right\rangle_{\Hb_{k_2}}
\right)
\\
&=
\Cov
\left(G^M_{s_1k_1j_1},G^M_{s_2k_2j_2}\right),
\end{align*}
where we used stationarity and
\(\widetilde S_\ell\stackrel d=S_\ell\), as well as \eqref{eq:covariance-GMsk} and \eqref{eq:covariance-GMsk-alternative-expresssion}.  Thus
\(\Cov(S_m^X)=\Cov(G^M)\). 

Let \(Y_1,\ldots,Y_m\) be independent centered Gaussian random elements,
independent of the \(X_\ell\)'s, with
\(\Cov(Y_\ell)=\Cov(X_\ell)\), and set
\(S_m^Y=m^{-1/2}\sum_{\ell=1}^mY_\ell\).  Then
\[
\Cov(S_m^Y)
=\frac1m\sum_{\ell=1}^m\Cov(Y_\ell)
=\Cov(S_m^X)
=\Cov(G^M).
\]
Since \(S_m^Y\) and \(G^M\) are centered Gaussian vectors,
\begin{equation}
\label{eq:matched-gaussian-law-tracked}
S_m^Y\stackrel d=G^M.
\end{equation}

We next verify the hypotheses of
Lemma~\ref{new:high_dimensional_CLT_mixed_norms}, with $n$ in that lemma replaced by $m$.  By
Corollary~\ref{lem:good_eigen_values_2}, for every
\((s,k)\in S\times[K]\),
\begin{align}
\label{eq:decay-proof}
\Cov(S^X_{msk})
=
\Cov(G^M_{sk})
\in
\operatorname{Decay}
\left(
\frac{xC_L^2}{2},
2C_U^2,
2\gamma_L,
2\gamma_U
\right).
\end{align}
To verify the condition involving \(J_m\) in \eqref{eq:Jn}, fix
\(v\in\mathbb R^{\check M_k}\) with \(\|v\|_{\check M_k}=1\), and put
\[
\zeta_i
:=
\left\langle
v,(O_{kM}\circ P_{kM})\varepsilon_k^{(i)}
\right\rangle.
\]
For a real-valued random variable \(W\), write
\[
Q'(W,u):=\inf\{t\in\mathbb R:\mathbb P(|W|>t)\le u\},
\qquad u\in(0,1).
\]
Because \(O_{kM}\) is an isometry and \(P_{kM}\) is a contraction,
\(|\zeta_i|\le\|\varepsilon_k^{(i)}\|_{\Hb_k}\), and hence
\(Q'(\zeta_i,u)\le D_\psi\log(2/u)\). Indeed, \ref{orlicz:tail_bound} applied in case $p=1$ gives 
\[
\mathbb P(|W|>D_\psi\log(2/u))\leq 2\exp\left(-\log(2/u)\right)\leq u,
\]
and this implies the quantile bound written above.  Set
\[
C_R
:=
\int_0^1
\left(\inf\{h\in\mathbb N:\beta_h\le2u\}\right)^2
\log^4(2/u)\,du.
\]
The polynomial mixing bound from \ref{cond:stationary} gives
\[
\inf\{h\in\mathbb N:\beta_h\le2u\}
\le
1+\left(\frac{C_\beta}{2u}\right)^{1/c_\beta}
\le 
C_1 u^{-1/c_\beta},
\]
where $C_1 = 1 + (C_\beta/2)^{1/c_\beta}$. Hence, recalling that $c_\beta>2$, $C_R$ can be bounded by a constant that only depends on \(C_\beta,c_\beta\).  Since \(\alpha_h\le\beta_h/2\),
Theorem~2.5 of~\cite{rio2017asymptotic} yields
\begin{align} \label{eq:rio1}
\mathbb E\Big|\sum_{i\in I_\ell}\zeta_i\Big|^4
\le 256q^2D_\psi^4C_R.
\end{align}
Using \(|a_{s\ell}|\le1\),
\(\widetilde S_\ell\stackrel d=S_\ell\), and Lyapunov's inequality,
we therefore obtain, uniformly in \((s,k,\ell)\) and \(v\),
\[
\mathbb E|\langle v,X_{\ell sk}\rangle|^3
=
\mathbb E\Big| q^{-1/2}\sum_{i \in I_\ell} a_{s\ell} \zeta_i\Big|^3
\le
q^{-3/2}
\Big(
\mathbb E\Big|\sum_{i\in I_\ell}\zeta_i\Big|^4
\Big)^{3/4}
\le
64D_\psi^3C_R^{3/4}.
\]
Consequently, we may take
\[
J_m=C_J(D_\psi^3C_R^{3/4}\vee1),
\]
where \(C_J\) is universal and no larger than 64.

Next we identify \(H_m\) from \eqref{eq:Hn3}. For every
\((s,k,\ell)\in S\times[K]\times[m]\),
\begin{align*}
\big\|\|X_{\ell sk}\|_2\big\|_{\psi_1}
&\le
q^{-1/2}
\left\|\|\widetilde S_{\ell k}\|_{\Hb_k}\right\|_{\psi_1}
\\
&=
q^{-1/2}
\left\|\|S_{\ell k}\|_{\Hb_k}\right\|_{\psi_1}
\le
q^{-1/2}\sum_{i\in I_\ell}
\left\|\|\varepsilon_k^{(i)}\|_{\Hb_k}\right\|_{\psi_1}
\le
D_\psi\sqrt q.
\end{align*}
Hence we may choose
\[
H_m
:=
C_H\bigl(D_\psi^3C_R^{3/4}\vee D_\psi\vee1\bigr)\sqrt q,
\]
where \(C_H\) is universal and sufficiently large that
\(1\le J_m\le H_m\).

We can now apply Lemma~\ref{new:high_dimensional_CLT_mixed_norms} with
sample size \(m\), block index set \(S\times[K]\), and largest block
dimension at most \(M\).  Since \(|S|K\le nK\) and the lower eigenvalue
constant is \(C_L\sqrt{x/2}\) by \eqref{eq:decay-proof}, that lemma gives
\begin{align}
&d_{\mathrm K}^{(\rho)}
\left(
\max_{s\in S}\max_{k\in[K]}
\|S^X_{msk}\|_{2}^2,
\max_{s\in S}\max_{k\in[K]}
\|S^Y_{msk}\|_{2}^2
\right)
\nonumber\\
&\quad\le
\frac{C}{x^{3/2}}
J_m
\left(\frac{M^4H_m^2}{m}\right)^{1/6}
\log^{3\kexp+2}(nK)
\log^{3\kexp + 3/2}(m),
\label{eq:mixed-norm-clt-application-tracked}
\end{align}
where \(C\) depends only on
\(C_L,C_U,\gamma_L,\gamma_U,\rho\).
Indeed, the factor involving the lower eigenvalue constant is
\[
\bigl(C_L\sqrt{x/2}\bigr)^{-3}
=2^{3/2}C_L^{-3}x^{-3/2},
\]
which accounts for the displayed power of \(x\).

Finally, because \(r\le q\le n/4\), we have $m \ge n/(4q)$. Moreover, the quantities \(J_m\) and \(H_m/\sqrt q\) are bounded by constants
depending only on \(D_\psi,C_\beta,c_\beta\), which yields
\[
J_m
\left(\frac{M^4H_m^2}{m}\right)^{1/6}
\le
C\left(\frac{M^4q}{m}\right)^{1/6}
\le
C\left(\frac{M^4q^2}{n}\right)^{1/6}.
\]
Finally,
$
\log^{3 \kexp + 3/2}(m) \le \log^{3 \kexp + 3/2}(n)$.
Combining these estimates with
\eqref{eq:projected-coupled-as-block-norm-tracked},
\eqref{eq:matched-gaussian-law-tracked}, and
\eqref{eq:mixed-norm-clt-application-tracked} proves the claim.
\end{proof}

\subsection{Conditional Comparison}
Throughout this section, the residual, oracle, and coupled oracle multiplier processes are constructed using the same iid standard Gaussian multiplier sequence
\(e_1,\ldots,e_m\) that is independent of
$\mathcal F_n\vee\widetilde{\mathcal F}_n$
from \eqref{def:join_data_coupled}. 
Recall the residual bootstrap procedure introduced in
Section~\ref{subsec:bootstrap}, with is a gap-length tuning parameter \(g=g_n\) . In the theory below we allow general \(g_n\).

We start by defining an oracle multiplier bootstrap process.
Let $q,r\in\mathbb N$ satisfy $1\le r\leq n/4$, and let
\(I_\ell\), \(\ell\in[m]\), denote the big blocks defined
in~\eqref{def:big_little_blocks}. Recall the residual bootstrap multiplier partial sum and the bootstrap CUSUM process from \eqref{def:bootstrap-partial-sum} and \eqref{def:bootstrap-cusum}, respectively, defined for each \(k\in[K]\) and \(s\in[n]\) by
\begin{equation} \label{eq:Ckstar}
    S_k^*(s)
    :=
    \sum_{\ell=1}^{m}
    e_\ell
    \sum_{i\in I_\ell}
    \widehat\varepsilon_{i,k}(g) \mathbf 1 \{ i \le s \},
    \qquad
    C_k^*(s)
    :=
    \frac{1}{\sqrt{mq}}
    \left\{
    S_k^*(s)-\frac{s}{n}S_k^*(n)
    \right\}.
\end{equation}
We then introduce the oracle analogues:
\[
    S_k^{O*}(s)
    :=
    \sum_{\ell=1}^{m}
    e_\ell
    \sum_{i\in I_\ell}
    \varepsilon_k^{(i)}\mathbf 1\{i\le s\},
    \qquad 
    C_k^{O*}(s)
    :=
    \frac{1}{\sqrt{mq}}
    \left\{
    S_k^{O*}(s)
    -
    \frac{s}{n}S_k^{O*}(n)
    \right\}.
\]
Note that we can write
\begin{align}
\label{def:oracle-ck}
    C_k^{O*}(s)
    =
    \frac{1}{\sqrt{mq}}
    \sum_{\ell=1}^{m}
    e_\ell
    \sum_{i\in I_\ell}
    \varepsilon_k^{(i)}
    \left\{
    \mathbf 1\{i\le s\}
    -
    \frac{s}{n}
    \right\}.
\end{align}
Analogously to the definition of $T_n^{*}$ from \eqref{eq:definition-Tstar}, define the oracle multiplier statistic by
\[
    T_n^{O*}
    :=
    \max_{k\in[K]}
    \max_{s\in[n]}
    \left\|C_k^{O*}(s)\right\|_{\Hb_k}.
\]
We will also use a block-aligned and Berbee-coupled oracle multiplier
process. For each \((s,k)\in[n]\times[K]\), define
\begin{equation}
\label{def:berbee-coupled-block-oracle}
    \widetilde C_k^{O*}(s)
    :=
    \frac{1}{\sqrt{mq}}
    \sum_{\ell=1}^m
    e_\ell \sum_{i\in I_\ell}
    \tilde \varepsilon_k^{(i)}
    \left\{
    \mathbf 1\{ \ell \le B_s\}
    -
    \frac{s}{n}
    \right\}
    =
    \frac{1}{\sqrt{mq}}
    \sum_{\ell=1}^m
    a_{s\ell}e_\ell \widetilde S_{\ell k}.
\end{equation}
Here \(\widetilde S_1,\ldots,\widetilde S_m\) are the Berbee-coupled
big-block sums defined in~\eqref{def:big_block_sums}, and
\(\widetilde S_{\ell k}:=\pi_k\widetilde S_\ell\in\Hb_k\). The coefficient
\(a_{s\ell}\) is defined in~\eqref{eq:def-a_sl}; thus
\(\widetilde C_k^{O*}(s)\) is the block-aligned version of the oracle
multiplier process, with whole big-block sums replacing partial block
contributions.

Set
\[
    \widetilde T_{sk}^{O*}
    :=
    \big\|\widetilde C_k^{O*}(s)\big\|_{\Hb_k}.
\]
For a truncation level \(M\), define the projected coordinate vector
\[
    \widetilde G_{sk}^{M,O*}
    :=
    (O_{kM}\circ P_{kM})
    \widetilde C_k^{O*}(s)
    \in\mathbb R^{\check M_k}.
\]
Equivalently,
\[
    \widetilde G_{sk}^{M,O*}
    =
    \frac1{\sqrt m}
    \sum_{\ell=1}^m
    a_{s\ell}e_\ell
    (O_{kM}\circ P_{kM})
    \left(q^{-1/2}\widetilde S_{\ell k}\right).
\]
Define
\[
    \widetilde T_{sk}^{M,O*}
    :=
    \big\|
    \widetilde G_{sk}^{M,O*}
    \big\|_2
    =
    \big\|
    P_{kM}\widetilde C_k^{O*}(s)
    \big\|_{\Hb_k}.
\]
Finally, define
\[
    \widetilde T_n^{O*}
    :=
    \max_{k\in[K]}
    \max_{s\in[n]}
    \widetilde T_{sk}^{O*},
    \qquad
    \widetilde T_x^{O*}
    :=
    \max_{k\in[K]}
    \max_{s\in S_x}
    \widetilde T_{sk}^{O*},
\]
and
\[
    \widetilde T_x^{M,O*}
    :=
    \max_{k\in[K]}
    \max_{s\in S_x}
    \widetilde T_{sk}^{M,O*}.
\]

\begin{lemma}[Residual--oracle comparison on a coordinate subset]
\label{lem:residual_oracle_adaptive_subset}
Suppose Assumption~\ref{cond:stationary} holds. 
Fix \(\varnothing\ne\mathcal J\subseteq[K]\) and suppose that the tuning parameters $q,r,g,\tau$ for the residual bootstrap from Section~\ref{subsec:bootstrap} satisfy
\begin{equation*}
   \tau \in (0,1/2) \qquad 1\le r\le q,
    \qquad
    g+q+r\le\frac{\tau n}{2}
\end{equation*}
and also that
\begin{equation*}
    \omega_k\in[\tau n,(1-\tau)n]
    \quad \forall\,
    k\in\mathcal J\cap\mathcal S,
\end{equation*}
where this condition is understood to be vacuous if
\(\mathcal J\cap\mathcal S=\varnothing\).
Then, for every $\ell \in [n]$, there exists an event 
\(\mathcal E_n\in\mathcal F_n\) satisfying
\begin{equation*}
    \mathbb P(\mathcal E_n^c)
    \le
    \frac1n
    +
    2\left\lceil\frac n{\ell}\right\rceil
    \beta_{\ell},
\end{equation*}
such that, on \(\mathcal E_n\),
\begin{align}
&\mathbb P\left(
    \max_{k\in\mathcal J}\max_{s\in[n]}
    \|C_k^*(s)-C_k^{O*}(s)\|_{\Hb_k}
    >
    \frac{C}{\sqrt \tau} R_{n,\mathcal J}^{\mathrm{ad}}(q,g,\ell,K)
    \,\middle|\,
    \mathcal F_n
\right) 
\le\frac2n,
\label{eq:adaptive-subset-conditional-comparison}
\end{align}
where, recalling \(\Delta_{\mathcal J,\mathrm{wk}}(g,\ell)\) from
\eqref{eq:def-delta-J-wk}, 
\begin{align}
R_{n,\mathcal J}^{\mathrm{ad}}(q,g,\ell,K)
:={}&
\sqrt{\log(nK)}
\Bigg[
\log(nK)
\sqrt{\frac{q+g}{n}}
\left\{
1+
\frac{\ell\log(en/\ell)}{\sqrt q}
\right\}
+
\sqrt q
\Delta_{\mathcal J,\mathrm{wk}}(g,\ell)
\Bigg]
\label{eq:def-adaptive-subset-residual-rate}
\end{align}
and where $C>0$ is a constant that only depends on $D_\psi,C_\beta,c_\beta$.
\end{lemma}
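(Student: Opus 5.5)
Conditionally on \(\mathcal F_n\), the difference \(C_k^*(s)-C_k^{O*}(s)\) is a Gaussian element of \(\Hb_k\) built from the multipliers \(e_\ell\). The plan is to bound its conditional \(\psi_2\)-norm (equivalently, the trace of its conditional covariance) deterministically on a good event \(\mathcal E_n\in\mathcal F_n\). Once this is done, a Gaussian tail bound in \(\Hb_k\) (Lemma~\ref{lem:psi_2_controlled_by_L_2} together with \ref{orlicz:tail_bound}) and a union bound over the at most \(nK\) pairs \((s,k)\) produce the factor \(\sqrt{\log(nK)}\) and the conditional probability \(2/n\).

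We write \(D_{i,k}:=\widehat\varepsilon_{i,k}-\varepsilon_k^{(i)}\) for \(i\in\bigcup_\ell I_\ell\). Then
\[
C_k^*(s)-C_k^{O*}(s)=(mq)^{-1/2}\sum_{\ell=1}^m e_\ell\sum_{i\in I_\ell}D_{i,k}\big(\mathbf 1\{i\le s\}-s/n\big).
\]
The conditional second moment is at most \((mq)^{-1}\sum_\ell\|\sum_{i\in I_\ell}D_{i,k}c_{si}\|^2\). By the definition of the residuals in \eqref{def:trimmed-residuals-clipped}, \(D_{i,k}\) equals \(-(\widehat\mu_{L,k}-\mu_k)\) plus possibly \(\delta_k\mathbf 1\{i>\omega_k\}\) on \(i\le N_{L,k}\). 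It equals \(-\varepsilon_k^{(i)}\) on the trimmed region \(N_{L,k}<i\le N_{R,k}\), and the analogous right-hand expression for \(i>N_{R,k}\). This splits the difference into three contributions.

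The first contribution is the mean-estimation error. On an event where \(\|\widehat\mu_{L,k}-\mu_k-\text{(bias)}\|\) and the right analogue are \(\lesssim \log(nK)/\sqrt{\tau n}\) uniformly in \(k\) (from \(\psi_1\)-bounds for partial sums, Lemma~\ref{lem:uified_bounds}, and \(N_{L,k},n-N_{R,k}\ge\tau n/2\)), this term contributes at most \((mq)^{-1/2}\sqrt{m}\,q\cdot\log(nK)/\sqrt{\tau n}\lesssim \log(nK)\sqrt{q/(\tau n)}\).

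The second contribution is the trimmed noise. It involves at most \(2(g+q+r)\) indices, hence \(O((g+q)/q+1)\) blocks, each with \(\psi_1\)-norm \(\lesssim\sqrt q\) after blocking. A union bound over \(k\) and over the \(O(n)\) possible positions of the trimmed window gives \(\lesssim \log(nK)\sqrt{(q+g)/n}\).

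The third contribution is the contamination by the mean change, and it is handled through localization. For \(k\notin\mathcal W_{\mathcal J}(g,\ell)\), Corollary~\ref{cor:coordinatewise_beta_localization_new} with coupling length \(\ell\) gives, simultaneously over \(k\), that \(|\widehat\omega_k-\omega_k|\le g\) on an event of probability at least \(1-1/n-2\lceil n/\ell\rceil\beta_\ell\). By the interior condition, clipping does not move \(\widetilde\omega_k\) away, so \(\omega_k\in(N_{L,k},N_{R,k}]\) and no \(\delta_k\) enters the residuals. The trimmed means are then unbiased up to noise. The localization radius itself carries the factor \(\ell\log(en/\ell)\), which I would propagate as the term \(\ell\log(en/\ell)/\sqrt q\) through the size of the window where the coupling remainder may act. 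For \(k\in\mathcal W_{\mathcal J}(g,\ell)\) we have no localization control, so I would bound the contamination crudely. Misplacing the change corrupts the left or right mean by at most \(\Delta_k\), and it corrupts at most one stretch of residuals by \(\Delta_k\). Summed over blocks with the multiplier weights, this gives a conditional standard deviation \(\lesssim (mq)^{-1/2}\sqrt m\,q\,\Delta_k/\sqrt\tau\) after the \(1/\sqrt\tau\) normalization from the trimmed means, that is, \(\sqrt q\,\Delta_{\mathcal J,\mathrm{wk}}(g,\ell)/\sqrt\tau\).

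The main obstacle is the coupling between the data-dependent trimming positions \(N_{L,k},N_{R,k}\) and the noise that is being bounded. I would handle it by taking maxima over all \(O(n)\) deterministic candidate positions (multiples of \(q+r\)) before the union bound, which costs only a \(\log n\) factor. Collecting the three contributions and intersecting the events defines \(\mathcal E_n\) with the stated probability and yields \eqref{eq:adaptive-subset-conditional-comparison}.
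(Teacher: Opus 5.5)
Your overall architecture matches the paper's proof. That means a conditional Gaussian tail bound with the block variance proxy, a union bound over the $nK$ pairs, and a split into trimmed-region noise, trimmed-mean error and contamination, with contamination handled through $\mathcal W_{\mathcal J}(g,\ell)$ and Corollary~\ref{cor:coordinatewise_beta_localization_new}. The noise-control step, however, has a genuine gap.

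Assumption~\ref{cond:stationary} gives only polynomial $\beta$-mixing and $\psi_1$-control of each $\|\varepsilon_k^{(i)}\|_{\Hb_k}$. This does not yield that a sum of $q$ consecutive noise terms has $\psi_1$-norm $\lesssim\sqrt q$; the triangle inequality gives only $qD_\psi$. Nor does it yield that the trimmed means are within $\log(nK)/\sqrt{\tau n}$ of their targets.

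Lemma~\ref{lem:uified_bounds} does not repair this, for three reasons. It concerns independent coupled big-block sums and carries an extra $q\log(m+1)$ term. It does not cover the partial blocks meeting the trimmed window, or the small blocks inside $[1,N_{L,k}]$. Transferring its bounds back to the data costs $\mathbb P(\Omega_{\mathrm{block}}^c)\approx (n/q)\beta_r$, which is not part of the asserted bound $1/n+2\lceil n/\ell\rceil\beta_\ell$. Separately, your account of the term $\ell\log(en/\ell)/\sqrt q$ as propagated from the localization radius is not an argument. Once $|\widehat\omega_k-\omega_k|<g$, the localization radius plays no further role; it enters only through the definition of $\mathcal W_{\mathcal J}(g,\ell)$.

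The missing ingredient is Lemma~\ref{lem:uniform_interval_sums_new}, applied with the given $\ell$ and $\eta=1/(12n)$. It provides a single event $\mathcal E_n\in\mathcal F_n$ with $\mathbb P(\mathcal E_n^c)\le 1/n+2\lceil n/\ell\rceil\beta_\ell$ on which
$\|\sum_{i\in I}\varepsilon_k^{(i)}\|_{\Hb_k}\le C\log(nK)\{\sqrt{|I|}+L_n\}$, with $L_n:=\ell\log(en/\ell)$, holds simultaneously for all $k\in[K]$ and all integer intervals $I\subseteq[n]$.

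The additive $L_n$ is the price of Berbee coupling with blocks of length $\ell$, and it is exactly where the factor $L_n/\sqrt q$ comes from. The deleted window $\{N_{L,k}<i\le N_{R,k}\}$ has length at most $4(g+q)$ and meets at most $6(g+q)/q$ big blocks. Each such block contributes $\log^2(nK)\{|I_\ell\cap(N_{L,k},N_{R,k}]|+L_n^2\}$; summing and dividing by $mq\ge n/4$ gives $\log(nK)\sqrt{(q+g)/n}\,(1+L_n/\sqrt q)$. Likewise, the noise averages over $[1,N_{L,k}]$ and $(N_{R,k},n]$ are $\lesssim\log(nK)\{(\tau n)^{-1/2}+L_n/(\tau n)\}$, because both segments have length at least $\tau n/2$.

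Since the bound is uniform over all intervals, the data-dependent positions $N_{L,k},N_{R,k}$ need no separate treatment. Corollary~\ref{cor:coordinatewise_beta_localization_new} holds on this very same event, so nothing further has to be intersected, and that is how the stated failure probability is achieved. Intersecting separate events for each of your three contributions, especially ones built on the $(q,r)$-block coupling, would exceed it.
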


\begin{proof}[Proof of Lemma~\ref{lem:residual_oracle_adaptive_subset}]
Set 
\begin{align}
\label{eq:helper-notation1}
\ell_0:=\ell,
\qquad
h:=q+r,
\qquad
L_n
:=\ell_0\log\left(\frac{en}{\ell_0}\right)
.
\end{align}
For each \(k\in[K]\), define the (random) deleted index set
\[
 G_k(g)
:=
\left\{
i\in[n]:
 N_{L,k}(g)<i\le
 N_{R,k}(g)
\right\},
\]
where \( N_{L,k}(g)\) and \( N_{R,k}(g)\) are defined in~\eqref{def:clipped-LR-general}.
Thus, by~\eqref{def:trimmed-residuals-clipped},
$\widehat\varepsilon_{i,k}(g)=0$ for all 
$i\in G_k(g)$.
For \((i,k)\in[n]\times[K]\), define
\[
d_{i,k}
:=
\widehat\varepsilon_{i,k}(g)-\varepsilon_k^{(i)}.
\]
Then, for \((s,k)\in[n]\times[K]\),
\[
C_k^*(s)-C_k^{O*}(s)
=
\frac1{\sqrt{mq}}
\sum_{\ell=1}^m e_\ell
\sum_{i\in I_\ell}d_{i,k}
\left\{\mathbf1(i\le s)-\frac sn\right\},
\]
where \(C_k^*(s)\) is the bootstrap CUSUM process for coordinate \(k\)
defined in~\eqref{def:bootstrap-cusum}. For each
\((s,k)\in[n]\times\mathcal J\), define
\[
V_{sk}
:=
\bigg[
\frac1{mq}
\sum_{\ell=1}^m
\Big\|
\sum_{i\in I_\ell}d_{i,k}
\Big\{\mathbf1(i\le s)-\frac sn\Big\}
\Big\|_{\Hb_k}^2
\bigg]^{1/2}.
\]
Then set
\[
D_{n,\mathcal J}
:=
\max_{k\in\mathcal J}\max_{s\in[n]}V_{sk}.
\]
Every \(V_{sk}\), and therefore \(D_{n,\mathcal J}\), is
\(\mathcal F_n\)-measurable because it depends only on the data. Let
\(C_{\mathrm G}>0\) be the absolute constant from
Lemma~\ref{lem:gaussian_conditional_tail_bound}. Conditionally on
\(\mathcal F_n\), each \(C_k^*(s)-C_k^{O*}(s)\) is a centered Gaussian
random element of \(\Hb_k\), being a linear function of the independent
standard Gaussian multipliers \(e_1,\ldots,e_m\).
Lemma~\ref{lem:gaussian_conditional_tail_bound} gives, for every
\(y>0\) and every \((s,k)\in[n]\times\mathcal J\), almost surely,
\[
\mathbb P\big(
\|C_k^*(s)-C_k^{O*}(s)\|_{\Hb_k}
>
C_{\mathrm G}\sqrt y\,V_{sk}
\,\big|\,
\mathcal F_n
\big)
\le
2e^{-y}.
\]
Define $C_{\mathrm{mult}}
:=
\sqrt2\,C_{\mathrm G}$.
Taking \(y=2\log(nK)\), using \(V_{sk}\le D_{n,\mathcal J}\), and
applying the conditional union bound over the at most
\(n|\mathcal J|\le nK\) pairs \((s,k)\), we obtain
\begin{align*}
&\mathbb P\left(
\max_{k\in\mathcal J}\max_{s\in[n]}
\|C_k^*(s)-C_k^{O*}(s)\|_{\Hb_k}
>
C_{\mathrm{mult}}D_{n,\mathcal J}\sqrt{\log(nK)}
\,\middle|\,
\mathcal F_n
\right)
\\
&\quad\le
\sum_{k\in\mathcal J}\sum_{s=1}^n
\mathbb P\left(
\|C_k^*(s)-C_k^{O*}(s)\|_{\Hb_k}
>
C_{\mathrm G}\sqrt{2\log(nK)}\,D_{n,\mathcal J}
\,\middle|\,
\mathcal F_n
\right)
\\
&\quad\le
\sum_{k\in\mathcal J}\sum_{s=1}^n
\mathbb P\left(
\|C_k^*(s)-C_k^{O*}(s)\|_{\Hb_k}
>
C_{\mathrm G}\sqrt{2\log(nK)}\,V_{sk}
\,\middle|\,
\mathcal F_n
\right)
\\
&\quad\le
2n|\mathcal J|e^{-2\log(nK)}
\le
2nK(nK)^{-2}
=
\frac{2}{nK}
\le
\frac2n.
\end{align*}
Consequently,
\begin{align}
&\mathbb P\left(
\max_{k\in\mathcal J}\max_{s\in[n]}
\|C_k^*(s)-C_k^{O*}(s)\|_{\Hb_k}
>
C_{\mathrm{mult}}D_{n,\mathcal J}\sqrt{\log(nK)}
\,\middle|\,
\mathcal F_n
\right)
\le\frac2n.
\label{eq:adaptive-subset-multiplier-reduction}
\end{align}
It remains to bound \(D_{n,\mathcal J}\). 
For each \((s,k)\in[n]\times\mathcal J\), define
\begin{align*}
V_{sk}^{\mathrm{del}}
&:=
\bigg[
\frac1{mq}
\sum_{\ell=1}^m
\Big\|
\sum_{i\in I_\ell\cap G_k(g)}
d_{i,k}
\left\{\mathbf1(i\le s)-\frac sn\right\}
\Big\|_{\Hb_k}^2
\bigg]^{1/2},
\\
V_{sk}^{\mathrm{ret}}
&:=
\bigg[
\frac1{mq}
\sum_{\ell=1}^m
\Big\|
\sum_{i\in I_\ell\setminus G_k(g)}
d_{i,k}
\left\{\mathbf1(i\le s)-\frac sn\right\}
\Big\|_{\Hb_k}^2
\bigg]^{1/2}.
\end{align*}
Set
\begin{align}
\label{eq:def-Dn}
D_{n,\mathcal J}^{\mathrm{del}}
:=
\max_{k\in\mathcal J}\max_{s\in[n]}
V_{sk}^{\mathrm{del}},
\qquad
D_{n,\mathcal J}^{\mathrm{ret}}
:=
\max_{k\in\mathcal J}\max_{s\in[n]}
V_{sk}^{\mathrm{ret}}.
\end{align}
The decomposition
$
d_{i,k}
=
d_{i,k}\mathbf1\{i\in G_k(g)\}
+
d_{i,k}\mathbf1\{i\notin G_k(g)\}
$
and the triangle inequality on the Hilbert space
\(\bigoplus_{\ell=1}^m\Hb_k\), equipped with its direct-sum norm, give $V_{sk}
\le
V_{sk}^{\mathrm{del}}
+
V_{sk}^{\mathrm{ret}}$ and 
consequently
\begin{equation}
\label{eq:adaptive-subset-D-reduction}
D_{n,\mathcal J}
\le
D_{n,\mathcal J}^{\mathrm{del}}
+
D_{n,\mathcal J}^{\mathrm{ret}}.
\end{equation}

We first introduce a uniform interval-sum bound that will be used for
both terms. Apply Lemma~\ref{lem:uniform_interval_sums_new} with
\[
\ell=\ell_0,
\qquad
\eta=\frac1{12n},
\]
and define
$\mathcal E_n
:=
\mathcal E_{\ell_0,\,1/(12n)}^{\mathrm{int}}$.
Then \(\mathcal E_n\in\mathcal F_n\) and
\begin{align*}
\mathbb P(\mathcal E_n^c)
&\le
\frac1{12n}
+
2\left\lceil
\frac n
{\ell_0}
\right\rceil
\beta_{
\ell_0
}\le
\frac1n
+
2\left\lceil
\frac n
{\ell_0}
\right\rceil
\beta_{
\ell_0}.
\end{align*}
Moreover, on \(\mathcal E_n\), simultaneously for every
\((k,I)\in[K]\times\mathcal I_n\),
\begin{align*}
\bigg\|
\sum_{i\in I}\varepsilon_k^{(i)}
\bigg\|_{\Hb_k}
&\le
C_{\mathrm{int}}\log(144n^3K)
\left\{
V\sqrt{|I|}
+
D_\psi L_n
\right\},
\end{align*}
where $L_n$ is defined in \eqref{eq:helper-notation1}, \(\mathcal I_n\) denotes the collection of nonempty integer
intervals in \([n]\), \(C_{\mathrm{int}}>0\) is the absolute constant
from Lemma~\ref{lem:uniform_interval_sums_new}, and \(V\) is the
dependence constant from that lemma. As stated there, \(V\) depends
only on \(C_\beta,c_\beta,D_\psi\).
The tuning conditions in \eqref{eq:adaptive-subset-block-conditions} imply \(n>8\), because
$
2
\le
q+r
\le
g+q+r
\le
\tau n/{2}
<
n/4.
$
In particular,  \(144\le n^3\), such that 
$
\log(144n^3K)
\le
\log((nK)^6)
=
6\log(nK)$.
Define
$
C_{\mathrm I}
:=
7C_{\mathrm{int}}(V\vee D_\psi).
$
Then, on \(\mathcal E_n\),
\begin{equation}
\label{eq:adaptive-subset-interval-bound}
\bigg\|
\sum_{i\in I}\varepsilon_k^{(i)}
\bigg\|_{\Hb_k}
\le
C_{\mathrm I}\log(nK)
\left\{
\sqrt{|I|}+L_n
\right\},
\qquad
(k,I)\in[K]\times\mathcal I_n.
\end{equation}
The constant \(C_{\mathrm I}\) depends only on
\(C_\beta,c_\beta,D_\psi\). We now 
show that, on \(\mathcal E_n\),
\begin{align}
D_{n,\mathcal J}^{\mathrm{del}}
&\le
C_{\mathrm{del}}\log(nK)
\sqrt{\frac{g+q}{n}}
\left(
1+\frac{L_n}{\sqrt q}
\right),
\label{eq:adaptive-subset-deleted-contribution}
\end{align}
where \(C_{\mathrm{del}}\) depends only on
\(C_\beta,c_\beta,D_\psi\). For
\((\ell,k)\in[m]\times[K]\), set
\[
d_{\ell,k}^{G}
:=
\big|I_\ell\cap G_k(g)\big|.
\]
Since \(d_{i,k}=-\varepsilon_k^{(i)}\) for
\(i\in G_k(g)\), it follows that
\begin{align*}
\bigg\|
\sum_{i\in I_\ell\cap G_k(g)}
d_{i,k}
\left\{\mathbf1(i\le s)-\frac sn\right\}
\bigg\|_{\Hb_k}
&=
\bigg\|
\sum_{i\in I_\ell\cap G_k(g)\cap[1,s]}
\varepsilon_k^{(i)}
-
\frac sn
\sum_{i\in I_\ell\cap G_k(g)}
\varepsilon_k^{(i)}
\bigg\|_{\Hb_k}
\\
&\le
\bigg\|
\sum_{i\in I_\ell\cap G_k(g)\cap[1,s]}
\varepsilon_k^{(i)}
\bigg\|_{\Hb_k}
+
\frac sn
\bigg\|
\sum_{i\in I_\ell\cap G_k(g)}
\varepsilon_k^{(i)}
\bigg\|_{\Hb_k}.
\end{align*}
Both \(I_\ell\cap G_k(g)\) and
\(I_\ell\cap G_k(g)\cap[1,s]\) are integer intervals.
Therefore, on \(\mathcal E_n\),
\eqref{eq:adaptive-subset-interval-bound} gives
\begin{align*}
\bigg\|
\sum_{i\in I_\ell\cap G_k(g)}
d_{i,k}
\left\{\mathbf1(i\le s)-\frac sn\right\}
\bigg\|_{\Hb_k}
& \le
C_{\mathrm I}\log(nK)
\left\{
\sqrt{
\big|I_\ell\cap G_k(g)\cap[1,s]\big|\cap[1,s]
}
+L_n
\right\}
\\
&\qquad\quad+
\frac sn C_{\mathrm I}\log(nK)
\left\{
\sqrt{
\big|I_\ell\cap G_k(g)\big|
}
+L_n
\right\}
\\
&\le
2C_{\mathrm I}\log(nK)
\left\{
\sqrt{\big|I_\ell\cap G_k(g)\big|}+L_n
\right\}.
\end{align*}
Recalling the definition of \(D_{n,\mathcal J}^{\mathrm{del}}\) from \eqref{eq:def-Dn}, we
obtain
\begin{align*}
\left(D_{n,\mathcal J}^{\mathrm{del}}\right)^2
&\le
\frac{4C_{\mathrm I}^2\log^2(nK)}{mq}
\max_{k\in\mathcal J}
\sum_{\ell\in[m]:
I_\ell\cap G_k(g)\ne\varnothing}
\left\{
\sqrt{\big|I_\ell\cap G_k(g)\big|}+L_n
\right\}^2
\\
&\le
\frac{8C_{\mathrm I}^2\log^2(nK)}{mq}
\max_{k\in\mathcal J}
\left[
\sum_{\ell=1}^m \big|I_\ell\cap G_k(g)\big|
+
L_n^2
\#\left\{
\ell\in[m]:
I_\ell\cap G_k(g)\ne\varnothing
\right\}
\right].
\end{align*}
We next bound the two quantities on the right-hand side of this display. First, by the definitions of the block-aligned endpoints in~\eqref{def:clipped-LR-general}, recalling $h=q+r$, 
\[
 N_{L,k}(g)
=
h\left\lfloor
\frac{\widetilde\omega_k-g}{h}
\right\rfloor,
\qquad
 N_{R,k}(g)
=
h\left\lceil
\frac{\widetilde\omega_k+g}{h}
\right\rceil.
\]
Thus,
\begin{align}
\label{eq:bound-on-tilde-g}
\sum_{\ell=1}^m \big|I_\ell\cap G_k(g)\big|
\le
| G_k(g)|
\le
 N_{R,k}(g)- N_{L,k}(g)
&\le \nonumber
h\left[
\frac{\widetilde\omega_k+g}{h}+1
-
\left\{
\frac{\widetilde\omega_k-g}{h}-1
\right\}
\right]
\\
&=
2g+2h
\le
2g+4q
\le
4(g+q),
\end{align}
where we used \(r\le q\). Second, let
\[
N_k
:=
\#\left\{
\ell\in[m]:
I_\ell\cap G_k(g)\ne\varnothing
\right\}.
\]
If \(N_k\le2\), then
$N_k
\le
2+| G_k(g)|/{q}.
$
If \(N_k>2\), discard the first and last big blocks intersecting
\( G_k(g)\). Because \( G_k(g)\) is an integer
interval, every remaining intersected big block is contained entirely
in \( G_k(g)\). There are \(N_k-2\) such blocks, each of
cardinality \(q\), and hence
$
| G_k(g)|
\ge
(N_k-2)q.
$
Consequently, in either case,
\[
N_k
\le
2+\frac{| G_k(g)|}{q}
\le
2+4\frac{g+q}{q}
\le
6\frac{g+q}{q},
\]
where the second inequality follows from \eqref{eq:bound-on-tilde-g}. Furthermore, the assumptions from \eqref{eq:adaptive-subset-block-conditions} give
$q + r \le g+q+r \le \tau n/{2} < n/4$.
Using \(\lfloor u\rfloor\ge u/2\) for \(u\ge2\) and $q \ge r$, we obtain
$
m
=
\left\lfloor{n}/{q+r}\right\rfloor
\ge
{n}/\{2(q+r)\}
\ge
{n}/{4q},
$
and hence $mq\ge n/4$.
Substituting these bounds gives
\begin{align*}
\big(D_{n,\mathcal J}^{\mathrm{del}}\big)^2
&\le
\frac{8C_{\mathrm I}^2\log^2(nK)}{mq}
\left\{
4(g+q)
+
6\frac{g+q}{q}L_n^2
\right\}
\\
&\le
192C_{\mathrm I}^2\log^2(nK)
\frac{g+q}{n}
\left(
1+\frac{L_n^2}{q}
\right).
\end{align*}
In view of \(\sqrt{1+u^2}\le1+u\) and defining $C_{\mathrm{del}}:=\sqrt{192}\,C_{\mathrm I}$, this proves~\eqref{eq:adaptive-subset-deleted-contribution}. We next bound \(D_{n,\mathcal J}^{\mathrm{ret}}\). By the definition of
the clipped change-point estimator in~\eqref{def:trimmed_omega_k}, we have 
$\tau n \le \widetilde\omega_k \le (1-\tau)n$ for all $k \in [K]$.
Thus, 
\begin{align*}
 N_{L,k}(g)
&=
h\left\lfloor
\frac{\widetilde\omega_k-g}{h}
\right\rfloor
\ge
\widetilde\omega_k-g-h
\ge
\tau n-g-h
\ge
\frac{\tau n}{2},
\\
n- N_{R,k}(g)
&=
n-
h\left\lceil
\frac{\widetilde\omega_k+g}{h}
\right\rceil
\ge
n-\widetilde\omega_k-g-h
\ge
\tau n-g-h
\ge
\frac{\tau n}{2},
\end{align*}
where the last inequalities use \(g+h\le\tau n/2\). Applying~\eqref{eq:adaptive-subset-interval-bound} to the two retained
segments over which $i$ may range, gives, uniformly over \(k\in[K]\),
\begin{align}
&\bigg\|
\frac1{ N_{L,k}(g)}
\sum_{i=1}^{ N_{L,k}(g)}
\varepsilon_k^{(i)}
\bigg\|_{\Hb_k}
+
\bigg\|
\frac1{n- N_{R,k}(g)}
\sum_{i= N_{R,k}(g)+1}^{n}
\varepsilon_k^{(i)}
\bigg\|_{\Hb_k}
\nonumber\\
&\hspace{6cm}\le
2C_{\mathrm I}\log(nK)
\bigg\{
\sqrt{\frac2{\tau n}}
+
\frac{2L_n}{\tau n}
\bigg\}.
\label{eq:adaptive-subset-retained-noise-means}
\end{align}
By the definition of $\mathcal W_{\mathcal J}(g,\ell_0)$ in \eqref{def:W_j(g)} and Corollary~\ref{cor:coordinatewise_beta_localization_new}, on
\(\mathcal E_n\),
\begin{equation*}
|\widehat\omega_k-\omega_k|
<
r_{\beta,k}^{(n)}(
\ell_0
)
\le g
\qquad
\forall\, k\in
(\mathcal J\cap\mathcal S)
\setminus
\mathcal W_{\mathcal J}\left(
g,
\ell_0
\right).
\end{equation*}
For every such \(k\), the interval
\([\tau n,(1-\tau)n]\) contains \(\omega_k\) by the interior assumption in \eqref{eq:intertior-condition}. We claim that
$
|\widetilde\omega_k-\omega_k|
\le
|\widehat\omega_k-\omega_k|.
$
Indeed, if
\(\widehat\omega_k\in[\tau n,(1-\tau)n]\), then
\(\widetilde\omega_k=\widehat\omega_k\). If
\(\widehat\omega_k<\tau n\), then
$
|\widetilde\omega_k-\omega_k|
=
\omega_k-\tau n
\le
\omega_k-\widehat\omega_k.
$
Finally, if \(\widehat\omega_k>(1-\tau)n\), then
$
|\widetilde\omega_k-\omega_k|
=
(1-\tau)n-\omega_k
\le
\widehat\omega_k-\omega_k.
$
Thus
\[
|\widetilde\omega_k-\omega_k|
\le
|\widehat\omega_k-\omega_k|
<g
\qquad \forall\, k\in
(\mathcal J\cap\mathcal S)
\setminus
\mathcal W_{\mathcal J}\left(
g,
\ell_0
\right).
\]
Consequently, for every such $k$,
\begin{align*}
 N_{L,k}(g)
&=
h\left\lfloor
\frac{\widetilde\omega_k-g}{h}
\right\rfloor
\le
\widetilde\omega_k-g
<
\omega_k,
\\
 N_{R,k}(g)
&=
h\left\lceil
\frac{\widetilde\omega_k+g}{h}
\right\rceil
\ge
\widetilde\omega_k+g
>
\omega_k.
\end{align*}
In other words, the true change point lies in the deleted region:
\[
\omega_k\in G_k(g) \qquad \forall\, k\in
(\mathcal J\cap\mathcal S)
\setminus
\mathcal W_{\mathcal J}\left(
g,
\ell_0
\right).
\]
Next, define
\begin{align*}
\pi_{L,k}
&:=
\frac1{ N_{L,k}(g)}
\sum_{j=1}^{ N_{L,k}(g)}
\mathbf1\{j>\omega_k\},
&  
\pi_{R,k}
& :=
\frac1{n- N_{R,k}(g)}
\sum_{j= N_{R,k}(g)+1}^{n}
\mathbf1\{j>\omega_k\}.
\\
\overline\varepsilon_{L,k}
&:=
\frac1{ N_{L,k}(g)}
\sum_{j=1}^{ N_{L,k}(g)}
\varepsilon_k^{(j)},
&  
\overline\varepsilon_{R,k}
&:=
\frac1{n- N_{R,k}(g)}
\sum_{j= N_{R,k}(g)+1}^{n}
\varepsilon_k^{(j)}.
\end{align*}
For \(k\in\mathcal S\) and \(i\notin G_k(g)\), the
definition of the residuals in \eqref{def:trimmed-residuals-clipped}  gives
\[
d_{i,k}
=
\begin{cases}
\bigl\{\mathbf1(i>\omega_k)-\pi_{L,k}\bigr\}\delta_k
-\overline\varepsilon_{L,k},
&
i\le N_{L,k}(g),
\\[0.4em]
\bigl\{\mathbf1(i>\omega_k)-\pi_{R,k}\bigr\}\delta_k
-\overline\varepsilon_{R,k},
&
i> N_{R,k}(g).
\end{cases}
\]
If
$
k\in
(\mathcal J\cap\mathcal S)
\setminus
\mathcal W_{\mathcal J}(g,\ell_0),
$
then
$
\pi_{L,k}=0
$ and $
\pi_{R,k}=1$, 
and the deterministic terms in the preceding display vanish. For
\(k\in\mathcal W_{\mathcal J}(g,\ell_0)\), since
\(\pi_{L,k},\pi_{R,k}\in[0,1]\),
\[
\left|
\mathbf1(i>\omega_k)-\pi_{L,k}
\right|\le1,
\qquad
\left|
\mathbf1(i>\omega_k)-\pi_{R,k}
\right|\le1,
\]
so the corresponding deterministic terms have norm at most
\(\Delta_k:=\|\delta_k\|_{\Hb_k}\). For \(k\notin\mathcal S\), the deterministic terms
vanish identically, and for these indices $\delta_k=0$. It thus follows from
\eqref{eq:adaptive-subset-retained-noise-means} that
\begin{align}
&\max_{k\in\mathcal J}
\max_{i\notin G_k(g)}
\|d_{i,k}\|_{\Hb_k}
\le
2C_{\mathrm I}\log(nK)
\bigg\{
\sqrt{\frac2{\tau n}}
+
\frac{2L_n}{\tau n}
\bigg\}
+
\Delta_{\mathcal J,\mathrm{wk}}(g,\ell_0),
\label{eq:adaptive-subset-outside-gap-bound}
\end{align}
where $\Delta_{\mathcal J,\mathrm{wk}}(g,\ell) = \max_{k \in W_{\mathcal J}(g,\ell_0)} \Delta_k$ as defined in \eqref{eq:def-delta-J-wk}.
For every \((\ell,s,k)\in[m]\times[n]\times\mathcal J\), the
Hilbert-space triangle inequality and
\(\left|\mathbf1(i\le s)-s/n\right|\le1\) give
\begin{align*}
\bigg\|
\sum_{i\in I_\ell\setminus G_k(g)}
d_{i,k}
\left\{\mathbf1(i\le s)-\frac sn\right\}
\bigg\|_{\Hb_k}
\le
\sum_{i\in I_\ell\setminus G_k(g)}
\|d_{i,k}\|_{\Hb_k}
\le
q
\max_{i\notin G_k(g)}
\|d_{i,k}\|_{\Hb_k}.
\end{align*}
Hence, by definition of $D_{n,\mathcal J}^{\mathrm{ret}}$ in \eqref{eq:def-Dn},
\begin{align*}
D_{n,\mathcal J}^{\mathrm{ret}}
&\le
\left[
\frac1{mq}
\sum_{\ell=1}^m q^2
\right]^{1/2}
\max_{k\in\mathcal J}
\max_{i\notin G_k(g)}
\|d_{i,k}\|_{\Hb_k}
=
\sqrt q
\max_{k\in\mathcal J}
\max_{i\notin G_k(g)}
\|d_{i,k}\|_{\Hb_k}.
\end{align*}
Therefore, from \eqref{eq:adaptive-subset-outside-gap-bound},
\begin{align*}
D_{n,\mathcal J}^{\mathrm{ret}}
\le
2C_{\mathrm I}\log(nK)
\bigg\{
\sqrt{\frac{2q}{\tau n}}
+
\frac{2\sqrt q L_n}{\tau n}
\bigg\}
+
\sqrt q\,
\Delta_{\mathcal J,\mathrm{wk}}(g,\ell_0).
\end{align*}
The tuning condition in \eqref{eq:adaptive-subset-block-conditions} gives \(q/n\le\tau/2\). Therefore,
$q / \sqrt{n(q+g)} \le \sqrt{q/n} \le \sqrt{\tau/2}$, which implies
\begin{align*}
\frac{2\sqrt q\,L_n}{\tau n}
&=
\frac2\tau
\frac{q}{\sqrt{n(q+g)}}
\sqrt{\frac{q+g}{n}}
\frac{L_n}{\sqrt q}
\le
\sqrt{\frac2\tau}
\sqrt{\frac{q+g}{n}}
\frac{L_n}{\sqrt q}.
\end{align*}
Moreover,
\[
\sqrt{\frac{2q}{\tau n}}
\le
\sqrt{\frac2\tau}
\sqrt{\frac{q+g}{n}}.
\]
It follows that
\begin{align}
D_{n,\mathcal J}^{\mathrm{ret}}
\le{}&
2C_{\mathrm I}\sqrt{\frac2\tau}
\log(nK)
\sqrt{\frac{q+g}{n}}
\left(
1+\frac{L_n}{\sqrt q}
\right)
+
\sqrt q\,
\Delta_{\mathcal J,\mathrm{wk}}(g,\ell_0).
\label{eq:adaptive-subset-retained-final}
\end{align}

Combining~\eqref{eq:adaptive-subset-D-reduction},
\eqref{eq:adaptive-subset-deleted-contribution}, and
\eqref{eq:adaptive-subset-retained-final}, and recalling $C_{\mathrm{del}} = \sqrt{192} C_{\mathrm I} \le \sqrt{192} C_{\mathrm I}/\sqrt \tau$, we obtain that
\begin{align*}
D_{n,\mathcal J}
&\le \nonumber
\frac{C_{\mathrm D}}{\sqrt\tau}
\bigg[
\log(nK)
\sqrt{\frac{q+g}{n}}
\left\{
1+
\frac{\ell_0\log(en/\ell_0)}{\sqrt q}
\right\}
+
\sqrt q\,
\Delta_{\mathcal J,\mathrm{wk}}(g,\ell_0)
\bigg]
\\
&=
\frac{C_{\mathrm D}}{\sqrt\tau} \,
\frac{R_{n,\mathcal J}^{\mathrm{ad}}(q,g,\ell_0,K)}{\sqrt{\log(nK)}} 
\end{align*}
where $C_{\mathrm D} :=
\{(\sqrt{192}+2\sqrt2)C_{\mathrm I}
\}\vee1$ and where $R_{n,\mathcal J}^{\mathrm{ad}}(q,g,\ell_0,K)$ is defined \eqref{eq:def-adaptive-subset-residual-rate}.
Combining this display with
\eqref{eq:adaptive-subset-multiplier-reduction} proves
\eqref{eq:adaptive-subset-conditional-comparison} with
$
C
:=
C_{\mathrm{mult}}C_{\mathrm D}.
$
The constant \(C\) depends only on
\(D_\psi,C_\beta,c_\beta\), and is independent of \(\tau\).
This completes the proof.
\end{proof}

\begin{lemma}[Oracle block-alignment comparison]
\label{lem:oracle_block_alignment}
Suppose Assumption~\ref{cond:stationary} holds. Let
\(q,r\in\mathbb N\) satisfy
\[
1\le r\le q,
\qquad
q\le\frac n4.
\]
Then there exist a constant
$
C_{\mathrm{align}}=C_{\mathrm{align}}(D_\psi)>0
$
and an event \(\Omega_{\mathrm{align}}\in\mathcal F_n\) satisfying
$
\mathbb P(\Omega_{\mathrm{align}})\ge1-1/n
$
such that, on \(\Omega_{\mathrm{align}}\cap\Omega_{\mathrm{big}}\) with $\Omega_{\mathrm{big}}$ from \eqref{def:Omega_big},
\[
\mathbb P\bigg(
\max_{k\in[K]}\max_{s\in[n]}
\big\|\widetilde C_k^{O*}(s)-C_k^{O*}(s)\big|_{\Hb_k}
>
C_{\mathrm{align}}
\frac{q\log^{3/2}(nK)}{\sqrt n}
\,\bigg|\,
\mathcal F_n\vee\widetilde{\mathcal F}_n
\bigg)
\le\frac2n,
\]
where $C_k^{O*}$ and $\widetilde C_k^{O*}$ are defined in 
\eqref{def:oracle-ck} and 
\eqref{def:berbee-coupled-block-oracle}, respectively,
\end{lemma}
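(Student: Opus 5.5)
The approach is to condition on $\mathcal G_n:=\mathcal F_n\vee\widetilde{\mathcal F}_n$. Given $\mathcal G_n$, the difference $\widetilde C_k^{O*}(s)-C_k^{O*}(s)$ is a centered Gaussian element of $\Hb_k$, linear in the multipliers $e_1,\dots,e_m$. Lemma~\ref{lem:gaussian_conditional_tail_bound} then applies exactly as in the proof of Lemma~\ref{lem:residual_oracle_adaptive_subset}. We only need a $\mathcal G_n$-measurable bound on the conditional standard deviation proxy
\[
W_{sk}:=\bigg[\frac1{mq}\sum_{\ell=1}^m\Big\|\sum_{i\in I_\ell}\big\{\widetilde\varepsilon_k^{(i)}(\mathbf 1\{\ell\le B_s\}-\tfrac sn)-\varepsilon_k^{(i)}(\mathbf 1\{i\le s\}-\tfrac sn)\big\}\Big\|_{\Hb_k}^2\bigg]^{1/2}.
\]
With $y=2\log(nK)$ and a union bound over at most $nK$ pairs $(s,k)$, the conditional failure probability is at most $2/n$. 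The threshold becomes $C\sqrt{\log(nK)}\max_{s,k}W_{sk}$.

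The difference is controlled block by block. On $\Omega_{\mathrm{big}}$ we have $\widetilde S_\ell=S_\ell$ for all $\ell$. Hence for every block $\ell$ with $I_\ell\subseteq[1,s]$ or $I_\ell\cap[1,s]=\varnothing$ (and $\ell\neq B_s$ in the ambiguous case), the two inner sums coincide. This holds because $\mathbf 1\{\ell\le B_s\}=\mathbf 1\{i\le s\}$ for all $i\in I_\ell$, and the coefficient is constant on the block, so only the block sum enters. At most one block, the one containing $s$ or indexed by $B_s$ (two blocks at most, given the ceiling convention in \eqref{eq:def-Bs}), contributes. Its contribution is bounded by $\|S_{\ell k}\|_{\Hb_k}+\|\sum_{i\in I_\ell,\,i\le s}\varepsilon_k^{(i)}\|_{\Hb_k}$, which is at most $2\sum_{i\in I_\ell}\|\varepsilon_k^{(i)}\|_{\Hb_k}\le 2q\max_{i\in[n]}\|\varepsilon_k^{(i)}\|_{\Hb_k}$. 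Therefore, on $\Omega_{\mathrm{big}}$,
\[
\max_{s,k}W_{sk}\le \frac{2\sqrt2\,q}{\sqrt{mq}}\max_{k\in[K]}\max_{i\in[n]}\|\varepsilon_k^{(i)}\|_{\Hb_k}.
\]

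Let $\Omega_{\mathrm{align}}\in\mathcal F_n$ be the event $\max_{k,i}\|\varepsilon_k^{(i)}\|_{\Hb_k}\le C D_\psi\log(nK)$. By the $\psi_1$ bound in Assumption~\ref{cond:stationary}, \ref{orlicz:tail_bound}, and a union bound over $nK$ terms, $\Omega_{\mathrm{align}}$ has probability at least $1-1/n$ for a suitable absolute $C$. Using $mq\ge n/4$ (from $r\le q\le n/4$), we get $\max W_{sk}\lesssim D_\psi q\log(nK)/\sqrt n$. Multiplying by $\sqrt{\log(nK)}$ gives the claimed threshold $C_{\mathrm{align}}q\log^{3/2}(nK)/\sqrt n$ with $C_{\mathrm{align}}$ depending only on $D_\psi$.

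The main care point is the bookkeeping of the boundary block. One must check precisely, for the definition $B_s=\lceil s/(q+r)\rceil$ and the small blocks $J_\ell$, which big blocks have mismatched coefficients. Some $s$ lie in a small block, in which case $B_s$ indexes a big block entirely to the left of $s$ and no mismatch occurs at all. Either way, the number of mismatched blocks is at most one, so the crude $2q\max\|\varepsilon\|$ bound suffices. The conditional measurability also needs checking: $W_{sk}$ must be $\mathcal G_n$-measurable and the multipliers independent of $\mathcal G_n$. Both follow from the construction.
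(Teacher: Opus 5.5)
Your proposal is correct and follows essentially the paper's route. On $\Omega_{\mathrm{big}}$ only the block $\ell=B_s$ can have mismatched coefficients, the noise is controlled on an $\mathcal F_n$-event of probability at least $1-1/n$ via the $\psi_1$ tail bound, and $mq\ge n/4$ together with a Gaussian tail for the multipliers gives the $q\log^{3/2}(nK)/\sqrt n$ threshold. The differences are cosmetic: the paper writes the difference exactly as $e_{B_s}D_{sk}/\sqrt{mq}$ and bounds $\max_{\ell}|e_\ell|$ directly rather than invoking Lemma~\ref{lem:gaussian_conditional_tail_bound}, and your hedge about ``two blocks'' is unnecessary since at most one block, namely $I_{B_s}$, can contribute.
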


\begin{proof}[Proof of Lemma~\ref{lem:oracle_block_alignment}]
Recall from~\eqref{eq:def-Bs} that
$
B_s
=
\lceil s/(q+r)\rceil.
$
On \(\Omega_{\mathrm{big}}\) from \eqref{def:Omega_big}, we have
\(\widetilde S_{\ell k}=S_{\ell k}\) for every
\((\ell,k)\in[m]\times[K]\). Hence, by definition in \eqref{def:oracle-ck} and \eqref{def:berbee-coupled-block-oracle}, 
\begin{align*}
\widetilde C_k^{O*}(s)-C_k^{O*}(s)
&=
\frac1{\sqrt{mq}}
\sum_{\ell=1}^m e_\ell
\sum_{i\in I_\ell}\varepsilon_k^{(i)}
\left\{
\mathbf1(\ell\le B_s)-\mathbf1(i\le s)
\right\}.
\end{align*}
Every summand in this display vanishes except
possibly the one with \(\ell=B_s\). If \(B_s>m\), then
\(s>m(q+r)\)
so every index in the retained big blocks is at most \(s\), and the entire
sum vanishes. Therefore, with
\[
D_{sk}
:=
\begin{cases}
\displaystyle
\sum_{i\in I_{B_s}}\varepsilon_k^{(i)}
\{1-\mathbf1(i\le s)\},
&B_s\in[m],\\[1.2ex]
0,
&B_s\notin[m],
\end{cases}
\]
we have, on \(\Omega_{\mathrm{big}}\),
\[
\widetilde C_k^{O*}(s)-C_k^{O*}(s)
=
\frac{e_{B_s}}{\sqrt{mq}}D_{sk}.
\]
Set
\[
A_n:=\max_{k\in[K]}\max_{s\in[n]}\|D_{sk}\|_{\Hb_k}.
\]
Each \(D_{sk}\) is either zero or a sum of at most \(q\) marginal noise elements. Therefore, by the Hilbert-space triangle
inequality, monotonicity of the Orlicz norm,
Assumption~\ref{cond:stationary}, and~\ref{orlicz:triangle},
\[
\left\|\|D_{sk}\|_{\Hb_k}\right\|_{\psi_1}
\le qD_\psi.
\]
Then~\ref{orlicz:tail_bound}, applied with \(p=1\), and the union bound
over the \(nK\) pairs \((s,k)\) yield an absolute constant \(C_0>0\) (we may take $C
_0=3$ since $n\geq 4$)
such that the event
\[
\Omega_{\mathrm{align}}
:=
\left\{
A_n\le C_0D_\psi q\log(nK)
\right\}
\in\mathcal F_n
\]
satisfies
$
\mathbb P(\Omega_{\mathrm{align}})\ge1-1/n.
$
Because \(r\le q\) and \(q\le n/4\), we have
$n/(q+r)\ge2$.
Therefore,
$
m
=
\lfloor n /(q+r) \rfloor
\ge
n / \{2(q+r) \} \ge n/(4q)$
and hence
$
mq\ge n/4.
$
Thus, on \(\Omega_{\mathrm{align}}\cap\Omega_{\mathrm{big}}\),
\[
\max_{k\in[K]}\max_{s\in[n]}
\|\widetilde C_k^{O*}(s)-C_k^{O*}(s)\|_{\Hb_k}
\le
2C_0D_\psi\frac{q\log(nK)}{\sqrt n}
\max_{\ell \in [m]}|e_\ell|.
\]
The multipliers are independent of \(\mathcal F_n\vee\widetilde{\mathcal F}_n\).
Moreover,
$
\left\||e_\ell|\right\|_{\psi_2\mid\mathcal F_n\vee\widetilde{\mathcal F}_n}
=
\left\||e_\ell|\right\|_{\psi_2}
=
\sqrt{8/3}.
$
Therefore, by~\ref{orlicz:tail_bound} and the union bound and since $m \le n$,
\begin{align*}
\mathbb P\left(
\max_{\ell \in [m]}|e_\ell|
>
\frac4{\sqrt3}\sqrt{\log n}
\,\middle|\,
\mathcal F_n\vee\widetilde{\mathcal F}_n
\right)\le
2m\exp\left\{
-\frac{(16/3)\log n}{8/3}
\right\}
=
2m e^{-2\log n}
\le
\frac2n.
\end{align*}
Combining this bound with the preceding deterministic inequality, we obtain,
on \(\Omega_{\mathrm{align}}\cap\Omega_{\mathrm{big}}\),
\begin{align*}
\mathbb P\left(
\max_{k\in[K]}\max_{s\in[n]}
\|\widetilde C_k^{O*}(s)-C_k^{O*}(s)\|_{\Hb_k}
>
\frac{8C_0D_\psi}{\sqrt3}
\frac{q\log(nK)\sqrt{\log n}}{\sqrt n}
\,\middle|\,
\mathcal F_n\vee\widetilde{\mathcal F}_n
\right)
\le\frac2n.
\end{align*}
Since \(\log n\le\log(nK)\), it follows that
$
\log(nK)\sqrt{\log n}
\le
\log^{3/2}(nK).
$
Therefore the asserted conditional bound holds with $C_{\mathrm{align}}
:= 8C_0 D_\psi/\sqrt 3$.
This completes the proof.
\end{proof}

\begin{lemma}[Conditional size bound for the coupled block-aligned
oracle process]
\label{lem:coupled_block_oracle_size}
Suppose Assumption~\ref{cond:stationary} holds. Let \(q,r\in\mathbb N\)
satisfy
\[
1\le r\le q \le\frac n4.
\]
Then there exist a constant
$
C_{\mathrm{size}}
=
C_{\mathrm{size}}(D_\psi,S_{\beta,0})>0
$
and an event
$
\Omega_{\mathrm{size}}^{O*}
\in\widetilde{\mathcal F}_n
$
satisfying
$
\mathbb P\left(\Omega_{\mathrm{size}}^{O*}\right)
\ge1-1/n
$
such that, on \(\Omega_{\mathrm{size}}^{O*}\),
\begin{align}
&\mathbb P\left(
\max_{k\in[K]}\max_{s\in[n]}
\|\widetilde C_k^{O*}(s)\|_{\Hb_k}
>
C_{\mathrm{size}}
\left(
1+\frac{q\log n}{\sqrt n}
\right)
\log^{3/2}(nK)
\,\middle|\,
\widetilde{\mathcal F}_n
\right)
\le\frac2n.
\label{eq:coupled-block-oracle-size}
\end{align}
\end{lemma}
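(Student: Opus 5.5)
The bound will follow from conditional Gaussianity in the multipliers together with an unconditional Orlicz bound on the conditional standard deviations.

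Conditionally on \(\widetilde{\mathcal F}_n\), each \(\widetilde C_k^{O*}(s)=(mq)^{-1/2}\sum_{\ell}a_{s\ell}e_\ell\widetilde S_{\ell k}\) is a centered Gaussian element of \(\Hb_k\), since it is linear in the independent standard normal multipliers \(e_1,\dots,e_m\). Define the \(\widetilde{\mathcal F}_n\)-measurable scale
\[
\widetilde V_{sk}:=\Big[\frac1{mq}\sum_{\ell=1}^m a_{s\ell}^2\|\widetilde S_{\ell k}\|_{\Hb_k}^2\Big]^{1/2}.
\]
As in the proof of Lemma~\ref{lem:residual_oracle_adaptive_subset}, Lemma~\ref{lem:gaussian_conditional_tail_bound} gives
\[
\mathbb P\big(\|\widetilde C_k^{O*}(s)\|_{\Hb_k}>C_{\mathrm G}\sqrt{y}\,\widetilde V_{sk}\mid\widetilde{\mathcal F}_n\big)\le 2e^{-y}.
\]
Taking \(y=2\log(nK)\) and a conditional union bound over the \(nK\) pairs \((s,k)\) then yields a failure probability of at most \(2/n\). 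The threshold in this step is \(C\sqrt{\log(nK)}\max_{s,k}\widetilde V_{sk}\). This accounts for \(\sqrt{\log(nK)}\) of the \(\log^{3/2}(nK)\) in the claim.

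It remains to build \(\Omega^{O*}_{\mathrm{size}}\in\widetilde{\mathcal F}_n\) on which \(\max_{s,k}\widetilde V_{sk}\lesssim(1+q\log n/\sqrt n)\log(nK)\). Since \(|a_{s\ell}|\le1\), we have \(\widetilde V_{sk}^2\le (mq)^{-1}\sum_{\ell}\|\widetilde S_{\ell k}\|^2_{\Hb_k}\) uniformly in \(s\), so it suffices to control \(W_k:=(mq)^{-1}\sum_{\ell=1}^m\|\widetilde S_{\ell k}\|^2_{\Hb_k}\) for each \(k\).

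The plan is to split \(W_k\) into its mean and a fluctuation. For the mean, stationarity and \(\widetilde S_\ell\stackrel d= S_\ell\) give \(\mathbb E W_k=\|\Sigma_{q,k}\|_{\Tr}\). Lemma~\ref{lem:convergence_to_long_term_trace} bounds this by \(C_{\mathcal K}+64D_\psi^2S_{\beta,1}/q\), a constant. For the fluctuation, the variables \(\|\widetilde S_{\ell k}\|_{\Hb_k}\) are independent across \(\ell\). Their \(\psi_1\)-norm is at most \(CD_\psi(S_{\beta,0}\vee1)^{1/2}(\sqrt q+\log q)\), obtained from Lemma~\ref{lem:uified_bounds} applied with a single block, or crudely at most \(qD_\psi\).

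The main step is a Bernstein-type bound for the sum of the squares, which are \(\psi_{1/2}\) variables. I would first try the cheaper route, taking square roots so that only a sum of norms needs to be controlled. By the triangle inequality in \(\ell^2\),
\[
\sqrt{W_k}\le \sqrt{\mathbb E W_k}+(mq)^{-1/2}\Big|\big(\textstyle\sum_\ell\|\widetilde S_{\ell k}\|^2\big)^{1/2}-\mathbb E(\cdot)\Big|.
\]
The fluctuation of the Euclidean norm of an independent vector is then controlled by a concentration inequality for Lipschitz functions of independent sub-exponential coordinates, at the scale of the largest coordinate. Concretely, \(\max_\ell\|\widetilde S_{\ell k}\|\le CqD_\psi\log(nK)\) holds with probability at least \(1-1/(2nK)\), and a truncated Hoeffding/Talagrand argument gives deviations of order \(\sqrt q\log(nK)\). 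Dividing by \(\sqrt{mq}\ge\sqrt n/2\) leaves \(C(1+q\log n/\sqrt n)\log(nK)\) after absorbing logarithms.

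A fallback is to use \(\sum_\ell\|\widetilde S_{\ell k}\|^2\le m\max_\ell\|\widetilde S_{\ell k}\|^2\) only for the part exceeding the truncation level. Finally, a union bound over \(k\in[K]\) gives \(\mathbb P((\Omega^{O*}_{\mathrm{size}})^c)\le1/n\), and combining this with the first step proves \eqref{eq:coupled-block-oracle-size}. I expect the concentration of \(W_k\), in particular keeping the \(q\log n/\sqrt n\) term rather than a worse power of \(q\), to be the delicate part.
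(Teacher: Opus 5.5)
Your first step (conditional Gaussianity, Lemma~\ref{lem:gaussian_conditional_tail_bound} with $y=2\log(nK)$, union bound over $(s,k)$) matches the paper, and the reduction $\widetilde V_{sk}\le\sqrt{W_k}$ is fine. There is, however, a genuine gap in the construction of $\Omega^{O*}_{\mathrm{size}}$, which is exactly the step you flag as delicate, and your sketch for it rests on a misreading.

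\textbf{The single-block bound is wrong.} Lemma~\ref{lem:uified_bounds}(3) with one block ($m=1$, $a_1=1$) gives $\|\|\widetilde S_{\ell k}\|_{\Hb_k}\|_{\psi_1}\lesssim \sqrt q+q\log 2\asymp q$, not $\sqrt q+\log q$. Under polynomial $\beta$-mixing the order $q$ cannot be improved in general, since with polynomially small probability a block can consist of $q$ copies of one $\psi_1$ variable. So the only available input is the crude bound $qD_\psi$.

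\textbf{The claimed deviation scale does not follow.} With that input there is no reason for the fluctuations of $N_k:=(\sum_\ell\|\widetilde S_{\ell k}\|_{\Hb_k}^2)^{1/2}\ge\max_\ell\|\widetilde S_{\ell k}\|_{\Hb_k}$ to be of order $\sqrt q\log(nK)$. Suppose you run the truncation argument carefully: truncate at $T\asymp qD_\psi\log(nK)$, so that no block exceeds $T$ with probability $1-1/(2nK)$. Then apply Talagrand's convex-distance inequality, or Bernstein, to $\sum_\ell(\|\widetilde S_{\ell k}\|\wedge T)^2$. This yields $N_k\lesssim \sqrt{mq}+T\sqrt{\log(nK)}$, i.e.\ $\max_s\widetilde V_{sk}\lesssim 1+q\log^{3/2}(nK)/\sqrt n$. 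That does not imply the claimed $(1+q\log n/\sqrt n)\log(nK)$ when $\log K\gg\log^2 n$ and $q\sqrt{\log(nK)}\gg\sqrt n$. So the lemma, stated uniformly in $K$, is not reached this way. Your fallback does not help either: bounding the exceedance part by $m\max_\ell\|\widetilde S_{\ell k}\|^2$ costs a factor $\sqrt m$ unless that part is empty.

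\textbf{The missing ingredient} is a Hoffmann-J\o rgensen/Talagrand inequality in Orlicz norm for sums of independent variables, \ref{orlicz:banach}. It controls the Orlicz norm of the sum by its mean plus the Orlicz norm of the largest summand. Combined with \ref{orlicz:max}, the maximum of the $m$ block norms then contributes only $qD_\psi\log(m+1)$. The probability level $\log(nK)$ enters just once, multiplicatively, through the final tail bound; this is what produces the product $q\log n\cdot\log(nK)$.

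This is what the paper does. Lemma~\ref{lem:uified_bounds}(6) centers $\|\widetilde S_{\ell k}\|^2$ and applies \ref{orlicz:banach} in $\psi_{1/2}$, giving
\[
\|\widetilde V_{sk}\|_{\psi_1}\lesssim D_\psi(S_{\beta,0}\vee1)^{1/2}\bigl\{\|a_s\|_2/\sqrt m+\sqrt{q/m}\log(m+1)\bigr\}.
\]
Using $\|a_s\|_2^2\le 2m$ and $m\ge n/(4q)$, this is at most $C_V(1+q\log n/\sqrt n)$. The event $\{\max_{s,k}\widetilde V_{sk}\le 3C_V(1+q\log n/\sqrt n)\log(nK)\}$ then fails with probability at most $2(nK)^{-2}\le 1/n$, by \ref{orlicz:tail_bound} and a union bound.

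A minor point: for the mean, use \eqref{eq:bound_on_square_sum_beta_mixing} or Lemma~\ref{lem:uified_bounds}(1) rather than the $S_{\beta,1}$-bound if the constant is to depend on $(D_\psi,S_{\beta,0})$ only.
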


\begin{proof}[Proof of Lemma~\ref{lem:coupled_block_oracle_size}]
For every \((s,k)\in[n]\times[K]\), define
\[
\widetilde V_{sk}
:=
\bigg\{
\frac1{mq}
\sum_{\ell=1}^m
a_{s\ell}^2
\|\widetilde S_{\ell k}\|_{\Hb_k}^2
\bigg\}^{1/2}.
\]
By \ref{orlicz:power} and
Lemma~\ref{lem:uified_bounds}(6), applied with \(Q=\operatorname{id}_{\Hb_k}\),
\begin{align*}
\|\widetilde V_{sk}\|_{\psi_1}
&\le
C_6
\bigg[
\bigg\{
\frac1{mq}
\sum_{\ell=1}^m
a_{s\ell}^2
\mathbb E\|\widetilde S_{\ell k}\|_{\Hb_k}^2
\bigg\}^{1/2}
+
D_\psi\sqrt{\frac qm}\log(m+1)
\bigg].
\end{align*}
Since
\(\widetilde S_{\ell k}\stackrel{d}=S_{\ell k}\),
Lemma~\ref{lem:uified_bounds}(1) gives
$
\mathbb E\|\widetilde S_{\ell k}\|_{\Hb_k}^2
\le
\widetilde C_{\beta,q}D_\psi^2q.
$
Moreover, with $S_{\beta,0}$ from \eqref{eq:def-S_beta},
\[
\widetilde C_{\beta,q}
\le
\{A_{4,1/2,1}+64\}(S_{\beta,0}\vee1).
\]
It follows that, for an absolute constant
\(C_{\mathrm{unif}}>0\),
\begin{align*}
\|\widetilde V_{sk}\|_{\psi_1}
&\le
C_{\mathrm{unif}}
D_\psi(S_{\beta,0}\vee1)^{1/2}
\left[
\frac{\|a_s\|_2}{\sqrt m}
+
\sqrt{\frac qm}\log(m+1)
\right].
\end{align*}
Lemma~\ref{lem:prop a_s_ell} gives
$
\|a_s\|_2^2\le m+2\le2m.
$
Furthermore, \(r\le q\) and \(q\le n/4\) imply
$
m\ge n/(4q)
$
while \(m+1\le n+1\le n^2\). Therefore,
$\sqrt{q/m} \log(m+1) \le (4q\log n )/\sqrt n$.
Consequently, 
\begin{equation}
\max_{k\in[K]}\max_{s\in[n]}
\|\widetilde V_{sk}\|_{\psi_1}
\le
C_V
\left(
1+\frac{q\log n}{\sqrt n}
\right),
\label{eq:coupled-block-size-proxy-orlicz}
\end{equation}
where
$
C_V
=
(\sqrt2+4)
C_{\mathrm{unif}}
D_\psi(S_{\beta,0}\vee1)^{1/2}.
$
Define
\[
\Omega_{\mathrm{size}}^{O*}
:=
\left\{
\max_{k\in[K]}\max_{s\in[n]}
\widetilde V_{sk}
\le
3C_V
\left(
1+\frac{q\log n}{\sqrt n}
\right)
\log(nK)
\right\}.
\]
This event belongs to \(\widetilde{\mathcal F}_n\). By
\eqref{eq:coupled-block-size-proxy-orlicz},
\ref{orlicz:tail_bound}, and the union bound,
\begin{align*}
\mathbb P\left(
(\Omega_{\mathrm{size}}^{O*})^c
\right)
&\le
2nK\exp\{-3\log(nK)\}
=
\frac{2}{(nK)^2}
\le
\frac1n,
\end{align*}
where the last inequality uses \(n\ge4\).
Conditionally on \(\widetilde{\mathcal F}_n\),
$\widetilde C_k^{O*}(s)
=
(mq)^{-1/2}
\sum_{\ell=1}^m
a_{s\ell}e_\ell\widetilde S_{\ell k}$
is a centered Gaussian random element of \(\Hb_k\).
Lemma~\ref{lem:gaussian_conditional_tail_bound} therefore gives, for an absolute constant $C_{\mathrm{G}}$ and for
every \(y>0\),
\[
\mathbb P\left(
\|\widetilde C_k^{O*}(s)\|_{\Hb_k}
>
C_{\mathrm G}\sqrt y\,\widetilde V_{sk}
\,\middle|\,
\widetilde{\mathcal F}_n
\right)
\le2e^{-y}.
\]
On \(\Omega_{\mathrm{size}}^{O*}\), take
\(y=2\log(nK)\). Then
\[
C_{\mathrm G}\sqrt y\,\widetilde V_{sk}
\le
3\sqrt2\,C_{\mathrm G}C_V
\left(
1+\frac{q\log n}{\sqrt n}
\right)
\log^{3/2}(nK).
\]
Define
$
C_{\mathrm{size}}
:=
3\sqrt2\,C_{\mathrm G}C_V.
$
A conditional union bound over the \(nK\) pairs \((s,k)\) yields
\begin{align*}
&\mathbb P\left(
\max_{k\in[K]}\max_{s\in[n]}
\|\widetilde C_k^{O*}(s)\|_{\Hb_k}
>
C_{\mathrm{size}}
\left(
1+\frac{q\log n}{\sqrt n}
\right)
\log^{3/2}(nK)
\,\middle|\,
\widetilde{\mathcal F}_n
\right)
\le
2nK e^{-2\log(nK)}
\le \frac2n.
\end{align*}
This proves~\eqref{eq:coupled-block-oracle-size}.
\end{proof}

\begin{lemma}[Boundary restriction for the squared coupled block-aligned
oracle process]
\label{lem:oracle_coupled_boundary_restriction_squared}
Suppose
Assumption~\ref{cond:stationary} holds. Fix \(x\in(0,3/16]\), and let \(S_x\) be as defined
in~\eqref{eq:definition-S_x}. Let \(q,r\in\mathbb N\) satisfy
\[
1\le r\le q \le \frac n4.
\]
Then there exist an absolute constant \(C_{\partial}>0\) and an event
$
\Omega_{\partial}^{O*}\in\widetilde{\mathcal F}_n
$
that may depend on $x$
satisfying
$
\mathbb P(\Omega_{\partial}^{O*})\ge1-1/n
$
such that, on \(\Omega_{\partial}^{O*}\), for every \(t>0\) satisfying
\[
t
\ge
C_{\partial}D_\psi^2(S_{\beta,0}\vee1)
\log^3(nK)
\Big(
x\vee\frac{q^2\log^2n}{n}
\Big),
\]
we have
\[
\mathbb P\left(
\max_{k\in[K]}
\max_{s\in[n]\setminus S_x}
\|\widetilde C_k^{O*}(s)\|_{\Hb_k}^2
>
t
\,\middle|\,
\widetilde{\mathcal F}_n
\right)
\le\frac2n.
\]
\end{lemma}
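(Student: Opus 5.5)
The plan mirrors the proof of Lemma~\ref{lem:coupled_big_block_boundary}, with an additional conditional Gaussian layer handled as in Lemma~\ref{lem:coupled_block_oracle_size}.

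\emph{Step 1: conditional scale and Gaussian tail.} For \((s,k)\in([n]\setminus S_x)\times[K]\), set
\[
\widetilde V_{sk}:=\Big\{\frac1{mq}\sum_{\ell=1}^m a_{s\ell}^2\|\widetilde S_{\ell k}\|_{\Hb_k}^2\Big\}^{1/2}.
\]
This quantity is \(\widetilde{\mathcal F}_n\)-measurable. Conditionally on \(\widetilde{\mathcal F}_n\), the element \(\widetilde C_k^{O*}(s)=(mq)^{-1/2}\sum_\ell a_{s\ell}e_\ell\widetilde S_{\ell k}\) is centered Gaussian in \(\Hb_k\). Lemma~\ref{lem:gaussian_conditional_tail_bound} then gives
\[
\mathbb P\big(\|\widetilde C_k^{O*}(s)\|_{\Hb_k}>C_{\mathrm G}\sqrt y\,\widetilde V_{sk}\mid\widetilde{\mathcal F}_n\big)\le 2e^{-y}.
\]
Take \(y=2\log(nK)\) and apply a conditional union bound over at most \(nK\) pairs. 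This yields conditional probability at most \(2/n\) that \(\max_{k}\max_{s\notin S_x}\|\widetilde C_k^{O*}(s)\|_{\Hb_k}^2\) exceeds \(2C_{\mathrm G}^2\log(nK)\max_{s\notin S_x,k}\widetilde V_{sk}^2\).

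\emph{Step 2: unconditional control of the scale.} As in Lemma~\ref{lem:coupled_block_oracle_size}, combine \ref{orlicz:power}, Lemma~\ref{lem:uified_bounds}(6) with \(Q=\id_{\Hb_k}\), and Lemma~\ref{lem:uified_bounds}(1) with \(\widetilde S_{\ell k}\stackrel d=S_{\ell k}\). This gives
\[
\|\widetilde V_{sk}\|_{\psi_1}\lesssim D_\psi(S_{\beta,0}\vee1)^{1/2}\Big[\frac{\|a_s\|_2}{\sqrt m}+\sqrt{\frac qm}\log(m+1)\Big].
\]
Lemma~\ref{lem:prop a_s_ell} gives \(\|a_s\|_2^2\le mV(s/n)+2\), and \(m\ge n/(4q)\) handles the second term. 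Together these yield the weight bound \eqref{eq:boundary_weight_bound_all_s}, namely \(\lesssim \sqrt{V(s/n)}+q\log n/\sqrt n\). For \(s\notin S_x\) we have \(V(s/n)<x\), so \(\|\widetilde V_{sk}\|_{\psi_1}\lesssim D_\psi(S_{\beta,0}\vee1)^{1/2}B_x\), where \(B_x:=\sqrt x\vee q\log n/\sqrt n\). Define
\[
\Omega_\partial^{O*}:=\Big\{\max_{k}\max_{s\notin S_x}\widetilde V_{sk}\le C'D_\psi(S_{\beta,0}\vee1)^{1/2}B_x\log(nK)\Big\}\in\widetilde{\mathcal F}_n.
\]
By \ref{orlicz:tail_bound} and a union bound over \(nK\) pairs, this event has probability at least \(1-1/n\) once \(C'\) is a large enough absolute constant.

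\emph{Step 3: combine.} On \(\Omega_\partial^{O*}\), the conditional threshold from Step 1 is at most
\[
2C_{\mathrm G}^2C'^2D_\psi^2(S_{\beta,0}\vee1)\log^3(nK)\Big(x\vee\frac{q^2\log^2 n}{n}\Big).
\]
Here we use \(B_x^2=x\vee q^2\log^2n/n\). Setting \(C_\partial:=2C_{\mathrm G}^2C'^2\), every \(t\) satisfying the stated lower bound dominates this threshold, which gives the claim.

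The only delicate point is Step 2. We need the \(\psi_1\) bound from Lemma~\ref{lem:uified_bounds}(6) to apply to the square-root-of-quadratic-form proxy \(\widetilde V_{sk}\), not just to the linear sum, and with the \(\sqrt{V(s/n)}\) factor preserved. This is exactly as in Lemma~\ref{lem:coupled_block_oracle_size}, except that the sharper bound \(\|a_s\|_2^2\le mV(s/n)+2\) replaces the crude bound \(2m\) used there.
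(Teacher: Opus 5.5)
Your proposal is correct and follows essentially the same argument as the paper. The paper also applies Lemma~\ref{lem:gaussian_conditional_tail_bound} conditionally with $y=2\log(nK)$ and a union bound, then bounds the $\psi_1$-norm of the $\widetilde{\mathcal F}_n$-measurable scale $\widetilde V_{sk}$ using $\|a_s\|_2^2\le mV(s/n)+2$ to get the factor $B_x=\sqrt x\vee q\log n/\sqrt n$, and defines $\Omega_\partial^{O*}$ by an Orlicz tail bound; the only cosmetic difference is that you assemble the quadratic-form estimate from Lemma~\ref{lem:uified_bounds}(6) and (1), whereas the paper quotes the combined bound directly, with the same result.
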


\begin{proof}[Proof of Lemma~\ref{lem:oracle_coupled_boundary_restriction_squared}]
In view of the representation
\[
\widetilde C_k^{O*}(s)
=
\frac1{\sqrt{mq}}
\sum_{\ell=1}^m
a_{s\ell}e_\ell\widetilde S_{\ell k},
\]
we may apply Lemma~\ref{lem:gaussian_conditional_tail_bound} to deduce that, for every \(y>0\) and every \((s,k)\in[n]\times[K]\), almost surely,
\[
\mathbb P\left(
\|\widetilde C_k^{O*}(s)\|_{\Hb_k}
>C_G\sqrt y\,V_{sk}
\,\middle|\,
\widetilde{\mathcal F}_n
\right)
\le2e^{-y},
\]
where  \(C_G>0\) denotes the absolute constant from
Lemma~\ref{lem:gaussian_conditional_tail_bound} and where 
\[
V_{sk}^2
:=
\frac1{mq}
\sum_{\ell=1}^m
a_{s\ell}^2
\|\widetilde S_{\ell k}\|_{\Hb_k}^2.
\]
Taking \(y=2\log(nK)\) and applying the conditional union bound, we
obtain, for every nonempty \(S\subset[n]\),
\begin{align}
\label{eq:conditional-boundary-maximum-short}
&\mathbb P\left(
\max_{k\in[K]}\max_{s\in S}
\|\widetilde C_k^{O*}(s)\|_{\Hb_k}
>
C_G\sqrt{2\log(nK)}
\max_{k\in[K]}\max_{s\in S}V_{sk}
\,\middle|\,
\widetilde{\mathcal F}_n
\right)
\nonumber\\
&\quad\le
\sum_{k=1}^K\sum_{s\in S}
\mathbb P\left(
\|\widetilde C_k^{O*}(s)\|_{\Hb_k}
>
C_G\sqrt{2\log(nK)}\,V_{sk}
\,\middle|\,
\widetilde{\mathcal F}_n
\right)
\nonumber\\
&\quad\le
2K|S|e^{-2\log(nK)}
=
\frac{2|S|}{n^2K}
\le
\frac{2}{nK}
\le
\frac2n.
\end{align}
Here the first inequality uses
\(V_{sk}\le\max_{k'\in[K],\,s'\in S}V_{s'k'}\). By \ref{orlicz:power} and Lemma~\ref{lem:uified_bounds}(3), applied with
\(Q=\operatorname{id}_{\Hb_k}\), and using
\(C_{\beta,q}\le S_{\beta,0}\) with $S_{\beta,0}$ from \eqref{eq:def-S_beta},
\begin{align}
\|V_{sk}\|_{\psi_1}
=
\|V_{sk}^2\|_{\psi_{1/2}}^{1/2}
&\le
C_3D_\psi(S_{\beta,0}\vee1)^{1/2}
\left[
\frac{\|a_s\|_2}{\sqrt m}
+
\sqrt{\frac qm}\log(m+1)
\right],
\label{eq:boundary-variance-proxy-orlicz-short}
\end{align}
where \(C_3>0\) is the absolute constant from
Lemma~\ref{lem:uified_bounds}(3). Lemma~\ref{lem:prop a_s_ell} gives
\[
\|a_s\|_2^2
\le
mV(s/n)+2,
\]
where $V(u)=u(1-u)$.
Thus, if \(s\in[n]\setminus S_x\), then \(V(s/n)\le x\)
by \eqref{eq:infsup-V-S_x} and hence
\[
\frac{\|a_s\|_2}{\sqrt m}
\le
\sqrt x+\sqrt{\frac2m}.
\]
Moreover, \(q\le n/4\) and \(r\le q\) imply $n/(q+r) \ge n/(2q) \ge 2$.
Using \(\lfloor z\rfloor\ge z/2\) for every \(z\ge2\), we obtain $m \ge n/(4q)$.
Therefore \(m^{-1/2}\le2\sqrt{q/n}\), and
\[
\sqrt{\frac2m}
\le
2\sqrt{\frac{2q}{n}}
\le
2\sqrt2\,\frac{q\log n}{\sqrt n},
\]
where we used \(\sqrt q\le q\) and \(\log n\ge1\). Moreover, since
\(m+1\le n+1\le n^2\),
\[
\sqrt{\frac qm}\log(m+1)
\le
\frac{2q}{\sqrt n}\log(n+1)
\le
4\frac{q\log n}{\sqrt n}.
\]
Consequently,
\[
\sqrt{\frac2m}
+
\sqrt{\frac qm}\log(m+1)
\le
(4+2\sqrt2)\frac{q\log n}{\sqrt n}
\le
7\frac{q\log n}{\sqrt n}.
\]
Set
\[
B_x
:=
\sqrt x\vee\frac{q\log n}{\sqrt n}.
\]
The preceding bounds imply
\[
\frac{\|a_s\|_2}{\sqrt m}
+
\sqrt{\frac qm}\log(m+1)
\le
\sqrt x+7\frac{q\log n}{\sqrt n}
\le8B_x
\]
for every \(s\in[n]\setminus S_x\). Hence
\eqref{eq:boundary-variance-proxy-orlicz-short} yields
\begin{equation}
\label{eq:boundary-variance-proxy-final-short}
\max_{k\in[K]}
\max_{s\in[n]\setminus S_x}
\|V_{sk}\|_{\psi_1}
\le
8C_4D_\psi(S_{\beta,0}\vee1)^{1/2}B_x.
\end{equation}
Set
$
D_1:=24C_4
$
and define the \(\widetilde{\mathcal F}_n\)-measurable event
\[
\Omega_{\partial}^{O*}
:=
\Big\{
\max_{k\in[K]}\max_{s\in[n]\setminus S_x}V_{sk}
\le
D_1D_\psi(S_{\beta,0}\vee1)^{1/2}B_x\log(nK)
\Big\}.
\]
By~\eqref{eq:boundary-variance-proxy-final-short},
\ref{orlicz:tail_bound}, and the union bound,
\begin{align*}
\mathbb P\bigl((\Omega_{\partial}^{O*})^c\bigr)
&\le
2nK\exp\Big\{
-\frac{D_1}{8C_4}\log(nK)
\Big\}
\\
&=
2nK\exp\{-3\log(nK)\}
=
\frac{2}{(nK)^2}
\le
\frac1n,
\end{align*}
where the last inequality uses \(n\ge4\) and \(K\ge1\). Thus
\(\mathbb P(\Omega_{\partial}^{O*})\ge1-1/n\), and, on this event,
\begin{equation*}
\max_{k\in[K]}
\max_{s\in[n]\setminus S_x}V_{sk}
\le
D_1D_\psi(S_{\beta,0}\vee1)^{1/2}
B_x\log(nK).
\end{equation*}
Together with the conditional maximal bound from \eqref{eq:conditional-boundary-maximum-short}, applied with
\(S=[n]\setminus S_x\), we obtain, on 
\(\Omega_{\partial}^{O*}\),
\begin{align*}
&\mathbb P\left(
\max_{k\in[K]}\max_{s\in[n]\setminus S_x}
\|\widetilde C_k^{O*}(s)\|_{\Hb_k}
>
\sqrt2\,C_GD_1D_\psi(S_{\beta,0}\vee1)^{1/2}
B_x\log^{3/2}(nK)
\,\middle|\,
\widetilde{\mathcal F}_n
\right)
\le\frac2n.
\end{align*}
Squaring the threshold and observing that
$B_x^2 = x \vee (q^2\log^2n/n)$ proves the result upon taking $C_{\partial}
:=
2C_G^2D_1^2$.
\end{proof}

The following result is a bootstrap version of Lemma~\ref{lem:coupled_big_block_projection_error}.

\begin{lemma}[Conditional projection error for the coupled block-aligned
oracle statistic]
\label{lem:oracle_coupled_projection_error_2}
Suppose Assumptions~\ref{cond:stationary} and~\ref{cond:scores} hold.
Let \(q,r\in\mathbb N\) satisfy
\[
1\le r\le q \le \frac n4.
\]
Then there exists a constant
$
C_{\mathrm{proj}}
=
C_{\mathrm{proj}}(D_\psi,C_U,\gamma_U,C_\beta,c_\beta)>0
$
such that, for every
\(M\in\mathbb N_{\ge1}\) and every nonempty
\(S\subset[n]\),
there exists an event
$
\Omega_{\mathrm{proj}}^{O*}\in\widetilde{\mathcal F}_n
$, potentially depending on $M$ and $S$,
satisfying
$
\mathbb P(\Omega_{\mathrm{proj}}^{O*})\ge 1-1/n
$
such that, on \(\Omega_{\mathrm{proj}}^{O*}\),
\begin{align*}
\mathbb P\bigg(&
\max_{k\in[K]}\max_{s\in S}
\|\widetilde C_k^{O*}(s)\|_{\Hb_k}^2
>
\max_{k\in[K]}\max_{s\in S}
\|P_{kM}\widetilde C_k^{O*}(s)\|_{\Hb_k}^2
+\Delta_{\mathrm{proj}}^O
\,\Bigm|\,
\widetilde{\mathcal F}_n
\bigg)
\le\frac2n,
\end{align*}
where
\[
\Delta_{\mathrm{proj}}^O
:=
C_{\mathrm{proj}}\log^3(nK)
\left[
M^{1-2\gamma_U}
+\frac1q
+\frac{q^2\log^2n}{n}
\right].
\]
\end{lemma}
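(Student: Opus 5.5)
The plan follows the unconditional analogue, Lemma~\ref{lem:coupled_big_block_projection_error}, with the Orlicz step replaced by a conditional Gaussian tail bound. The multipliers enter in the same way as in Lemmas~\ref{lem:coupled_block_oracle_size} and~\ref{lem:oracle_coupled_boundary_restriction_squared}. Write $P_{kM}^\perp:=\id_{\Hb_k}-P_{kM}$. By \eqref{eq:orthogonal}, for every $(s,k)$,
\[
\|\widetilde C_k^{O*}(s)\|_{\Hb_k}^2=\|P_{kM}\widetilde C_k^{O*}(s)\|_{\Hb_k}^2+\|P_{kM}^\perp\widetilde C_k^{O*}(s)\|_{\Hb_k}^2 .
\]
Taking maxima over $S\times[K]$ therefore reduces the claim to showing that, on a suitable $\widetilde{\mathcal F}_n$-measurable event, the conditional probability that $\max_{k,s}\|P_{kM}^\perp\widetilde C_k^{O*}(s)\|_{\Hb_k}^2>\Delta^O_{\mathrm{proj}}$ is at most $2/n$.

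Conditionally on $\widetilde{\mathcal F}_n$, the element
\[
P_{kM}^\perp\widetilde C_k^{O*}(s)=(mq)^{-1/2}\sum_{\ell}a_{s\ell}e_\ell P_{kM}^\perp\widetilde S_{\ell k}
\]
is centered Gaussian in $\Hb_k$. Lemma~\ref{lem:gaussian_conditional_tail_bound}, applied with $y=2\log(nK)$, together with a conditional union bound over at most $nK$ pairs, gives failure probability at most $2/n$ at the threshold $C_G\sqrt{2\log(nK)}\max_{k,s}V^\perp_{sk}$. Here
\[
(V^\perp_{sk})^2:=\frac1{mq}\sum_{\ell=1}^m a_{s\ell}^2\|P_{kM}^\perp\widetilde S_{\ell k}\|_{\Hb_k}^2
\]
is $\widetilde{\mathcal F}_n$-measurable. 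This is the same step as \eqref{eq:conditional-boundary-maximum-short}.

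It remains to control $\max_{k,s}V^\perp_{sk}$ on an event of probability at least $1-1/n$. I would apply Lemma~\ref{lem:uified_bounds}(6) (as in the proof of Lemma~\ref{lem:coupled_block_oracle_size}) with $Q=P_{kM}^\perp$, together with \ref{orlicz:power}. This gives
\[
\|V^\perp_{sk}\|_{\psi_1}\lesssim \Big\{\frac1{mq}\sum_\ell a_{s\ell}^2\,\E\|P_{kM}^\perp\widetilde S_{\ell k}\|^2_{\Hb_k}\Big\}^{1/2}+D_\psi\sqrt{q/m}\,\log(m+1).
\]
The second moment is bounded exactly as in the proof of Lemma~\ref{lem:coupled_big_block_projection_error}. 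First, $q^{-1}\E\|P_{kM}^\perp\widetilde S_{\ell k}\|^2\le\|P_{kM}^\perp\mathcal K_kP_{kM}^\perp\|_{\Tr}+\|\Cov(q^{-1/2}\widetilde S_{\ell k})-\mathcal K_k\|_{\Tr}$. Assumption~\ref{cond:scores} and Lemma~\ref{lem:convergence_to_long_term_trace} bound the right-hand side by $C(M^{1-2\gamma_U}+1/q)$. Next, Lemma~\ref{lem:prop a_s_ell} gives $\|a_s\|_2^2\le 2m$, and $m\ge n/(4q)$ gives $\sqrt{q/m}\log(m+1)\lesssim q\log n/\sqrt n$. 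Combining these,
\[
\|V^\perp_{sk}\|_{\psi_1}\le C\big(M^{1-2\gamma_U}+q^{-1}+q^2\log^2n/n\big)^{1/2}.
\]
A $\psi_1$ tail bound with a union bound over $nK$ pairs then defines $\Omega^{O*}_{\mathrm{proj}}$ as the event where $\max V^\perp_{sk}\le 3C\log(nK)(\cdots)^{1/2}$, with complement probability at most $2/(nK)^2\le1/n$.

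Squaring the combined threshold gives $2C_G^2\log(nK)\cdot 9C^2\log^2(nK)(\cdots)$. This is $\Delta^O_{\mathrm{proj}}$ with the $\log^3(nK)$ factor, so $C_{\mathrm{proj}}:=18C_G^2C^2$. I expect the main obstacle to be verifying that Lemma~\ref{lem:uified_bounds}(6) applies with the non-identity contraction $Q=P_{kM}^\perp$, so that the leading term carries the projected second moment and not the full one; part (4) was used this way for the unconditional analogue. If only the identity version were available, I would redo the Orlicz bound directly from the independence of the coupled blocks. That route uses Talagrand/Bernstein-type control of $\sum_\ell a_{s\ell}^2\|P^\perp_{kM}\widetilde S_{\ell k}\|^2$, with individual summands bounded in $\psi_{1/2}$ by $q^2D_\psi^2$.
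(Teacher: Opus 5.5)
Your proposal is correct and follows the paper's proof essentially step for step. It uses the orthogonal splitting via \eqref{eq:orthogonal}, the conditional Gaussian tail bound of Lemma~\ref{lem:gaussian_conditional_tail_bound} with $y=2\log(nK)$, the same $\psi_1$ control of $V^\perp_{sk}$ via \ref{orlicz:power} and Lemma~\ref{lem:uified_bounds}(6) with $Q=P_{kM}^\perp$, and the same event $\Omega^{O*}_{\mathrm{proj}}$ with constant $3C$, giving $C_{\mathrm{proj}}=2C_G^2(3C)^2$. Lemma~\ref{lem:uified_bounds} is stated for an arbitrary bounded $Q$ with $U=\|\|Q X_1\|_{\Hb}\|_{\psi_1}\le D_\psi$ for the contraction $P_{kM}^\perp$, so the obstacle you anticipated does not arise.
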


\begin{proof}[Proof of Lemma~\ref{lem:oracle_coupled_projection_error_2}]
The proof is similar to the proof of Lemma~\ref{lem:coupled_big_block_projection_error}. Write
$
P_{kM}^\perp:=\operatorname{id}_{\Hb_k}-P_{kM}
$
and note that
\begin{align}
&\max_{k\in[K]}\max_{s\in S}
\|\widetilde C_k^{O*}(s)\|_{\Hb_k}^2
\le 
\max_{k\in[K]}\max_{s\in S}
\|P_{kM}\widetilde C_k^{O*}(s)\|_{\Hb_k}^2
+
\max_{k\in[K]}\max_{s\in S}
\|P_{kM}^\perp\widetilde C_k^{O*}(s)\|_{\Hb_k}^2
\label{eq:conditional-projection-reduction-short}
\end{align}
by \eqref{eq:orthogonal}.
For \((s,k)\in[n]\times[K]\), define
\[
V_{sk,M}^\perp
:=
\bigg\{
\frac1{mq}
\sum_{\ell=1}^m
a_{s\ell}^2
\|P_{kM}^\perp\widetilde S_{\ell k}\|_{\Hb_k}^2
\bigg\}^{1/2}.
\]
Let \(C_G>0\) be the absolute constant from
Lemma~\ref{lem:gaussian_conditional_tail_bound}. Conditionally on
\(\widetilde{\mathcal F}_n\), that lemma and
\ref{orlicz:tail_bound} give, for every \(y>0\),
\[
\mathbb P\left(
\|P_{kM}^\perp\widetilde C_k^{O*}(s)\|_{\Hb_k}
>
C_G\sqrt y\,V_{sk,M}^\perp
\,\middle|\,
\widetilde{\mathcal F}_n
\right)
\le2e^{-y}.
\]
Taking \(y=2\log(nK)\) and applying the conditional union bound over
the at most \(K|S|\le nK\) pairs gives
\begin{align}
\mathbb P\bigg(&
\max_{k\in[K]}\max_{s\in S}
\|P_{kM}^\perp\widetilde C_k^{O*}(s)\|_{\Hb_k}
>
C_G\sqrt{2\log(nK)}
\max_{k\in[K]}\max_{s\in S}V_{sk,M}^\perp
\,\Bigm|\,
\widetilde{\mathcal F}_n
\bigg)
\le\frac2n.
\label{eq:conditional-projection-gaussian-short}
\end{align}
By \ref{orlicz:power} and Lemma~\ref{lem:uified_bounds}(6) it holds for an absolute constant $C_6$ that
\begin{align}
\|V_{sk,M}^\perp\|_{\psi_1}
\le C_6
\bigg[
\bigg\{
\frac1{mq}
\sum_{\ell=1}^m
a_{s\ell}^2
\mathbb E\|P_{kM}^\perp\widetilde S_{\ell k}\|_{\Hb_k}^2
\bigg\}^{1/2}
+
D_\psi\sqrt{\frac qm}\log(m+1)
\bigg],
\label{eq:conditional-proxy-refined-HJ-short}
\end{align}
Since \(\widetilde S_{\ell k}\stackrel d=S_{\ell k}\),
\begin{align*}
\frac1q
\mathbb E\|P_{kM}^\perp\widetilde S_{\ell k}\|_{\Hb_k}^2
&=
\left\|
P_{kM}^\perp
\Cov(q^{-1/2}S_{\ell k})
P_{kM}^\perp
\right\|_{\Tr}
\\
&\le
\left\|
P_{kM}^\perp
\left\{
\Cov(q^{-1/2}S_{\ell k})-\mathcal K_k
\right\}
P_{kM}^\perp
\right\|_{\Tr}
+
\left\|
P_{kM}^\perp\mathcal K_kP_{kM}^\perp
\right\|_{\Tr}.
\end{align*}
Here we used Lemma~\ref{lem:properties-covariance}(2) and
\(P_{kM}^\perp=(P_{kM}^\perp)^*\).
Set $C_1:=64D_\psi^2S_{\beta,1}$.
Lemma~\ref{lem:convergence_to_long_term_trace} gives
\[
\left\|
\Cov(q^{-1/2}S_{\ell k})-\mathcal K_k
\right\|_{\Tr}
\le\frac{C_1}{q}.
\]
By the ideal property of the trace norm, \citep[Chapter~2]{simon2005trace} it holds that
\(\|ABC\|_{\Tr}\le
\|A\|_{\mathrm{op}}\|B\|_{\Tr}\|C\|_{\mathrm{op}}\), whenever $A, C$ are bounded and $B$ is trace-class. Hence
\begin{align*}
&\left\|
P_{kM}^\perp
\left\{
\Cov(q^{-1/2}S_{\ell k})-\mathcal K_k
\right\}
P_{kM}^\perp
\right\|_{\Tr}
\le
\|P_{kM}^\perp\|_{\mathrm{op}}^2
\left\|
\Cov(q^{-1/2}S_{\ell k})-\mathcal K_k
\right\|_{\Tr}
\le\frac{C_1}{q},
\end{align*}
because \(\|P_{kM}^\perp\|_{\mathrm{op}}\le1\).
Moreover, Assumption~\ref{cond:scores} and an integral comparison give
\[
\left\|
P_{kM}^\perp\mathcal K_kP_{kM}^\perp
\right\|_{\Tr}
=
\sum_{j=M+1}^{r_k}\lambda_{kj}
\le
\frac{C_U^2}{2\gamma_U-1}M^{1-2\gamma_U}.
\]
Define $C_2 = \max\{C_1, C_U^2/(2\gamma_U-1)\}$.
Consequently,
\begin{equation}
\label{eq:projected-block-second-moment-short}
\frac1q
\mathbb E\|P_{kM}^\perp\widetilde S_{\ell k}\|_{\Hb_k}^2
\le
C_2\left(M^{1-2\gamma_U}+\frac1q\right).
\end{equation}
Lemma~\ref{lem:prop a_s_ell} implies
\(\|a_s\|_2^2\le m+2\le2m\), because \(m\ge2\). Therefore,
\eqref{eq:projected-block-second-moment-short} gives
\begin{align*}
&\bigg\{
\frac1{mq}
\sum_{\ell=1}^m
a_{s\ell}^2
\mathbb E\|P_{kM}^\perp\widetilde S_{\ell k}\|_{\Hb_k}^2
\bigg\}^{1/2}
\le
\sqrt{2C_2}
\Big(M^{1-2\gamma_U}+\frac1q\Big)^{1/2}.
\end{align*}
Furthermore, \(m\ge n/(4q)\) and
\(m+1\le n+1\le n^2\) imply
\[
D_\psi\sqrt{\frac qm}\log(m+1)
\le
4D_\psi\frac{q\log n}{\sqrt n}.
\]
Set
\[
R_{n,M}
:=
M^{1-2\gamma_U}
+\frac1q
+\frac{q^2\log^2n}{n}
\]
and $C_V = C_6 \{ \sqrt {2C_2} + 4 D_\psi\}$.
Then~\eqref{eq:conditional-proxy-refined-HJ-short} yields
\begin{equation}
\label{eq:conditional-proxy-final-short}
\|V_{sk,M}^\perp\|_{\psi_1}
\le
C_VR_{n,M}^{1/2}.
\end{equation}
Set $D_V:=3C_V$ and define
\[
\Omega_{\mathrm{proj}}^{O*}
:=
\bigg\{
\max_{k\in[K]}\max_{s\in S}V_{sk,M}^\perp
\le
D_V\log(nK)R_{n,M}^{1/2}
\bigg\}.
\]
By~\eqref{eq:conditional-proxy-final-short},
\ref{orlicz:tail_bound}, and the union bound,
\begin{align*}
\mathbb P\big((\Omega_{\mathrm{proj}}^{O*})^c\big)
&\le
2K|S|e^{-3\log(nK)}
\le
2nK(nK)^{-3}
=
\frac{2}{(nK)^2}
\le
\frac1n.
\end{align*}
Combining~\eqref{eq:conditional-projection-gaussian-short} with the
definition of \(\Omega_{\mathrm{proj}}^{O*}\), we obtain, on that event,
\begin{align*}
\mathbb P\bigg(
\max_{k\in[K]}\max_{s\in S}
\|P_{kM}^\perp\widetilde C_k^{O*}(s)\|_{\Hb_k}^2
>
2C_G^2D_V^2\log^3(nK)R_{n,M}
\,\Bigm|\,
\widetilde{\mathcal F}_n
\bigg)
\le\frac2n.
\end{align*}
The conclusion follows from
\eqref{eq:conditional-projection-reduction-short} after setting
$
C_{\mathrm{proj}}
:=
2C_G^2D_V^2$.
\end{proof}

\begin{lemma}[Uniform covariance concentration for the projected coupled
oracle process]
\label{lem:oracle_coupled_covariance_concentration}
Suppose Assumptions~\ref{cond:stationary} and~\ref{cond:scores} hold.
Let \(q,r\in\mathbb N\) satisfy
\[
1\le r\le q \le \frac n4.
\]
For \(k\in[K]\) and $M \in \N$, let \(\check M_k:=M\wedge r_k\), and, for
\(j \in [\check M_k], \ell \in [m]\), define
\[
\widetilde Z_{\ell,kj}
:=
q^{-1/2}
\left\langle
\widetilde S_{\ell k},Z_{kj}
\right\rangle_{\Hb_k}.
\]
For \(s,t\in[n]\), \(k,k'\in[K]\),
\(j\in[\check M_k]\), and \(j'\in[\check M_{k'}]\), define
\[
\widehat\Gamma_{skj,tk'j'}
:=
\frac1m
\sum_{\ell=1}^m
a_{s\ell}a_{t\ell}
\widetilde Z_{\ell,kj}\widetilde Z_{\ell,k'j'},
\qquad
\Gamma_{skj,tk'j'}
:=
\mathbb E [\widehat\Gamma_{skj,tk'j'}].
\]
Then there exist a constant
$
C_{\mathrm{cov}}
=
C_{\mathrm{cov}}(D_\psi,C_\beta,c_\beta)>0
$
and an event
\(\Omega_{\mathrm{cov}}^{O*}\in\widetilde{\mathcal F}_n\) satisfying
$\mathbb P(\Omega_{\mathrm{cov}}^{O*}) \ge 1 - 1/n
$
such that, on \(\Omega_{\mathrm{cov}}^{O*}\),
\begin{align*}
&\max_{s,t\in[n]}
\max_{k,k'\in[K]}
\max_{\substack{
j\in[\check M_k]\\
j'\in[\check M_{k'}]
}}
\left|
\widehat\Gamma_{skj,tk'j'}-
\Gamma_{skj,tk'j'}
\right|
\le
C_{\mathrm{cov}}
\left[
\frac1{\sqrt m}
+\frac{q\log^2n}{m}
\right]
\log^2(nKM).
\end{align*}
\end{lemma}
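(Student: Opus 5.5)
The plan is to write the centered product as a sum of independent mean-zero terms and control its maximum with a Bernstein-type inequality for sub-Weibull variables, then take a union bound over the index set. Set $W_\ell := a_{s\ell}a_{t\ell}\{\widetilde Z_{\ell,kj}\widetilde Z_{\ell,k'j'}-\mathbb E[\widetilde Z_{\ell,kj}\widetilde Z_{\ell,k'j'}]\}$, so that $\widehat\Gamma-\Gamma = m^{-1}\sum_{\ell=1}^m W_\ell$. The Berbee-coupled block sums $\widetilde S_1,\ldots,\widetilde S_m$ are independent and satisfy $\widetilde S_\ell\stackrel d=S_\ell$, so the $W_\ell$ are independent and centered for each fixed index $(s,k,j,t,k',j')$. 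The number of index tuples is at most $n^2K^2M^2\le (nKM)^2$, so a tail probability of order $(nKM)^{-3}$ per tuple suffices, and the event $\Omega_{\mathrm{cov}}^{O*}$ is then $\widetilde{\mathcal F}_n$-measurable with probability at least $1-1/n$.

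The key moment inputs are as follows. Since $|\widetilde Z_{\ell,kj}|\le q^{-1/2}\|\widetilde S_{\ell k}\|_{\Hb_k}$, Lemma~\ref{lem:uified_bounds} (the Orlicz bound for block sums under $\beta$-mixing, as used in Lemma~\ref{lem:coupled_big_block_boundary}) gives $\|\widetilde Z_{\ell,kj}\|_{\psi_1}\lesssim D_\psi(S_{\beta,0}\vee1)^{1/2}(1+\sqrt q\log 2)$. That crude bound scales like $\sqrt q$ and is only usable for the sub-exponential tail part. A sharper input comes from the fourth-moment bound \eqref{eq:rio1} (Rio's inequality), which yields $\mathbb E\widetilde Z_{\ell,kj}^4\le 256 D_\psi^4 C_R$, a bound uniform in $q$. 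Together with $|a_{s\ell}|\le1$ and Cauchy--Schwarz, this gives $\Var(W_\ell)\le C(D_\psi,C_\beta,c_\beta)$. For the tail, the product of two $\psi_1$ variables is $\psi_{1/2}$, so $\|W_\ell\|_{\psi_{1/2}}\lesssim D_\psi^2(S_{\beta,0}\vee1)\,q$. The factor $q$ comes from $\|\widetilde S_{\ell k}\|\lesssim q D_\psi$ in the worst case, consistent with the $q\log(m+1)$ term in Lemma~\ref{lem:uified_bounds}.

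Next, apply a Bernstein inequality for sums of independent centered $\psi_{1/2}$ variables, for instance via truncation at level $\asymp q\log^2(nKM)$ combined with the classical Bernstein inequality. At deviation level $x=3\log(nKM)$ this gives
\[
\Big|\tfrac1m\textstyle\sum_\ell W_\ell\Big|\lesssim \sqrt{\tfrac{x}{m}}+\tfrac{q\,x^2}{m}\lesssim \Big[\tfrac1{\sqrt m}+\tfrac{q\log^2(nKM)}{m}\Big]\log^{2}(nKM).
\]
The target displays $q\log^2n/m$ with an extra $\log^2(nKM)$, so there is room to spare. I will use the truncation route: on the event $\max_{\ell,k}\|\widetilde S_{\ell k}\|\le C D_\psi q\log(nKM)$, whose complement has probability $\le (nKM)^{-3}$ by the $\psi_1$ bound and a union bound, the summands are bounded by $Cq\log^2(nKM)$. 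The truncation bias is controlled by the fourth moment and the exponential tail. Classical Bernstein with variance $O(1)$ and range $O(q\log^2)$ then gives exactly the claimed rate.

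The main obstacle is the step above. Obtaining a variance proxy that is $O(1)$ rather than $O(q)$ is what produces the clean $1/\sqrt m$ term, and this relies on Rio's fourth-moment bound rather than on the $\psi_1$ norm. A second difficulty is doing the truncation carefully, so that the recentering after truncation costs at most $O(q\log^2/m)$ uniformly over all indices.
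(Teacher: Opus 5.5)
\textbf{Gap: the truncation-plus-Bernstein step loses a logarithmic factor.} Your ingredients are the paper's: independence and centering from the Berbee coupling, the $O(1)$ variance from Rio's fourth-moment bound, $\|W_\ell\|_{\psi_{1/2}}\lesssim q$, and a union bound over at most $(nKM)^2$ tuples. The concentration step, however, does not deliver the stated rate.

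Classical Bernstein with variance $O(1)$ and range $R$, at deviation level $x\asymp\log(nKM)$, gives $|m^{-1}\sum_\ell W_\ell|\lesssim\sqrt{x/m}+Rx/m$. It does not give $\sqrt{x/m}+qx^2/m$.

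\begin{itemize}
\item With your truncation, $R\asymp q\log^2(nKM)$. At best $R\asymp q\log^2(nK)$, because the truncation must be inactive simultaneously for all $\ell\in[m]$ and $k\in[K]$.
\item The linear term is therefore at least of order $q\log^2(nK)\log(nKM)/m$.
\item The lemma claims $q\log^2 n\,\log^2(nKM)/m$ with $C_{\mathrm{cov}}$ independent of $K$ and $M$. Your bound is not dominated by this once $\log K\gg\log^2 n$, which is exactly the high-dimensional regime the paper allows.
\end{itemize}

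This is not a matter of tuning the truncation level. Making the truncation inactive for a fixed tuple with probability $1-e^{-x}$ forces $R\gtrsim q(x+\log m)^2$. Bernstein's linear term then multiplies this by $x$ once more, giving $qx^3/m$ against the target $q\log^2 n\,x^2/m$. Separately, the right-most expression in your display, with $q\log^2(nKM)/m$ inside the bracket, is weaker than the lemma's bound, not stronger. Only the middle expression would have sufficed, and your method does not produce it.

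\textbf{The fix.} Replace truncation by a Talagrand-type inequality applied to each fixed tuple. By \ref{orlicz:banach} with $p=1/2$,
\[
\Big\|\sum_\ell W_\ell\Big\|_{\psi_{1/2}}\lesssim \E\Big|\sum_\ell W_\ell\Big|+\Big\|\max_\ell|W_\ell|\Big\|_{\psi_{1/2}}\lesssim\sqrt m+q\log^2(m+1).
\]
The first term uses your variance bound and the second uses \ref{orlicz:max}. The maximum over blocks now costs only $\log^2(m+1)\lesssim\log^2 n$ for a fixed tuple. The dependence on $K$ and $M$ enters once, through the tail bound \ref{orlicz:tail_bound} at level $\asymp\log(nKM)$, which supplies the common factor $\log^2(nKM)$. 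This is the paper's argument.

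A sub-Weibull Bernstein inequality would work equally well, provided its tail term is $x^2\|\max_\ell|W_\ell|\|_{\psi_{1/2}}$ rather than range times $x$.
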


\begin{proof}[Proof of Lemma~\ref{lem:oracle_coupled_covariance_concentration}]
Fix an admissible tuple \((s,t,k,k',j,j')\), and set
\[
b_\ell:=a_{s\ell}a_{t\ell},
\qquad
\xi_\ell
:=
b_\ell
\left\{
\widetilde Z_{\ell,kj}\widetilde Z_{\ell,k'j'}
-
\mathbb E[
\widetilde Z_{\ell,kj}\widetilde Z_{\ell,k'j'}]
\right\}.
\]
Since the coupled block sums $\tilde S_{\ell k}$ are independent and identically distributed,
the variables \(\xi_1,\ldots,\xi_m\) are independent and centered. Also recalling the definition of~\eqref{eq:def-a_sl} it holds that 
\[
|b_\ell|\le1,
\qquad
\sum_{\ell=1}^m b_\ell^2\le m,
\qquad
\widehat\Gamma_{skj,tk'j'}-\Gamma_{skj,tk'j'}
=
\frac1m\sum_{\ell=1}^m\xi_\ell.
\]
We first record two uniform bounds for the block scores. For
\[
\phi_{kj}^{(i)}
:=
\langle\varepsilon_k^{(i)},Z_{kj}\rangle_{\Hb_k},
\]
Assumption~\ref{cond:stationary} gives
\(
\|\phi_{kj}^{(i)}\|_{\psi_1}\le D_\psi
\).
Consequently, from \ref{orlicz:tail_bound},
\[
Q'(\phi_{kj}^{(i)},u)
\le D_\psi\log(2/u),
\qquad u\in(0,1),
\]
where
\(
Q'(W,u):=\inf\{z\ge0:\mathbb P(|W|>z)\le u\}
\).
Moreover,
\[
C_R
:=
\int_0^1
\inf\{h\in\mathbb N:\beta_h\le2u\}
\log^4(2/u)\,du
<\infty,
\]
and \(C_R\) depends only on \(C_\beta,c_\beta\). Since the
\(\alpha\)-mixing coefficients satisfy \(\alpha_h\le\beta_h/2\),
Theorem~2.5 of~\cite{rio2017asymptotic} gives for an absolute constant $C$
\[
\mathbb E\Big|
\sum_{i\in I_\ell}\phi_{kj}^{(i)}
\Big|^4
\le
Cq^2D_\psi^4C_R;
\]
see also \eqref{eq:rio1} for an extended version of this argument.
Using \(\widetilde S_\ell\stackrel d=S_\ell\), we therefore obtain a constant $C_1$ that depends only on $D_{\psi},C_\beta,c_\beta$ such that 
\begin{equation}
\label{eq:coupled-score-fourth-short}
\mathbb E|\widetilde Z_{\ell,kj}|^4\le C_1.
\end{equation}
On the other hand, the Orlicz triangle inequality gives
\begin{equation}
\label{eq:coupled-score-orlicz-short}
\|\widetilde Z_{\ell,kj}\|_{\psi_1}
\le
q^{-1/2}
\sum_{i\in I_\ell}\|\phi_{kj}^{(i)}\|_{\psi_1}
\le D_\psi\sqrt q.
\end{equation}
By \eqref{eq:coupled-score-fourth-short} and Cauchy--Schwarz,
\[
\mathbb E\xi_\ell^2
\le
b_\ell^2
\mathbb E\left[
\widetilde Z_{\ell,kj}^2
\widetilde Z_{\ell,k'j'}^2
\right]
\le C_1b_\ell^2.
\]
Hence independence and centeredness imply that, with $C_2=\sqrt C_1$ depending only on $C_\beta, c_\beta, D_{\psi}$,
\begin{equation}
\label{eq:covariance-sum-expectation-short}
\mathbb E\Big|
\sum_{\ell=1}^m\xi_\ell
\Big|
\le
\Big(
\sum_{\ell=1}^m\mathbb E\xi_\ell^2
\Big)^{1/2}
\le C_2\sqrt m.
\end{equation}
Furthermore, \eqref{eq:coupled-score-orlicz-short}, \ref{orlicz:triangle_up_to_constant} and
\ref{orlicz:product} give a constant $C_3$ that depends only on $D_\psi$ such that 
\[
\|\xi_\ell\|_{\psi_{1/2}}
\le C_3q|b_\ell|
\le C_3q.
\]
Thus, by~\ref{orlicz:max} there is a constant $C_4$ that depends only on $D_\psi$ such that
\begin{equation}
\label{eq:covariance-maximal-summand-short}
\Big\|
\max_{\ell \in [m]}|\xi_\ell|
\Big\|_{\psi_{1/2}}
\le
C_4q\log^2(m+1).
\end{equation}
Then by~\ref{orlicz:banach}, with \(\mathcal B=\mathbb R\) and \(p=1/2\), together with \eqref{eq:covariance-sum-expectation-short} and
\eqref{eq:covariance-maximal-summand-short}, it holds that for a constant $C_5$ that depends only on $C_\beta, c_\beta, D_{\psi}$ that 
\[
\Big\|
\sum_{\ell=1}^m\xi_\ell
\Big\|_{\psi_{1/2}}
\le
C_5\left\{
\sqrt m+q\log^2(m+1)
\right\}.
\]
Consequently it holds that 
\begin{equation*}
\left\|
\widehat\Gamma_{skj,tk'j'}-\Gamma_{skj,tk'j'}
\right\|_{\psi_{1/2}}
\le
C_5\left[
\frac1{\sqrt m}
+\frac{q\log^2(m+1)}m
\right].
\end{equation*}
There are at most \((nKM)^2\) admissible covariance entries. Applying~\ref{orlicz:tail_bound}  and taking a union bound gives an
event \(\Omega_{\mathrm{cov}}^{O*}\in\widetilde{\mathcal F}_n\) with
$
\mathbb P(\Omega_{\mathrm{cov}}^{O*}) \ge 1-1/n
$
on which
\begin{align*}
&\max_{s,t\in[n]}
\max_{k,k'\in[K]}
\max_{\substack{
j\in[\check M_k]\\
j'\in[\check M_{k'}]
}}
\left|
\widehat\Gamma_{skj,tk'j'}-\Gamma_{skj,tk'j'}
\right|
\le
C_6
\left[
\frac1{\sqrt m}
+\frac{q\log^2(m+1)}m
\right]
\log^2(nKM)
\end{align*}
for a constant $C_6$ that depends only on $C_\beta, c_\beta, D_{\psi}$. Since \(m+1\le n+1\), the asserted bound follows after setting 
\(C_{\mathrm{cov}}=C_6\).
\end{proof}

\begin{lemma}[Conditional Gaussian comparison for the projected coupled
oracle process]
\label{lem:projected_coupled_oracle_gaussian_comparison}
Fix \(\rho>0\) and \(x\in(0,3/16]\). Suppose
Assumptions~\ref{cond:stationary} and~\ref{cond:scores} hold. Let
\(q,r,M\in\mathbb N\) satisfy
\[
1\le r\le q \le\frac n4,
\]
and
\begin{equation}
\label{eq:M-condition-conditional-Gaussian-comparison}
M^{2\gamma_L}
\le
\frac{
C_L^2x(m\wedge q)
}{
2C_4\bigl(D_\psi^2S_{\beta,1}\vee C_{\mathcal K}\bigr)
},
\end{equation}
where \(C_4\) is the universal constant from
Lemma~\ref{lem:bound-on-operator-norm-covariances}. Recall
\(G_{sk}^M\) from~\eqref{eq:definition-GMsk}.
Then there exist a constant
$
C_{\mathrm{GB}}
=
C_{\mathrm{GB}}
(\rho,D_\psi,C_L,C_U,\gamma_L,\gamma_U,C_\beta,c_\beta)>0
$
and an event
\(\Omega_{\mathrm{GB}}^{O*}\in\widetilde{\mathcal F}_n\) satisfying
$
\mathbb P(\Omega_{\mathrm{GB}}^{O*})\ge 1-1/n
$
such that, on \(\Omega_{\mathrm{GB}}^{O*}\),
\begin{align*}
&\sup_{t\ge\rho}
\bigg|
\mathbb P\Big(
\max_{s\in S_x}\max_{k\in[K]}
\big\|G_{sk}^M\big\|_2^2
\le t
\Big)
-
\mathbb P\Big(
\max_{s\in S_x}\max_{k\in[K]}
\big\|(O_{kM}\circ P_{kM})\widetilde C_k^{O*}(s)\big\|_2^2
\le t
\,\Big|\,
\widetilde{\mathcal F}_n
\Big)
\bigg|
\\
&\hspace{4.7cm} \le
C_{\mathrm{GB}}x^{-1}
M^{2/3}\left[
\frac1{\sqrt m}
+\frac{q\log^2n}{m}
\right]^{1/3}
\log^{\,11/6+6\kexp}(nKM),
\end{align*}
where $\kexp:=\gamma_L/({2\gamma_U-1})$.
\end{lemma}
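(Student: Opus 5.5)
The plan is to view the conditional law of $\big((O_{kM}\circ P_{kM})\widetilde C_k^{O*}(s)\big)_{(s,k)\in S_x\times[K]}$ given $\widetilde{\mathcal F}_n$ as a centered Gaussian vector, and to compare it with the Gaussian vector $(G^M_{sk})$ through a Gaussian-to-Gaussian comparison for maxima of Euclidean norms of blocks. This is the same mechanism as in the proof of Corollary~\ref{cor:projected_mixed_norm_kolmogorov}, with the roles of the two Gaussians chosen suitably. By \eqref{def:berbee-coupled-block-oracle}, conditionally on $\widetilde{\mathcal F}_n$,
\[
\widetilde G^{M,O*}_{sk}=\frac1{\sqrt m}\sum_{\ell=1}^m a_{s\ell}e_\ell\,(\widetilde Z_{\ell,kj})_{j\le\check M_k},
\]
which is a centered Gaussian vector whose conditional cross-covariance entries are exactly $\widehat\Gamma_{skj,tk'j'}$ from Lemma~\ref{lem:oracle_coupled_covariance_concentration}. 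Since the $\widetilde S_\ell$ are i.i.d.\ with $\widetilde S_\ell\stackrel d=S_1$, the unconditional expectation $\Gamma_{skj,tk'j'}=\frac1m\langle a_s,a_t\rangle\,q^{-1}\E[\langle S_1,Z_{kj}\rangle\langle S_1,Z_{k'j'}\rangle]$ coincides with $\Cov(G^M_{sk},G^M_{tk'})$ by \eqref{eq:covariance-GMsk} and \eqref{eq:covariance-GMsk-alternative-expresssion}. On the event $\Omega^{O*}_{\mathrm{cov}}$ of that lemma, the two covariance matrices therefore differ in max-norm by at most $\varepsilon_\Gamma:=C_{\mathrm{cov}}[m^{-1/2}+q\log^2 n/m]\log^2(nKM)$.

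I would then set $\Omega^{O*}_{\mathrm{GB}}:=\Omega^{O*}_{\mathrm{cov}}$ and apply the Gaussian comparison Lemma~\ref{lem:gaussbootsphere-new}, exactly as in the proof of Corollary~\ref{cor:projected_mixed_norm_kolmogorov}. Take $X=(G^M_{sk})$, zero-padded to $\R^M$, as the reference vector. Corollary~\ref{lem:good_eigen_values_2} supplies the spectral decay $\operatorname{Decay}(xC_L^2/2,2C_U^2,2\gamma_L,2\gamma_U)$ under \eqref{eq:M-condition-conditional-Gaussian-comparison}, and this produces the factor $x^{-1}$. Take $Y$ to be the conditional bootstrap vector. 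The lemma is applied at threshold $\sqrt\rho$, followed by the change of variables $u=\sqrt t$. With max-norm discrepancy $\varepsilon_\Gamma$ it gives an error of order
\[
x^{-1}\big[M^{2/3}\varepsilon_\Gamma^{1/3}+\text{lower order}\big]\log^{7/6+c\kexp}(\cdots).
\]
The factor $\varepsilon_\Gamma^{1/3}$ contributes $[m^{-1/2}+q\log^2n/m]^{1/3}\log^{2/3}(nKM)$. Combined with $\log^{7/6+2\kexp}$, this yields $\log^{11/6+\cdots}(nKM)$, which matches the exponent $11/6+6\kexp$ once the looser power of $\kexp$ is absorbed. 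The trace quantity $U$ appearing in the logarithm has to be bounded for $Y$ as well, but only up to $\log$ factors. On $\Omega^{O*}_{\mathrm{cov}}$, $\Tr\Cov(Y_{sk})\le\Tr\Cov(X_{sk})+M\varepsilon_\Gamma$, and $\varepsilon_\Gamma\lesssim \log^2(nKM)$ always holds because $q\le n/4$ forces $q/m\lesssim q^2/n\le n$. Hence $\log U\lesssim\log(nKM)$.

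The main obstacle is that Lemma~\ref{lem:gaussbootsphere-new} presumably requires the decay condition on at least one of the two vectors, and possibly only in the role it plays in the earlier corollary. I would check that only the reference vector $X=G^M$ needs it, since anti-concentration is used for that side only. If it instead needs both, I would first show that $\|\Cov(Y_{sk})-\Cov(X_{sk})\|_{\op}\le M\varepsilon_\Gamma$. Via Weyl's inequality, this preserves the lower decay bound with constant $xC_L^2/4$ whenever $M^{1+2\gamma_L}\varepsilon_\Gamma\lesssim x$. Otherwise the claimed bound exceeds $1$ and is trivial after enlarging $C_{\mathrm{GB}}$, because $M^{2/3}\varepsilon_\Gamma^{1/3}\ge (M^{2}\varepsilon_\Gamma)^{1/3}$ and $M^{2\gamma_L}\le x\,q$. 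A second, minor point is the auxiliary integer $m_{\mathrm{aux}}$ in Lemma~\ref{lem:gaussbootsphere-new}. I would choose it as $\lceil\varepsilon_\Gamma^{-1}\rceil$ (clipped to at least $1$) so that its $m_{\mathrm{aux}}^{-2/3}$ remainder is dominated by $M^{2/3}\varepsilon_\Gamma^{1/3}$.
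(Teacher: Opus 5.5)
Your proposal is correct and follows the paper's proof essentially step by step. The shared steps are: identify the conditional and unconditional covariances with $\widehat\Gamma$ and $\Gamma$, set $\Omega^{O*}_{\mathrm{GB}}:=\Omega^{O*}_{\mathrm{cov}}$, take the decay of $X=(G^M_{sk})$ alone from Corollary~\ref{lem:good_eigen_values_2}, bound $U$ through the trace, and apply Lemma~\ref{lem:gaussbootsphere-new} at threshold $\sqrt\rho$. That lemma only requires decay for $X$, so your Weyl fallback is unnecessary. The remaining differences are cosmetic: the paper takes the auxiliary integer to be $n$ itself, using $n^{-2/3}\le m^{-1/6}\lesssim\varepsilon_\Gamma^{1/3}$, which avoids your clipping and the requirement $m_{\mathrm{aux}}\ge 4$. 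Also, your aside that $\varepsilon_\Gamma\lesssim\log^2(nKM)$ is not literally true, since $q\log^2 n/m$ can be of order $n\log^2 n$, but bounding $\log U$ only needs $\log\varepsilon_\Gamma\lesssim\log(nKM)$, which does hold.
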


\begin{proof}[Proof of Lemma~\ref{lem:projected_coupled_oracle_gaussian_comparison}]
Let \(\mathcal I_x:=S_x\times[K]\), and define 
\[
X
:=
(G_{sk}^M)_{(s,k)\in\mathcal I_x},
\qquad
Y
:=
\bigl((O_{kM}\circ P_{kM})
\widetilde C_k^{O*}(s)\bigr)_{(s,k)\in\mathcal I_x},
\]
with $G_{sk}^M$ from \eqref{eq:definition-GMsk} and $O_{kM}$ from \eqref{eq:def-OkM}. Note that $X$ is centered Gaussian, whereas \(Y\) is centered Gaussian conditionally on
\(\widetilde{\mathcal F}_n\). By the
definitions in Lemma~\ref{lem:oracle_coupled_covariance_concentration} and the representations in \eqref{eq:covariance-GMsk} and \eqref{eq:covariance-GMsk-alternative-expresssion},
their covariance entries satisfy
\[
\Cov(X)_{skj,tk'j'}
=
\Gamma_{skj,tk'j'},
\qquad
\Cov(Y\mid\widetilde{\mathcal F}_n)_{skj,tk'j'}
=
\widehat\Gamma_{skj,tk'j'}.
\]
Let $
\Omega_{\mathrm{GB}}^{O*}
:=
\Omega_{\mathrm{cov}}^{O*}$
be the event from that lemma. Then
\(
\mathbb P((\Omega_{\mathrm{GB}}^{O*})^c)\le1/n
\), and, on \(\Omega_{\mathrm{GB}}^{O*}\), there is a constant $C$ depending only on $C_\beta, c_\beta, D_\psi$ such that 
\begin{equation}
\label{eq:conditional-Gaussian-covariance-distance-short}
\left\|
\Cov(Y\mid\widetilde{\mathcal F}_n)-\Cov(X)
\right\|_{\infty}
\le
C\delta_{\mathrm{GB}} := 
C \left[
\frac1{\sqrt m}
+\frac{q\log^2n}{m}
\right]
\log^2(nKM).
\end{equation}

Next, Condition~\eqref{eq:M-condition-conditional-Gaussian-comparison} and
Corollary~\ref{lem:good_eigen_values_2} imply that, for every
\((s,k)\in\mathcal I_x\),
\begin{equation}
\label{eq:conditional-Gaussian-decay-short}
\Cov(G_{sk}^M)
\in
\operatorname{Decay}
\left(
\frac{xC_L^2}{2},
2C_U^2,
2\gamma_L,
2\gamma_U
\right).
\end{equation}
In particular, by \eqref{eq:trace-as-sum-eigenvalues} and definition of $\operatorname{Decay}(\cdot)$ in Definition~\ref{def:covariance-decay},
\[
\sup_{(s,k)\in\mathcal I_x}
\Tr\big(  \Cov(X_{sk}) \big)
\le
2C_U^2\sum_{j=1}^\infty j^{-2\gamma_U}
\le C_1,
\]
where $C_1$ depends only on $\gamma_U$ and $C_U$. Note that $\Tr(A) \le M \| A \|_\infty$ for a positive semidefinite $(M \times M)$-matrix $A$. By \eqref{eq:conditional-Gaussian-covariance-distance-short}, on
\(\Omega_{\mathrm{GB}}^{O*}\) and for a constant $C_2$ that may have the same parameter dependencies as $C$ and $C_1$ and that may be chosen larger than $C_1 \vee 1$
\[
\sup_{(s,k)\in\mathcal I_x}
\Tr\big( \Cov(Y_{sk}\mid\widetilde{\mathcal F}_n)\big)
\le
C_2 (1 + M\delta_{\mathrm{GB}}).
\]
Consequently, 
\begin{equation}
\label{eq:conditional-Gaussian-U-short}
U
:= 
\max_{(s,k)\in\mathcal I_x}
\left\{
\sqrt{\Tr\big( \Cov(X_{sk})\big)}
\vee
\sqrt{\Tr\big( \Cov(Y_{sk} \mid \widetilde{\mathcal F}_n\big)}
\right\}
\le C_2\{1+\sqrt{M\delta_{\mathrm{GB}}}\}.
\end{equation}

We apply Lemma~\ref{lem:gaussbootsphere-new} conditionally on
\(\widetilde{\mathcal F}_n\), with block index set \(\mathcal I_x\),
largest block dimension at most \(M\), and auxiliary integer $n\ge 4$. The heterogeneous block dimensions cause no change: one may pad each block
with zero coordinates up to dimension \(M\). Since the present result
concerns squared norms, the lower threshold in that lemma is
\(\sqrt\rho\).  Using
\eqref{eq:conditional-Gaussian-covariance-distance-short} and
\eqref{eq:conditional-Gaussian-decay-short}, we obtain for a constant $C_3$ that may depend on $D_\psi, C_
\beta, c_\beta, C_U, C_L, \rho, \gamma_U, \gamma_L$
\begin{align}
&\sup_{t\ge\rho}
\left|
\mathbb P\left(
\max_{(s,k)\in\mathcal I_x}\|X_{sk}\|_2^2\le t
\right)
-
\mathbb P\left(
\max_{(s,k)\in\mathcal I_x}\|Y_{sk}\|_2^2\le t
\,\middle|\,
\widetilde{\mathcal F}_n
\right)
\right|
\nonumber\\
&\qquad\le
C_3x^{-1}
\left[
M^{2/3}\delta_{\mathrm{GB}}^{1/3}
+n^{-2/3}
\right]
\log^{\,7/6+6\kexp}
\left(
en(|\mathcal I_x|+1)(U\vee1)
\right).
\label{eq:conditional-Gaussian-comparison-before-simplification}
\end{align}
Here all powers of \(C_L,C_U\) and \(\rho\) have been absorbed into
\(C_3\); the factor \(x^{-1}\) follows from the lower eigenvalue constant
\(C_L\sqrt{x/2}\) in
\eqref{eq:conditional-Gaussian-decay-short}. It remains to simplify the right-hand side. It holds that 
\[
n^{-2/3}
\le
M^{2/3}\delta_{\mathrm{GB}}^{1/3},
\]
since $M \ge 1$ and
\[
n^{-2/3}
\le
n^{-1/6}
\le
m^{-1/6}
\le
\left[
\left\{
\frac1{\sqrt m}
+
\frac{q\log^2 n}{m}
\right\}
\log^2(nKM)
\right]^{1/3}
= \delta_{\mathrm{GB}}^{1/3}.
\]
Moreover, \(|\mathcal I_x|\le nK\), and
\eqref{eq:conditional-Gaussian-U-short}, together with
\(q\le n\) and \(m\ge2\), gives
\[
\log\left(
en(|\mathcal I_x|+1)(U\vee1)
\right)
\le
C_4\log(nKM),
\]
for a constant $C_4$ that may have the same parameter dependencies as the constant $C_3$. Substituting these two bounds into
\eqref{eq:conditional-Gaussian-comparison-before-simplification} proves
the claim.
\end{proof}

\subsection{Localization}

Recall that \[
    \Delta_k:=\|\delta_k\|_{\mathcal H_k},
    \qquad
    \theta_k:=\omega_k/n,
    \qquad
    \gamma_k:=\theta_k\wedge(1-\theta_k).
\]

\begin{lemma}[Localization under a general noise process]
\label{lem:general_localization_lemma}
Fix some nonempty collection of changed coordinates
\(\chgset\subseteq\mathcal S\), with \(\mathcal S\) from
\eqref{eq:change-point-coordinates}. For \(r\in[n-1]\), define the local
noise envelope
\begin{align}
\label{eq:definition-Hnr}
\mathcal H_n(r) := \mathcal H_n(r,\chgset)
:=
\max_{k\in\chgset}
\sup_{\substack{
0\leq s\leq n\\
|s-\omega_k|\geq r
}}
\frac{
\sqrt n\,
\|C_{n,k}^{O}(s)-C_{n,k}^{O}(\omega_k)\|_{\Hb_k}
}{
|s-\omega_k|\,\gamma_k\Delta_k
},
\end{align}
where the supremum over an empty set is understood to be zero.
Then, for every \(r\in[n]\), we have the following inclusion of events:
\[
\{ \mathcal H_n(r)<1/5 \}
\subseteq
\Big\{
\max_{k\in\chgset}
|\widehat\omega_k-\omega_k|
<
r
\Big\}.
\]
\end{lemma}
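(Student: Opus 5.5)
The plan is a deterministic argument: on the event $\{\mathcal H_n(r)<1/5\}$ I will show that, for every $k\in\chgset$ and every $s$ with $|s-\omega_k|\ge r$, we have $\|C_{n,k}(s)\|_{\Hb_k}<\|C_{n,k}(\omega_k)\|_{\Hb_k}$. Then the first maximizer $\widehat\omega_k$ from \eqref{eq:definition-hat-omega-k} cannot lie at distance $\ge r$ from $\omega_k$. If no $s\in[n]$ satisfies $|s-\omega_k|\ge r$, the claim is trivial for that $k$, so fix $k$ and suppose such $s$ exist.

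\emph{Step 1 (signal/noise split).} By \eqref{eq:change_point_model}, $\mu_k$ cancels and
\[
C_{n,k}(s)=a_s\,\delta_k+N(s),\qquad N(s):=C^O_{n,k}(s),\qquad a_s:=n^{-1/2}g(s),
\]
where $g(s)=s(n-\omega_k)/n$ for $s\le\omega_k$ and $g(s)=\omega_k(n-s)/n$ for $s\ge\omega_k$. A direct computation gives $g(\omega_k)-g(s)=|s-\omega_k|(1-\theta_k)$ on the left and $|s-\omega_k|\theta_k$ on the right. Hence, writing $u=\delta_k/\Delta_k$, $A=a_{\omega_k}\Delta_k$ and $B=a_s\Delta_k\ge0$,
\[
A-B\ \ge\ D_s:=n^{-1/2}|s-\omega_k|\gamma_k\Delta_k ,
\qquad
A=\sqrt n\,\theta_k(1-\theta_k)\Delta_k\ \ge\ \sqrt n\,\gamma_k\Delta_k/2 .
\]
The envelope hypothesis says exactly that the increment $e_s:=N(s)-N(\omega_k)$ satisfies $\|e_s\|<D_s/5$.

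\emph{Step 2 (control of $w:=N(\omega_k)$).} Here I use that the supremum in \eqref{eq:definition-Hnr} includes $s=0$ and $s=n$, where $C^O_{n,k}$ vanishes. Whichever endpoint lies at distance $\ge r$ from $\omega_k$ (at least one does, since some admissible $s$ exists) gives
\[
\|w\|<\tfrac15 n^{-1/2}\,\mathrm{dist}\,\gamma_k\Delta_k\le \tfrac15\sqrt n\,\gamma_k\Delta_k\le \tfrac25 A .
\]
When the endpoint is on the same side as $s$, the bound improves to $\|w\|<\tfrac15 n^{-1/2}\min(\omega_k,n-\omega_k)\gamma_k\Delta_k=\tfrac15\sqrt n\,\gamma_k^2\Delta_k$, which is of smaller order than $A$.

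\emph{Step 3 (comparison).} Expand
\[
\|Au+w\|^2-\|Bu+w+e_s\|^2=(A-B)\bigl(A+B+2\langle u,w\rangle\bigr)-2\langle Bu+w,e_s\rangle-\|e_s\|^2 .
\]
By Step 2 the first term is at least $(A-B)\cdot cA$ for some $c>0$. The error terms are bounded by $2(B+\|w\|)D_s/5+D_s^2/25$, and I will dominate them using $A-B\ge D_s$ and $B\le A$. Concretely, when $B$ is close to $A$, the positive term $(A-B)(A+B)\approx 2AD_s$ must beat the error $\approx \tfrac25 AD_s$; when $B$ is small, the error is small.

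The main obstacle is the constant bookkeeping in this last step. The crude bound $|\langle u,w\rangle|\le\tfrac25A$ leaves too little room against the $\tfrac25(B+\|w\|)D_s$ cross term. I would therefore first try the sharper, same-side bound $\|w\|\lesssim\sqrt n\gamma_k^2\Delta_k\le 2\gamma_k A$ together with a case split on $\gamma_k\le1/4$ versus $\gamma_k>1/4$. If the margin still fails, I would additionally use the increment bound at $s=0$ or $s=n$ on the opposite side, which together pin down $\langle u,w\rangle$ more tightly.
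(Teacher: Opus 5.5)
Your route is sound, but the obstacle you flag in Step~3 is not real, and the proposed repair should be dropped. (A harmless slip first: the signal part of $C_{n,k}(s)$ is $-n^{-1/2}g(s)\delta_k$. Since the sign is the same for all $s$, you can absorb it into $u$.)

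Set $x:=A-B>0$. The two facts you already have, $\|e_s\|<D_s/5\le x/5$ and $\|w\|<\tfrac25A$, close your expansion directly:
\[
\|Au+w\|^2-\|Bu+w+e_s\|^2
\;\ge\; xA+xB-2\|w\|\,(x+\|e_s\|)-2B\|e_s\|-\|e_s\|^2
\;>\; xA+xB-\tfrac{24}{25}Ax-\tfrac25Bx-\tfrac{x^2}{25}
\;=\;\tfrac{16}{25}\,Bx\;\ge\;0 .
\]
The strict step does two things in order:
\begin{itemize}
\item it inserts $\|w\|<\tfrac25A$, which is allowed because the coefficient $-2(x+\|e_s\|)$ is negative;
\item it then inserts $\|e_s\|\le x/5$, which is allowed because the remaining expression is decreasing in $\|e_s\|$.
\end{itemize}
The final equality uses $x=A-B$.

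So no case split on $\gamma_k$ and no second endpoint are needed. The constant $1/5$ is exactly where these two estimates balance; the lower bound degenerates at $B=0$.

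The sharpening you propose is moreover false as stated. The endpoint on the same side as $s$ lies at distance $\omega_k$ or $n-\omega_k$, which need not equal $\min(\omega_k,n-\omega_k)$. The near endpoint, on the other hand, may lie at distance $<r$ from $\omega_k$, and then it is not covered by $\mathcal H_n(r)$. The same objection applies to your fallback of using the endpoint on the opposite side.

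With this completion your proof is correct. It differs from the paper's only in how the same two estimates are combined.
\begin{itemize}
\item The paper argues by contradiction from $\|C_{n,k}(s)\|\ge\|C_{n,k}(\omega_k)\|$. Writing $a(s):=B/A\in[0,1]$, it uses the exact identity $C_{n,k}(s)=a(s)C_{n,k}(\omega_k)+e_s+\{1-a(s)\}w$.
\item The triangle inequality together with $\|C_{n,k}(\omega_k)\|\ge A-\|w\|$ then gives $A-B\le\|e_s\|+2\frac{A-B}{A}\|w\|\le 5(A-B)\mathcal H_n(r)$, a contradiction.
\end{itemize}
This linear argument avoids inner products and all quadratic bookkeeping, which is precisely where your proposal stalled. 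In exchange, your squared-norm expansion gives the explicit gap $\|C_{n,k}(\omega_k)\|^2-\|C_{n,k}(s)\|^2>\tfrac{16}{25}B(A-B)$.

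Your way of bounding $w$ is also slightly simpler than the paper's: you use whichever endpoint is admissible and then $\mathrm{dist}\le n$. The paper instead fixes the far endpoint according to whether $\omega_k\ge n/2$ and then has to check that it is admissible.
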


\begin{lemma}[Localization under polynomially $\beta$-mixing noise]
\label{lem:polynomial_beta_localization}
Suppose that Assumption~\ref{cond:stationary} holds, and let $\chgset\subseteq\mathcal S$ with $\mathcal S$ from \eqref{eq:change-point-coordinates} be some nonempty collection of
changed coordinates.
Then there exists a universal constant $C_0>0$ such that, 
for every $\eta\in(0,1/12)$ and every integer $\ell \in [n]$,  we have
\[
    \mathbb P\Big(
        \max_{k\in\chgset}
        |\widehat\omega_k-\omega_k|
        <
        r_\beta(\ell,\eta)
    \Big)
    \ge
    1-\eta
    -
    2
    \left\lceil\frac{n}{\ell}\right\rceil
    \beta_\ell,
\]
where
\begin{align}
\label{eq:def-rbeta}
    r_\beta(\ell,\eta)
    :=
    \Big\lceil
        C_0
        \Big\{
            \frac{
                V^2x_{n,\chgset,\eta}^{\,2}
            }{
                \nu_\chgset^2
            }
            +
            \frac{
                D_\psi\ell\Lambda_{n,\ell}
                x_{n,\chgset,\eta}
            }{
                \nu_\chgset
            }
        \Big\}
    \Big\rceil,
\end{align}
with $V$ some constant that may depend only on $D_\psi$, $C_\beta$, and $c_\beta$ and with
\begin{align}
\label{eq:xnk}
\nu_\chgset
     :=
     \min_{k\in\chgset}\gamma_k\Delta_k,
     \qquad
    x_{n,\chgset,\eta}
    :=
    \log\left(
        \frac{12  n|\chgset|}{\eta}
    \right),
    \qquad
    \Lambda_{n,\ell}
    :=
    \log\left(\frac{en}{\ell}\right).
\end{align}
\end{lemma}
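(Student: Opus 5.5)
The plan is to reduce the claim to the deterministic inclusion of Lemma~\ref{lem:general_localization_lemma}. It then suffices to exhibit an event $\mathcal E$ with
\[
\mathbb P(\mathcal E^c)\le\eta+2\lceil n/\ell\rceil\beta_\ell
\qquad\text{and}\qquad
\mathcal E\subseteq\{\mathcal H_n(r_\beta(\ell,\eta),\chgset)<1/5\}.
\]
If $r_\beta(\ell,\eta)\ge n$ the claim is trivial, since $|\widehat\omega_k-\omega_k|\le n-1$. We may therefore assume $r:=r_\beta(\ell,\eta)\in[n-1]$.

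\textbf{Step 1: uniform interval-sum bound.} Only the coordinates in $\chgset$ enter, so the mean terms play no role: $\mathcal H_n$ is defined through the oracle CUSUM $C^O_{n,k}$. I would use the uniform interval-sum bound of Lemma~\ref{lem:uniform_interval_sums_new} (already invoked in the proof of Lemma~\ref{lem:residual_oracle_adaptive_subset}), restricted to the coordinates $k\in\chgset$ and applied with coupling length $\ell$ and confidence level of order $\eta$. It gives an event $\mathcal E$ of the required probability on which, simultaneously for all $k\in\chgset$ and all integer intervals $I\subseteq[n]$,
\[
\Big\|\sum_{i\in I}\varepsilon_k^{(i)}\Big\|_{\Hb_k}\le C\,x_{n,\chgset,\eta}\bigl\{V\sqrt{|I|}+D_\psi\ell\Lambda_{n,\ell}\bigr\}.
\]
The logarithmic factor coming from the union bound over at most $n^2|\chgset|$ pairs $(k,I)$ is $\log(n^2|\chgset|/\eta)\lesssim x_{n,\chgset,\eta}$. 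If that lemma is not to be assumed, I would reprove it directly. Split $[n]$ into $\lceil n/\ell\rceil$ blocks of length $\ell$ and apply Berbee's coupling (Lemma~\ref{lem:berbee}) separately to the odd and to the even blocks; this costs $2\lceil n/\ell\rceil\beta_\ell$. A Bernstein-type bound for independent $\psi_1$ Hilbert-valued block sums then controls the full blocks covered by $I$. The at most two partial boundary blocks are each bounded crudely by $D_\psi\ell\Lambda_{n,\ell}$. This reproof is the step I expect to be the main technical obstacle: keeping the variance proxy $V$, which depends only on $D_\psi,C_\beta,c_\beta$, uniform in $\ell$.

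\textbf{Step 2: plug into $\mathcal H_n$.} For $s=\omega_k\pm d$ with $d\ge r$, the identities \eqref{eq:COnk-difference-decomposition1}--\eqref{eq:COnk-difference-decomposition2} write $\sqrt n\{C^O_{n,k}(s)-C^O_{n,k}(\omega_k)\}$ as $\pm$ an interval sum of length $d$ minus $(d/n)\sum_{i=1}^n\varepsilon_k^{(i)}$. On $\mathcal E$, the triangle inequality, $d\ge r$ and $r\le n$ then give
\[
\frac{\sqrt n\,\|C^O_{n,k}(s)-C^O_{n,k}(\omega_k)\|_{\Hb_k}}{d\,\gamma_k\Delta_k}
\le\frac{2Cx_{n,\chgset,\eta}}{\nu_\chgset}\Big\{\frac{V}{\sqrt r}+\frac{D_\psi\ell\Lambda_{n,\ell}}{r}\Big\}.
\]

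\textbf{Step 3: choose $C_0$.} The first term is at most $1/10$ once $r\ge 400C^2V^2x^2/\nu_\chgset^2$, and the second is at most $1/10$ once $r\ge 20C D_\psi\ell\Lambda_{n,\ell}x/\nu_\chgset$. Hence choosing $C_0$ as a large universal multiple of $C^2\vee C$ in \eqref{eq:def-rbeta} yields $\mathcal H_n(r)<1/5$ on $\mathcal E$. Lemma~\ref{lem:general_localization_lemma} then finishes the proof.
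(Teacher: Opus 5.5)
Your proposal is correct and follows essentially the paper's route: Berbee coupling over the odd and even blocks of length $\ell$, a $\psi_1$ bound for the independent full-block sums (variance proxy $V^2|I|$ plus a maximal term of order $D_\psi\ell\Lambda_{n,\ell}$), a crude bound for the at most $2\ell$ boundary indices, then the envelope estimate and Lemma~\ref{lem:general_localization_lemma}. The differences are cosmetic. The paper unions only over the at most $2n$ intervals per coordinate that actually enter $\mathcal H_n$ (those anchored at $\omega_k$, together with $[n]$), so $x_{n,\chgset,\eta}$ appears exactly; it also argues with the coupled estimator $\widehat\omega_k^\star$ rather than on the coupling event. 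Your union over all $n^2$ intervals costs only a factor $2$ in the logarithm, and you need a slightly larger $C_0$ (e.g.\ $1/11$ per term) to get the strict inequality $\mathcal H_n(r)<1/5$.
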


\begin{lemma}[Uniform interval-sum bound under polynomial mixing]
\label{lem:uniform_interval_sums_new}
Suppose Assumption~\ref{cond:stationary} holds, and let
\(\mathcal I_n\) denote the collection of all nonempty integer
intervals contained in \([n]\). For every \(\ell\in[n]\) and
\(\eta\in(0,1/12)\), define
\[
\Lambda_{n,\ell}
:=
\log\left(\frac{en}{\ell}\right),
\qquad
x_{n,\eta}
:=
\log\left(\frac{12n^2K}{\eta}\right);
\]
see also \eqref{eq:xnk}.
Then there exist a universal constant \(C>0\), a constant
\(V>0\) depending only on \(D_\psi,C_\beta,c_\beta\), and an event
$
\mathcal E_{\ell,\eta}^{\mathrm{int}}
\in\mathcal F_n
$
such that
\[
\Prob\left(
\{\mathcal E_{\ell,\eta}^{\mathrm{int}}\}^c
\right)
\le
\eta
+
2\left\lceil\frac n\ell\right\rceil\beta_\ell
\]
and, on \(\mathcal E_{\ell,\eta}^{\mathrm{int}}\), for every
\((k,I)\in[K]\times\mathcal I_n\),
\[
\Big\|
\sum_{i\in I}\varepsilon_k^{(i)}
\Big\|_{\Hb_k}
\le
Cx_{n,\eta}
\Big\{
V\sqrt{|I|}
+
D_\psi\ell\Lambda_{n,\ell}
\Big\}.
\]
\end{lemma}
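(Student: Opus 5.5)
The plan is to reduce the problem to independent block sums with a Berbee coupling at scale $\ell$, and then control all intervals simultaneously through a union bound over block endpoints.

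\emph{Coupling.} Partition $[n]$ into $N:=\lceil n/\ell\rceil$ consecutive blocks $B_1,\dots,B_N$ of length $\ell$, the last one possibly shorter. Split them into the odd-indexed and the even-indexed blocks; within each family, consecutive blocks are separated by a gap of length $\ell$. Apply Lemma~\ref{lem:berbee} separately to each family, exactly as in Section~\ref{subsubsec:blocking_coupling}. This gives independent copies $\widetilde Y_j\stackrel d=Y_j:=\sum_{i\in B_j}\varepsilon^{(i)}$ within each family, together with an event $\Omega_\ell$ on which $\widetilde Y_j=Y_j$ for all $j$, where $\mathbb P(\Omega_\ell^c)\le 2\lceil n/\ell\rceil\beta_\ell$. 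The event $\mathcal E^{\mathrm{int}}_{\ell,\eta}$ will be $\Omega_\ell$ intersected with an event of probability at least $1-\eta$ that depends only on the data. Strictly speaking it is measurable with respect to the data and the coupled variables; measurability in $\mathcal F_n$ is obtained by phrasing the final event purely in terms of the $\varepsilon^{(i)}$, since on $\Omega_\ell$ the two descriptions agree.

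\emph{Decomposition of an interval.} Fix $k$ and $I\in\mathcal I_n$. Write $I$ as a union of a run of full blocks $B_a,\dots,B_b$ and at most two boundary pieces, each of length less than $\ell$. I bound the boundary pieces crudely by $\ell\max_{i\le n}\|\varepsilon^{(i)}_k\|_{\Hb_k}$. By \ref{orlicz:tail_bound} and a union bound over $nK$ pairs $(i,k)$ together with Assumption~\ref{cond:stationary}, this maximum is at most $C D_\psi x_{n,\eta}$ outside probability $\eta/3$. The resulting term is bounded by $C x_{n,\eta}D_\psi\ell\Lambda_{n,\ell}$ because $\Lambda_{n,\ell}\ge1$. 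The full-block part splits into its odd and even sub-runs. Each sub-run is a sum of consecutive independent, centered summands $\pi_k\widetilde Y_j$, and there are at most $J\le |I|/\ell$ of them.

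\emph{Tail bound for independent block sums.} For a fixed run, I would apply the Hilbert-space Hoffmann--Jørgensen-type Orlicz inequality \ref{orlicz:banach} with $p=1$, which gives
\[
\Big\|\Big\|\sum_j \pi_k\widetilde Y_j\Big\|_{\Hb_k}\Big\|_{\psi_1}\lesssim \mathbb E\Big\|\sum_j\pi_k\widetilde Y_j\Big\|_{\Hb_k}+\Big\|\max_j\|\pi_k\widetilde Y_j\|_{\Hb_k}\Big\|_{\psi_1}.
\]
For the first term, independence gives $\mathbb E\|\sum_j\pi_k\widetilde Y_j\|\le(\sum_j\mathbb E\|\pi_k Y_j\|^2)^{1/2}\le V\sqrt{J\ell}\le V\sqrt{|I|}$. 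Here $\mathbb E\|\pi_kY_j\|^2\le V^2\ell$ follows from Lemma~\ref{lem:uified_bounds}(1), or equivalently from Lemma~\ref{lem:convergence_to_long_term_trace}, with $V^2\lesssim D_\psi^2(S_{\beta,0}\vee1)$. For the second term, $\|\|\pi_kY_j\|\|_{\psi_1}\le \ell D_\psi$, and the Orlicz maximal inequality \ref{orlicz:max} over at most $n/\ell$ blocks gives $\lesssim\ell D_\psi\log(en/\ell)=\ell D_\psi\Lambda_{n,\ell}$. Then \ref{orlicz:tail_bound} with a union bound over the $\le n^2$ choices of $(a,b)$, the $K$ coordinates and the two parities yields, outside probability $2\eta/3$, the bound $Cx_{n,\eta}\{V\sqrt{|I|}+D_\psi\ell\Lambda_{n,\ell}\}$ simultaneously for all runs. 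This is where $x_{n,\eta}=\log(12n^2K/\eta)$ enters.

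\emph{Combining.} On $\Omega_\ell$ the coupled sums coincide with the original ones, so adding the three pieces and adjusting constants gives the claimed bound with total failure probability at most $\eta+2\lceil n/\ell\rceil\beta_\ell$. The step I expect to be the main obstacle is the third one. The point is to obtain the variance-type term $V\sqrt{|I|}$ without an extra logarithm, while keeping only the single factor $\Lambda_{n,\ell}$ on the heavy-tail term, uniformly over all runs. If \ref{orlicz:banach} is not available in exactly this form, I would fall back on the Pinelis Bernstein inequality (Lemma~\ref{lem:pinelis-maximal-bernstein}) applied to truncated block sums. The truncation level would be $\ell D_\psi\Lambda_{n,\ell}x_{n,\eta}$, and the truncation error would be absorbed via the maximal bound above.
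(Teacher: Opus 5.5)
Your proposal is correct and follows essentially the same argument as the paper: a Berbee coupling at scale $\ell$ applied separately to the odd and even blocks, a decomposition of each interval into odd full blocks, even full blocks and a remainder of fewer than $2\ell$ indices, then \ref{orlicz:banach} with $p=1$ combined with the second-moment bound \eqref{eq:bound_on_square_sum_beta_mixing} and \ref{orlicz:max}, and finally \ref{orlicz:tail_bound} with a union bound over the at most $n^2K$ choices, where the final event is phrased in the original variables. The only differences are cosmetic: you control the boundary remainder through $\ell\max_i\|\varepsilon_k^{(i)}\|_{\Hb_k}$ with a separate union bound, whereas the paper bounds its $\psi_1$-norm by $2D_\psi\ell$ via \ref{orlicz:triangle}. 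Also, \ref{orlicz:banach} is stated in the paper in exactly the form you need, so the Pinelis fallback is unnecessary.
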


\begin{corollary}[Simultaneous coordinatewise localization]
\label{cor:coordinatewise_beta_localization_new}
Suppose Assumption~\ref{cond:stationary} holds, and let
\(\mathcal S\subseteq[K]\) be the collection of changed coordinates
from~\eqref{eq:change-point-coordinates}. Assume that $\mathcal S \ne \varnothing$.
There exists a constant
\[
    C_{\mathrm{loc}}
    =
    C_{\mathrm{loc}}(D_\psi,C_\beta,c_\beta)>0
\]
such that, for every $\ell \in \N$, on the event
\(\mathcal E_{\ell,\,1/(12n)}^{\mathrm{int}}\) from
Lemma~\ref{lem:uniform_interval_sums_new}, we have
\[
    \forall\, k\in\mathcal S: \quad |\widehat\omega_k-\omega_k|
    <
    r_{\beta,k}^{(n)}(\ell),
\]
where
\begin{equation}
\label{eq:def-coordinatewise-rbeta-new}
    r_{\beta,k}^{(n)}(\ell)
    :=
    \left\lceil
        C_{\mathrm{loc}}
        \frac{\log(nK)}{\nu_k}
        \left\{
            \frac{\log(nK)}{\nu_k}
            +
            \ell\log\left(\frac{en}{\ell}\right)
        \right\}
    \right\rceil 
\end{equation}
with $\nu_k:=\gamma_k\Delta_k>0$ from \eqref{eq:def-nu_k}.
Consequently, by Lemma~\ref{lem:uniform_interval_sums_new},
\[
    \mathbb P\left(
        \exists k\in\mathcal S:
        |\widehat\omega_k-\omega_k|
        \ge
        r_{\beta,k}^{(n)}(\ell)
    \right)
    \le
    \frac1n
    +
    2\left\lceil\frac n\ell\right\rceil\beta_\ell.
\]
\end{corollary}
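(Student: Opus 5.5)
The plan is to obtain the corollary as a direct consequence of Lemma~\ref{lem:general_localization_lemma}, applied separately to each changed coordinate $k\in\mathcal S$ with $\chgset=\{k\}$, combined with the uniform interval-sum bound of Lemma~\ref{lem:uniform_interval_sums_new}. Throughout we fix $\ell$ and $\eta=1/(12n)$ and work on the event $\mathcal E:=\mathcal E^{\mathrm{int}}_{\ell,1/(12n)}$. On $\mathcal E$ we have, simultaneously for all $k\in[K]$ and all integer intervals $I\subseteq[n]$,
\[
\Big\|\sum_{i\in I}\varepsilon_k^{(i)}\Big\|_{\Hb_k}\le Cx_{n}\bigl\{V\sqrt{|I|}+D_\psi\ell\Lambda_{n,\ell}\bigr\},
\qquad x_n=\log(144n^3K)\le 6\log(nK),
\]
using $n\ge4$. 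The probability statement then follows immediately from the bound on $\mathbb P(\mathcal E^c)$ in Lemma~\ref{lem:uniform_interval_sums_new}, since $\eta=1/(12n)\le 1/n$.

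By Lemma~\ref{lem:general_localization_lemma}, applied with $\chgset=\{k\}$, it suffices to show that on $\mathcal E$ we have $\mathcal H_n(r,\{k\})<1/5$ for $r=r^{(n)}_{\beta,k}(\ell)$, for every $k\in\mathcal S$ at once. No union bound over $k$ is needed, because $\mathcal E$ already controls all coordinates and all intervals simultaneously. Write $s=\omega_k\pm d$ with $d\ge r$. The CUSUM increment identity gives
\[
\sqrt n\bigl\{C^O_{n,k}(s)-C^O_{n,k}(\omega_k)\bigr\}=\pm\sum_{i\in I_d}\varepsilon_k^{(i)}\mp\frac dn\sum_{i=1}^n\varepsilon_k^{(i)},
\]
where $I_d$ is an integer interval of length $d$ adjacent to $\omega_k$. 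On $\mathcal E$ the norm of this increment is therefore at most
\[
6C\log(nK)\bigl\{V\sqrt d+D_\psi\ell\Lambda_{n,\ell}+(d/n)(V\sqrt n+D_\psi\ell\Lambda_{n,\ell})\bigr\}\le C'\log(nK)\bigl\{\sqrt d+\ell\Lambda_{n,\ell}\bigr\},
\]
using $d\le n$. Dividing by $d\,\nu_k$ bounds the envelope by
\[
\frac{C'\log(nK)}{\nu_k}\Bigl\{r^{-1/2}+\frac{\ell\Lambda_{n,\ell}}{r}\Bigr\}.
\]
The first term is below $1/10$ as soon as $r\ge 100C'^2\log^2(nK)/\nu_k^2$, and the second is below $1/10$ as soon as $r\ge 10C'\log(nK)\ell\Lambda_{n,\ell}/\nu_k$. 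Choosing $C_{\mathrm{loc}}\ge 100C'^2\vee 10C'$ makes $r^{(n)}_{\beta,k}(\ell)$ exceed both thresholds, so $\mathcal H_n<1/5$. Boundary cases with $r\ge n$ are trivial, since then $|\widehat\omega_k-\omega_k|<n\le r$ automatically.

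The main obstacle is bookkeeping, not any new idea. One must check that the constant $C'$ depends only on $(D_\psi,C_\beta,c_\beta)$, which holds because $V$ does. One must also handle the endpoint conventions $s\in\{0,n\}$ in Lemma~\ref{lem:general_localization_lemma}, where $C^O_{n,k}$ vanishes and the same interval bound still applies.
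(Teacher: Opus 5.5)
Your proposal is correct and follows the paper's proof essentially step for step: you work on $\mathcal E^{\mathrm{int}}_{\ell,1/(12n)}$, bound the CUSUM increment by two interval sums, bound $\mathcal H_n(r,\{k\})$ by $C'\log(nK)\nu_k^{-1}\{r^{-1/2}+\ell\log(en/\ell)/r\}$, choose $C_{\mathrm{loc}}$ accordingly, and invoke Lemma~\ref{lem:general_localization_lemma} with $\chgset=\{k\}$ for every $k\in\mathcal S$ without a union bound. There are two harmless constant slips: first, for $n\in\{4,5\}$ and $K=1$ one has $\log(144n^3K)>6\log(nK)$, so use $7\log(nK)$ as the paper does; second, thresholds that only give each term $\le 1/10$ yield $\mathcal H_n\le 1/5$ rather than the strict inequality the lemma needs, which you fix by slightly enlarging $C_{\mathrm{loc}}$ (the paper targets $1/11$ per term).
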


\begin{proof}[Proof of Lemma~\ref{lem:general_localization_lemma}]
A tedious but straightforward calculation shows that
\begin{align}
\label{eq:cnk-dnk-cnko}
C_{n,k}(s)
=
D_{n,k}(s)+C_{n,k}^{O}(s),
\end{align}
where \(D_{n,k}(s)\) is the deterministic CUSUM process defined by
\[
D_{n,k}(s)
=
d_{n,k} \, a_{n,k}(s),
\]
with
\[
d_{n,k}
=
-\sqrt n\,\theta_k(1-\theta_k)\delta_k,
\qquad
a_{n,k}(s)
:=
\begin{cases}
\dfrac{s}{\omega_k},
& 0\leq s\leq\omega_k,\\[1.2ex]
\dfrac{n-s}{n-\omega_k},
& \omega_k<s\leq n.
\end{cases}
\]
Note that \(0\leq a_{n,k}(s)\leq1\), that \(a_{n,k}(\omega_k)=1\),
and that
\begin{align}
\label{eq:sup-dnk}
\|D_{n,k}(\omega_k)\|_{\mathcal H_k}
=
\sqrt n\,\theta_k(1-\theta_k)\Delta_k.
\end{align}

Fix \(k\in\chgset\), and abbreviate
\[
\omega:=\omega_k,
\qquad
\theta:=\theta_k,
\qquad
\gamma:=\gamma_k,
\qquad
\Delta:=\Delta_k,
\qquad
d:=d_{n,k},
\]
as well as
\[
a(s):=a_{n,k}(s),
\qquad
C(s):=C_{n,k}(s),
\qquad
C^O(s):=C_{n,k}^{O}(s),
\qquad
D(s):=D_{n,k}(s).
\]

We will show that, on the event \(\{\mathcal H_n(r)<1/5\}\), every
\(s\in\{1,\dots,n-1\}\) with \(|s-\omega|\ge r\) satisfies
\[
\|C(s)\|_{\mathcal H_k}<\|C(\omega)\|_{\mathcal H_k}.
\]
As a consequence, every maximizer \(\widehat \omega_k\) must satisfy
\(|\widehat\omega_k-\omega_k|<r\). Since \(k\in\chgset\) was arbitrary,
this proves the result.

Hence, fix \(s\in\{1,\ldots,n-1\}\) such that \(|s-\omega|\geq r\).
Suppose, toward a contradiction, that
\begin{align}
\label{eq:proof-by-contradiction}
\|C(s)\|_{\mathcal H_k}
\geq
\|C(\omega)\|_{\mathcal H_k}.
\end{align}
By the definition \(D(s)=a(s)d\), with \(D(\omega)=d\), and by the
decomposition \(C(s)=D(s)+C^O(s)\) from \eqref{eq:cnk-dnk-cnko},
\begin{align*}
C(s)
&=
a(s)d+C^O(s)
\\
&=
a(s)\{C(\omega)-C^O(\omega)\}+C^O(s)
\\
&=
a(s)C(\omega)
+
\{C^O(s)-C^O(\omega)\}
+
\{1-a(s)\}C^O(\omega).
\end{align*}
Hence, under \eqref{eq:proof-by-contradiction}, the triangle inequality gives
\begin{align}
\label{eq:c-co-ineq}
(1-a(s))\|C(\omega)\|_{\mathcal H_k}
\le
\|C^O(s)-C^O(\omega)\|_{\mathcal H_k}
+
(1-a(s))\|C^O(\omega)\|_{\mathcal H_k}.
\end{align}
Using \eqref{eq:cnk-dnk-cnko} and the triangle inequality gives
\[
\|D(\omega)\|_{\mathcal H_k}
\le
\|C(\omega)\|_{\mathcal H_k}
+
\|C^O(\omega)\|_{\mathcal H_k}.
\]
Combining this display with \eqref{eq:c-co-ineq} gives
\[
L(s)
:=
(1-a(s))\|D(\omega)\|_{\mathcal H_k}
\le
\|C^O(s)-C^O(\omega)\|_{\mathcal H_k}
+
2(1-a(s))\|C^O(\omega)\|_{\mathcal H_k}.
\]
A direct calculation using \eqref{eq:sup-dnk} gives
\[
L(s)
=
\begin{cases}
\dfrac{(\omega-s)(1-\theta)\Delta}{\sqrt n},
& s\leq\omega,\\[1.2ex]
\dfrac{(s-\omega)\theta\Delta}{\sqrt n},
& s>\omega,
\end{cases}
\]
and hence
\[
L(s)
\ge
\frac{|s-\omega|\gamma\Delta}{\sqrt n}.
\]
Therefore, by the definition of \(\mathcal H_n(r)\),
\[
\|C^O(s)-C^O(\omega)\|_{\mathcal H_k}
\le
L(s)\mathcal H_n(r).
\]
Moreover,
\begin{align*}
2(1-a(s))\|C^O(\omega)\|_{\mathcal H_k}
&=
2L(s)
\frac{\|C^O(\omega)\|_{\mathcal H_k}}
{\|D(\omega)\|_{\mathcal H_k}}
\\
&=
2L(s)
\frac{\|C^O(\omega)\|_{\mathcal H_k}}
{\sqrt n\,\theta(1-\theta)\Delta}
\\
&\le
4L(s)
\frac{\|C^O(\omega)\|_{\mathcal H_k}}
{\sqrt n\,\gamma\Delta},
\end{align*}
where we used \(\theta(1-\theta)=\gamma(1-\gamma)\) and \(\gamma\leq1/2\).

If \(\omega\ge n/2\), then
\[
\frac{\|C^O(\omega)\|_{\mathcal H_k}}
{\sqrt n\,\gamma\Delta}
=
\frac{\omega}{n}
\frac{
\sqrt n\|C^O(0)-C^O(\omega)\|_{\mathcal H_k}
}{
|0-\omega|\gamma\Delta
}
\le
\mathcal H_n(r).
\]
Indeed, this is immediate when \(r\le n/2\). If \(r>n/2\), then the
constraints \(|s-\omega|\ge r>n/2\) and \(\omega\ge n/2\) imply
\(\omega\ge s\), and hence
\[
|0-\omega|
=
\omega
=
\omega-s+s
=
|\omega-s|+s
\ge r.
\]
Likewise, if \(\omega\le n/2\), then
\[
\frac{\|C^O(\omega)\|_{\mathcal H_k}}
{\sqrt n\,\gamma\Delta}
=
\frac{n-\omega}{n}
\frac{
\sqrt n\|C^O(n)-C^O(\omega)\|_{\mathcal H_k}
}{
|n-\omega|\gamma\Delta
}
\le
\mathcal H_n(r).
\]
Overall,
\[
2(1-a(s))\|C^O(\omega)\|_{\mathcal H_k}
\le
4 L(s)\mathcal H_n(r).
\]
Combining the previous inequalities yields
\[
L(s)
\le
5L(s)\mathcal H_n(r).
\]
Since \(s\ne\omega\), we have \(L(s)>0\). Thus, on the event
\(\{\mathcal H_n(r)<1/5\}\), it is impossible that
\[
\|C(s)\|_{\mathcal H_k}
\ge
\|C(\omega)\|_{\mathcal H_k}.
\]
This proves the assertion.
\end{proof}

\begin{proof}[Proof of Lemma~\ref{lem:polynomial_beta_localization}] 
Throughout, we write $r_\beta=r_\beta(\ell,\eta)$. The assertion is trivial if $r_\beta\ge n$, so assume that $r_\beta < n$.
Partition $\{1,\ldots,n\}$ into consecutive blocks
\[
    B_1,\ldots,B_m,
    \qquad
    m:=\left\lceil\frac{n}{\ell}\right\rceil,
\]
where every block except possibly the last has length $\ell$; note that the blocking structure is different than in other proofs in this manuscript.
For each block $j$, write $E_j = (\eps^{(i)})_{i \in B_j}$.
By Berbee's coupling lemma applied to both $E_1, E_3, E_5, \dots$ and to $E_2, E_4, E_6, \dots$, we can construct a sequence $(\eps^{(i), \star})_{i \in [n]}$ such that each block variable $E_j^* = (\eps^{(i), \star})_{i \in B_j}$ has the same distribution as $E_j$, such that $E_1^\star, E_3^\star, E_5^\star, \dots$ are mutually independent, such that $E_2^\star, E_4^\star, E_6^\star, \dots$ are mutually independent and such that 
\begin{align*}
\Prob\big( \exists i \in [n]: \eps^{(i)} \ne \eps^{(i),\star}\big) = \Prob\big( \exists j \in [m]: E_j \ne E_j^\star\big) \le 2 m\beta_\ell \le 2 \left\lceil\frac{n}{\ell}\right\rceil \beta_\ell.
\end{align*}
As a consequence, it is sufficient to show that 
\[
    \mathbb P\Big(
        \max_{k\in\chgset}
        |\widehat\omega_k^\star-\omega_k|
        <
        r_\beta
    \Big)
    \ge
    1-\eta,
\]
where $\widehat\omega_k^\star$ is defined analogously as $\widehat\omega_k$, but based on $(\eps^{(i),\star})_i$ instead of $(\eps^{(i)})_i$. 

An application of Lemma~\ref{lem:general_localization_lemma} (with  $(\eps^{(i),\star})_i$ instead of $(\eps^{(i)})_i$) implies that
\[
    \mathbb P\Big(
        \max_{k\in\chgset}
        |\widehat\omega_k^\star-\omega_k|
        \ge
        r_\beta
    \Big)
    \le \Prob\Big(\mathcal H_n^\star (r_\beta)  \ge  \frac15 \Big)
\]
with  $\mathcal H_n^\star (r)$ the star-versions of $\mathcal H_n(r)$ from  \eqref{eq:definition-Hnr}. We need to prove that the right-hand side is bounded by $\eta$. For that purpose consider the event 
\begin{align}
\label{eq:bound-interval-sums}
    \mathcal E_{\ell, \eta}^\star(C) = \bigcap_{k \in \chgset} \bigcap_{I \in \mathcal I_k} \bigg\{\Big\|
        \sum_{i\in I}\eps_k^{(i), \star}
    \Big\|_{\Hb_k}
    \le
    Cx_{n,\chgset,\eta}
    \left(
        V\sqrt{|I|}
        +
        D_\psi\ell\Lambda_{n,\ell}
    \right) \bigg\},
\end{align}
where $\mathcal I_k$ comprises all intervals $I \subseteq[n]$ of the form $I=[n], I=[\omega_k]$ or $I=\{(s \wedge \omega_k)+1, \dots, s\vee \omega_k\}$ for some $s \in [n-1]$ with $s \ne \omega_k$. The proof is finished once we show that we can choose a universal constant $C$ such that 
\begin{align}
\label{eq:to-be-shown}
\text{(i)} \quad \Prob\big(\mathcal E_{\ell, \eta}^\star(C)\big) \ge 1-\eta \qquad \text{and} \qquad \text{(ii)} \quad\mathcal E_{\ell, \eta}^\star(C) \subseteq \Big\{\mathcal H_n^\star (r_\beta)  <  \frac15\Big\}.
\end{align}

We start by proving (i).
For that purpose, fix $k \in \chgset$ and an interval $I \in \mathcal I_k$, and write $d:=|I|$. 
Let
\[
    J_{\mathrm o}(I)
    :=
    \{j:\ j\ \text{is odd and }B_j\subseteq I\}, \qquad
    J_{\mathrm e}(I)
    :=
    \{j:\ j\ \text{is even and }B_j\subseteq I\}.
\]
We can then write
\begin{align} \label{eq:main-decomp-star}
\sum_{i\in I}\eps_k^{(i)^, \star} 
= 
\sum_{j\in J_{\mathrm o}(I)} Y_{j,k}^\star + 
\sum_{j\in J_{\mathrm e}(I)} Y_{j,k}^\star +
\sum_{i \in N(I)} \eps^{(i), \star}_k,
\end{align}
where $N(I)$ contains all indices $i \in I$ that are not part of any set $B_j$ with $j \in J_{\mathrm o}(I) \cup J_{\mathrm e}(I)$, and where
\[
    Y_{j,k}^\star
    :=
    \sum_{i\in B_j}\eps_k^{(i),\star}, \qquad j \in [m].
\]

We will bound each of the three sums on the right-hand side of \eqref{eq:main-decomp-star} individually, and start by treating the sum over $J_{\mathrm o}(I)$. 
By the triangle inequality for the $\psi_1$-norm, see \ref{orlicz:triangle},
strict stationarity, and Assumption~\ref{cond:stationary},
\begin{align*}
    \left\|
        \|Y_{j,k}^\star\|_{\Hb_k}
    \right\|_{\psi_1}
    =
    \left\|
        \|Y_{j,k}\|_{\Hb_k}
    \right\|_{\psi_1}
    \le
    D_\psi |B_j|
    \le
    D_\psi\ell,
\end{align*}
where $Y_{j,k}:=\sum_{i\in B_j}\eps_k^{(i)}$ denotes the non-star version of $Y_{j,k}^\star$.
Hence, since $|J_{\mathrm o}(I)|\le m$, 
\ref{orlicz:max} implies
\begin{align}
\label{eq:max-ystar}
    \Big\|
        \max_{j\in J_{\mathrm o}(I)}
        \|Y_{j,k}^\star\|_{\Hb_k}
    \Big\|_{\psi_1}
    \le
    C_1 D_\psi\ell\log(em)
    \le
    C_1 D_\psi\ell\Lambda_{n,\ell}
\end{align}
for some universal constant $C_1>0$.
Moreover by~\eqref{eq:bound_on_square_sum_beta_mixing} there is a constant $V$ depending only on $D_\psi$, $c_\beta$ and $C_\beta$ such that
\begin{align*}
    \mathbb E
    \|Y_{j,k}^\star\|_{\Hb_k}^{2}
    =
    \mathbb E
    \|Y_{j,k}\|_{\Hb_k}^{2}
    \le
    V^2|B_j|.
\end{align*}
Since the variables $(Y_{j,k}^\star)_{j\in J_{\mathrm o}(I)}$ are independent
and centered, and since the complete blocks contained in $I$ have total
length at most $d$, we obtain that
\begin{align}
\label{eq:sum-ystar}
    \sum_{j\in J_{\mathrm o}(I)}
    \mathbb E\|Y_{j,k}^\star\|_{\Hb_k}^{2}
    \le
    V^2d.
\end{align}

Next, note the following consequence of~\ref{orlicz:banach}: if $U_1,\ldots,U_N$ are independent, centered, Hilbert-valued random
variables, then there exists a universal constant $C_2>0$ such that
\begin{align*}
\bigg\|\Big\|\sum_{j=1}^{N}U_j\Big\|\bigg\|_{\psi_1} 
\leq 
C_2 \Big\{\Big( \sum_{j=1}^{N} \E\| U_j \|^2\Big)^{1/2}+\Big\|\max_{j=1}^{N}\|U_{j}\|\Big\|_{\psi_1}\Big\}.
\end{align*}
Together with \eqref{eq:max-ystar} and \eqref{eq:sum-ystar}, we obtain that, for some universal constant $C_3>0$, 
\[
\bigg\|\Big\|
            \sum_{j\in J_{\mathrm o}(I)}
            Y_{j,k}^\star
        \Big\|_{\Hb_k}\bigg\|_{\psi_1} \le C_3 \left\{
            V\sqrt d
            +
            D_\psi\ell\Lambda_{n,\ell}
        \right\},
\]
The ordinary $\psi_1$ tail bound from \ref{orlicz:tail_bound} then yields,
for every $x\ge0$,
\begin{align}
\label{eq:tail-ystar}
    \mathbb P\bigg(
        \Big\|
            \sum_{j\in J_{\mathrm o}(I)}
            Y_{j,k}^\star
        \Big\|_{\Hb_k}
        >
        C_3x
        \left\{
            V\sqrt d
            +
            D_\psi\ell\Lambda_{n,\ell}
        \right\}
    \bigg)
    \le
    2e^{-x}.
\end{align}
The same estimate holds for the complete even blocks, i.e., with $J_{\mathrm o}(I)$ replaced by $J_{\mathrm e}(I)$.

Finally, regarding the last sum on the right-hand side of \eqref{eq:main-decomp-star}, note that $N(I)$ contains at most $2 \ell$ elements. Hence, by the triangle inequality for the $\psi_1$-norm, see \ref{orlicz:triangle},
strict stationarity, and Assumption~\ref{cond:stationary},
\[
\bigg\| \Big\| \sum_{i \in N(I)} \eps_k^{(i),\star} \Big\|_{\Hb_k} \bigg\|_{\psi_1} \le 2 D_\psi \ell.
\]
Therefore, by \ref{orlicz:tail_bound}, for every $x \ge 0$,
\begin{align}
\label{eq:tail-ystar2}
    \mathbb P\bigg(
        \Big\|
             \sum_{i \in N(I)} \eps_k^{(i),\star}  \Big\|_{\Hb_k}
        >
        2x D_\psi \ell \Big)
    \le
    2e^{-x}.
\end{align}
Recalling the main decomposition from \eqref{eq:main-decomp-star}, and combining \eqref{eq:tail-ystar} and \eqref{eq:tail-ystar2}, we have found a universal constant $C_4>0$ such that, for any $x \ge 0$,
\begin{align}
\label{eq:tail-epsstar}
    \mathbb P\bigg(
        \Big\|
             \sum_{i \in I} \eps_k^{(i),\star}  \Big\|_{\Hb_k}
        >
        C_4x
        \left\{
            V\sqrt d
            +
            D_\psi\ell\Lambda_{n,\ell}
        \right\} \bigg)
    \le
    6e^{-x}.
\end{align}
Recalling the definition of $\mathcal E_{n,k}^\star$ from \eqref{eq:bound-interval-sums} and noting that $|\mathcal I_k|=n+1 \le 2n$, the union bound then implies that 
\[
\Prob \Big( (\mathcal E_{n,k}^\star)^c\Big) \le 12 n |\chgset|e^{- x_{n,\chgset, \eta}} = \eta.
\]
We have hence shown (i) in \eqref{eq:to-be-shown} with $C=C_{4}$.

It remains to prove (ii) in \eqref{eq:to-be-shown}, so suppose we are on the event $\mathcal E_{\ell, \eta}^*(C)$ from \eqref{eq:bound-interval-sums}. We need to show that $\mathcal H_n(r_\beta) < 1/5$, where
\begin{align*}
\mathcal H_n(r) 
=
\max_{k\in\chgset}
\sup_{\substack{
1\leq s\leq n-1\\
|s-\omega_k|\geq r
}}
\frac{
\sqrt n\,
\|C_{n,k}^{O}(s)-C_{n,k}^{O}(\omega_k)\|_{\mathcal H_k}
}{
|s-\omega_k|\,\gamma_k\Delta_k
};
\end{align*}
here, and in the remaining parts of this proof, we will omit the stars everywhere.
Recall from \eqref{eq:definition-CnkO} that
\[
    \sqrt{n} C_{n,k}^O(s)
    =
        \sum_{i=1}^{s}\eps_k^{(i)}
        -
        \frac{s}n
        \sum_{i=1}^{n}\eps_k^{(i)}.
\]
Let $d:=|s-\omega_k|$. If \(s>\omega_k\), then \(d=s-\omega_k\) and
\begin{equation}\label{eq:COnk-difference-decomposition1}
\sqrt{n}\{C_{n,k}^O(s)-C_{n,k}^O(\omega_k)\}
=
\sum_{i=\omega_k+1}^{s}\varepsilon_k^{(i)} - \frac{d}n \sum_{i=1}^{n}\eps_k^{(i)}.
\end{equation}
If $s<\omega_k$, then \(d=\omega_k-s\) and
\begin{equation}\label{eq:COnk-difference-decomposition2}
\sqrt{n}\{C_{n,k}^O(s)-C_{n,k}^O(\omega_k)\}
=
-
\sum_{i=s+1}^{\omega_k}\varepsilon_k^{(i)} + \frac{d}n \sum_{i=1}^{n}\eps_k^{(i)}.
\end{equation}
Therefore, after taking the Hilbert norm and applying the triangle inequality,
\begin{align*}
\sqrt n
\Big\|
C_{n,k}^{O}(s)-C_{n,k}^{O}(\omega_k)
\Big\|_{\mathcal H_k}
&\le 
\Big\|
\sum_{i=(s\wedge\omega_k)+1}^{s\vee\omega_k}
\varepsilon_k^{(i)}
\Big\|_{\Hb_k}
+ \frac{d}{n}
\Big\|
\sum_{i=1}^{n}\varepsilon_k^{(i)}
\Big\|_{\mathcal H_k}.
\end{align*}
On the event $\mathcal E_{\ell, \eta}^*(C)$ from \eqref{eq:bound-interval-sums}, we hence obtain the bound
\begin{align*}
\sqrt n
\Big\|
C_{n,k}^{O}(s)-C_{n,k}^{O}(\omega_k)
\Big\|_{\mathcal H_k}
&\le 
Cx_{n,\chgset,\eta}
    \left\{
        V\sqrt d
        +
        D_\psi\ell\Lambda_{n,\ell}
    \right\}
    +
    C\frac{d}{n}
    x_{n,\chgset,\eta}
    \left\{
        V\sqrt n
        +
        D_\psi\ell\Lambda_{n,\ell}
    \right\}.
\end{align*}
Since $r < n$ and $r \le d$, this yields the bound
\[
    \mathcal H_n(r_\beta)
    \le
    \frac{Cx_{n,\chgset,\eta}}{\nu_\chgset}
    \left\{
        \frac{
            V
        }{\sqrt r_\beta}
        +
        \frac{
            D_\psi\ell\Lambda_{n,\ell}
        }{r_\beta}
    \right\}.
\]
By the definition of $r_\beta$ in \eqref{eq:def-rbeta} and by taking the universal constant $C_0$ sufficiently large,
\[
    \frac{Cx_{n,\chgset,\eta}V}{\nu_\chgset\sqrt{r_\beta}}
    \le
    \frac{1}{11},
    \qquad
    \frac{Cx_{n,\chgset,\eta}D_\psi \ell \Lambda_{n,\ell}}{\nu_\chgset r_\beta}
    \le
    \frac{1}{11}.
\]
Thus $\mathcal H_n(r_\beta)<1/5$, as was to be shown.
\end{proof}

\begin{proof}[Proof of Lemma~\ref{lem:uniform_interval_sums_new}]
The proof is very similar to the proof of Lemma~\ref{lem:polynomial_beta_localization}.
Partition \(\{1,\ldots,n\}\) into consecutive blocks
\[
B_1,\ldots,B_m,
\qquad
m:=\left\lceil\frac{n}{\ell}\right\rceil,
\]
where every block except possibly the last has length \(\ell\); note
that the blocking structure is different than in other proofs in this
manuscript. For each block \(j\), write
$
E_j=(\varepsilon^{(i)})_{i\in B_j}.
$
By Berbee's coupling lemma applied to both
\(E_1,E_3,E_5,\ldots\) and \(E_2,E_4,E_6,\ldots\), we can construct a
sequence \((\varepsilon^{(i),\star})_{i\in[n]}\) such that each block
variable $E_j^\star=(\varepsilon^{(i),\star})_{i\in B_j}$
has the same distribution as \(E_j\), such that
\(E_1^\star,E_3^\star,E_5^\star,\ldots\) are mutually independent,
such that \(E_2^\star,E_4^\star,E_6^\star,\ldots\) are mutually
independent, and such that
\begin{align}
\label{eq:coupling-failure-interval-new_2}
\Prob\Big(
\exists i\in[n]:
\varepsilon^{(i)}\ne\varepsilon^{(i),\star}
\Big)
&=
\Prob\Big(
\exists j\in[m]:
E_j\ne E_j^\star
\Big)
\le
2m\beta_\ell
\le
2\left\lceil\frac n\ell\right\rceil\beta_\ell.
\end{align}

Let \(\mathcal I_n\) be the collection of all integer intervals
contained in \([n]\), as in the statement of the lemma, and define
the starred interval event
\[
\mathcal E_{\ell,\eta}^{\mathrm{int},\star}(C)
:=
\bigcap_{k\in[K]}
\bigcap_{I\in\mathcal I_n}
\bigg\{
\Big\|
\sum_{i\in I}\varepsilon_k^{(i),\star}
\Big\|_{\Hb_k}
\le
Cx_{n,\eta}
\Big(
V\sqrt{|I|}
+
D_\psi\ell\Lambda_{n,\ell}
\Big)
\bigg\}.
\]
We first prove that \(C\) can be chosen universally so that
\[
\Prob\left(
\mathcal E_{\ell,\eta}^{\mathrm{int},\star}(C)
\right)
\ge1-\eta.
\]

For that purpose, fix
\(k\in[K]\) and \(I\in\mathcal I_n\),
and write \(d:=|I|\). Let
\[
J_{\mathrm o}(I)
:=
\{j:\ j\text{ is odd and }B_j\subseteq I\},
\qquad
J_{\mathrm e}(I)
:=
\{j:\ j\text{ is even and }B_j\subseteq I\}.
\]
As in \eqref{eq:main-decomp-star}, we can then write
\begin{align*}
\sum_{i\in I}\varepsilon_k^{(i),\star}
=
\sum_{j\in J_{\mathrm o}(I)}Y_{j,k}^\star
+
\sum_{j\in J_{\mathrm e}(I)}Y_{j,k}^\star
+
\sum_{i\in N(I)}\varepsilon_k^{(i),\star},
\end{align*}
where \(N(I)\) contains all indices \(i\in I\) that are not part of
any set \(B_j\) with
\(j\in J_{\mathrm o}(I)\cup J_{\mathrm e}(I)\), and where
\[
Y_{j,k}^\star
:=
\sum_{i\in B_j}\varepsilon_k^{(i),\star},
\qquad j\in[m].
\]
If either \(J_{\mathrm o}(I)\) or \(J_{\mathrm e}(I)\) is empty, the
corresponding expressions are understood to be zero.
Using the same arguments that led to \eqref{eq:tail-epsstar}, we find a universal constant $C_4 >0$ such that 
\begin{align}
\label{eq:tail-epsstar-interval-new_2}
    \mathbb P\bigg(
        \Big\|
             \sum_{i \in I} \eps_k^{(i),\star}  \Big\|_{\Hb_k}
        >
        C_4x
        \left\{
            V\sqrt d
            +
            D_\psi\ell\Lambda_{n,\ell}
        \right\} \bigg)
    \le
    6e^{-x}.
\end{align}
There are at most \(n^2\) integer intervals contained in \([n]\).
Taking \(x=x_{n,\eta}\) in
\eqref{eq:tail-epsstar-interval-new_2} and applying the union bound over
all \(k\in[K]\) and \(I\in\mathcal I_n\) gives
\[
\Prob\left(
\{\mathcal E_{\ell,\eta}^{\mathrm{int},\star}(C_4)\}^c
\right)
\le
6Kn^2e^{-x_{n,\eta}}
\le
\eta.
\]

Define the event in the original variables by
\[
\mathcal E_{\ell,\eta}^{\mathrm{int}}
:=
\bigcap_{k\in[K]}
\bigcap_{I\in\mathcal I_n}
\bigg\{
\Big\|
\sum_{i\in I}\varepsilon_k^{(i)}
\Big\|_{\Hb_k}
\le
C_5x_{n,\eta}
\Big(
V\sqrt{|I|}
+
D_\psi\ell\Lambda_{n,\ell}
\Big)
\bigg\}.
\]
This event belongs to \(\mathcal F_n\). On the event that every
coupled block agrees with its original counterpart,
\(\mathcal E_{\ell,\eta}^{\mathrm{int},\star}(C_5)\) is equal to
\(\mathcal E_{\ell,\eta}^{\mathrm{int}}\). Therefore,
\eqref{eq:coupling-failure-interval-new_2} gives
\[
\Prob\left(
\{\mathcal E_{\ell,\eta}^{\mathrm{int}}\}^c
\right)
\le
\eta
+
2\left\lceil\frac n\ell\right\rceil\beta_\ell.
\]
This is the claimed result.
\end{proof}

\begin{proof}
Let \(C_{\mathrm{int}}>0\) and \(V>0\) be the constants from
Lemma~\ref{lem:uniform_interval_sums_new}. Apply that lemma with
\(\eta=1/(12n)\) and $\ell \in [n]$ fixed. Since \(n\ge4\) and \(K\ge1\),
$
    x_{n,\eta} = \log(144n^3K)
    \le
    7\log(nK).
$
It follows that, on
\(\mathcal E_{\ell,\,1/(12n)}^{\mathrm{int}}\),
\begin{equation}
\label{eq:coordinatewise-interval-bound-new}
    \max_{k\in[K]}\max_{I\in\mathcal I_n}
    \Big\|
        \sum_{i\in I}\varepsilon_k^{(i)}
    \Big\|_{\Hb_k}
    \le
    C_{\mathrm I}\log(nK)
    \left\{
        \sqrt{|I|}
        +
        \ell\log\left(\frac{en}{\ell}\right)
    \right\},
\end{equation}
where \(\mathcal I_n\) is the collection of nonempty integer intervals
in \([n]\) and one may take
$
    C_{\mathrm I}
    :=
    7C_{\mathrm{int}}(V\vee D_\psi),
$
which only depends on $D_\psi,C_\beta,c_\beta$.

Fix
\(k\in\mathcal S\), and write 
$
    r_k:=r_{\beta,k}^{(n)}(\ell)
$
for the expression defined in \eqref{eq:coordinatewise-interval-bound-new}.
If \(r_k\ge n\), then
\(|\widehat\omega_k-\omega_k|<r_k\) follows from the range of the
change-point estimator. It therefore remains to consider \(r_k<n\).

For \(s\in\{0,\ldots,n\}\) satisfying
\(d:=|s-\omega_k|\ge r_k\), the definition of the oracle CUSUM process
in \eqref{eq:definition-CnkO} and the triangle inequality give
\begin{align*}
&\sqrt n
\left\|
    C_{n,k}^O(s)-C_{n,k}^O(\omega_k)
\right\|_{\Hb_k}
\le
\Big\|
    \sum_{i=(s\wedge\omega_k)+1}^{s\vee\omega_k}
    \varepsilon_k^{(i)}
\Big\|_{\Hb_k}
+
\frac dn
\Big\|
    \sum_{i=1}^n\varepsilon_k^{(i)}
\Big\|_{\Hb_k}.
\end{align*}
Applying~\eqref{eq:coordinatewise-interval-bound-new} to these two
integer intervals yields
\begin{align*}
\sqrt n
\left\|
    C_{n,k}^O(s)-C_{n,k}^O(\omega_k)
\right\|_{\Hb_k}
&\le
    C_{\mathrm I}\log(nK)
    \left\{
        \sqrt d+\ell\log\left(\frac{en}{\ell}\right)
    \right\}
\\
&\qquad\quad+
    C_{\mathrm I}\frac dn\log(nK)
    \left\{
        \sqrt n+\ell\log\left(\frac{en}{\ell}\right)
    \right\}.
\end{align*}
Dividing by \(d\nu_k\), taking the supremum over
\(d\ge r_k\), and using \(r_k\le d\le n\), we obtain
\begin{equation}
\label{eq:coordinatewise-envelope-bound-new}
    \mathcal H_n(r_k,\{k\})
    \le
    \frac{2C_{\mathrm I}\log(nK)}{\nu_k}
    \left\{
        \frac1{\sqrt{r_k}}
        +
        \frac{
            \ell\log\!\left(\frac{en}{\ell}\right)
        }{r_k}
    \right\},
\end{equation}
where \(\mathcal H_n\) is defined in
\eqref{eq:definition-Hnr}.
Choose the constant in~\eqref{eq:def-coordinatewise-rbeta-new} so that
$
    C_{\mathrm{loc}}
    \ge
    (22C_{\mathrm I})^2
    \vee
    22C_{\mathrm I}.
$
Hence, by the definition of \(r_k\)
\begin{align*}
    r_k
    &\ge
    C_{\mathrm{loc}}
    \frac{\log^2(nK)}{\nu_k^2}
    \ge (22C_{\mathrm{I}})^2\frac{\log^2(nK)}{\nu_k^2},
\end{align*}
and 
\begin{align*}
    r_k
    &\ge
    C_{\mathrm{loc}}
    \frac{
        \ell\log\!\left(\frac{en}{\ell}\right)\log(nK)
    }{\nu_k}
    \ge 
    22C_{\mathrm{I}}
    \frac{
        \ell\log\!\left(\frac{en}{\ell}\right)\log(nK)
    }{\nu_k}.
\end{align*}
Consequently,
\[
    \frac{2C_{\mathrm I}\log(nK)}
    {\nu_k\sqrt{r_k}}
    \le
    \frac1{11},
\qquad
    \frac{
        2C_{\mathrm I}\ell\log\!\left(\frac{en}{\ell}\right)
        \log(nK)
    }{
        \nu_k r_k
    }
    \le
    \frac1{11}.
\]
Together with~\eqref{eq:coordinatewise-envelope-bound-new}, this gives
\[
    \mathcal H_n(r_k,\{k\})
    \le
    \frac2{11}
    <
    \frac15.
\]
Lemma~\ref{lem:general_localization_lemma}, applied with
\(\chgset=\{k\}\) and \(r=r_k\), therefore gives
$
    |\widehat\omega_k-\omega_k|<r_k.
$
Because \(k\in\mathcal S\) was arbitrary and
\(\mathcal E_{\ell,\,1/(12n)}^{\mathrm{int}}\) controls all coordinates
and integer intervals simultaneously, the conclusion holds for every
\(k\in\mathcal S\) without an additional union bound.
\end{proof}

\section{Miscellaneous Definitions and Results}
\label{sec:miscellaneous}

\begin{definition}[Polynomial Eigenvalue Decay]
\label{def:covariance-decay}
Let \(\Hb_1,\dots,\Hb_K\) be separable Hilbert spaces, let
$
\mathcal H=\bigoplus_{k \in [K]} \Hb_k,
$
and let \(\mathcal C:\mathcal H\to\mathcal H\) be a positive
semidefinite, self-adjoint, trace-class covariance operator. For each
\(k\in[K]\), let \(\pi_k:\mathcal H\to\Hb_k\) denote the canonical
coordinate projection and define
\[
\mathcal C_k:=\pi_k\mathcal C\pi_k^* : \Hb_k \to \Hb_k
\]
Let
\[
r_k:=\operatorname{rank}(\mathcal C_k) := \dim(\mathrm{ran} \mathcal C_k) \in\mathbb N_{\ge 0}\cup\{\infty\},
\]
and suppose that \(r_k\geq 1\). Denote the positive eigenvalues of
\(\mathcal C_k\), counted with multiplicity and arranged in
nonincreasing order, by
\[
\lambda_1^{(k)}\geq\lambda_2^{(k)}
\geq\cdots\geq\lambda_{r_k}^{(k)}>0,
\]
where, when \(r_k=\infty\), the sequence is indexed by all
\(j\in\mathbb N\). For constants satisfying
\[
0<B\leq 1\leq A<\infty,
\qquad
\frac12<a\leq b<\infty,
\]
we write
\[
\mathcal C\in\operatorname{Decay}(B^2,A^2,2b,2a)
\]
if, for every \(k\in[K]\) and every \(j \in [r_k] \cap \N\),
\[
B^2j^{-2b}
\leq
\lambda_j^{(k)}
\leq
A^2j^{-2a}.
\]
\end{definition}

Note that Assumption~\ref{cond:scores} can be rewritten as $\mathcal K \in \operatorname{Decay}(C_L^2,C_U^2,2\gamma_L,2\gamma_U)$, provided we additionally require $C_L \le 1$.

\begin{lemma}\label{lem:gaussbootsphere-new}
Fix \(\rho>0\), and let \(M\geq 1\) and \(K\geq 1\). Let \(X=(X_1, \dots, X_K)\) and \(Y=(Y_1, \dots, Y_K)\) be centered Gaussian random vectors taking values in \((\mathbb R^M)^K\).
Assume that, in the finite-dimensional sense of Definition~\ref{def:covariance-decay},
\[
\Cov(X)
\in
\operatorname{Decay}
\left(
C_L^2,
C_U^2,
2\gamma_L,
2\gamma_U
\right),
\]
where $0<C_L\leq 1\leq C_U<\infty$ and $1/2<\gamma_U\leq\gamma_L<\infty$
Then there exists a constant
$
D=D(\gamma_U,\gamma_L)>0
$
depending only on \(\gamma_U\) and \(\gamma_L\) such that, for every auxiliary integer
\(m\in\mathbb N_{\ge4}\),
\begin{align*}
&\sup_{t\geq \rho}
\bigg|
\mathbb P\Big(
\max_{k\in[K]}\|X_k\|_2\leq t
\Big)
-
\mathbb P\Big(
\max_{k\in[K]}\|Y_k\|_2\leq t
\Big)
\bigg|
\\
&\quad\leq
D
\left(
\frac{C_U^{2\kexp+2}}
{C_L^3\rho^{\kexp+1/2}}
\right)^{2/3}
\left[
M^{2/3}
\|\Sigma_1-\Sigma_2\|_{\infty}^{1/3}
+
m^{-2/3}
\right]
\log^{7/6+2\kexp}\big(
em(K+1)(U\vee1)
\big),
\end{align*}
where $\Sigma_1:=\Cov(X), \Sigma_2:=\Cov(Y)$ and $\kexp = \gamma_L / (2 \gamma_U-1)$ and where
\[
U:=
\max_{k\in[K]}
\left\{
\sqrt{\Tr(\Cov(X_k))}
\vee
\sqrt{\Tr(\Cov(Y_k))}
\right\}.
\]
\end{lemma}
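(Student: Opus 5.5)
The plan is a Gaussian-to-Gaussian comparison for maxima of Euclidean norms, in three steps: smooth the max-norm functional, compare the two covariances through a Slepian/Stein interpolation, and convert the smoothed bound back into a Kolmogorov bound by anti-concentration.

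\emph{Step 1 (smoothing).} Since $\max_k\|X_k\|_2\le t$ is an intersection of balls rather than half-spaces, I would not reduce to rectangles. I would approximate each ball by a finite polyhedron, using the decay of $\Cov(X_k)$. The condition $\lambda_j\le C_U^2j^{-2\gamma_U}$ lets one truncate to the leading $d\asymp(\log(\cdot)/t^2)^{1/(2\gamma_U-1)}$ directions at a negligible cost. On this effective sphere, $\|x\|_2$ can be written as a maximum of linear functionals $\langle v,x\rangle$ over a $\delta$-net of the unit sphere, with $\log|\text{net}|\lesssim d\log(1/\delta)$. I expect this to be the source of the exponent $\kexp=\gamma_L/(2\gamma_U-1)$ in the logarithm.

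\emph{Step 2 (comparison).} Replace the indicator by a smooth max: a log-sum-exp of smoothing level $\beta$ composed with a $C^3$ cutoff of width $\psi^{-1}$. Use the Slepian interpolation $Z(u)=\sqrt u X+\sqrt{1-u}Y$ and Gaussian integration by parts. The derivative in $u$ involves $\sum(\Sigma_1-\Sigma_2)_{ab}\partial_{ab}f$. Since the linear functionals use unit vectors in $\mathbb R^M$, each entry pairing costs at most a factor $M^2$, which is absorbed by $\|\Sigma_1-\Sigma_2\|_\infty$. This gives a bound of the form $M^2\|\Sigma_1-\Sigma_2\|_\infty(\psi^2+\psi\beta)$. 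The auxiliary integer $m$ serves only as a floor for the discretization/smoothing error, producing the $m^{-2/3}$ term. One could equally invoke the Chernozhukov--Chetverikov--Kato Gaussian comparison applied to the net.

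\emph{Step 3 (anti-concentration and optimization).} The step I expect to be the main obstacle is anti-concentration of $\max_k\|X_k\|_2$ near levels $t\ge\rho$. I would prove it with a Nazarov-type bound for Gaussian maxima over the net, normalized by the smallest variance of the relevant linear functionals. That variance is lower-bounded through $\lambda_j\ge C_L^2j^{-2\gamma_L}$ restricted to the first $d$ directions, which yields a factor $C_L^{-1}d^{\gamma_L}$ and, with $d$ as above, the factor $C_U^{2\kexp+2}C_L^{-3}\rho^{-\kexp-1/2}$ and further $\log^{\kexp}$ powers. The restriction $t\ge\rho$ is needed because $d$ blows up as $t\to0$.

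Finally, optimize the width $\beta^{-1}\log(\cdot)+\psi^{-1}$ against the comparison error $\psi^2M^2\|\Sigma_1-\Sigma_2\|_\infty$. The optimum gives the exponent $1/3$ on $M^2\|\Sigma_1-\Sigma_2\|_\infty$, i.e. $M^{2/3}\|\Sigma_1-\Sigma_2\|_\infty^{1/3}$, with the overall prefactor raised to the power $2/3$. The logarithm $\log(em(K+1)(U\vee1))$ collects the net cardinality over $K$ blocks, and the trace bound $U$ controls the size of the truncation radius.
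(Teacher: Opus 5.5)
Your Step~1 is where the argument breaks. Truncating to the leading $d\asymp(\log(\cdot)/t^2)^{1/(2\gamma_U-1)}$ eigendirections of $\Cov(X_k)$ is not a negligible perturbation. This choice of $d$ only makes $\max_k\|P_d^{\perp}X_k\|_2^2$ of the order of $t^2$, so $\max_k\|P_dx_k\|_2$ and $\max_k\|x_k\|_2$ differ on the scale of the threshold itself. Pushing the discarded part below the smoothing width forces $d$ to be polynomial in the inverse width, which destroys the logarithmic net size; capping at $d=M$ just returns a net of the full sphere. Moreover, $Y$ satisfies no decay condition, so its tail in the eigenbasis of $\Cov(X_k)$ is controlled only through $\|\Sigma_1-\Sigma_2\|_\infty$.

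The smoothing must approximate the full norms. One way is a $\delta$-net $N$ of the whole sphere $S^{M-1}$ with $\delta$ of order $\{m(U\vee1)\}^{-1}$ up to logarithms. It reproduces every block norm to within $1/m$ on the event (of probability $1-O(1/m)$) that all block norms of $X$ and $Y$ are $\lesssim U\sqrt{\log(m(K+1))}$. Then $\log(K|N|)\lesssim M\log(em(K+1)(U\vee1))$, which is how $U$ ends up only inside the logarithm.

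This also corrects your accounting of $M^{2/3}$. For unit $v,w\in\mathbb R^M$ one has $|v^\top(\Sigma_1-\Sigma_2)_{kk'}w|\le\|v\|_1\|w\|_1\|\Sigma_1-\Sigma_2\|_\infty\le M\|\Sigma_1-\Sigma_2\|_\infty$, a factor $M$ and not $M^2$. The second factor $M$ is $\log|N|$, which enters the Slepian error through the smooth-max parameter $\beta\asymp\psi\log(K|N|)$. The comparison error is therefore $\lesssim\psi^2M^2\|\Sigma_1-\Sigma_2\|_\infty\log(em(K+1)(U\vee1))$, and balancing it against the smoothing bias gives $(M^2\|\Sigma_1-\Sigma_2\|_\infty)^{1/3}$.

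Once the net lives in all of $\mathbb R^M$, Step~3 as you propose it fails too. Nazarov's inequality over $N$ (or the CCK comparison used as a black box) costs $\sqrt{\log|N|}\asymp\sqrt M$. Its variance floor is the smallest variance over all net directions, which can be of order $C_L^2M^{-2\gamma_L}$ or smaller.

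What is needed is a dimension-free anti-concentration bound for the true statistic $\max_k\|X_k\|_2$ at levels bounded away from zero, and only for $X$. The sandwich $\mathbb P(\max_k\|Y_k\|_2\le t)\le\mathbb P(\max_k\|X_k\|_2\le t+w)+\mathrm{err}$ and its mirror image use $X$ alone. That bound is Lemma~\ref{lem:acnew}, which the paper imports from the same work whose Proposition~2 it cites in place of a proof of the present statement. Its additive $1/m$ is what the auxiliary integer $m$ is for.

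A truncation level like yours belongs inside that argument. Outside an event of probability $1/m$, the part of $X_k$ beyond $d\asymp(\log(m(K+1))/\rho^2)^{1/(2\gamma_U-1)}$ stays below the level. This forces the maximizing direction to carry variance at least $C_L^2d^{-2\gamma_L}$, up to factors in $\rho$ and $C_U$. That variance floor, not the net size, is the source of the powers $\log^{\kexp}$.

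Finally, the explicit prefactor cannot just be read off. The optimal width is $w\asymp\{M^2\|\Sigma_1-\Sigma_2\|_\infty\log(\cdot)/A\}^{1/3}$, with $A$ the anti-concentration constant. So $w$ exceeds $\rho$ when the two covariances are far apart, and no anti-concentration constant decaying like $\rho^{-\kexp-1/2}$ can hold at widths of order $\rho$ (recall $\kexp>1/2$).

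In fact the displayed inequality is false with $D=D(\gamma_U,\gamma_L)$. Take $M=K=1$, $C_L=C_U=\gamma_L=\gamma_U=1$ (so $\kexp=1$), $m=4$, $X\sim N(0,1)$ and $Y\sim N(0,\rho^2)$ with $\rho\ge1$. The left side is at least $2\{\Phi(\rho)-\Phi(1)\}\to2\{1-\Phi(1)\}\approx0.32$, with $\Phi$ the standard normal distribution function. The right side is at most $2D\rho^{-1/3}\log^{19/6}(8e\rho)\to0$ as $\rho\to\infty$. So the $\rho$-dependence has to be absorbed into a $\rho$-dependent constant, as in Lemma~\ref{lem:acnew}. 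That weaker form is all that Corollary~\ref{cor:projected_mixed_norm_kolmogorov} and Lemma~\ref{lem:projected_coupled_oracle_gaussian_comparison} actually use.
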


\begin{proof}
    The result is a straightforward adaptation of Proposition 2 (page 59) from \cite{many_means}.
\end{proof}

\begin{lemma}\label{new:high_dimensional_CLT_mixed_norms}
Let \(M_1,\ldots,M_K\in\mathbb N\), and let
$\mathcal H_M:=\bigoplus_{k \in [K]} \mathbb R^{M_k}$
be equipped with the direct-sum inner product. Suppose
\(\{X_i\}_{i=1}^n\) is a collection of independent centered random vectors
taking values in \(\mathcal H_M\). Let \(\{Y_i\}_{i=1}^n\) be a
collection of independent centered Gaussian random vectors in \(\mathcal H_M\), independent
of \(\{X_i\}_{i=1}^n\), such that
$\Cov(Y_i)=\Cov(X_i)$ for each $i \in [n]$.
Define
\[
S_n^X:=\frac1{\sqrt n}\sum_{i=1}^n X_i,
\qquad
S_n^Y:=\frac1{\sqrt n}\sum_{i=1}^n Y_i,
\qquad
M:=\max_{k\in[K]}M_k.
\]
Let \(J_n\) and \(H_n\) satisfy \(1\le J_n\le H_n<\infty\), and assume that
\begin{equation}\label{eq:Jn}
\max_{k\in[K]}
\sup_{v\in\mathbb R^{M_k}:\|v\|_2=1}
\frac1n\sum_{i=1}^n
\mathbb E\left[
\left|
\left\langle \pi_kX_i,v\right\rangle
\right|^3
\right]
\le J_n,
\end{equation}
and
\begin{equation}\label{eq:Hn3}
\max_{k\in[K]}\max_{i\in[n]}
\big\|
\left\|\pi_kX_i\right\|_2
\big\|_{\psi_1}
\le H_n.
\end{equation}
Let \(\rho>0\). Suppose that \(\Cov(S_n^X)\) satisfies
\[
\Cov(S_n^X)
\in
\operatorname{Decay}(C_L^2,C_U^2,2\gamma_L,2\gamma_U)
\]
in the finite-dimensional sense of Definition~\ref{def:covariance-decay}, where $0<C_L \le 1 \le C_U < \infty$ and $1/2 < \gamma_U \le \gamma_L < \infty.$ 
Then there exists a constant
$D=D(\rho,C_U,\gamma_U,\gamma_L)>0$
such that
\begin{align*}
d_{\mathrm{K}}^{(\rho)}
\Big(\max_{k\in[K]}\left\|\pi_kS_n^X\right\|_2^2, \max_{k\in[K]} \left\|\pi_kS_n^Y\right\|_2^2 \Big)
&\le
\frac{D}{C_L^3}
J_n
\left(
\frac{M^4H_n^2}{n}
\right)^{1/6}
\log^{3\kexp+2}(K+1)
\log^{3\kexp+3/2}(n),
\end{align*}
where $d_{\mathrm K}^{(\rho)}$ is the restricted K-distance from \eqref{eq:restricted-k-distance}.
\end{lemma}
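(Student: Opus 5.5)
The plan is a Lindeberg replacement argument for a smoothed indicator of the maximum of squared block norms, combined with an anti-concentration bound for the Gaussian side; this is the mixed-norm analogue of the classical high-dimensional CLT for hyperrectangles. I do not reduce the Euclidean balls to polytopes via $\varepsilon$-nets: that would inflate the effective dimension to $\exp(M)$ directions and spoil the polynomial dependence on $M$.

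\textbf{Step 1: smoothing.} Fix $t\ge\rho$. Introduce the soft-max of the squared block norms,
\[
F_\beta(x):=\beta^{-1}\log\sum_{k\in[K]}\exp\big(\beta\|\pi_kx\|_2^2\big),
\qquad
0\le F_\beta(x)-\max_{k}\|\pi_kx\|_2^2\le\beta^{-1}\log K,
\]
and compose it with a $C^3$ function $g_{t,\psi}$ that equals $1$ on $(-\infty,t]$, equals $0$ on $[t+\psi^{-1},\infty)$, and has $\|g^{(j)}\|_\infty\lesssim\psi^{j}$. Then
\[
\big|\mathbb E g_{t,\psi}(F_\beta(S_n^X))-\mathbb E g_{t,\psi}(F_\beta(S_n^Y))\big|
\]
controls the Kolmogorov distance up to the anti-concentration term
\[
\sup_{t\ge\rho}\mathbb P\Big(\big|\max_k\|\pi_kS_n^Y\|_2^2-t\big|\le\beta^{-1}\log K+\psi^{-1}\Big).
\]
This term is bounded by the anti-concentration lemma for Gaussian mixed norms under $\operatorname{Decay}(C_L^2,C_U^2,2\gamma_L,2\gamma_U)$ (the result invoked in Corollary~\ref{cor:projected_mixed_norm_ac}). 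That lemma is where the factors $C_L^{-3}$, the threshold $\rho$, and the powers $\log^{3\kexp}$ enter, since the eigenvalue decay controls the effective dimension of each block.

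\textbf{Step 2: Lindeberg swapping.} Replace $X_i$ by $Y_i$ one at a time and Taylor expand $h:=g_{t,\psi}\circ F_\beta$ to third order. Because $\Cov(Y_i)=\Cov(X_i)$, the first- and second-order terms cancel. The third-order remainder is bounded by $n^{-3/2}\sum_i \mathbb E\, |D^3h(\cdot)[X_i,X_i,X_i]|$ plus the same expression with $Y_i$. Two ingredients are needed here.

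\begin{itemize}
\item \emph{Derivative bounds.} The derivatives of $x\mapsto\|\pi_kx\|^2$ are $2\pi_kx$ and $2\pi_k$, and the softmax weights $w_k(x)\propto e^{\beta\|\pi_kx\|^2}$ sum to one. Consequently $D^3h[v,v,v]$ is a weighted sum over $k$ of polynomials of degree at most three in $\langle\pi_kx,\pi_kv\rangle$ and $\|\pi_kv\|^2$. The coefficients are bounded by $(\psi+\beta)^3$ times powers of $\|\pi_kx\|$.
\item \emph{Truncation of $\|\pi_kx\|$.} On the relevant range this norm is at most $\sqrt{t+1}$, since $h'\neq 0$ only where $F_\beta(x)\le t+\psi^{-1}$. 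For $t$ large, the probability beyond $\sqrt t$ is negligible by Gaussian and $\psi_1$ tails, which is where the $\log n$ factors arise.
\end{itemize}

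The directional third moments of $X_i$ are controlled by $J_n$ in \eqref{eq:Jn}. To handle the evaluation point, which depends on the other summands, and to exploit the sparsity of the softmax weights, I follow the ``stability'' device of Chernozhukov--Chetverikov--Kato: truncate $\|\pi_kX_i\|_2\le C H_n\log(nK)$ using \eqref{eq:Hn3}, and use that $w_k(x+u)\le e^{C\beta\|u\|(\sqrt t+\|u\|)}w_k(x)$. The summation over coordinates inside the block of dimension $M_k\le M$ produces the polynomial factor in $M$.

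\textbf{Step 3: balancing.} Combining the steps gives a bound of the form
\[
\frac{(\psi+\beta)^3 M^{a}J_nH_n}{\sqrt n}\,\mathrm{polylog}
+\frac{C}{C_L^3}\big(\beta^{-1}\log K+\psi^{-1}\big)\,\mathrm{polylog}.
\]
Choosing $\beta\asymp\psi\log K$ and optimizing $\psi$ yields the rate $\big(M^4H_n^2/n\big)^{1/6}$ up to $J_n$, $C_L^{-3}$, and the stated logarithms.

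\textbf{Main obstacle.} I expect Step 2 to be hardest, specifically obtaining a third-derivative bound for the smooth maximum of \emph{squared norms} that is uniform in $K$ and only polynomial in $M$. Unlike the hyperrectangle case, the inner functions are unbounded quadratics, so the threshold restriction $t\ge\rho$, the truncation of $\|x\|$ near the level $t$, and the treatment of the $\psi_1$ tails via $H_n$ must be handled jointly. My first attempt would be to write $D^3h$ explicitly in terms of the softmax weights and bound every term by $\sum_k w_k(x)\|\pi_kv\|^3(1+\|\pi_kx\|)^3$, then absorb $\|\pi_kx\|$ via the truncation at level $t+\psi^{-1}$.
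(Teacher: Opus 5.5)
\textbf{Gap: the balancing in Step~3 gives $n^{-1/8}$, not $n^{-1/6}$.} Your architecture (soft-max smoothing, Lindeberg swapping, Gaussian anti-concentration under the decay condition) is reasonable. However, Step~3 does not follow from Step~2, and that is where the content of the lemma lies. Your remainder is $A\psi^{3}n^{-1/2}+C\psi^{-1}$ up to logarithms, with $A=M^{a}J_nH_n$ and $\beta\asymp\psi\log K$. The optimal choice $\psi\asymp(\sqrt n/A)^{1/4}$ then gives an error of order $(A/\sqrt n)^{1/4}=(A^{2}/n)^{1/8}$. That is the $n^{-1/8}$ rate of the original Chernozhukov--Chetverikov--Kato Gaussian approximation, not $J_n(M^4H_n^2/n)^{1/6}$. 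No tuning of $\beta$ and $\psi$ fixes this. A uniform bound $|D^3h|\lesssim(\psi+\beta)^3\,\mathrm{polylog}$, combined with a single Gaussian anti-concentration step at the end, cannot produce the exponent $1/6$.

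\textbf{The missing idea.} You need the refinement that gives $n^{-1/6}$ in the hyperrectangle case (Chernozhukov--Chetverikov--Kato, 2017). The paper gives no proof of its own; it defers to Theorem~4 of Decker, Kong and Volgushev, which presumably adapts that refinement to maxima of block norms. The argument runs as follows.
\begin{itemize}
\item All derivatives of $h=g_{t,\psi}\circ F_\beta$ vanish outside the layer $\{t\le F_\beta(x)\le t+\psi^{-1}\}$.
\item Use stability of the soft-max weights to move the evaluation point to the hybrid sum $W_i=n^{-1/2}(\sum_{j<i}Y_j+\sum_{j>i}X_j)$, and use independence of $W_i$ from $X_i$. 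The $i$th remainder then carries the extra factor $\mathbb P\bigl(F_\beta(W_i)\text{ lies in a slightly enlarged layer}\bigr)$.
\item If this probability were $O(\psi^{-1})$ up to logarithms, the remainder would scale like $\psi^{2}J_n n^{-1/2}$. Balancing against $\psi^{-1}$ then gives the exponent $1/6$.
\item But $W_i$ is not Gaussian, so the anti-concentration lemma does not apply to it. You must bound its layer probability by the Gaussian one plus the Kolmogorov distance for the partially swapped sums.
\item This yields an inequality involving the very quantity being estimated. It must be closed by induction on $n$ or by a self-bounding argument, with truncation of $\|\pi_kX_i\|_2$ at level $H_n\log(nK)$ handling the unbounded summands.
\end{itemize}
Without this step your argument proves only an $n^{-1/8}$-type version of the lemma.

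\textbf{A side remark.} Your reason for avoiding a polytope reduction is mistaken. Such Gaussian approximation bounds depend on the number of directions only through its logarithm. An $\varepsilon$-net of size $e^{O(M\log(1/\varepsilon))}$ therefore costs only polynomial factors of $M$, though presumably a worse power than $M^4$.
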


\begin{proof}
    The result is a straightforward adaptation of Theorem 4 (page 52) from \cite{many_means}.
\end{proof}

\begin{lemma}[Improved anti-concentration for mixed norms of Gaussian variables]
\label{lem:acnew}
For fixed \(M_1,\ldots,M_K\in\mathbb N\) with $K \in \N$, let
\[
g\in \mathcal H_M=\bigoplus_{k \in [K]} \mathbb R^{M_k}
\]
be a centered Gaussian random vector with covariance operator \(\mathcal K\) satisfying
\[
\mathcal K
\in
\operatorname{Decay}
\left(
C_L^2,
C_U^2,
2\gamma_L,
2\gamma_U
\right)
\]
in the sense of Definition~\ref{def:covariance-decay}, where $0<C_L \le 1 \le C_U < \infty$ and $1/2 < \gamma_U \le \gamma_L < \infty.$
Define
\[
S:=\max_{k\in[K]}\|\pi_k g\|_{2},
\]
with $\| \cdot \|_2$ the Euclidean norm on $\R^{M_k}$.
Then, for any $\rho>0$, there exist constants
\[
D=D(\rho,C_U,\gamma_U,\gamma_L)>0,
\qquad
D'=D'(\rho,C_U,\gamma_U,\gamma_L)>0,
\]
such that, for any $\delta>0$ and $m\in\mathbb N_{\ge3}$,
\begin{equation*}
\sup_{t\ge\rho}
\Prob\left(S\in[t-\delta,t+\delta]\right)
\le
\frac{D}{C_L^3}
\left(\delta+\frac1m\right)
\log^{1/2+3\kexp}(K+1)
\log^{3\kexp}(m)
\end{equation*}
and
\begin{equation*}
\sup_{t\ge\rho}
\Prob\left(S^2\in[t-\delta,t+\delta]\right)
\le
\frac{D'}{C_L^3}
\left(\delta+\frac1m\right)
\log^{1/2+3\kexp}(K+1)
\log^{3\kexp}(m),
\end{equation*}
where $\kexp = \gamma_L / (2 \gamma_U-1)$.
\end{lemma}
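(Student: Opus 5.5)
The plan is to reduce to anti-concentration for the maximum of finitely many Gaussian variables with variances bounded below, in the spirit of Chernozhukov--Chetverikov--Kato (Nazarov's inequality). The price of the reduction is a truncation in the eigen-directions, and this is where the factor $\log^{3\kexp}(m)$ and the dependence on $\kexp=\gamma_L/(2\gamma_U-1)$ come from.

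\emph{Step 1 (truncation).} For each $k$, diagonalise the marginal covariance $\pi_k\mathcal K\pi_k^*$, so that $\|\pi_k g\|_2^2=\sum_{j\le M_k}\lambda_j^{(k)}Z_{kj}^2$ with standard normal $Z_{kj}$. Fix a level $p\in\mathbb N$ and split $\|\pi_kg\|_2^2=A_k+R_k$, where $A_k$ is the sum over $j\le p\wedge M_k$ and $R_k$ the remainder. By the upper decay bound, $\E R_k\lesssim C_U^2p^{1-2\gamma_U}$, and $R_k^{1/2}$ is sub-Gaussian with this scale. The $\psi_2$ tail bound and a union bound over $k$ then give $\max_kR_k\le\delta'$ with probability at least $1-1/m$. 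This holds provided $p\asymp\{C_U^2\log((K+1)m)/\delta'\}^{1/(2\gamma_U-1)}$.

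\emph{Step 2 (choice of $\delta'$).} I choose $\delta'\asymp\rho\,(\delta+1/m)$. On the high-probability event, if $S\in[t-\delta,t+\delta]$ with $t\ge\rho$, then $\sqrt{A_k}\ge\sqrt{\|\pi_kg\|_2^2-\delta'}$, so the truncated maximum $S_p:=\max_k\sqrt{A_k}$ lies in a window of width $O(\delta+1/m)$ around $t$. Here I use $t\ge\rho$ to convert additive errors on the squared scale into additive errors on the norm scale. It therefore suffices to bound $\sup_{t\ge\rho/2}\Prob(|S_p-t|\le C(\delta+1/m))$, and to add $1/m$ for the exceptional event.

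\emph{Step 3 (anti-concentration for the truncated maximum).} Write $S_p=\max_k\sup_{v\in\mathbb S^{p-1}}\langle v,(\sqrt{\lambda_j^{(k)}}Z_{kj})_{j\le p}\rangle$. Pass to an $\epsilon$-net of each sphere with $\epsilon\asymp 1/m$; this costs an additive $O(1/m)$ on the event that all blocks are $O(\sqrt{\log(Km)})$, and the net has cardinality $N\le K(3m)^p$. Every net element $v$ indexes a centred Gaussian with variance at least $\sigma^2:=C_L^2p^{-2\gamma_L}$, by the lower decay bound. Nazarov's inequality for maxima of Gaussians with variances bounded below then gives a bound of order $(\delta+1/m)\,\sigma^{-1}\sqrt{\log N}$, with $\log N\lesssim\log(K+1)+p\log m$. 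Substituting $p$ from Step~1 turns $\sigma^{-1}\asymp C_L^{-1}p^{\gamma_L}$ into powers $\log^{\kexp}$ of $\log((K+1)m)$. Collecting the $p\log m$ factor and the $C_L$-dependence through the normalisation then produces the claimed $C_L^{-3}\log^{1/2+3\kexp}(K+1)\log^{3\kexp}(m)$; I expect some bookkeeping (e.g.\ a cruder $\sigma^{-3}$-type Nazarov bound) to match exactly the stated exponents.

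\emph{Step 4 (squared statement).} If $\delta\ge\rho/2$ the squared bound is trivial after enlarging $D'$. Otherwise, for $t\ge\rho$ the event $\{S^2\in[t-\delta,t+\delta]\}$ is contained in $\{S\in[\sqrt t-\delta/\sqrt{\rho/2},\sqrt t+\delta/\sqrt{\rho/2}]\}$, with $\sqrt t\ge\sqrt\rho$. The first bound, applied with threshold $\sqrt\rho$, then yields the second.

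\emph{Main obstacle.} The delicate point is Step~3: the net directions have heterogeneous variances, and the smallest one, $C_Lp^{-\gamma_L}$, degenerates as $p$ grows. Everything hinges on balancing $p$ so that the truncation tail in Step~1 is $O(\delta+1/m)$, while $p^{\gamma_L}$ stays polylogarithmic. If the direct Nazarov route gives worse exponents, I would first try a refinement: restrict the supremum to directions $v$ whose variance exceeds a threshold tied to $\rho$, using $t\ge\rho$ to argue that low-variance directions cannot attain the maximum with high probability.
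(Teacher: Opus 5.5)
\paragraph{The gap: Steps~1--3 cannot be balanced.} This is not fixable by bookkeeping. Step~2 compares $S$ deterministically with the truncated maximum $S_p$, and the only control it has is $0\le S-S_p\le\max_kR_k/S$. So you need the discarded tail to be below the window, $\max_kR_k\le\delta'\asymp\rho(\delta+1/m)$. By Step~1 this forces
\[
p^{2\gamma_U-1}\gtrsim\frac{C_U^2\log((K+1)m)}{\rho\,(\delta+1/m)},
\]
so $p$ is polynomial in $m$ (if $\delta\le 1/m$) or in $1/\delta$, not polylogarithmic. Then $\sigma^{-1}=C_L^{-1}p^{\gamma_L}$ carries a factor $(\delta+1/m)^{-\kexp}$. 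Your claim that substituting $p$ ``turns $\sigma^{-1}$ into powers $\log^{\kexp}$'' drops this factor. The Nazarov bound is then of order $(\delta+1/m)^{1-\kexp}$ or worse, before even counting the $\sqrt{p\log m}$ inside $\sqrt{\log N}$. That is not linear in $\delta+1/m$, and it is vacuous whenever $\kexp\ge1$ (for instance $\gamma_U=\gamma_L=1$). The two requirements in your ``main obstacle'' paragraph are incompatible: $R_k$ decays only polynomially in $p$, so a truncation error of $O(\delta+1/m)$ and a polylogarithmic $p^{\gamma_L}$ cannot both hold.

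\paragraph{The fix: your closing refinement.} The refinement you mention at the end is the missing idea, and it should replace Steps~2--3. Truncate only to certify a variance lower bound, never the statistic itself.
\begin{itemize}
\item Take $p^{2\gamma_U-1}\asymp C_U^2\log((K+1)m)/\rho^2$. Then $\max_kR_k\le\rho^2/8$ outside an event of probability at most $1/m$.
\item Write $S=\sup\{X_{k,v}:k\in[K],\ \|v\|_2=1\}$ with $X_{k,v}=\langle v,\pi_kg\rangle$.
\item Assume $\delta<\rho/2$ (the other case is trivial), $t\ge\rho$ and $|S-t|\le\delta$. Then $S\ge\rho/2$.
\item Let $k^*$ be the maximising block and $v^*=\pi_{k^*}g/S$ the maximiser. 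In the eigenbasis of block $k^*$, using $R_{k^*}\le\rho^2/8\le S^2/2$,
\[
\sum_j\lambda_j^{(k^*)}(v_j^*)^2\ge\lambda^{(k^*)}_{p\wedge r_{k^*}}\,\frac{A_{k^*}}{S^2}\ge\frac{C_L^2p^{-2\gamma_L}}{2}=:\underline\sigma^2 .
\]
\item Hence, on the good event, $S$ equals the supremum of $X$ over the deterministic index set $\{(k,v):\Var(X_{k,v})\ge\underline\sigma^2\}$. This is a Gaussian process with variances in $[\underline\sigma^2,C_U^2]$, and $\underline\sigma^{-1}\asymp C_L^{-1}\{\log((K+1)m)/\rho^2\}^{\kexp}$ is polylogarithmic.
\end{itemize}
This index set consists of subsets of the unit spheres in $\mathbb R^{M_k}$. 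A net argument would therefore bring in the dimensions $M_k$, on which the bound must not depend. Use instead a dimension-free anti-concentration inequality for suprema of Gaussian processes with non-constant variances, such as that of Chernozhukov, Chetverikov and Kato. It depends on the index set only through
\[
\E\sup X_{k,v}/\Var(X_{k,v})^{1/2}\le\E S/\underline\sigma\lesssim C_U\sqrt{\log(K+1)}/\underline\sigma .
\]
Tracking that inequality's dependence on $\underline\sigma$ is what should produce the factor $C_L^{-3}\log^{3\kexp}$. The exceptional event contributes the $1/m$. Your Step~4 is fine.

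The paper itself gives no argument here; it only cites an adaptation of Lemma~10 and Corollary~2 of Decker, Kong and Volgushev. The stated exponents are therefore the only benchmark available.
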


\begin{proof}
    The result is a straightforward adaptation of Lemma 10 and Corollary 2 (page 37) from \cite{many_means}.
\end{proof}

\begin{lemma}[Simultaneous power under Gaussian comparison]
\label{lem:simultaneous_power_generic}
Fix \(\gamma\in(0,1)\), \(\rho>0\), and \(P\in\mathcal P\), and let
\(\varnothing\ne\chgset\subseteq\mathcal S_P\) be deterministic. Recall \(B_{\mathcal J,P}\) and
\(c_{\mathcal J,P}^{\mathrm{BB}}\) from
\eqref{eq:def-B-J-P} and~\eqref{eq:def-BB-quantile-J-P},
respectively. Define 
\begin{equation}\label{noise:kappa}
T_{n,\chgset}^{O}
:=
\max_{k\in\chgset}\max_{s\in[n]}
\|C_{n,k}^{O}(s)\|_{\Hb_k}.
\end{equation}
Suppose that there exist \(\varpi\in[0,1]\), \(0<\delta<\gamma \wedge (1-\gamma)\), and
an event \(\Omega\in\mathcal F_n\) satisfying
\begin{align}
\mathbb P_P\big(\Omega^c\big) &\le \varpi,
\label{eq:generic-power-calibration-event} 
\\
\mathbb P\big( B_{[K],P}^2\le\rho \big)
&<
1-\gamma-\delta,
\label{eq:generic-power-bootstrap-threshold}
\\
\sup_{t\ge\rho}
\left|
\mathbb P_P\left(
    (T_{n,\chgset}^{O})^2\le t
\right)
-
\mathbb P\left(
    B_{\chgset,P}^2\le t
\right)
\right|
&\le\delta
\label{eq:generic-power-oracle-comparison},
\\
\text{On } \Omega: \qquad \sup_{t\ge\rho}
\left|
\mathbb P_P\left(
    (T_n^*)^2\le t
    \,\middle|\,
    \mathcal F_n
\right)
-
\mathbb P\left(
    B_{[K],P}^2\le t
\right)
\right|
&\le\delta.
\label{eq:generic-power-bootstrap-comparison}
\end{align}
Then, for every \(z\ge\sqrt\rho\), if
\begin{equation}
\sqrt n\,\theta_k(1-\theta_k)\Delta_k
>
c_{[K],P}^{\mathrm{BB}}(1-\gamma+\delta)+z,
\qquad k\in\chgset,
\label{eq:generic-power-signal-at-z}
\end{equation}
then
\begin{equation}
\mathbb P_P\big(
    \chgset\subseteq\widehat{\mathcal S}_\gamma
\big)
\ge
\mathbb P\big(
    B_{\chgset,P}\le z
\big)
-\varpi-\delta.
\label{eq:generic-power-brownian-lower-bound}
\end{equation}
In particular, for every \(\eta\in(0,1)\) such that
\begin{equation}
c_{\chgset,P}^{\mathrm{BB}}(1-\eta)\ge\sqrt\rho,
\label{eq:generic-power-oracle-threshold}
\end{equation}
if
\begin{equation}
\sqrt n\,\theta_k(1-\theta_k)\Delta_k
>
c_{[K],P}^{\mathrm{BB}}(1-\gamma+\delta)
+
c_{\chgset,P}^{\mathrm{BB}}(1-\eta),
\qquad k\in\chgset,
\label{eq:generic-power-two-quantile-condition}
\end{equation}
then
\begin{equation}
\mathbb P_P\big(
    \chgset\subseteq\widehat{\mathcal S}_\gamma
\big)
\ge
1-\eta-\varpi-\delta.
\label{eq:generic-simultaneous-power-bound}
\end{equation}
\end{lemma}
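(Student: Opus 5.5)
The plan is to reduce simultaneous detection to two events: the bootstrap critical value is not too large, and the noise part of every CUSUM in $\chgset$ is uniformly small. We then separate them with the triangle inequality applied to the decomposition $C_{n,k}=D_{n,k}+C_{n,k}^O$ from the proof of Lemma~\ref{lem:general_localization_lemma}.

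First, the critical value. On $\Omega$, assumption \eqref{eq:generic-power-bootstrap-comparison} gives, for every $t\ge\rho$,
\[
\mathbb P(T_n^{*2}\le t\mid\mathcal F_n)\ge \mathbb P(B_{[K],P}^2\le t)-\delta .
\]
Take $t_0:=\{c_{[K],P}^{\mathrm{BB}}(1-\gamma+\delta)\}^2$. By \eqref{eq:generic-power-bootstrap-threshold} and the definition of the quantile, $t_0\ge\rho$; indeed, if $t_0<\rho$ then $\mathbb P(B_{[K],P}^2\le\rho)\ge1-\gamma+\delta>1-\gamma-\delta$, a contradiction. Continuity of the Gaussian maximum then gives $\mathbb P(B^2_{[K],P}\le t_0)\ge 1-\gamma+\delta$. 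Hence $\mathbb P(T_n^{*2}\le t_0\mid\mathcal F_n)\ge1-\gamma$, so $\widehat Q_{1-\gamma}\le t_0$ on $\Omega$.

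Second, the deterministic drift. By \eqref{eq:sup-dnk}, $\|D_{n,k}(\omega_k)\|=\sqrt n\,\theta_k(1-\theta_k)\Delta_k$. Therefore
\[
T_{n,k}\ge \|C_{n,k}(\omega_k)\|\ge \sqrt n\,\theta_k(1-\theta_k)\Delta_k-\|C^O_{n,k}(\omega_k)\|\ge \sqrt n\,\theta_k(1-\theta_k)\Delta_k - T^O_{n,\chgset}.
\]
Under \eqref{eq:generic-power-signal-at-z}, the event $\{T^O_{n,\chgset}\le z\}\cap\Omega$ forces $T_{n,k}>c^{\mathrm{BB}}_{[K],P}(1-\gamma+\delta)=\sqrt{t_0}\ge\sqrt{\widehat Q_{1-\gamma}}$ for all $k\in\chgset$. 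This is exactly the event $\chgset\subseteq\widehat{\mathcal S}_\gamma$.

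Third, the probabilities. We have
\[
\mathbb P(\chgset\subseteq\widehat{\mathcal S}_\gamma)\ge \mathbb P\bigl((T^O_{n,\chgset})^2\le z^2\bigr)-\varpi .
\]
Since $z^2\ge\rho$, assumption \eqref{eq:generic-power-oracle-comparison} yields $\mathbb P\bigl((T^O_{n,\chgset})^2\le z^2\bigr)\ge\mathbb P(B_{\chgset,P}\le z)-\delta$, which proves \eqref{eq:generic-power-brownian-lower-bound}. For the final claim, set $z=c^{\mathrm{BB}}_{\chgset,P}(1-\eta)$. This is admissible by \eqref{eq:generic-power-oracle-threshold}, and $\mathbb P(B_{\chgset,P}\le z)\ge1-\eta$ by right-continuity of the distribution function at its quantile.

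The only delicate point is the quantile manipulation in the first step, namely ensuring that $t_0\ge\rho$ so that the restricted Kolmogorov bound applies there. This is exactly what \eqref{eq:generic-power-bootstrap-threshold} is designed to guarantee. A secondary care point is handling ties: the rejection rule uses a strict inequality, and this is absorbed by the strict inequality in \eqref{eq:generic-power-signal-at-z}.
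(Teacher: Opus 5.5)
Your proposal is correct and follows the paper's proof. It uses the same reverse-triangle bound $T_{n,k}\ge\sqrt n\,\theta_k(1-\theta_k)\Delta_k-T^O_{n,\chgset}$, the same inclusion $\Omega\cap\{T^O_{n,\chgset}\le z\}\subseteq\{\chgset\subseteq\widehat{\mathcal S}_\gamma\}$, the oracle comparison at $t=z^2$, and the choice $z=c^{\mathrm{BB}}_{\chgset,P}(1-\eta)$. The only difference is that you get $\widehat Q_{1-\gamma}\le\{c^{\mathrm{BB}}_{[K],P}(1-\gamma+\delta)\}^2$ directly, by evaluating the bootstrap comparison at $t_0\ge\rho$ and using only right-continuity of the distribution function. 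The paper instead first uses \eqref{eq:generic-power-bootstrap-threshold} to get $\widehat Q_{1-\gamma}>\rho$ and then invokes the two-sided quantile sandwich of Lemma~\ref{lem:koltoquant}.
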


\begin{proof}[Proof of Lemma~\ref{lem:simultaneous_power_generic}]
Recall from~\eqref{def:marginal_test} that, for every \(k\in[K]\), and
in particular for every \(k\in\chgset\),
\[
T_{n,k}
=
\max_{s\in[n]}
\|C_{n,k}(s)\|_{\Hb_k}.
\]
For brevity, write
\[
\mathfrak m_{k}
:=
\sqrt n\,\theta_k(1-\theta_k)\Delta_k,
\qquad k\in\chgset.
\]
By~\eqref{eq:cnk-dnk-cnko},
\[
C_{n,k}(s)
=
D_{n,k}(s)+C_{n,k}^{O}(s),
\qquad s\in[n].
\]
Evaluating the preceding decomposition at the true change point
\(s=\omega_k\) and applying the reverse triangle inequality gives
\begin{align}
T_{n,k}
\ge
\|C_{n,k}(\omega_k)\|_{\Hb_k}
\ge
\|D_{n,k}(\omega_k)\|_{\Hb_k}
-
\|C_{n,k}^{O}(\omega_k)\|_{\Hb_k}
\ge
\mathfrak m_{k}
-
T_{n,\chgset}^{O},
\qquad k\in\chgset,
\label{eq:power-proof-reverse-triangle}
\end{align}
where we have used that
\[
\|D_{n,k}(\omega_k)\|_{\Hb_k}
=
\max_{s\in[n]}\|D_{n,k}(s)\|_{\Hb_k}
=
\mathfrak m_{k}
\]
as a consequence of~\eqref{eq:sup-dnk}.
Recall the conditional bootstrap \((1-\gamma)\)-quantile
\(\widehat Q_{1-\gamma}\) defined immediately below
\eqref{eq:definition-Tstar}, and define its unsquared counterpart by
$
\widehat c_{1-\gamma}
:=
\widehat Q_{1-\gamma}^{1/2}.
$
Recall the definition of $\widehat S_\gamma$ from \eqref{eq:def-hat-S_gamma}.
Because both \(T_{n,k}\) and \(\widehat c_{1-\gamma}\) are
nonnegative, it follows from \eqref{eq:power-proof-reverse-triangle} that
\begin{align}
\mathbb P_P\big(
    \chgset\subseteq\widehat{\mathcal S}_\gamma
\big)
&=
\mathbb P_P\big(
\forall k\in\chgset:
T_{n,k}^2>\widehat Q_{1-\gamma}
\big)
\nonumber\\
&=
\mathbb P_P\big(
\forall k\in\chgset:
T_{n,k}>\widehat c_{1-\gamma}
\big)
\nonumber\\
&\ge
\mathbb P_P\big(
\forall k\in\chgset:
\mathfrak m_{k}>
\widehat c_{1-\gamma}+T_{n,\chgset}^{O}
\big).
\label{eq:power-proof-detection-probability}
\end{align}
For \(t\in\mathbb R\), define
\[
F_n^*(t)
:=
\mathbb P_P\big(
(T_n^*)^2\le t
\,\big|\,
\mathcal F_n
\big),
\qquad
F_{[K],P}^{\mathrm{BB}}(t)
:=
\mathbb P\big(
B_{[K],P}^2\le t
\big),
\]
such that we can rewrite \eqref{eq:generic-power-bootstrap-comparison} as
\[
\text{On } \Omega: \qquad \sup_{t\ge\rho}
\left|
F_n^*(t)-F_{[K],P}^{\mathrm{BB}}(t)
\right|
\le\delta.
\]
Further, we then have, for $a \in (0,1)$,
\begin{align}
\begin{aligned} \label{eq:squared-quantiles}
    \widehat Q_{a} &:= \inf\left\{
t\in\mathbb R:
F_n^*(t)\ge a
\right\}
=
(\widehat c_a )^2
\\
Q_{[K],P}^{\mathrm{BB}}(a)
&:=
\inf\left\{
t\in\mathbb R:
F_{[K],P}^{\mathrm{BB}}(t)\ge a
\right\}
=
\big\{ c_{[K],P}^{\mathrm{BB}}(a) \big\}^2.
\end{aligned}
\end{align}
Since $0<\delta < \gamma \wedge (1-\gamma)$, we have
$
1-\gamma-\delta\in(0,1),$ and $
1-\gamma+\delta\in(0,1)$.
On the event \(\Omega\), apply
Lemma~\ref{lem:koltoquant} with
\[
F=F_n^*,
\qquad
G=F_{[K],P}^{\mathrm{BB}},
\qquad
t_*=\rho,
\qquad
\alpha=1-\gamma.
\]
Condition~\eqref{eq:generic-power-bootstrap-threshold} first gives
$
\widehat Q_{1-\gamma}>\rho,
$
and we then obtain that, on $\Omega$,
\begin{equation}
Q_{[K],P}^{\mathrm{BB}}(1-\gamma-\delta)
\le
\widehat Q_{1-\gamma}
\le
Q_{[K],P}^{\mathrm{BB}}(1-\gamma+\delta).
\label{eq:power-proof-squared-quantile-comparison}
\end{equation}
Consequently, by \eqref{eq:squared-quantiles}, 
\begin{equation}
\widehat c_{1-\gamma}
\le
c_{[K],P}^{\mathrm{BB}}(1-\gamma+\delta).
\label{eq:power-proof-unsquared-quantile-comparison}
\end{equation}
Fix \(z\ge\sqrt\rho\) satisfying
\eqref{eq:generic-power-signal-at-z}. On \(\Omega\),
\eqref{eq:power-proof-unsquared-quantile-comparison} and
\eqref{eq:generic-power-signal-at-z} imply
\[
\Omega\cap\{T_{n,\chgset}^{O}\le z\}
\subseteq
\left\{
\forall k\in\chgset:
\mathfrak m_{k}>
\widehat c_{1-\gamma}+T_{n,\chgset}^{O}
\right\}.
\]
Together with~\eqref{eq:power-proof-detection-probability} and \eqref{eq:generic-power-calibration-event}, this yields
\begin{equation}
\mathbb P_P\big(
\chgset\subseteq\widehat{\mathcal S}_\gamma
\big)
\ge 
\mathbb P_P\big(\Omega \cap \{T_{n,\chgset}^O \le z\} 
\big)
\ge
\mathbb P_P\left(T_{n,\chgset}^{O}\le z\right)-\varpi.
\label{eq:power-proof-oracle-reduction-at-z}
\end{equation}
Because \(z^2\ge\rho\), the oracle comparison
\eqref{eq:generic-power-oracle-comparison}, applied at \(t=z^2\),
gives
\begin{align}
\mathbb P_P\left(T_{n,\chgset}^{O}\le z\right)
&=
\mathbb P_P\left((T_{n,\chgset}^{O})^2\le z^2\right)
\nonumber\\
&\ge
\mathbb P\left(B_{\chgset,P}^2\le z^2\right)-\delta
\nonumber\\
&=
\mathbb P\left(B_{\chgset,P}\le z\right)-\delta.
\label{eq:power-proof-oracle-comparison-at-z}
\end{align}
Combining~\eqref{eq:power-proof-oracle-reduction-at-z} and
\eqref{eq:power-proof-oracle-comparison-at-z} proves
\eqref{eq:generic-power-brownian-lower-bound}. Finally, take $
z=c_{\chgset,P}^{\mathrm{BB}}(1-\eta).
$
Condition~\eqref{eq:generic-power-oracle-threshold} ensures that
\(z\ge\sqrt\rho\), while
\eqref{eq:generic-power-two-quantile-condition} is precisely
\eqref{eq:generic-power-signal-at-z}. By the definition of the
generalized quantile and the right-continuity of the distribution
function of \(B_{\chgset,P}\),
\[
\mathbb P\left(
B_{\chgset,P}
\le
c_{\chgset,P}^{\mathrm{BB}}(1-\eta)
\right)
\ge
1-\eta.
\]
Substitution into~\eqref{eq:generic-power-brownian-lower-bound}
proves~\eqref{eq:generic-simultaneous-power-bound}.
\end{proof}

\begin{lemma}\label{lem:prop a_s_ell}
Let $\langle \cdot,\cdot\rangle$ denote the standard Euclidean inner product on $\R^m$. For each $s \in [n]$, recall the vector $a_s = (a_{s1},\ldots,a_{sm})^\top \in \R^m$ with  $a_{s\ell}
    :=
    \mathbf 1\{\ell\le B_s\}-s/n$ and $B_s = \lceil s/ (q+r) \rceil$; see \eqref{eq:def-a_sl}, and recall the Brownian bridge covariance kernel $K:I^2\to \R, K(u,v) = u \wedge v - uv$ defined in \eqref{def:brownian_covariance_kernel_variance_2}. Then
\begin{equation}\label{eq:brownian_bridge_covariance_bound}
\sup_{(s,t) \in [n] \times [n]}
\Big|
\langle a_s, a_t \rangle
-
mK\Big(\frac sn,\frac tn\Big)
\Big|
< 2.
\end{equation}
\end{lemma}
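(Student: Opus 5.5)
The proof is a direct computation of $\langle a_s,a_t\rangle$. We compare it with $mK(s/n,t/n)$ through the discretisation $B_s/m\approx s/n$. Write $u=s/n$, $v=t/n$, and assume without loss of generality $s\le t$, so that $B_s\le B_t$. Put $\tilde B_s:=B_s\wedge m$ and $\tilde B_t:=B_t\wedge m$; only indices $\ell\in[m]$ enter, and $\mathbf 1\{\ell\le B_s\}$ summed over $\ell\in[m]$ equals $\tilde B_s$. Expanding the product gives
\[
\langle a_s,a_t\rangle=\sum_{\ell=1}^m\big(\mathbf 1\{\ell\le B_s\}-u\big)\big(\mathbf 1\{\ell\le B_t\}-v\big)
=\tilde B_s - v\tilde B_s - u\tilde B_t + muv .
\]
We also have
\[
mK(u,v)=m(u - uv)=mu - uv\cdot m .
\]

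Next we bound the discretisation error $\beta_s:=\tilde B_s - mu$. Set $h=q+r$. Then $B_s=\lceil s/h\rceil$ lies in $[s/h,\,s/h+1)$, and $m=\lfloor n/h\rfloor$ lies in $(n/h-1,\,n/h]$. Hence $mu=ms/n$ lies in $(s/h-s/n,\,s/h]$. If $B_s\le m$, this gives $\beta_s=B_s-mu\in[0,\,1+s/n)\subset[0,2)$. If $B_s>m$, then $\tilde B_s=m\ge mu$, and $m-mu=m(1-u)<B_s-mu$; moreover $m-mu=m(1-u)\ge 0$. So in all cases $0\le\beta_s<1+u$. A sharper statement holds: $\beta_s<1+u-\mathbf 1\{\cdot\}\cdot 0$, with the bound coming from $B_s-mu<1+s/h-(s/h-s/n)=1+u$.

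We now substitute. Writing $\tilde B_s=mu+\beta_s$ and $\tilde B_t=mv+\beta_t$ gives
\[
\langle a_s,a_t\rangle - mK(u,v)=\beta_s(1-v)-u\beta_t .
\]
Since $\beta_s,\beta_t\ge0$ and $u,v\in[0,1]$, the difference lies in $[-u\beta_t,\ (1-v)\beta_s]$. The upper end satisfies
\[
(1-v)\beta_s<(1-v)(1+u)\le(1-u)(1+u)\le 1 ,
\]
using $v\ge u$. The lower end satisfies $u\beta_t<u(1+v)\le 2$. The absolute value is therefore strictly less than $2$, as claimed.

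The only delicate step is the bookkeeping when $B_s>m$ or $B_t>m$, i.e.\ when $s$ falls into the trailing remainder beyond the last complete block of length $h$. This is handled by the truncation $\tilde B_s=B_s\wedge m$ together with the observation $0\le m(1-u)$. One should also check that $m\ge1$, which holds because $h\le n/2$ under the standing constraint $1\le r\le q\le n/4$. The sign conventions in the lower bound need care, but everything else is elementary.
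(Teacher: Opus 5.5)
Your proof is correct and follows the paper's argument essentially verbatim: the same expansion with $B_s\wedge m$, the same discretisation error $z_s=\beta_s\in[0,2)$ (you bound it by $1+u$ rather than the paper's $1+\rho/(q+r)$), and the same identity $\beta_s(1-v)-u\beta_t$. One cosmetic slip: the inequality $(1-v)\beta_s<(1-v)(1+u)$ is not strict when $t=n$, so write $\le$ there; this does not affect the conclusion because that term is at most $1$. Also delete the garbled ``sharper statement'' sentence.
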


Note that~\eqref{eq:brownian_bridge_covariance_bound} implies that for all $s\in [n]$, if $m\geq 1$, then
\begin{equation}\label{bound_norm_a_s}
\|a_s\|_2^2 < 2+mK\Big(\frac sn, \frac sn\Big) \leq 2+\frac{m}{4}\leq \frac 94 m,
\end{equation}
and hence $\|a_s\|_2 \leq (3/2) \sqrt{m}\leq 2\sqrt{m}$.

\begin{proof}[Proof of Lemma~\ref{lem:prop a_s_ell}]
Define
\[
M_s:=B_s\wedge m=\min\{\lceil s/(q+r)\rceil,m\},
\]
such that 
\[
\sum_{\ell=1}^m \mathbf 1\{\ell\le B_s\}=M_s.
\]
Assume first that $s\le t$, and note that $B_s \le B_t$. Then
\begin{align*}
\langle a_s,a_t\rangle
&=
\sum_{\ell=1}^m
\Big(\mathbf 1\{\ell\le B_s\}-\frac sn\Big)
\Big(\mathbf 1\{\ell\le B_t\}-\frac tn\Big)=
M_s- \frac tn M_s- \frac sn M_t+ m \frac sn \frac tn.
\end{align*}
On the other hand,
\[
mK\Big(\frac sn,\frac tn\Big)=m \frac sn \Big( 1- \frac tn\Big).
\]
Therefore,
\begin{align*}
\langle a_s,a_t\rangle-mK\Big(\frac sn,\frac tn\Big)
&=
\Big( 1- \frac tn\Big) \Big( M_s - m \frac sn \Big) - \frac sn \Big (M_t - m \frac tn\Big)
= 
\Big( 1- \frac tn\Big) z_s - \frac sn z_t,
\end{align*}
where
\[
z_s:=M_s-\frac{ms}{n}.
\]
Suppose we have shown that $0\le z_s<2$ for all $s \in [n]$. Then 
\begin{align*}
\Big| \langle a_s,a_t\rangle-mK\Big(\frac sn,\frac tn\Big) \Big| 
& \le 
\Big( 1- \frac tn\Big) z_s + \frac sn z_t
< 2 \Big( 1- \frac tn + \frac sn\Big) \le 2.
\end{align*}
This is the assertion for $s \le t$, and the case $t\le s$ follows by symmetry.

It remains to show  that $0\le z_s<2$ for all $s \in [n]$. First, since
$m=\lfloor n/(q+r)\rfloor \le n/(q+r)$, we have
\[
\frac{ms}{n}\le \frac{s}{q+r}\le B_s.
\]
Thus, $M_s = \min(B_s, m) \ge \min(ms/n, m) = ms/n$, and hence $z_s \ge 0$.
Moreover, writing $n = m(q+r) + \rho$ for some $0 \le \rho < h$, and observing that $M_s \le B_s \le s/(q+r) + 1$,  we obtain
\begin{align*}
z_s
\le
\frac{s}{q+r}+1-\frac{ms}{n}
&=
1+s\left(\frac1{q+r}-\frac mn\right)
=
1+\frac{s\rho}{(q+r)n}
\le
1+\frac{\rho}{q+r}
<2.
\end{align*}
Thus $z_s \le 2$. This completes the proof.
\end{proof}

\begin{lemma}[Rate of convergence to the long-run covariance operator in trace norm]
\label{lem:convergence_to_long_term_trace}
Let \(\Hb\) be a separable Hilbert space, and let
\((X_i)_{i\in\mathbb Z}\) be a centered strictly stationary sequence of
\(\Hb\)-valued random elements with $\beta$-mixing coefficients $\beta_h$ as defined in Definition~\ref{def:beta_mixing_for_sequence}. Suppose also that
\[
U
:=
\bigl\|\|X_0\|_{\Hb}\bigr\|_{\psi_1}
<\infty,
\qquad
S_{\beta,1}
:=
\sum_{h=1}^{\infty}
h\beta_h\log^2\Big(\frac4{\beta_h}\Big)
<\infty;
\]
see also~\eqref{eq:def-S_beta}. For \(h\in \N_{\ge 0}\), define
\[
\Gamma_h
:=
\Cov(X_0, X_h) =
\E[X_0\otimes X_h].
\]
Then the series
\[
\mathcal K_{\infty}
:=
\Gamma_0+
\sum_{h=1}^{\infty}
\bigl(\Gamma_h+\Gamma_h^*\bigr)
\]
converges absolutely in trace norm and defines a positive semidefinite,
self-adjoint trace-class operator. Moreover, for every \(N\in\mathbb N\),
\[
\Big\|
\Cov\Big(
N^{-1/2}\sum_{i=1}^{N}X_i
\Big)
-
\mathcal K_{\infty}
\Big\|_{\Tr}
\le
64U^2S_{\beta,1}\frac1N
\]
and
\begin{align}
\mathbb E\Big[ 
\Big\|
N^{-1/2}\sum_{i=1}^{N}X_i
\Big\|_{\Hb}^{2}
\Big]
\le
64U^2(1+S_{\beta,0}).
\label{eq:bound_on_square_sum_beta_mixing}
\end{align}
\end{lemma}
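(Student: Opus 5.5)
The plan is to prove the two trace-norm facts first and then the finite-$N$ bound by a direct expansion of the covariance of the partial sum. The key input is a Davydov/Rio-type covariance inequality for $\beta$-mixing (hence $\alpha$-mixing, with $\alpha_h\le\beta_h/2$) Hilbert-valued variables, stated in trace norm:
\[
\|\Gamma_h\|_{\Tr}\;\lesssim\;\beta_h\log^2(4/\beta_h)\,U^2 .
\]
To obtain this I write $\|\Gamma_h\|_{\Tr}=\sup_{\|A\|_{\op}\le1}|\Tr(A\Gamma_h)|=\sup_A|\E\langle AX_h,X_0\rangle|$. Each such pairing is a covariance of two functionals of $X_0$ and $X_h$, both dominated by $\|X_0\|_{\Hb}$ and $\|X_h\|_{\Hb}$.

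For the covariance inequality itself I would use Rio's quantile bound, $|\Cov(\xi,\zeta)|\le 2\int_0^{2\alpha}Q_\xi Q_\zeta$, where $Q$ is the tail quantile. Under the $\psi_1$ assumption, $Q(u)\le U\log(2/u)$, and integrating gives $\lesssim U^2\beta_h\log^2(4/\beta_h)$. The inequality has to be applied to real-valued functionals with uniform control over $A$. I expect this to be the main obstacle. The clean route is the Hilbert-space version of the covariance inequality of Dehling–Philipp type (or Berbee coupling): couple $X_h$ with an independent copy $X_h^\star$ that agrees with probability $1-\beta_h$. Then
\[
\E\langle AX_h,X_0\rangle=\E\bigl[\langle A(X_h-X_h^\star),X_0\rangle\bigr],
\]
which is bounded by $\E\bigl[\|X_h-X_h^\star\|\,\|X_0\|\,\mathbf 1\{X_h\ne X_h^\star\}\bigr]$ uniformly in $A$. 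Hölder/Orlicz truncation on the event of probability $\beta_h$ then yields the $\beta_h\log^2(4/\beta_h)U^2$ bound with an explicit constant of at most $32$.

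With this in hand, absolute trace-norm convergence of $\mathcal K_\infty$ follows from $\sum_h\beta_h\log^2(4/\beta_h)\le S_{\beta,1}<\infty$. Self-adjointness is clear from the symmetrized form. Positive semidefiniteness and trace class follow because $\mathcal K_\infty$ is the trace-norm limit of the positive operators $\Cov(N^{-1/2}\sum X_i)$, once the rate is established.

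For the rate, stationarity gives
\[
\Cov\Bigl(N^{-1/2}\textstyle\sum_{i=1}^N X_i\Bigr)=\Gamma_0+\sum_{h=1}^{N-1}\bigl(1-\tfrac hN\bigr)(\Gamma_h+\Gamma_h^*).
\]
Hence the difference from $\mathcal K_\infty$ is
\[
-\sum_{h=1}^{N-1}\tfrac hN(\Gamma_h+\Gamma_h^*)-\sum_{h\ge N}(\Gamma_h+\Gamma_h^*),
\]
and its trace norm is at most $\tfrac{2}{N}\sum_{h\ge1}h\|\Gamma_h\|_{\Tr}$, using $1\le h/N$ for $h\ge N$. This gives the bound $64U^2S_{\beta,1}/N$. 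Finally, \eqref{eq:bound_on_square_sum_beta_mixing} follows from $\E\|N^{-1/2}\sum X_i\|^2=\Tr\Cov(\cdot)\le\|\Gamma_0\|_{\Tr}+2\sum_h\|\Gamma_h\|_{\Tr}$. Here $\|\Gamma_0\|_{\Tr}=\E\|X_0\|^2\le 2U^2$ by the $\psi_1$ moment bound, and the sum is controlled by $S_{\beta,0}$.
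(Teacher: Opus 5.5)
Your proposal is correct, and its skeleton matches the paper's:
\begin{itemize}
\item the variational formula $\|\Gamma_h\|_{\Tr}=\sup_{\|A\|_{\op}\le1}|\E\langle AX_h,X_0\rangle|$;
\item a lag-wise bound $\|\Gamma_h\|_{\Tr}\le 32U^2\beta_h\log^2(4/\beta_h)$;
\item the expansion $\Gamma_0+\sum_{h<N}(1-h/N)(\Gamma_h+\Gamma_h^*)$, with $h/N\ge1$ used on the tail;
\item positivity by passing to the trace-norm limit.
\end{itemize}

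The real difference is how you get the lag-wise bound. The paper does not couple. It applies the Hilbert-space covariance inequality of Merlev\`ede--Peligrad--Utev (via Lu et al.), $|\E\langle X,Y\rangle_{\Hb}|\le 18\int_0^{\alpha}Q_{\|X\|_{\Hb}}(u)Q_{\|Y\|_{\Hb}}(u)\,du$, to $AX_0$ and $X_h$. It combines this with $2\alpha\le\beta$ and the $\psi_1$ quantile bound $Q(u)\le U\log(2/u)$.

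You instead use Berbee's lemma to replace $X_h$ by an independent copy $X_h^\star$ with $\Prob(X_h\ne X_h^\star)\le\beta_h$. Centering then kills $\E\langle AX_h^\star,X_0\rangle$. The remainder is bounded, uniformly in $A$, by $\E[(\|X_h\|_{\Hb}+\|X_h^\star\|_{\Hb})\|X_0\|_{\Hb}\mathbf 1_E]$ with $\Prob(E)\le\beta_h$.

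To make your ``Hölder/Orlicz truncation'' step precise:
\begin{itemize}
\item Each product has $\psi_{1/2}$-norm at most $U^2$ by \ref{orlicz:product}, so its tail quantile is at most $U^2\log^2(2/u)$.
\item For any event $E$, $\E[Z\mathbf 1_E]\le\int_0^{\Prob(E)}Q_Z(u)\,du$.
\item Together these give $2U^2\beta_h\{\log^2(2/\beta_h)+2\log(2/\beta_h)+2\}\le 7U^2\beta_h\log^2(4/\beta_h)$, well within the constant $32$ you need.
\end{itemize}

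What each route buys: yours is self-contained and uses only the $\beta$-mixing coupling the paper already exploits elsewhere. It also correctly avoids the obstacle you flagged, namely that Rio's scalar covariance inequality does not apply directly to $\langle AX_h,X_0\rangle$. The paper's route needs only the weaker $\alpha$-mixing coefficient, at the cost of citing an external Hilbert-space inequality.

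Two small points. First, before using the supremum formula for the trace norm, record that $\Gamma_h$ is trace class, e.g.\ via $\|\Gamma_h\|_{\Tr}\le\E[\|X_0\|_{\Hb}\|X_h\|_{\Hb}]<\infty$. Second, your bound $\E\|X_0\|_{\Hb}^2\le 2U^2$ is correct, since $e^x-1\ge x^2/2$ for $x\ge0$, and it is slightly sharper than the paper's $4U^2$.
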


\begin{proof}
Lemma~\ref{lem:properties-covariance}(5), shows that for all $h\geq 0$ the operator $\Gamma_h$ is trace-class, i.e, that \(\|\Gamma_h\|_{\Tr}<\infty\).
We next derive a sharper bound on\(\|\Gamma_h\|_{\Tr}\) for \(h\ge0\) than the one provided in Lemma~\ref{lem:properties-covariance}(5). Let
\(A\in\mathcal B(\Hb)\) satisfy $
\|A\|_{\op}\le1$.
Then \(AX_0\) is centered and $
\|AX_0\|_{\Hb}
\le
\|X_0\|_{\Hb}$
so that
\[
\bigl\|\|AX_0\|_{\Hb}\bigr\|_{\psi_1}
\le
\bigl\|\|X_0\|_{\Hb}\bigr\|_{\psi_1}
\le U,
\]
using monotonicity of the $\psi_1$ norm. 
Strict stationarity also gives
\[
\bigl\|\|X_h\|_{\Hb}\bigr\|_{\psi_1}
=
\bigl\|\|X_0\|_{\Hb}\bigr\|_{\psi_1}
\le U.
\]
Finally, by monotonicity of the \(\beta\)-mixing coefficient, $\beta\bigl(\sigma(AX_0),\sigma(X_h)\bigr) \le \beta_h$.
We may therefore apply Lemma~\ref{lem:expectation_beta_psi_1_control} to \(AX_0\) and \(X_h\), and obtain
\begin{equation}
\label{eq:transformed-lag-inner-product-bound}
\big|
\E\langle AX_0,X_h\rangle_{\Hb}
\big|
\le
32U^2\beta_h
\log^2\left(\frac4{\beta_h}\right),
\end{equation}
where the right-hand side is understood to equal \(0\) if \(\beta_h=0\) as indicated in the statement of Lemma~\ref{lem:expectation_beta_psi_1_control}. By Lemma~\ref{lem:trace_norm_variational_rank_one}(iii) it holds that
\[
\Tr\bigl(A(x\otimes y)\bigr)
=
\langle Ax,y\rangle_{\Hb}.
\]
Now note that since \(T\to\Tr(AT)\) is a bounded linear functional on $\mathcal{B}_{\Tr}$ it holds 
that
\[
\Tr(A\Gamma_h)
=
\Tr\bigl(A\E[X_0\otimes X_h]\bigr)
=
\E\Tr\bigl(A(X_0\otimes X_h)\bigr)
=
\E\langle AX_0,X_h\rangle_{\Hb}.
\]
We used that the expectation in the second expression is defined as a Bochner integral, and that the Bochner integral commutes with bounded linear operators \cite{DiestelUhl1977}, Theorem 6, page 47. Consequently, by Lemma~\ref{lem:trace_norm_variational_rank_one}(ii) and
\eqref{eq:transformed-lag-inner-product-bound},
\begin{align}
\|\Gamma_h\|_{\Tr}
=
\sup_{\|A\|_{\op}\le1}
\left|\Tr(A\Gamma_h)\right|
=
\sup_{\|A\|_{\op}\le1}
\left|
\E\langle AX_0,X_h\rangle_{\Hb}
\right|
\le
32U^2\beta_h
\log^2\left(\frac4{\beta_h}\right).
\label{eq:lag-covariance-trace-bound}
\end{align}
In particular the hypothesis $S_{\beta,1}<\infty$ implies that
\[
\sum_{h=1}^{\infty}\|\Gamma_h\|_{\Tr}<\infty.
\]
Since
\[
\|\Gamma_h+\Gamma_h^*\|_{\Tr}
\le
\|\Gamma_h\|_{\Tr}+\|\Gamma_h^*\|_{\Tr}
=
2\|\Gamma_h\|_{\Tr},
\]
the series
\[
\Gamma_0+
\sum_{h=1}^{\infty}
\bigl(\Gamma_h+\Gamma_h^*\bigr)
\]
converges absolutely in trace norm and this absolute convergence implies that, since $\mathcal{B}_{\Tr}$ is a Banach space,  it converges to a limit \(\mathcal K_{\infty}\) in $\mathcal{B}_{\Tr}$. Moreover, \(\Gamma_0\) is self-adjoint, and each operator
\(\Gamma_h+\Gamma_h^*\) is self-adjoint. Hence \(\mathcal K_{\infty}\) is
self-adjoint. It remains to prove the asserted approximation bound and positivity. For \(N\in\mathbb N\), we have 
\begin{align*}
\mathcal K_N
:=
\Cov\Big(
N^{-1/2}\sum_{i=1}^{N}X_i
\Big)
&=
\E\Big[
\Big(
N^{-1/2}\sum_{i=1}^{N}X_i
\Big)
\otimes
\Big(
N^{-1/2}\sum_{j=1}^{N}X_j
\Big)
\Big]\\
&=
\frac1N
\sum_{i=1}^{N}\sum_{j=1}^{N}
\E[X_i\otimes X_j],
\end{align*}
where we used that $X_j$ is centered.
Strict stationarity and the identity $(x \otimes y)^*=(y \otimes x)$ from Lemma~\ref{lem:norm_kronecker_product} implies that $\E[X_i\otimes X_j]=\Gamma_{j-i}$ for $i\le j$ and $\E[X_i\otimes X_j]=\Gamma_{i-j}^*$ for $i>j$.
For every \(h\in\{1,\ldots,N-1\}\), there are \(N-h\) pairs
\((i,j)\) satisfying \(j-i=h\). It follows that
\begin{equation}
\label{eq:finite-sample-long-run-covariance-expansion}
\mathcal K_N
=
\Gamma_0+
\sum_{h=1}^{N-1}
\left(1-\frac hN\right)
\bigl(\Gamma_h+\Gamma_h^*\bigr).
\end{equation}
Subtracting the defining series for \(\mathcal K_{\infty}\) from
\eqref{eq:finite-sample-long-run-covariance-expansion}, we obtain
\[
\mathcal K_N-\mathcal K_\infty
=
-\frac1N
\sum_{h=1}^{N-1}
h\bigl(\Gamma_h+\Gamma_h^*\bigr)
-
\sum_{h=N}^{\infty}
\bigl(\Gamma_h+\Gamma_h^*\bigr).
\]
Therefore,
\begin{align*}
\|\mathcal K_N-\mathcal K_\infty\|_{\Tr}
&\le
\frac1N
\sum_{h=1}^{N-1}
h\|\Gamma_h+\Gamma_h^*\|_{\Tr}
+
\sum_{h=N}^{\infty}
\|\Gamma_h+\Gamma_h^*\|_{\Tr}\\
&\le
\frac2N
\sum_{h=1}^{N-1}
h\|\Gamma_h\|_{\Tr}
+
2\sum_{h=N}^{\infty}
\|\Gamma_h\|_{\Tr}\\
&\le
\frac2N
\sum_{h=1}^{N-1}
h\|\Gamma_h\|_{\Tr}
+
\frac2N
\sum_{h=N}^{\infty}
h\|\Gamma_h\|_{\Tr}\\
&=
\frac2N
\sum_{h=1}^{\infty}
h\|\Gamma_h\|_{\Tr}\\
&\le
64U^2S_{\beta,1}\frac1N,
\end{align*}
where the final inequality follows from
\eqref{eq:lag-covariance-trace-bound}. Finally, every \(\mathcal K_N\) is positive semidefinite. The preceding
bound shows that
$
\lim_{N \to \infty} \|\mathcal K_N-\mathcal K_\infty\|_{\Tr}\ = 0,
$ which in turn yields 
$
\lim_{N \to \infty} \|\mathcal K_N-\mathcal K_\infty\|_{\op} =  0.
$
Hence, for every
\(x\in\Hb\),
\[
\langle\mathcal K_\infty x,x\rangle_{\Hb}
=
\lim_{N\to\infty}
\langle\mathcal K_Nx,x\rangle_{\Hb}
\ge0.
\]
Thus \(\mathcal K_\infty\) is positive semidefinite.

It remains to show \eqref{eq:bound_on_square_sum_beta_mixing}.
By centeredness and strict stationarity,
\begin{align*}
\mathbb E\Big[
\Big\|
\sum_{i=1}^{N}X_i
\Big\|_{\Hb}^{2}
\Big]
&=
N\mathbb E\|X_0\|_{\Hb}^{2}
+
2\sum_{h=1}^{N-1}(N-h)
\mathbb E\langle X_0,X_h\rangle_{\Hb}
\nonumber\\
&\le
N\mathbb E\|X_0\|_{\Hb}^{2}
+
2\sum_{h=1}^{N-1}(N-h)
\left|
\mathbb E\langle X_0,X_h\rangle_{\Hb}
\right|.
\end{align*}
By~\ref{orlicz:expectation}, with \(p=2\), and the definition of \(U\),
$
\mathbb E\|X_0\|_{\Hb}^{2}
\le
4U^2.
$
Moreover, applying
\eqref{eq:transformed-lag-inner-product-bound} with
\(A=\operatorname{id}_{\Hb}\) gives, for every \(h\in \N\),
\[
\left|
\mathbb E\langle X_0,X_h\rangle_{\Hb}
\right|
\le
32U^2\beta_h
\log^2\left(\frac4{\beta_h}\right).
\]
Consequently, using \(N-h\le N\) and the definition of
\(S_{\beta,0}\),
\begin{align*}
\mathbb E\Big[
\Big\|
\sum_{i=1}^{N}X_i
\Big\|_{\Hb}^{2}
\Big]
&\le
4NU^2
+
64NU^2
\sum_{h=1}^{N-1}
\beta_h\log^2\left(\frac4{\beta_h}\right)
\le
64U^2(1+S_{\beta,0})N.
\end{align*}
which proves \eqref{eq:bound_on_square_sum_beta_mixing}.
\end{proof}

\begin{lemma}[Control of a Hilbert-space inner product]
\label{lem:expectation_beta_psi_1_control}
Let \(\Hb\) be a separable Hilbert space, and let \(X\) and \(Y\) be centered
\(\Hb\)-valued random elements defined on a probability space
\((\Omega,\mathcal F,\Prob)\). Suppose that, for some \(U<\infty\) and $b \in [0,1]$
\[
\bigl\|\|X\|_{\Hb}\bigr\|_{\psi_1}\le U,
\qquad
\bigl\|\|Y\|_{\Hb}\bigr\|_{\psi_1}\le U,
\qquad 
\beta\bigl(\sigma(X),\sigma(Y)\bigr)\le b,
\]
where \(\beta(\cdot,\cdot)\) is the coefficient from
Definition~\ref{beta_mixing_individ}. Then
\[
\big|
\E\langle X,Y\rangle_{\Hb}
\big|
\le
32U^2b
\log^2\left(\frac4b\right),
\]
where the right-hand side is understood to equal \(0\) when \(b=0\).
\end{lemma}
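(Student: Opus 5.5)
The statement is the Lemma on control of a Hilbert-space inner product. The plan is to apply Berbee's coupling to the pair $(X,Y)$ and then truncate using the $\psi_1$ tails.

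\textbf{Coupling.} By Lemma~\ref{lem:berbee} (after enlarging the probability space if needed) there is a random element $Y^\star \stackrel{d}{=} Y$ that is independent of $X$ and satisfies $\Prob(Y\neq Y^\star)\le b$. Independence and centeredness give $\E\langle X,Y^\star\rangle_{\Hb}=\langle \E X,\E Y^\star\rangle_{\Hb}=0$; the Bochner expectation commutes with the bounded bilinear form under independence, and integrability follows from the $\psi_1$ bounds. Hence
\[
\E\langle X,Y\rangle_{\Hb}=\E\big[\langle X,Y-Y^\star\rangle_{\Hb}\mathbf 1\{Y\ne Y^\star\}\big],
\]
and therefore
\[
|\E\langle X,Y\rangle_{\Hb}|\le \E\big[\|X\|_{\Hb}(\|Y\|_{\Hb}+\|Y^\star\|_{\Hb})\mathbf 1_A\big],\qquad A=\{Y\neq Y^\star\},\quad \Prob(A)\le b .
\]
If $b=0$ the right-hand side vanishes, so assume $b>0$.

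\textbf{Truncation.} Bound the product by $\|X\|\|Y\|\le \tfrac12(\|X\|^2+\|Y\|^2)$. It then suffices to show that for a nonnegative $Z$ with $\|Z\|_{\psi_1}\le U$ and an event $A$ with $\Prob(A)\le b$,
\[
\E[Z^2\mathbf 1_A]\le C U^2 b\log^2(4/b).
\]
To prove this, split at the level $t=U\log(4/b)$:
\[
\E[Z^2\mathbf 1_A]\le t^2 b+\E\big[Z^2\mathbf 1\{Z>t\}\big].
\]
Using the tail bound $\Prob(Z>u)\le 2e^{-u/U}$ from \ref{orlicz:tail_bound}, the second term is $\int_t^\infty 2u\,\Prob(Z>u)\,du+t^2\Prob(Z>t)$. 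This is at most $4U^2 e^{-t/U}\big((t/U)^2+2t/U+2\big)/2$, which is $O\big(U^2 b\log^2(4/b)\big)$ because $e^{-t/U}=b/4$ and $\log(4/b)\ge\log 4>1$. Applying this with $Z=\|X\|$, $Z=\|Y\|$ and $Z=\|Y^\star\|$, each with the same event $A$, gives the claim up to an absolute constant.

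\textbf{Main obstacle.} The only delicate point is tracking the constant so that it equals exactly $32$. This requires careful bookkeeping of the tail integral, and likely also using the sharper truncation $\E[Z^2\mathbf 1_A]\le \int_0^\infty 2u\min(b,2e^{-u/U})\,du$, which gives $U^2 b\big(\log^2(2/b)+2\log(2/b)+2\big)\le 4U^2b\log^2(4/b)$. Multiplying this by the factors coming from the two terms involving $Y$ and $Y^\star$, and from the AM--GM step, stays below $32$.
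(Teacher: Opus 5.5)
Your argument is correct, and it takes a genuinely different route from the paper.

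\textbf{The paper's route.} The paper never couples. It passes from \(\beta\) to the strong-mixing coefficient via \(\alpha(\sigma(X),\sigma(Y))\le b/2\). It then invokes a Rio-type covariance inequality for Hilbert-space valued elements (due to Merlev\`ede, Peligrad and Utev, as restated by Lu et al.): \(|\E\langle X,Y\rangle_{\Hb}|\le 18\int_0^{\alpha}Q_{\|X\|}(u)Q_{\|Y\|}(u)\,du\). Finally it bounds the quantiles by \(U\log(2/u)\) using the \(\psi_1\) tails and integrates. This gives \(9U^2b\{\log^2(4/b)+2\log(4/b)+2\}\le 32U^2b\log^2(4/b)\).

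\textbf{Your route.} You replace that external covariance inequality by Berbee's coupling and an elementary truncation, which is more self-contained. Carried through, it also gives a better constant. From \(\E[Z^2\mathbf 1_A]\le U^2b\{\log^2(2/b)+2\log(2/b)+2\}\le 4U^2b\log^2(4/b)\), with weights \(1,\tfrac12,\tfrac12\) on the terms in \(\|X\|^2,\|Y\|^2,\|Y^\star\|^2\), you get \(8U^2b\log^2(4/b)\). So the ``main obstacle'' you flag is not an obstacle: no delicate bookkeeping is needed to stay below \(32\).

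\textbf{What the paper's route buys.} It buys generality. It only uses \(\alpha(\sigma(X),\sigma(Y))\), so the same bound holds, with \(b\) replaced by \(2\alpha\), for pairs that are strongly mixing but whose \(\beta\)-coefficient is large. For such pairs no coupling with small \(\Prob(Y\neq Y^\star)\) exists. Since the lemma is stated in terms of \(\beta\), both routes suffice here.

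\textbf{Two small points.}
\begin{enumerate}
\item The coupling you need is the general two-variable Berbee lemma (e.g.\ Dedecker--Louhichi, Lemma~4.1), which applies because \(\Hb\) is Polish and yields \(\Prob(Y\ne Y^\star)=\beta(\sigma(X),\sigma(Y))\). Lemma~\ref{lem:berbee} is not the right citation: it is the block version for stationary sequences.
\item Your step \(\E\langle X,Y^\star\rangle_{\Hb}=\langle\E X,\E Y\rangle_{\Hb}=0\) only needs one of \(X\), \(Y\) to be centered.
\end{enumerate}
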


\begin{proof}
If \(U=0\), then
\[
\E\|X\|_{\Hb}
\le
A_{4,1,1}\bigl\|\|X\|_{\Hb}\bigr\|_{\psi_1}
\le
A_{4,1,1}U
=
0
\]
by~\ref{orlicz:expectation}. Hence \(X=0\)
almost surely, and the assertion is immediate.

Henceforth, suppose that \(U>0\). For a nonnegative real-valued random
variable \(Z\), define its upper-tail quantile function by
\[
Q_Z(u)
:=
\inf\bigl\{
t\ge0:\Prob(Z>t)\le u
\bigr\},
\qquad u\in(0,1).
\]
Also define
\[
\overline{\alpha}
:=
\alpha\bigl(\sigma(X),\sigma(Y)\bigr)
=
\sup_{\substack{A\in\sigma(X)\\B\in\sigma(Y)}}
\left|
\Prob(A\cap B)-\Prob(A)\Prob(B)
\right|.
\]
By equation~(1.11) of \cite{Bra05}, for any two \(\sigma\)-fields
\(\mathcal A\) and \(\mathcal B\),
$
2\alpha(\mathcal A,\mathcal B)
\le
\beta(\mathcal A,\mathcal B).
$
Since
$\beta\bigl(\sigma(X),\sigma(Y)\bigr)\le b$ by assumption, 
we obtain
\begin{equation}
\label{eq:alpha-beta-upper-bound-expectation}
\overline{\alpha}
\le
\frac12\beta\bigl(\sigma(X),\sigma(Y)\bigr)
\le
\frac b2.
\end{equation}

Observing that $X$ and $Y$ are centered, Lemma~2.1 of \cite{lu2022almost}, which is attributed there to
Lemma~2 of \cite{merlevede1997sharp}, yields
\begin{equation}
\label{eq:inner-product-quantile-bound}
\big|
\E\langle X,Y\rangle_{\Hb}
\big|
\le
18
\int_0^{\overline{\alpha}}
Q_{\|X\|_{\Hb}}(u)
Q_{\|Y\|_{\Hb}}(u)
\,\diff u.
\end{equation}
By Lemma~\ref{orlicz:tail_bound}, for every \(t\ge0\),
\[
\Prob\bigl(\|X\|_{\Hb}>t\bigr)
\le
2e^{-t/U},
\qquad
\Prob\bigl(\|Y\|_{\Hb}>t\bigr)
\le
2e^{-t/U}.
\]
Therefore, for every \(u\in(0,1)\),
\begin{equation}
\label{eq:psi1-quantile-bound-expectation}
Q_{\|X\|_{\Hb}}(u)
\le
U\log\left(\frac2u\right),
\qquad
Q_{\|Y\|_{\Hb}}(u)
\le
U\log\left(\frac2u\right).
\end{equation}

If \(b=0\), then
\eqref{eq:alpha-beta-upper-bound-expectation} implies that
\(\overline{\alpha}=0\). Hence
\eqref{eq:inner-product-quantile-bound} gives
$
\E\langle X,Y\rangle_{\Hb}=0.
$
Thus the desired conclusion holds when \(b=0\). We may therefore assume
for the remainder of the proof that \(b>0\).

Combining
\eqref{eq:alpha-beta-upper-bound-expectation},
\eqref{eq:inner-product-quantile-bound}, and
\eqref{eq:psi1-quantile-bound-expectation}, we obtain
\[
\left|
\E\langle X,Y\rangle_{\Hb}
\right|
\le
18U^2
\int_0^{b/2}
\log^2\left(\frac2u\right)\,du.
\]
It is straightforward to check that \(u\mapsto\log^2(2/u)\) has antiderivative $u\big[ \log^2(2/u)+2\log(2/u)+2 \big]$, and so by the fundamental theorem of calculus,
\begin{equation}
\label{eq:precise-beta-upper-bound-inner-product}
\left|
\E\langle X,Y\rangle_{\Hb}
\right|
\le 18 U^2 
\int_0^{b/2}
\log^2\left(\frac2u\right)\,\diff u
=
9U^2b
\left[
\log^2\left(\frac4b\right)
+
2\log\left(\frac4b\right)
+
2
\right].
\end{equation}
Finally, let $L=\log(4/b)$.
Since \(0<b\le1\), we have $L \ge \log 4 > 11/8$.
Consequently,
\[
1+\frac2L+\frac2{L^2}
\le
1+\frac{16}{11}+\frac{128}{121}
=
\frac{425}{121},
\]
and therefore
\[
9\bigl(L^2+2L+2\bigr)
\le
\frac{3825}{121}L^2
<
32L^2.
\]
Applying this inequality to
\eqref{eq:precise-beta-upper-bound-inner-product} gives
\[
\left|
\E\langle X,Y\rangle_{\Hb}
\right|
\le
32U^2b
\log^2\left(\frac4b\right)
\]
as asserted.
\end{proof}

\begin{lemma}[Conditional Gaussian multiplier tail bound]
\label{lem:gaussian_conditional_tail_bound}
Let \(Y_1,\ldots,Y_m\) be random elements in a separable Hilbert space \(\Hb\), and define
$
\sigma_Y:=\sigma(Y_1,\ldots,Y_m).
$
Let \(e_1,\ldots,e_m\) be independent standard normal random variables independent of \(\sigma_Y\). There exists an absolute constant \(C>0\) such
that, almost surely,
\[
\bigg\|
\Big\|
\sum_{\ell=1}^m e_\ell Y_\ell
\Big\|_{\Hb}
\bigg\|_{\psi_2\mid \sigma_Y}
\le
C
\bigg(
\sum_{\ell=1}^m
\|Y_\ell\|_{
\Hb}^2
\bigg)^{1/2}.
\]
Consequently, by \ref{orlicz:tail_bound}, after increasing \(C\) if necessary, for every \(t>0\),
almost surely,
\[
\mathbb P\bigg(
\Big\|
\sum_{\ell=1}^m e_\ell Y_\ell
\Big\|_{\Hb}
>
C\sqrt t
\bigg(
\sum_{\ell=1}^m
\|Y_\ell\|_{
\Hb}^2
\bigg)^{1/2}
\,\bigg|\,\sigma_Y
\bigg)
\le2e^{-t}.
\]
\end{lemma}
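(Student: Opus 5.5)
Conditionally on $\sigma_Y$, the vector $(Y_1,\ldots,Y_m)$ is fixed and $Z:=\sum_{\ell=1}^m e_\ell Y_\ell$ is a centered Gaussian random element of $\Hb$. The plan is to reduce the claim to the standard fact that the norm of a Hilbert-valued Gaussian is sub-Gaussian with scale given by its second moment. By independence of $(e_\ell)$ from $\sigma_Y$, we may work with a regular conditional distribution: fix deterministic $y_1,\ldots,y_m\in\Hb$ and prove the bound for $Z_y:=\sum_\ell e_\ell y_\ell$ with a constant not depending on $y$. Measurability in $y$ of all quantities involved is routine, since $\Hb$ is separable.

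\textbf{Step 1: second moment.} Compute
\[
\mathbb E\|Z_y\|_{\Hb}^2=\sum_{\ell,\ell'}\mathbb E[e_\ell e_{\ell'}]\langle y_\ell,y_{\ell'}\rangle_{\Hb}=\sum_\ell\|y_\ell\|_{\Hb}^2=:s^2 .
\]
Equivalently, $\Cov(Z_y)=\sum_\ell y_\ell\otimes y_\ell$ has trace $s^2$.

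\textbf{Step 2: sub-Gaussianity.} Apply Lemma~\ref{lem:psi_2_controlled_by_L_2}, already used in the paper for Hilbert-space Gaussians, which gives $\|\|Z_y\|_{\Hb}\|_{\psi_2}^2\le \tfrac83\|\Cov(Z_y)\|_{\Tr}=\tfrac83 s^2$. If one prefers a self-contained route, there are two options.
\begin{itemize}
\item View $y\mapsto \|\sum_\ell e_\ell y_\ell\|$ as a function of $e\in\R^m$. It is Lipschitz with constant $\sup_{\|a\|_2\le1}\|\sum_\ell a_\ell y_\ell\|\le s$, so Gaussian concentration yields $\mathbb P(\|Z_y\|>\mathbb E\|Z_y\|+t)\le e^{-t^2/(2s^2)}$. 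Combined with $\mathbb E\|Z_y\|\le s$, this gives $\psi_2$-norm $\lesssim s$.
\item Diagonalise the covariance: $\|Z_y\|^2=\sum_j\lambda_j g_j^2$ with $\sum_j\lambda_j=s^2$. Then $\mathbb E\exp(\|Z_y\|^2/(cs^2))=\prod_j(1-2\lambda_j/(cs^2))^{-1/2}$, which is at most $2$ for $c$ large, using $-\log(1-x)\le 2x$ for $x\le1/2$.
\end{itemize}

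\textbf{Step 3: conclusion.} Plugging $y_\ell=Y_\ell(\omega)$ into Step 2 gives the conditional $\psi_2$ bound almost surely. The tail bound then follows from \ref{orlicz:tail_bound} with $p=2$. Conditionally, $\mathbb P(\|Z\|>u\mid\sigma_Y)\le 2\exp(-u^2/\|\|Z\|\|_{\psi_2|\sigma_Y}^2)$. Setting $u=C\sqrt t\,s$, with $C$ the constant above, gives the bound $2e^{-t}$.

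The only delicate point is justifying that the conditional Orlicz norm equals the norm computed under the regular conditional distribution. This holds because $e$ is independent of $\sigma_Y$, so the conditional law of $Z$ given $\sigma_Y$ is that of $Z_y$ evaluated at $y=Y(\omega)$.
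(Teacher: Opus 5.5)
Your proof is correct, but it takes a different route from the paper's. The paper never uses that the conditional law of $\sum_\ell e_\ell Y_\ell$ is Gaussian. It treats $e_1Y_1,\ldots,e_mY_m$ as conditionally independent centered summands and applies the Talagrand-type inequality \ref{orlicz:banach} with $p=2$. It bounds the conditional first moment by Cauchy--Schwarz together with orthogonality of the multipliers, and computes $\|\|e_\ell Y_\ell\|_{\Hb}\|_{\psi_2\mid\sigma_Y}=\sqrt{8/3}\,\|Y_\ell\|_{\Hb}$ via \ref{orlicz:taking_out_what_is_known}.

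You instead freeze $y=Y(\omega)$ and observe that $Z_y$ is a centered Gaussian element with $\|\Cov(Z_y)\|_{\Tr}=\sum_\ell\|y_\ell\|_{\Hb}^2$. You then invoke Lemma~\ref{lem:psi_2_controlled_by_L_2}, or alternatively Gaussian concentration or the explicit moment generating function.

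Your route is more elementary. It avoids the deep inequality \ref{orlicz:banach} and gives an explicit constant: $\sqrt{8/3}$, or $2$ from your moment generating function computation with $c=4$. The paper's route uses Gaussianity only through $\E[e_\ell e_{\ell'}]=\mathbf 1\{\ell=\ell'\}$ and $\|e_\ell\|_{\psi_2}<\infty$. It would therefore carry over verbatim to other independent, unit-variance sub-Gaussian multipliers such as Rademacher weights, where the conditional law is no longer Gaussian and your argument via Lemma~\ref{lem:psi_2_controlled_by_L_2} does not apply directly.

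The ``delicate point'' you flag needs no extra justification. The paper defines the conditional Orlicz norm in \eqref{conditional_orlicz_norm} precisely by freezing the independent variable, which is exactly what you do.
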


\begin{proof}[Proof of Lemma~\ref{lem:gaussian_conditional_tail_bound}]
Conditionally on \(\sigma_Y\), the random elements \(e_1Y_1,\ldots,e_mY_m\) are independent and centered. Therefore, by~\ref{orlicz:banach}, applied conditionally with \(p=2\),
\begin{align*}
\bigg\|
\Big\|
\sum_{\ell=1}^m e_\ell Y_\ell
\Big\|_{\Hb}
\bigg\|_{\psi_2\mid \sigma_Y}
\le
C\Bigg\{
\mathbb E\bigg[
\Big\|
\sum_{\ell=1}^m e_\ell Y_\ell
\Big\|_{\Hb}
\,\bigg|\,\sigma_Y
\bigg]
+
\bigg(
\sum_{\ell=1}^m
\big\|
\|e_\ell Y_\ell\|_{\Hb}
\big\|_{\psi_2\mid\sigma_Y}^2
\bigg)^{1/2}
\Bigg\}.
\end{align*}
By the conditional Cauchy--Schwarz inequality,
\begin{align*}
\mathbb E\bigg[
\Big\|
\sum_{\ell=1}^m e_\ell Y_\ell
\Big\|_{\Hb}
\,\bigg|\,\sigma_Y
\bigg]
\le
\bigg\{
\mathbb E\bigg[
\Big\|
\sum_{\ell=1}^m e_\ell Y_\ell
\Big\|_{\Hb}^2
\,\bigg|\,\sigma_Y
\bigg] \bigg\}^{1/2}
=
\bigg(
\sum_{\ell=1}^m\|Y_\ell\|_{\Hb}^2
\bigg)^{1/2}.
\end{align*}
where we have used that 
\(\mathbb E[e_\ell e_{\ell'}]=\mathbf1\{\ell=\ell'\}\).
Furthermore,
\begin{align*}
\sum_{\ell=1}^m
\big\|
\|e_\ell Y_\ell\|_{\Hb}
\big\|_{\psi_2\mid\sigma_Y}^2
=
\sum_{\ell=1}^m
\|e_\ell\|_{\psi_2}^2\|Y_\ell\|_{\Hb}^2
=
\frac83
\sum_{\ell=1}^m\|Y_\ell\|_{\Hb}^2,
\end{align*}
where the first equality follows from
\ref{orlicz:taking_out_what_is_known} and the independence of
\(e_\ell\) and \(\sigma_Y\) and the second one from $\|e_\ell\|_{\psi_2}=\sqrt{8/3}$.
Combining the preceding bounds proves the conditional Orlicz-norm
inequality. 
\end{proof}

\begin{lemma}\label{lem:psi_2_controlled_by_L_2}
Let $\mathcal H$ be a separable Hilbert space, and let $g$ be a centered Gaussian random element in $\mathcal H$ with covariance operator $\Cov(g)$. Then
\[
    \big\|\|g\|_{\mathcal H}\big\|_{\psi_2}^2
    \le
    \frac{8}{3}
    \|\Cov(g)\|_{\operatorname{Tr}}
    =
    \frac{8}{3}
    \mathbb E\|g\|_{\mathcal H}^2.
\]
\end{lemma}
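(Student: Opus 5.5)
The plan is to reduce the statement to a scalar sub-Gaussian computation. We use the standard characterisation of Gaussian measures on a separable Hilbert space. Write $\Cov(g)=\sum_j \lambda_j\, e_j\otimes e_j$ with $\lambda_j\ge0$, where $(e_j)$ is an orthonormal system of eigenvectors. Then
\[
\|g\|_{\mathcal H}^2=\sum_j \lambda_j \xi_j^2 \quad\text{almost surely},
\]
with $\xi_j$ i.i.d.\ standard normal. This is the Karhunen--Lo\`eve expansion; it uses Lemma~\ref{lem:properties-covariance}(3), which puts $g$ in the closed range of $\Cov(g)$. Moreover $\|\Cov(g)\|_{\Tr}=\sum_j\lambda_j=\E\|g\|_{\mathcal H}^2$ by Lemma~\ref{lem:properties-covariance}(2), which is the claimed equality. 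We may assume $\sigma^2:=\sum_j\lambda_j>0$, since otherwise $g=0$ almost surely and there is nothing to show.

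The key step is to show that $\E\exp(\|g\|_{\mathcal H}^2/c^2)\le 2$ for $c^2=\tfrac83\sigma^2$. Independence gives, for $t<1/(2\max_j\lambda_j)$,
\[
\E\exp\Bigl(t\sum_j\lambda_j\xi_j^2\Bigr)=\prod_j (1-2t\lambda_j)^{-1/2}.
\]
For infinitely many terms we pass to the limit by monotone convergence. We take $t=3/(8\sigma^2)$, so that $2t\lambda_j\le 3/4<1$ for every $j$. We then need to bound $-\tfrac12\sum_j\log(1-2t\lambda_j)$.

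The elementary inequality to use is that $u\mapsto -\log(1-u)/u$ is increasing on $(0,1)$. Hence $-\log(1-u)\le u\cdot(-\log(1-u_0))/u_0$ for $u\le u_0=3/4$, which gives
\[
-\tfrac12\sum_j\log(1-2t\lambda_j)\le \frac{-\log(1/4)}{2\cdot 3/4}\,\sum_j 2t\lambda_j\cdot\frac12\cdot 2 .
\]
More carefully: with $\sum_j 2t\lambda_j = 3/4$, the exponent is at most $\tfrac12\cdot\frac{\log 4}{3/4}\cdot\frac34=\tfrac12\log 4=\log 2$. So the product is at most $2$. This is exactly the defining condition of the $\psi_2$ norm, giving $\|\|g\|\|_{\psi_2}^2\le \tfrac83\sigma^2$.

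The only delicate point is the choice of constant, namely checking that the convexity bound is tight enough to land at $2$. The worst case is a single nonzero eigenvalue, where the bound reads $(1-3/4)^{-1/2}=2$ exactly, and this explains the constant $8/3$. Everything else is routine.
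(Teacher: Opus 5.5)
Your proof is correct, but it takes a different route from the paper.

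\textbf{The paper's route.} The paper does not diagonalise $\Cov(g)$ and does not use independence. It takes an arbitrary orthonormal basis $(b_j)$ and writes $\|g\|_{\mathcal H}^2=\sum_j g_j^2$ with $g_j=\langle g,b_j\rangle$. It then combines $\|X\|_{\psi_2}^2=\|X^2\|_{\psi_1}$ with the triangle inequality for the $\psi_1$-norm to get $\|\sum_j g_j^2\|_{\psi_1}\le\sum_j\|g_j\|_{\psi_2}^2=\frac83\sum_j\Var(g_j)$. The constant $8/3$ comes from the scalar identity $\|N(0,s^2)\|_{\psi_2}^2=\frac83 s^2$. This is shorter, and it uses only the marginal Gaussianity of each coordinate, never their joint law. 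It does silently pass the $\psi_1$ triangle inequality from finite to infinite sums, which needs a Fatou-type limiting step.

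\textbf{Your route.} You use the Karhunen--Lo\`eve representation with independent scores and compute the moment generating function exactly. You close with the monotonicity of $u\mapsto-\log(1-u)/u$ on $(0,1)$, using $\sum_j 2t\lambda_j=3/4$; this correctly yields $\E\exp(3\|g\|_{\mathcal H}^2/(8\sigma^2))\le 2$. This needs more structure, but it has three advantages:
\begin{itemize}
\item The infinite sum is handled cleanly by monotone convergence.
\item It shows the constant $8/3$ is attained exactly in the rank-one case.
\item The same computation gives sharper constants when the spectrum is spread out. For many equal small eigenvalues, $\|g\|_{\mathcal H}^2$ concentrates at $\sigma^2$, and the optimal constant tends to $1/\log 2$.
\end{itemize}

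Your first display bounding $-\tfrac12\sum_j\log(1-2t\lambda_j)$ is oddly written. It nonetheless evaluates to the same $\log 2$ as the corrected line that follows it.
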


\begin{proof}[Proof of Lemma~\ref{lem:psi_2_controlled_by_L_2}]
Let $(b_j)_{j\in J}$ be an orthonormal basis of $\mathcal H$, where $J=\{1,\dots,d\}$ if $\mathcal H$ is finite-dimensional and $J=\mathbb N$ otherwise. Put $g_j := \langle g,b_j\rangle_{\mathcal H}$ for $j\in J$, and note that each $g_j$ is a centered Gaussian random variable. By Parseval's identity, we have $\|g\|_{\mathcal H}^2 = \sum_{j\in J} g_j^2$ almost surely.
Using the identity $\|X\|_{\psi_2}^2=\|X^2\|_{\psi_1}$ and the triangle
inequality for the $\psi_1$-norm, we obtain
\[
    \big\|\|g\|_{\mathcal H}\big\|_{\psi_2}^2
    =
    \big\|\|g\|_{\mathcal H}^2\big\|_{\psi_1}                         
    =
    \Big\|\sum_{j\in J} g_j^2 \Big\|_{\psi_1}                        
    \le
    \sum_{j\in J} \|g_j^2\|_{\psi_1}                                   
    =
    \sum_{j\in J} \|g_j\|_{\psi_2}^2
    \le 
    \frac{8}{3}\sum_{j\in J}\operatorname{Var}(g_j),
\]
where the last inequality follows from the fact $\|Z\|_{\psi_2}^2 = 8\sigma^2/3$ for a centered Gaussian random variable $Z\sim N(0,\sigma^2)$.  Finally, by Lemma~\ref{lem:properties-covariance},
\[
    \sum_{j\in J}\operatorname{Var}(g_j)
    =
    \sum_{j\in J}
    \mathbb E\langle g,b_j\rangle_{\mathcal H}^2
    =
    \mathbb E\|g\|_{\mathcal H}^2
    =
    \|\Cov(g)\|_{\operatorname{Tr}},
\]
which yields the asserted bound.
\end{proof}

The following is Lemma 36 in \cite{many_means}.

\begin{lemma}[A Kolmogorov bound implies a bound on quantiles]
\label{lem:koltoquant}
Let \(F\) and \(G\) be cumulative distribution functions, and let
\(q_F\) and \(q_G\) be their respective quantile functions.
For any cumulative distribution function \(H\), define its 
quantile function for $u \in \R$ by
\[
q_H(u)
:=
\inf\{t\in\mathbb R:H(t)\ge u\},
\]
where the infimum is taken in the extended real line and
\(\inf\varnothing=\infty\). Let \(\alpha\in(0,1)\), \(t_*\in\mathbb R\), and \(\delta>0\), and
suppose
\begin{equation*}
\sup_{t\ge t_*}|F(t)-G(t)|\le\delta.
\end{equation*}
If \(q_F(\alpha)>t_*\), then
\[
q_G(\alpha-\delta)
\le
q_F(\alpha)
\le
q_G(\alpha+\delta).
\]
Further,
\[
G(t_*)+\delta<\alpha
\]
implies \(q_F(\alpha)>t_*\).
\end{lemma}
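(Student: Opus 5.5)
The plan is to work directly from the definition of the generalized quantile, using two standard facts about distribution functions. First, if $q_H(u)$ is finite then $H(q_H(u))\ge u$, by right-continuity. Second, $H(t)\ge u$ implies $q_H(u)\le t$. The Kolmogorov hypothesis only controls $|F-G|$ on $[t_*,\infty)$, so the one point needing care is that every comparison point lies to the right of $t_*$.

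\emph{Lower inequality.} Set $s:=q_F(\alpha)$; by hypothesis $s>t_*$.
\begin{itemize}
\item If $s=\infty$, there is nothing to prove.
\item Otherwise $F(s)\ge\alpha$ by right-continuity.
\item Since $s\ge t_*$, the hypothesis gives $G(s)\ge F(s)-\delta\ge\alpha-\delta$.
\item Hence $q_G(\alpha-\delta)\le s$. This also covers $\alpha-\delta\le 0$, where the left side is $-\infty$.
\end{itemize}

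\emph{Upper inequality.} Set $t:=q_G(\alpha+\delta)$.
\begin{itemize}
\item If $t=\infty$, the claim is trivial. Otherwise $G(t)\ge\alpha+\delta$.
\item I first show $t\ge t_*$, and expect this to be the only step needing an argument. Suppose $t<t_*$. Monotonicity gives $G(t_*)\ge G(t)\ge\alpha+\delta$, so $F(t_*)\ge G(t_*)-\delta\ge\alpha$. Then $q_F(\alpha)\le t_*$, contradicting $q_F(\alpha)>t_*$.
\item With $t\ge t_*$, the hypothesis gives $F(t)\ge G(t)-\delta\ge\alpha$, hence $q_F(\alpha)\le t$.
\end{itemize}

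\emph{Final claim.} Assume $G(t_*)+\delta<\alpha$.
\begin{itemize}
\item Applying the hypothesis at $t_*$ itself gives $F(t_*)\le G(t_*)+\delta<\alpha$.
\item By monotonicity, $F(t)<\alpha$ for every $t\le t_*$, so $q_F(\alpha)\ge t_*$.
\item Equality is impossible: if $q_F(\alpha)=t_*$ were finite, right-continuity would force $F(t_*)\ge\alpha$. Hence $q_F(\alpha)>t_*$.
\end{itemize}
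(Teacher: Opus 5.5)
Your proof is correct and complete. The paper does not prove this lemma itself; it imports it verbatim as Lemma~36 of Decker, Kong and Volgushev (2025), so there is no in-text argument to compare against. Your direct argument is the standard one, and it uses the restricted Kolmogorov bound only at points \(\ge t_*\), with the hypothesis \(q_F(\alpha)>t_*\) forcing \(q_G(\alpha+\delta)\ge t_*\).

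The only tacit step is finiteness: \(q_G(\alpha+\delta)>-\infty\) because \(\alpha+\delta>0\) and \(G(x)\to0\) as \(x\to-\infty\), and this is what licenses \(G(q_G(\alpha+\delta))\ge\alpha+\delta\) via right-continuity.
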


\begin{lemma}\label{lem:uified_bounds}
Let $\Hb$ be a separable Hilbert space, and suppose that $X_1, \dots, X_n$ is a centered strictly stationary sequence in $\Hb$ with $\beta$-mixing coefficients $\beta_h$. Let $Q:\Hb \to \Hb$ be a bounded linear operator and
$U:=\big\|\|Q\circ X_1\|_{\Hb}\big\|_{\psi_1}<\infty$.
For $\ell \in [m]$ let $S_\ell=\sum_{i\in I_\ell}X_i$ and let $\tilde{S}_1,\dots,\tilde{S}_m$ be any independent sequence satisfying $\tilde{S}_\ell=_d S_\ell$ for all $\ell \in [m]$; here, $I_\ell$ is from \eqref{def:big_little_blocks}. Let  $C_{\beta, q}
:= \sum_{h=1}^{q-1}\beta_h\log^2(4/\beta_h)$ and let
\[
\tilde{C}_{\beta, q}=\big(A_{4,1/2,1}
+
64C_{\beta, q}\big),
\]
with $A_{4,1/2,1}$ the absolute constant from \ref{orlicz:expectation}.
Then, for any  $a=[a_1, \dots, a_m]^{\top}\in [-1,1]^m$, it holds
\begin{enumerate}[(1)]
\item $\E\Big[\|Q\circ  S_{1}\|^2_{\Hb}\Big]\leq \tilde{C}_{\beta, q}U^2q$,
\item $\E\Big[\Big\|\sum_{\ell=1}^ma_\ell\cdot Q\circ\tilde{S}_\ell\Big\|_\Hb\Big]\leq \big(\tilde{C}_{\beta, q})^{1/2}U\|a\|_2\sqrt{q}$,
\item $\Big\|\Big\|\sum_{\ell=1}^ma_\ell\cdot Q\circ\tilde{S}_\ell\Big\|_\Hb\Big\|_{\psi_1}\leq C_3U\big(C_{\beta, q}\vee 1)^{1/2}\Big[\|a\|_2\sqrt{q}+q\log(m+1)\Big]$,
\item $\Big\|\Big\|\sum_{\ell=1}^ma_\ell\cdot Q\circ\tilde{S}_\ell\Big\|_\Hb\Big\|_{\psi_1}\leq C_4\left[\E\Big[\Big\|\sum_{\ell=1}^m a_\ell\cdot Q\circ\tilde{S}_\ell\Big\|_{\Hb}\Big]+q U\log(m+1)\right]$,
\item $\Big\|\sum_{\ell=1}^m a^2_\ell\cdot\|Q\circ\tilde{S}_\ell\|^2_\Hb\Big\|_{\psi_{1/2}}^{1/2}
\leq C_5U\big(C_{\beta, q}\vee 1)^{1/2}\Big[\|a\|_2\sqrt{q}+q\log(m+1)\Big]$,
\item 
$\Big\|\sum_{\ell=1}^m a^2_\ell\cdot\|Q\circ\tilde{S}_\ell\|^2_\Hb\Big\|_{\psi_{1/2}}^{1/2}
\le C_6
\left[
\left\{
\sum_{\ell=1}^m
a_\ell^2
\mathbb E
\|Q\circ\widetilde S_\ell\|_{\Hb}^2
\right\}^{1/2}
+
qU\log(m+1)
\right]$,
\end{enumerate}
where the $C_j$ are absolute constants. 
\end{lemma}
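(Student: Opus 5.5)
The plan is to prove the six items in order. Each later item reuses the earlier ones together with standard Orlicz-norm tools: the Hoffmann-Jørgensen type inequality \ref{orlicz:banach}, the maximal inequality \ref{orlicz:max}, and \ref{orlicz:power}.

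\emph{Item (1).} I would expand the second moment using stationarity, exactly as in the proof of \eqref{eq:bound_on_square_sum_beta_mixing}:
\[
\E\|Q S_1\|_{\Hb}^2 = q\,\E\|QX_1\|_{\Hb}^2 + 2\sum_{h=1}^{q-1}(q-h)\,\E\langle QX_0,QX_h\rangle_{\Hb}.
\]
The diagonal term is at most $A_{4,1/2,1}U^2 q$ by \ref{orlicz:expectation}. For the cross terms, $\sigma(QX_0)\subseteq\sigma(X_0)$, so their $\beta$-coefficient is at most $\beta_h$. Lemma~\ref{lem:expectation_beta_psi_1_control} then gives $|\E\langle QX_0,QX_h\rangle|\le 32U^2\beta_h\log^2(4/\beta_h)$. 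Summing over $h$ and using $q-h\le q$ yields $(A_{4,1/2,1}+64C_{\beta,q})U^2q$.

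\emph{Item (2).} By Cauchy--Schwarz and independence and centredness of the $\tilde S_\ell$,
\[
\E\Big\|\sum_\ell a_\ell Q\tilde S_\ell\Big\| \le \Big(\sum_\ell a_\ell^2\,\E\|Q\tilde S_\ell\|^2\Big)^{1/2},
\]
and (1) bounds each summand since $\tilde S_\ell\stackrel d= S_\ell$.

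\emph{Item (4).} I would apply \ref{orlicz:banach} with $p=1$ to the independent centred summands $a_\ell Q\tilde S_\ell$. This bounds the $\psi_1$ norm of the sum by its expectation plus $\|\max_\ell |a_\ell|\,\|Q\tilde S_\ell\|\|_{\psi_1}$. Each block has $\|\|Q\tilde S_\ell\|\|_{\psi_1}\le qU$ by the triangle inequality \ref{orlicz:triangle}. Then \ref{orlicz:max} over $m$ terms gives $C qU\log(m+1)$. Item (3) follows by inserting (2) into (4) and using $\tilde C_{\beta,q}\lesssim C_{\beta,q}\vee 1$.

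\emph{Items (5) and (6).} These are analogous, applied to the nonnegative independent variables $W_\ell=a_\ell^2\|Q\tilde S_\ell\|^2$, where each $\|W_\ell\|_{\psi_{1/2}}\le (qU)^2$ by \ref{orlicz:power}. I would use the $\psi_{1/2}$ version of \ref{orlicz:banach} for sums of independent real variables. Applying it to the centred parts gives
\[
\Big\|\sum_\ell W_\ell\Big\|_{\psi_{1/2}} \lesssim \sum_\ell \E W_\ell + \E\Big|\sum_\ell (W_\ell-\E W_\ell)\Big| + \Big\|\max_\ell W_\ell\Big\|_{\psi_{1/2}}.
\]
The maximum term is $\lesssim (qU)^2\log^2(m+1)$ by \ref{orlicz:max}. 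The middle term is at most $2\sum_\ell\E W_\ell$. Taking square roots gives (6), and substituting (1) gives (5).

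The main obstacle is the $\psi_{1/2}$ step, since $\psi_{1/2}$ is only a quasi-norm. I must make sure the version of \ref{orlicz:banach} recorded in the paper covers $p<1$, up to constants, for nonnegative summands. If it does not, I would instead bound $\|(\sum_\ell W_\ell)^{1/2}\|_{\psi_1}$ directly. This is the $\psi_1$ norm of the Hilbert-space norm of the vector $(a_\ell Q\tilde S_\ell)_\ell$ in $\bigoplus_\ell\Hb$, written as a sum of independent centred elements placed in orthogonal coordinates. I would then apply (4)-type reasoning there, and the expectation term again reduces to (1) via Cauchy--Schwarz.
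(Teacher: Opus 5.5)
Your proposal is correct and follows the paper's proof essentially step by step: the stationarity expansion with Lemma~\ref{lem:expectation_beta_psi_1_control} gives (1), Cauchy--Schwarz with independence gives (2), \ref{orlicz:banach} with $p=1$ plus \ref{orlicz:max} gives (4) and then (3), and centering plus \ref{orlicz:banach} with $p=1/2$ applied to $W_\ell=a_\ell^2\|Q\tilde S_\ell\|_{\Hb}^2$ gives (6) and then (5). The obstacle you flag does not arise, because \ref{orlicz:banach} is recorded for all $0<p\le 1$ and an arbitrary Banach space, in particular $\R$; your fallback would also work and is slightly cleaner, since it writes $(\sum_\ell W_\ell)^{1/2}$ as the norm of a sum of independent centred elements of $\bigoplus_\ell\Hb$ and then applies \ref{orlicz:banach} with $p=1$ and Jensen.
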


\begin{proof}[Proof of Lemma~\ref{lem:uified_bounds}]
We prove the statements one by one. First,
\begin{align*}
\E\big[\big\|Q\circ  S_{1}\big\|^2_{\Hb}\big]
&=
\E\Big[\sum_{i,i'=1}^{q}\langle Q \circ X_i,Q \circ X_{i^{'}}\rangle_\Hb\Big]
\\&=
q\E\Big[\langle Q\circ X_1,Q \circ X_1\rangle_\Hb\Big]
+
2\sum_{h=1}^{q-1}\big(q-h)\E\Big[\langle Q \circ X_1,Q \circ X_{(1+h)}\rangle_\Hb\Big]
\\& \le 
q\E\Big[\| Q\circ X_1\|^2_\Hb\Big]
+
2q\sum_{h=1}^{q-1}\Big|\E\Big[\langle Q \circ X_1,Q \circ X_{(1+h)}\rangle_\Hb\Big]\Big|
\\
&
\le
qA_{4,1/2,1}\Big\|\| Q\circ X_1\|^2_\Hb\Big\|_{\psi_{1/2}}
+
q64U^2\sum_{h=1}^{q-1}\beta_h\log^2(4/{\beta_h})
\\&
=
qA_{4,1/2,1}\Big\|\| Q\circ X_1\|_\Hb\Big\|^2_{\psi_1}
+
q64U^2\sum_{h=1}^{q-1}\beta_h\log^2(4/{\beta_h})\\&
\le
qU^2\Big(A_{4,1/2,1}
+
64C_{\beta, q}\Big).
\end{align*}
In the first line we used the definition of an inner-product norm and bi-linearity of the inner-product. In the second line we used linearity of expectation, stationarity of the sequence $X_1, \dots, X_q$ and symmetry of the inner-product. In the fourth line we used~\ref{orlicz:expectation} and Lemma~\ref{lem:expectation_beta_psi_1_control}. In the fifth line we used~\ref{orlicz:power}. This completes the proof of (1).

Second,
\begin{align*}
&\phantom{{}={}}
\E\Big[\Big\|\sum_{\ell=1}^m a_\ell\cdot Q\circ\tilde{S}_\ell\Big\|^2_\Hb\Big] 
\\&=
\E\Big[\Big\langle \sum_{\ell=1}^m a_\ell\cdot Q\circ\tilde{S}_\ell, \sum_{\ell=1}^m a_\ell\cdot Q\circ\tilde{S}_\ell\Big\rangle_\Hb\Big] 
\\&=
\sum_{\ell=1}^{m}\E\Big[\langle  a_\ell\cdot Q\circ\tilde{S}_\ell, a_{\ell}\cdot Q\circ\tilde{S}_{\ell}\rangle_\Hb\Big]+2\sum_{\ell\neq  \ell^{'}}\E\Big[\langle  a_\ell\cdot Q\circ\tilde{S}_\ell, a_{\ell^{'}}\cdot Q\circ\tilde{S}_{\ell^{'}}\rangle_\Hb\Big] 
\\&=
\sum_{\ell=1}^{m}a^2_\ell\E\Big[\| Q\circ\tilde{S}_\ell\|_\Hb\Big]
= 
\| a \|_2^2 \E\Big[\| Q\circ\tilde{S}_1\|_\Hb\Big],
\end{align*}
where we used independence of $\tilde{S}_\ell$ and $\tilde{S}_\ell^{'}$ for $\ell \neq \ell^{'}$ and stationarity of the sequence $\tilde{S}_1,\dots,\tilde{S}_m$. Plugging in the bound proved in the first part completes the proof of (2) after an application of the Cauchy-Schwarz inequality. 

Third, by~\ref{orlicz:banach} applied to the norm $\| \cdot \|_\Hb$ and \ref{orlicz:max} it holds that
\begin{align*}
&\phantom{{}={}}
\Big\|\Big\|\sum_{\ell=1}^ma_\ell\cdot Q\circ\tilde{S}_\ell\Big\|_\Hb\Big\|_{\psi_1}
\\&\leq
A_{11,1}\Big(\E\Big[\Big\|\sum_{\ell=1}^m a_\ell\cdot Q\circ\tilde{S}_\ell\Big\|_{\Hb}\Big]+\Big\|\max_{\ell=1}^{m}\big\| a_\ell\cdot Q\circ\tilde{S}_\ell\big\|_\Hb\Big\|_{\psi_1}\Big)\\
&\leq 
A_{11,1}\Big(\E\Big[\Big\|\sum_{\ell=1}^m a_\ell\cdot Q\circ\tilde{S}_\ell\Big\|_{\Hb}\Big] +A_{8,1}\|a\|_\infty\log(m+1)\Big\|\Big\|\sum_{i\in I_1}Q\circ  X_i\Big\|_\Hb\Big\|_{\psi_1}\Big)\\
&\leq 
A_{11,1}\Big(\E\Big[\Big\|\sum_{\ell=1}^m a_\ell\cdot Q\circ\tilde{S}_\ell\Big\|_{\Hb}\Big] +A_{8,1}\log(m+1)qU\Big).
\end{align*}
To pass from the third to fourth line we used $\|a\|_\infty \leq 1$ and~\ref{orlicz:triangle} to conclude that
\begin{equation*}
\Big\|\big\|\sum_{i\in I_1}Q\circ  X_i\big\|_\Hb\Big\|_{\psi_1}
\leq q U.
\end{equation*}
The penultimate display yields (4) by setting $C_4=A_{11,1}\times (A_{8,1}\vee 1)$.  Then (3) follows by inserting the bound of (2) into (4) and using
\[
\widetilde C_{\beta,q}^{1/2}
\le
\bigl(A_{4,1/2,1}+64\bigr)^{1/2}
(C_{\beta,q}\vee1)^{1/2}.
\]
Then choose $C_3=C_4\bigl\{1\vee(A_{4,1/2,1}+64)^{1/2}\bigr\}$.

Fourth, we prove (5) and (6), starting with (5).  To lighten notation in the following derivation we omit the operator $Q$ when performing calculations. Since $Q$ is linear, all calculations go through when each $\tilde{S}_\ell$ below is replaced by $ Q \circ \tilde{S}_\ell$. By the triangle inequality from \ref{orlicz:triangle_up_to_constant} it holds that
\begin{align*}
\Big\|\sum_{\ell=1}^m a^2_\ell\|\tilde{S}_\ell\|^2_\Hb\Big\|_{\psi_{1/2}}
&\leq 
A_{2,1/2}\Big\{\Big\|\sum_{\ell=1}^ma^2_\ell\Big( \|\tilde{S}_\ell\|^2_\Hb-\E\big[\|\tilde{S}_\ell\|^2_\Hb\big]\Big)\Big\|_{\psi_{1/2}}+\sum_{\ell=1}^m\Big\|a^2_\ell\E\big[\|\tilde{S}_\ell\|^2_\Hb\big]\Big\|_{\psi_{1/2}}\Big\}.
\end{align*}
With respect to the second summand it holds that
\begin{align*}
\sum_{\ell=1}^m\Big\|a^2_\ell\E\big[\|\tilde{S}_\ell\|^2_\Hb\big]\Big\|_{\psi_{1/2}}
&=
\|1 \|_{\psi_{1/2}} \E\big[\|S_1\|_\Hb^2\big]  \sum_{\ell=1}^m a^2_\ell
\le
[\log(2)]^{-2}\|a\|^2_2qU^2\tilde{C}_{\beta, q},
\end{align*}
where we used that $\|1\|_{\psi_{1/2}}=[\log(2)]^{-2}$ and plugged in the bound from the first part.
With respect to the first summand it holds that
\begin{align*}
&\phantom{{}={}}
\Big\|\sum_{\ell=1}^m a^2_\ell \Big(\|\tilde{S}_\ell\|^2_\Hb-\E\big[\|\tilde{S}_\ell\|^2_\Hb\big]\Big)\Big\|_{\psi_{1/2}}
\\
&\leq A_{11, 1/2}\Bigg\{\E\Big[\Big|\sum_{\ell=1}^m\ a^2_\ell \Big( \|\tilde{S}_\ell\|^2_\Hb-\E\big[\|\tilde{S}_\ell\|^2_\Hb\big]\Big)\Big|\Big]
+\Big\|\max_{\ell\in[m]}\left|a^2_\ell\Big( \|\tilde{S}_\ell\|^2_\Hb-\E\big[\|\tilde{S}_\ell\|^2_\Hb\big]\Big)\right|\Big\|_{\psi_{1/2}}\Bigg\}
\\
&\leq 
A_{11, 1/2}\Big\{2\sum_{\ell=1}^m a^2_\ell \E\Big[\|\tilde{S}_\ell\|^2_\Hb\Big]
+
A_{8,1/2}[\log(m+1)]^2 \Big\| \|\tilde{S}_1\|^2_\Hb-\E\big[\|\tilde{S}_1\|^2_\Hb\big]\Big\|_{\psi_{1/2}}\Big\},
\end{align*}
where we used \ref{orlicz:banach} in the second line and \ref{orlicz:max} and $\|a\|_\infty \le 1$ in the third line. Using the first bound of this lemma, the first sum on the right-hand side is bounded by $2 \|a \|^2_2 \tilde C_{\beta, q} U^2q$. Finally, 
\begin{align*}
\Big\| \|\tilde{S}_1\|^2_\Hb- \E\big[\|\tilde{S}_1\|^2_\Hb\big]\Big\|_{\psi_{1/2}}
&\leq 
A_{2, 1/2} \Big(\big\| \|\tilde{S}_1\|^2_\Hb\big\|_{\psi_{1/2}}+\big\|1\big\|_{\psi_{1/2}}\E\big[\|\tilde{S}_1\|^2_\Hb\big]\Big)
\\&\leq 
A_{2, 1/2}\Big(q^2U^2+[\log(2)]^{-2}\tilde{C}_{\beta, q} U^2q\Big),
\end{align*}
again using the first bound of this lemma and the fact that 
\[
\big\| \|\tilde{S}_1\|^2_\Hb\big\|_{\psi_{1/2}} = \big\| \|\tilde{S}_1\|_\Hb\big\|_{\psi_{1}}^2 \le q^2U^2
\]
by \ref{orlicz:power} and the triangle inequality. Since \(q\ge1\) and
\(\widetilde C_{\beta,q}\le(A_{4,1/2,1}+64)(C_{\beta,q}\vee1)\),
assembling the preceding estimates and using
\(\sqrt{u+v}\le\sqrt u+\sqrt v\) proves (5), after increasing the
absolute constant \(C_5\).
To prove (6) we repeat much of the argument used to prove (5).
After centering and applying~\ref{orlicz:triangle_up_to_constant}, we have
\begin{align*}
\Big\|
\sum_{\ell=1}^m a_\ell^2\|Q\widetilde S_\ell\|_{\Hb}^2
\Big\|_{\psi_{1/2}}
&\le
A_{2,1/2}
\left[
S_1
+
S_2
\right],
\end{align*}
where 
\[
S_1=\left\|
\sum_{\ell=1}^m(a_\ell^2\|Q\widetilde S_\ell\|_{\Hb}^2-\mathbb E[a_\ell^2\|Q\widetilde S_\ell\|_{\Hb}^2])
\right\|_{\psi_{1/2}},\qquad S_2=\left\|
\sum_{\ell=1}^m \mathbb E[a_\ell^2\|Q\widetilde S_\ell\|_{\Hb}^2]
\right\|_{\psi_{1/2}}.
\]
Note that 
\begin{equation*}
S_2=\left\|
\sum_{\ell=1}^m\mathbb E[a_\ell^2\|Q\widetilde S_\ell\|_{\Hb}^2]
\right\|_{\psi_{1/2}}=\left\|1
\right\|_{\psi_{1/2}}\left|\sum_{\ell=1}^m\mathbb E[a_\ell^2\|Q\widetilde S_\ell\|_{\Hb}^2]\right|
=
\{\log(2)\}^{-2}
\sum_{\ell=1}^m\mathbb E[a_\ell^2\|Q\widetilde S_\ell\|_{\Hb}^2],
\end{equation*}
where we used that $\|1\|_{\psi_{\frac 12}}=\{\log(2)\}^{-2}$ after pulling out a positive constant. 
Applying~\ref{orlicz:banach} to  $S_1$  gives
\begin{align*}
&\left\|
\sum_{\ell=1}^m
\left(
a_\ell^2\|Q\widetilde S_\ell\|_{\Hb}^2
-
\mathbb E\!\left[
a_\ell^2\|Q\widetilde S_\ell\|_{\Hb}^2
\right]
\right)
\right\|_{\psi_{1/2}}\\
&\le
A_{11,1/2}
\Bigg[
\mathbb E\left|
\sum_{\ell=1}^m
\left(
a_\ell^2\|Q\widetilde S_\ell\|_{\Hb}^2
-
\mathbb E\!\left[
a_\ell^2\|Q\widetilde S_\ell\|_{\Hb}^2
\right]
\right)
\right|
+
\left\|
\max_{\ell\in[m]}
\left|
a_\ell^2\|Q\widetilde S_\ell\|_{\Hb}^2
-
\mathbb E\!\left[
a_\ell^2\|Q\widetilde S_\ell\|_{\Hb}^2
\right]
\right|
\right\|_{\psi_{1/2}}
\Bigg].
\end{align*}
Then by the triangle inequality for the $L^1$  norm it holds that
\begin{align*}
&\mathbb E\left|
\sum_{\ell=1}^m
\left(
a_\ell^2\|Q\widetilde S_\ell\|_{\Hb}^2
-
\mathbb E\!\left[
a_\ell^2\|Q\widetilde S_\ell\|_{\Hb}^2
\right]
\right)
\right|\le
\sum_{\ell=1}^m
\mathbb E\left|
a_\ell^2\|Q\widetilde S_\ell\|_{\Hb}^2
-
\mathbb E\!\left[
a_\ell^2\|Q\widetilde S_\ell\|_{\Hb}^2
\right]
\right|
\le
2\sum_{\ell=1}^m
a_\ell^2
\mathbb E\|Q\widetilde S_\ell\|_{\Hb}^2.
\end{align*}
Also \ref{orlicz:max} gives
\begin{align*}
&\phantom{{}={}} \left\|
\max_{\ell\in[m]}
\left|
a_\ell^2\|Q\widetilde S_\ell\|_{\Hb}^2
-
\mathbb E\!\left[
a_\ell^2\|Q\widetilde S_\ell\|_{\Hb}^2
\right]
\right|
\right\|_{\psi_{1/2}}
\\& \le
A_{8,1/2}\log^2(m+1)
\max_{\ell\in[m]}
\Big\|a_\ell^2\|Q\widetilde S_\ell\|_{\Hb}^2
-
\mathbb E\!\left[
a_\ell^2\|Q\widetilde S_\ell\|_{\Hb}^2
\right]\Big\|_{\psi_{1/2}}.
\end{align*}
Now for each $\ell\in[m]$ it holds by ~\ref{orlicz:triangle_up_to_constant} that 
\begin{align*}
\|a_\ell^2\|Q\widetilde S_\ell\|_{\Hb}^2
-
\mathbb E\!\left[
a_\ell^2\|Q\widetilde S_\ell\|_{\Hb}^2
\right]\|_{\psi_{1/2}}&\leq A_{2,1/2}\{\|a_\ell^2\|Q\widetilde S_\ell\|_{\Hb}^2\|_{\psi_{1/2}}
+
\|\mathbb E\!\left[
a_\ell^2\|Q\widetilde S_\ell\|_{\Hb}^2
\right]\|_{\psi_{1/2}}\}\\
&\leq A_{2,1/2}\{\|\|Q\widetilde S_\ell\|_{\Hb}^2\|_{\psi_{1/2}}
+
\|\mathbb E\!\left[
\|Q\widetilde S_\ell\|_{\Hb}^2
\right]\|_{\psi_{1/2}}\}\\
&\leq A_{2,1/2}\{\|\|Q\widetilde S_\ell\|_{\Hb}\|^2_{\psi_{1}}
+
\mathbb E\!\left[
\|Q\widetilde S_\ell\|_{\Hb}^2
\right]\|1\|_{\psi_{1/2}}\}\\
&\leq A_{2,1/2}\{\|\|Q\widetilde S_\ell\|_{\Hb}\|^2_{\psi_{1}}
+
A_{4, 1/2,1}\|\|Q\widetilde S_\ell\|_{\Hb}^2\|_{\psi_{1/2}}
\|1\|_{\psi_{1/2}}\}\\
&=A_{2,1/2}\{\|\|Q\widetilde S_\ell\|_{\Hb}\|^2_{\psi_{1}}
+
A_{4, 1/2,1}\|\|Q\widetilde S_\ell\|_{\Hb}\|^2_{\psi_{1}}
\|1\|_{\psi_{1/2}}\}\\
&\leq D_1\|\|Q\widetilde S_\ell\|_{\Hb}\|^2_{\psi_{1}},
\end{align*}
where
$D_1=A_{2,1/2}\{1+A_{4,1/2,1}\|1\|_{\psi_{1/2}}\}$.
By the triangle inequality in \(\Hb\), stationarity, and
\ref{orlicz:triangle},
\[
\left\|
\|Q\widetilde S_\ell\|_{\Hb}
\right\|_{\psi_1}
=
\left\|
\|QS_\ell\|_{\Hb}
\right\|_{\psi_1}
\le
\sum_{i\in I_\ell}
\left\|
\|QX_i\|_{\Hb}
\right\|_{\psi_1}
=qU.
\]
Consequently, for every $\ell \in [m]$ it holds that 
\[
\|a_\ell^2\|Q\widetilde S_\ell\|_{\Hb}^2
-
\mathbb E\!\left[
a_\ell^2\|Q\widetilde S_\ell\|_{\Hb}^2
\right]\|_{\psi_{1/2}}\le D_1a_\ell^2
\left\|
\|Q\widetilde S_\ell\|_{\Hb}
\right\|_{\psi_1}^2
\le
D_1 q^2U^2,
\]
where we used that $a^2_\ell \leq 1$.
Aggregating all of the bounds we have proved shows that there are absolute constants $D_2$ and $D_3$ such that
\[
S_2\leq D_2\sum_{\ell=1}^ma_\ell^2\mathbb E[\|Q\widetilde S_\ell\|_{\Hb}^2], \qquad
S_1 
\leq 
D_3\bigg\{\log^2(m+1)q^2U^2 + \sum_{\ell=1}^ma_\ell^2\mathbb E[\|Q\widetilde S_\ell\|_{\Hb}^2] \bigg\}.
\]
so that there is an absolute constant $C_6$ such that, using ~\ref{orlicz:power} and sub-additivity of the square root function,
\[
\left\|
\sum_{\ell=1}^m a_\ell^2\|Q\widetilde S_\ell\|_{\Hb}^2
\right\|^{1/2}_{\psi_{1/2}}
\leq C_6
\bigg\{
\Big(\sum_{\ell=1}^ma_\ell^2\mathbb E[\|Q\widetilde S_\ell\|_{\Hb}^2]\Big)^{1/2}
+\log(m+1)qU
\bigg\}.
\]
The proof is finished.
\end{proof}

The following Lemma is a straightforward adaptation of a martingale maximal inequality proved in \cite{pinelis1994optimum}, Theorem 3.3. 

\begin{lemma}[Pinelis maximal Bernstein inequality]
\label{lem:pinelis-maximal-bernstein}
Let \(X_1,\ldots,X_M\) be independent, centered random elements of a
separable Hilbert space \(\mathcal H\). Suppose that there exists a constant
\(B>0\) such that, for every integer \(p\ge2\),
\[
    \max_{i \in [M]}
    \mathbb E\|X_i\|_{\mathcal H}^p
    \le
    \frac{p!}{2}B^{p} .
\]
Then it holds for every \(x\ge 0\) that 
\[
\mathbb P\Big(
    \max_{m \in [M]}
    \Big\|
        \sum_{i=1}^{m}X_i
    \Big\|_{\mathcal H}
    \geq
     B\{\sqrt{Mx}+x\}
\Big)
\le
2e^{-x/4}.
\]
\end{lemma}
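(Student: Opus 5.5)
The plan is to derive this from the Bernstein-type form of Pinelis' maximal inequality for martingales in $(2,D)$-smooth Banach spaces (\cite{pinelis1994optimum}, Theorem~3.3). A separable Hilbert space is $(2,1)$-smooth, so we may take $D=1$.

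First, partial sums of independent centered $\mathcal H$-valued elements form a martingale. Set $f_m:=\sum_{i=1}^m X_i$, with martingale differences $d_i=X_i$ relative to the natural filtration. By independence, the conditional moments equal the unconditional ones, and the hypothesis gives
\[
\sum_{i=1}^M \mathbb E\bigl[\|X_i\|_{\mathcal H}^p \bigm| X_1,\dots,X_{i-1}\bigr]\le \frac{p!}{2}\,(MB^2)\,B^{p-2},\qquad p\ge2 .
\]
This is exactly the Bernstein moment condition with variance proxy $\sigma^2:=MB^2$ and scale parameter $B$.

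Second, I would invoke Theorem~3.3 of \cite{pinelis1994optimum}, or rather its standard Bernstein corollary, which gives for every $r>0$
\[
\mathbb P\Bigl(\max_{m\in[M]}\|f_m\|_{\mathcal H}\ge r\Bigr)\le 2\exp\Bigl(-\frac{r^2}{2(\sigma^2+rB)}\Bigr).
\]
The main point to check here is that the Bernstein-moment version, and not merely the bounded-increment version, is covered with $D=1$ and without extra constants. If Pinelis' statement carries slightly worse absolute constants in the exponent, the target bound $e^{-x/4}$ should leave enough slack to absorb them. For that reason I would track the exponent in the elementary step below rather than optimise it.

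Third, substitute $r=B(\sqrt{Mx}+x)$ and write $a:=\sqrt{Mx}$, so that $\sigma^2=B^2a^2/x$. For $x>0$,
\[
\frac{r^2}{2(\sigma^2+rB)}=\frac{x(a+x)^2}{2(a^2+ax+x^2)}\ge \frac{x}{2},
\]
because $(a+x)^2\ge a^2+ax+x^2$. The probability is therefore at most $2e^{-x/2}\le 2e^{-x/4}$. The case $x=0$ is trivial, since the right-hand side of the claimed bound is then $2$.

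I expect the only real obstacle to be bookkeeping in the first two steps: matching the normalisation of Pinelis' moment condition with $\frac{p!}{2}B^p$ per summand, and confirming that the maximal (rather than terminal) version of the bound holds. Doob-type maximality is built into Pinelis' martingale argument, so I would expect this to go through.
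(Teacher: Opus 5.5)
Your proposal is correct and follows the paper's proof. Both arguments use the martingale reduction with $d_i=X_i$, the $(2,1)$-smoothness of $\mathcal H$, Pinelis' Theorem~3.3 with variance proxy $MB^2$ and scale $B$, and the substitution $r=B(\sqrt{Mx}+x)$.

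The only difference is the final step. The paper keeps the exponent $r^2/\{\sigma^2+\sigma\sqrt{\sigma^2+2Br}\}$ and splits into two cases to obtain the constant $1/(2+\sqrt2)\ge 1/4$. Your Bernstein form follows from that exponent via $\sigma\sqrt{\sigma^2+2Br}\le \sigma^2+Br$, and your single inequality $(a+x)^2\ge a^2+ax+x^2$ then gives the sharper bound $2e^{-x/2}$.
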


\begin{proof}[Proof of Lemma~\ref{lem:pinelis-maximal-bernstein}]
Regard
\[
S_m=\sum_{i=1}^m X_i,
\qquad m=0,\ldots,M,
\]
as a martingale with respect to
$\mathcal F_m:=\sigma(X_1,\ldots,X_m)$,
where following our convention throughout this manuscript, an empty sum is defined to equal $0 \in \Hb$.
Its martingale differences are \(d_i=X_i\). A Hilbert space is a \((2,D)\)-smooth separable Banach space for $D=1$, using the Definition given in Equation (2.1) of \cite{pinelis1994optimum} and recalling the parallelogram equality for norms induced by an inner-product.  Moreover,
by independence,
\[
\E_{i-1}\|d_i\|_{\mathcal H}^p
=
\E\|X_i\|_{\mathcal H}^p.
\]
Hence, using the notation of Theorem 3.3 of \cite{pinelis1994optimum} the moment assumption in the formulation of the lemma we are proving gives 
\[
\left\|
\sum_{i=1}^M
\E_{i-1}\|d_i\|_{\mathcal H}^p
\right\|_{\infty} \le \sum_{i=1}^M\left\|
\E_{i-1}\|d_i\|_{\mathcal H}^p
\right\|_{\infty}
\le \frac{p!}{2}\,
(MB^2)B^{p-2}
=\frac{p!}{2}\,
V^2_M B^{p-2},
\]
where we define $V^2_M:=(MB^2)>0$. In this context, the norm $\|\cdot\|_{\infty}$ refers to the essential supremum of a random variable, following \cite{pinelis1994optimum}. 
Applying Theorem 3.3 of \cite{pinelis1994optimum} to the martingale
\((S_m)_{m=0}^M\) gives for every $t\geq 0$
\[
\mathbb P\Big(
\max_{m \in [M]}\|S_m\|_{\mathcal H}\ge t
\Big)
\le
2\exp\Big\{
-\frac{t^2}
{V_M^2+V_M\sqrt{V_M^2+2Bt}}
\Big\}.
\]
Now, for given $x \ge 0$, set 
\[
t=V_M\sqrt{x}+Bx.
\]
Note that by sub-additivity of the square root function, it holds that
\[
\sqrt{V_M^2+2Bt}
\le
V_M+\sqrt{2Bt},
\]
and therefore
\[
V_M^2+V_M\sqrt{V_M^2+2Bt}
\le
2V_M^2+V_M\sqrt{2Bt}.
\]
If \(Bt\le V_M^2\), then
$
V_M\sqrt{2Bt}\le \sqrt2\,V_M^2,
$
and hence for $c=1/(2+\sqrt{2})\approx 0.2928932\geq 1/4$ it holds that
\[
\frac{t^2}
{V_M^2+V_M\sqrt{V_M^2+2Bt}}
\ge
c\frac{t^2}{V_M^2}.
\]
If \(Bt>V_M^2\), then
$2V_M^2\le2Bt$
and
$
V_M\sqrt{2Bt}
<
\sqrt2\,Bt$.
Consequently, again for $c=1/(2+\sqrt{2})$ it holds that 
\[
\frac{t^2}
{V_M^2+V_M\sqrt{V_M^2+2Bt}}
\ge
c\frac{t}{B}.
\]
These two bounds together imply that
\[
\Prob\Big(
\max_{m \in [M]}\|S_m\|_{\mathcal H}\ge t
\Big)
\le
2\exp\Big\{
-\frac 14\min\Big(
\frac{t^2}{V_M^2},
\frac{t}{B}
\Big)
\Big\}.
\]
Then note that
\[
\frac 14\frac{t^2}{V_M^2}=\frac 14\frac{\{V_M\sqrt{x}+Bx\}^2}{V_M^2}\geq \frac 14 x.
\]
Likewise,
\[
\frac 14  \times \frac{t}{B}\ge \frac 14 x.
\]
Thus, 
\[
\frac 14\min\Big(
\frac{t^2}{V_M^2},
\frac{t}{B}\Big)\geq \frac 14 x.
\]
and so,
\[
\Prob\Big(
\max_{m \in [M]}\|S_m\|_{\mathcal H}\ge t
\Big)
\le
2\exp\Big\{
-\frac 14x
\Big\},
\]
which is what we sought to prove.
\end{proof}

\begin{lemma}\label{lem:berbee}
Let $\eps^{(1)}, \dots, \eps^{(n)}$ be a strictly stationary sequence taking values in the Borel space $\mathcal{S}$. For each $h \in \N$, let 
\[
\beta_h=\max_{1 \leq \ell \leq n-h}\beta(\sigma(\eps^{(1)}, \dots, \eps^{(\ell)}), \sigma(\eps^{(\ell+h)}, \dots, \eps^{(n)}),
\]
for $\beta(\mathcal{A}, \mathcal{B})$ the $\beta$-mixing coefficient of the $\sigma$-fields $\mathcal{A}$ and $\mathcal{B}$. For each $\ell \in [m]$, recall the set $I_{\ell}$, and for $\ell \in [m]$ let 
$E_\ell =(\eps^{(i)})_{i \in I_\ell}\in \mathcal{S}^q$.
Then there exists a sequence of independent random elements $\tilde{E}_\ell \in \mathcal{S}^q$ satisfying  $\tilde{E}_\ell =_dE_\ell$ for all $\ell \in [m]$ and also 
$\Prob\big((E_\ell)_{\ell \in [m]} \neq (\tilde{E}_\ell)_{\ell \in [m]}\big) \leq m\beta_r$.
\end{lemma}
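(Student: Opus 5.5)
This is Berbee's coupling lemma applied blockwise. I plan to build the coupled blocks one at a time by induction, using the classical maximal-coupling form of Berbee's lemma. That form states the following. Let $X$ be a random element of a Borel space, let $Y$ be a random element of a Borel space, and suppose a uniform random variable independent of $(X,Y)$ is available, which may require enlarging the probability space. Then there is $\tilde Y \stackrel d= Y$, independent of $X$, with $\Prob(Y\ne\tilde Y)=\beta(\sigma(X),\sigma(Y))$.

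The inductive step runs as follows.
\begin{itemize}
\item Set $\tilde E_1:=E_1$.
\item Suppose $\tilde E_1,\dots,\tilde E_{\ell-1}$ have been constructed, mutually independent, with the correct marginals. Apply Berbee's lemma with $X:=(E_1,\dots,E_{\ell-1},\tilde E_1,\dots,\tilde E_{\ell-1})$ and $Y:=E_\ell$. This needs a fresh uniform variable $U_\ell$; take $U_1,\dots,U_m$ i.i.d.\ uniform and independent of everything else.
\item The lemma gives $\tilde E_\ell\stackrel d= E_\ell$, independent of $X$, and hence independent of $(\tilde E_1,\dots,\tilde E_{\ell-1})$. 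This preserves mutual independence.
\end{itemize}

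For the bound on $\Prob(E_\ell\ne\tilde E_\ell)$, note that the construction of $\tilde E_1,\dots,\tilde E_{\ell-1}$ uses only the $E_j$ with $j<\ell$ and the $U_j$ with $j<\ell$. Since the $U_j$ are independent of the data, the standard property $\beta(\mathcal A\vee\mathcal C,\mathcal B)=\beta(\mathcal A,\mathcal B)$ for $\mathcal C$ independent of $\mathcal A\vee\mathcal B$ applies. It gives
\[\Prob(E_\ell\ne\tilde E_\ell)=\beta\big(\sigma(E_1,\dots,E_{\ell-1}),\sigma(E_\ell)\big).\]
The blocks $E_1,\dots,E_{\ell-1}$ are measurable with respect to $\sigma(\eps^{(1)},\dots,\eps^{(j)})$ with $j=(\ell-2)(q+r)+q$. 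The block $E_\ell$ is measurable with respect to $\sigma(\eps^{(j+r+1)},\dots,\eps^{(n)})$, because a small block of length $r$ separates them. By monotonicity of $\beta$ under sub-$\sigma$-fields, this is at most $\beta_{r+1}\le\beta_r$; if the paper's convention on the gap differs by one, one gets $\beta_r$ directly. A union bound over $\ell=2,\dots,m$ then gives $\Prob\big((E_\ell)_\ell\ne(\tilde E_\ell)_\ell\big)\le(m-1)\beta_r\le m\beta_r$.

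The main obstacle is bookkeeping rather than anything substantive. One must check that the auxiliary randomization in earlier steps does not inflate the mixing coefficient; this is the independence property above. One must also check that $\mathcal S^q$ is again a Borel space, so that Berbee's lemma applies, which holds since finite products of Borel spaces are Borel. I would cite Berbee (1979), or Rio's book, for the base coupling statement.
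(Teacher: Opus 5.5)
Your proposal is correct and is essentially the paper's argument. The paper simply invokes Berbee's coupling lemma in the sequential, blockwise form of Dedecker and Louhichi (2002, Lemma~4.1). Your induction is the standard proof of that form: fresh independent uniforms at each step, the fact that independent auxiliary randomization leaves $\beta$ unchanged, and the observation that the $r$ small-block indices give a bound $\beta_{r+1}\le\beta_r$. The union bound $(m-1)\beta_r\le m\beta_r$ you obtain is exactly what the paper uses for $\Omega_{\mathrm{big}}$.
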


\begin{proof}[Proof of Lemma~\ref{lem:berbee}]
    This is a direct consequence of Berbee's coupling lemma \cite[Lemma 4.1]{Dedecker2002}.
\end{proof}

\begin{definition}[\(\beta\)-mixing coefficient between two sub-\(\sigma\)-fields]
\label{beta_mixing_individ}
Let \((\Omega,\mathcal F,\Prob)\) be a probability space, and let
\(\mathcal F_1,\mathcal F_2\subseteq\mathcal F\) be sub-\(\sigma\)-fields. Define
\[
\beta(\mathcal F_1,\mathcal F_2)
:=
\frac12
\sup
\bigg\{
\sum_{i=1}^{I}\sum_{j=1}^{J}
\big|
\Prob(A_i\cap B_j)-\Prob(A_i)\Prob(B_j)
\big|
\bigg\},
\]
where the supremum is taken over all \(I,J\in\mathbb N\), all finite
\(\mathcal F_1\)-measurable partitions $
\Omega=\bigsqcup_{i=1}^{I}A_i$
and all finite \(\mathcal F_2\)-measurable partitions $
\Omega=\bigsqcup_{j=1}^{J}B_j.
$ Here the notation $\bigsqcup$ emphasizes that the unions are disjoint. 
\end{definition}

\begin{definition}[\(\beta\)-mixing coefficients of a strictly stationary sequence]
\label{def:beta_mixing_for_sequence}
Let \((X_i)_{i\in\mathbb Z}\) be a strictly stationary sequence of random elements. For each \(k\in\mathbb N\), define
\[
\beta_k
:=
\beta\!\left(
\sigma(X_i:i\le 0),
\sigma(X_i:i\ge k)
\right).
\]
\end{definition}

\section{Orlicz Norms}
\label{sec:orlicz}

For $x\in\R$ and $p>0$, let $\psi_p(x) = \exp(|x|^p)-1$. For a real-valued random variable $X$ let $\|X\|_{\psi_{p}} = \inf\{B>0: \E[\psi_p(|X/B|)] \leq 1\}$. A function $\psi$ is an Orlicz function if $\psi$ is convex, non-decreasing, and satisfies $\psi(0)=0$. If $p>1$, then $\psi_{p}$ is an Orlicz function. For $Z \sim N(0,1)$, we have $C_{Z} = \|Z\|_{\psi_2}=\sqrt{8/3}$ \citep[see Section 2.2.1]{VW96}, and we also have $\|1\|_{\psi_p}=(\log 2)^{-1/p}$. 

Let $\mathcal{X}$ and $\mathcal{Y}$ be Banach spaces, and let $X \in \mathcal{X}$ and $Y \in \mathcal{Y}$ be independent random elements. Let $g: \mathcal{X} \times \mathcal{Y} \rightarrow \R$ be a measurable function. For each fixed $y \in \mathcal{Y}$ let $h(y) = \|g(X,y)\|_{\psi_1}$. We then define the conditional Orlicz norm as
\begin{equation}\label{conditional_orlicz_norm}
\|g(X,Y)\|_{\psi_{1} \mid \sigma(Y)}:=h(Y).
\end{equation}

\begin{lemma}\label{prop_Orlicz}
Let $X_{1},X_{2}, \dots$ and $Y$ be real-valued random variables unless otherwise stated, and let $\psi$ be an Orlicz function. 
\begin{enumerate}
\renewcommand{\theenumi}{[P1]}
\renewcommand{\labelenumi}{\theenumi}
\item \label{orlicz:triangle}
 If $p \geq 1$, we have $\|X+Y\|_{\psi_p} \leq \|X\|_{\psi_p}+\|Y\|_{\psi_p}$.
\renewcommand{\theenumi}{[P2]}
\renewcommand{\labelenumi}{\theenumi}
\item \label{orlicz:triangle_up_to_constant}
If $p\in(0,1)$, there is a universal constant $A_{2,p}$ such that, for any $n\in\N$ and any random variables $X_1, \dots, X_n$, 
\begin{align} \label{eq:orlicz-triangle}
\Big\| \sum_{i=1}^n X_i \Big\|_{\psi_p} \leq A_{2,p}\sum_{i=1}^n \|X_i\|_{\psi_p}.
\end{align}
\renewcommand{\theenumi}{[P3]}
\renewcommand{\labelenumi}{\theenumi}
\item \label{orlicz:tail_bound} For any $x>0$, we have $\Prob(|X| \geq x) \leq 2\exp(-x^p/\|X\|_{\psi_p}^p)$. 
\renewcommand{\theenumi}{[P4]}
\renewcommand{\labelenumi}{\theenumi}
\item \label{orlicz:expectation} For any $p,q>0$ there exists a universal constant $A_{4,q,p}$ such that $(\E[|X|^{p}])^{1/p} \leq A_{4,q,p}\|X\|_{\psi_{q}}$. More precisely, we have $A_{4,1,p} = (2 \Gamma(p+1))^{1/p}$.
\renewcommand{\theenumi}{[P5]}
\renewcommand{\labelenumi}{\theenumi}
\item \label{orlicz:power}For any positive pair $(q,p)$ $\|X\|_{\psi_p}=\||X|^{p/q}\|^{q/p}_{\psi_q}$  so long as both sides of the equality are finite. 
\renewcommand{\theenumi}{[P6]}
\renewcommand{\labelenumi}{\theenumi}
\item \label{orlicz:product} For all $p$, $\|XY\|_{\psi_p} \leq \|X\|_{\psi_{2p}}\|Y\|_{\psi_{2p}}$. 
\renewcommand{\theenumi}{[P7]}
\renewcommand{\labelenumi}{\theenumi}
\item \label{orlicz:integral} If $X$ is a real valued stochastic process with Riemann integrable sample paths defined on a compact subset of the real line,  $I$, then for all $p\geq 1$, $\|\int_{I}X(t)dt\|_{\psi_p} \leq \int_{I}\|X(t)\|_{\psi_p}dt$. If $0<p<1$ there is a universal constant $A_{7,p}$ such that $\|\int_{I}X(t)dt\|_{\psi_p} \leq A_{7,p}\int_{I}\|X(t)\|_{\psi_p}dt$.
    \renewcommand{\theenumi}{[P8]}
\renewcommand{\labelenumi}{\theenumi}
\item \label{orlicz:max} 
For all $p>0$, there exists a universal constant $A_{8, p}$ such that
$\|\max_{j=1}^{n}X_j\|_{\psi_p} \leq A_{8, p} \log^{1/p}(n+1)\max_{j=1}^{n}\|X_j\|_{\psi_p}$
for any $n\in\N$ and any random variables $X_1, \dots, X_n$.
\renewcommand{\theenumi}{[P9]}
\renewcommand{\labelenumi}{\theenumi}
\item \label{orlicz:ind_sum}
Let $X_1, \dots, X_n$ be independent random elements of a separable Hilbert space $\Hb$. For all $p\in(0,2]$, there exists a universal constant $A_{9,p}$ such that
$\|\|n^{-1/2}\sum_{i=1}^{n}X_i\|_\Hb\|_{\psi_p} \leq A_{9,p}\max_{i=1}^{n}\|\|X_i\|_\Hb\|_{\psi_p}$.
\renewcommand{\theenumi}{[P10]}
\renewcommand{\labelenumi}{\theenumi}
\item \label{orlicz:limit}
If $p \geq 1$, $\|\liminf_{n\rightarrow \infty}X_n\|_{\psi_p} \leq \liminf_{n \rightarrow \infty}\|X_n\|_{\psi_p}$. If $0<p<1$, there is a universal constant $A_{10,p}$ such that $\|\liminf_{n\rightarrow \infty}X_n\|_{\psi_p} \leq A_{10,p}\liminf_{n \rightarrow \infty}\|X_n\|_{\psi_p}$.
\renewcommand{\theenumi}{[P11]}
\renewcommand{\labelenumi}{\theenumi}
\item \label{orlicz:banach}
If $X_1, \dots X_n$ are independent random elements of the Banach space $(\mathcal{B},\|\cdot\|)$ and $0<p\leq 1$ then there exists a constant $A_{11,p}$ that depends on $p$ only such that
\[
\bigg\|\Big\|\sum_{i=1}^{n}X_i\Big\|\bigg\|_{\psi_p} 
\leq 
A_{11,p}\Big\{\E\Big[\Big\|\sum_{i=1}^{n}X_i\Big\|\Big]+\Big\|\max_{i=1}^{n}\|X_{i}\|\Big\|_{\psi_p}\Big\}.
\]
Moreover, for $p=2$, we have
\[
\bigg\|\Big\|\sum_{i=1}^{n}X_i\Big\|\bigg\|_{\psi_2} 
\leq 
A_{11,2}\Big\{\E\Big[\Big\|\sum_{i=1}^{n}X_i\Big\|\Big]+\Big(\sum_{i=1}^{n}\Big\|\|X_{i}\|\Big\|^2_{\psi_2}\Big)^{1/2}\Big\}.
\]

\renewcommand{\theenumi}{[P12]}
\renewcommand{\labelenumi}{\theenumi}
\item \label{orlicz:taking_out_what_is_known}
If $X$ and $Y$ are independent random variables and $\sigma(X)$ is the $\sigma$-field generated by $X$ then 
\[
\big\|XY\|_{\psi_p\mid \sigma(X)} =X\big\|Y\|_{\psi_p}
\]
\end{enumerate}
\end{lemma}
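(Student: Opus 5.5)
Most of the items in Lemma~\ref{prop_Orlicz} are standard facts about Orlicz (quasi-)norms. The plan is to verify each directly from the definition $\|X\|_{\psi_p}=\inf\{B>0:\E\psi_p(|X|/B)\le1\}$, grouped by the tool each requires. I organize them into three groups.

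\emph{Convexity for $p\ge1$.} Since $\psi_p$ is then an Orlicz function, \ref{orlicz:triangle} follows from convexity. Write $X+Y$ as a convex combination of $X/\|X\|_{\psi_p}$ and $Y/\|Y\|_{\psi_p}$ with weights proportional to the norms, then apply Jensen to $\psi_p$. The convex (first) part of \ref{orlicz:integral} is the same argument with Riemann sums in place of two summands, passing to the limit by Fatou. Fatou also gives the $p\ge1$ part of \ref{orlicz:limit}: if $\E\psi_p(|X_n|/B)\le1$ along a subsequence realizing the $\liminf$, then $\E\psi_p(\liminf|X_n|/B)\le1$.

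\emph{Definition-level identities.} \ref{orlicz:tail_bound} is Markov's inequality applied to $\psi_p(|X|/\|X\|_{\psi_p})+1$. \ref{orlicz:power} is the identity $\psi_p(|X|/B)=\psi_q\bigl(|X|^{p/q}/B^{p/q}\bigr)$, which matches the two infima. \ref{orlicz:expectation} integrates the tail bound; for $q=1$ this gives $\E|X|^p\le2\Gamma(p+1)\|X\|_{\psi_1}^p$. \ref{orlicz:product} uses $|XY|^p\le\tfrac12(|X|^{2p}/a^{2p}+|Y|^{2p}/b^{2p})\,a^pb^p$ together with convexity of $\exp$. \ref{orlicz:taking_out_what_is_known} follows by conditioning: on $\{X=x\}$ the norm of $xY$ equals $|x|\,\|Y\|_{\psi_p}$ by homogeneity, using independence.

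\emph{Quasi-convex case $p<1$.} For \ref{orlicz:triangle_up_to_constant}, the remaining part of \ref{orlicz:integral}, and the $p<1$ part of \ref{orlicz:limit}, I would use the classical device that $\psi_p$ coincides with a convex Orlicz function $\tilde\psi_p$ beyond a threshold $x_p$, and that $\|\cdot\|_{\psi_p}$ and $\|\cdot\|_{\tilde\psi_p}$ are equivalent up to constants depending only on $p$. The $p\ge1$ arguments then transfer with a constant loss. The maximal inequality \ref{orlicz:max} is van der Vaart--Wellner Lemma 2.2.2, applied to this convexified function.

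\emph{Sums of independent elements.} \ref{orlicz:banach} is the Hoffmann--J{\o}rgensen/Talagrand inequality for sums of independent Banach-valued variables: Theorem 6.21 of Ledoux--Talagrand for $p\le1$, and the $\psi_2$ version with $\ell_2$-aggregated summand norms (Talagrand's inequality, cf.\ van der Vaart--Wellner Prop.\ A.1.6). I would cite these directly. \ref{orlicz:ind_sum} then follows by combining \ref{orlicz:banach} with $\E\|\sum X_i\|\le(\sum\E\|X_i\|^2)^{1/2}$, which holds by orthogonality in $\Hb$ for centered $X_i$; I assume centering here, which appears to be intended. The summand term is handled via \ref{orlicz:max} for $p\le1$, and via the sum of squares form for $p\in(1,2]$, interpolated through the $p=2$ case.

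The main obstacle is making the $p<1$ convexification constants uniform and correctly tracked, and checking that \ref{orlicz:ind_sum} really yields an $n$-free constant. For $p<1$ the maximal term in \ref{orlicz:banach} produces a $\log^{1/p}(n+1)$ factor. I would therefore first try to restrict that item to $p\in[1,2]$ or to centered summands, where the $\sqrt n$ normalization absorbs the maximum, and otherwise simply cite the literature bound.
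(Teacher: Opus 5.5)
Your route differs from the paper's, which proves nothing here. It cites Section~8 of Decker's thesis for \ref{orlicz:triangle}--\ref{orlicz:limit} and Talagrand (1989) for \ref{orlicz:banach}, and calls \ref{orlicz:taking_out_what_is_known} elementary. Your first-principles arguments are correct: two-point convexity for $p\ge1$, Markov and layer-cake integration for \ref{orlicz:tail_bound}--\ref{orlicz:expectation}, the substitution for \ref{orlicz:power}, AM--GM plus convexity of $\exp$ for \ref{orlicz:product}, and convexification of $\psi_p$ near the origin for $p<1$. Together with the citation for \ref{orlicz:banach}, they cover everything the paper actually uses.

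They also expose defects in the statement that the citations hide. \ref{orlicz:ind_sum} is false without centering: $X_i\equiv1$ gives $\sqrt n$ on the left. \ref{orlicz:taking_out_what_is_known} needs $|X|$. \ref{orlicz:limit} fails for signed sequences: deterministic $X_n$ alternating between $-1$ and $0$ has $\|\liminf_n X_n\|_{\psi_p}=(\log 2)^{-1/p}$ but $\liminf_n\|X_n\|_{\psi_p}=0$. Your Fatou argument proves the correct version with $\liminf_n|X_n|$, which is all the paper needs. That argument uses no convexity, so it gives constant $1$ for every $p>0$, and the convexification you planned for the $p<1$ half of \ref{orlicz:limit} is unnecessary.

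The Banach-space inequality you cite for \ref{orlicz:banach} is stated for mean-zero summands (van der Vaart--Wellner's Proposition~A.1.6 is), whereas \ref{orlicz:banach} is not. Centering costs only a constant, since $\|\sum_i\E X_i\|\le\E\|\sum_iX_i\|$ and $\max_i\|\E X_i\|\le\E\max_i\|X_i\|$.

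The one step that fails as written is \ref{orlicz:ind_sum} for $1<p<2$. Item \ref{orlicz:banach} covers only $p\le1$ and $p=2$, and there is nothing to ``interpolate.'' The $p=2$ bound requires $\bigl\|\|X_i\|\bigr\|_{\psi_2}$, which is not controlled by $\bigl\|\|X_i\|\bigr\|_{\psi_p}$, and the $p=1$ bound only yields a $\psi_1$ conclusion. You need the $1<p\le2$ case of the same inequality (Ledoux--Talagrand, Theorem~6.21; van der Vaart--Wellner, Proposition~A.1.6). With $S_n:=\sum_{i=1}^nX_i$ and $1/p+1/q=1$, it reads $\bigl\|\|S_n\|\bigr\|_{\psi_p}\le K_p\{\E\|S_n\|+(\sum_i\bigl\|\|X_i\|\bigr\|_{\psi_p}^{q})^{1/q}\}$. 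Since $q\ge2$, the last term is at most $n^{1/q}\max_i\bigl\|\|X_i\|\bigr\|_{\psi_p}\le\sqrt n\max_i\bigl\|\|X_i\|\bigr\|_{\psi_p}$. The $\ell_2$-aggregated form you wanted is true, but only as a consequence of this $\ell_q$ form. Combined with $\E\|S_n\|\le(\sum_i\E\|X_i\|^2)^{1/2}$ and \ref{orlicz:expectation}, this finishes the case.

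Your hesitation about $p\le1$ is unnecessary once centering is assumed. \ref{orlicz:banach}, \ref{orlicz:max} and \ref{orlicz:expectation} give $\bigl\|\|S_n\|\bigr\|_{\psi_p}\lesssim_p\{\sqrt n+\log^{1/p}(n+1)\}\max_i\bigl\|\|X_i\|\bigr\|_{\psi_p}$. Since $\log^{1/p}(n+1)\le C_p\sqrt n$, the constant is free of $n$ on all of $(0,1]$, with no restriction of the range. As far as I can see, \ref{orlicz:ind_sum} is not invoked anywhere else in the paper, so this gap does not affect its results.
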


Proofs of statements \ref{orlicz:triangle} -- \ref{orlicz:limit} can be found in \cite[Section 8]{Dec24}. A proof of \ref{orlicz:banach} can be found in \cite{talagrand1989isoperimetry}. \ref{orlicz:taking_out_what_is_known} is elementary.

\section{Background on Hilbert Spaces}\label{sec:Hilbert_space_defs}

The purpose of this section is to collect results about Hilbert spaces that we will need in the rest of the article. Almost all of the material comes from \cite{TE15}, but it is scattered throughout the book. 
A Hilbert space $\Hb$ is a complete inner-product space, and as such comes equipped with an inner-product, $\langle \cdot, \cdot \rangle_{\mathbb{H}}$. In this manuscript, we will always take $\Hb$ to be separable. Being complete, $\mathbb{H}$ is also a complete normed vector space (with norm $\|\cdot\|_{\mathbb{H}} = \sqrt{ \langle \cdot, \cdot \rangle_{\mathbb{H}}}$). Therefore we have:

\begin{lemma}\label{def:abs_conv_implies_conv}
If $\{x_j\}_{j=1}^{\infty} \subset \Hb$ satisfies $\sum_{j=1}^{\infty}\|x_j\|_{\Hb}<\infty$, then there is some $L \in \Hb$ such that 
\[
\lim_{N \to \infty} \Big\|\sum_{j=1}^{N}x_j-L \Big\|_{\Hb}=0.
\]
\end{lemma}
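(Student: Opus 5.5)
Absolute convergence implies convergence in the Hilbert space setting. The plan is to show that the partial sums form a Cauchy sequence in the norm $\|\cdot\|_{\Hb}$, and then invoke completeness of $\Hb$ to produce the limit $L$.

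Write $S_N := \sum_{j=1}^N x_j$ and $T_N := \sum_{j=1}^N \|x_j\|_{\Hb}$. By hypothesis the real sequence $(T_N)_N$ is nondecreasing and bounded, hence convergent. In particular it is Cauchy in $\R$: for every $\varepsilon>0$ there is $N_0$ such that $\sum_{j=N+1}^{N'} \|x_j\|_{\Hb} = T_{N'}-T_N < \varepsilon$ for all $N'>N\ge N_0$.

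Next, for such $N'>N\ge N_0$, the triangle inequality for the norm induced by $\langle\cdot,\cdot\rangle_{\Hb}$ (which follows from Cauchy--Schwarz) gives
\[
\|S_{N'}-S_N\|_{\Hb} = \Big\|\sum_{j=N+1}^{N'} x_j\Big\|_{\Hb} \le \sum_{j=N+1}^{N'} \|x_j\|_{\Hb} < \varepsilon .
\]
Thus $(S_N)_N$ is Cauchy in $\Hb$. Since $\Hb$ is complete, there exists $L\in\Hb$ with $\|S_N - L\|_{\Hb}\to 0$, which is exactly the claim.

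There is no real obstacle. The only point to make explicit is that completeness is part of the definition of a Hilbert space, so no separability or further structure is needed.
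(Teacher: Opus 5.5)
Your proof is correct and takes the same approach as the paper. The paper records the lemma without proof, as an immediate consequence of the completeness of $\Hb$ (the standard fact that absolutely convergent series converge in a Banach space). Your Cauchy-sequence argument via the triangle inequality is exactly the justification it leaves out.
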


A collection $\{e_i\}_{i \in I} \subset \mathbb{H}$, where $I$ is an arbitrary index set, is called an orthonormal collection if $\langle e_i, e_j \rangle = \delta_{ij}$, where $\delta_{ij} = 1$ if $i =j$ and $0$ otherwise.  Given a set $S \subset \Hb$, we define $\textrm{cl}(S)$, the closure of $S$, to be the intersection of all closed sets containing $S$. In general, $\textrm{cl}(S)$ is a closed set. Given a collection of vectors $\{f_i\}_{i \in I}$, where $I$ is an arbitrary index set, we define $\lin\{f_i\}_{i \in I}$ to be the set of all finite linear combinations of elements of $\{f_i\}_{i \in I}$.  If $\textrm{cl}(\lin\{f_i\}_{i \in I})=\Hb$, and  $\{f_i\}_{i \in I}$ is an orthonormal collection,  $\{f_i\}_{i \in I}$ is called a complete orthonormal system for $\Hb$, sometimes abbreviated to CONS. The following result is Parseval's equality and is Theorem 2.4.13 in \cite{TE15}

\begin{lemma}\label{lem:parseval}
If $x \in \Hb$ and $\{e_j\}_{j=1}^{\infty}$ is a CONS, then $\|x\|^2_{\Hb} = \sum_{j=1}^{\infty}\langle x, e_j \rangle^2$.
\end{lemma}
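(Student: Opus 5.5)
The plan is the standard three-step argument: Bessel's inequality, convergence of the Fourier series, and completeness of the system to identify its limit with $x$. Throughout, $\Hb$ is a real separable Hilbert space, as in the paper; this is why $\langle x,e_j\rangle^2$ appears without absolute values.

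\emph{Bessel's inequality.} Fix $x\in\Hb$ and set $c_j:=\langle x,e_j\rangle$ and $s_N:=\sum_{j=1}^N c_j e_j$. Orthonormality gives $\langle x-s_N,e_j\rangle=0$ for $j\le N$, so $x-s_N\perp s_N$. The Pythagorean identity then yields
\[
\|x\|_{\Hb}^2=\|s_N\|_{\Hb}^2+\|x-s_N\|_{\Hb}^2,
\qquad
\|s_N\|_{\Hb}^2=\sum_{j=1}^N c_j^2 .
\]
Hence $\sum_{j=1}^N c_j^2\le\|x\|_{\Hb}^2$ for every $N$, and the series $\sum_j c_j^2$ converges.

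\emph{Convergence of $s_N$.} For $M>N$ we have $\|s_M-s_N\|_{\Hb}^2=\sum_{j=N+1}^M c_j^2$, which tends to zero by the previous step. So $(s_N)$ is Cauchy and, by completeness, converges to some $y\in\Hb$. Lemma~\ref{def:abs_conv_implies_conv} is not directly applicable, since $\sum_j\|c_je_j\|_{\Hb}=\sum_j|c_j|$ need not be finite. This step therefore appeals to completeness via the Cauchy criterion rather than to absolute convergence.

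\emph{Identifying the limit.} Continuity of the inner product gives $\langle x-y,e_j\rangle=\lim_N\langle x-s_N,e_j\rangle=0$ for every $j$. Consequently $x-y$ is orthogonal to $\lin\{e_j\}$, and by continuity it is also orthogonal to $\textrm{cl}(\lin\{e_j\})=\Hb$. In particular $x-y\perp x-y$, so $x=y$. Letting $N\to\infty$ in the identity from the first step, with $\|x-s_N\|_{\Hb}\to0$, gives $\|x\|_{\Hb}^2=\sum_{j=1}^\infty c_j^2$.

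The only delicate point is this last step: showing that the limit of the Fourier series equals $x$. That is exactly where completeness of the orthonormal system (density of its span) is needed, together with passing orthogonality to the closure. Everything else is routine finite-dimensional algebra.
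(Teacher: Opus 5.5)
Your argument is correct and complete: Bessel's inequality, the Cauchy criterion for the partial sums (you rightly note that the paper's absolute-convergence lemma does not apply), and density of the span to identify the limit with $x$. The paper gives no proof of its own and simply quotes the result as Theorem 2.4.13 of Hsing and Eubank; your proof is essentially the standard textbook argument behind that theorem.
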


\subsection{Linear Operators and Operator Norms}

If $T: \Hb_1 \to \Hb_2$ is a linear map between two Hilbert spaces, we define
\begin{equation*}
\textrm{Im}(T) = \{y \in \Hb_2 \mid \exists x \in H_1: T(x)=y\},
\end{equation*}
and 
\begin{equation*}
\textrm{Ker}(T) = \{x \in \Hb_1 \mid T(x)=0\}.
\end{equation*}

\subsubsection{Bounded Operators}
Given a linear map $T:\Hb_1 \to \Hb_2$ between two Hilbert spaces, define 
\[
\|T\|_{\op}=\sup_{x \in \Hb_1, \|x\|_{ \mathbb{H}}=1}\|T(x)\|_{\Hb_2}.
\]
$T$ is said to be bounded if $\| T \|_{\op}< \infty$. Let
\[
\mathcal{B}(\Hb_1,\Hb_2) = \big\{T:\mathbb{H}_1\to \mathbb{H}_2 \mid \|T\|_{\op}<\infty\big\}
\]
denote the set of bounded operators, which is a Banach space by Theorem 3.1.3 of \cite{TE15}. If $\Hb=\Hb_1=\Hb_2$ so that $T$ is a linear operator on $\Hb$, we write $\mathcal{B}(\Hb)$.  The following is Theorem 3.3.1 of \cite{TE15}:

\begin{lemma}
If $T \in \mathcal{B}(\mathbb{H}_1,\mathbb{H}_2)$, then there is a unique $T^*\in \mathcal{B}(\mathbb{H}_2,\mathbb{H}_1)$ such that for all $(x_1, x_2) \in (\Hb_1, \Hb_2)$, $\langle T(x_1), x_2 \rangle_{\Hb_2} =\langle x_1, T^{*}x_2 \rangle_{\Hb_1}$.
\end{lemma}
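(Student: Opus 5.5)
This is the standard existence-of-adjoint result, and I would prove it via the Riesz representation theorem.

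Fix $x_2\in\Hb_2$ and consider the map $\varphi_{x_2}:\Hb_1\to\R$, $x_1\mapsto\langle T(x_1),x_2\rangle_{\Hb_2}$.

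\emph{Step 1: $\varphi_{x_2}$ is a bounded linear functional.} Linearity follows from linearity of $T$ and of the inner product in its first argument. By Cauchy--Schwarz,
\[
|\varphi_{x_2}(x_1)|\le\|T\|_{\op}\|x_1\|_{\Hb_1}\|x_2\|_{\Hb_2},
\]
so $\varphi_{x_2}$ is bounded.

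\emph{Step 2: define $T^*$.} By the Riesz representation theorem there is a unique $y=:T^*x_2\in\Hb_1$ with $\varphi_{x_2}(x_1)=\langle x_1,y\rangle_{\Hb_1}$ for all $x_1\in\Hb_1$. This defines a map $T^*:\Hb_2\to\Hb_1$ satisfying the stated identity.

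\emph{Step 3: linearity of $T^*$.} For scalars $a,b$ and $x_2,x_2'\in\Hb_2$, and every $x_1\in\Hb_1$,
\[
\langle x_1,T^*(ax_2+bx_2')\rangle_{\Hb_1}=\langle T x_1,ax_2+bx_2'\rangle_{\Hb_2}=\langle x_1,aT^*x_2+bT^*x_2'\rangle_{\Hb_1}.
\]
Since this holds for all $x_1$, the two vectors coincide, so $T^*$ is linear.

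\emph{Step 4: boundedness of $T^*$.} Take $x_1=T^*x_2$ in the defining identity:
\[
\|T^*x_2\|_{\Hb_1}^2=\langle TT^*x_2,x_2\rangle_{\Hb_2}\le\|T\|_{\op}\|T^*x_2\|_{\Hb_1}\|x_2\|_{\Hb_2}.
\]
Hence $\|T^*\|_{\op}\le\|T\|_{\op}<\infty$, so $T^*\in\mathcal B(\Hb_2,\Hb_1)$.

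\emph{Step 5: uniqueness.} Suppose $S$ also satisfies the identity. Then $\langle x_1,(S-T^*)x_2\rangle_{\Hb_1}=0$ for all $x_1$. Choosing $x_1=(S-T^*)x_2$ gives $(S-T^*)x_2=0$, and since $x_2$ was arbitrary, $S=T^*$.

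I expect no real obstacle. The only point needing care is that the Riesz representer is applied to a functional that is linear in $x_1$, which matches the real scalar field used throughout the paper; in the complex case the inner product would be conjugate-linear in the second argument and Step 3 would need the corresponding conjugates.
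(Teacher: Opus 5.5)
Your proof is correct and is the standard Riesz-representation argument. The paper gives no proof of its own: it quotes the result as Theorem~3.3.1 of Hsing and Eubank, and your steps (existence via Riesz, linearity, the bound $\|T^*\|_{\op}\le\|T\|_{\op}$, uniqueness) are the usual proof of that theorem; note also that over $\mathbb{C}$ the conjugates in your Step~3 cancel, so the argument works unchanged there too.
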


If $T \in \mathcal{B}(\Hb)$ satisfies $T=T^*$, $T$ is said to be self-adjoint. A self-adjoint operator is said to be non-negative or positive semi-definite if for all $x \in \Hb$, $\langle Tx, x \rangle_\Hb \geq 0.$
 The following is Theorem 3.4.3 of \cite{TE15}:
 
\begin{lemma}
If $T$ is bounded and non-negative, then there exists a bounded non-negative operator $S$ satisfying $S^2:=S\circ S =T$, and we write $S=T^{1/2}$. Further, if $T$ is a bounded operator on $\Hb$, $T^{*}T$ is self-adjoint and non-negative.
\end{lemma}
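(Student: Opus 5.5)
The second assertion is a direct computation, so I would dispose of it first. For $T\in\mathcal B(\Hb_1,\Hb_2)$ (or $\mathcal B(\Hb)$), the adjoint lemma above gives $(T^*T)^*=T^*(T^*)^*=T^*T$, so $T^*T$ is self-adjoint. Moreover, $\langle T^*Tx,x\rangle=\langle Tx,Tx\rangle=\|Tx\|^2\ge0$, which gives non-negativity.

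For the square root I would use the classical power-series construction. The case $T=0$ is trivial, so by rescaling (set $S:=\|T\|_{\op}^{1/2}S_0$ with $S_0$ the root of $T/\|T\|_{\op}$) I may assume $\|T\|_{\op}\le1$. Put $A:=\id-T$, which is self-adjoint. The key preliminary step is $\|A\|_{\op}\le1$. Indeed, $0\le\langle Tx,x\rangle\le\|T\|_{\op}\|x\|^2\le\|x\|^2$, so $\langle Ax,x\rangle\in[0,\|x\|^2]$. For a self-adjoint operator the operator norm equals $\sup_{\|x\|=1}|\langle Ax,x\rangle|$, hence $\|A\|_{\op}\le1$.

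Next I would use the binomial expansion $\sqrt{1-z}=1-\sum_{k\ge1}c_kz^k$, where $c_k\ge0$ and $\sum_k c_k=1$. The latter follows by monotone convergence as $z\uparrow1$, since the left side tends to $0$. Since $\|A^k\|_{\op}\le1$, the series
\[
S:=\id-\sum_{k\ge1}c_kA^k
\]
converges absolutely in the Banach space $\mathcal B(\Hb)$ (Lemma~\ref{def:abs_conv_implies_conv} in its Banach-space form). The operator $S$ is self-adjoint because each $A^k$ is. It is non-negative because
\[
\langle Sx,x\rangle=\|x\|^2-\sum_k c_k\langle A^kx,x\rangle\ge\|x\|^2-\sum_kc_k\|x\|^2=0.
\]

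The step I expect to be the main obstacle is verifying $S^2=T$, because the scalar identity $(1-\sum c_kz^k)^2=1-z$ must be transferred to operators at the boundary point of convergence, where $|z|=\|A\|=1$ is allowed. I would handle this with Mertens'/Cauchy's product theorem. Since $\sum_k c_k<\infty$, the Cauchy product of the absolutely convergent operator series converges absolutely. Its coefficients are those of the formal product, and these equal the coefficients of $1-z$ by uniqueness of power-series coefficients on $|z|<1$. Hence $S^2=\id-A=T$, and undoing the rescaling finishes the proof.
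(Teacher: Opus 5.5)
Your proof is correct, but it takes a different route from the paper. The paper gives no argument for this lemma and simply quotes it as background, both assertions together, from Theorem~3.4.3 of Hsing and Eubank.

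Your version is self-contained and constructive: you apply the binomial series of $\sqrt{1-z}$ to $A=\id-T$, and every step checks out.
\begin{itemize}
\item The identity $\|A\|_{\op}=\sup_{\|x\|=1}|\langle Ax,x\rangle|$ holds for self-adjoint operators on real Hilbert spaces as well. Here you rely on the paper's convention that non-negative operators are self-adjoint; over $\R$, the condition $\langle Tx,x\rangle\ge0$ alone would not give this.
\item The only delicate point is transferring the scalar identity to the boundary case $\|A\|_{\op}=1$. You handle it correctly: $\sum_k c_k=1<\infty$ makes the series for $S$ absolutely convergent in the Banach algebra $\mathcal B(\Hb)$, so the Cauchy product theorem applies. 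Uniqueness of power-series coefficients on $|z|<1$ then identifies the product coefficients with those of $1-z$.
\end{itemize}

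What your construction gives beyond the bare statement is that $S$ is a norm limit of polynomials in $T$. Hence $S$ commutes with every bounded operator that commutes with $T$.

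What it does not give is uniqueness of the non-negative square root. The notation $S=T^{1/2}$ tacitly presumes uniqueness, and it is part of the standard square-root theorem being cited. Since the lemma as stated only asks for existence, this is not a gap in your proof.
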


\subsubsection{Compact Operators}
The following is Definition 4.1.1 from \cite{TE15}.
\begin{definition}[Compact Linear Transformation]
A linear transformation $T$ from a normed space $\mathbb{X}_1$ into a normed space $\mathbb{X}_2$ is called a compact transformation if for any bounded sequence $\{x_n\} \subset \mathbb{X}_1$, $\{Tx_n\}$ contains a convergent subsequence in $\mathbb{X}_2$.
\end{definition}

The following is Definition 3.4.6 in \cite{TE15}.

\begin{definition}\label{def:kronecker_product}
Let $x_1, x_2 \in \mathbb{H}$. Then,
\begin{equation}
(x_1 \otimes x_2):\mathbb{H}\rightarrow \mathbb{H}
\end{equation}
is the linear operator defined for each $y \in \mathbb{H}$ by
\begin{equation}\label{def:tensor_product}
(x_1 \otimes x_2)(y)=\langle x_1, y \rangle_{\mathbb{H}} x_2. 
\end{equation}
\end{definition}
A simple calculation shows $\otimes$ is bi-linear and that $\|x_1 \otimes x_2\|_{\op}=\|x_1\|_\Hb \|x_2\|_\Hb$, whence $(x_1 \otimes x_2) \in \mathcal B(\Hb)$; see Theorem 3.4.7 in \cite{TE15} and Lemma~\ref{lem:norm_kronecker_product} below.

The following is Theorem 4.2.4 of \cite{TE15}:
 
\begin{theorem}
Suppose $T$ is a compact, self-adjoint operator. The set of non-zero eigenvalues of $T$ is either finite or consists of a sequence that tends to 0. Each non-zero eigenvalue has finite multiplicity, and eigenvectors corresponding to different eigenvalues are orthogonal. Let $\lambda_1, \lambda_2, \dots$ be the eigenvalues of $T$ ordered so that $|\lambda_1| \geq |\lambda_2|\geq \dots$, and let $e_1, e_2, \dots$ be the corresponding orthonormal eigenvectors obtained from the Gram-Schmidt procedure, necessary in case that there are eigenvalues of non-unit multiplicity. Then $\{e_j\}$ is a CONS for $\textrm{cl} \hspace{.1cm}\textrm{Im}(T)$, and 
\[
T=\sum_{j \geq 1}\lambda_j(e_j \otimes e_j),
\]
i.e. for every $x \in \mathbb{H}$,

\[
T(x)=\sum_{j \geq 1}\lambda_j \langle x, e_j \rangle_{\mathbb{H}} e_j.
\]
\end{theorem}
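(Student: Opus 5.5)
This is a textbook result (Theorem~4.2.4 of \cite{TE15}), and I would prove it by the standard variational argument. I would first collect the algebraic facts, which follow directly from self-adjointness. If $Tx=\lambda x$ with $x\neq0$, then $\lambda\|x\|^2=\langle Tx,x\rangle=\langle x,Tx\rangle=\bar\lambda\|x\|^2$, so every eigenvalue is real. If $Tx=\lambda x$ and $Ty=\mu y$ with $\lambda\neq\mu$, then $(\lambda-\mu)\langle x,y\rangle=\langle Tx,y\rangle-\langle x,Ty\rangle=0$, so eigenvectors for distinct eigenvalues are orthogonal.

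Next I would use compactness for the structure of the nonzero spectrum. Fix $\varepsilon>0$ and suppose there were infinitely many orthonormal eigenvectors $e_1,e_2,\dots$ with eigenvalues $|\lambda_j|\ge\varepsilon$. These could come from one eigenspace of infinite dimension, or from infinitely many distinct eigenvalues, which are orthogonal by the first step. Then
\[
\|Te_i-Te_j\|^2=\lambda_i^2+\lambda_j^2\ge 2\varepsilon^2,\qquad i\neq j .
\]
So $(Te_j)$ has no Cauchy subsequence, although $(e_j)$ is bounded, which contradicts compactness. Hence each nonzero eigenvalue has finite multiplicity, and only finitely many eigenvalues exceed $\varepsilon$ in modulus. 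Therefore the nonzero eigenvalues can be ordered as $|\lambda_1|\ge|\lambda_2|\ge\cdots$, and this list is either finite or tends to $0$.

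The key step, and the one I expect to be the main obstacle, is existence: a nonzero compact self-adjoint $T$ has an eigenvalue $\lambda$ with $|\lambda|=\|T\|_{\op}$. I would first use the identity $\|T\|_{\op}=\sup_{\|x\|=1}|\langle Tx,x\rangle|$, which follows from self-adjointness via a polarization and parallelogram argument. I would then pick unit vectors $x_n$ with $\langle Tx_n,x_n\rangle\to\lambda$ and $|\lambda|=\|T\|_{\op}$. Expanding the square gives
\[
\|Tx_n-\lambda x_n\|^2\le\|T\|_{\op}^2-2\lambda\langle Tx_n,x_n\rangle+\lambda^2\to0 .
\]
By compactness, $Tx_{n_k}\to y$ along a subsequence. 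Then $\lambda x_{n_k}\to y$, so $\|y\|=|\lambda|\neq0$, and continuity of $T$ gives $Ty=\lambda y$.

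Finally I would iterate. Let $V_N=\operatorname{span}\{e_1,\dots,e_N\}$, where the $e_j$ are the eigenvectors already found. Both $V_N$ and $V_N^\perp$ are $T$-invariant, because $T$ is self-adjoint. The restriction $T|_{V_N^\perp}$ is again compact and self-adjoint, so the existence step produces $e_{N+1}$ with $|\lambda_{N+1}|=\|T|_{V_N^\perp}\|_{\op}$. This gives
\[
\Big\|T-\sum_{j\le N}\lambda_j\,e_j\otimes e_j\Big\|_{\op}=|\lambda_{N+1}|\to0 .
\]
If instead the restriction is $0$ at some stage, the process terminates and the sum is finite. In either case the expansion $T=\sum_{j}\lambda_j\,e_j\otimes e_j$ holds, and pointwise $Tx=\sum_j\lambda_j\langle x,e_j\rangle e_j$. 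It remains to identify the closed span of the $e_j$. The expansion gives $\operatorname{Im}(T)\subseteq\textrm{cl}\,\lin\{e_j\}$. Conversely, $e_j=\lambda_j^{-1}Te_j\in\operatorname{Im}(T)$, so $\textrm{cl}\,\lin\{e_j\}\subseteq\textrm{cl}\,\operatorname{Im}(T)$. Hence $\{e_j\}$ is a CONS for $\textrm{cl}\,\operatorname{Im}(T)$.
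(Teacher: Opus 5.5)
The paper gives no proof of this statement; it quotes Theorem~4.2.4 of \cite{TE15}. Your argument is the standard proof of that theorem and is correct: you obtain an eigenvalue of modulus $\|T\|_{\op}$ from $\|T\|_{\op}=\sup_{\|x\|=1}|\langle Tx,x\rangle|$ together with compactness, and then iterate on the invariant complements $V_N^\perp$.

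The one step you leave implicit is that your iteratively constructed list $(\lambda_j,e_j)$ is the full list of nonzero eigenvalues counted with multiplicity, which the statement presupposes. Take $Tv=\mu v$ with $\mu\neq0$. Your expansion gives $v=\mu^{-1}Tv\in\operatorname{cl}\lin\{e_j\}$. Pairing $\mu v=\sum_j\lambda_j\langle v,e_j\rangle e_j$ with $e_i$ gives $\langle v,e_i\rangle=0$ unless $\lambda_i=\mu$, so the $\mu$-eigenspace is spanned by $\{e_j:\lambda_j=\mu\}$.

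It follows that $\sum_{j:\lambda_j=\mu}e_j\otimes e_j$ is the orthogonal projection onto that finite-dimensional eigenspace. Hence the expansion and the CONS claim carry over to any Gram--Schmidt choice of eigenvectors and any ordering within blocks of equal modulus.
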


The following is Lemma 4.2.7 in \cite{TE15}.

\begin{lemma}\label{lem:min_max_principle_Hilbert_space}
Let $T$ be a non-negative definite self-adjoint compact operator on a Hilbert space $\mathbb{H}$ with eigenvalues $\lambda_1 \geq \lambda_2 \geq \dots \geq 0$. Then,

\begin{equation}\label{eq_1:lem:min_max_principle_Hilbert_space}
\lambda_k=\max_{v_1, \dots v_k \in \mathbb{H}}\min_{v \in \lin\{v_1, \dots, v_k\}}\frac{\langle Tv, v \rangle }{\|v\|^2}
\end{equation}
and 
\begin{equation}\label{eq_2:lem:min_max_principle_Hilbert_space}
\lambda_k=\min_{v_1, \dots v_{k-1} \in \mathbb{H}}\max_{v \in \lin\{v_1, \dots, v_{k-1}\}^{\perp}}\frac{\langle Tv, v \rangle }{\|v\|^2},
\end{equation}
where the maximum and minimum in Equations \eqref{eq_1:lem:min_max_principle_Hilbert_space} and \eqref{eq_2:lem:min_max_principle_Hilbert_space} are each attained when $v_1, \dots, v_k$ are equal to $e_1, \dots, e_k$, the eigenvectors corresponding to $\lambda_1, \dots, \lambda_k$.
\end{lemma}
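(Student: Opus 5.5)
The plan is to reduce both identities to the spectral theorem for compact self-adjoint operators stated just above, together with an elementary dimension count. Write $T=\sum_{j\ge1}\lambda_j(e_j\otimes e_j)$, with $\{e_j\}$ a CONS for $\mathrm{cl}\,\mathrm{Im}(T)$, and complete it to a CONS of $\mathbb H$ by adjoining an orthonormal basis of $\mathrm{Ker}(T)$, on which the eigenvalue is $0$. If $T$ has only finitely many nonzero eigenvalues, set $\lambda_j=0$ for the remaining indices. For $v\neq 0$, Parseval's equality (Lemma~\ref{lem:parseval}) gives the Rayleigh quotient representation
\[
R(v):=\frac{\langle Tv,v\rangle}{\|v\|^2}=\frac{\sum_j\lambda_j\langle v,e_j\rangle^2}{\sum_j\langle v,e_j\rangle^2},
\]
so $R(v)$ is a weighted average of the $\lambda_j$. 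In \eqref{eq_1:lem:min_max_principle_Hilbert_space} I will read the minimum as taken over nonzero $v$ and the vectors $v_1,\dots,v_k$ as linearly independent. Without linear independence the formula fails: for $k=2$ and $v_1=v_2=e_1$ the inner minimum equals $\lambda_1$, which may exceed $\lambda_2$. This reading is the one intended by \cite{TE15}.

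For \eqref{eq_1:lem:min_max_principle_Hilbert_space}, first take $v_i=e_i$. Every nonzero $v\in\lin\{e_1,\dots,e_k\}$ has $R(v)$ equal to an average of $\lambda_1,\dots,\lambda_k$, hence $R(v)\ge\lambda_k$, and equality holds at $v=e_k$. So the inner minimum equals $\lambda_k$ and is attained. For the reverse inequality, let $v_1,\dots,v_k$ be arbitrary linearly independent vectors. The linear map $v\mapsto(\langle v,e_1\rangle,\dots,\langle v,e_{k-1}\rangle)$ sends the $k$-dimensional space $\lin\{v_1,\dots,v_k\}$ into $\mathbb R^{k-1}$, so it has a nonzero kernel vector $v$. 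For this $v$ the weights sit on indices $j\ge k$, so $R(v)\le\lambda_k$. Hence the inner minimum is at most $\lambda_k$, and the maximum over all choices equals $\lambda_k$.

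For \eqref{eq_2:lem:min_max_principle_Hilbert_space}, take $v_i=e_i$ for $i\le k-1$. On $\lin\{e_1,\dots,e_{k-1}\}^\perp$ the weights sit on indices $j\ge k$, so $R\le\lambda_k$ there, with equality at $e_k$. For arbitrary $v_1,\dots,v_{k-1}$, the subspace $W=\lin\{v_1,\dots,v_{k-1}\}^\perp$ has codimension at most $k-1$. The same kernel argument then gives a nonzero $v\in W\cap\lin\{e_1,\dots,e_k\}$, and for it $R(v)\ge\lambda_k$. Hence the maximum over $W$ is at least $\lambda_k$, which gives the min–max identity.

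The only genuinely delicate point I anticipate is that the inner maximum in \eqref{eq_2:lem:min_max_principle_Hilbert_space} is actually attained for a general closed subspace $W$, rather than being only a supremum. To handle this I would apply the spectral theorem to the compression $P_WTP_W$, which is again compact, self-adjoint and nonnegative. Its supremum of $R$ over $W$ equals its largest eigenvalue, which is attained at a corresponding eigenvector; if that eigenvalue is $0$, then $R\equiv 0$ on $W$ and the maximum is trivially attained. The attainment claims at $v_i=e_i$ for both formulas were already verified directly above.
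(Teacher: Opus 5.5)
Your argument is correct. The paper itself offers no proof of this lemma: it is simply quoted from \cite{TE15}. What you give is the textbook Courant--Fischer argument, so there is no competing route to compare against. It has two ingredients: the spectral expansion turns the Rayleigh quotient into a weighted average of eigenvalues, with kernel directions carrying weight on the eigenvalue $0\le\lambda_k$; and a dimension count gives $W\cap\lin\{e_1,\dots,e_k\}\neq\{0\}$ whenever $W$ has codimension at most $k-1$.

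Your proposal adds some things the statement as quoted in the paper leaves implicit, and they are correct.
\begin{itemize}
\item The max--min identity is false unless $v_1,\dots,v_k$ are linearly independent and $v\neq0$. Your example $v_1=v_2=e_1$ shows this.
\item For an arbitrary independent $k$-tuple, the inner minimum is attained because the unit sphere of a finite-dimensional subspace is compact.
\item For the min--max identity, the inner ``max'' over a general $W=\lin\{v_1,\dots,v_{k-1}\}^\perp$ needs justification.
\end{itemize}

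Your compression argument handles that last point, but spell out two details.
\begin{itemize}
\item The Rayleigh quotient of $P_WTP_W$ over all of $\mathbb H$ has the same supremum as $R$ over $W$. This holds because $\langle P_WTP_Wv,v\rangle=\langle TP_Wv,P_Wv\rangle\ge0$ and $\|P_Wv\|\le\|v\|$.
\item An eigenvector $u$ of $P_WTP_W$ for a positive top eigenvalue $\mu$ lies in $W$, since $u=\mu^{-1}P_WTP_Wu$.
\end{itemize}
With these two remarks recorded, the proof is complete.
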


The following is Theorem 4.2.8 of \cite{TE15}.

\begin{lemma}\label{lem:weyl_eigen_value_bound_Hilbert_space}
Let $T$ and ${\tilde T}$ be non-negative definite, self-adjoint, compact operators with eigenvalue sequences $\{\lambda_j\}$ and $\{\tilde{\lambda}_j\}$ respectively. Then, 
\[
\sup_{k \geq 1}|\lambda_j -\tilde{\lambda}_j| \leq \|{T} - {\tilde T}\|_{\op}. 
\]
\end{lemma}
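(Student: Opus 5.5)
The plan is to derive the bound directly from the Courant--Fischer characterisation in Lemma~\ref{lem:min_max_principle_Hilbert_space}. The inequality is symmetric in $T$ and $\tilde T$, so it suffices to prove the one-sided bound
\[
\lambda_k \le \tilde\lambda_k + \|T-\tilde T\|_{\op}
\]
for every $k\ge 1$ and then swap the roles of the two operators. Write $\varepsilon:=\|T-\tilde T\|_{\op}$.

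The starting observation is a pointwise quadratic-form comparison. Since $T-\tilde T$ is bounded and self-adjoint, Cauchy--Schwarz gives, for every $v\ne0$,
\[
\frac{\langle Tv,v\rangle}{\|v\|^2}
=\frac{\langle \tilde Tv,v\rangle}{\|v\|^2}+\frac{\langle (T-\tilde T)v,v\rangle}{\|v\|^2}
\le \frac{\langle \tilde Tv,v\rangle}{\|v\|^2}+\varepsilon .
\]

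Next I use the max--min formula \eqref{eq_1:lem:min_max_principle_Hilbert_space} for $\lambda_k$. Fix arbitrary $v_1,\dots,v_k\in\Hb$ and let $W$ be their span. The minimum over $v\in W$ of the Rayleigh quotient of $T$ is at most the minimum over $v\in W$ of the Rayleigh quotient of $\tilde T$, plus $\varepsilon$. Taking the maximum over $v_1,\dots,v_k$ on both sides and applying \eqref{eq_1:lem:min_max_principle_Hilbert_space} once to $T$ and once to $\tilde T$ yields $\lambda_k\le\tilde\lambda_k+\varepsilon$. Monotonicity of $\min$ and $\max$ under a uniform additive shift is all that is needed here, and the attainment of the extrema is supplied by Lemma~\ref{lem:min_max_principle_Hilbert_space}. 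Exchanging $T$ and $\tilde T$ gives the reverse inequality, so $|\lambda_k-\tilde\lambda_k|\le\varepsilon$ for each $k$, and taking the supremum over $k$ finishes the argument.

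The main point requiring care is bookkeeping rather than analysis. The eigenvalue sequences must be indexed so that both are infinite: when an operator has finite rank, I pad its sequence with zeros. Degenerate choices in the max--min, such as $v_1,\dots,v_k$ spanning fewer than $k$ dimensions, must not break the formula. I would handle both points by noting that, for non-negative compact operators, the padded zeros are exactly what the min--max formula returns in those cases. With that convention the two-line comparison above applies verbatim for every $k$.
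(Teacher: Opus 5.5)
Your argument is correct and is the standard Courant--Fischer proof of Weyl's inequality. The paper gives no argument of its own; it quotes the result as Theorem~4.2.8 of Hsing and Eubank, which is obtained in exactly this way from the min--max principle stated just before it (Lemma~\ref{lem:min_max_principle_Hilbert_space}).

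One remark in your bookkeeping is wrong. Degenerate tuples in the max--min formula \eqref{eq_1:lem:min_max_principle_Hilbert_space} do not return the padded zeros; they can return too large a value. For example, taking $v_1=\dots=v_k$ equal to the top eigenvector gives $\lambda_1$. So you should maximise only over linearly independent $v_1,\dots,v_k$. Alternatively, run the same pointwise comparison through the min--max form \eqref{eq_2:lem:min_max_principle_Hilbert_space}, which stays valid for degenerate choices. Your zero-padding for finite-rank operators is fine.
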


    \subsubsection{Hilbert-Schmidt Operators}
Let $\{e_i\}$ be a CONS for $\Hb_1$, and let $T \in \mathcal{B}(\Hb_1, \Hb_2)$. If $T$ satisfies 
\begin{equation}
\|T\|_{\HS}:=\Big(\sum_{i=1}^{\infty}\|T(e_i)\|^2_{\Hb_2}\Big)^{1/2}<\infty,
\end{equation}
then $T$ is called a Hilbert-Schmidt operator. The collection of $T \in \mathcal{B}(H_1, H_2)$ such that $\|T\|_{\HS}$ is finite is called the collection of Hilbert-Schmidt operators, $\mathcal{B}_{\HS}$. The space of Hilbert-Schmidt operators is a Hilbert space for a particular choice of inner-product. Further more, every Hilbert-Schmidt operator is a compact operator, and if $T$ is a self-adjoint positive semi-definite Hilbert-schmidt operator, we have
\begin{equation} \label{eq:hilber-schmidt-eigenvalues}
\|T\|_{\HS} = \Big(\sum_{j=1}^{\infty}\lambda^2_j \Big)^{1/2}.
\end{equation}

\subsubsection{Trace-class operators}

\begin{definition}\label{def:trace_norm}
If $T \in \mathcal{B}(\Hb)$, we define the trace-norm of $T$ as follows. Given any CONS $\{e_i\}_{i=1}^{\infty}$ for $\Hb$ define
\[
\|T\|_{\Tr}:=\sum_{j=1}^{\infty}\langle (T^*T)^{1/2}e_j,e_j \rangle_{\Hb}.
\]
Then define
\[
\mathcal{B}_{\Tr}=\mathcal{B}_{\Tr}(\Hb):=\{T \in \mathcal{B}(\Hb) \, : \, \|T\|_{\Tr}<\infty\},
\]
\end{definition}
It is well know than that the normed vector space $\mathcal{B}_{\Tr}$ is a Banach space. If $T\in \mathcal{B}_{\Tr}$ the trace of $T$ is defined, following \citet[page 114]{TE15}, as follows. Given any CONS $\{e_i\}_{i=1}^{\infty}$ for $\Hb$ let
\[
\Tr(T):=\sum_{j=1}^\infty \langle Te_j, e_j\rangle_\Hb.
\]
If $T$ is self-adjoint and non-negative, then
\begin{equation}
\label{eq:trace-as-sum-eigenvalues}
\Tr(T)=\|T\|_\Tr=\sum_{j\ge 1}\lambda_j,
\end{equation}
where $\{\lambda_j\}$ is the collection of non-zero eigenvalues of $T$ \citep[Equation (4.35)]{TE15}. In the case of general $T\in \mathcal B_\Tr$, it holds that
\begin{equation*}
\|T\|_\Tr=\sum_{j\ge 1}\sigma_j
\end{equation*}
where $\{\sigma_j\}$ is the collection of non-zero eigenvalues of $T^*T$, also called the singular values of $T$.
Every trace class operator $T \in \mathcal B(\Hb)$ is also a Hilbert-Schmidt operator, and it satisfies 
\begin{align} \label{eq:operator-norms-inequality}
\big\|T\big\|_{\op} \leq \big\|T\big\|_{\HS} \leq \big\|T\big\|_{\Tr}.
\end{align}

\begin{lemma}\label{lem:norm_kronecker_product}
For $x, y \in \Hb$, we have $(x \otimes y)^*=(y \otimes x)$ and
\[
\|x \otimes y\|_{\op}=\|x \otimes y\|_{\HS}=\|x \otimes y\|_{\Tr} = \|x\|_\Hb\|y\|_\Hb.
\]
\end{lemma}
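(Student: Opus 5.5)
The plan is to reduce everything to the observation that $x\otimes y$ is a rank-one operator with a single nonzero singular value equal to $\|x\|_\Hb\|y\|_\Hb$. First I will dispose of the degenerate case. If $x=0$ or $y=0$, then $x\otimes y=0$ and $y\otimes x=0$, so every claimed identity reads $0=0$. Henceforth I assume $x,y\neq0$.

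\emph{Adjoint.} By Definition~\ref{def:kronecker_product}, for all $u,v\in\Hb$,
\[
\langle (x\otimes y)u,v\rangle_\Hb=\langle x,u\rangle_\Hb\langle y,v\rangle_\Hb
=\big\langle u,\langle y,v\rangle_\Hb\, x\big\rangle_\Hb
=\langle u,(y\otimes x)v\rangle_\Hb .
\]
Since $x\otimes y$ is bounded (Cauchy--Schwarz gives $\|(x\otimes y)u\|_\Hb\le\|x\|_\Hb\|y\|_\Hb\|u\|_\Hb$), the uniqueness of the adjoint gives $(x\otimes y)^*=y\otimes x$.

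\emph{Singular values.} Using the adjoint formula, for $u\in\Hb$,
\[
(x\otimes y)^*(x\otimes y)u=(y\otimes x)\big(\langle x,u\rangle_\Hb\, y\big)=\langle x,u\rangle_\Hb\|y\|_\Hb^2\, x ,
\]
so $T^*T=\|y\|_\Hb^2\,(x\otimes x)$ with $T:=x\otimes y$. This operator vanishes on $x^\perp$ and maps $e:=x/\|x\|_\Hb$ to $\|x\|_\Hb^2\|y\|_\Hb^2\, e$. Hence its only nonzero eigenvalue is $\|x\|_\Hb^2\|y\|_\Hb^2$, and this eigenvalue is simple. It follows that $(T^*T)^{1/2}=\|x\|_\Hb\|y\|_\Hb\,(e\otimes e)$, and the unique nonzero singular value of $T$ is $\sigma=\|x\|_\Hb\|y\|_\Hb$.

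\emph{Norms.} Complete $e$ to a CONS $\{e_j\}$ with $e_1=e$. Then
\[
\|T\|_{\Tr}=\sum_j\langle (T^*T)^{1/2}e_j,e_j\rangle_\Hb=\sigma ,
\qquad
\|T\|_{\HS}^2=\sum_j\|Te_j\|_\Hb^2=\sum_j\langle x,e_j\rangle_\Hb^2\|y\|_\Hb^2=\sigma^2 ,
\]
where the last equality is Parseval (Lemma~\ref{lem:parseval}). For the operator norm, Cauchy--Schwarz already gives $\|T\|_{\op}\le\sigma$, and evaluating at $u=e$ gives $\|Te\|_\Hb=\|x\|_\Hb\|y\|_\Hb$, so $\|T\|_{\op}=\sigma$. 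Combining these, $\|T\|_{\op}=\|T\|_{\HS}=\|T\|_{\Tr}=\|x\|_\Hb\|y\|_\Hb$.

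No step is a real obstacle. The only point needing care is that the square root $(T^*T)^{1/2}$ is identified correctly. This follows because $\|y\|_\Hb^2(x\otimes x)=\sigma^2(e\otimes e)$ is a nonnegative multiple of the orthogonal projection $e\otimes e$. Its nonnegative square root is therefore $\sigma(e\otimes e)$, by uniqueness of nonnegative square roots.
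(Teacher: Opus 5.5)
Your proof is correct and follows essentially the same route as the paper: derive the adjoint from the defining identity, show that $T^*T$ is a nonnegative multiple of the rank-one projection onto $\operatorname{span}\{x\}$, and evaluate the trace norm with a CONS whose first element is $x/\|x\|_\Hb$. The only difference is that you compute $\|T\|_{\op}$ and $\|T\|_{\HS}$ directly and handle $x=0$ or $y=0$ explicitly. The paper instead obtains these two norms by sandwiching with $\|T\|_{\op}\le\|T\|_{\HS}\le\|T\|_{\Tr}$ together with the cited identity $\|x\otimes y\|_{\op}=\|x\|_\Hb\|y\|_\Hb$.
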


\begin{proof}
In view of \eqref{eq:operator-norms-inequality} and $\|x \otimes y\|_{\op} = \|x\|_\Hb\|y\|_\Hb$ by Theorem 3.4.7 in \cite{TE15}, it is sufficient to show that $\|x \otimes y\|_{\Tr} \le  \|x\|_\Hb\|y\|_\Hb$. Without loss of generality, by bi-linearity of $\otimes$, assume that $\|x\|_\Hb=\|y\|_\Hb=1$.

First note that for any $a, b \in \Hb$ we have
\[
\langle (x \otimes y)(a), b \rangle =  \langle x,a\rangle \langle  y, b \rangle 
\]
Interchanging $y$ with $x$ and $a$ with $b$ (and symmetry of the real inner product) gives 
\[
\langle a, (y \otimes x)b \rangle=\langle  y, b \rangle \langle x,a\rangle 
\]
Thus $(x \otimes y)^*= y\otimes x$. Next, for any  $w \in \Hb$, we have
\begin{align*}
\big((x \otimes y)^*(x \otimes y)\big)(w)  
=
(y \otimes x) \big( \langle x,w\rangle y\big)
=
\langle x,w\rangle \langle y,y \rangle x 
= 
\langle x,w\rangle  x = (x \otimes x) w,
\end{align*}
which shows that $(x \otimes y)^*(x \otimes y) = (x\otimes x)$.
Let $e_1=x$ and let $e_2, e_3 \dots $ be a complete orthonormal system for $\textrm{span}\{x\}^{\perp}$, so that $e_1, e_2, \dots$ is a complete orthonormal system for $\Hb$. Then, since  $(x \otimes x) = (x\otimes x)^{2}$,
\[
\big\|x\otimes y\big\|_{\Tr} = \sum_{j=1}^{\infty} \langle (x\otimes x)e_j, e_j \rangle_\Hb = \sum_{j=1}^{\infty} \langle (e_1\otimes e_1)e_j, e_j \rangle_\Hb =\langle (e_1\otimes e_1)e_1, e_1 \rangle_\Hb=1.
\]
\end{proof}

\begin{lemma}[Variational characterization of the trace norm and rank-one trace identity]
\label{lem:trace_norm_variational_rank_one}
Let \(\Hb\) be a separable Hilbert space.
Then the following assertions hold.
\begin{enumerate}[(i)]
\item If \(T\in\mathcal B_{\Tr}(\Hb)\) and \(A\in\mathcal B(\Hb)\), then
\(AT\) and \(TA\) are trace class and
\begin{equation}
\label{eq:trace_ideal_inequality}
\bigl|\Tr(AT)\bigr|
\le
\|A\|_{\op}\|T\|_{\Tr}.
\end{equation}

\item For every \(T\in\mathcal B_{\Tr}(\Hb)\),
\begin{equation*}
\|T\|_{\Tr}
=
\sup_{\substack{A\in\mathcal B(\Hb)\\ \|A\|_{\op}\le1}}
\bigl|\Tr(AT)\bigr|.
\end{equation*}

\item Let \(x,y\in\Hb\). Then recalling Definition~\ref{def:kronecker_product} it holds that  \(x\otimes y\) is trace class and, for every
\(A\in\mathcal B(\Hb)\),
\begin{equation*}
\Tr\bigl(A(x\otimes y)\bigr)
=
\langle Ax,y\rangle_{\Hb}.
\end{equation*}
\end{enumerate}
\end{lemma}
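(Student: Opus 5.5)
The plan is to prove the three items of Lemma~\ref{lem:trace_norm_variational_rank_one} from the singular value (polar/Schmidt) decomposition of a trace-class operator. By the theory of compact operators, every $T\in\mathcal B_{\Tr}(\Hb)$ admits a representation
\[
T=\sum_{j\ge1}\sigma_j\,(u_j\otimes v_j),
\]
where $(u_j)$ and $(v_j)$ are orthonormal systems and $\sigma_j\ge0$ are the singular values with $\sum_j\sigma_j=\|T\|_{\Tr}$. Recall the convention $(u\otimes v)(y)=\langle u,y\rangle v$. The series converges in trace norm, by the triangle inequality and Lemma~\ref{lem:norm_kronecker_product}, which gives $\|u_j\otimes v_j\|_{\Tr}=1$. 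We prove (iii) first, since (i) and (ii) then reduce to computations on rank-one pieces.

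For (iii), Lemma~\ref{lem:norm_kronecker_product} already gives that $x\otimes y$ is trace class. For $A$ bounded, the operator $A(x\otimes y)$ sends $z$ to $\langle x,z\rangle Ay$, so it equals $x\otimes Ay$. Choose a CONS $(e_j)$. Then
\[
\Tr\bigl(A(x\otimes y)\bigr)=\sum_j\langle x,e_j\rangle\langle Ay,e_j\rangle=\langle x,Ay\rangle
\]
by Parseval's identity (Lemma~\ref{lem:parseval}, polarized). This equals $\langle A^*x,y\rangle$, whereas the stated identity reads $\langle Ax,y\rangle$. Under the paper's definition the product $A(x\otimes y)$ is of the form $\langle x,\cdot\rangle Ay$, so the two expressions differ by replacing $A$ with $A^*$. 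I would therefore check the convention carefully: either the intended identity is $\langle x,Ay\rangle$, or one should use $\Tr((x\otimes y)A)$. In the application (the proof of Lemma~\ref{lem:convergence_to_long_term_trace}) a supremum over $\|A\|_{\op}\le1$ is taken, and that class is closed under adjoints, so either form suffices. I would record whichever identity is consistent with the definition and note this invariance.

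For (i), expand $AT=\sum_j\sigma_j\,A(u_j\otimes v_j)=\sum_j\sigma_j\,(u_j\otimes Av_j)$. Each term has trace norm $\|u_j\|\,\|Av_j\|\le\|A\|_{\op}$ by Lemma~\ref{lem:norm_kronecker_product}. The series therefore converges absolutely in the Banach space $\mathcal B_{\Tr}$, so $AT$ is trace class with $\|AT\|_{\Tr}\le\|A\|_{\op}\|T\|_{\Tr}$. The same argument applies to $TA=\sum_j\sigma_j(A^*u_j\otimes v_j)$. The trace is a linear functional bounded by the trace norm, since $|\Tr(S)|\le\|S\|_{\Tr}$; this is standard and follows from the singular value decomposition applied to $S$ together with Cauchy--Schwarz. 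It is therefore continuous on $\mathcal B_{\Tr}$, and by (iii) we obtain $|\Tr(AT)|\le\sum_j\sigma_j|\langle u_j,Av_j\rangle|\le\|A\|_{\op}\|T\|_{\Tr}$.

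For (ii), the inequality ``$\ge$'' is (i). For ``$\le$'', take the partial isometry $A:=\sum_j u_j\otimes v_j$, which maps $v_j\mapsto u_j$ under the stated convention and has $\|A\|_{\op}\le1$. Then term by term $\Tr(AT)=\sum_j\sigma_j\langle u_j,Av_j\rangle=\sum_j\sigma_j=\|T\|_{\Tr}$, possibly with $A$ replaced by $A^*$ to match the convention fixed in (iii).

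The main obstacle is bookkeeping rather than depth. It consists of getting the tensor convention consistent throughout, and justifying that the trace may be interchanged with the infinite sum. That interchange follows from continuity of $\Tr$ on $\mathcal B_{\Tr}$, which I would state as a cited standard fact.
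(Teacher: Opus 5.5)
Your argument is correct, and it takes a different route from the paper, whose proof is entirely by citation. There, (i) and (iii) are quoted from Conway. Item (ii) is then obtained by combining Conway's variational formula over compact contractions, which gives $\|T\|_{\Tr}\le\sup_{\|A\|_{\op}\le1}|\Tr(AT)|$ because compact operators are bounded, with (i) for the reverse inequality. Your Schmidt-decomposition route is self-contained and gives the ideal bound $\|AT\|_{\Tr}\le\|A\|_{\op}\|T\|_{\Tr}$ directly. It also exhibits the norming operator in (ii) explicitly, whereas the paper's route buys brevity.

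Two bookkeeping points need fixing:
\begin{itemize}
\item Under the paper's convention $(u\otimes v)(z)=\langle u,z\rangle v$, the operator $\sum_j u_j\otimes v_j$ sends $u_j\mapsto v_j$, not $v_j\mapsto u_j$. The contraction you need in (ii) is its adjoint $z\mapsto\sum_j\langle v_j,z\rangle u_j$. This is defined as a strongly convergent series and has norm at most one by Bessel's inequality; your hedge ``possibly with $A$ replaced by $A^*$'' covers it.
\item When you sum series in $\mathcal B_{\Tr}(\Hb)$, note that the trace-norm limit agrees with the operator-norm limit of the partial sums, since $\|\cdot\|_{\op}\le\|\cdot\|_{\Tr}$. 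This is what guarantees that the sums really are $T$ and $AT$.
\end{itemize}

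Your suspicion about (iii) is justified: as stated, it is false under the paper's own Definition~\ref{def:kronecker_product}. With that definition, $A(x\otimes y)=x\otimes Ay$, so
\[
\Tr\bigl(A(x\otimes y)\bigr)=\langle x,Ay\rangle_{\Hb}=\langle A^*x,y\rangle_{\Hb},
\]
which differs from $\langle Ax,y\rangle_{\Hb}$ unless $A$ is self-adjoint. For a concrete counterexample, take $\R^2$, $x=e_1$, $y=e_2$ and $A=x\otimes y$. Then $A(x\otimes y)=e_1\otimes Ae_2=0$ has trace $0$, whereas $\langle Ax,y\rangle=1$.

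The stated identity is the correct one for the transposed convention $(x\otimes y)(z)=\langle z,y\rangle x$, presumably the convention of the cited source, and the paper's proof simply imports it. As you observe, the error is harmless downstream. In the proof of Lemma~\ref{lem:convergence_to_long_term_trace} one actually gets $\Tr(A\Gamma_h)=\E\langle A^*X_0,X_h\rangle_{\Hb}$. The inner-product bound used there holds for every contraction, in particular for $A^*$, so the bound on $\|\Gamma_h\|_{\Tr}$ survives unchanged.
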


\begin{proof}
The estimate
\eqref{eq:trace_ideal_inequality} is standard; see
\citet[Page 267]{Con90}. For \(T\in\mathcal B_{\Tr}(\Hb)\) it holds that,
\begin{equation}
\label{eq:trace_norm_variational_compact}
\|T\|_{\Tr}
=
\sup_{\substack{C \textrm{ is compact}\\ \|C\|_{\op}\le1}}
\bigl|\Tr(CT)\bigr|,
\end{equation}
see \citet[Page 268]{Con90}.
Since every compact operator is bounded,
\eqref{eq:trace_norm_variational_compact} implies
\[
\|T\|_{\Tr}
\le
\sup_{\substack{A\in\mathcal B(\Hb)\\ \|A\|_{\op}\le1}}
\bigl|\Tr(AT)\bigr|.
\]
Conversely, by~(i), every \(A\in\mathcal B(\Hb)\) satisfying
\(\|A\|_{\op}\le1\) obeys
\[
\bigl|\Tr(AT)\bigr|
\le
\|T\|_{\Tr}.
\]
Taking the supremum over all such \(A\) yields the reverse inequality and
proves~(ii). Finally, (iii) is given in \citet[page 268]{Con90}.
\end{proof}

\subsection{Mean and Covariance of a random elements in Hilbert spaces}

Let $f$ and $g$ be random elements of $\Hb$, and suppose $\E[\|f\|^2_{\Hb}] \vee \E[\|g\|^2_{\Hb}] <\infty$. Recalling the tensor product $\otimes$ from Definition~\ref{def:kronecker_product}, the cross-covariance operator of $f$ and $g$ is defined as
\begin{equation*} 
\Cov(f,g)=\int_{\Omega} (f - \E[f]) \otimes(g-\E[g]) \,\diff\mathbb{P} \in \mathcal{B}_{\HS}(\Hb);
\end{equation*}
where the integral is taken in the Bochner sense for $\mathcal{B}_{\HS}(\Hb)$-valued functions. The covariance operator of $f$ is defined as $\Cov(f) := \Cov(f,f)$. 
The following is a slight extension of Theorem 7.2.5 in \cite{TE15}

\begin{lemma} \label{lem:properties-covariance}
If $\Exp[\| f \|_\Hb^2]<\infty$ and $\E[f]=0$ we have:
\begin{enumerate}[(1)]
\item $\langle \Cov(f)u, v \rangle_{\Hb} =\E[\langle f,u \rangle_{\Hb}\langle f,v \rangle_{\Hb}
]$ for all $v, u \in \Hb$,
\item $\Cov(f)$ is a non-negative trace-class operator with $\| \Cov(f)\|_{\Tr} = \Exp[\| f \|_\Hb^2]$. 
\item $\Prob\big(f \in \mathrm{cl}(\mathrm{Im}(\Cov(f)))\big)=1$.
\item[(4)] For $T\in\mathcal B(\Hb, \Hb')$, we have $\Cov (T f) = T \Cov(f) T^*$.
\end{enumerate}
If $g$ is another random in $\Hb$ with $\Exp[\| g \|_\Hb^2]<\infty$ and $\E[g]=0$, defined on the same probability space as $f$, then
\begin{enumerate}[(1)]
\item[(5)] $\Cov(f,g)$ is a trace-class operator with 
\[
\|\Cov(f,g)\|_{\Tr} \leq \Exp[\| f \|_\Hb\| g \|_\Hb] \le ( \Exp[\| f \|_\Hb^2]\Exp[\| g \|_\Hb^2])^{1/2}.
\]
\end{enumerate}
\end{lemma}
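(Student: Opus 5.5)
The plan is to work throughout with the Bochner integral taken in the Banach space $\mathcal B_{\Tr}(\Hb)$ rather than in $\mathcal B_{\HS}(\Hb)$, and to derive every item by commuting this integral with suitable bounded linear maps. First I would justify that $\Cov(f,g)$ is in fact a $\mathcal B_{\Tr}$-valued Bochner integral. By Lemma~\ref{lem:norm_kronecker_product}, $\|f\otimes g\|_{\Tr}=\|f\|_\Hb\|g\|_\Hb$. By Cauchy--Schwarz, $\E[\|f\|_\Hb\|g\|_\Hb]\le(\E\|f\|_\Hb^2\,\E\|g\|_\Hb^2)^{1/2}<\infty$. Strong measurability of $\omega\mapsto f\otimes g$ into the separable range follows from bilinearity and continuity of $(x,y)\mapsto x\otimes y$ (again Lemma~\ref{lem:norm_kronecker_product}). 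Hence the integral exists in $\mathcal B_{\Tr}$. The inclusion $\mathcal B_{\Tr}\hookrightarrow\mathcal B_{\HS}$ is a bounded linear map by \eqref{eq:operator-norms-inequality}, and the Bochner integral commutes with bounded linear maps \citep[Theorem 6, p.~47]{DiestelUhl1977}. So this integral coincides with the $\mathcal B_{\HS}$-valued definition. The triangle inequality for Bochner integrals then gives (5) immediately: $\|\Cov(f,g)\|_{\Tr}\le\E\|f\otimes g\|_{\Tr}=\E[\|f\|_\Hb\|g\|_\Hb]$, followed by Cauchy--Schwarz.

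For (1), I apply the bounded functional $T\mapsto\langle Tu,v\rangle_\Hb$ to the integral. By Definition~\ref{def:kronecker_product}, $\langle (f\otimes f)u,v\rangle=\langle f,u\rangle\langle f,v\rangle$, and commuting the functional with the integral yields the identity. Taking $u=v$ in (1) gives $\langle\Cov(f)u,u\rangle=\E\langle f,u\rangle^2\ge0$, and symmetry of (1) in $u,v$ gives self-adjointness; this is the nonnegativity in (2). For the trace identity in (2), I use \eqref{eq:trace-as-sum-eigenvalues}: for nonnegative self-adjoint operators the trace norm equals $\sum_j\langle\Cov(f)e_j,e_j\rangle$ for any CONS $(e_j)$. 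This sum equals $\sum_j\E\langle f,e_j\rangle^2=\E\sum_j\langle f,e_j\rangle^2=\E\|f\|_\Hb^2$, by monotone convergence and Parseval (Lemma~\ref{lem:parseval}).

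For (4), I verify pointwise that $(Tf)\otimes(Tf)=T(f\otimes f)T^*$. Indeed, $(Tf\otimes Tf)z=\langle Tf,z\rangle Tf=T(\langle f,T^*z\rangle f)=T(f\otimes f)T^*z$. I then commute the bounded linear map $S\mapsto TST^*$ (bounded on $\mathcal B_{\Tr}$ by the ideal property) with the integral. For (3), let $P$ be the orthogonal projection onto $\ker\Cov(f)=\mathrm{cl}(\mathrm{Im}\Cov(f))^\perp$; this identity uses self-adjointness. Choose a CONS $(b_j)$ of the kernel. By (1) and monotone convergence, $\E\|Pf\|_\Hb^2=\sum_j\E\langle f,b_j\rangle^2=\sum_j\langle\Cov(f)b_j,b_j\rangle=0$. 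Hence $Pf=0$ almost surely, i.e.\ $f\in\mathrm{cl}(\mathrm{Im}\Cov(f))$ almost surely.

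I expect the only delicate step to be the first one: ensuring the integral is well defined in trace norm, which requires strong measurability and separability of the range, and identifying it with the Hilbert--Schmidt definition in the paper. Once this is settled, every remaining item is a one-line consequence of commuting the integral with a bounded linear map.
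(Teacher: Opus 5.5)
Your proof is correct, and it is more complete than what the paper offers. The paper gives no argument for this lemma. It only remarks that the statement is a slight extension of Theorem~7.2.5 of \citet{TE15}, which addresses the self-covariance items, and it leaves (4) and (5) unproved.

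For (1)--(3) your steps are the standard ones:
\begin{itemize}
\item test the integral against $T\mapsto\langle Tu,v\rangle_\Hb$;
\item sum the diagonal over a CONS using Parseval and monotone convergence;
\item show $\E\langle f,b\rangle_\Hb^2=\langle\Cov(f)b,b\rangle_\Hb=0$ for $b\in\ker\Cov(f)$.
\end{itemize}

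What is genuinely different is that you integrate in $\mathcal B_{\Tr}(\Hb)$ from the start. You then identify the result with the $\mathcal B_{\HS}$-valued integral through the bounded inclusion. This buys two things.

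\begin{itemize}
\item \emph{Item (5) becomes immediate.} It follows in one line from the Bochner triangle inequality and $\|f\otimes g\|_{\Tr}=\|f\|_\Hb\|g\|_\Hb$. The diagonal-sum argument certifies trace-class membership only for non-negative self-adjoint operators, so it does not by itself handle the cross-covariance. Your device yields trace-class membership and the sharp bound $\E[\|f\|_\Hb\|g\|_\Hb]$ at once.
\item \emph{It legitimizes an interchange the paper uses later.} The proof of Lemma~\ref{lem:convergence_to_long_term_trace} uses $\Tr\bigl(A\,\E[X_0\otimes X_h]\bigr)=\E\,\Tr\bigl(A(X_0\otimes X_h)\bigr)$. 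The functional $T\mapsto\Tr(AT)$ is bounded on $\mathcal B_{\Tr}(\Hb)$ but is not defined on all of $\mathcal B_{\HS}(\Hb)$ in infinite dimensions. So the expectation there must be read as a $\mathcal B_{\Tr}$-valued integral, which is exactly the identification you establish.
\end{itemize}

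The cost is the measurability and integrability bookkeeping in $\mathcal B_{\Tr}(\Hb)$. You handle it correctly via continuity of $(x,y)\mapsto x\otimes y$ into $\mathcal B_{\Tr}(\Hb)$ and separability of $\Hb$.

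A small simplification for (4): it follows directly from (1) applied to $Tf$, which is centered with $\E\|Tf\|_{\Hb'}^2\le\|T\|_{\op}^2\,\E\|f\|_\Hb^2<\infty$. Indeed, for all $u,v\in\Hb'$,
\[
\langle\Cov(Tf)u,v\rangle_{\Hb'}=\E\bigl[\langle f,T^*u\rangle_\Hb\langle f,T^*v\rangle_\Hb\bigr]=\langle T\Cov(f)T^*u,v\rangle_{\Hb'}.
\]
This avoids the two-space version of the trace-ideal inequality, whereas Lemma~\ref{lem:trace_norm_variational_rank_one}(i) is stated only on a single space.
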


\putbib[biblio]
\end{bibunit}

\begin{thebibliography}{33}

\bibitem[\protect\citeauthoryear{Aitchison}{1982}]{aitchison1982statistical}
\begin{barticle}[author]
\bauthor{\bsnm{Aitchison},~\bfnm{J.}\binits{J.}}
(\byear{1982}).
\btitle{The statistical analysis of compositional data}.
\bjournal{J. Roy. Statist. Soc. Ser. B}
\bvolume{44}
\bpages{139--177}.
\bnote{With discussion}.
\bmrnumber{676206}
\end{barticle}
\endbibitem

\bibitem[\protect\citeauthoryear{Aston and Kirch}{2012}]{MR2922864}
\begin{barticle}[author]
\bauthor{\bsnm{Aston},~\bfnm{John A.~D.}\binits{J.~A.~D.}} \AND
  \bauthor{\bsnm{Kirch},~\bfnm{Claudia}\binits{C.}}
(\byear{2012}).
\btitle{Detecting and estimating changes in dependent functional data}.
\bjournal{J. Multivariate Anal.}
\bvolume{109}
\bpages{204--220}.
\bdoi{10.1016/j.jmva.2012.03.006}
\bmrnumber{2922864}
\end{barticle}
\endbibitem

\bibitem[\protect\citeauthoryear{Aue, Rice and S\"onmez}{2018}]{Aue2018}
\begin{barticle}[author]
\bauthor{\bsnm{Aue},~\bfnm{Alexander}\binits{A.}},
  \bauthor{\bsnm{Rice},~\bfnm{Gregory}\binits{G.}} \AND
  \bauthor{\bsnm{S\"onmez},~\bfnm{Ozan}\binits{O.}}
(\byear{2018}).
\btitle{Detecting and dating structural breaks in functional data without
  dimension reduction}.
\bjournal{J. R. Stat. Soc. Ser. B. Stat. Methodol.}
\bvolume{80}
\bpages{509--529}.
\bdoi{10.1111/rssb.12257}
\bmrnumber{3798876}
\end{barticle}
\endbibitem

\bibitem[\protect\citeauthoryear{Bai, Hu and Wu}{2026}]{Bai2026}
\begin{barticle}[author]
\bauthor{\bsnm{Bai},~\bfnm{Lujia}\binits{L.}},
  \bauthor{\bsnm{Hu},~\bfnm{Qirui}\binits{Q.}} \AND
  \bauthor{\bsnm{Wu},~\bfnm{Weichi}\binits{W.}}
(\byear{2026}).
\btitle{Inference for structural changes in nonstationary functional time
  series with partial measurement error}.
\bjournal{Journal of the Royal Statistical Society Series B: Statistical
  Methodology}
\bpages{qkag072}.
\bdoi{10.1093/jrsssb/qkag072}
\end{barticle}
\endbibitem

\bibitem[\protect\citeauthoryear{Banerjee, Laha and Lakra}{2020}]{MR4176154}
\begin{barticle}[author]
\bauthor{\bsnm{Banerjee},~\bfnm{Buddhananda}\binits{B.}},
  \bauthor{\bsnm{Laha},~\bfnm{Arnab~K.}\binits{A.~K.}} \AND
  \bauthor{\bsnm{Lakra},~\bfnm{Arjun}\binits{A.}}
(\byear{2020}).
\btitle{Data-driven dimension reduction in functional principal component
  analysis identifying the change-point in functional data}.
\bjournal{Stat. Anal. Data Min.}
\bvolume{13}
\bpages{529--536}.
\bdoi{10.1002/sam.11471}
\bmrnumber{4176154}
\end{barticle}
\endbibitem

\bibitem[\protect\citeauthoryear{Bastian}{2025}]{Bastian2025}
\begin{barticle}[author]
\bauthor{\bsnm{Bastian},~\bfnm{Patrick}\binits{P.}}
(\byear{2025}).
\btitle{Choosing the right norm for change point detection in functional data}.
\bjournal{Electron. J. Stat.}
\bvolume{19}
\bpages{4637--4672}.
\bdoi{10.1214/25-ejs2451}
\bmrnumber{4965756}
\end{barticle}
\endbibitem

\bibitem[\protect\citeauthoryear{Bastian, Basu and
  Dette}{2024}]{bastian2024multiple}
\begin{barticle}[author]
\bauthor{\bsnm{Bastian},~\bfnm{Patrick}\binits{P.}},
  \bauthor{\bsnm{Basu},~\bfnm{Rupsa}\binits{R.}} \AND
  \bauthor{\bsnm{Dette},~\bfnm{Holger}\binits{H.}}
(\byear{2024}).
\btitle{Multiple change point detection in functional data with applications to
  biomechanical fatigue data}.
\bjournal{Ann. Appl. Stat.}
\bvolume{18}
\bpages{3109--3129}.
\bdoi{10.1214/24-aoas1926}
\bmrnumber{4815956}
\end{barticle}
\endbibitem

\bibitem[\protect\citeauthoryear{Berkes et~al.}{2009}]{berkes2009detecting}
\begin{barticle}[author]
\bauthor{\bsnm{Berkes},~\bfnm{Istv\'an}\binits{I.}},
  \bauthor{\bsnm{Gabrys},~\bfnm{Robertas}\binits{R.}},
  \bauthor{\bsnm{Horv\'ath},~\bfnm{Lajos}\binits{L.}} \AND
  \bauthor{\bsnm{Kokoszka},~\bfnm{Piotr}\binits{P.}}
(\byear{2009}).
\btitle{Detecting changes in the mean of functional observations}.
\bjournal{J. R. Stat. Soc. Ser. B Stat. Methodol.}
\bvolume{71}
\bpages{927--946}.
\bdoi{10.1111/j.1467-9868.2009.00713.x}
\bmrnumber{2750251}
\end{barticle}
\endbibitem

\bibitem[\protect\citeauthoryear{Cai and Hall}{2006}]{cai2006prediction}
\begin{barticle}[author]
\bauthor{\bsnm{Cai},~\bfnm{T.~Tony}\binits{T.~T.}} \AND
  \bauthor{\bsnm{Hall},~\bfnm{Peter}\binits{P.}}
(\byear{2006}).
\btitle{Prediction in functional linear regression}.
\bjournal{Ann. Statist.}
\bvolume{34}
\bpages{2159--2179}.
\bdoi{10.1214/009053606000000830}
\bmrnumber{2291496}
\end{barticle}
\endbibitem

\bibitem[\protect\citeauthoryear{Chakraborty and
  Sakhanenko}{2023}]{Chakraborty2023}
\begin{barticle}[author]
\bauthor{\bsnm{Chakraborty},~\bfnm{Nilanjan}\binits{N.}} \AND
  \bauthor{\bsnm{Sakhanenko},~\bfnm{Lyudmila}\binits{L.}}
(\byear{2023}).
\btitle{Novel multiplier bootstrap tests for high-dimensional data with
  applications to {MANOVA}}.
\bjournal{Comput. Statist. Data Anal.}
\bvolume{178}
\bpages{Paper No. 107619, 18}.
\bdoi{10.1016/j.csda.2022.107619}
\bmrnumber{4488688}
\end{barticle}
\endbibitem

\bibitem[\protect\citeauthoryear{Dette and Kutta}{2021}]{DetteKutta2021}
\begin{barticle}[author]
\bauthor{\bsnm{Dette},~\bfnm{Holger}\binits{H.}} \AND
  \bauthor{\bsnm{Kutta},~\bfnm{Tim}\binits{T.}}
(\byear{2021}).
\btitle{Detecting structural breaks in eigensystems of functional time series}.
\bjournal{Electron. J. Stat.}
\bvolume{15}
\bpages{944--983}.
\bdoi{10.1214/20-ejs1796}
\bmrnumber{4255289}
\end{barticle}
\endbibitem

\bibitem[\protect\citeauthoryear{Fremdt et~al.}{2014}]{MR3147328}
\begin{barticle}[author]
\bauthor{\bsnm{Fremdt},~\bfnm{Stefan}\binits{S.}},
  \bauthor{\bsnm{Horv\'ath},~\bfnm{Lajos}\binits{L.}},
  \bauthor{\bsnm{Kokoszka},~\bfnm{Piotr}\binits{P.}} \AND
  \bauthor{\bsnm{Steinebach},~\bfnm{Josef~G.}\binits{J.~G.}}
(\byear{2014}).
\btitle{Functional data analysis with increasing number of projections}.
\bjournal{J. Multivariate Anal.}
\bvolume{124}
\bpages{313--332}.
\bdoi{10.1016/j.jmva.2013.11.009}
\bmrnumber{3147328}
\end{barticle}
\endbibitem

\bibitem[\protect\citeauthoryear{Hall and Van~Keilegom}{2007}]{HK07}
\begin{barticle}[author]
\bauthor{\bsnm{Hall},~\bfnm{Peter}\binits{P.}} \AND
  \bauthor{\bsnm{Van~Keilegom},~\bfnm{Ingrid}\binits{I.}}
(\byear{2007}).
\btitle{Two-sample tests in functional data analysis starting from discrete
  data}.
\bjournal{Statist. Sinica}
\bvolume{17}
\bpages{1511--1531}.
\bmrnumber{2413533}
\end{barticle}
\endbibitem

\bibitem[\protect\citeauthoryear{Holm}{1979}]{holm1979simple}
\begin{barticle}[author]
\bauthor{\bsnm{Holm},~\bfnm{Sture}\binits{S.}}
(\byear{1979}).
\btitle{A simple sequentially rejective multiple test procedure}.
\bjournal{Scand. J. Statist.}
\bvolume{6}
\bpages{65--70}.
\bmrnumber{538597}
\end{barticle}
\endbibitem

\bibitem[\protect\citeauthoryear{Horv\'ath, Kokoszka and
  Rice}{2014}]{Horvath2014}
\begin{barticle}[author]
\bauthor{\bsnm{Horv\'ath},~\bfnm{Lajos}\binits{L.}},
  \bauthor{\bsnm{Kokoszka},~\bfnm{Piotr}\binits{P.}} \AND
  \bauthor{\bsnm{Rice},~\bfnm{Gregory}\binits{G.}}
(\byear{2014}).
\btitle{Testing stationarity of functional time series}.
\bjournal{J. Econometrics}
\bvolume{179}
\bpages{66--82}.
\bdoi{10.1016/j.jeconom.2013.11.002}
\bmrnumber{3153649}
\end{barticle}
\endbibitem

\bibitem[\protect\citeauthoryear{Jirak}{2015}]{jirak2015uniform}
\begin{barticle}[author]
\bauthor{\bsnm{Jirak},~\bfnm{Moritz}\binits{M.}}
(\byear{2015}).
\btitle{Uniform change point tests in high dimension}.
\bjournal{Ann. Statist.}
\bvolume{43}
\bpages{2451--2483}.
\bdoi{10.1214/15-AOS1347}
\bmrnumber{3405600}
\end{barticle}
\endbibitem

\bibitem[\protect\citeauthoryear{Jirak}{2016}]{jirak2016optimal}
\begin{barticle}[author]
\bauthor{\bsnm{Jirak},~\bfnm{Moritz}\binits{M.}}
(\byear{2016}).
\btitle{Optimal eigen expansions and uniform bounds}.
\bjournal{Probab. Theory Related Fields}
\bvolume{166}
\bpages{753--799}.
\bdoi{10.1007/s00440-015-0671-3}
\bmrnumber{3568039}
\end{barticle}
\endbibitem

\bibitem[\protect\citeauthoryear{Jirak}{2018}]{jirak2018rate}
\begin{barticle}[author]
\bauthor{\bsnm{Jirak},~\bfnm{Moritz}\binits{M.}}
(\byear{2018}).
\btitle{Rate of convergence for {H}ilbert space valued processes}.
\bjournal{Bernoulli}
\bvolume{24}
\bpages{202--230}.
\bdoi{10.3150/16-BEJ870}
\bmrnumber{3706754}
\end{barticle}
\endbibitem

\bibitem[\protect\citeauthoryear{Kumar et~al.}{2024}]{kumar2024estimation}
\begin{barticle}[author]
\bauthor{\bsnm{Kumar},~\bfnm{Shivam}\binits{S.}},
  \bauthor{\bsnm{Xu},~\bfnm{Haotian}\binits{H.}},
  \bauthor{\bsnm{Cho},~\bfnm{Haeran}\binits{H.}} \AND
  \bauthor{\bsnm{Wang},~\bfnm{Daren}\binits{D.}}
(\byear{2024}).
\btitle{Estimation and inference for change points in functional regression
  time series}.
\bjournal{arXiv preprint arXiv:2405.05459}.
\end{barticle}
\endbibitem

\bibitem[\protect\citeauthoryear{Kutta, Dette and
  Wang}{2025}]{kutta2025multiscale}
\begin{barticle}[author]
\bauthor{\bsnm{Kutta},~\bfnm{Tim}\binits{T.}},
  \bauthor{\bsnm{Dette},~\bfnm{Holger}\binits{H.}} \AND
  \bauthor{\bsnm{Wang},~\bfnm{Shixuan}\binits{S.}}
(\byear{2025}).
\btitle{Multiscale Change Point Detection for Functional Time Series}.
\bjournal{arXiv preprint arXiv:2511.06870}.
\end{barticle}
\endbibitem

\bibitem[\protect\citeauthoryear{Li, Li and Shang}{2024}]{li2024detection}
\begin{barticle}[author]
\bauthor{\bsnm{Li},~\bfnm{Degui}\binits{D.}},
  \bauthor{\bsnm{Li},~\bfnm{Runze}\binits{R.}} \AND
  \bauthor{\bsnm{Shang},~\bfnm{Han~Lin}\binits{H.~L.}}
(\byear{2024}).
\btitle{Detection and estimation of structural breaks in high-dimensional
  functional time series}.
\bjournal{Ann. Statist.}
\bvolume{52}
\bpages{1716--1740}.
\bdoi{10.1214/24-aos2414}
\bmrnumber{4804825}
\end{barticle}
\endbibitem

\bibitem[\protect\citeauthoryear{Lopes}{2025}]{lopes2025improved}
\begin{barticle}[author]
\bauthor{\bsnm{Lopes},~\bfnm{Miles~E.}\binits{M.~E.}}
(\byear{2025}).
\btitle{Improved rates of bootstrap approximation for the operator norm: a
  coordinate-free approach}.
\bjournal{Ann. Inst. Henri Poincar\'e{} Probab. Stat.}
\bvolume{61}
\bpages{2262--2290}.
\bdoi{10.1214/24-aihp1477}
\bmrnumber{4947554}
\end{barticle}
\endbibitem

\bibitem[\protect\citeauthoryear{Lu et~al.}{2022}]{lu2022almost}
\begin{barticle}[author]
\bauthor{\bsnm{Lu},~\bfnm{Jianya}\binits{J.}},
  \bauthor{\bsnm{Wu},~\bfnm{Wei~Biao}\binits{W.~B.}},
  \bauthor{\bsnm{Xiao},~\bfnm{Zhijie}\binits{Z.}} \AND
  \bauthor{\bsnm{Xu},~\bfnm{Lihu}\binits{L.}}
(\byear{2022}).
\btitle{Almost sure invariance principle of $\beta-$ mixing time series in
  Hilbert space}.
\bjournal{arXiv preprint arXiv:2209.12535}.
\end{barticle}
\endbibitem

\bibitem[\protect\citeauthoryear{Padilla et~al.}{2022}]{3600270.3602960}
\begin{binproceedings}[author]
\bauthor{\bsnm{Padilla},~\bfnm{Carlos Misael~Madrid}\binits{C.~M.~M.}},
  \bauthor{\bsnm{Zhao},~\bfnm{Zifeng}\binits{Z.}},
  \bauthor{\bsnm{Wang},~\bfnm{Daren}\binits{D.}} \AND
  \bauthor{\bsnm{Yu},~\bfnm{Yi}\binits{Y.}}
(\byear{2022}).
\btitle{Change-point detection for sparse and dense functional data in general
  dimensions}.
In \bbooktitle{Proceedings of the 36th International Conference on Neural
  Information Processing Systems}.
\bseries{NIPS '22}.
\bpublisher{Curran Associates Inc.}, \baddress{Red Hook, NY, USA}.
\end{binproceedings}
\endbibitem

\bibitem[\protect\citeauthoryear{Pinelis}{1994}]{pinelis1994optimum}
\begin{barticle}[author]
\bauthor{\bsnm{Pinelis},~\bfnm{Iosif}\binits{I.}}
(\byear{1994}).
\btitle{Optimum bounds for the distributions of martingales in {B}anach
  spaces}.
\bjournal{Ann. Probab.}
\bvolume{22}
\bpages{1679--1706}.
\bmrnumber{1331198}
\end{barticle}
\endbibitem

\bibitem[\protect\citeauthoryear{Rice, Wirjanto and
  Zhao}{2020}]{RiceWirjantoZhao2020}
\begin{barticle}[author]
\bauthor{\bsnm{Rice},~\bfnm{Gregory}\binits{G.}},
  \bauthor{\bsnm{Wirjanto},~\bfnm{Tony}\binits{T.}} \AND
  \bauthor{\bsnm{Zhao},~\bfnm{Yuqian}\binits{Y.}}
(\byear{2020}).
\btitle{Tests for conditional heteroscedasticity of functional data}.
\bjournal{J. Time Series Anal.}
\bvolume{41}
\bpages{733--758}.
\bdoi{10.1111/jtsa.12532}
\bmrnumber{4176358}
\end{barticle}
\endbibitem

\bibitem[\protect\citeauthoryear{Romano and Wolf}{2005}]{RomanoWolf2005}
\begin{barticle}[author]
\bauthor{\bsnm{Romano},~\bfnm{Joseph~P.}\binits{J.~P.}} \AND
  \bauthor{\bsnm{Wolf},~\bfnm{Michael}\binits{M.}}
(\byear{2005}).
\btitle{Exact and approximate stepdown methods for multiple hypothesis
  testing}.
\bjournal{J. Amer. Statist. Assoc.}
\bvolume{100}
\bpages{94--108}.
\bdoi{10.1198/016214504000000539}
\bmrnumber{2156821}
\end{barticle}
\endbibitem

\bibitem[\protect\citeauthoryear{Shang and Ji}{2023}]{ShangJi2023}
\begin{barticle}[author]
\bauthor{\bsnm{Shang},~\bfnm{Han~Lin}\binits{H.~L.}} \AND
  \bauthor{\bsnm{Ji},~\bfnm{Kaiying}\binits{K.}}
(\byear{2023}).
\btitle{Forecasting intraday financial time series with sieve bootstrapping and
  dynamic updating}.
\bjournal{J. Forecast.}
\bvolume{42}
\bpages{1973--1988}.
\bdoi{10.1002/for.3000}
\bmrnumber{4667846}
\end{barticle}
\endbibitem

\bibitem[\protect\citeauthoryear{Sharipov, Tewes and
  Wendler}{2016}]{sharipov2016sequential}
\begin{barticle}[author]
\bauthor{\bsnm{Sharipov},~\bfnm{Olimjon}\binits{O.}},
  \bauthor{\bsnm{Tewes},~\bfnm{Johannes}\binits{J.}} \AND
  \bauthor{\bsnm{Wendler},~\bfnm{Martin}\binits{M.}}
(\byear{2016}).
\btitle{Sequential block bootstrap in a {H}ilbert space with application to
  change point analysis}.
\bjournal{Canad. J. Statist.}
\bvolume{44}
\bpages{300--322}.
\bdoi{10.1002/cjs.11293}
\bmrnumber{3536199}
\end{barticle}
\endbibitem

\bibitem[\protect\citeauthoryear{Thevenon et~al.}{2013}]{thevenon2013human}
\begin{barticle}[author]
\bauthor{\bsnm{Thevenon},~\bfnm{Florian}\binits{F.}},
  \bauthor{\bsnm{Wirth},~\bfnm{Stefanie~B}\binits{S.~B.}},
  \bauthor{\bsnm{Fujak},~\bfnm{Marian}\binits{M.}},
  \bauthor{\bsnm{Pot{\'e}},~\bfnm{John}\binits{J.}} \AND
  \bauthor{\bsnm{Girardclos},~\bfnm{St{\'e}phanie}\binits{S.}}
(\byear{2013}).
\btitle{Human impact on the transport of terrigenous and anthropogenic elements
  to peri-alpine lakes (Switzerland) over the last decades}.
\bjournal{Aquatic Sciences}
\bvolume{75}
\bpages{413--424}.
\end{barticle}
\endbibitem

\bibitem[\protect\citeauthoryear{VanderDoes, Rice and
  Wendler}{2025}]{fChange2025}
\begin{bmanual}[author]
\bauthor{\bsnm{VanderDoes},~\bfnm{Jeremy}\binits{J.}},
  \bauthor{\bsnm{Rice},~\bfnm{Gregory}\binits{G.}} \AND
  \bauthor{\bsnm{Wendler},~\bfnm{Martin}\binits{M.}}
(\byear{2025}).
\btitle{fChange: Functional Change Point Detection and Analysis}
\bnote{R package version 2.1.0}.
\bdoi{10.32614/CRAN.package.fChange}
\end{bmanual}
\endbibitem

\bibitem[\protect\citeauthoryear{Verzelen et~al.}{2023}]{verzelen2023optimal}
\begin{barticle}[author]
\bauthor{\bsnm{Verzelen},~\bfnm{Nicolas}\binits{N.}},
  \bauthor{\bsnm{Fromont},~\bfnm{Magalie}\binits{M.}},
  \bauthor{\bsnm{Lerasle},~\bfnm{Matthieu}\binits{M.}} \AND
  \bauthor{\bsnm{Reynaud-Bouret},~\bfnm{Patricia}\binits{P.}}
(\byear{2023}).
\btitle{Optimal change-point detection and localization}.
\bjournal{Ann. Statist.}
\bvolume{51}
\bpages{1586--1610}.
\bdoi{10.1214/23-aos2297}
\bmrnumber{4658569}
\end{barticle}
\endbibitem

\bibitem[\protect\citeauthoryear{Zhou and Dette}{2023}]{ZD23}
\begin{barticle}[author]
\bauthor{\bsnm{Zhou},~\bfnm{Zhou}\binits{Z.}} \AND
  \bauthor{\bsnm{Dette},~\bfnm{Holger}\binits{H.}}
(\byear{2023}).
\btitle{Statistical inference for high-dimensional panel functional time
  series}.
\bjournal{J. R. Stat. Soc. Ser. B. Stat. Methodol.}
\bvolume{85}
\bpages{523--549}.
\bdoi{10.1093/jrsssb/qkad015}
\bmrnumber{4718536}
\end{barticle}
\endbibitem

\end{thebibliography}


\begin{thebibliography}{16}

\bibitem[\protect\citeauthoryear{Bradley}{2005}]{Bra05}
\begin{barticle}[author]
\bauthor{\bsnm{Bradley},~\bfnm{Richard~C.}\binits{R.~C.}}
(\byear{2005}).
\btitle{Basic properties of strong mixing conditions. {A} survey and some open
  questions}.
\bjournal{Probab. Surv.}
\bvolume{2}
\bpages{107--144}.
\bnote{Update of, and a supplement to, the 1986 original}.
\bdoi{10.1214/154957805100000104}
\bmrnumber{2178042}
\end{barticle}
\endbibitem

\bibitem[\protect\citeauthoryear{Conway}{1990}]{Con90}
\begin{bbook}[author]
\bauthor{\bsnm{Conway},~\bfnm{John~B.}\binits{J.~B.}}
(\byear{1990}).
\btitle{A course in functional analysis},
\bedition{second} ed.
\bseries{Graduate Texts in Mathematics}
\bvolume{96}.
\bpublisher{Springer-Verlag, New York}.
\bmrnumber{1070713}
\end{bbook}
\endbibitem

\bibitem[\protect\citeauthoryear{Decker}{2024}]{Dec24}
\begin{bphdthesis}[author]
\bauthor{\bsnm{Decker},~\bfnm{David~Colin}\binits{D.~C.}}
(\byear{2024}).
\btitle{Hypothesis Tests for High-Dimensional Functional Data},
\btype{PhD thesis},
\bpublisher{University of Toronto},
\baddress{Toronto, Canada}
\bnote{Available at \url{http://hdl.handle.net/1807/140607}}.
\end{bphdthesis}
\endbibitem

\bibitem[\protect\citeauthoryear{Decker, Kong and Volgushev}{2025}]{many_means}
\begin{bmisc}[author]
\bauthor{\bsnm{Decker},~\bfnm{Colin}\binits{C.}},
  \bauthor{\bsnm{Kong},~\bfnm{Dehan}\binits{D.}} \AND
  \bauthor{\bsnm{Volgushev},~\bfnm{Stanislav}\binits{S.}}
(\byear{2025}).
\btitle{Simultaneous hypothesis testing for comparing many functional means}.
\end{bmisc}
\endbibitem

\bibitem[\protect\citeauthoryear{Dedecker and Louhichi}{2002}]{Dedecker2002}
\begin{bincollection}[author]
\bauthor{\bsnm{Dedecker},~\bfnm{J\'er\^ome}\binits{J.}} \AND
  \bauthor{\bsnm{Louhichi},~\bfnm{Sana}\binits{S.}}
(\byear{2002}).
\btitle{Maximal inequalities and empirical central limit theorems}.
In \bbooktitle{Empirical process techniques for dependent data}
\bpages{137--159}.
\bpublisher{Birkh\"auser Boston, Boston, MA}.
\bmrnumber{1958779}
\end{bincollection}
\endbibitem

\bibitem[\protect\citeauthoryear{Diestel and Uhl}{1977}]{DiestelUhl1977}
\begin{bbook}[author]
\bauthor{\bsnm{Diestel},~\bfnm{J.}\binits{J.}} \AND
  \bauthor{\bsnm{Uhl},~\bfnm{J.~J.}\binits{J.~J.} \bsuffix{Jr.}}
(\byear{1977}).
\btitle{Vector measures}.
\bseries{Mathematical Surveys}
\bvolume{No. 15}.
\bpublisher{American Mathematical Society, Providence, RI}
\bnote{With a foreword by B. J. Pettis}.
\bmrnumber{453964}
\end{bbook}
\endbibitem

\bibitem[\protect\citeauthoryear{Hsing and Eubank}{2015}]{TE15}
\begin{bbook}[author]
\bauthor{\bsnm{Hsing},~\bfnm{Tailen}\binits{T.}} \AND
  \bauthor{\bsnm{Eubank},~\bfnm{Randall}\binits{R.}}
(\byear{2015}).
\btitle{Theoretical foundations of functional data analysis, with an
  introduction to linear operators}.
\bseries{Wiley Series in Probability and Statistics}.
\bpublisher{John Wiley \& Sons, Ltd., Chichester}.
\bdoi{10.1002/9781118762547}
\bmrnumber{3379106}
\end{bbook}
\endbibitem

\bibitem[\protect\citeauthoryear{Li, Li and Shang}{2024}]{li2024detection}
\begin{barticle}[author]
\bauthor{\bsnm{Li},~\bfnm{Degui}\binits{D.}},
  \bauthor{\bsnm{Li},~\bfnm{Runze}\binits{R.}} \AND
  \bauthor{\bsnm{Shang},~\bfnm{Han~Lin}\binits{H.~L.}}
(\byear{2024}).
\btitle{Detection and estimation of structural breaks in high-dimensional
  functional time series}.
\bjournal{Ann. Statist.}
\bvolume{52}
\bpages{1716--1740}.
\bdoi{10.1214/24-aos2414}
\bmrnumber{4804825}
\end{barticle}
\endbibitem

\bibitem[\protect\citeauthoryear{Lu et~al.}{2022}]{lu2022almost}
\begin{barticle}[author]
\bauthor{\bsnm{Lu},~\bfnm{Jianya}\binits{J.}},
  \bauthor{\bsnm{Wu},~\bfnm{Wei~Biao}\binits{W.~B.}},
  \bauthor{\bsnm{Xiao},~\bfnm{Zhijie}\binits{Z.}} \AND
  \bauthor{\bsnm{Xu},~\bfnm{Lihu}\binits{L.}}
(\byear{2022}).
\btitle{Almost sure invariance principle of $\beta-$ mixing time series in
  Hilbert space}.
\bjournal{arXiv preprint arXiv:2209.12535}.
\end{barticle}
\endbibitem

\bibitem[\protect\citeauthoryear{Merlev\`ede, Peligrad and
  Utev}{1997}]{merlevede1997sharp}
\begin{barticle}[author]
\bauthor{\bsnm{Merlev\`ede},~\bfnm{Florence}\binits{F.}},
  \bauthor{\bsnm{Peligrad},~\bfnm{Magda}\binits{M.}} \AND
  \bauthor{\bsnm{Utev},~\bfnm{Sergey}\binits{S.}}
(\byear{1997}).
\btitle{Sharp conditions for the {CLT} of linear processes in a {H}ilbert
  space}.
\bjournal{J. Theoret. Probab.}
\bvolume{10}
\bpages{681--693}.
\bdoi{10.1023/A:1022653728014}
\bmrnumber{1468399}
\end{barticle}
\endbibitem

\bibitem[\protect\citeauthoryear{Pinelis}{1994}]{pinelis1994optimum}
\begin{barticle}[author]
\bauthor{\bsnm{Pinelis},~\bfnm{Iosif}\binits{I.}}
(\byear{1994}).
\btitle{Optimum bounds for the distributions of martingales in {B}anach
  spaces}.
\bjournal{Ann. Probab.}
\bvolume{22}
\bpages{1679--1706}.
\bmrnumber{1331198}
\end{barticle}
\endbibitem

\bibitem[\protect\citeauthoryear{Rio}{2017}]{rio2017asymptotic}
\begin{bbook}[author]
\bauthor{\bsnm{Rio},~\bfnm{Emmanuel}\binits{E.}}
(\byear{2017}).
\btitle{Asymptotic theory of weakly dependent random processes},
\bedition{french} ed.
\bseries{Probability Theory and Stochastic Modelling}
\bvolume{80}.
\bpublisher{Springer, Berlin}.
\bdoi{10.1007/978-3-662-54323-8}
\bmrnumber{3642873}
\end{bbook}
\endbibitem

\bibitem[\protect\citeauthoryear{Simon}{2005}]{simon2005trace}
\begin{bbook}[author]
\bauthor{\bsnm{Simon},~\bfnm{Barry}\binits{B.}}
(\byear{2005}).
\btitle{Trace ideals and their applications},
\bedition{second} ed.
\bseries{Mathematical Surveys and Monographs}
\bvolume{120}.
\bpublisher{American Mathematical Society, Providence, RI}.
\bdoi{10.1090/surv/120}
\bmrnumber{2154153}
\end{bbook}
\endbibitem

\bibitem[\protect\citeauthoryear{Talagrand}{1989}]{talagrand1989isoperimetry}
\begin{barticle}[author]
\bauthor{\bsnm{Talagrand},~\bfnm{Michel}\binits{M.}}
(\byear{1989}).
\btitle{Isoperimetry and integrability of the sum of independent {B}anach-space
  valued random variables}.
\bjournal{Ann. Probab.}
\bvolume{17}
\bpages{1546--1570}.
\bmrnumber{1048946}
\end{barticle}
\endbibitem

\bibitem[\protect\citeauthoryear{van~de Geer}{2000}]{vandeGeer2000}
\begin{bbook}[author]
\bauthor{\bparticle{van~de} \bsnm{Geer},~\bfnm{Sara~A.}\binits{S.~A.}}
(\byear{2000}).
\btitle{Applications of empirical process theory}.
\bseries{Cambridge Series in Statistical and Probabilistic Mathematics}
\bvolume{6}.
\bpublisher{Cambridge University Press, Cambridge}.
\bmrnumber{1739079}
\end{bbook}
\endbibitem

\bibitem[\protect\citeauthoryear{van~der Vaart and Wellner}{1996}]{VW96}
\begin{bbook}[author]
\bauthor{\bparticle{van~der} \bsnm{Vaart},~\bfnm{Aad~W.}\binits{A.~W.}} \AND
  \bauthor{\bsnm{Wellner},~\bfnm{Jon~A.}\binits{J.~A.}}
(\byear{1996}).
\btitle{Weak convergence and empirical processes}.
\bseries{Springer Series in Statistics}.
\bpublisher{Springer-Verlag, New York}
\bnote{With applications to statistics}.
\bdoi{10.1007/978-1-4757-2545-2}
\bmrnumber{1385671}
\end{bbook}
\endbibitem

\end{thebibliography}
\end{document}